\documentclass[a4paper]{article}

\usepackage[english,francais]{babel}
\usepackage{lmodern}
\usepackage[T1]{fontenc}
\rmfamily
\DeclareFontShape{T1}{lmr}{b}{sc}{<->ssub*cmr/bx/sc}{}
\DeclareFontShape{T1}{lmr}{bx}{sc}{<->ssub*cmr/bx/sc}{}

\usepackage[pass]{geometry}

\usepackage[all]{xy}
\usepackage{tabularx}
\usepackage{tikz}
\usepackage{calc}
\usepackage{graphicx}
\usepackage{fancyhdr}
\usepackage[babel=true]{csquotes}

\usepackage{amssymb,amsmath,amsthm,amscd}
\usepackage{mathdots}
\usepackage{mathrsfs}
\usepackage[retainorgcmds]{IEEEtrantools}

\usepackage[noprefix]{nomencl}
\input{Nomencl/nomencl.tex}
\makenomenclature

\usepackage{appendix}
\usepackage[nottoc,notlof,notlot,notbib]{tocbibind}
\setcounter{tocdepth}{3}

\newenvironment{dedicace}{%
  \newpage\thispagestyle{empty}
  \hfill\begin{minipage}{100mm}\begin{flushright}\it}{%
  \end{flushright}\end{minipage}\vfill}

\newtheorem{introthm}{Théorème}
\newtheorem{introconj}[introthm]{Conjecture}
\newtheorem{introdefi}[introthm]{Définition}
\newtheorem{introprop}[introthm]{Proposition}
\newtheorem{introcor}[introthm]{Corollaire}
\newtheorem{introquest}[introthm]{Question}
\newtheorem{intropb}[introthm]{Problème}
\newtheorem{introprinc}[introthm]{Principe}

\newtheorem{defi}{Definition}[section]
\newtheorem{rem}[defi]{Remark}
\newtheorem{lem}[defi]{Lemma}
\newtheorem{prop}[defi]{Proposition}
\newtheorem{defi-prop}[defi]{Definition-proposition}

\newtheorem{thm}[defi]{Theorem}

\newtheorem*{conj}{Conjecture}

\newtheorem{claimn}{Claim}

\newtheorem{definition}{Définition}[section]
\newtheorem{exemple}[definition]{Exemple}
\newtheorem{remarque}[definition]{Remarque}
\newtheorem{lemme}[definition]{Lemme}
\newtheorem{proposition}[definition]{Proposition}
\newtheorem{defiprop}[definition]{Définition-Proposition}
\newtheorem{corollaire}[definition]{Corollaire}
\newtheorem{theoreme}[definition]{Théorème}

\newcommand{\ZZ}{\mathbb Z}

\newcommand{\glie}{\mathfrak g}
\newcommand{\Lglie}{{{}^L \! \! \; \mathfrak g}}

\newcommand{\Lpsi}{{{}^L \! \! \; \psi_i}}

\newcommand{\hlie}{\mathfrak h}
\newcommand{\hlied}{\mathfrak h^\ast}
\newcommand{\hliedZ}{\mathfrak h^\ast_\ZZ}

\newcommand{\Lhlie}{{{}^L \! \; \! \mathfrak h}}

\newcommand{\Lhlied}{{{}^L \! \; \! \mathfrak h^\ast}}
\newcommand{\LhliedZ}{{{}^L \! \; \! \mathfrak h^\ast_\ZZ}}

\newcommand{\LVR}{{{}^L \! \: \! \VR}}

\newcommand{\LV}{{{}^L \! \! \; V}}
\newcommand{\LVq}{{{}^L \! \! \; V_{\! \! \; q}}}
\newcommand{\LXpm}{{{}^L \! X^\pm_i}}

\newcommand{\slt}{\mathfrak{sl}_2}

\newcommand{\Uq}{{U_{\! \! \: q}}}
\newcommand{\Ut}{{U_{\! \! \; t}}}
\newcommand{\Uqt}{{U_{\! \! \: q,t}}}

\newcommand{\Lsglie}{{{}^L \! \! \; \hat \glie}}

\newcommand{\NN}{\mathbb N}
\newcommand{\NNI}{\mathbb N^{\ast}}
\newcommand{\QQ}{\mathbb Q}
\newcommand{\KK}{\Bbbk}

\newcommand{\Ugeneric}[2]{{U_{#1}(#2)}}
\newcommand{\Uc}[1][\glie]{\Ugeneric{\! \! \; \KK}{#1}}
\newcommand{\Uh}[1][\glie,\psi]{\Ugeneric{\! \! \: h}{#1}}

\newcommand{\tglie}{\tilde \glie}

\newcommand{\Utgeneric}[2]{\Ugeneric{#1}{#2}}
\newcommand{\Utc}[1][\tglie]{\Utgeneric{\! \! \; \KK}{#1}}
\newcommand{\Uth}[1][\tglie,\psi]{\Utgeneric{\! \! \: h}{#1}}

\newcommand{\vpitgeneric}[2]{\varpi_{#2}(#1)}
\newcommand{\vpitc}[1][\glie]{\vpitgeneric{#1}{\KK}}
\newcommand{\vpith}[1][\glie,\psi]{\vpitgeneric{#1}{h}}

\newcommand{\Xdat}{\mathfrak X}
\newcommand{\Xdom}{{X_{\! \! \: \mathrm{dom}}}}

\newcommand{\Aone}{\mathfrak A_1}

\newcommand{\Unpgeneric}[2]{{U^{> 0}_{\! \! \; #1}(#2)}}
\newcommand{\Unpc}[1][\glie]{\Unpgeneric{\KK}{#1}}

\newcommand{\Unngeneric}[2]{{U^{< 0}_{\! \! \; #1}(#2)}}
\newcommand{\Unnc}[1][\glie]{\Unngeneric{\KK}{#1}}

\newcommand{\UCageneric}[2]{{U^0_{\! \! \; #1}(#2)}}
\newcommand{\UCac}[1][\glie]{\UCageneric{\KK}{#1}}

\newcommand{\Deltageneric}[2]{{\Delta^i_{#2}(#1)}}
\newcommand{\Deltac}[1][\glie]{\Deltageneric{#1}{\! \: \KK}}
\newcommand{\Deltah}[1][\glie,\psi]{\Deltageneric{#1}{h}}

\newcommand{\wt}{\mathrm{wt}}

\newcommand{\Ocatgeneric}[2]{{\mathcal O_{#1}#2}}
\newcommand{\Ocatc}[1][(\glie)]{\Ocatgeneric{\KK}{#1}}

\newcommand{\Chgeneric}[3]{{\mathrm{Ch}_{#1} (#2 #3)}}
\newcommand{\Ch}[2][\glie,]{\Chgeneric{}{#1}{#2}}
\newcommand{\Chc}[1]{\Chgeneric{\: \! \KK}{#1}{}}

\newcommand{\Ointgeneric}[2]{{\mathcal O^{\! \: \mathrm{int}}_{#1}#2}}
\newcommand{\Ointc}[1][(\glie)]{\Ointgeneric{\KK}{#1}}
\newcommand{\Ointh}[1][(\glie,\psi)]{\Ointgeneric{\! \! \: h}{#1}}

\newcommand{\Lgeneric}[2]{{L_{#2}(#1)}}
\newcommand{\Lc}[1]{\Lgeneric{#1}{\KK}}
\newcommand{\Lh}[1]{\Lgeneric{#1}{h}}

\newcommand{\Cinth}[1]{{\mathcal C^{\mathrm{int}}_{\! \! \; h} (#1)}}

\newcommand{\cch}{[| h |]}
\newcommand{\Kh}{\KK \cch}

\newcommand{\Endgeneric}[1]{{\mathrm{End}_{#1}}}
\newcommand{\Endc}{\Endgeneric{\: \! \KK}}
\newcommand{\Endh}{\Endgeneric{\: \! \Kh}}
\newcommand{\EndR}{\Endgeneric{\! \! \; R}}

\newcommand{\hzero}[1]{#1_{\vert h=0}}

\newcommand{\Cat}{\! \mathrm{\it{Cat}}}
\newcommand{\Fun}{\mathrm{\it{Fun}}}
\newcommand{\FF}{\mathbb F}
\newcommand{\Shape}{\mathscr S}

\newcommand{\Alggeneric}[1]{{\mathscr A_{#1}}}
\newcommand{\Algc}{\Alggeneric{\KK}}

\newcommand{\AlgR}{\Alggeneric{\! \! \: R}}

\newcommand{\XX}{\mathbb D}
\renewcommand{\AA}{\mathbb A}

\newcommand{\pAlggeneric}[2]{{\dot{\mathscr A}_{#1}(#2)}}

\newcommand{\pAlgh}[1][\Xdat,\psi]{\pAlggeneric{h}{#1}}
\newcommand{\pAlgR}[1][\Xdat,\psi]{\pAlggeneric{\! \! \: R}{#1}}

\newcommand{\sltF}{\mathfrak{sl}_{2,F}}
\newcommand{\FUgeneric}[2]{{U_{#1}(#2)}}
\newcommand{\FUc}[1][{\glie_{\! \! \; F}}]{\FUgeneric{\! \! \; \KK}{#1}}
\newcommand{\FUh}[1][{\glie_{\! \! \; F}}]{\FUgeneric{\! \! \: h}{#1}}

\newcommand{\BB}{\mathbb B}
\newcommand{\BBv}{B}
\newcommand{\BBe}[1][]{E^{#1}}

\newcommand{\pigeneric}[2]{{\pi_{#2}(#1)}}
\newcommand{\pic}[1][\glie]{\pigeneric{#1}{\KK}}
\newcommand{\pih}[1][\glie,\psi]{\pigeneric{#1}{\! \! \; h}}

\newcommand{\Ikergeneric}[2]{I_{#2}(#1)}

\newcommand{\Ikerh}[1][\glie,\psi]{\Ikergeneric{#1}{h}}

\newcommand{\psicl}{\psi_{\mathrm{cl}}}
\newcommand{\psiclI}{{\psi^I_{\! \! \: \mathrm{cl}}}}
\newcommand{\psiqI}{{\psi^I_{\! \! \; q}}}

\newcommand{\glieF}{{\glie_{\! \! \; F}}}
\newcommand{\FDeltageneric}[2]{{\Delta^i_{#2}(#1)}}
\newcommand{\FDeltac}[1][\glieF]{\FDeltageneric{#1}{\! \: \KK}}
\newcommand{\FDeltah}[1][\glieF]{\FDeltageneric{#1}{h}}

\newcommand{\II}{\mathbf I}
\newcommand{\FUUgeneric}[2]{{\mathbb U_{#1}(#2)}}
\newcommand{\FUUc}[1][\glieF]{\FUUgeneric{\! \! \; \KK}{#1}}

\newcommand{\UUgeneric}[2]{{\mathbb U_{#1}(#2)}}
\newcommand{\UUc}[1][\glie]{\UUgeneric{\! \! \; \KK}{#1}}
\newcommand{\UUh}[1][\glie,\psi]{\UUgeneric{\! \! \: h}{#1}}

\newcommand{\pitgeneric}[2]{\tilde \pi_{#2}(#1)}

\newcommand{\pith}[1][\glie,\psi]{\pitgeneric{#1}{h}}

\newcommand{\Pigeneric}[2]{{\Pi_{#2}(#1)}}
\newcommand{\Pic}[1][\glie]{\Pigeneric{#1}{\KK}}
\newcommand{\Pih}[1][\glie,\psi]{\Pigeneric{#1}{\! \! \; h}}

\newcommand{\Lph}[1]{{L'_{\! \! \; h}(#1)}}
\newcommand{\Uph}[1]{{U'_{\! \! \; h}(#1)}}

\newcommand{\psiq}{{\psi_{\! \! \; q}}}

\newcommand{\Tinf}[1][\Kh]{\Theta(#1)}

\newcommand{\Sinf}[1][\Kh]{\mathscr E(#1)}
\newcommand{\Sinfh}{\mathscr E_h}

\newcommand{\GQEeqgeneric}[3]{{\mathscr E^{[#2]}_{#3}(#1)}}

\newcommand{\GQEeqh}[2][\psi_1,\psi_2]{\GQEeqgeneric{#1}{#2}{h}}

\newcommand{\Coloh}[1][]{{\mathrm{Col} ^{#1}_h \! \! \; (\BB)}}
\newcommand{\Colo}[2][]{{\mathrm{Col} ^{#1}_{\! \! \: #2} \! \! \; (\BB)}}

\newcommand{\vsp}{\vspace*{0.1cm}}

\newcommand{\dnh}{\psi}

\newcommand{\CF}[1][\! \! \; R]{{\mathcal C_{\! \! \: R}(\glieF)}}
\newcommand{\Cint}[2][\! R]{{\mathcal C^{\mathrm{int}}_{#1}(#2)}}
\newcommand{\Cinthat}[2][\! R]{{\widehat{\mathcal C}^{\mathrm{int}}_{#1}(#2)}}

\newcommand{\Uhat}[2][\! R]{{\widehat U_{#1}(#2)}}

\newcommand{\UF}[1][\! R]{{U_{#1}(\glieF)}}
\newcommand{\UFslt}[1][\! R]{{U_{#1}(\sltF)}}
\newcommand{\DeltaF}[1][\! \! \; R]{\FDeltageneric{\glieF}{#1}}
\newcommand{\pihat}[2][\! \! \: R]{{\widehat \pi_{#1}(#2)}}

\newcommand{\VR}{{V_{\! \! \: R}}}

\newcommand{\Vh}{{V_{\! \! \; h}}}
\newcommand{\Vhp}{{V'_{\! \! \: h}}}

\newcommand{\Deltahat}[1][\! \! \; R]{{\widehat \Delta^i_{#1}(\glie,\psi)}}
\newcommand{\UR}[2][\! R]{{U_{#1} (#2)}}

\newcommand{\Oint}[2][\! \! \! \; R]{{\mathcal O^{\! \: \mathrm{int}}_{#1}(#2)}}
\newcommand{\AR}{{A_{\! \! \; R}}}

\newcommand{\xiV}{{{}^\xi \! \! \; V}}

\newcommand{\xiVR}{{{}^\xi \! \! \; \VR}}

\newcommand{\gliepF}{{\glie'_{\! \! \: F}}}
\newcommand{\LglieF}{{\Lglie_{\! \! \; F}}}
\newcommand{\UFp}[1][\! R]{{U_{#1}(\gliepF)}}
\newcommand{\UFL}[1][\! R]{{U_{#1}(\LglieF)}}
\newcommand{\xipsi}{{{}^\xi \! \! \; \psi_i(d)}}
\newcommand{\DeltaFp}[1][\! \! \; R]{\FDeltageneric{\gliepF}{#1}}
\newcommand{\Vq}{{V_{\! \! \; q}}}
\newcommand{\Vqt}{{V_{\! \! \; q,t}}}

\newcommand{\Vqpm}{{V_{\! \! \: q^{\pm 1}}}}
\newcommand{\LVqpm}{{{}^L \! \! \; V_{\! \! \; q^{\pm 1}}}}

\newcommand{\Ointq}[1][\glie]{{\mathcal O^{\mathrm{int}}_{\! q}(#1)}}
\newcommand{\LC}{{{}^L \! \! \; C}}
\newcommand{\LXdat}{{{}^L \! \! \; \Xdat}}
\newcommand{\LX}{{{}^L \! X}}

\newcommand{\Lchi}{{{}^L \! \! \; \chi}}
\newcommand{\Lalpha}{{{}^L \! \! \: \alpha_i}}
\newcommand{\Lcalpha}{{{}^L \! \! \: \check \alpha_i}}
\newcommand{\Llambda}{{{}^L \! \lambda}}

\newcommand{\Lq}{{L_{q}}}

\newcommand{\omegaF}[1][\! \! \; R]{{\omega_{#1}(\glieF)}}
\newcommand{\omegahat}[1][\! \! \; R]{{\widehat \omega_{#1}(\glie,\psi)}}

\newcommand{\eqhat}[3][\! \! \: R]{{\widehat{\mathscr E}^{\! \; [#3]}_{#1}(#2)}}

\newcommand{\Lrep}[2][\! \! \: R]{{L_{#1}(#2)}}
\newcommand{\Uck}[2][\! R]{{\widehat U_{#1}(#2)}}
\newcommand{\UUck}[2][\! R]{{\widehat{\mathbb U}_{#1}(#2)}}

\newcommand{\UB}[2][\! R]{{U_{#1}(#2)}}

\newcommand{\FUU}[2][\! R]{{\mathbb U_{#1}(#2)}}

\newcommand{\CC}{\mathbb C}

\newcommand{\Aq}{\mathcal{A}}

\newcommand{\Urh}[1]{U_{#1}(\mathfrak{sl}_2)}
\newcommand{\Uhhp}{{U_{\! \! \: h,h'}(\mathfrak{sl}_2,g)}}
\newcommand{\Ughp}{{U_{\! gh'}(\slt)}}

\newcommand{\cmod}{\mathcal{C}(\mathfrak{sl}_2)}
\newcommand{\cmodhh}{\mathcal{C}^{\: \! h,h'}(\mathfrak{sl}_2,g)}
\newcommand{\cmodh}{\mathcal{C}^{\: \! h}(\mathfrak{sl}_2)}

\newcommand{\grot}{\text{Rep} (\mathfrak{sl}_2)}
\newcommand{\groth}{\text{Rep}^{h} (\mathfrak{sl}_2)}

\newcommand{\grothh}{\text{Rep}^{h,h'} (\mathfrak{sl}_2,g)}

\newcommand{\xxn}[4]{\left( Q_{#4} \, T^{\: \! {\{ H_{#1}^{#2} #3 \}}_{\! \! \: Q_{#4}}} \right)^{H_{#1}^{#2} #3} - \ \left( Q_{#4} \, T^{\: \! {\{ H_{#1}^{#2} #3 \}}_{\! \! \: Q_{#4}}} \right)^{-H_{#1}^{#2}}}
\newcommand{\xxd}[4]{Q_{#4} \, T^{\: \! {\{ H_{#1}^{#2} #3 \}}_{\! \! \: Q_{#4}}} \ - \ Q_{#4}^{-1} \, T^{\: \! - {\{ H_{#1}^{#2} #3 \}}_{\! \! \: Q_{#4}}}}
\newcommand{\xxf}[4]{\frac{\xxn{#1}{#2}{#3}{#4}}{\xxd{#1}{#2}{#3}{#4}}}

\newcommand{\xxp}[2]{\prod_{e = 1, -1} \xxf{e}{#1}{#2}{}}

\title{Algèbres de Kac-Moody colorées, algèbres enveloppantes quantiques généralisées et interpolation de Langlands}
\author{Alexandre Bouayad}

\begin{document}
\parindent = 0pt

\thispagestyle{empty}
\newgeometry{left=2cm,right=2cm}

\begin{center}
  {\Large \bf \textsc{Université Paris VII - Denis Diderot}} \\
  \vspace*{0.5cm}

  {\Large \bf \'Ecole doctorale Paris Centre} \\

  \vspace*{\fill}

  {\Huge \textsc{Thèse de doctorat}} \\ [3ex]
  {\LARGE {Discipline : Mathématiques}} \\
  \vspace*{1.5cm}

  {\large présentée par} \\
  \vspace*{0.5cm}
  {\LARGE {\bf Alexandre \textsc{Bouayad}}} \\
  \vspace*{1.5cm}

  \hrule
  \vspace*{0.3cm}
  {\LARGE {\bf Algèbres enveloppantes quantiques généralisées,}} \\
  \vspace*{0.25cm}
  {\LARGE {\bf Algèbres de Kac-Moody colorées et}} \\
  \vspace*{0.25cm}
  {\LARGE {\bf Interpolation de Langlands}} \\
  \vspace*{0.3cm}
  \hrule
  \vspace*{1cm}

  {\large \bf Generalised Quantum Enveloping Algebras, Coloured Kac-Moody Algebras and Langlands Interpolation \\ }
  \vspace*{1cm}

  {\large Dirigée par David \textsc HERNANDEZ}

  \vspace*{2cm}

  {\large Soutenue le 8 juillet 2013 devant le jury composé de :}

  \vspace*{0.5cm}

  {\Large
    \begin{tabular}{lll}
      M. David \textsc{Hernandez}     & Université Paris Denis Diderot & Directeur   \\
      M. Kevin \textsc{McGerty}       & University of Oxford           & Rapporteur  \\
      M. Marc \textsc{Rosso}          & Université Paris Denis Diderot & Examinateur \\
      M. Olivier \textsc{Schiffmann}  & Université Paris-Sud 11        & Rapporteur  \\
      Mme. Michela \textsc{Varagnolo} & Université de Cergy Pontoise   & Examinateur
    \end{tabular}
  }

\end{center}
\vspace*{\fill}

\clearpage
\restoregeometry
\pagenumbering{roman} \setcounter{page}{1}
\thispagestyle{empty}
\vspace*{\fill}

\begin{tabular}{ccl}
  \includegraphics[scale=0.45]{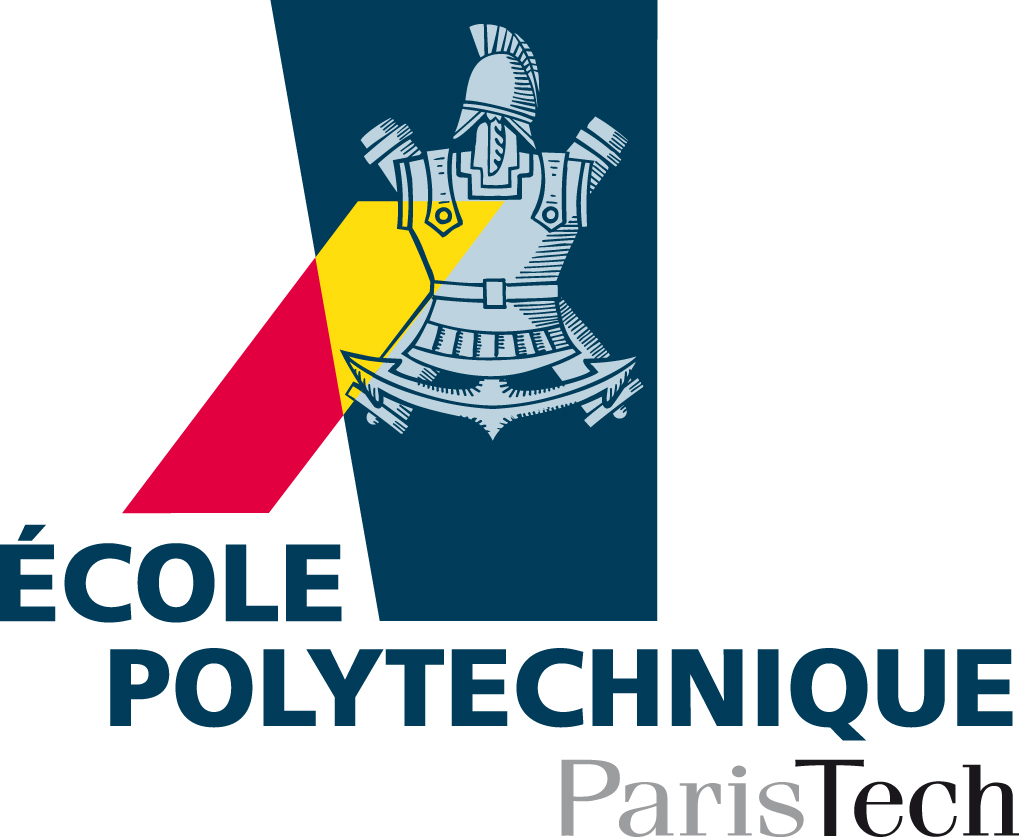}       & \hspace*{0.15cm} &
  \begin{minipage}[t]{(\textwidth)/2-0.15cm}
    Centre de Mathématiques \\
    Laurent Schwartz \\ \\
    Ecole Polytechnique \\
    91 128 Palaiseau cedex
  \end{minipage}
  \\ \\ \\ \\ \\ \\ \\

  \includegraphics{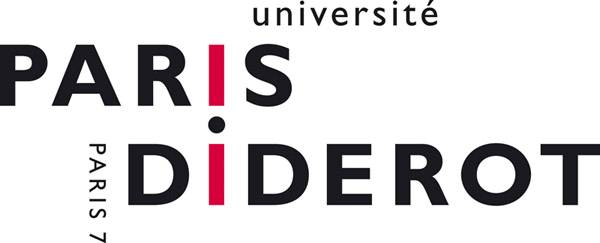}                  &                  &
  \begin{minipage}[t]{(\textwidth)/2-0.15cm}
    Institut de Mathématiques \\
    de Jussieu - Paris Rive Gauche \\ \\
    5 rue Thomas Mann \\
    75 205 Paris cedex 13
  \end{minipage}
  \\ \\ \\ \\ \\ \\ \\

  \includegraphics[scale=0.6]{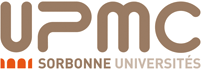} &                  &
  \begin{minipage}[t]{(\textwidth)/2-0.15cm}
    \'Ecole doctorale Paris Centre \\ \\
    4 place Jussieu \\
    75 252 Paris cedex 05
  \end{minipage}
\end{tabular}

\vspace*{\fill}

\clearpage
\thispagestyle{empty}
\begin{dedicace}
  À mes parents.
\end{dedicace}

\newpage
\thispagestyle{empty}
\mbox{}
\newpage
\clearpage
\pagenumbering{roman} \setcounter{page}{1}
\part*{Remerciements}
\righthyphenmin=62
\lefthyphenmin=62

Mes premiers remerciements sont naturellement destinés à mon directeur de thèse David Hernandez. Je lui suis très reconnaissant de m'avoir proposé un sujet d'étude dont les motivations sont aussi variées que profondes. Durant la difficile et parfois sombre traversée, sur laquelle ce manuscrit rescapé tentera d'apporter quelques lumières, travailler sous la direction de David Hernandez se révéla être une excellente formation à la recherche. Son expérience, ses conseils, sa disponibilité et son attention ont été une aide précieuse. \\

Kevin McGerty et Olivier Schiffmann ont accepté d'être les rapporteurs de cette thèse. Je souhaite les remercier vivement pour le temps qu'ils ont généreusement consacré à cette tâche. Je les remercie également pour leur enthousiasme et je suis heureux qu'ils aient tous deux accepté de se rendre à Paris à l'occasion de la soutenance. J'aimerais de nouveau remercier ici Kevin McGerty pour son invitation à l'université d'Oxford, ainsi que pour les nombreuses et passionnantes discussions que nous avons partagées alors. \\

Je suis très honoré que Marc Rosso et Michela Varagnolo fassent partie de mon jury de thèse. Je les remercie d'avoir accepté si volontiers l'invitation. Je remercie par ailleurs Marc Rosso pour ses encouragements toujours bienveillants, mais aussi pour son aide si déterminante dont j'ai eu la chance de bénéficier à plusieurs moments importants de ma thèse. Je remercie Michela Varagnolo pour son invitation à venir présenter mes recherches dans le GDR ``Théorie de Lie Algébrique et Géométrique''. \\

Je suis très reconnaissant à Dennis Gaitsgory, qui malgré une activité déjà débordante, trouva le temps de patiemment m'écouter. Je le remercie tout autant de m'avoir fait découvrir avec un enthousiasme si communicatif certains des sujets qui le passionnent. Je le remercie enfin pour son accueil lors de mon séjour à l'université Harvard, et pour son soutien qu'il m'apporta à de si nombreuses reprises. \\

Je remercie Edward Frenkel pour les discussions passionnantes que j'ai eues l'immense chance de partager avec lui. Je le remercie également pour son soutien qui m'a été précieux. Je remercie Vyjayanthi Chari pour son invitation en Inde et son accueil chaleureux en Californie. Je remercie Pavel Etingof, Kobi Kremnitzer, Peter Littelmann, Hiraku Nakajima et Raphaël Rouquier pour leur générosité, le soutien et l'attention qu'ils m'ont accordés. J'ai par ailleurs une pensée émue pour Jean-Louis Loday dont je fis la connaissance lors d'une école d'été à Pékin. Je débutais alors ma thèse, ses encouragements et sa joie des mathématiques ont eu sur moi une influence certaine. \\

Des spécialistes avec qui j'ai eu la chance de discuter pendant la préparation de cette thèse m'ont aussi beaucoup apporté, je pense notamment à Benjamin Enriquez, Jacob Greenstein, Bernhard Keller, Bruno Valette et Éric Vasserot. À cet égard, je tiens à remercier tout particulièrement Olivier Mathieu. \\

Mes remerciements vont au Centre de Mathématiques de Laurent Schwartz à l'École Polytechnique, ainsi qu'à l'Institut de Mathématiques de Jussieu, qui, durant la préparation de cette thèse, me proposèrent un cadre de travail idéal. À ce propos, je remercie tout autant le département de mathématiques de l'université Harvard, ainsi que la Fondation Sciences Mathématiques de Paris. \\

Je veux avoir ici une pensée toute particulière pour nombre de (post)doctorants que j'ai eu la chance de rencontrer durant ces dernières années. Ils ont ensemble fourni la preuve constante d'un fait qui m'apporta tant de réconfort : il est vraisemblablement possible d'être autant sympathique que talentueux. Parmi tous ces héros que j'ai eu l'immense privilège de côtoyer, certains ont accepté de partager avec moi davantage de leur temps : le petit briquet dont le calme olympien en désarma (énerva) plus d'un, Loulou qui fit de son anticonformisme une raison de vivre (ou de rire), Drago\c{s} dont la soif de logique se parait des couleurs sanguines d'illustres ancêtres, Paloma qui fit de l'indécision un art, Elena qui remarqua très tôt mes aptitudes en anglais, Jacquie Chan dont la gloire n'est plus à présenter, Daniel qui reçut à maintes reprises le prix du mathématicien le plus cool, Lukas qui ne me haït point, Johan dont le rire était aussi fréquent que les fautes d'orthographe, Yohan le séducteur séduit, Claire dont le secret est encore à découvrir, Sarah qui court si vite. \\

Je remercie de tout mon coeur Marie et Baihui, dont la douceur si généreuse fit sur moi des miracles. \\

Je remercie tous mes vieux amis qui ont ces dernières années, de nouveau, observer inlassablement le mouvement tumultueux de mes humeurs, réjouis par mon optimisme conquérant et concernés par mes doutes abyssaux. \\

J'aimerais ici remercier mes parents. C'est sans nul doute à eux que je dois le plus. Plutôt que d'énoncer de nouveaux remerciements, je préfère maintenant brandir cette vérité que seul les initiés sont capables d'étayer : \emph{il n'existe pas parents plus grandioses que maman et papa Bouayad}. Enfin, je remercie mon frère Aurélien, pour qui j'ai une immense admiration. Je n'oublie bien sûr pas Maurice.

\clearpage
\vspace*{\fill}
\part*{Résumé}
\righthyphenmin=62
\lefthyphenmin=62
Nous proposons dans cette thèse un nouveau processus de déformation des algèbres de Kac-Moody (K-M) et de leurs représentations. La direction de déformation est indiquée par une collection de nombres, appelée \emph{coloriage}. Les nombres naturels mènent par exemple aux algèbres classiques, tandis que les nombres quantiques mènent aux algèbres quantiques. \\

Nous établissons dans un premier temps des conditions nécessaires et suffisantes sur les coloriages, de telle sorte que le processus dépend polynômialement d'un paramètre formel et fournit les \emph{algèbres enveloppantes quantiques généralisées (algèbres GQE)}. Nous levons par la suite les restrictions et montrons que le processus existe toujours via les \emph{algèbres de Kac-Moody colorées}. \\

Nous formulons la \emph{conjecture GQE}, qui prévoit que toute représentation dans la catégorie $\mathcal O^{\mathrm{int}}$ d'une algèbre de K-M peut être déformée en une représentation d'une algèbre GQE associée. Nous donnons divers exemples pour lesquels la conjecture est vérifiée et réalisons une première étape vers sa résolution en prouvant que les algèbres de K-M sans relations de Serre peuvent être déformées en des algèbres GQE sans relations de Serre. \\

Admettant la conjecture GQE, nous établissons un résultat analogue dans le cas des algèbres de K-M colorées, nous prouvons que les théories des représentations déformées sont parallèles à la théorie classique, nous explicitons une présentation de Serre déformée pour les algèbres GQE, nous prouvons que ces dernières sont les représentants d'une classe naturelle de déformations formelles des algèbres de K-M et sont $\hlie$-triviales en type fini. En guise d'application, nous expliquons en termes d'interpolation les dualités de Langlands classique et quantique entre représentations d'algèbres de Lie et nous proposons une nouvelle approche afin de résoudre une conjecture de Frenkel-Hernandez. \\

En général, nous prouvons qu'il est possible d'interpoler les représentations de deux algèbres \hbox{de K-M} colorées isogéniques par les représentations d'une troisième. En observant que la conjecture GQE est vérifiée dans le cas des algèbres quantiques standards, nous donnons une nouvelle preuve de la dualité de Langlands classique mentionnée précédemment (les premières preuves sont dues à Littelmann et McGerty).



\vspace*{\fill}

\clearpage
\vspace*{\fill}
\part*{Abstract}
\begin{otherlanguage}{english}
  \righthyphenmin=62
\lefthyphenmin=62
We propose in this thesis a new deformation process of Kac-Moody (K-M) algebras and their representations. The direction of deformation is given by a collection of numbers, called a \emph{colouring}. The natural numbers lead for example to the classical algebras, while the quantum numbers lead to the associated quantum algebras. \\

We first establish sufficient and necessary conditions on colourings to allow the process depend polynomially on a formal parameter and to provide the \emph{generalised quantum enveloping (GQE) algebras}. We then lift the restrictions and show that the process still exists via the \emph{coloured Kac-Moody algebras}. \\

We formulate the \emph{GQE conjecture} which predicts that every representation in the category $\mathcal O^{\mathrm{int}}$ of a K-M algebra can be deformed into a representation of an associated GQE algebra. We give various evidences for this conjecture and make a first step towards its resolution by proving that Kac-Moody algebras without Serre relations can be deformed into GQE algebras without Serre relations. \\

In case the conjecture holds, we establish an analog result for coloured K-M algebras, we prove that the deformed representation theories are parallel to the classical one, we explicit a deformed Serre presentation for GQE algebras, we prove that the latter are the representatives of a natural class of formal deformations of K-M algebras and are $\hlie$-trivial in finite type. As an application, we explain in terms of interpolation both classical and quantum Langlands dualities between representations of Lie algebras, and we propose a new approach which aims at proving a conjecture of Frenkel-Hernandez. \\

In general, we prove that representations of two isogenic coloured K-M algebras can be interpolated by representations of a third one. Observing that standard quantum algebras satisfy the GQE conjecture, we give a new proof of the previously mentioned classical Langlands duality (the first proofs are due to Littelmann and McGerty).

\end{otherlanguage}
\vspace*{\fill}


\clearpage
\renewcommand{\contentsname}{Table des matières -- Contents}
\tableofcontents

\newpage
\thispagestyle{empty}
\mbox{}
\newpage
\clearpage
\pagenumbering{arabic} \setcounter{page}{1}
\addcontentsline{toc}{part}{Introduction}
\part*{Introduction}
\righthyphenmin=62
\lefthyphenmin=62

Cette thèse est divisée en trois grande partie. La première partie traite des \emph{algèbres enveloppantes quantiques généralisées}. La deuxième partie est consacrée aux \emph{algèbres de Kac-Moody colorées} et à l'\emph{interpolation de Langlands}. Quant à la troisième partie, elle présente une construction explicite d'un cas particulier; elle correspond à un article \cite{Bou1} publié en 2012 dans la revue \emph{International Mathematics Research Notices} et a inspiré quelques-unes des idées à partir desquelles le cas général a pu être développé. \\

Cette introduction donne en premier lieu un rapide rappel du contexte dans lequel s'inscrivent les travaux de cette thèse, on y présente entre autres la \emph{dualité de Langlands entre représentations d'algèbres de Lie}. Le \emph{principe d'interpolation de Langlands} est introduit dans un second temps. On présente par la suite les principales constructions et idées qui composent la théorie des algèbres de Kac-Moody colorées et des algèbres enveloppantes quantiques généralisées. Les énoncés des principaux théorèmes de cette thèse sont regroupés dans une section consacrée à cet usage. On discute enfin des différentes ouvertures que laissent entrevoir les travaux présentés ici. \\

Cette introduction peut aussi être utilisée comme un guide à travers les deux premières parties de cette thèse. Sont en effet expliquées ici les principales idées sur lesquelles s'appuient les constructions qui composent chacune des deux parties.

\section{Contexte}
\subsection{Algèbres de Lie}
Les \emph{algèbres de Lie} sont l'approximation linéaire des groupes de Lie, objets de nature à la fois algébrique et géométrique. Cette simplification fournit par suite la possibilité de définir un remplacement des groupes de Lie en des termes purement algébriques. Il est dans le même temps remarquable que les algèbres de Lie retiennent beaucoup des informations des objets a priori plus riches dont elles sont issues. \\

Parmi les algèbres de Lie, on peut distinguer la classe des algèbres de Lie \emph{simples} (et \emph{semi-simples}). Les algèbres de Lie (complexes) semi-simples de dimension finie sont classifiées par certaines matrices définies positives, appelées les \emph{matrices de Cartan}. La \emph{présentation de Serre} indique de quelle manière les matrices de Cartan encodent ces algèbres de Lie, en distinguant à l'intérieur de celles-ci générateurs et relations (voir \cite{Ser}).

\subsection{Algèbres de Kac-Moody}
La présentation de Serre des algèbres de Lie semi-simples de dimension finie admet des généralisations naturelles. Les \emph{algèbres de Kac-Moody} (voir \cite{Kac, Moo}) sont par exemple définies à partir d'une généralisation des matrices de Cartan, pour lesquelles la condition de positivité est levée. Les algèbres de Kac-Moody apparaissent alors comme un analogue de dimension infinie des algèbres de Lie semi-simples de dimension finie. \\

Soit $\glie$ une algèbre de Kac-Moody (complexe), on note $C= {(C_{ij})}_{i,j \in I}$ sa matrice de Cartan généralisée. On supposera toujours que $\glie$ est \emph{symétrisable}, i.e. qu'il existe une famille ${(d_i)}_{i \in I}$ d'entiers strictement positifs tels que ${d_i \: \! C_{ij}} = {d_j \: \! C_{ji}}$ pour tout $i,j \in I$. On note $d$ le ppcm des entiers $d_i$ ($i \in I$). \\

La présentation de Serre distingue à l'intérieur de l'algèbre de Kac-Moody $\glie$ une sous-algèbre commutative maximale $\hlie$, appelée la \emph{sous-algèbre de Cartan} \hbox{de $\glie$}, ainsi que certains éléments $X_i^\pm$ ($i \in I$), appelés les \emph{générateurs de Chevalley} \hbox{de $\glie$}. Sont de surcroît distingués des éléments $\alpha_i \in \hlie$ \hbox{et $\check \alpha_i \in \hlied$ ($i \in I$)}, appelés les \emph{racines simples} et les \emph{coracines simples} de $\glie$, respectivement. \\

L'algèbre $\glie$ est engendrée par l'espace vectoriel $\hlie$ et par les générateurs de Chevalley. On donne ci-dessous les relations prescrites par la présentation de Serre.

\begin{equation} \label{eq_KM_nondeformableintro} \left\{ {\renewcommand{\arraystretch}{1.3} \begin{array}{lr}
\big[ \check \mu, \check \mu' \big] = 0 & (\check \mu, \check \mu' \in \hlie) \, , \\
\big[ {\check \mu}, X^{\pm}_i \big] = \pm \alpha_i(\check \mu) \, X_i^{\pm} & (i \in I, \ \check \mu \in \hlie) \, , \\
\big[ X^+_i, X^-_j \big] = 0 & \quad (i,j \in I, \ i \neq j) \, ,
\end{array}} \right. \end{equation}
\begin{equation} \label{eq_KM1_intro}
\big[ X^+_i, X^-_i \big] = \check \alpha_i \quad \quad \quad (i \in I) \, ,
\end{equation}
\begin{equation} \label{eq_KM2_intro}
\sum_{k+k' = 1 - C_{ij}} {(-1)}^k \, {1 - C_{ij} \choose k} \, {(X_i^{\pm})}^k \; \! X^{\pm}_j \; \!  {(X_i^{\pm})}^{k'} = 0 \quad \quad \quad (i,j \in I, \ i \neq j) \, .
\end{equation}

Les relations \eqref{eq_KM2_intro} sont appelées les \emph{relations de Serre}.

\subsection{Algèbres quantiques}
La forme concise de la présentation de Serre autorise d'autres généralisations. Les \emph{groupes quantiques} sont par exemple définies en déformant les relations de la présentation de Serre. Ces déformations dépendent d'un paramètre, désigné habituellement par $q$ ou par $h$ (ces deux choix vérifient la relation $q = \exp (h)$). Le paramètre $h$ est directement emprunté à la physique quantique et fait référence à la constante de Planck. \\

Les groupes quantiques ont été introduits par Drinfeld \cite{Dri,DrP} et \hbox{Jimbo \cite{Jim}}, ils sont considérés pour différentes raisons comme des analogues quantiques des \emph{algèbres universelles enveloppantes} des algèbres de Kac-Moody. On note $\Uq(\glie)$ l'algèbre quantique associée à l'algèbre de Kac-Moody $\glie$, quand $U(\glie)$ désigne l'algèbre enveloppante universelle de $\glie$. \\

Les générateurs de Chevalley de $\glie$ et la sous-algèbre de Cartan $\hlie$, dont la structure n'est par ailleurs pas déformée dans l'algèbre quantique, constituent comme pour $\glie$ des éléments distinctifs de l'algèbre $\Uq(\glie)$; ils sont à nouveau désignés par $X_i^\pm$ ($i \in I$) et $\hlie$, respectivement.

\subsection{Dualité de Langlands entre sous-algèbres de Cartan}
L'\emph{algèbre de Kac-Moody Langlands duale} de $\glie$, que l'on note $\Lglie$, est l'algèbre de Kac-Moody (symétrisable) dont la matrice de Cartan est la transposée de la matrice de Cartan de $\glie$. On note $\Lhlie$ la sous-algèbre de Cartan de $\Lglie$. Les racines et coracines de $\Lglie$ sont quant à elle désignées par $\Lalpha$ et $\Lcalpha$ ($i \in I$), respectivement. \\

Il existe pour les sous-algèbres de Cartan $\hlie$ et $\Lhlie$ une correspondance naturelle. Les deux algèbres de Lie (abéliennes) $\hlie$ \hbox{et $\Lhlie$} ont en effet la même dimension et il est possible de construire entre les deux un isomorphisme qui induisent les correspondances suivantes entre racines et coracines :
\begin{equation} \label{eq_CartanLanglands}
{\renewcommand{\arraystretch}{1.3}
\begin{array}{ccc}
\hlie & \longleftrightarrow & \Lhlie \\
\check \alpha_i & \leftrightarrow & \frac{d}{d_i} \; \! \Lcalpha \, ,
\end{array}
\quad \quad \quad \quad \quad
\begin{array}{ccc}
\hlied & \longleftrightarrow & \Lhlied \\
\frac{d}{d_i} \; \! \alpha_i & \leftrightarrow & \Lalpha \, .
\end{array}
}
\end{equation}

\subsection{Dualité de Langlands entre représentations d'algèbres de Lie}
La correspondance \eqref{eq_CartanLanglands} entre les deux sous-algèbres de Cartan Langlands duales permet d'établir une correspondance entre les représentations linéaires de la première et les représentations linéaires de la seconde. On peut alors se poser la question suivante : est-il possible d'étendre cette correspondance entre les représentations de $\glie$ et les représentations de $\Lglie$ ? \\

On se restreint ici aux \emph{représentations intégrables} de $\glie$ et $\Lglie$. À cela, trois raisons a priori peuvent être données. Premièrement, dans le cas où $\glie$ est de dimension finie, ces représentations peuvent être, comme leur nom l'indique, intégrées en des représentations des groupes de Lie $G$ et ${{}^L \! \; \! G}$, dont $\glie$ et $\Lglie$ sont les approximations linéaires respectives. Deuxièmement, les actions des sous-algèbres de Cartan sont faciles à décrire : elles sont diagonalisables et les valeurs propres associées aux actions des coracines simples sont \hbox{dans $\ZZ$}. Enfin, il existe suffisamment de représentations intégrables pour obtenir par reconstruction Tannakienne toute l'algèbre enveloppante universelle $U(\glie)$ de $\glie$. \\

Une représentation intégrable $V$ admet donc la décomposition suivante, où l'action de $\hlie$ sur un espace $V_{\lambda}$ est scalaire et donnée par la forme \hbox{linéaire $\lambda$}:
\begin{equation} \label{eq_decompV}
V \ = \ \bigoplus_{\lambda \in \hliedZ} V_\lambda \, , \quad \quad \text{avec} \quad \hliedZ \ := \ \Big \{ \lambda \in \hlied \, ; \ \lambda(\check \alpha_i) \in \ZZ \, , \ \forall \, i \in I  \Big\} \, .
\end{equation}
Une forme linéaire $\lambda \in \hlied$ telle que $V_\lambda$ est non nul est appelée un \hbox{\emph{poids} de $V$}. Le \emph{caractère} de la représentation $V$ résume l'action de $\hlie$ sur $V$, en retenant pour chaque $\lambda \in \hliedZ$ la dimension du sous-espace $V_\lambda$. \\

La correspondance de Langlands \eqref{eq_CartanLanglands} pour les sous-algèbres de Cartan fournit une action de $\Lhlie$ sur $V$. Toutefois, on remarque que les valeurs propres associées aux actions des coracines ${}^L \! \check \alpha_i$ ($i \in I$) ne sont pas toutes dans $\ZZ$. S'il l'on espère étendre la correspondance entre les représentations de $\hlie$ et $\Lhlie$ à une correspondance entre les représentations intégrables de $\glie$ et $\Lglie$, il apparaît donc nécessaire de considérer la sous-représentation $\LV := \bigoplus_{\lambda \in \LhliedZ} V_\lambda$ de $\hlie$. \\

On peut à présent apporter une réponse au problème précédemment posé. On restreint pour cela une nouvelle fois les catégories de représentations de $\glie$ \hbox{et $\Lglie$} mises en jeu, en imposant quelques restrictions supplémentaires sur l'action des sous-algèbres de Cartan. Les nouvelles catégories considérées sont \hbox{notées $\Oint[]{\glie}$} et $\Oint[]{\Lglie}$, respectivement (dans le cas où $\glie$ est de dimension finie, $\Oint[]{\glie}$ est la catégorie des représentations de dimension finie de $\glie$). Notons que ces catégories ne présentent pas une réduction trop simpliste du problème: elles contiennent par exemple encore suffisamment de représentations pour obtenir par reconstruction Tannakienne les algèbres enveloppantes universelles de $\glie$ \hbox{et $\Lglie$} (voir à ce propos la proposition I.\ref{prop_UBperfect}).

\begin{introthm}[Dualité de Langlands entre les catégories ${\Oint[]{\glie}}$ et ${\Oint[]{\Lglie}}$] \label{introthm_dualite}
Pour toute représentation $V$ de $\glie$ dans $\Oint[]{\glie}$, il existe une représentation de $\Lglie$ dans $\Oint[]{\Lglie}$, dont la restriction à la sous-algèbre de Cartan $\Lhlie$ est la représentation $\LV$.
\end{introthm}

Toute représentation dans $\Oint[]{\Lglie}$ étant déterminée par sa restriction à $\Lhlie$, notons qu'il existe une unique représentation de $\Lglie$ qui vérifie les conditions du précédent théorème. \\

Dans le cas où $\glie$ est de dimension finie, Frenkel-Hernandez \cite{FH1} ont prouvé qu'il existe une action virtuelle de $\Lglie$ sur $\LV$ qui étend l'action de $\Lhlie$ (i.e. il existe un élément dans le groupe de Grothendieck de $\Oint[]{\Lglie}$ dont la restriction à $\Lhlie$ est la classe de $\LV$ dans le groupe de Grothendieck des représentations de $\Lhlie$). \\

Le théorème \ref{introthm_dualite} a été établi par Littelmann \cite{Lit1,Lit2} en toute généralité, ainsi que par McGerty \cite{McG} dans le cas où le diagramme de Coxeter associé à $\glie$ ne contient aucun cycle de longueur impaire.

\subsection{Dualité de Langlands entre représentations quantiques}
La sous-algèbre de Cartan $\hlie$ n'est pas déformée dans l'algèbre quantique associé à $\glie$. On peut alors définir de la même manière que dans le cas classique une notion de représentation intégrable de $\Uq(\glie)$, ainsi qu'une catégorie $\Oint[\! q]{\glie}$ de représentations de $\Uq(\glie)$. Dans le cas où $q$ n'est pas une racine de l'unité (on fera toujours cette hypothèse), la théorie des représentations de $\Uq(\glie)$ dans la catégorie $\Oint[\! q]{\glie}$ s'avère être parallèle à la théorie des représentations de $\glie$ dans la catégorie $\Oint[]{\glie}$ (voir \cite{LuR, Ros} entres autres). Plus précisément, on sait que toute représentation de l'algèbre quantique $\Uq(\glie)$ dans $\Oint[\! q]{\glie}$ se spécialise \hbox{à ${q \! = \! 1}$} en une représentation de $\glie$ dans $\Oint[]{\glie}$ et, réciproquement, que toute représentation \hbox{de $\glie$} dans $\Oint[]{\glie}$ admet une unique déformation dans $\Oint[\! q]{\glie}$. \\

Rappelons que la dualité de Langland \eqref{eq_CartanLanglands} entre les sous-algèbres de Cartan $\hlie$ et $\Lhlie$ induit une dualité de Langlands entre les représentations de ces algèbres. Le problème d'étendre la dualité de Langlands entre les représentations de ces algèbres à une dualité de Langlands entre les représentations intégrables des groupes quantiques $\Uq(\glie)$ et $\Uq(\Lglie)$ se pose à nouveau. Au vu de la grande similitude entre les catégories $\Oint[]{\glie}$ et $\Oint[\! q]{\glie}$, le problème posé dans le cadre des catégories $\Oint[\! q]{\glie}$ et $\Oint[\! q]{\Lglie}$ apparaît équivalent au problème classique. \\

Soit $\Vq$ une représentation intégrable de $\Uq(\glie)$, on définit à partir de la restriction de $\Vq$ à $\hlie$, une sous-représentation $\LVq$ de $\Lhlie$, de la même manière que dans le cas classique. Le théorème \ref{introthm_dualite} implique son analogue quantique.

\begin{introthm} [Dualité de Langlands entre les catégories ${\Oint[\! q]{\glie}}$ et ${\Oint[\! q]{\Lglie}}$] \label{introthm_dualiteq}
Pour toute représentation $\Vq$ de $\Uq(\glie)$ dans $\Oint[\! q]{\glie}$, il existe une représentation de $\Uq(\Lglie)$ dans $\Oint[\! q]{\Lglie}$, dont la restriction à la sous-algèbre de Cartan $\Lhlie$ est la représentation $\LVq$.
\end{introthm}

De la même manière que dans le cas classique, toute représentation dans la catégorie $\Oint[\! q]{\Lglie}$ étant déterminée par sa restriction à $\Lhlie$, notons qu'il existe une unique représentation de $\Uq(\Lglie)$ qui vérifie les conditions du précédent théorème.

\section{Interpolation de Langlands}
\subsection{Principe d'interpolation de Langlands classique}
La dualité de Langlands entre les catégories $\Oint[]{\glie}$ et $\Oint[]{\Lglie}$, établie dans  le théorème \ref{introthm_dualite}, est un résultat d'existence : étant donné une représentation $V$ \hbox{de $\glie$} dans $\Oint[]{\glie}$, il existe \hbox{sur $\LV$} une action de l'algèbre $\Lglie$ qui étend l'action de la sous-algèbre de Cartan $\Lhlie$. On connaît explicitement l'action de $\Lhlie$ sur $\LV$, toutefois le théorème ne fournit aucune description explicite quant à l'action du reste de l'algèbre $\Lglie$ (le théorème étant un résultat de dualité entre $\glie$ et $\Lglie$, nous entendons par description explicite une expression de l'action de $\Lglie$ \hbox{sur $\LV$} obtenue à partir de l'action de $\glie$ \hbox{sur $V$}). \\

On désigne dans la suite par $\LXpm$ ($i \in I$) les générateurs de Chevalley de l'algèbre de Kac-Moody Langlands duale $\Lglie$. Au vu des relations \hbox{définissant $\glie$}, on remarque que le générateur $X_i^\pm$ envoie le sous-espace $V_{\lambda}$ \hbox{de $V$} \hbox{dans $V_{\lambda \pm \alpha_i}$}. La forme de la correspondance \eqref{eq_CartanLanglands} implique alors que l'élément ${(X_i^{\pm})}^{d/d_i}$ envoie le sous-espace $V_{\lambda}$ \hbox{dans $V_{\lambda \pm \Lalpha}$} et, en particulier, que $\LV$ est stable sous l'action de ${(X_i^{\pm})}^{d/d_i}$ (notons que $\LV$ n'est  pas stable sous l'action de l'algèbre $\glie$ toute entière). Il apparaît donc naturel de se demander si l'action de $\LXpm$ \hbox{sur $\LV$} peut être décrite à partir de l'action de ${(X_i^{\pm})}^{d/d_i}$ sur $V$. \\

Répondre par l'affirmative à cette question, et ce de manière générale, est un problème difficile, si ce n'est impossible : les catégories $\Oint[]{\glie}$ et $\Oint[]{\Lglie}$ permettant d'obtenir les algèbres enveloppantes universelles $U(\Lglie$) et $U(\Lglie)$ par reconstruction Tannakienne, il serait en effet peu ou prou équivalent d'exhiber un morphisme d'algèbre injectif \hbox{de $U(\Lglie)$} dans $U(\glie)$ (dont la restriction à $\Lhlie$ serait de surcroît déterminée par la dualité de Langlands \eqref{eq_CartanLanglands} entre $\hlie$ et $\Lhlie$). \\

On propose dans cette thèse de considérer l'énoncé (plus faible) suivant.
\begin{introprinc}[Principe d'interpolation de Langlands classique] \label{introprinc_inter} $\phantom{}$ \\
Pour toute représentation $V$ de $\glie$ dans $\Oint[]{\glie}$, il existe une déformation à un paramètre de l'action \hbox{sur $V$} des générateurs de Chevalley $X_i^\pm$ telle que la spécialisation à une première valeur dudit paramètre restaure l'action initiale des générateurs $X^\pm_i$ sur $V$ et telle que la spécialisation à une seconde valeur permette d'obtenir les actions des générateurs $\LXpm$ sur $\LV$ à partir de l'action déformée des éléments ${(X_i^{\pm})}^{d/d_i}$ sur $V$.
\end{introprinc}

On obtient ainsi un lien entre les actions de $\glie$ sur $V$ et de $\Lglie$ sur $\LV$, non via la description de cette dernière à partir de la première, mais plutôt en les termes d'une interpolation, les deux actions apparaissant comme autant de spécialisations d'une seule et même action, unificatrice. On donne ici à ce principe le nom d'\emph{interpolation de Langlands classique}. Notons qu'il est par ailleurs sous-entendu que l'action de $\hlie$ sur $V$ n'est pas déformée : on rappelle que l'action de la sous-algèbre $\Lhlie$ sur $\LV$ est obtenue à partir de l'action de $\hlie$ sur $V$, via la dualité de Langlands \eqref{eq_CartanLanglands} entre les deux sous-algèbres de Cartan. \\

C'est ce principe d'interpolation dont ont fait implicitement usage Littelmann et McGerty dans leur preuves du théorème \ref{introthm_dualite} : à la représentation de $V$ de $\glie$, ils ont associé sa déformation quantique $\Vq$, représentation du groupe \hbox{quantique $\Uq(\glie)$}, et ont prouvé que la spécialisation à ${q \! = \! \epsilon}$ de $\Vq$ (où $\varepsilon$ désigne une racine primitive de l'unité d'ordre ${2d}$) permet de reconstruire l'action de $\LXpm$ sur $\LV$ à partir de l'action de ${(X_i^{\pm})}^{d/d_i}$ sur $V$, la spécialisation à ${q \! = \! 1}$ permettant quant à elle de retrouver l'action de $X_i^\pm$ sur $V$ à partir de l'action de $X^\pm_i$ sur $\Vq$.

\subsection{Principe d'interpolation de Langlands quantique}
Le principe d'interpolation de Langlands classique est associé à la dualité de Langlands entre les catégories $\Oint[]{\glie}$ et $\Oint[]{\Lglie}$. Qu'en est-il dans le cas quantique ? \\

Inspirés par un projet entrepris par \hbox{Frenkel-Reshetikhin \cite{FR3}}, Frenkel-Hernandez ont introduit, dans le cas où $\glie$ est simple et de dimension finie, une déformation à deux paramètres $\Uqt(\tglie)$ de l'algèbre enveloppante universelle de $\tilde \glie$ (l'algèbre de Lie vérifiant les mêmes relations que $\glie$, hormis les relations de Serre). Ils ont conjecturé que les représentations de $\Uqt(\tglie)$ interpolent entre certaines représentations des groupes quantiques Langlands duaux $\Uq(\glie)$ et $\Ut (\Lglie)$ (on rappelle que $\varepsilon$ désigne une racine primitive de l'unité d'ordre ${2d}$).

\begin{introconj} \emph{\cite[conjecture 5.1]{FH1}} \label{introconj_FH}
On suppose ici que $\glie$ est simple et de dimension finie. Pour toute représentation de dimension finie irréductible $\Vq$ de $\Uq (\glie)$ dont le plus haut poids est dans $\LhliedZ$, il existe une représentation de dimension finie $\Vqt$ de l'algèbre $\Uqt(\tglie)$, telle que la spécialisation à ${t \! = \! 1}$ de l'action de $\Uqt(\tglie)$ sur $\Vqt$ restaure l'action de $\Uq (\glie)$ sur $\Vq$ et telle que la spécialisation à ${q \! = \! \varepsilon}$ permette d'obtenir une action de $\Ut (\Lglie)$ sur $\LVq$.
\end{introconj}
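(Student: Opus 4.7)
Le plan est de réaliser $\Uqt(\tglie)$ comme une algèbre GQE / de Kac-Moody colorée à deux paramètres qui interpole entre les deux algèbres quantiques standards Langlands duales, puis de déformer $\Vq$ dans ce cadre unifié. Les trois ingrédients principaux seront la déformation GQE des représentations, le théorème général d'interpolation pour les algèbres de Kac-Moody colorées isogéniques, et la correspondance de Langlands \eqref{eq_CartanLanglands} entre les deux sous-algèbres de Cartan. \\

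Premièrement, j'exhiberais un $I$-coloriage à deux paramètres $\psiqtI$ de $\tglie$ dont la spécialisation à ${t \! = \! 1}$ redonne le coloriage quantique standard $\psiq$ de $\glie$ --- de sorte que $\Uqt(\tglie)$ à ${t \! = \! 1}$ s'identifie à $\Uq(\glie)$, puisque $\tglie$ ne diffère de $\glie$ que par les relations de Serre, relations que $\Uqt(\tglie)$ n'impose précisément pas --- et dont la spécialisation à ${q \! = \! \varepsilon}$ se transporte, via le dictionnaire de Langlands \eqref{eq_CartanLanglands} entre les données de Cartan, sur le coloriage quantique standard de $\Lglie$ attaché à $\Ut(\Lglie)$. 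Cette étape consiste à mettre en évidence $\psiq$ (sur $\glie$) et son analogue sur $\Lglie$ comme deux coloriages isogéniques et à les réaliser comme spécialisations d'un même coloriage $\psiqtI$ au sens de la thèse. \\

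Deuxièmement, partant de la représentation irréductible de dimension finie $\Vq$ de $\Uq(\glie)$, j'appliquerais le résultat de déformation GQE déjà établi dans la thèse pour les algèbres de Kac-Moody sans relations de Serre --- cadre qui couvre précisément $\tglie$ --- pour construire une représentation $\Vqt$ de $\Uqt(\tglie)$ déformant $\Vq$ selon la direction $t$, ce qui garantit automatiquement la restitution de $\Vq$ à ${t \! = \! 1}$. Pour la seconde moitié, à ${q \! = \! \varepsilon}$, j'invoquerais le théorème général d'interpolation pour les algèbres K-M colorées isogéniques: sur $\LVq$, les éléments renormalisés de type puissance divisée construits à partir de $\Vqt$ deviennent, après spécialisation, les générateurs de Chevalley $\LXpm$ de $\Lglie$, l'action de $\Lhlie$ étant elle déjà prescrite par \eqref{eq_CartanLanglands}. \\

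Le principal obstacle sera de justifier ces deux spécialisations en dehors du cadre purement formel. La déformation GQE fournie par la thèse est une déformation formelle sur $\Kh$, alors que l'énoncé de Frenkel-Hernandez s'exprime dans un cadre à deux variables concrètes $(q,t)$: il faut donc contrôler la rationalité/convergence et la dimension finie de $\Vqt$, ce qui n'est pas automatique. Par ailleurs, bien que $\Uqt(\tglie)$ ne porte aucune relation de Serre, la spécialisation à ${q \! = \! \varepsilon}$ doit faire émerger les relations de Serre quantiques de $\Ut(\Lglie)$; c'est ici que doit opérer, dans ce cadre à deux paramètres, le mécanisme à la Littelmann-McGerty fondé sur l'annulation de l'entier quantique $[d/d_i]_q$ à ${q \! = \! \varepsilon}$, et c'est bien là le cœur de la difficulté, le point où la philosophie GQE est véritablement confrontée à la prédiction de Frenkel-Hernandez.
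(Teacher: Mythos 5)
Your overall architecture does match the paper's: a two-parameter colouring realising (a quotient of) $\Uqt(\tglie)$ inside a coloured Kac-Moody algebra (théorème \ref{introthm_GQEclq}, point 5), a deformation of $\Vq$ in the interpolating direction, and the isogenic interpolation theorem applied to the Langlands isogeny to produce the dual action at ${q \! = \! \varepsilon}$. But your second step contains a genuine gap. The result you invoke as \og déjà établi \fg{} for Kac-Moody algebras without Serre relations (théorème I.\ref{thm_Uth}) is a statement about deforming the \emph{algebra} $\Utc$ into $\Uth$ (proved by the diamond lemma); it says nothing about deforming \emph{representations}. The existence of a $\psi$-integrable deformation of a given representation is precisely the GQE conjecture (conjecture \ref{conj_GQE}), which the thesis assumes rather than proves, and which is needed here for the two-parameter interpolating colouring over $\KK(q)$ --- not for the standard quantum colouring $\psiqI$, the only case in which it is known to hold. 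The paper's actual result is the implication \og conjecture GQE $\Rightarrow$ conjecture de Frenkel-Hernandez \fg{}; your proposal silently uses the hypothesis of that implication as if it were an established theorem, so as written it proves nothing unconditional.

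The obstacle you correctly flag at the end --- passing from formal deformations over $\Kh$ to honest specialisations at ${t \! = \! 1}$ and ${q \! = \! \varepsilon}$ --- is indeed the crux, but you leave it open, whereas the paper resolves it by a specific mechanism: one works with coloured Kac-Moody algebras (Tannakian completions, not $h$-adic ones) over the ring $R_q$ of rational functions in $u$ over $\KK(q)$ without poles at $0$ and $1$. Proposition \ref{prop_hadmissible} shows that every colouring over a field is the specialisation at ${h \! = \! 1}$ of an $h$-admissible colouring; the GQE conjecture then yields a formal deformation, from which proposition \ref{prop_form} together with lemma \ref{lem_admissiblepair} extracts an $R'$-form of the $\psi$-integrable representation (rational functions without poles at the specialisation points), which can legitimately be specialised while preserving characters and finite rank of weight spaces (lemme \ref{lem_form}, théorème \ref{thm_existence}, théorème \ref{thm_quantuminter}). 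Note also that the quantum Serre relations of $\Ut(\Lglie)$ at ${q \! = \! \varepsilon}$ are not produced by a Littelmann--McGerty computation with the vanishing of $[d/d_i]_q$: in the coloured framework they hold automatically on every integrable weight representation (cf. définition-proposition \ref{defi-prop_GQESerre}), the extraction of the dual $\slt$-directions being governed by lemme \ref{lem_isogenyint}, and with quantum divided powers ${(X_i^\pm)}^{(d/d_i)}$ used in place of the plain powers (cf. théorème \ref{thm_Langlandsinterviaq}).
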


Nous proposons, à partir de la précédente conjecture, de définir l'analogue quantique du principe d'interpolation de Langlands classique.

\begin{introprinc}[Principe d'interpolation de Langlands quantique] \label{introprinc_interq} $\phantom{}$ \\
Pour toute représentation $\Vq$ de $\Uq (\glie)$ dans $\Oint[\! q]{\glie}$, il existe une déformation à un paramètre de l'action \hbox{sur $\Vq$} des générateurs de Chevalley $X_i^\pm$ telle que la spécialisation à une première valeur dudit paramètre restaure l'action initiale des générateurs $X^\pm_i$ sur $\Vq$ et telle que la spécialisation à une seconde valeur permette d'obtenir les actions des générateurs $\LXpm$ sur $\LVq$ à partir de l'action déformée des éléments ${(X_i^{\pm})}^{d/d_i}$ sur $\Vq$.
\end{introprinc}

\section{Déformation colorée de la catégorie ${\Oint[]{\glie}}$}
Le programme de Frenkel-Hernandez évoqué précédemment a été le point de départ de cette thèse, qui propose dans cette optique d'aborder d'une manière générale les principes d'interpolation de Langlands classique et quantique (on rappelle que le principe classique a été implicitement illustré par Littelmann et Mcgerty, tandis qu'une réalisation du principe quantique a été conjecturée sous une forme particulière par Frenkel-Hernandez).


\subsection{Le cristal global de rang 1}
On sait que pour chaque $i \in I$, les trois générateurs $X^-_i$, $\check \alpha_i$ \hbox{et $X^+_i$} de l'algèbre de Kac-Moody $\glie$ engendrent une sous-algèbre isomorphe à la plus simple des algèbres de Kac-Moody, i.e. l'algèbre de Lie $\slt$ (l'algèbre des matrices carrées de taille $2$ et de trace nulle). Il est par ailleurs connu que pour chaque $i \in I$, une représentation intégrable de $\glie$, considérée comme représentation de $\slt$ par restriction de l'action de $\glie$ aux générateurs $X^\pm_i$ et $\check \alpha_i$, peut être décomposée en une somme directe (possiblement infinie) de représentations irréductibles de dimension finie. \\

Les représentations irréductibles de dimension finie de $\slt$ sont explicitement connues. L'action des générateurs de Chevalley de $\slt$ est naturellement donnée par une certaine représentation (de dimension un en chacun des sommets), dite \emph{classique} et \hbox{notée $\psicl$}, du carquois défini ci-dessous.

\begin{introdefi}
Le cristal global de rang 1, noté $\BB$, est le carquois infini suivant.
\begin{equation*}
\shorthandoff{;:!?} \xymatrix @!0 @C=3pc @R=1.1pc {
&&&&& \quad \ \! \; \bullet \! \; \scriptstyle{b_{n,0}} \ar@<4pt>[dd] && \\
&&&&&& \quad \ \ \iddots & \\
&&& \quad \ \! \; \bullet \! \; \scriptstyle{b_{3,0}} \ar@<4pt>[dd] && \quad \ \! \; \bullet \! \; \scriptstyle{b_{n,1}} \ar@<4pt>[uu] \ar@<4pt>[dd] && \\
&& \quad \ \! \; \bullet \! \; \scriptstyle{b_{2,0}} \ar@<4pt>[dd] && \iddots &&& \\
& \quad \ \! \; \bullet \! \; \scriptstyle{b_{1,0}} \ar@<4pt>[dd] && \quad \ \! \; \bullet \! \; \scriptstyle{b_{3,1}} \ar@<4pt>[dd] \ar@<4pt>[uu] && \quad \ \! \; \bullet \! \; \scriptstyle{b_{n,2}} \ar@<4pt>[uu] && \\
\quad \ \! \; \bullet \! \; \scriptstyle{b_{0,0}} && \quad \ \! \; \bullet \! \; \scriptstyle{b_{2,1}} \ar@<4pt>[dd] \ar@<4pt>[uu] && \cdots & \vdots & \quad \ \ \cdots & \\
& \quad \ \! \; \bullet \! \; \scriptstyle{b_{1,1}} \ar@<4pt>[uu] && \quad \ \! \; \bullet \! \; \scriptstyle{b_{3,2}} \ar@<4pt>[dd] \ar@<4pt>[uu] && \quad \quad \ \, \bullet \! \; \scriptstyle{b_{n,n-2}} \ar@<4pt>[dd] && \\
&& \quad \ \! \; \bullet \! \; \scriptstyle{b_{2,2}} \ar@<4pt>[uu] && \ddots &&& \\
&&& \quad \ \! \; \bullet \! \; \scriptstyle{b_{3,3}} \ar@<4pt>[uu] && \quad \quad \ \, \bullet \! \; \scriptstyle{b_{n,n-1}} \ar@<4pt>[uu] \ar@<4pt>[dd] & \\
&&&&&& \quad \ \ \ddots & \\
&&&&& \quad \ \, \! \; \bullet \! \; \scriptstyle{b_{n,n}} \ar@<4pt>[uu] && \\
}
\end{equation*}
\end{introdefi}

Le carquois $\BB$ est dessiné dans le réseau $\ZZ^2$ et a pour support l'ensemble des combinaisons linéaires, à coefficients dans $\NN$, des vecteurs $(1,1)$ et $(1,-1)$, de telle sorte que le sommet $b_{n,p}$ a pour coordonnées $(n,n-2p)$. Pour chaque $n \in \NN$, l'action des générateurs de Chevalley sur l'unique représentation irréductible de dimension $n+1$ de $\slt$ est déterminée par la restriction de la représentation $\psicl$ à la composante connexe d'abscisse $n$ du carquois $\BB$, l'action de la sous-algèbre de Cartan étant quant à elle déterminée par les ordonnées des sommets. \\

Rappelons à présent que l'algèbre quantique $\Uq(\glie)$ associé à $\glie$ est construit en déformant la présentation de Serre de $\glie$. Distinguons, parmi l'ensemble des relations qui composent la présentation de Serre, le groupe de relations \eqref{eq_KM_nondeformableintro}, non déformées dans $\Uq(\glie)$, qu'on appelle alors les \emph{relations non déformables} de $\glie$. On sait par ailleurs qu'il existe une déformation ${\psi_{\! \! \: q^{d_i}}}$ de la représentation $\psicl$ telle que pour chaque $i \in I$, l'action du générateur de Chevalley $X_i^\pm$, sur une représentation intégrable $\Vq$ de $\Uq(\glie)$, est déterminée par la déformation ${\psi_{\! \! \: q^{d_i}}}$.

\subsection{Coloriages}
On se propose de considérer les principes d'interpolation de Langlands classique et quantique à la lumière de l'exemple offert par les groupes quantiques (on rappelle que c'est via ces derniers que le principe classique a été une première fois approché par Littelmann et McGerty). \\

Considérons pour chaque $i \in I$, une déformation $\psi_i$ de la représentation classique du carquois $\BB$. Par déformation, on entend ici que les représentations $\psi_i$ du carquois $\BB$ sont définies au-dessus de l'anneau $R := \CC[u]$ et que les dimensions de $\psicl$ sont conservées.

\begin{introdefi}
\begin{itemize}
\item[\textbullet] Un coloriage de $\BB$ à valeurs dans $R$ est une représentation du carquois $\BB$ définie au-dessus de $R$ et libre de \hbox{rang 1} sur chaque sommet.
\item[\textbullet] Un ${I \! \! \;}$-coloriage de $\BB$ est une famille $\psi = {(\psi_i)}_{i \in I}$ de coloriages \hbox{de $\BB$}.
\end{itemize}
\end{introdefi}

Pour chaque $i \in I$, la représentation $\psi_i$ détermine naturellement une déformation de l'action du générateur $X^\pm_i$ sur les représentations irréductibles de dimension finie de $\slt$. Apportons quelques précisions. \\

La donnée de la \hbox{représentation $\psi_i$} du carquois $\BB$ est équivalente à la donnée d'une famille de scalaires \hbox{$\psi_i^\pm(n,k)$ dans $R$}, où $n \in \NNI$ désigne l'abscisse d'une composante connexe du carquois $\BB$ et où l'entier $k$ (compris entre $1$ et $n$) désigne la $k\up{ème}$ flèche au sein de cette $n\up{ième}$ composante, selon la numérotation fixée par la convention suivante.
$$ \shorthandoff{;:!?} \xymatrix @!0 @R=1.1pc {
\quad \ \, \! \; \bullet \! \; \scriptstyle{b_{n,0}} \ar@<4pt>[dd]^{\scriptstyle \psi^-(n,1)} \\
\\
\quad \ \, \! \; \bullet \! \; \scriptstyle{b_{n,1}} \ar@<4pt>[uu]^{\scriptstyle \psi^+(n,n)} \ar@<4pt>[dd]^{\scriptstyle \psi^-(n,2)}\\
 \\
\quad \ \, \! \; \bullet \! \; \scriptstyle{b_{n,2}} \ar@<4pt>[uu]^{\scriptstyle \psi^+(n,n-1)} \\
\vdots \\
\quad \quad \ \, \bullet \! \; \scriptstyle{b_{n,n-2}} \ar@<4pt>[dd]^{\scriptstyle \psi^-(n,n-1)} \\
\\
\quad \quad \ \, \bullet \! \; \scriptstyle{b_{n,n-1}} \ar@<4pt>[uu]^{\scriptstyle \psi^+(n,2)} \ar@<4pt>[dd]^{\scriptstyle \psi^-(n,n)} \\
\\
\quad \ \, \! \; \bullet \! \; \scriptstyle{b_{n,n}} \ar@<4pt>[uu]^{\scriptstyle \psi^+(n,1)}
} $$

À titre d'exemple, la représentation classique $\psicl$ du carquois $\BB$ est caractérisée par les nombres entiers ${\psi^\pm_{\! \! \; \mathrm{cl}}}(n,k) = k$, tandis que sa déformation quantique $\psiq$ est caractérisée par les \emph{nombres quantiques} ${\psi^\pm_{\! \! \; q}}(n,k) = {[k]}_q := \frac{q^k - q^{-k}}{q-q^{-1}}$ (regardés comme déformations quantiques des nombres entiers). \\

Les déformations induites par la représentation $\psi_i$ sont décrites dans la définition suivante.

\begin{introdefi}
On note $\Lrep{n,\psi_i}$ le $R$-module $\bigoplus_{0 \leq p \leq n} {R \; \! b_{n,p}}$, muni d'actions de $X^\pm_i$ et $\check \alpha_i$, déterminées par $\psi_i$ de la manière suivante.
$$ \left\{ {\renewcommand{\arraystretch}{0.75} \begin{array}{rcl}
\check \alpha_i . \! \; b_{n,p} & := & \ (n- 2p) \, b_{n,p} \, , \\
&& \\
X_i^- . \! \; b_{n,p} & := & \left\{ {\renewcommand{\arraystretch}{1.1} \begin{array}{cl}
\psi_i^-(n,p+1) \, b_{n,p+1} & \quad \text{si $p \neq n$,} \\
0 & \quad \text{sinon,}
\end{array}} \right. \\
&& \\
X_i^+ . \! \; b_{n,p} & := &  \left\{ {\renewcommand{\arraystretch}{1.1} \begin{array}{cl}
\psi_i^+(n,n-p+1) \, b_{n,p-1} & \quad \text{si $p \neq 0$,} \\
0 & \quad \text{sinon.}
\end{array}} \right.
\end{array}} \right.
$$
\end{introdefi}

\subsection{Représentations $\psi$-intégrables}
Soit maintenant $V$ une représentation de $\glie$ dans la catégorie $\Oint[]{\glie}$. On se propose de déformer, suivant les directions fixées par les représentations $\psi_i$, l'action des générateurs $X_i^\pm$ sur l'espace $\VR := {V \! \otimes_\CC \! \CC[u]}$, de telle sorte que les conditions suivantes, dites de $\psi$-intégrabilité, soient satisfaites. Notons que, conformément aux principes d'interpolation de Langlands, on demande que l'action de la sous-algèbre de Cartan $\hlie$ ne soit quant à elle pas déformée \hbox{sur $\VR$}.

\begin{introdefi}
On appelle représentation \hbox{$\psi$-intégrable} un \hbox{$R$-module} $V$ muni d'une action des générateurs de Chevalley $X_i^\pm$ et d'une action diagonalisable de la sous-algèbre de Cartan $\hlie$, telles que les conditions suivantes sont satisfaites.
\begin{enumerate}
\item[a)] Les relations non déformables de $\glie$ sont vérifiées sur $V$.
\item[b)] Pour chaque $i \in I$, les actions de $X_i^\pm$ et $\check \alpha_i$ sont déterminées par la \hbox{représentation $\psi_i$} de $\BB$, de telle sorte que $V$ est isomorphe à une somme directe de représentations $\Lrep{n,\psi_i}$.
\end{enumerate}
\end{introdefi}

Admettant dans un premier temps la possibilité qu'une telle déformation existe, examinons l'action déformée de ${(X_i^{\pm})}^{d/d_i}$ sur le sous-espace $\LVR := { {}^L \! \! \: V} \otimes_\CC \CC[u]$. \\

Fixons $i \in I$ et considérons le carquois dont les sommets sont ceux d'ordonnées divisibles par $d/d_i$ dans le carquois $\BB$ et dont les flèches sont obtenues comme composées de $d/d_i$ flèches pointant dans une même direction. Remarquons que ce nouveau carquois peut être identifié avec la réunion de $d/d_i$ copies de $\BB$ (la figure suivante illustre ce fait).

\vsp
\vsp
\vsp

\begin{equation} \label{Louloufig}
\begin{tikzpicture}[scale = 0.6]
\begin{scope}[scale=2]
    \foreach \i in {0,...,7}
        {
            \foreach \j in {0,...,\i}
                {
                    \filldraw[blue] (\i/2, -\i/4 + \j/2 ) circle (1pt);
                }
        }
\end{scope}
\begin{scope}[scale= 2, xshift= 0.5cm]
    \foreach \i in {0,...,6}
        {
            \foreach \j in {0,...,\i}
                {
                    \filldraw[red] (\i/2, -\i/4 + \j/2 ) circle (1pt);
                }
        }
\end{scope}
\begin{scope}
    \foreach \i in {0,...,14}
        {
            \foreach \j in {0,...,\i}
                {
                    \draw[black] (\i/2, -\i/4 + \j/2 ) circle (2pt);
                }
        }
\end{scope}
\node at (3.75, -4.5 ) {$\scriptstyle d/d_i \, = \,  2$};
\begin{scope}[xshift= 10cm]
    \begin{scope}[scale=1]
        \foreach \i in {0,...,4}
            {
                \foreach \j in {0,...,\i}
                    {
                        \filldraw[blue] (3*\i/2, -3*\i/4 + 3*\j/2 ) circle (2pt);
                    }
            }
    \end{scope}
    \begin{scope}[scale= 1, xshift= 1cm]
        \foreach \i in {0,...,4}
            {
                \foreach \j in {0,...,\i}
                    {
                        \filldraw[green] (3*\i/2, -3*\i/4 + 3*\j/2 ) circle (2pt);
                    }
            }
    \end{scope}
    \begin{scope}[scale= 1, xshift= 2cm]
        \foreach \i in {0,...,3}
            {
                \foreach \j in {0,...,\i}
                    {
                        \filldraw[red] (3*\i/2, -3*\i/4 + 3*\j/2 ) circle (2pt);
                    }
            }
    \end{scope}
    \begin{scope}
        \foreach \i in {0,...,14}
            {
                \foreach \j in {0,...,\i}
                    {
                        \draw[black] (\i/2, -\i/4 + \j/2 ) circle (2pt);
                    }
            }
    \end{scope}
    \node at (3.75, -4.5 ) {$\scriptstyle d/d_i \, = \, 3$};
\end{scope}

\end{tikzpicture}
\end{equation}

\vsp
\vsp

La représentation $\psi_i$ induit alors $d/d_i$ représentations $\Lpsi(l)$ ($1 \leq l \leq d/d_i$) du carquois $\BB$, de telle sorte que l'action de l'élément ${(X_i^{\pm})}^{d/d_i}$ sur $\LVR$ est, au vu de la condition b) de $\psi$-intégrabilité, déterminée par l'ensemble des \hbox{représentations $\Lpsi(l)$}. \\

La condition a) implique par ailleurs que les relations non déformables \hbox{de $\Lglie$} sont vérifiées sur $\LVR$ par les éléments ${(X_i^{\pm})}^{d/d_i}$ et la sous-algèbre de \hbox{Cartan $\Lhlie$}. Les autres relations prescrites par la présentation de Serre de $\Lglie$ sont quant à elles satisfaites dans le cas où les représentations $\Lpsi(l)$ du carquois $\BB$ sont toutes égales à la représentation classique $\psicl$ : les relations \eqref{eq_KM1_intro} sont vérifiées car l'action de ${(X_i^{\pm})}^{d/d_i}$ sur $\LVR$ est déterminée par $\psicl$ et les relations de Serre sont vérifiées car l'action de $\Lhlie$ et ${(X_i^{\pm})}^{d/d_i}$ \hbox{sur $\LVR$} est intégrable. De la même manière, les relations du groupes quantiques $\Uq(\Lglie)$ sont vérifiées \hbox{sur $\LVR$} par les éléments ${(X_i^{\pm})}^{d/d_i}$ et la sous-algèbre de \hbox{Cartan $\Lhlie$}, dans le cas où pour chaque $i \in I$, les représentations $\Lpsi(l)$ du carquois $\BB$ sont toutes égales à la représentation quantique ${\psi_{\! q^{d \! \: \! / \! \: \! d_i}}}$. \\

La réalisation du principe d'interpolation de Langlands classique (ou quantique) se réduit donc à choisir des \hbox{déformations $\psi_i$} de $\psicl$, telles que pour chaque $i \in I$, la spécialisation de $\psi_i$ à une première valeur de $u$ soit égale à $\psicl$ (ou ${\psi_{\! \! \: q^{d_i}}}$) et telles que pour chaque $i \in I$, les spécialisations de $\Lpsi(l)$ à une seconde valeur de $u$ soient égales à $\psicl$ (ou ${\psi_{\! q^{d \! \: \! / \! \: \! d_i}}}$). Un tel choix ne présentant pas de difficulté, la question seule, et plus générale, de l'existence d'une déformation de l'action des générateurs de Chevalley sur $V$, suivant les directions imposées par les représentations $\psi_i$ du carquois $\BB$, est ainsi posée.

\begin{intropb}[Déformation colorée de la catégorie ${\Oint[]{\glie}}$] $\phantom{}$ \\
Est-il possible, pour chaque représentation $V$ dans $\Oint[]{\glie}$, d'étendre l'action de la sous-algèbre de Cartan \hbox{$\hlie$} sur $V$ en une action \hbox{$\psi$-intégrable} sur ${V \! \otimes_\KK \! R}$ ?
\end{intropb}

Il est nécessaire de sensiblement généraliser le problème. Plus précisément, l'algèbre de Kac-Moody $\glie$ est dorénavant construite au-dessus d'un corps $\KK$ quelconque, que l'on suppose de \textbf{caractéristique nulle}, et les \hbox{représentations $\psi_i$} du carquois $\BB$ sont désormais définies au-dessus d'une \hbox{$\KK$-algèbre}, que l'on suppose \textbf{intègre} et que l'on désigne à nouveau par $R$.

\section{Algèbres de Kac-Moody colorées}
Les représentations d'une algèbre peuvent être employées dans le but de mieux comprendre l'algèbre elle-même. Nous proposons dans cette thèse de considérer la stratégie inverse, dans le but de prouver l'existence d'une déformation colorée de la catégorie $\Ointc$. Plus précisément, nous proposons de collecter au sein d'une même algèbre les relations satisfaites par les générateurs de $\glie$ sur tous les \hbox{espaces $V_{\! R}$}, avec $V$ dans $\Oint[]{\glie}$, pour lesquels l'action des générateurs de Chevalley a pu être déformée dans les directions imposées par les \hbox{coloriages $\psi_i$}.

\subsection{Définition des algèbres de Kac-Moody colorées}
Considérons la $R$-algèbre $\AR(\glie,\psi)$ des endomorphismes du foncteur d'oubli de la catégorie des représentations $\psi$-intégrables dans la catégorie des $R$-modules. Il existe sur $\AR(\glie,\psi)$ une topologie naturelle, définie comme étant la plus petite topologie rendant continue chaque \hbox{application ${\AR(\glie,\psi) \! \times \! V \! \to \! V}$} associée à une représentation $\psi$-intégrable $V$.


\begin{introdefi}
On note $\Uhat{\glie,\psi}$ la sous-algèbre de $\AR(\glie,\psi)$ topologiquement engendrée par les générateurs de Chevalley $X_i^\pm$ et la sous-algèbre de \hbox{Cartan $\hlie$}.
\end{introdefi}

Autrement dit, la sous-algèbre $\Uhat{\glie,\psi}$ est composée des éléments dans $\AR(\glie,\psi)$ dont l'action sur chacun des vecteurs des représentations $\psi$-intégrables coïncide avec l'action d'une combinaison linéaire finie de monômes en $X^\pm_i$ et $\check \mu \in \hlie$. \\

L'algèbre $\Uhat{\glie,\psi}$ est regardée comme une déformation de l'algèbre enveloppante universelle $\Uc$, ou plus exactement, comme une déformation de sa \emph{complétion Tannakienne} $\Uck[\KK]{\glie}$ relativement à la catégorie des représentations intégrables.

\begin{defi}
Si le $I$-coloriage $\psi$ est admissible, i.e. si ses valeurs $\psi^\pm_i (n,k)$ sont inversibles dans $R$, l'algèbre $\Uhat{\glie,\psi}$ est appelée une algèbre de Kac-Moody colorée.
\end{defi}

\subsection{Structure des algèbres de Kac-Moody colorées}
L'algèbre $\Uhat{\glie,\psi}$ est une algèbre associative. Il est possible de munir celle-ci, et ce de manière naturel au vu des définitions, d'une structure algébrique plus riche. La nature-même des représentations \hbox{$\psi$-intégrables} implique en effet le résultat suivant.

\begin{introprop}
Il existe un morphisme d'algèbre \hbox{de $\Uck{\slt,\psi_i}$} dans $\Uhat{\glie,\psi}$, qui aux générateurs $X^\pm$ et $\check \alpha$ de la première, fait correspondre les \hbox{générateurs $X_i^\pm$} et $\check \alpha_i$ de la seconde.
\end{introprop}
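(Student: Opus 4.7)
The plan is to exploit functoriality of the ``endomorphisms of the forgetful functor'' construction. The guiding idea is: a $\psi$-intégrable representation of $(\glie,\psi)$, restricted to the $\slt$-triple attached to the index $i$, should be $\psi_i$-intégrable, and any such restriction functor induces a map on the corresponding Tannakian completions in the opposite direction. I would therefore proceed as follows.

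First, I would define a restriction functor
\[
\mathrm{Res}_i \, : \, \Cinthat{\glie,\psi} \, \longrightarrow \, \Cinthat{\slt,\psi_i} \, ,
\]
sending a $\psi$-intégrable $\glie$-module $V$ to the same underlying $R$-module equipped with the action of $X^\pm := X_i^\pm$ and $\check\alpha := \check\alpha_i$. To see that $\mathrm{Res}_i V$ is indeed $\psi_i$-intégrable, I would verify both axioms of the definition. Axiom (a) for $\slt$ reduces to the relation $[\check\alpha,X^\pm] = \pm 2 X^\pm$, which is the specialisation to $j=i$ of the non-deformable relation $[\check\mu,X_j^\pm] = \pm\alpha_j(\check\mu) X_j^\pm$ holding on $V$ by axiom (a) for $(\glie,\psi)$. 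Axiom (b) for $(\slt,\psi_i)$ requires that $V$ decompose, with respect to the triple $(X_i^-,\check\alpha_i,X_i^+)$, as a direct sum of modules $\Lrep{n,\psi_i}$; but this is exactly axiom (b) for $(\glie,\psi)$ at the index $i$.

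Next I would observe that the forgetful functors fit into a commutative triangle
\[
\mathrm{forg}_{\glie,\psi} \, = \, \mathrm{forg}_{\slt,\psi_i} \circ \mathrm{Res}_i \, .
\]
Any $\phi \in \AR(\slt,\psi_i) = \mathrm{End}(\mathrm{forg}_{\slt,\psi_i})$ therefore yields an endomorphism $\widetilde\phi$ of $\mathrm{forg}_{\glie,\psi}$ by $\widetilde\phi_V := \phi_{\mathrm{Res}_i V}$, and the assignment $\phi \mapsto \widetilde\phi$ is plainly an algebra morphism $\AR(\slt,\psi_i) \to \AR(\glie,\psi)$. It is continuous for the natural topologies, because the topology on each side is generated by the evaluation maps on individual objects and $\mathrm{Res}_i$ is $R$-linear on underlying modules: if $\phi$ acts trivially on a finite collection of $\psi_i$-intégrable modules including $\mathrm{Res}_i V_1,\ldots,\mathrm{Res}_i V_n$, then $\widetilde\phi$ acts trivially on $V_1,\ldots,V_n$.

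It then remains to check that $\widetilde{(\cdot)}$ sends the topologically generated subalgebra $\Uck{\slt,\psi_i}$ into $\Uhat{\glie,\psi}$, with the prescribed behaviour on generators. By construction, the image of $X^\pm \in \Uck{\slt,\psi_i}$ is the endomorphism of $\mathrm{forg}_{\glie,\psi}$ acting on each $V$ as $X_i^\pm$, and similarly $\check\alpha$ is sent to $\check\alpha_i$; these elements lie in $\Uhat{\glie,\psi}$ by definition. Since $\widetilde{(\cdot)}$ is a continuous algebra morphism, the image of the closed subalgebra topologically generated by $X^\pm$ and $\check\alpha$ lies in the closed subalgebra topologically generated by their images, hence in $\Uhat{\glie,\psi}$. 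The only delicate point I anticipate is the continuity verification, which relies on the compatibility of the topologies on $\AR(\slt,\psi_i)$ and $\AR(\glie,\psi)$ via the restriction functor; this should however follow directly from the description of both topologies as initial with respect to the evaluation maps on objects.
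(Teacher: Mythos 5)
Your proof is correct and takes essentially the same route as the paper's own argument (definition-proposition \ref{prop_Deltahat} in part II): the single substantive input in both is that the pull-back of a $\psi$-integrable representation along the simple $\slt$-direction $\DeltaF$ is $\psi_i$-integrable — which is literally clause b) of the definition of $\psi$-integrability — whence continuity of $\DeltaF$ for the two Tannakian topologies. The paper concludes via the universal property of the separated completion of $\UF$, which is just the completion-side rephrasing of your whiskering of endomorphisms of the forgetful functors; your only wobble is harmless, since axiom a) of $\psi_i$-integrability asks for diagonalisability of the $\check \alpha_i$-action rather than the commutation relation you cite, but it follows from clause b) in any case.
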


Considérons alors le diagramme $\II$ obtenu, à partir de l'ensemble de \hbox{sommets $I$}, par l'ajout d'un sommet $t_I$ et d'une flèche reliant $i$ à $t_I$ pour \hbox{chaque $i \in I$}. On entend ici par \emph{diagramme d'algèbre (de forme $\II$)}, un foncteur de $\II$ dans la catégorie des $R$-algèbres.

\begin{introdefi}
Le diagramme de Kac-Moody coloré $\UUck{\glie,\psi}$ est le diagramme d'algèbre formé par les morphismes évoqués dans la précédente proposition.
\end{introdefi}

Il est également possible de définir un diagramme d'algèbre $\UUc$ à partir de l'algèbre $\Uc$ (on rappelle en effet que pour chaque $i \in I$, les \hbox{générateurs $X_i^\pm$} \hbox{et $\check \alpha_i$} engendrent dans $\glie$ une algèbre isomorphe à $\slt$). Le diagramme de Kac-Moody coloré $\UUck{\glie,\psi}$ est regardé comme une déformation de $\UUc$. \\

Rappelons par ailleurs que les relations non déformables de $\glie$ désignent le groupe de relations \eqref{eq_KM_nondeformableintro} de la présentation de Serre de $\glie$.

\begin{introdefi}
L'algèbre de Lie ${\glie_{\! \! \; F}}$ est l'algèbre de Lie engendrée par les mêmes générateurs que $\glie$ et sujette aux seules relations non déformables.
\end{introdefi}

La notation $F$ fait référence au terme anglais ``\emph{deformation-Free}''. \\

De nouveau, on associe naturellement à l'algèbre $\UF$ un diagramme $\FUU{{\glie_{\! \! \; F}}}$. Les relations non déformables de $\glie$ étant par définition satisfaites dans $\Uhat{\glie,\psi}$, remarquons alors qu'il existe un morphisme de diagramme naturel de $\FUU{{\glie_{\! \! \; F}}}$ dans le diagramme de Kac-Moody coloré $\UUck{\glie,\psi}$. \\

Ne sont considérées dans cette thèse que les déformations de $\Uc$ conservant les relations non déformables de $\glie$, et par ailleurs compatibles avec la structure de diagramme de $\UUc$.

\begin{introdefi}
Un diagramme ${\glie_{\! \! \; F}}$-pointé est un diagramme d'algèbre $\AA$, munie d'un épimorphisme de $\FUU{{\glie_{\! \! \; F}}}$ dans $\AA$.
\end{introdefi}






\section[Algèbres enveloppantes quantiques généralisées]{Algèbres enveloppantes quantiques \\ généralisées}
L'algèbre $\Uhat{\glie,\psi}$ a été introduite dans le but de prouver l'existence d'une déformation des représentations de la catégorie $\Ointc$, suivant les directions imposées par le \hbox{${I \! \! \;}$-coloriage} $\psi = {(\psi_i)}_{i \in I}$. Afin de rendre effective cette stratégie, il est dans un premier temps nécessaire d'imposer certaines restrictions à $\psi$.

\subsection{Complétion $h$-adique}
À l'anneau des séries formelles, noté $\Kh$, est associé la notion de déformation formelle : un objet algébrique ${X_{\! \! \: h}}$ (par exemple, une algèbre ou une représentation, etc.) est une déformation formelle d'un second objet ${X_{\! \! \: 0}}$ de même nature si la spécialisation à ${h \! = \! 0}$ de ${X_{\! \! \: h}}$ \hbox{est ${X_{\! \! \: 0}}$} et si l'objet ${X_{\! \! \: h}}$, considéré comme un module au-dessus de $\Kh$, admet une écriture de la forme ${X_{\! \! \: 0}} \cch$. Cette notion de déformation a le double avantage d'autoriser des développements à l'infini et d'être dans le même temps calculable  : il est en général possible d'aborder les questions de déformation à l'aide de récurrences menées à partir de la valuation de séries formelles, ce principe étant remarquablement illustré par certaines cohomologies pour ce genre de problème (voir par exemple \cite{Ger,GS}). \\

Notons $\UB{\glie,\psi}$ la sous-algèbre de $\AR(\glie,\psi)$ engendrée par les générateurs de Chevalley $X_i^\pm$ et la sous-algèbre de Cartan $\hlie$. On rappelle que $\Uhat{\glie,\psi}$ est la complétion Tannakienne de l'algèbre $\UB{\glie,\psi}$. On considère ici la situation où chaque coloriage $\psi_i$ dépend d'un paramètre formel $h$; en d'autres termes, on suppose à partir de maintenant que $R = \Kh$. Afin d'exploiter la notion de déformation formelle dans le cadre des algèbres de Kac-Moody colorées, nous proposons de remplacer la complétion Tannakienne de $\UB{\glie,\psi}$ par sa complétion $h$-adique.



\begin{introdefi}
L'algèbre $\Uh[\glie,\psi]$ est la complétion $h$-adique de $\UB{\glie,\psi}$, on note $\UUh[\glie,\psi]$ le diagramme d'algèbre associé.
\end{introdefi}

$$ \shorthandoff{;:!?} \xymatrix @!0 @C=4pc @R=1.4pc {
\Uhat{\glie,\psi} && \Uh[\glie,\psi] \\
\scriptstyle{\text{complétion Tannakienne}} \quad \quad \quad \quad \quad && \, \ \quad \quad \quad \scriptstyle{\text{complétion $h$-adique}} \\
& \UB{\glie,\psi} \ar@{_{(}->}[uul] \ar@{^{(}->}[uur] & \\
} $$




\subsection{Coloriages $h$-admissibles}
L'algèbre $\Uh[\glie,\psi]$ n'est toutefois pas toujours une déformation formelle de l'algèbre enveloppante universelle $\Uc$. La complétion $h$-adique étant moins permissive que la complétion Tannakienne, l'existence d'une déformation dans l'algèbre $\Uh[\glie,\psi]$ des relations de la présentation de Serre est sujette à certaines conditions, imposées au ${I \! \! \;}$-coloriage $\psi$.

\begin{introdefi}
Un $I$-coloriage $\psi = {(\psi_i)}_{i \in I}$ est dit $h$-admissible si les axiomes suivants sont vérifiés.
\begin{IEEEeqnarray*}{lCl}
\bullet \text{ (Axiome de régularité)} & \quad \quad & \text{$\psi^\pm_i(n,k)$ dépend polynômialement de $(n,k)$} \, . \\
\bullet \text{ (Axiome de déformation)} & \quad \quad & \text{$\psi^\pm_i(n,k)$, spécialisé à ${h \! = \! 0}$, est égal a $k$} \, . \\
\bullet \text{ (Axiome du quotient)} & \quad \quad & \psi_i^\pm (n,0) \, = \, 0 \, . \\
\bullet \text{ (Axiome de Verma)} & \quad \quad & \psi_i^\pm (n,-k) \ = \ - \, \psi_i^\mp (-n-2,k) \, .
\end{IEEEeqnarray*}
\end{introdefi}

Le caractère nécessaire des axiomes précédents peut être pressenti ici. Supposons que l'algèbre $\Uh[\glie,\psi]$ soit une déformation formelle de l'algèbre enveloppante universelle $\Uc$. L'axiome de régularité est alors une conséquence du fait que chaque élément de $\Uh[\glie,\psi]$ s'écrit sous la forme d'une combinaison linéaire (infinie et à coefficients dans $\Kh$) de monômes en $X^\pm_i$ et $\check \mu \in \hlie$. L'axiome de déformation doit quant à lui être vérifié afin que la spécialisation de $\Uh[\glie,\psi]$ à ${h \! = \! 0}$ soit égale à $\Uc$. Notons que les représentations de dimension finie de $\slt$ peuvent être obtenues comme les quotients de certaines représentations de dimension infinies, appelées modules de Verma. Cette propriété du quotient, préservée par déformation, implique l'axiome du quotient. Enfin, on sait que certains modules de Verma se réalisent comme sous-représentations d'autres modules de Verma. Cette dernière propriété étant également préservée par déformation, l'axiome de Verma s'en trouve déduit.

\subsection{Définition des algèbres GQE}
La dénomination suivante sera justifiée plus loin.

\begin{introdefi}
Si $\psi$ est un $I$-coloriage $h$-admissible, l'algèbre $\Uh$ est appelée une algèbre enveloppante quantique généralisée.
\end{introdefi}

La dénomination anglaise \emph{Generalised Quantum Enveloping} est symbolisée par le sigle GQE, que l'on utilise à partir de maintenant pour désigner les algèbres enveloppantes quantiques généralisées. \\

Notons qu'il est par ailleurs possible de définir l'algèbre quantique, que l'on note ici $\Uh[\glie]$ et que l'on qualifie de formel, en tant que déformation formelle de l'algèbre $\Uc$ (voir \cite{Dri}, ainsi que \cite[theorem 1]{Tan} pour une preuve détaillée). Les groupes quantiques apparaissent alors comme un exemple parmi les algèbres enveloppantes quantiques généralisées (voir le théorème \ref{introthm_GQEclq}, énoncé un peu plus loin dans cette introduction) : le $I$-coloriage $h$-admissible associé est la famille de représentations quantiques ${({\psi_{\! \! \: q^{d_i}}})}_{i \in I}$ évoquée plus tôt (on rappelle que les paramètres $q$ et $h$ vérifient la relation $q = \exp (h)$). \\

Les algèbres GQE sont l'analogue $h$-adique des algèbres de Kac-Moody colorées. Elles ont été introduites dans le but de répondre au problème de déformation colorée de la catégorie $\Ointc$. Cette stratégie est justifiée et précisée par les deux points suivants.

\vsp

\begin{itemize}
\item[\textbullet] Les algèbres GQE, déformations formelles de l'algèbre $\Uc$, permettent d'aborder le problème de déformation colorée de manière effective.
\item[\textbullet] Il existe beaucoup de coloriages $h$-admissibles du cristal $\BB$, de telle sorte la résolution du problème de déformation colorée dans le cas des $I$-coloriages \hbox{$h$-admissibles} permet de résoudre le même problème pour tous les \hbox{${I \! \! \;}$-coloriages} admissibles.
\end{itemize}

\section{Énoncés des théorèmes}

\subsection{Exemples fondamentaux de réalisations}
Les algèbres de Kac-Moody colorées, ainsi que les algèbres GQE, offrent un cadre unificateur, dans lequel s'inscrivent les algèbres de Kac-Moody classiques, les algèbres quantiques standards (formelles et rationnelles) et les algèbres de Frenkel-Hernandez $\Uqt (\tglie)$ (définies dans le cas où $\glie$ est de dimension finie). \\

Rappelons que l'algèbre quantique formelle $\Uh[\glie]$ est une déformation formelle de $\Uc$ et que l'algèbre quantique rationnelle $\Uq(\glie)$ est quant à elle définie au-dessus du corps $\KK(q)$. Les algèbres de Frenkel-Hernandez $\Uqt (\tglie)$ sont définies au-dessus du corps $\CC(q,t)$. \\

On désigne par $\psiclI$ le ${I \! \! \;}$-coloriage ${(\psicl)}_{i \in I}$ et on désigne par $\psiqI$ le \hbox{${I \! \! \;}$-coloriage} constitué des coloriages quantiques ${\psi_{\! \! \: q^{d_i}}}$.



\begin{introthm}
\label{introthm_GQEclq} $\phantom{}$
\begin{enumerate}
\item[1)] L'algèbre GQE ${\, \Uh[\glie,\psiclI]}$ est égale à la déformation formelle constante de l'algèbre enveloppante universelle $\Uc$.
\item[2)] L'algèbre GQE ${\, \Uh[\glie,\psiqI]}$ est égale à l'algèbre quantique formelle $\Uh[\glie]$.
\item[3)] L'algèbre de Kac-Moody colorée $\Uck[{\! \! \; \KK}]{\glie,\psiclI}$ contient, en tant que sous-algèbre, l'algèbre enveloppante universelle $\Uc$.
\item[4)] L'algèbre de Kac-Moody colorée $\Uck[{\! \! \; \KK(q)}]{\glie,\psiqI}$ contient, en tant que sous-algèbre, l'algèbre quantique rationnelle $\Uq(\glie)$.
\item[5)] Il existe une algèbre de Kac-Moody colorée contenant, en tant que sous-algèbre, un quotient de l'algèbre de Frenkel-Hernandez $\Uqt (\tglie)$.
\end{enumerate}
\end{introthm}

L'algèbre de Frenkel-Hernandez $\Uqt (\tglie)$ est une déformation (dans un sens autre que formel) de l'algèbre de Lie $\glie$. L'énoncé de la conjecture de Frenkel-Hernandez peut alors être reliée à un problème d'existence d'une déformation des relations de Serre dans $\Uqt (\tglie)$. La théorie des algèbres de Kac-Moody colorées apporte une réponse à cette question.

\begin{introcor}
Il existe des relations de Serre déformées, vérifiées sur toutes les représentations intégrables de $\Uqt (\tglie)$.
\end{introcor}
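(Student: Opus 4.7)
Le plan consiste à déduire le corollaire directement du point 5) du théorème \ref{introthm_GQEclq}, qui fournit une algèbre de Kac-Moody colorée $\Uhat{\glie,\psi}$ (définie sur une certaine extension $R$ de $\CC(q,t)$) contenant, comme sous-algèbre, un quotient $\bar U := \Uqt{\tglie}/J$ de l'algèbre de Frenkel-Hernandez.

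On identifierait d'abord l'idéal $J$ comme l'idéal bilatère engendré par des éléments jouant le rôle de \emph{relations de Serre déformées}. Par construction, $\Uhat{\glie,\psi}$ est une sous-algèbre de l'algèbre des endomorphismes du foncteur d'oubli des représentations $\psi$-intégrables, et hérite à ce titre d'une présentation colorée analogue à la présentation de Serre classique, incluant des relations de Serre déformées prescrites par le coloriage $\psi$. À l'inverse, $\Uqt{\tglie}$, définie à partir de $\tglie$, ne satisfait par définition aucune relation de Serre. Les éléments engendrant $J$ correspondent précisément aux images, dans $\Uqt{\tglie}$, des relations de Serre déformées de $\Uhat{\glie,\psi}$.

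On démontrerait ensuite que toute représentation intégrable $\Vqt$ de $\Uqt{\tglie}$ se factorise par $\bar U$, ce qui équivaut à l'annulation de l'action de $J$ sur $\Vqt$. Le point clé serait que l'action des générateurs de Chevalley de $\Uqt{\tglie}$ sur $\Vqt$ est entièrement déterminée par le ${I \! \! \;}$-coloriage $\psi$ du théorème, rendant $\Vqt$ une représentation $\psi$-intégrable. L'action de $\Uqt{\tglie}$ s'étendrait alors à $\Uhat{\glie,\psi}$ via l'inclusion fournie par le théorème, forçant les relations de Serre déformées à s'annuler sur $\Vqt$.

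Le principal obstacle réside dans la vérification de la $\psi$-intégrabilité des représentations intégrables de $\Uqt{\tglie}$, c'est-à-dire dans l'exhibition explicite du coloriage $\psi$ adéquat et de ses coefficients $\psi_i^\pm(n,k)$ dans l'extension $R$ de $\CC(q,t)$, ainsi que dans la vérification des axiomes d'admissibilité correspondants. Ce travail est a priori effectué lors de la démonstration du point 5) du théorème \ref{introthm_GQEclq}; une fois celui-ci acquis, le corollaire en découle formellement.
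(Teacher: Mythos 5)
Votre esquisse suit pour l'essentiel la même route que le texte : le corollaire est une conséquence formelle du point 5) du théorème \ref{introthm_GQEclq}, une fois observé que toute représentation intégrable de $\Uqt (\tglie)$ est $\psi$-intégrable pour le ${I \! \! \;}$-coloriage $\psi$ exhibé dans la preuve dudit point, et que les relations de Serre déformées sont satisfaites dans l'algèbre de Kac-Moody colorée correspondante, donc s'annulent sur toute représentation $\psi$-intégrable (chacune étant une représentation discrète de la complétion). Vous situez par ailleurs au bon endroit le poids de la démonstration, à savoir la construction du coloriage et la vérification de la $\psi$-intégrabilité dans la preuve du point 5).

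Deux imprécisions doivent néanmoins être corrigées. Premièrement, vous invoquez une \enquote{présentation colorée} de $\Uhat{\glie,\psi}$ et identifiez l'idéal $J$ à l'idéal engendré par les relations de Serre déformées : ni l'un ni l'autre n'est établi dans le texte et, surtout, la présentation de Serre déformée (théorème I.\ref{thm_presentationUh}) n'est démontrée que \emph{sous la conjecture GQE}, alors que le corollaire est inconditionnel. Le seul ingrédient nécessaire est l'énoncé unilatéral : les relations de Serre GQE sont vérifiées dans $\Uh$ (définition-proposition I.\ref{defi-prop_GQESerre}), ainsi que son analogue coloré; ce résultat est inconditionnel et sa preuve passe par la $H$-trivialisation en rang 1 (théorèmes I.\ref{thm_UhoneHtrivial} et II.\ref{thm_UhatoneHtrivial}), qui construit les éléments ${}^\psi \! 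X_i^{\pm}$ comme séries convergentes et ramène tout à l'argument d'intégrabilité classique pour $\slt$. Votre rédaction, telle quelle, importerait donc silencieusement la conjecture dans un énoncé qui n'en dépend pas. Deuxièmement, les éléments ${}^\psi \! X_i^{\pm}$ ne vivent ni dans $\Uqt (\tglie)$ ni dans son quotient $\bar U$, mais dans la complétion : l'assertion \enquote{vérifiées sur toutes les représentations intégrables} est un énoncé au niveau des opérateurs, les séries agissant grâce à la nilpotence locale des générateurs de Chevalley; la factorisation par $\bar U$ n'est donc pas le mécanisme exact, même si votre conclusion demeure correcte.
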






\subsection{Les équations GQE}
Le rang 1 joue un rôle particulier dans la théorie des algèbres de Kac-Moody colorées et des algèbres GQE; il constitue la brique fondamentale à partir de laquelle le cas général est développé (comme c'est le cas pour les algèbres de Kac-Moody classiques et les groupes quantiques). Le rang 1, à la différence du cas général, peut être traité explicitement : plusieurs phénomènes sont en \hbox{rang 1} directement reliés à certaines équations linéaires (de dimension infinie). À chaque paire de coloriages $(\psi_1,\psi_2)$ du \hbox{cristal $\BB$}, correspond une telle équation, que l'on note ${\widehat{\mathscr E}_{\! \! \: R}(\psi_1,\psi_2)}$. \\


La proposition suivante illustre un premier lien entre les algèbres de Kac-Moody de rang 1 et les \hbox{équations ${\widehat{\mathscr E}_{\! \! \: R}}$}.

\begin{introprop}
Soit $\psi$ un coloriage. Les relations de la présentation de Serre en rang 1 admettent une déformation dans $\Uck{\slt,\psi}$ si et seulement si ${\widehat{\mathscr E}_{\! \! \: R}(\psi,\psi)}$ admet une solution.
\end{introprop}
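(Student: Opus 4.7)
The plan is to reduce the question to an explicit calculation on the irreducible $\psi$-integrable modules $\Lrep{n,\psi}$, $n \in \NN$, and then to identify the resulting family of compatibility conditions as the equation ${\widehat{\mathscr E}_{\! \! \: R}(\psi,\psi)}$.

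At rank $1$, the Serre presentation consists of the non-deformable relations of \eqref{eq_KM_nondeformableintro}, which are automatically enforced in $\Uck{\slt,\psi}$ by construction, together with the single Cartan-type commutation $[X^+, X^-] = \check\alpha$ of \eqref{eq_KM1_intro}; the Serre relations \eqref{eq_KM2_intro} are vacuous here. So the only question is whether there exists a \emph{deformed Cartan} element $Y$ of $\Uck{\slt,\psi}$ -- necessarily lying in the closure of the Cartan part, i.e.\ a function of $\check\alpha$ in the Tannakian sense -- such that $[X^+, X^-] = Y$ holds as operators on every $\psi$-integrable representation.

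I would then compute, from the explicit formulae for the action of $X^\pm$ on $\Lrep{n,\psi}$, the eigenvalue of $[X^+, X^-]$ on the basis vector $b_{n,p}$, namely
\begin{equation*}
c_\psi(n,p) \; := \; \psi^-(n,p+1)\,\psi^+(n,n-p) \; - \; \psi^+(n,n-p+1)\,\psi^-(n,p),
\end{equation*}
with the convention that $\psi^\pm(n,k)$ vanishes whenever $k$ lies outside $\{1, \dots, n\}$. A function of $\check\alpha$ in the Tannakian completion acts on $b_{n,p}$ by $f(n - 2p)$ for some family $f : \ZZ \to R$. Hence an element $Y$ satisfying $[X^+, X^-] = Y$ exists if and only if the scalars $c_\psi(n,p)$ factor through the weight map $(n,p) \mapsto n - 2p$, i.e.\ if and only if there is $f : \ZZ \to R$ with $f(n-2p) = c_\psi(n,p)$ for all admissible $n, p$.

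The final step is to recognise that this last condition is by construction the solubility of ${\widehat{\mathscr E}_{\! \! \: R}(\psi,\psi)}$: a solution \emph{is} such a family $f$, the equations being precisely the coincidences $c_\psi(n,p) = c_\psi(n', p')$ imposed whenever $n - 2p = n' - 2p'$. Given a solution $f$ one recovers $Y = f(\check\alpha)$ in the Tannakian closure of the Cartan part; conversely, given a deformation of the Serre relation, its eigenvalues on the collection of $\Lrep{n,\psi}$ furnish a solution. The main obstacle I anticipate is the clean identification of the ``functions of $\check\alpha$'' inside $\Uck{\slt,\psi}$: the Tannakian completion contains weight-zero elements (for instance $X^+ X^-$) that are \emph{not} in the closure of $R[\check\alpha]$, and one must argue that the deformed Serre relation is genuinely constrained to seek $Y$ in that closure. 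I would handle this by exploiting the semisimplicity of the $\check\alpha$-action with integer weights, together with the fact that every weight space of $\Lrep{n,\psi}$ is free of rank one over $R$, so that the ``Cartan-like'' endomorphisms of the forgetful functor that factor through the weight grading are exactly the families indexed by $\ZZ$.
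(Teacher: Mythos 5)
Your opening reduction is exactly the paper's mechanism (an element of $\Uck{\slt,\psi}$ is zero if and only if it acts as zero on every $\Lrep{n,\psi}$, cf.\ remark \ref{rem_Uhatperfect}), and your eigenvalue computation is correct: on $b_{n,p}$ the commutator $[X^+,X^-]$ acts by $c_\psi(n,p)=[\psi](n,p+1)-[\psi](n,p)$. The genuine gap is in your final identification step, which you yourself flagged as the delicate one: what you call ``by construction the solubility of ${\widehat{\mathscr E}_{\! \! \: R}(\psi,\psi)}$'' is not the paper's equation. By definition \ref{defi_eqhat}, the unknown of $\eqhat{\psi,\psi}{-1}$ is not a single function $f:\ZZ\to R$ but a whole column matrix $M=(M_a)_{a\in\NN}$ of functions $\ZZ\to R$, and the equation is the triangular linear system
\[
\sum_{0\le a\le p} M_a(n-2p+2a)\,\frac{[\psi](n,p)\!\: !}{[\psi](n,p-a)\!\: !} \;=\; [\psi](n,p+1)\,,\qquad 0\le p\le n\,,
\]
whose algebraic meaning (proposition \ref{prop_eqhatspan}, proved as in I.\ref{prop_GQEeqspan}) is the normally ordered relation $X^+X^-=\sum_{a\in\NN}(X^-)^a\,M_a(\check\alpha)\,(X^+)^a$ in $\Uck{\slt,\psi}$. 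Correspondingly, the ``deformation of the rank-one Serre presentation'' the proposition refers to is not your ansatz $[X^+,X^-]=Y$ with $Y$ in the closure of the Cartan part: it keeps $\check\alpha$ undeformed and deforms the relation into normally ordered form — equivalently, it deforms $X^+$ into a series ${}^\psi X^+=\sum_a (X^-)^a\,\bar S_a(\check\alpha)\,(X^+)^{a+1}$ with $[{}^\psi X^+,X^-]=\check\alpha$ (compare theorem \ref{thm_presentationUhone} and the GQE Serre relations).

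This is not a cosmetic difference: under your reading the ``if and only if'' is false. Your condition demands that $c_\psi(n,p)$ factor through the weight $n-2p$, which ties together components $\BB_n$ and $\BB_{n'}$ of the crystal and fails for a generic admissible colouring; yet for every admissible colouring the equation $\eqhat{\psi,\psi}{-1}$ \emph{does} have a solution, since evaluated at each $n$ it is a finite triangular system with invertible diagonal and one solves recursively in $a$ for the values $M_a(n)$ (proposition \ref{prop_eqhatexist} — this is also why the introduction says a solution ``almost always'' exists in the Tannakian case, in contrast with the $h$-adic case). So a solution of the paper's equation would exist while your deformed relation would not, breaking the implication ``solution $\Rightarrow$ deformation''. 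Your closing worry about isolating the functions of $\check\alpha$ inside the completion, though a legitimate observation (indeed $X^+X^-$ is weight-zero but not Cartan), is therefore resolved differently than you propose: the deformed relation is deliberately sought in the larger space of normally ordered series $\sum_a (X^-)^a\,(\text{Cartan})\,(X^+)^a$, one Cartan coefficient per depth $a$, and the matching of the two actions on the $b_{n,p}$ then reproduces the triangular system above rather than your coincidence conditions $c_\psi(n,p)=c_\psi(n',p')$.
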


Dans le cas où la complétion Tannakienne est remplacée par la complétion \hbox{$h$-adique}, l'équation ${\widehat{\mathscr E}_{\! \! \: R}(\psi_1,\psi_2)}$ est remplacée par l'\emph{équation GQE} ${\mathscr E_{\! \! \; h}(\psi_1,\psi_2)}$.

\begin{introprop}
Soit $\psi$ un coloriage de $\BB$ à valeurs dans $\Kh$. Les relations de la présentation de Serre en rang 1 admettent une déformation dans $\Uh[\slt,\psi]$ si et seulement si l'équation GQE ${\mathscr E_{\! \! \; h}(\psi,\psi)}$ admet une solution.
\end{introprop}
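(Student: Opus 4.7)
Le plan est de décalquer la preuve de la proposition précédente (qui traite le cas de la complétion Tannakienne $\Uhat{\slt,\psi}$), en suivant attentivement les modifications nécessaires pour passer à la complétion $h$-adique. \\

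J'identifierais d'abord précisément le contenu de l'énoncé. En rang 1, la présentation de Serre de $\slt$ se compose des relations non déformables (qui sont vérifiées tautologiquement dans $\Uh[\slt,\psi]$, celle-ci étant construite à partir des représentations $\psi$-intégrables) et de l'unique relation déformable $[X^+,X^-]=\check\alpha$; les relations de Serre proprement dites n'apparaissent pas puisque $C_{ii}=2$. Admettre une déformation revient donc à exhiber un élément $Z \in \Uh[\slt,\psi]$, nécessairement situé dans la partie de Cartan $\UCah[\slt,\psi]$ par considération des poids, tel que $Z$ se spécialise en $\check\alpha$ à $h=0$ et tel que l'identité $[X^+,X^-]=Z$ soit satisfaite dans $\Uh[\slt,\psi]$. \\

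J'exploiterais ensuite la structure explicite des représentations $\psi$-intégrables : toute telle représentation se décompose en une somme directe de modules $\Lh{n,\psi}$, sur lesquels $X^\pm$ agissent via $\psi^\pm$ sur la base $\{b_{n,p}\}_{0 \leq p \leq n}$. Un calcul direct de $[X^+,X^-] \cdot b_{n,p}$ fournit le scalaire
\[
c_\psi(n,p) \ := \ \psi^-(n,p+1) \, \psi^+(n,n-p) \; - \; \psi^+(n,n-p+1) \, \psi^-(n,p) \ \in \ \Kh \, .
\]
Un élément $Z \in \UCah[\slt,\psi]$ est déterminé par la famille des scalaires par lesquels il agit sur chaque espace de poids, de sorte que la relation $[X^+,X^-]=Z$ sera satisfaite sur toutes les représentations $\psi$-intégrables si et seulement si $c_\psi(n,p)$ ne dépend que du poids $n-2p$ de $b_{n,p}$, ce dernier scalaire coïncidant alors avec celui par lequel $Z$ agit sur l'espace de poids correspondant. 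C'est exactement le contenu de l'équation GQE $\mathscr E_h(\psi,\psi)$, dont les inconnues prescrivent, poids par poids, une compatibilité entre les scalaires $c_\psi(n,p)$. \\

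L'implication directe en découle immédiatement : étant donnée une déformation $[X^+,X^-]=Z$, l'évaluation sur chaque $\Lh{n,\psi}$ fournit une solution de $\mathscr E_h(\psi,\psi)$. La réciproque constitue le principal obstacle. À partir d'une solution, il s'agit de construire un véritable élément $Z$ de la complétion $h$-adique $\Uh[\slt,\psi]$, et non seulement de la complétion Tannakienne $\Uhat{\slt,\psi}$ qui est plus grande. L'assemblage est naturel : $Z$ est défini comme une série formelle en $\check\alpha$ et $h$ dont la valeur sur chaque espace de poids est prescrite par la solution. La vérification que cette série converge $h$-adiquement au sein de la sous-algèbre engendrée par $X^\pm$ et $\hlie$ (et non seulement au sein de $\Uhat{\slt,\psi}$) est le point délicat; elle repose précisément sur le contrôle $h$-adique intrinsèque à la formulation de l'équation $\mathscr E_h$, qui garantit que la solution se présente comme une série en $h$ à coefficients polynomiaux en $\check\alpha$. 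C'est en ce point que la topologie $h$-adique, moins permissive, se manifeste et distingue cet énoncé de son analogue Tannakien.
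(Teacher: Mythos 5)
Votre proposition repose sur une identification erronée de ce que signifie ici \og déformer les relations \fg{}, et cette erreur fait échouer l'équivalence dans les deux sens. Vous supposez que la déformation de $[X^+,X^-]=\check \alpha$ prend la forme $[X^+,X^-]=Z$ avec $Z$ dans la sous-algèbre fermée engendrée par la partie de Cartan, puis vous lisez l'équation GQE comme la condition que le scalaire $c_\psi(n,p)$ ne dépende que du poids $n-2p$. Or la considération des poids montre seulement que $[X^+,X^-]$ est de degré zéro, et la composante de degré zéro de $\Uh[\slt,\psi]$ contient tous les monômes $(X^-)^a H^b (X^+)^a$ : elle ne se réduit nullement aux puissances de $H$. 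La déformation considérée dans la thèse est la relation en forme normale
$$ X^+ X^- \ = \ \sum_{a \in \NN} \, (X^-)^a \, M_a(H) \, (X^+)^a \, , $$
dont l'inconnue est la suite $M = {(M_a)}_{a \in \NN} \in \Sinfh$ de polynômes de $\KK[u] \cch$ tendant vers zéro, et l'équation GQE $\GQEeqh[\psi,\psi]{-1}$ est très exactement le système linéaire sur $M$ obtenu en évaluant les deux membres sur chaque vecteur $b_{n,p}$ de $\Lh{n,\psi}$, à savoir, pour tous $n,p \in \NN$ avec $p \leq n$,
$$ \sum_{0 \leq a \leq p} M_a(n-2p+2a) \ \frac{[\psi] (n,p) !}{[\psi] (n,p-a) !} \ = \ \left\{ \begin{array}{cl} [\psi](n,p+1) & \quad \text{si } p < n \, , \\ 0 & \quad \text{si } p = n \end{array} \right. $$
(voir la proposition I.\ref{prop_GQEeqspan}; l'énoncé Tannakien que vous pensiez décalquer a d'ailleurs lui aussi cette forme à coefficients $M_a$, et non la forme Cartan).

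Pour constater que votre caractérisation est fausse, considérez le coloriage $\psi$ défini par $\psi^+ = 1$ et $\psi^-(n,k) = k(n-k+1)\bigl(1 + h \, k(n-k+1)\bigr)$, de sorte que $[\psi] = f \, (1+h f)$ avec $f(u,v) := v(u-v+1)$. Les quatre axiomes sont vérifiés (pour l'axiome de Verma, noter que $f(-u-2,v) = f(u,u+v+1) = -v(u+v+1)$), donc $\psi$ est $h$-admissible et $\GQEeqh[\psi,\psi]{-1}$ admet une solution d'après le théorème I.\ref{thm_GQEeq}; pourtant la valeur propre de $[X^+,X^-]$ sur $b_{n,p}$ vaut $(n-2p)\bigl(1 + h\,(n + 2p(n-p))\bigr)$, qui n'est pas une fonction du seul poids : pour $(n,p) = (3,1)$ et $(5,2)$, de même poids $1$, on obtient $1+7h$ et $1+17h$. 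Sous votre lecture, ce coloriage n'admettrait donc aucune déformation, ce qui contredit l'équivalence à établir. Enfin, votre souci de convergence $h$-adique est légitime mais porte sur le mauvais objet : une fois l'inconnue correctement identifiée, la convergence de la série $\sum_{a \in \NN} (X^-)^a M_a(H) (X^+)^a$ est immédiate, puisque $M \in \Sinfh$ et que le $\Kh$-module $\Uh[\slt,\psi]$ est séparé et complet (il est topologiquement libre, lemme I.\ref{lem_Uhonehfree}); ce qui distingue le cas $h$-adique du cas Tannakien n'est pas l'assemblage d'un élément de Cartan, mais la contrainte --- polynomialité des $M_a$ et convergence vers zéro --- imposée à l'espace $\Sinfh$ dans lequel on cherche la solution.
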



Formellement, les équations ${\widehat{\mathscr E}_{\! \! \: R}}$ et les équations GQE sont identiques, seul l'espace dans lequel sont recherchées les solutions diffère. Dans le cas Tannakien, l'espace est suffisamment large, de telle sorte qu'une solution existe presque toujours (si $R$ est un corps, une solution existe toujours). Dans le cas $h$-adique, l'espace des solutions est très nettement restreint. Quel est l'analogue $h$-adique des coloriages admissibles ? Est-il possible d'en donner une caractérisation ? Nous avions déjà discuté à ce propos du caractère nécessaire des axiomes d'un coloriage $h$-admissible, le résultat suivant apporte une réponse complète.



\begin{introthm}
\label{introthm_GQEeq}
Soit $\psi$ un coloriage du cristal $\BB$ à valeurs dans $\Kh$. L'équation GQE ${\mathscr E_{\! \! \; h}(\psi,\psi)}$ admet une solution si et seulement \hbox{si $\psi$} est $h$-admissible.
\end{introthm}

Notons que ce dernier résultat, et plus généralement, l'étude des équations GQE, forment le coeur technique de cette thèse (voir l'appendice I.\ref{app_GQE}).

\subsection{Les algèbres GQE sans relations de Serre}
L'algèbre de Lie $\tilde \glie$ est définie à partir des mêmes relations que $\glie$, hormis les relations de Serre. Le théorème suivant affirme que toute algèbre GQE peut être obtenue comme quotient d'une déformation formelle de $\Utc$.

\begin{introthm}
Soit $\psi$ un $I$-coloriage $h$-admissible. Il existe une déformation formelle \hbox{${\glie_{\! \! \; F}}$-pointée} $\Uth$ de l'algèbre $\Utc$ qui admet l'algèbre GQE ${\, \Uh}$ comme quotient.
\end{introthm}

L'algèbre $\Uth$ est appelée une \emph{algèbre GQE sans relation de Serre}. Notons qu'il est possible de donner une présentation de l'algèbre $\Uth$ à partir de solutions des équations GQE ${\mathscr E_{\! \! \; h}(\psi_i,\psi_i)}$. \\

On désigne par $\FUh$ la complétion $h$-adique de l'algèbre $\UF$. Le diagramme suivant présente une hiérarchie entre les différentes algèbres introduites jusqu'à présent dans le cadre des algèbres GQE.
$$ \shorthandoff{;:!?} \xymatrix @!0 @C=6pc {
\FUh \ar@{->>}[r] & \, \Uth \ar@{->>}[r] & \, \Uh \, .
} $$

\subsection{La conjecture GQE}
Sont proposés dans cette thèse un programme et une méthode pour la résolution du problème de déformation de la catégorie $\Ointc$. Dans le cas où la direction de déformation est donnée par un $I$-coloriage $h$-admissible, une réponse positive au problème est énoncée en tant que conjecture. Notons que la formulation de la conjecture diffère sensiblement de celle du problème initial, et ce afin de souligner la simplification apportée à celui-ci par l'introduction de la notion de déformation formelle (il est possible de montrer l'équivalence des deux énoncés).

\begin{introconj}[Conjecture GQE]
Soit $\psi$ un $I$-coloriage $h$-admissible. Pour chaque représentation $V$ de $\glie$ dans $\Ointc$, l'action de ${\glie_{\! \! \; F}}$ sur $V$ admet une déformation formelle $\psi$-intégrable.
\end{introconj}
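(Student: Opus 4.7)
The plan is to construct the desired $\psi$-integrable deformation of $V$ order by order in the $h$-adic filtration, using the rank-one case as the building block and treating the non-deformable cross-index relations $[X_i^+, X_j^-] = 0$ ($i \neq j$) as the central obstruction.

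First, fix notation. Write $V_{\! R} := V \otimes_\KK \Kh$, on which $\hlie$ acts by $\Kh$-linear extension of its (undeformed) action on $V$. I seek operators $X_i^\pm(h) \in \Endh(V_{\! R})$ that (i) shift the $\hlie$-weight grading by $\pm \alpha_i$, (ii) reduce modulo $h$ to the classical Chevalley action $X_i^\pm$ on $V$, (iii) satisfy $[X_i^+(h), X_j^-(h)] = 0$ for all $i \neq j$, and (iv) for each $i \in I$ make $V_{\! R}$ into a direct sum of $\slt$-modules of the form $\Lrep{n,\psi_i}$. Conditions (i)--(iii) encode the non-deformable relations of $\glieF$ (the relations $[\hlie,\hlie]=0$ being automatic), while (iv) is the $\psi$-integrability condition.

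Second, I treat each $i \in I$ independently. Since $V \in \Ointc$ and $\psi_i$ is $h$-admissible, the restriction of $V$ to the $\slt$-copy indexed by $i$ decomposes as a direct sum of finite-dimensional irreducibles $L(n)$. Theorem~\ref{introthm_GQEeq} provides a solution of the rank-one equation $\mathscr E_h(\psi_i,\psi_i)$, from which each summand of dimension $n+1$ inherits a canonical $\psi_i$-deformation isomorphic to $\Lrep{n,\psi_i}$. Assembling these yields provisional operators $\tilde X_i^\pm(h)$ satisfying (i), (ii), and (iv), but possibly not (iii).

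Third, the heart of the proof is to modify the $\tilde X_i^\pm(h)$ so that (iii) is restored. The commutators $[\tilde X_i^+(h), \tilde X_j^-(h)]$ ($i \neq j$) are weight-graded operators on $V_{\! R}$ divisible by $h$. The plan is to add corrections $h^n \delta_{i,n}^\pm$ of the appropriate weight to $\tilde X_i^\pm(h)$, inductively in $n$, subject to the constraint that they preserve the rank-one $\psi_i$-integrability of the previous step. At each stage, the defect of (iii) modulo $h^{n+1}$ defines a Chevalley--Eilenberg-type $2$-cocycle on $\glieF$ with values in weight-graded endomorphisms of $V$, and the inductive step reduces to showing that this cocycle lies in the image of the differential induced by the constrained space of admissible corrections.

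The principal obstacle, and the reason the statement is left as a conjecture, is the vanishing of these higher-order obstructions: the admissible corrections at order $n$ are tightly restricted by the rigidity of the rank-one deformation, and it is not at all clear that the cross-index cocycles always lie in the resulting coboundary space. Two strategies look promising. The first is to lift the construction to Verma modules, where the Verma axiom in the definition of $h$-admissibility seems designed precisely to make such a construction unobstructed; one would then descend to integrable quotients in $\Ointc$. The second is to exploit the tower $\FUh \twoheadrightarrow \Uth \twoheadrightarrow \Uh$ and try to produce a representation of $\Uth$ on $V_{\! R}$ from the earlier Serre-free deformation result, reducing the conjecture to the (purely rank-one) check that the kernel of $\Uth \twoheadrightarrow \Uh$ annihilates $V_{\! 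R}$.
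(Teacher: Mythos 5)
You should know at the outset that the paper contains no proof of this statement: it is precisely the \emph{GQE conjecture}, the author states that a proof is ``currently in preparation'' for a forthcoming paper (adding only that the Serre-relation-free algebras $\Uth$ ``will play a central role''), and the later results of Part I (theorems \ref{thm_Uhdned}, \ref{thm_presentationUh}, \ref{thm_Uhrepresentations}) are proved conditionally on it. The only unconditional evidence in the text is the rank-one case, where the modules $\Lh{n,\psi_i}$ are themselves the deformation (deformation axiom together with lemma \ref{lem_defLcAone}), the classical colouring $\psiclI$ (trivial), and the quantum colouring $\psiqI$ via the known deformation theory of $\Uh[\glie]$. Your sketch is honestly calibrated — you yourself flag the vanishing of the higher-order cross-index obstructions as unproved — but by the same token it cannot be accepted as a proof: that inductive vanishing \emph{is} the content of the conjecture, and the rigidity you invoke to set up the induction (uniqueness of the rank-one $\psi_i$-integrable deformation, theorem \ref{thm_Uhintegrable}) cuts both ways, since it simultaneously shrinks the space of admissible corrections $h^n \delta_{i,n}^{\pm}$ in which your cocycles would have to be coboundaries.

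There is also a concrete flaw in your second fallback. You propose to reduce the conjecture to ``the (purely rank-one) check that the kernel of $\Uth \twoheadrightarrow \Uh$ annihilates $V_{\! R}$''. This reduction is vacuous once one has what it presupposes: $\Uh$ is \emph{defined} as the quotient of $\FUh$ by the ideal $\Ikerh$ of elements acting by zero on every $\psi$-integrable representation, so any $\psi$-integrable deformation of $V$ automatically factors through $\Uh$, and the relations \eqref{eq_UthXii} already hold on it by proposition \ref{prop_GQEeqspan}, making it a $\Uth$-module for free. The genuinely missing step lies upstream: producing \emph{any} $\FUh$-module structure on ${V \! \otimes_\KK \! \Kh}$ that is $\psi_i$-integrable for all $i$ simultaneously. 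Theorem \ref{thm_Uth}, which you cite as the ``earlier Serre-free deformation result'', deforms the algebra $\Utc$, not its modules, so it supplies no representation of $\Uth$ on $V_{\! R}$ by itself; and the fact that the kernel of $\vpith$ contains the GQE Serre relations (definition-proposition \ref{defi-prop_GQESerre}) is again a statement about elements acting by zero on representations one has yet to construct. Your first fallback — lifting to Verma modules, where the Verma axiom of $h$-admissibility indeed looks tailored to make the construction unobstructed, then descending to $\Ointc$ — is the more promising route and is consistent with the paper's announced programme; as submitted, however, the proposal is a programme, not a proof.
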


Au moment où ces lignes sont écrites, nous travaillons également à une résolution de la conjecture GQE (où les algèbre GQE sans relations de Serre jouent un rôle central). Notons que la conjecture est toutefois vérifiée en rang 1, les représentations $\Lrep{n,\psi_i}$ fournissant, par définition et au vu de l'axiome de déformation vérifié par $\psi$, une solution. \\


Les algèbres quantiques standards fournissent par ailleurs l'exemple d'une autre situation pour laquelle la conjecture GQE est établie. On rappelle en effet que les représentations de $\glie$ dans $\Ointc$ admettent des déformations formelles $\psiqI$-intégrables, obtenues comme représentations de l'algèbre quantique $\Uh[\glie]$. Une preuve complète de la conjecture GQE pourra fournir une nouvelle preuve de l'existence d'une déformation quantique des représentations dans $\Ointc$. Notons enfin que la conjecture GQE est trivialement vérifiée pour le $I$-coloriage $h$-admissible $\psiclI$. \\

Dans certains des énoncés suivants, nous supposons que nous sommes dans le cadre du programme proposé dans cette thèse, c'est-à-dire que la conjecture GQE est vérifiée pour toute algèbre de Kac-Moody $\glie$ et pour toute $I$-coloriage $h$-admissible $\psi$. Notons que les théorèmes sont toutefois pleinement établis en \hbox{rang 1}.

\subsection{Programme de résolution}
Rappelons que l'énoncé de la conjecture GQE apporte une réponse au problème de déformation colorée dans le cas des $I$-coloriages $h$-admissibles. Notons d'autre part que tous les coloriages du cristal $\BB$ peuvent s'obtenir à partir de coloriages $h$-admissibles, par spécialisation du paramètre $h$ à une valeur autre que zéro. Dans le cas des représentations $\psi$-intégrables, la spécialisation du paramètre formel $h$ à une valeur distincte de zéro doit toutefois être menée avec précaution (les spécialisations de l'anneau $\Kh$ à des valeurs non nulles sont triviales). La théorie des algèbres de Kac-Moody colorées permet alors de surmonter ces dernières difficultés et donne ainsi un cadre pour une solution générale au problème de déformation colorée.

\begin{introthm}
\label{introthm_deformpb}
Soit $\psi$ un ${I \! \! \;}$-coloriage admissible. On suppose que l'anneau $R$ est principal. Pour toute \hbox{représentation $V$} dans $\Ointc$, il est possible d'étendre l'action de \hbox{$\hlie$} sur $V$ en une action \hbox{$\psi$-intégrable} de $\glieF$ sur ${V \! \otimes_\KK \! R}$.
\end{introthm}


\subsection{La catégorie $\Oint{\glie,\psi}$}
On désigne \hbox{par $\Lc{\lambda}$} l'unique représentation irréductible de $\glie$ de plus haut \hbox{poids $\lambda \in \hlie^\ast$}. On rappelle que si $\lambda$ est \emph{dominant}, la représentation $\Lc{\lambda}$ peut être définie par générateurs et relations. Elle est engendrée par un vecteur $v_\lambda$ et sujette aux relations suivantes :
\begin{equation} \label{eq_Lcpres}
\check \mu \! \; . \; \! v_\lambda \ = \ \langle \lambda, \check \mu \rangle \, v_\lambda \, , \quad X^+_i \! \; . \! \; v \ = (X^-_i)^{\langle \lambda, \check \alpha_i \rangle +1} . \! \; v_\lambda \ = \ 0 \quad \quad \ (i \in I, \ \check \mu \in \hlie) \, .
\end{equation}
On rappelle que les représentations $\Lc{\lambda}$, avec $\lambda$ dominant, sont deux à deux distinctes et sont les uniques représentations irréductibles de la catégorie $\Ointc$, par ailleurs complètement réductible. \\


Soit maintenant $\psi$ un ${I \! \! \;}$-coloriage. La catégorie $\Oint{\glie,\psi}$ est composée des représentations $\psi$-intégrables dont les poids satisfont aux mêmes conditions que celles imposées aux représentations intégrables classiques dans $\Ointc$. Le théorème suivant affirme que la catégorie $\Oint{\glie,\psi}$ est parallèle à son analogue classique $\Ointc$.

\begin{introthm}
\label{introthm_Opsi}
Soit $\psi$ un ${I \! \! \;}$-coloriage admissible du cristal $\BB$. On suppose que l'anneau $R$ est principal.
\begin{enumerate}
\item[1)] Soit $\lambda$ dominant. La représentation $\Lc{\lambda}$ admet une unique déformation $\psi$-intégrable, que l'on note $\Lrep{\glie,\lambda,\psi}$.
\item[2)] Soit $\lambda$ dominant. La représentation $\Lrep{\glie,\lambda,\psi}$ de $\Uhat{\glie,\psi}$ est engendrée par un vecteur $v_\lambda$ et sujette aux relations \eqref{eq_Lcpres}.
\item[3)] Les représentations $\Lrep{\glie,\lambda,\psi}$ ($\lambda$ dominant) sont deux à deux distinctes et sont les uniques représentations indécomposables dans $\Oint{\glie,\psi}$.
\item[4)] Toute représentation dans $\Oint{\glie,\psi}$ est isomorphe à une somme directe de représentations $\Lrep{\glie,\lambda,\psi}$ avec $\lambda$ dominant.
\end{enumerate}
\end{introthm}

\subsection{Déformations formelles}
Le théorème suivant affirme, qu'outre la structure d'algèbre associative, c'est l'entière structure de diagramme ${\glie_{\! \! \; F}}$-pointé de $\UUc$  qui est déformée par les algèbres (ou diagrammes) GQE. Le caractère nécessaire et suffisant des axiomes d'une $I$-coloriage $h$-admissible est dans le même temps établi.

\begin{introthm}
\label{introthm_Uhdned}
Soit $\psi$ un ${I \! \! \;}$-coloriage du cristal ${\, \BB}$. Les trois assertions suivantes sont équivalentes.
\begin{enumerate}
\item[i)] Le ${I \! \! \;}$-coloriage $\psi$ est un $I$-coloriage $h$-admissible.
\item[ii)] L'algèbre $\Uh$ est une déformation formelle de $\Uc$.
\item[iii)] Le diagramme d'algèbre $\UUh$ est une déformation formelle de $\UUc$.
\end{enumerate}
\end{introthm}

\subsection{Présentation des algèbres GQE}
Lorsque les algèbres GQE sont des déformations formelles de $\Uc$, l'existence d'une déformation des relations de la présentation de Serre est assurée. Le résultat suivant précise ce fait et exhibe une présentation par générateurs et relations explicite.

\begin{introthm}
Soit $\psi$ un $I$-coloriage $h$-admissible. L'algèbre GQE ${\, \Uh}$ est topologiquement engendrée par les éléments $X_i^\pm$, l'espace vectoriel $\hlie$, sujette aux relations non déformables \eqref{eq_KM_nondeformableintro} de $\glie$ et aux relations suivantes:
\begin{equation*} \label{eq_GQESerre} \left\{ {\renewcommand{\arraystretch}{2}\begin{array}{l}
\big[ {}^\psi \! X^+_i, X^-_i \big] = \, \check \alpha_i \quad \quad (i \in I) \, , \\
\sum_{k+k' = 1 - C_{ij}} {(-1)}^k \, {1 - C_{ij} \choose k} \, \big( {}^\psi \! X_i^{\pm} \big)^k \! \; X^{\pm}_j \! \; \big( {}^\psi \! X_i^{\pm} \big)^{k'} \ = \ 0 \quad \quad (i,j \in I, \ i \neq j) \, , \\
\text{avec } \ {}^\psi \! X_i^{\pm} \ := \ \sum_{a \in \NN} \, \big( X_i^\mp \big)^a \! \; {\bar S^{\! \: \psi_i}_{\! \! \; h,a}} (\pm \check \alpha_i) \! \; \big( X^\pm_i \big)^{a+1}
\end{array}} \right.
\end{equation*}
où ${({\bar S^{\! \: \psi_i}_{\! \! \; h,a}})}_{a \in \NN}$ est une suite convergeant vers $0$ et à valeurs dans la sous-algèbre topologiquement engendrée par $\check \alpha_i$.
\end{introthm}

Les relations $\sum_{k+k' = 1 - C_{ij}} {(-1)}^k \, {1 - C_{ij} \choose k} \, \big( {}^\psi \! X_i^{\pm} \big)^k X^{\pm}_j \big( {}^\psi \! X_i^{\pm} \big)^{k'} = 0$ ($i \neq j$) sont appelées les \emph{relations de Serre GQE}. \\

Notons que les suites ${{({\bar S^{\! \: \psi_i}_{\! \! \; h,a}})}_{\! \! \: a \in \NN}}$ sont obtenues directement à partir des solutions de certaines équations GQE associées aux coloriages $h$-admissibles $\psi_i$.

\subsection{$\hlie$-trivialité}
Il est connu, dans le cas où $\glie$ est de dimension finie, que toute déformation formelle de $\Uc$ est triviale. Le théorème \ref{introthm_htrivial} énonce un résultat sensiblement plus fort dans le cas des algèbres GQE. Notons que ce résultat avait été établi par Drinfeld \cite{Dtrivial} dans le cas particulier de l'algèbre quantique $\Uh[\glie]$.

\begin{introthm}
\label{introthm_htrivial}
Si $\glie$ est de dimension finie, il existe un isomorphisme fixant $\hlie$ entre chaque algèbre GQE (associée à $\glie$) et la déformation formelle constante de $\Uc$.
\end{introthm}

Dans le cas des algèbres de Kac-Moody colorées, la $\hlie$-trivialité est à ce jour conjecturée.

\begin{introconj}
Si $\glie$ est de dimension finie, les algèbres de Kac-Moody colorées (associées à $\glie$) sont deux à deux $\hlie$-isomorphes.
\end{introconj}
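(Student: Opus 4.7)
The plan is to establish the pairwise $\hlie$-isomorphisms by combining the representation-theoretic rigidity afforded by Theorem \ref{introthm_Opsi} with the already-established $\hlie$-triviality of GQE algebras (Theorem \ref{introthm_htrivial}), and then transferring the result to the Tannakian setting via a suitable reconstruction argument. Throughout, fix two admissible $I$-colourings $\psi_1, \psi_2$ of $\BB$ with values in a principal $\KK$-algebra $R$, and assume $\glie$ is of finite type.

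First, I would reduce to a universal situation over $\Kh$. Every admissible $I$-colouring $\psi$ over $R$ arises as a specialisation of some $h$-admissible $I$-colouring $\tilde \psi$ over $\Kh$, obtained by choosing polynomial lifts of the functions $\psi_i^\pm(n,k)$ along a $\KK$-algebra morphism $\Kh \to R$ sending $h$ to an element making all the relevant values invertible. Thus it suffices to produce a compatible family of $\hlie$-isomorphisms at the $h$-admissible level and show that they descend to admissible colourings over $R$.

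Second, I would pass to representations. By Theorem \ref{introthm_Opsi}, for each admissible $I$-colouring $\psi$ the category $\Oint{\glie,\psi}$ is semisimple with indecomposables $\Lrep{\glie,\lambda,\psi}$ indexed by dominant weights $\lambda$. In finite type, the $\hlie$-character of $\Lrep{\glie,\lambda,\psi}$ coincides with the classical Weyl character (the weight-space decomposition is preserved by the $\psi$-integrability axiom since $\hlie$ is not deformed), so the underlying $\hlie$-graded $R$-module of $\Lrep{\glie,\lambda,\psi}$ is canonically independent of $\psi$. Using Theorem \ref{introthm_htrivial} applied to the corresponding GQE algebras $\Uh[\glie,\tilde \psi_1]$ and $\Uh[\glie,\tilde \psi_2]$, I would construct an equivalence $F_\lambda : \Lrep{\glie,\lambda,\psi_1} \toiso \Lrep{\glie,\lambda,\psi_2}$ of $\hlie$-graded $R$-modules together with a compatible transport of the actions of the Chevalley generators. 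Assembling these yields an equivalence of categories $F : \Oint{\glie,\psi_1} \to \Oint{\glie,\psi_2}$ commuting with the forgetful functor to $\hlie$-graded $R$-modules.

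Third, I would apply Tannakian reconstruction. By definition, $\Uhat{\glie,\psi}$ is the subalgebra of $\AR(\glie,\psi)$ topologically generated by $X_i^\pm$ and $\hlie$, viewed inside the endomorphisms of the forgetful functor from $\psi$-integrable representations. The equivalence $F$, commuting with the $\hlie$-forgetful functor, induces an isomorphism on the endomorphism algebras of these forgetful functors; it remains to check that this isomorphism identifies the Chevalley subalgebras, so that it restricts to an $\hlie$-isomorphism $\Uhat{\glie,\psi_1} \toiso \Uhat{\glie,\psi_2}$.

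The main obstacle is the construction of the equivalence in the second step. Theorem \ref{introthm_htrivial} yields an $\hlie$-fixing isomorphism only at the $h$-adic level, and the underlying gauge (produced via Drinfeld-style obstruction theory) is non-canonical and a priori transcendental in $h$. Transporting it to general principal $R$ requires showing that one may select the gauge so that the image of each Chevalley generator $X_i^\pm$ lies in the Tannakian closure $\Uhat{\glie,\psi_2}$, not merely in the larger $h$-adic completion $\Uh[\glie,\tilde \psi_2]$; equivalently, the transported action on each $\Lrep{\glie,\lambda,\psi_2}$ must be $R$-linear and compatible with the explicit description of $\psi_2$. I expect this step to require either a Hochschild cohomology computation adapted to the ${\glie_{\! \! \; F}}$-pointed category of deformations, together with the explicit deformed Serre presentation of GQE algebras to ensure the gauge can be chosen polynomially in $h$, or an indirect argument using the rigidity of the semisimple categories $\Oint{\glie,\psi}$ in finite type to reconstruct the isomorphism directly from the combinatorial data of the representations $\Lrep{n,\psi_i}$.
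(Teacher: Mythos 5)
You should know at the outset that the paper contains no proof of this statement: it is recorded as a conjecture, with the text immediately preceding it saying explicitly that for coloured Kac-Moody algebras "la $\hlie$-trivialité est à ce jour conjecturée". The only case the thesis establishes is rank 1 (Theorem \ref{thm_UhatoneHtrivial}), proved by solving the linear equation $\eqhat{\psi_2,\psi_1}{0}$ directly over $R$ (Proposition \ref{prop_eqhatexist}) to obtain an explicit gauge fixing $X^-$ and $H$ and sending $X^+$ to a convergent series $\sum_{a} (X^-)^a M_a(H) (X^+)^{a+1}$. Judged on its own merits, your proposal does not close the open problem, because your Step 2 assumes precisely the point at issue. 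Matching the $\hlie$-graded modules is immediate from equality of characters over a principal ring (Theorem \ref{introthm_Opsi}, itself conditional on the GQE conjecture), but producing $F_\lambda$ "together with a compatible transport of the actions of the Chevalley generators" is the entire content of $\hlie$-triviality: one must construct, node by node, gauges of the rank-1 type, then verify (i) that the transported generators satisfy the deformed cross relations and deformed Serre relations of $\Uhat{\glie,\psi_2}$, so that the node-wise gauges assemble into a single algebra morphism, and (ii) that each transported $X_i^+$ lies in the subalgebra topologically generated by the Chevalley generators and $\hlie$ — in rank 1 this is the convergence of an explicit series, and no higher-rank analogue is offered. Your closing paragraph names this obstacle but only gestures at "a Hochschild computation or a rigidity argument"; the relevant relative cohomology group controlling $\hlie$-fixing gauges is never identified, and the paper's non-$\glieF$-triviality theorem (two coloured algebras are $\glieF$-pointed isomorphic only for congruent colourings) shows that naive pointed rigidity fails, so the mutual compatibility of the node-wise gauges is a genuine open constraint — which is exactly why the paper leaves the statement conjectural even while proving the GQE analogue (Theorem \ref{introthm_htrivial}).

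Your Step 1 reduction also fails as stated, and in an instructive way. Proposition \ref{prop_hadmissible} lifts colourings with values in a \emph{field} to $h$-admissible colourings with values in $\KK[h]$, which can then be specialised at $h=1$; there is in general no $\KK$-algebra morphism $\Kh \to R$ "sending $h$ to an element", since $\Kh$ is a power series ring. More seriously, your descent runs in the wrong direction: the trivialising gauges of Theorem \ref{introthm_htrivial} have coefficients $\bar S^{\psi_i}_{h,a} \in \KK[u] \cch$, genuine power series in $h$, and the thesis itself stresses that the formal parameter "ne peut pas être spécialisé à une valeur autre que zéro" without special precautions; the transfer mechanism it develops for this purpose (admissible ring extensions and $R'$-forms, Proposition \ref{prop_form} and Lemma \ref{lem_admissiblepair}) applies to representations, not to the trivialising algebra isomorphism. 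The rank-1 proof succeeds precisely by abandoning the $h$-adic gauge and solving $\eqhat{\psi_2,\psi_1}{0}$ over $R$ itself, where the unknowns range over arbitrary functions in $R^\ZZ$ rather than polynomials converging to zero, and admissibility of $\psi_2$ makes the resulting triangular system invertible. As written, your plan therefore reduces the conjecture to itself, plus a specialisation step that is unavailable.
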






\subsection{Non $\glieF$-trivialité}
La trivialité en rang fini (établie ou conjecturée) des algèbres GQE et des algèbre de Kac-Moody colorées mène à la remarque suivante : les objets introduits dans cette thèse sont, en tant qu'algèbres associatives, très proches de $\Uc$. Néanmoins, les identifications sont en général difficiles à mettre en oeuvre. L'intérêt des algèbres GQE et des algèbre de Kac-Moody colorées est confirmé par le problème  de déformation colorée, qui on le rappelle avait motivé leur introduction : nous ne recherchons pas seulement une déformation de l'action de l'algèbre $\Uc$ sur ses représentations, mais plus précisément une déformation de l'action de chacun des générateurs de Chevalley de $\Uc$ (rappelons par ailleurs que ces contraintes ont été imposées dans l'optique d'une réalisation du principe d'interpolation de Langlands). \\

La structure algébrique adaptée à ce contexte est ici la structure d'algèbre \hbox{${\glie_{\! \! \; F}}$-pointée}, qui retient, entre autres, et la structure d'algèbre associative, et le choix des générateurs de Chevalley. Les théorèmes suivant affirment alors que deux ${I \! \! \:}$-coloriages différents de $\BB$ mènent à deux déformations différentes de $\glie$.

\begin{introthm}
Les structures $\glieF$-pointées de deux algèbres de K-M colorées (respectivement de deux algèbres GQE) sont isomorphes si et seulement si, pour chaque $i \in I$, les deux représentations associées du carquois $\BB$ sont isomorphes.
\end{introthm}

\subsection{Classification}
Une déformation formelle $\AA_h$ du diagramme $\UUc$ est dite \emph{localement $\hlie$-triviale} si les algèbres $\AA_h(i)$ sont des déformations formelles \hbox{$H$-triviales} de $\Uc[\slt]$. \\

On rappelle (voir le théorème \ref{introthm_Uhdned}) que les diagrammes GQE sont des déformations formelles du diagramme $\glieF$-pointé $\UUc$. Le théorème suivant établit que les diagrammes GQE décrivent l'ensemble des déformations formelles localement $\hlie$-triviales de $\UUc$.

\begin{introthm}
L'application suivante est une bijection.
\begin{IEEEeqnarray*}{CCC}
\left \{ {\renewcommand{\arraystretch}{0.85} \begin{array}{c}
\text{${I \! \! \;}$-coloriages} \\
\text{$h$-admissibles}
\end{array}} \right \}
& \ \longrightarrow \ &
\left \{ {\renewcommand{\arraystretch}{0.85} \begin{array}{c}
\text{Déformations formelles} \\
\text{localement $\hlie$-triviales} \\
\text{du diagramme ${\glie_{\! \! \; F}}$-pointé $\UUc$}, \\
\text{à isomorphisme} \\
\text{de diagramme ${\glie_{\! \! \; F}}$-pointé près}
\end{array}} \right \} \\
&& \\
\dnh & \longmapsto & \UUh \, .
\end{IEEEeqnarray*}
\end{introthm}

\subsection{Dualité isogénique entre représentations d'algèbres de Lie}
La dualité de Langlands entre algèbres de Kac-Moody est un exemple d'\emph{isogénie}. Deux algèbres de \hbox{Kac-Moody $\glie$ et $\glie'$} sont dit isogéniques s'il existe une dualité entre les sous-algèbres de Cartan $\hlie$ et $\hlie'$, de forme identique à la dualité de \hbox{Langlands \eqref{eq_Cartanisogenie}} entre $\hlie$ et $\Lhlie$ et où les nombres $d/d_i$ sont remplacés par des nombres $\xi_i \in \NNI$.

\begin{equation} \label{eq_Cartanisogenie}
{\renewcommand{\arraystretch}{1.3}
\begin{array}{ccc}
\hlie & \longleftrightarrow & \hlie' \\
\check \alpha_i & \leftrightarrow & \xi_i \; \! \check \alpha'_i \, .
\end{array}
\quad \quad \quad \quad \quad
\begin{array}{ccc}
\hlied & \longleftrightarrow & {(\hlie')}^\ast \\
\xi_i \; \! \alpha_i & \leftrightarrow & \alpha'_i \, ,
\end{array}
}
\end{equation}

La correspondance entre les algèbres $\hlie$ et $\hlie'$ induit une correspondance entre les représentations de $\hlie$ et $\hlie'$. On peut alors à nouveau se demander si cette dernière correspondance peut être étendue entre les représentations de $\glie$ et $\glie'$. La théorie des algèbres de Kac-Moody colorées permet de répondre à cette question dans le cas des catégories $\Ointc$ et $\Ointc[(\glie')]$.

\begin{introthm}
Pour toute représentation $V$ dans la catégorie $\Ointc$, il existe une représentation de $\glie'$ dans la catégorie $\Ointc[(\glie')]$, dont la restriction à $\hlie'$ est la représentation $\xiV := \bigoplus_{\lambda \in {(\hlie'_\ZZ)}^\ast} V_\lambda$.
\end{introthm}

\subsection{Interpolations isogéniques classique et quantique}
Le principe d'interpolation de Langlands classique (voir le principe \ref{introprinc_inter}) peut être formulé dans le cadre plus général d'une isogénie. La dualité isogénique, énoncée précédemment, est établie comme conséquence d'une réalisation de ce \emph{principe d'interpolation isogénique classique}. Le théorème suivant, à l'aide de la théorie des algèbres de Kac-Moody colorées, affirme que cette réalisation existe et donne en particulier une nouvelle preuve de la dualité de Langlands entre représentations d'algèbres de Lie, telle que formulée dans le théorème \ref{introthm_dualite}. \\

On désigne ici par $R$ l'anneau des fractions rationnelles en la variable $u$, à coefficients dans $\KK$, sans pôle en $0$ et $1$.

\begin{introthm}
\label{introthm_intercl}
Il existe un ${I \! \! \;}$-coloriage $\psi_u$, à valeurs dans $R$, tel que pour toute \hbox{représentation $V$} dans $\Ointc$, il est possible d'étendre l'action de \hbox{$\hlie$} \hbox{sur $V$} en une action $\psi_u$-intégrable de ${\glie_{\! \! \; F}}$ \hbox{sur $\VR = {V \! \otimes_\KK \! R}$} vérifiant les deux points suivants.
\begin{enumerate}
\item[\textbullet] L'action de $X^\pm_i$ sur $\VR$, spécialisée à ${u \! = \! 0}$, est égale à celle de $X_i^\pm$ \hbox{sur $V$}.
\item[\textbullet] L'action de $(X^\pm_i)^{d/d_i}$ sur ${{}^\xi \! \! \; V_{\! R}}$, spécialisée à ${u \! = \! 1}$, fait du $\hlie'$-module $\xiV$ une représentation de $\glie'$ dans $\Ointc[(\glie')]$.
\end{enumerate}
\end{introthm}

Il existe, et ce manière évidente (on rappelle en effet que les catégories $\Ointc$ \hbox{et $\Oint[\! q]{\glie}$} sont similaires), un analogue quantique de la dualité isogénique entre représentations d'algèbre de Lie : pour toute représentation $\Vq$ dans $\Oint[\! q]{\glie}$, il existe une représentation de $\Uq (\glie')$ dans $\Oint[\! q]{\glie}$, dont la restriction à $\hlie'$ est la représentation ${{}^\xi \! \! \; V_{\! \! \: q}}$. Le résultat suivant établit l'existence d'une réalisation du \emph{principe d'interpolation isogénique quantique}. \\


On désigne ici par $R_q$ l'anneau des fractions rationnelles en la \hbox{variable $u$}, à coefficients dans $\KK(q)$, sans pôle en $0$ et $1$.

\begin{introthm}
\label{introthm_interq}
Il existe un ${I \! \! \;}$-coloriage $\psi_u$, à valeurs dans $R_q$, tel que pour toute représentation $\Vq$ dans $\Oint[\! q]{\glie}$, il est possible d'étendre l'action de \hbox{$\hlie$} sur $\Vq$ en une action $\psi_u$-intégrable de ${\glie_{\! \! \; F}}$ \hbox{sur $\VR = {V \! \otimes_{\KK(q)} \! R_q}$} vérifiant les deux points suivants.
\begin{enumerate}
\item[\textbullet] L'action de $X^\pm_i$ sur $\VR$, spécialisée à ${u \! = \! 0}$, est égale à celle de $X_i^\pm$ \hbox{sur $\Vq$}.
\item[\textbullet] L'action de $(X^\pm_i)^{d/d_i}$ sur ${{}^\xi \! \! \; V_{\! R}}$, spécialisée à ${u \! = \! 1}$, fait du \hbox{$\hlie'$-module ${{}^\xi \! \! \; V_{\! \! \: q}}$} une représentation de $\Uq (\glie')$ dans $\Oint[\! q]{\glie'}$.
\end{enumerate}
\end{introthm}



\subsection{Interpolation isogénique colorée}

Le résultat suivant généralise les deux théorèmes précédents et établit l'existence d'une réalisation d'un \emph{principe d'interpolation isogénique colorée}. \\

On désigne ici par $R$ l'anneau des fractions rationnelles en la \hbox{variable $u$}, à coefficients dans $\KK$, sans pôle en $0$ et $1$.

\begin{introthm}
Soient $\psi, \psi'$ des ${I \! \! \;}$-coloriages de $\BB$, à valeurs dans le \hbox{corps $\KK$}. Il existe un ${I \! \! \;}$-coloriage $\psi_u$ de $\BB$, à valeurs dans l'anneau $R$, tel que pour toute \hbox{représentation $V$} dans $\Oint[\KK]{\glie,\psi}$, il est possible d'étendre l'action de \hbox{$\hlie$} sur $V$ en une action $\psi_u$-intégrable de ${\glie_{\! \! \; F}}$ \hbox{sur $\VR = {V \! \otimes_\KK \! R}$} vérifiant les deux points suivants.
\begin{enumerate}
\item[\textbullet] L'action de $X^\pm_i$ sur $\VR$, spécialisée à ${u \! = \! 0}$, est égale à celle de $X_i^\pm$ \hbox{sur $V$}.
\item[\textbullet] L'action de $(X^\pm_i)^{d/d_i}$ sur ${{}^\xi \! \! \; V_{\! R}}$, spécialisée à ${u \! = \! 1}$, fait du $\hlie'$-module $\xiV$ une représentation $\psi'$-intégrable de ${\glie'_{\! \! \; F}}$.
\end{enumerate}
\end{introthm}

Signalons ici que la réalisation des principes d'interpolation (isogénique, de Langlands, classique, quantique et colorée) ne peut pas être réalisée directement à partir des algèbres GQE. Rappelons en effet que le paramètre formel $h$ ne peut pas être spécialisé à une valeur autre que zéro en l'absente de certaines précautions, rendant difficile à première vue toute interpolation. Le relais est à cette fin transmis aux algèbres de \hbox{Kac-Moody} colorées.

\subsection{Contribution au programme de Frenkel-Hernandez}
La réalisation du principe d'interpolation de Langlands quantique, cas particulier du théorème \ref{introthm_interq} en considérant l'\emph{isogénie de Langlands}, apporte une solution à une extension de la conjecture de Frenkel-Hernandez : on rappelle en effet qu'un quotient de l'algèbre de Frenkel-Hernandez $\Uqt (\tglie)$ est contenu dans une algèbre de Kac-Moody colorée (voir le théorème \ref{introthm_GQEclq}). Le résultat suivant affirme ainsi que la conjecture GQE implique la conjecture de Frenkel-Hernandez. \\

On rappelle que $\varepsilon$ désigne une racine primitive de l'unité d'ordre $2d$.

\begin{introthm}
Pour toute représentation $\Vq$ dans la catégorie $\Oint[\! q]{\glie}$, il existe une représentation $\Vqt$ de $\Uqt(\tglie)$ vérifiant les deux points suivants.
\begin{enumerate}
\item[\textbullet] L'action de $X^\pm_i$ sur $\Vqt$ peut être spécialisée à ${t \! = \! 1}$ et est alors égale à l'action de $X_i^\pm$ \hbox{sur $\Vq$}.
\item[\textbullet] L'action de $(X^\pm_i)^{d/d_i}$ sur ${{}^\xi \! \! \; V_{\! \! \: q,t}}$, peut être spécialisée à ${q \! = \! \varepsilon}$ et fait alors du \hbox{$\hlie'$-module ${{}^\xi \! \! \; V_{\! \! \: q,t}}$} une représentation de ${U_{\! \! \; t}} (\Lglie)$ dans $\Oint[\! \! \; t]{\Lglie}$.
\end{enumerate}
\end{introthm}

\subsection{Interpolation de Langlands classique via les groupes quantiques}
La théorie des algèbres de Kac-Moody colorées permet de réaliser le principe d'interpolation de Langlands classique à partir des représentations de l'algèbre quantique $\Uq (\glie)$. Notons que ce résultat suivant est pleinement établi, dans la mesure où la conjecture GQE est vérifiée pour le $I$-coloriage quantique $\psiqI$, et apporte par ailleurs une nouvelle preuve de la dualité de Langlands entre les catégories $\Ointc$ et $\Ointc[(\Lglie)]$ (cf. théorème \ref{introthm_dualite}). \\ 

On rappelle que $\varepsilon$ désigne une racine primitive de l'unité d'ordre $2d$.

\begin{introthm}
\label{introthm_intergrpq}
Soit $V$ une représentation dans $\Ointc$ et notons par $\Vq$ la représentation de $\Uq(\glie)$ déformant $V$.
\begin{enumerate}
\item[\textbullet] L'action de $X^\pm_i$ sur $\Vq$ peut être spécialisée à ${q \! = \! 1}$ et est alors égale à l'action de $X_i^\pm$ \hbox{sur $V$}.
\item[\textbullet] L'action de $(X^\pm_i)^{d/d_i}$ sur $\LVq$ peut être spécialisée à ${q \! = \! \varepsilon}$ et fait alors du $\Lhlie$-module $\LVq$ une représentation de $\Lglie$ dans $\Ointc[(\Lglie)]$.
\end{enumerate}
\end{introthm}

\section{Ouvertures}
\subsection[Dualité de Langlands entre représentations affines classiques]{Dualité de Langlands entre représentations affines \\ classiques}
Soit ici $\glie$ une algèbre de Lie simple de dimension finie. On note $\hat \glie^\sigma$ l'algèbre de Lie affine associée à $\glie$ et à un automorphisme $\sigma$ d'ordre $m \in \NNI$ du diagramme de Dynkin de $\glie$ : c'est l'extension centrale non triviale de l'algèbre de Lie des applications polynomiales contravariantes de $\CC^\times$ dans $\glie$ (on considère les actions de $\ZZ / m \ZZ$ sur $\CC^\times$ et $\glie$, induites respectivement par une racine primitive \hbox{dans $\CC$} d'ordre $m$ et par $\sigma$). Lorsque $m = 1$, l'algèbre affine $\hat \glie^\sigma$ est dite non tordue et notée plus simplement $\hat \glie$. Lorsque $m > 1$, l'algèbre est dite tordue. Remarquons que $\glie$ est une sous-algèbre de Lie de l'algèbre de Lie affine non tordue $\hat \glie$. \\

L'algèbre de Lie affine $\hat \glie^\sigma$ n'est pas exactement une algèbre de Kac-Moody, mais plus exactement l'algèbre dérivée d'une algèbre de Kac-Moody $\hat \glie^\sigma_{\mathrm{KM}}$. La catégorie des représentations de dimension finie de $\hat \glie^\sigma$ est riche et est très différente de la catégorie $\mathcal O^{\mathrm{int}}(\hat \glie^\sigma_{\mathrm{KM}})$ (toute représentation dans $\mathcal O^{\mathrm{int}}(\hat \glie^\sigma_{\mathrm{KM}})$ est par exemple complètement réductible, tandis qu'une représentation de dimension finie de $\hat \glie^\sigma$ ne l'est en général pas). \\

On note ${{}^L \! \! \: \hat \glie^\sigma}$ l'algèbre Langlands duale de l'algèbre affine $\hat \glie^\sigma$ : c'est l'algèbre de Lie affine dont l'algèbre de Kac-Moody associée est l'algèbre Langlands duale de l'algèbre de Kac-Moody $\hat \glie^\sigma_{\mathrm{KM}}$. Dans le cas où l'algèbre $\hat \glie^\sigma$ est différente de son algèbre Langlands duale, cette dernière est une algèbre de Lie affine tordue si $\hat \glie^\sigma$ est non tordue et réciproquement. \\

Notons que ${{}^L \! \! \: \hat \glie^\sigma}$ n'est pas en général une algèbre de Lie affine associée à $\Lglie$ (les opérations ``duale de Langlands'' et ``affinisation'' ne commutent pas). Toutefois, l'algèbre $\Lsglie$ contient $\Lglie$ (comme sous-algèbre des applications constantes de l'algèbre de Lie affine $\Lsglie$). \\

Soit maintenant $V$ une représentation de dimension finie de $\hat \glie$. On rappelle que la dualité de Langlands, telle que formulée dans le théorème \ref{introthm_dualite}, affirme l'existence d'une extension à $\Lglie$ de l'action de $\Lhlie$ sur l'espace $\LV$ (où $V$ est regardé comme une représentation de $\glie$ par restriction).

\begin{introquest} \label{introquest_affine1}
Soit $V$ une représentation de dimension finie de l'algèbre de Lie affine non tordue $\hat \glie$. Existe-t-il une représentation de $\Lsglie$ de dimension finie, dont la restriction à la sous-algèbre de Cartan $\Lhlie$ est la représentation $\LV$ ?
\end{introquest}

Supposons que $\hat \glie$ n'est pas égale à son algèbre de Langlands duale (dans le cas contraire, la question \ref{introquest_affine1} devient triviale). L'algèbre $\Lsglie$ est une algèbre de Lie affine tordue, on note alors $\glie_1$ l'algèbre de Lie simple de dimension finie et $\sigma_1$ l'automorphisme du diagramme de Dynkin de $\glie_1$ tels que $\Lsglie = \hat \glie_1^{\sigma_1}$. En remarquant que l'algèbre $\Lglie$ est la sous-algèbre de $\glie_1$ constituée des points fixes de $\sigma_1$ et en remarquant que toute représentation de dimension finie de $\glie_1$ peut être étendue en une représentation de $\hat \glie_1^{\sigma_1}$ (par spécialisation à une valeur de $\CC^\times$ des éléments de $\hat \glie_1^{\sigma_1}$), la question \ref{introquest_affine1} apparaît équivalente à la question suivante : soit $V$ une représentation de dimension finie de $\glie$, est-il possible d'étendre à $\glie_1$ l'action de $\Lglie$ sur $\LV$ ? \\

Plus généralement, on peut se demander si la dualité de Langlands entre les catégories $\mathcal O^{\mathrm{int}}(\hat \glie^\sigma_{\mathrm{KM}})$ et $\mathcal O^{\mathrm{int}}({{}^L \! \! \: \hat \glie^\sigma_{\mathrm{KM}}})$ admet un analogue pour les représentations de dimension finie de $\hat \glie^\sigma$ et ${{}^L \! \! \: \hat \glie^\sigma}$. Notons pour cela que la correspondance \eqref{eq_CartanLanglands} entre les sous-algèbres de Cartan de $\hat \glie^\sigma_{\mathrm{KM}}$ et ${{}^L \! \! \: \hat \glie^\sigma_{\mathrm{KM}}}$ peut être restreinte aux sous-algèbres de Cartan $\hat \hlie^\sigma$ et ${{}^L \! \! \; \hat \hlie^\sigma}$ de $\hat \glie^\sigma$ et ${{}^L \! \! \: \hat \glie^\sigma}$. Notons également que l'action \hbox{de $\hat \hlie^\sigma$} sur une représentation de dimension finie de $\hat \glie^\sigma$ est diagonalisable, on définit alors la sous-représentation $\LV$ de ${{}^L \! \! \; \hat \hlie^\sigma}$ de la même manière que précédemment.

\begin{introquest} \label{introquest_affine2}
Soit $V$ une représentation de dimension finie de l'algèbre de Lie affine $\hat \glie^\sigma$. Existe-t-il une représentation de dimension finie de ${{}^L \! \! \: \hat \glie^\sigma}$, dont la restriction à la sous-algèbre de Cartan ${{}^L \! \! \; \hat \hlie^\sigma}$ est la représentation $\LV$.
\end{introquest}

Rappelons que pour toute représentation $V$ dans $\Ointc$, il existe une unique représentation dans $\Ointc[(\Lglie)]$ dont la restriction à $\Lhlie$ est $\LV$. Une propriété analogue d'unicité n'est toutefois pas envisageable dans le cas des questions \ref{introquest_affine1} et \ref{introquest_affine2}. Remarquons enfin qu'une réponse à la question \ref{introquest_affine2} apporte en particulier une réponse à la question \ref{introquest_affine1}, la dualité de Langlands entre les sous-algèbres de Cartan $\hat \hlie$ et ${{}^L \! \! \; \hat \hlie}$ étant une extension de la dualité de Langlands entre les sous-algèbres de Cartan de $\glie$ et $\Lglie$.

\subsection[Dualité de Langlands entre représentations affines quantiques]{Dualité de Langlands entre représentations affines \\ quantiques}
Les algèbres de Lie affines admettent également des déformations quantiques. Soit $\glie$ une algèbre de Lie simple de dimension finie, on note $\Uq(\hat \glie^\sigma)$ l'algèbre affine quantique associée à l'algèbre de Lie affine $\hat \glie^\sigma$. Les questions \ref{introquest_affine1} et \ref{introquest_affine2}, à propos d'une extension de la dualité de Langlands pour les catégories $\mathcal O^{\mathrm{int}}$ à une dualité de Langlands entre les représentations de dimensions finies des algèbres de Lie affines, peuvent être alors reformulées dans le cas quantique.

\begin{introquest} \label{introquest_affine1q}
Soit $\Vq$ une représentation de dimension finie de l'algèbre affine quantique non tordue $\Uq(\hat \glie)$. \hbox{Existe-t-il} une représentation de dimension finie \hbox{de $\Uq(\Lsglie)$}, dont la restriction à $\Uq(\Lhlie)$ est la représentation $\LVq$ ?
\end{introquest}

\begin{introquest} \label{introquest_affine2q}
Soit $\Vq$ une représentation de dimension finie de l'algèbre affine quantique $\Uq(\hat \glie^\sigma)$. \hbox{Existe-t-il} une représentation de dimension finie de $\Uq({{}^L \! \! \: \hat \glie^\sigma})$, dont la restriction à ${{}^L \! \! \; \hat \hlie^\sigma}$ est la représentation $\LVq$ ?
\end{introquest}

La catégorie des représentations de dimension finie de $\Uq(\hat \glie^\sigma)$, contrairement à la catégorie $\mathcal O^{\mathrm{int}}(\hat \glie^\sigma_{\mathrm{KM}})$, s'avère être très différente de son analogue classique. Par conséquent, les questions \ref{introquest_affine1q} et \ref{introquest_affine2q} ne peuvent pas être considérées cette fois-ci équivalentes au problème classique (posé par les questions \ref{introquest_affine1} et \ref{introquest_affine2}). \\

Frenkel-Hernandez \cite{FH2} apportent un début de réponse aux questions \ref{introquest_affine1q} et \ref{introquest_affine2q}. Ils prouvent en effet l'existence de relations entre certaines représentations de dimension finie des algèbres affines quantiques duales $\Uq(\hat \glie^\sigma)$ et $\Uq({{}^L \! \! \: \hat \glie^\sigma})$. Ces relations sont décrites en termes des $q$-caractères desdites représentations (le \hbox{$q$-caractère} d'une représentation de dimension finie de $\Uq(\hat \glie^\sigma)$ résume une grande partie de la décomposition de Jordan de la représentation sous l'action de la sous-algèbre de Cartan de $\Uq(\hat \glie^\sigma)$).

\subsection{Correspondance de Langlands géométrique}
La dualité de Langlands entre représentations d'algèbres de Kac-Moody peut être reliée au programme de Langlands géométrique. Ce lien a été proposé par Frenkel-Hernandez \cite{FH1}, nous reprenons ci-dessous certains des éléments d'explication présentés dans l'introduction \hbox{de \cite{FH1}}, où $\glie$ désigne toujours une algèbre de Lie simple de dimension finie. \\

[\emph{Un des résultats clés dans la correspondance géométrique de Langlands est un isomorphisme entre le centre $Z(\hat \glie)$ de l'algèbre enveloppante complétée \hbox{de $\hat \glie$} au niveau critique ($\hat \glie$ désigne l'algèbre de Lie affine associée à l'algèbre de dimension \hbox{finie $\glie$}) et la ${W \!}$-algèbre classique $W (\Lglie)$ associée à l'algèbre de Lie Langlands duale de $\glie$ (voir \cite{FF,Fre1} ainsi que \cite{FR1} pour plus de détails). Ce résultat constitue le socle de la correspondance de Langlands géométrique locale (\hbox{voir \cite{Fre4,FG1}}) et il est aussi un ingrédient principal employé par \hbox{Beilinson-Drinfeld \cite{BD}} (voir \hbox{aussi \cite{Fre2}}) dans la construction de la correspondance de Langlands géométrique globale. Toutefois, cet isomorphisme reste mystérieux : son existence est connue mais mal comprise.} \\

\emph{Dans le but de mieux comprendre cet isomorphisme, Frenkel-Reshetikhin \cite{FR2} ont proposé de $q$-déformer la situation, en considérant le centre $Z_q(\hat \glie)$ de l'algèbre affine quantique $\Uq(\hat \glie)$ au niveau critique. On sait par ailleurs que le centre $Z_q(\hat \glie)$ est relié à l'anneau de Grothendieck ${\mathrm{Rep} \, \Uq(\hat \glie)}$ des représentations de dimension finie de $\Uq(\hat \glie )$ (à chaque représentation de dimension finie, il est en effet possible d'associer une série génératrice composée d'éléments dans $Z_q(\hat \glie)$, en utilisant ce que l'on appelle la construction transfer-matrix). On espère donc obtenir un nouvel éclairage sur l'isomorphisme $Z(\hat \glie ) \simeq W(\Lglie)$ en étudiant les possibles liens existant entre $Z_q(\hat \glie )$, ${\mathrm{Rep} \, \Uq(\hat \glie)}$ et la ${W \!}$-algèbre classique \hbox{$q$-déformée} ${W_{\!q}}(\Lglie)$.} \\

\emph{L'idée de Frenkel-Reshetikhin \cite{FR3} a été de déformer un nouvelle fois la situation, en introduisant une déformation (non commutative) à deux paramètres ${W_{\! q,t}} (\glie)$. La spécialisation ${W_{\! q,1}}$ à $t=1$ est le centre $Z_q(\hat \glie)$, de telle sorte que ${W_{\! q,t}} (\glie)$ est une déformation à un paramètre de $Z_q(\hat \glie)$ et une déformation à deux paramètres du centre initial $Z(\hat \glie)$. Le travail de Frenkel-Reshetikhin \cite{FR3} a été motivé par l'espoir qu'à travers l'analyse de différentes dualités et limites de ${W_{\! q,t}} (\glie)$, il soit possible d'apporter de nouveaux éléments de compréhension à propos de l'isomorphisme $Z(\hat \glie ) \simeq W(\Lglie)$ et par suite à propos de la correspondance de Langlands. En particulier, il a été suggéré que la spécialisation ${W_{\! \varepsilon,t}} (\glie)$ à $q=\varepsilon$ (où $\varepsilon$ désigne une racine primitive de l'unité d'ordre $2d$) contient comme sous-algèbre le centre $Z_t(\Lsglie)$ de l'algèbre affine quantique $U_t(\Lsglie)$ au niveau critique. Le centre $Z_t(\Lsglie)$ est par ailleurs lié à l'anneau de Grothendieck $\mathrm{Rep} \, U_t(\Lsglie)$ des représentations de dimension finie de $U_t(\Lsglie)$ (via la construction transfer-matrix). La spécialisation ${W_{\! q,1}}(\glie)$ à $t=1$ est quant à elle liée à l'anneau de Grothendieck des représentations de dimension finie de $\Uq(\hat \glie)$ (toujours via la construction transfer-matrix). La ${W \!}$-algèbre ${W_{\! q,t}}(\glie)$ apparaît ainsi interpoler les anneaux de Grothendieck des représentations de dimension finie des algèbres affines quantiques associées à $\hat \glie$ et $\Lsglie$. Ceci suggère en particulier l'existence de certaines relations entre ces représentations (des exemples ont été donnés par Frenkel-Reshetikhin \cite{FR3}).}] \\

On retrouve à travers ces relations la dualité de Langlands entre représentations de dimension finie des algèbres affines quantiques duales $\Uq(\hat \glie)$ et $\Uq(\Lsglie)$, évoquée précédemment. Précisons toutefois que ces relations sont exprimées au niveau des $q$-caractères et que l'on ignore à ce jour si elles correspondent précisément aux propos de la question \ref{introquest_affine1q}. \\

Remarquons que la valeur de spécialisation $\varepsilon$ ait apparut dans trois situations.

\vsp

\begin{itemize}
\item[\textbullet] Une première fois chez Frenkel-Reshetikhin afin de retrouver le centre $Z_t(\Lsglie)$ de l'algèbre affine quantique $U_t(\Lsglie)$ au niveau critique à partir de la $W$-algèbre déformée ${W_{\! q,t}} (\glie)$ (rappelons qu'il s'agit à ce jour d'une conjecture).
\item[\textbullet] Une deuxième fois chez Littelmann et McGerty afin d'établir le \hbox{théorème \ref{introthm_dualite}}.
\item[\textbullet] Une troisième fois  dans la conjecture \ref{introconj_FH} de Frenkel-Hernandez.
\end{itemize}


\newpage
\thispagestyle{empty}
\mbox{}
\newpage
\cleardoublepage
\part{Generalised Quantum Enveloping Algebras}
\label{part_GQE}
\setcounter{section}{0}
\begin{otherlanguage}{english}
  \righthyphenmin=62
\lefthyphenmin=62

\addcontentsline{toc}{section}{Notations and conventions}
\section*{Notations and conventions}
Rings and associative algebras are unitary, their morphisms are unity-preserving. \\

We denote by $\KK$ a field of characteristic zero. We denote by $\KK[u]$ and by $\KK[u,v]$ the polynomial rings \hbox{over $\KK$} in one variable $u$ and in two variables $u,v$, respectively. \\


Let $\Lambda$ be a set. We denote by $\KK \Lambda$ the $\KK$-vector space which admits $\Lambda$ as a basis. We denote by $\langle \Lambda \rangle$ the free monoid generated by $\Lambda$. We denote by $\KK \langle \Lambda \rangle$ the \hbox{$\KK$-algebra} of this monoid. Let $R$ be a ring, we denote by ${\Lambda^{\! R}}$ the ring of functions from $R$ to $\Lambda$. \\


Let $A$ be a $\KK$-algebra and $\glie$ be Lie $\KK$-algebra. Left $A$-modules and left $\glie$-modules are called representations of $A$ and $\glie$, respectively. Morphisms of left $A$-modules and of left $\glie$-modules are called $A$-morphisms and $\glie$-morphisms, respectively. The universal enveloping algebra of $\glie$ is denoted by $\Uc$. \\

We identify as usual representations of $\glie$ with representations of $\Uc$. \\

A sum indexed by a empty set is zero, a product indexed by an empty set is equal to $1$.

\nomenclature[AN]{$\NN$}{Natural numbers}
\nomenclature[AZ]{$\ZZ$}{Integers}
\nomenclature[AQ]{$\QQ$}{Rational numbers}
\nomenclature[AK]{$\KK$}{Field of characteristic zero}
\nomenclature[AK(u)]{$\KK[u]$}{Polynomial ring over $\KK$ in one variable $u$}
\nomenclature[AK(u,v)]{$\KK[u,v]$}{Polynomial ring over $\KK$ in two variables $u,v$}
\nomenclature[AKLambda]{$\KK \Lambda$}{$\KK$-vector space with basis $\Lambda$}
\nomenclature[A(Lambda)]{$\langle \Lambda \rangle$}{Free monoid generated by a set $\Lambda$}
\nomenclature[AK(Lambda)]{$\KK \langle \Lambda \rangle$}{Free algebra generated by a set $\Lambda$}

\section{Kac-Moody algebras}
We recall in this section the definition of Kac-Moody algebras. Note that the definition given in this paper is slightly different (but mainly equivalent) to the definition of Kac-Moody algebras usually present in the literature. The difference is about the Cartan subalgebra, which is built here from what we call a root datum (we borrow the name and the concept from Lusztig: see \cite{Lus}). We also recall some results on the representation theory of Kac-Moody algebras.


\subsection{Generalised Cartan matrices and root data}
A \emph{generalised Cartan matrix} is a matrix $C = {(C_{ij})}_{i,j \in I}$ indexed by a finite \hbox{set $I$}, with entries in $\ZZ$ and which satisfies the following conditions for all $i,j \in I$.
\begin{enumerate}
\item[\textbullet] $C_{ii} = 2$.
\item[\textbullet] $C_{ij} \leq 0$ if $i \neq j$.
\item[\textbullet] $C_{ij} = 0$ if and only if $C_{ji} = 0$.
\end{enumerate}
The set $I$ is called the \emph{Dynkin set}. The elements of $I$ are called the \emph{Dynkin vertices}. The generalised Cartan matrix $C$ is called \emph{symmetrisable} if there exists a family ${(d_i)}_{i \in I}$, called a \emph{symmetrising vector} of $C$, with values in $\ZZ_{> 0}$ and such that $d_i \: \! C_{ij} = d_j \: \! C_{ji}$ for \hbox{all $i,j$} \hbox{in $I$}. We assume that the integers $d_i$ ($i \in I'$) are coprime for every connected component $I'$ of $I$ (two elements $i,j \in I$ are linked if $C_{ij} \neq 0$). In this paper, \textbf{we always assume that a generalised Cartan matrix is symmetrisable}. \\

A generalised Cartan matrix is of \emph{finite type} if it is positive-definite. Generalised Cartan matrices of finite type are also called Cartan matrices. \\

A \emph{root datum} $\Xdat$ associated to a generalised Cartan matrix $C$ consists of the following.
\begin{enumerate}
\item[\textbullet] A pair $(X,Y)$ where $X$, $Y$ are two finitely generated free $\ZZ$-modules.
\item[\textbullet] Families $(\alpha_i \: \! ; \, i \in I)$ and ${({\check \alpha_i} \: \! ; \, i \in I)}$ with values in $X$ and $Y$, respectively. We demand that the maps $i \mapsto \alpha_i$ and $i \mapsto \check \alpha_i$ are injective.
\item[\textbullet] A perfect bilinear pairing $\langle \cdot \, , \cdot \rangle : Y \times X \to \ZZ$ such that $\langle {\check \alpha_i}, \alpha_j \rangle = C_{ij}$ for all $i,j \in I$.
\end{enumerate}
The $\ZZ$-modules $X$ and $Y$ are called the \emph{root lattice} and the \emph{coroot lattice}, respectively. We call $\alpha_i$ and ${\check \alpha_i}$ ($i \in I$) the \emph{simple roots} and the \emph{simple coroots}, respectively. Where there is no possibility for confusion, we normally abuse notation slightly and write $(X, Y, I)$ to designate the root datum $\Xdat$. The root datum is said of \emph{finite type} if the generalised Cartan matrix $C$ is of finite type. \\

We say that the root datum $\Xdat$ is \emph{$X \!$-regular} if the family of simple roots is linearly independent and \emph{$Y \!$-regular} if the family of simple coroots is. When $\Xdat$ is of finite type, $X \!$-regularity and $Y \!$-regularity are automatic by the non-degeneracy of $C$, but in general it is an additional assumption, which we make in this paper. \\

An element $\lambda$ in $X$ is said \emph{dominant} if $\langle {\check \alpha_i}, \lambda \rangle \geq 0$ for all $i \in I$. The subset of dominant elements in $X$ is denoted by $\Xdom$ and called the \emph{dominant cone}. \\

On the root lattice $X$, we define a partial order (remark that the antisymmetry axiom is satisfied thanks to the $X \!$-regularity):
$$ \lambda \ \leq \ \lambda' \quad \ \text{ if } \ \quad \lambda' - \lambda \ \in \ \sum_{i \in I} \, \NN \: \! \alpha_i \, . $$

\nomenclature[BI]{$I$}{Dynkin set \hfill \ref{para_KMmatrix}}
\nomenclature[Bdi]{${(d_i)}_{i \in I}$}{Symmetrising vector of $C$ \hfill \ref{para_KMmatrix}}
\nomenclature[CFinitetype1]{}{Finite type $\ $(generalised Cartan matrix) \hfill \ref{para_KMmatrix}}
\nomenclature[BR]{$\Xdat$}{Root datum \hfill \ref{para_KMroot}}
\nomenclature[BX]{$X$}{Root lattice \hfill \ref{para_KMroot}}
\nomenclature[Balphai]{$\alpha_i \ \ (i \in I)$}{Simple roots of $\Xdat$ \hfill \ref{para_KMroot}}
\nomenclature[BY]{$Y$}{Coroot lattice \hfill \ref{para_KMroot}}
\nomenclature[Balphaicheck]{$\check \alpha_i \ \ (i \in I)$}{Simple coroots of $\Xdat$ \hfill \ref{para_KMroot}}
\nomenclature[B<,>]{${\langle \, \cdot \, , \cdot \, \rangle}$}{Pairing between $Y$ and $X$ \hfill \ref{para_KMroot}}
\nomenclature[BX_dom]{$\Xdom$}{Dominant cone \hfill \ref{para_KMroot}}
\nomenclature[Blambdalessthanorequalto]{$\lambda \leq \lambda' \ \ (\lambda, \lambda' \in X)$}{Partial order on $X$ \hfill \ref{para_KMroot}}
\nomenclature[CFinitetype2]{}{$\phantom{Finite type }$ $ \, \; $(root datum) \hfill \ref{para_KMroot}}
\nomenclature[CXregularYregular]{}{$X$-regular, $Y$-regular \hfill \ref{para_KMroot}}
\nomenclature[Cdominant]{}{Dominant element in $X$ \hfill \ref{para_KMroot}}

\subsection{Kac-Moody algebras with and without Serre relations} \label{subs_KM}
The \emph{Kac-Moody algebra} $\glie = \glie (\Xdat)$ associated to the root datum $\Xdat$ is the Lie \hbox{$\KK$-algebra} generated by the elements $X^-_i, X^+_i$ ($i \in I$), the \hbox{$\ZZ$-module $Y$} and subject to the following relations:
\begin{equation} \label{eq_KM_nondeformable} \left\{ {\renewcommand{\arraystretch}{1.3} \begin{array}{rclr}
\big[ \check \mu, \check \mu' \big] & = & 0 & (\check \mu, \check \mu' \in Y) \, , \\
\big[ {\check \mu}, X^{\pm}_i \big] & = & \pm \; \! \langle {\check \mu} , \alpha_i \rangle \, X_i^{\pm} & (i \in I, \ \check \mu \in Y) \, , \\
\big[ X^+_i, X^-_j \big] & = & 0 & \quad \quad (i,j \in I, \ i \neq j) \, ,
\end{array}} \right. \end{equation}
\begin{equation} \label{eq_KM_deformable} \left\{ {\renewcommand{\arraystretch}{1.3} \begin{array}{lr}
\big[ X^+_i, X^-_i \big] = \check \alpha_i & (i \in I) \, , \\
\sum_{k+k' = 1 - C_{ij}} (-1)^k \, {1 - C_{ij} \choose k} \, {(X_i^{\pm})}^k \; \! X^{\pm}_j \; \!  {(X_i^{\pm})}^{k'} = 0 & \quad (i,j \in I, \ i \neq j) \, .
\end{array}} \right.
\end{equation}

The relations $\sum_{k+k' = 1 - C_{ij}} (-1)^k {1 - C_{ij} \choose k} {(X_i^{\pm})}^k X^{\pm}_j {(X_i^{\pm})}^{k'} = 0$ ($i, j \in I$, $i \neq j$) are called the \emph{Serre relations}. The elements $X^-_i, X^+_i$ ($i \in I$) are called the \emph{Chevalley generators} of $\glie$. \\

We denote by $\Unnc$ and $\Unpc$ the $\KK$-subalgebras of $\Uc$ generated by the elements $X^-_i$ ($i \in I$) and $X^+_i$ ($i \in I$), respectively. We denote by $\UCac$ the \hbox{$\KK$-subalgebra} of $\Uc$ generated by $Y$. The algebra $\Uc$, when viewed as a $\KK$-vector space, admits the following decomposition, where the isomorphism is given by the multiplication of $\Uc$:
$$ \Uc \ \simeq \ \, \Unnc \, \otimes_\KK \, \UCac \, \otimes_\KK \, \UCac \, . $$
We call this decomposition the \emph{triangular decomposition} of $\Uc$. \\


\emph{The Kac-Moody algebra without Serre relations} $\tilde \glie = \tilde \glie (\Xdat)$ associated to the root datum $\Xdat$ is the Lie \hbox{$\KK$-algebra} generated by the elements $X^-_i, X^+_i$ ($i \in I$), the \hbox{$\ZZ$-module $Y$}, subject to the relations \eqref{eq_KM_nondeformable} and to the following ones:
\begin{equation} \label{eq_KM_deformablet}
\big[ X^+_i, X^-_i \big] = \check \alpha_i \quad \quad (i \in I) \, .
\end{equation}

We denote by $\vpitc$ the projection map from $\Utc$ \hbox{to $\Uc$}. The algebra $\Utc$ also admits a triangular decomposition.

\nomenclature[BUKlessthan0]{$\Unnc$}{\hfill \ref{para_triangl}}
\nomenclature[BUKgreaterthan0]{$\Unpc$}{\hfill \ref{para_triangl}}
\nomenclature[BUK0]{$\UCac$}{\hfill \ref{para_triangl}}
\nomenclature[Ctriangulardecomposition]{}{Triangular decomposition of $\Uc$ \hfill \ref{para_triangl}}

\nomenclature[BgK(R)]{$\glie_{\KK} (\Xdat)$}{Kac-Moody algebra \hfill \ref{para_KM}}
\nomenclature[BXimoinsXiplus1]{$X^-_i, X^+_i \ \ (i \in I)$}{Chevalley generators of $\ \glie_\KK(\Xdat)$, $\Uc$ \hfill \ref{para_KM}}
\nomenclature[BUK(R)]{$\Uc$}{Enveloping algebra of $\glie_{\KK} (\Xdat)$ \hfill \ref{para_KM}}
\nomenclature[CSerrerelations]{}{Serre relations \hfill \ref{para_KM}}
\nomenclature[BgtildeK(R)]{$\tilde \glie_{\KK} (\Xdat)$}{Kac-Moody algebra without Serre relations \hfill \ref{para_KMwithout}}
\nomenclature[BXimoinsXiplus2]{}{$\phantom{Chevalley generators of } \, \; \tilde \glie_{\KK} (\Xdat)$, $\Utc$ \hfill \ref{para_KMwithout}}
\nomenclature[BUtildeK(R)]{$\Utc$}{Enveloping algebra of $\tilde \glie_{\KK} (\Xdat)$ \hfill \ref{para_KMwithout}}
\nomenclature[BpiK(R)]{$\vpitc$}{Projection map from $\Utc$ to $\Uc$ \hfill \ref{para_KMwithout}}

\subsection{The Lie algebra $\slt$ and the simple $\slt$-directions} \label{subs_KMone}
We designate by $\Aone$ the root datum defined by the following.
\begin{enumerate}
\item[\textbullet] The root and coroot lattices are both equal to $\ZZ$.
\item[\textbullet] The simple root and the simple coroot are $2 \in \ZZ$ and $1 \in \ZZ$, respectively.
\item[\textbullet] The pairing is the product of $\ZZ$.
\end{enumerate}
Note that the Kac-Moody algebra associated to $\Aone$ is naturally isomorphic to the Lie algebra of 2-by-2 matrices with coefficients in $\KK$ and whose trace is zero; we denote it by $\slt$. \\

The two Chevalley generators of $\slt$ are designated by $X^-$ and $X^+$, we besides denote by $H$ the simple coroot of $\Aone$. The Lie $\KK$-algebra $\slt$ is generated by the elements $X^-, X^+$, $H$ and subject to the following relations:
\begin{IEEEeqnarray}{rCl}
\label{eq_KM_nondeformableone} \big[ H, X^{\pm} \big] & \ = \ & \pm \; \! 2 \, X^{\pm} \, , \\
\label{eq_KM_deformableone} \big[ X^+, X^- \big] & = & H \, .
\end{IEEEeqnarray}
Remark that there is no Serre relations for $\slt$. \\

Let $i \in I$. There exist unique Lie $\KK$-algebra morphisms from $\slt$ to $\glie$ \hbox{and $\tglie$}, which \hbox{sends $X^{\pm}$} and $H$ to $X_i^{\pm}$ and $\check \alpha_i$, respectively. The induced $\KK$-algebra morphisms from $\Uc[\slt]$ to $\Uc$ and $\Utc$ are denoted by $\Deltac$ and $\Deltac[\tglie]$, respectively. The morphisms $\Deltac$ and $\Deltac[\tglie]$ ($i \in I$) are called the \emph{simple $\slt$-directions} \hbox{of $\Uc$} and $\Utc$, respectively.

\nomenclature[BA1]{$\slt$}{Root datum of rank one \hfill \ref{para_KMone}}
\nomenclature[BX_moinsX_plus]{$X^-, X^+$}{Chevalley generators of $\glie_\KK(\slt)$ \hfill \ref{para_KMone}}
\nomenclature[BH1]{$H$}{Simple coroot of $\glie_\KK(\slt)$ \hfill \ref{para_KMone}}
\nomenclature[BDeltaKi(R)]{$\Deltac \ \ (i \in I)$}{Simple $\slt$-directions of $\Uc$ \hfill \ref{para_KMdirections}}
\nomenclature[Bwt(V)]{$\wt (V)$}{Set of weights of a representation $V$ \hfill \ref{para_KMweight}}
\nomenclature[BVlambda]{$V_\lambda \ \ (\lambda \in X)$}{Weight spaces of a representation $V$ \hfill \ref{para_KMweight}}
\nomenclature[Cweight]{}{Weight vector, weight representation \hfill \ref{para_KMweight}}

\subsection{Highest weight representations and the category $\mathcal O$}
Let $V$ be a representation of $\glie$ and let $\lambda \in X$. A vector $v$ in $V$ is called a \emph{weight vector} of weight $\lambda$ if $\check \mu.v = \langle \check \mu, \lambda \rangle \; \! v$ for all $\check \mu \in Y$. The \emph{weight space} \hbox{of $V$} of \hbox{weight $\lambda$} is the $\KK$-vector subspace of $V$ whose vectors are of weight $\lambda$, we denote it \hbox{by $V_{\lambda}$}. Remark that the sum of the weight spaces of $V$ is direct. The representation $V$ is called a \emph{weight representation} \hbox{of $\glie$} \hbox{if $V$} is equal to the sum of its weights spaces.  If $V$ is a weight representation and if $V_\lambda$ is nonzero, we call $\lambda$ a \emph{weight} of $V$. The set of weights of a weight \hbox{representation $V$} is denoted \hbox{by $\wt (V)$}. \\

A representation $V$ of $\glie$ is called a \emph{highest weight representation} of $\glie$ of highest weight $\lambda$ if the following two conditions hold.
\begin{enumerate}
\item[\textbullet] The representation $V$ is generated by a nonzero vector $v_{\lambda}$ in $V_{\lambda}$. The vector $v_{\lambda}$ is called a \emph{highest weight vector} of $V$.
\item[\textbullet] The weights of $V$ are less than or equal to $\lambda$.
\end{enumerate}
Remark that a highest weight representation of $\glie$ is a weight representation \hbox{of $\glie$} with finite-dimensional weight spaces. \\

We denote by $\Ocatc$ the category of representations $V$ of $\glie$ which satisfy the following conditions.
\begin{enumerate}
\item[\textbullet] The representation $V$ is a weight representation of $\glie$ with finite-dimensional weight spaces.
\item[\textbullet] There exist finitely many elements $\lambda_1, \dots, \lambda_k$ in $X$ such that $\wt (V)$ is contained in $\bigcup_{s=1}^k (\lambda_s - \sum_{i \in I} \NN \alpha_i)$.
\end{enumerate}


\nomenclature[BOK(R)]{$\Ocatc$}{Category $\mathcal O$ for $\ \Uc$ \hfill \ref{para_KMcatO}}

\subsection{Some known results} \label{subs_KMresults}
A weight representation $V$ of $\glie$ is called \emph{integrable} if the actions of the Chevalley generators on $V$ are locally nilpotent. We denote by $\Ointc$ the category of integrable representations of $\glie$. When the root datum $\Xdat$ is of finite type (equivalently, when $\glie$ is finite-dimensional), a representation of $\glie$ is in $\Ointc$ if and only if it is finite-dimensional. \\

Let us recall some results on the representation theory of $\glie$.
\begin{enumerate}
\item[\textbullet] For every $\lambda \in X$, there is a unique, up to isomorphism, irreducible representation $\Lc{\lambda}$ of $\glie$ of highest weight $\lambda$.
\item[\textbullet] Let $\lambda \in \Xdom$. The representation $\Lc{\lambda}$ is generated by a vector $v_{\lambda}$ and subject to the following relations:
$$ \check \mu . v_{\lambda} \, = \, \langle \lambda, \check \mu \rangle \; \! v_{\lambda} \, , \quad X^+_i . v_{\lambda} = 0 \, , \quad (X^-_i)^{\langle \lambda, \check \alpha_i \rangle +1} . v_{\lambda} \, = \, 0  \quad \ (i \in I, \ \check \mu \in Y) \, . $$
\item[\textbullet] The representations $\Lc{\lambda}$ ($\lambda \in X$) are pairwise distinct and are the unique, up to isomorphism, irreducible representations in $\Ocatc$.
\item[\textbullet] Let $\lambda,\lambda' \in X$. Every $\glie$-morphism from $\Lc{\lambda}$ to $\Lc{\lambda'}$ is equal to a scalar multiple of the identity map if $\lambda = \lambda'$ and is zero else.
\item[\textbullet] Let $\lambda \in X$. The representation $\Lc{\lambda}$ is integrable if and only if $\lambda \in \Xdom$.
\item[\textbullet] Every representation in the category $\Ointc$ is isomorphic to a direct sum of representations $\Lc{\lambda}$ with $\lambda \in \Xdom$.
\end{enumerate}

\vsp

We give below, for the reader's convenience, a proof of a complete-reducibility type result for the category of integrable representations of $\slt$.

\begin{prop} \label{prop_KMintone}
An integrable representation of $\slt$ is isomorphic to a direct sum of representations $\Lc{n}$ with $n \in \NN$. Besides, a representation of $\slt$ is integrable if and only if the actions of $X^-,X^+,H$ are locally finite.
\end{prop}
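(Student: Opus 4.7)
The plan is to reduce both assertions to the classical theory of finite-dimensional $\slt$-representations recalled in \ref{subs_KMresults}. The key intermediate claim, from which everything follows, is that under either hypothesis the cyclic submodule $\Uc[\slt].v$ generated by any $v \in V$ is finite-dimensional.

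First I would build this submodule in three successive layers dictated by the PBW order $(X^-)^a H^b (X^+)^c$: first $V^+ := \sum_{c \geq 0} \KK (X^+)^c.v$, then $V^+_H := \sum_{b \geq 0} \KK H^b V^+$, and finally $\sum_{a \geq 0} \KK (X^-)^a V^+_H$, which contains $\Uc[\slt].v$. Under the integrability hypothesis, decomposing $v$ into its finitely many weight components reduces the argument to the case where $v$ itself is a weight vector; then $V^+$ is finite-dimensional by local nilpotency of $X^+$, it coincides with $V^+_H$ since its spanning vectors $(X^+)^c.v$ are weight vectors on which $H$ acts by scalars, and the last layer is finite-dimensional by local nilpotency of $X^-$ on each weight vector of $V^+$. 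Under the hypothesis that $X^-, X^+, H$ all act locally finitely, the same three-step construction again produces a finite-dimensional subspace: apply local finiteness of $X^+$ to $v$, then local finiteness of $H$ to a finite basis of $V^+$, then local finiteness of $X^-$ to a finite basis of $V^+_H$.

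Once finite-dimensionality of $\Uc[\slt].v$ is established, the classical results recalled in \ref{subs_KMresults} imply that it decomposes as a direct sum of modules $\Lc{n_i}$ with $n_i \in \NN$; in particular $H$ acts diagonalisably on it with integer eigenvalues and $X^{\pm}$ act nilpotently. The first assertion then follows: $V$ is the sum of the semisimple submodules $\Uc[\slt].v$ for $v \in V$, hence is itself a sum, therefore a direct sum, of simple $\slt$-submodules, which can only be the $\Lc{n}$ with $n \in \NN$. The non-trivial direction of the second assertion is immediate as well, since the finite-dimensionality of each $\Uc[\slt].v$ endows $V$ with a weight decomposition and a locally nilpotent action of $X^{\pm}$, which is exactly integrability; the converse direction is clear from the first assertion, as each vector of $\Lc{n}$ lies in the finite-dimensional invariant subspace $\Lc{n}$.

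I do not anticipate a genuine obstacle: the only delicate point is to carry out the three layers in the correct order so that each intermediate subspace is controlled by the relevant commutation relation $[H, X^{\pm}] = \pm 2 X^{\pm}$, and this is precisely what the chosen PBW order ensures.
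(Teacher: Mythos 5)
Your proof is correct and follows essentially the same route as the paper's: show via the triangular (PBW) decomposition and local finiteness that each cyclic submodule $\Uc[\slt].v$ is finite-dimensional, invoke the classical finite-dimensional theory to decompose it into representations $\Lc{n}$, and conclude that $V$, being a sum of simples, is a direct sum of them. Your three-layer construction merely makes explicit the step the paper compresses into ``in view of the triangular decomposition,'' and your separate treatment of the integrable hypothesis fills in the implication (integrable $\Rightarrow$ locally finite) that the paper leaves implicit.
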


\proof
Let $V$ be a representation of $\slt$ on which the actions of $X^-,X^+,H$ are locally finite and let $v$ be a vector in $V$. Denote by $V'$ the subrepresentation of $V$ generated by $v$. In view of the triangular decomposition of $\slt$ and since the actions of $X^-,X^+,H$ on $V$ are locally finite, the subrepresentation $V'$ is finite-dimensional. Therefore, $V'$ is a sum of representations $\Lc{n}$ with $n \in \NN$. This being true for \hbox{every $v$} in $V$, the representation $V$ is a sum of irreducible representations $\Lc{n}$ with $n \in \NN$. Hence, the representation $V$ is a direct sum of irreducible representations $\Lc{n}$ with $n \in \NN$.
\qed \\

The author ignores whether there exists (when $\glie$ is not finite-dimensional) a reference for the following proposition. The proof given below follows from a private communication with Victor Kac. Note that it is possible in the case \hbox{of $\slt$} to give a more direct and elementary proof.

\begin{prop} \label{prop_UBperfect}
An element in $\Uc$ is zero if and only if it acts as zero on all the representations of $\glie$ in $\Ointc$.
\end{prop}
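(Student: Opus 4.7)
The forward implication is immediate. For the converse, it suffices to show that the canonical map $\Uc \longrightarrow \prod_{\lambda \in \Xdom} \Endc (\Lc{\lambda})$ is injective, since every integrable representation of $\glie$ decomposes as a direct sum of copies of the $\Lc{\lambda}$ with $\lambda \in \Xdom$. Since any such action preserves the $Y$-weight grading, I can decompose $u$ under the adjoint action of $Y$ and reduce to the case where $u$ is $\mathrm{ad}(Y)$-homogeneous of some weight $\beta \in X$; moreover, unless $\beta \in -\sum_i \NN \alpha_i$, the action of $u$ shifts every weight space of $\Lc{\lambda}$ outside the support of $\Lc{\lambda}$ and $u$ acts as zero for a trivial reason. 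So assume $\beta = -\sum_i n_i \alpha_i$ with $n_i \geq 0$.

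Using PBW, fix homogeneous-weight bases $\{b^-_s\}$ and $\{b^+_t\}$ of $\Unnc$ and $\Unpc$ respectively, and write the finite expansion
\[
u \ = \ \sum_{s,t} \, b^-_s \, h_{s,t} \, b^+_t \, , \quad h_{s,t} \in \UCac \simeq S(Y) \, .
\]
The key technical tool is a standard comparison between the Verma module $M(\lambda)$ and its irreducible quotient $\Lc{\lambda}$: the kernel of $M(\lambda) \twoheadrightarrow \Lc{\lambda}$ is generated as a $\Unnc$-module by the vectors $(X^-_i)^{\langle \check\alpha_i, \lambda \rangle + 1} v_\lambda$, so all its weights lie below $\lambda - (\langle \check\alpha_i, \lambda\rangle + 1) \alpha_i$ for some $i$. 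Consequently, for $\lambda \in \Xdom$ sufficiently dominant (depending only on the finite support of $u$), $M(\lambda) \twoheadrightarrow \Lc{\lambda}$ is an isomorphism on every weight space $\lambda - \mu$ with $\mu = \sum_j m_j \alpha_j$ satisfying $m_j \leq \langle \check\alpha_j, \lambda \rangle$ for all $j$.

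One then extracts the coefficients $h_{s,t}$ by evaluating $u$ on vectors $b^-_{s'} v_\lambda \in \Lc{\lambda}$, by induction on the total $\Unpc$-degree of the $b^+_t$ occurring in $u$. In the base case, applying $u$ to $v_\lambda$ kills every term with $b^+_t \neq 1$ and yields $u.v_\lambda = \bigl( \sum_s h_{s,1}(\lambda) \, b^-_s \bigr) v_\lambda$, which lies in the isomorphism range above; since $M(\lambda) \simeq \Unnc \! \cdot \! v_\lambda$ is $\Unnc$-free, vanishing forces $\sum_s h_{s,1}(\lambda) \, b^-_s = 0$, and linear independence of the $b^-_s$ together with Zariski density of the sufficiently dominant $\lambda$ in $X \otimes_\ZZ \KK$ yields $h_{s,1} = 0$ for each $s$. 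For the inductive step, applying $u$ to $b^-_{s'} v_\lambda$ and iteratively moving each $b^+_t$ past $b^-_{s'}$ via commutators decomposes the result into a \textbf{top} term, in which $b^+_t$ pairs maximally with a subword of $b^-_{s'}$ to produce a Cartan-valued coefficient evaluated at $\lambda$, plus terms of strictly smaller $\Unpc$-degree. Choosing $b^-_{s'}$ in Shapovalov duality with a chosen $b^+_t$ isolates this top contribution and the induction closes. The principal obstacle lies precisely in this last combinatorial step, which amounts to the non-degeneracy of the Shapovalov pairing on $\Lc{\lambda}$ for $\lambda$ sufficiently dominant.
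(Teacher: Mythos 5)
The one genuinely false step is your weight reduction. You claim that a homogeneous $u$ of weight $\beta \notin -\sum_{i} \NN \alpha_i$ \emph{``acts as zero for a trivial reason''} because it shifts every weight space of $\Lc{\lambda}$ outside the support. This is wrong: take $u = X_i^+$, homogeneous of weight $\alpha_i$; it sends $X^-_i . v_\lambda$ to $\langle \check \alpha_i, \lambda \rangle \, v_\lambda \neq 0$ whenever $\langle \check \alpha_i, \lambda \rangle > 0$. The weights of $\Lc{\lambda}$ occupy a large part of $\lambda - \sum_i \NN \alpha_i$, so translating by a $\beta$ with some positive component does not leave the support. Worse, the claim is logically backwards: if nonzero homogeneous elements of such weights really did act as zero on every representation in $\Ointc$, the very proposition you are proving would be false. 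Fortunately the step is inessential: your PBW induction never uses $\beta \in -\sum_i \NN \alpha_i$ (when $\beta$ has a positive component the base case is simply vacuous, as no term with $b^+_t = 1$ occurs), so the repair is just to delete the claim, keeping only the reduction to $\mathrm{ad}(Y)$-homogeneous $u$.

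With that excised, your argument is correct and rests on the same two pillars as the paper's proof: the comparison of the Verma module with $\Lc{\lambda}$ on weight spaces $\lambda - \mu$ with $\mu$ bounded, valid for $\lambda$ sufficiently dominant, and the non-degeneracy of the contravariant form on $\Lc{\lambda}$ (the paper's $\sigma$, with your Shapovalov pairing recovered from properties a''), b''), c'')), together with Zariski density of sufficiently dominant $\lambda$ to pass from pointwise to polynomial vanishing of the Cartan coefficients (the paper's condition $x^0_1(\lambda) \neq 0$). The difference is organisational: you run an induction on the $\Unpc$-degree and extract \emph{every} coefficient $h_{s,t}$, which forces you to invert the full Gram matrix of the pairing between the weight-$\gamma$ parts of $\Unpc$ and $\Unnc$ at $\lambda$ — note that ``choosing $b^-_{s'}$ in Shapovalov duality with a chosen $b^+_t$'' must really mean working with a dual basis, since several $t$ of the same weight contribute simultaneously. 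The paper avoids the induction altogether: it singles out one summand $x^-_1 x^0_1 x^+_1$ with $\deg(x^+_1)$ not greater than any other occurring $\Unpc$-degree, uses contravariance of $\sigma$ to produce a single test vector $v$ with $x_1^+.v$ a nonzero multiple of $v_\lambda$, and kills all cross-terms with $\deg(x^+_k) \neq \deg(x^+_1)$ by pure weight reasons, obtaining $x.v \neq 0$ in one stroke. Your ``principal obstacle'' is in fact standard: the contravariant form on $M(\lambda)$ has radical equal to the maximal proper submodule, hence descends non-degenerately to $\Lc{\lambda}$, and your isomorphism range transports this to the pairing you need for $\lambda$ sufficiently dominant.
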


\proof
Consider the ${X \!}$-gradation of the $\KK$-algebra $\Uc$ defined by the following:
$$ \mathrm{deg} \; \! (\mu) \, := \, 0 \, , \quad \quad \mathrm{deg} \; \!  (X^\pm_i) \, := \, \alpha_i \quad \quad \quad (\mu \in X, \ i \in I) \, . $$
Let $x$ be a nonzero element in $\Uc$. Recall the triangular decomposition of $\Uc$ and pick nonzero homogeneous \hbox{elements $x^-_k$, $x^0_k$} \hbox{and $x^+_k$} \hbox{in $\Unnc$}, $\UCac$ and $\Unpc$ respectively \hbox{($1 \leq k \leq l$)} such that the following conditions are satisfied:
\begin{enumerate}
\item[a)] $x = \sum_{1 \leq k \leq l} x^-_k \; \! x^0_k \; \! x_k^+$,
\item[b)] the family ${(x^-_k)}_{1 \leq k \leq l}$ is $\KK$-linearly independent,
\item[c)] $\deg (x_1^+) \ngtr \deg (x_k^+)$ for every $1 < k \leq l$.
\end{enumerate}
Denote by $\tau$ the involutive anti-automorphism of the $\KK$-algebra $\Uc$ which sends $X_i^\pm$ \hbox{to $-X_i^\mp$} for every $i \in I$ and which fixes pointwise $Y$. Pick $\lambda$ in $\Xdom$ such that the following conditions hold (we denote by $v_{\lambda}$ a fixed choice of highest weight vector of the representation $\Lc{\lambda}$):
\begin{enumerate}
\item[a')] $x^0_1 (\lambda)$ is nonzero,
\item[b')] the family ${(x^-_k . v_{\lambda})}_{1 \leq k \leq l}$ is $\KK$-linearly independent,
\item[c')] the vector $\tau(x^+_1) . v_{\lambda}$ is nonzero.
\end{enumerate}
Pick now a nonzero $\KK$-bilinear form $\sigma$ on $\Lc{\lambda}$ which has the following properties:
\begin{enumerate}
\item[a'')] $\sigma$ is non-degenerate,
\item[b'')] $\sigma(v,w) = 0$ for every weight vectors $v,w$ in $\Lc{\lambda}$ of distinct weights,
\item[c'')] $\sigma$ is contravariant, i.e. $\sigma (y.v,w) = \sigma(v,\tau(y).w)$ for every $v,w \in \Lc{\lambda}$ and for every $y \in \Uc$.
\end{enumerate}
In view of the non-degeneracy of $\sigma$ and in view of the orthogonality property $b'')$, pick a weight vector $v$ in $\Lc{\lambda}$ of weight $\lambda - \deg(x_1^+)$ such that $\sigma(v, \tau(x^+_1).v_{\lambda})$ is nonzero. Since $\sigma$ is contravariant, the vector $x_1^+.v$ is thus a nonzero scalar multiple of $v_{\lambda}$. Thanks to condition $c)$ above, the following holds:
$$ x.v \ = \ \sum_{\substack{1 \leq k \leq l, \\ \deg(x^+_k) \, = \, \deg (x_1^+)}} x^0_k(\lambda) \, x^-_k . (x^+_k . v) \, . $$
In view of conditions $a')$ and $b')$, one concludes that $x.v$ is nonzero.
\qed

\nomenclature[BOKint(R)]{$\Ointc$}{Category $\mathcal O^{\mathrm{int}}$ for $\Uc$ \hfill \ref{para_KM}}
\nomenclature[Cintegrablerepresentation]{}{Integrable representation \hfill \ref{para_KMint}}
\nomenclature[BLK(R,lambda)]{$\Lc{\lambda} \ \ (\lambda \in \Xdom)$}{Irreducible representations in $\Ointc$ \hfill \ref{para_KMresults}}
\nomenclature[BChK(R,V)1]{$\Chc{V}$}{Character of a representation $V$ of $\Uc$ \hfill \ref{para_KMcharacter}}
\nomenclature[BChK(R,V)2]{$\Ch{\lambda}$}{Character of the representation $\Lc{\lambda}$ \hfill \ref{para_KMcharacter}}

\section{Modules and algebras over a power series ring}
\subsection{The power series ring $\Kh$}
We denote by $\Kh$ the \emph{power series ring} in the variable $h$ over $\KK$. We use the following notation to describe an element $\lambda_h$ in $\Kh$:
$$ \lambda_h \ = \ \sum_{n \in \NN} \, \lambda_m \, h^m \, ; \quad \text{ with } \ \lambda_m \in \KK \, , \ \ \forall \, m \in \NN \, . $$
The formal power series $\lambda_h$ is invertible in $\Kh$ if and only if the scalar $\lambda_0$ is nonzero.

\nomenclature[Dlambdah=sum]{$\lambda_h = \sum_{m \in \NN} \lambda_m h^m$}{Decomposition of $\lambda_h$ in the ring $\Kh$ \hfill \ref{para_Kh}}

\subsection{$h$-adic topology}
Let $V$ be a $\Kh$-module. The submodules ${h^m \: \! V}$ ($m \in \NN$) form a local basis at the origin of a linear topology, called the \emph{$h$-adic topology} on the $\Kh$-module $V$. \textbf{In this first part of the thesis, topological statements are always made with regards to the $h$-adic topology.} We recall below some definitions and facts about the $h$-adic topology. \\

A $\Kh$-linear map is automatically continuous. In particular, the action by every element in the ring $\Kh$ on a $\Kh$-module is continuous. The closure in a $\Kh$-module of a $\Kh$-submodule is again a $\Kh$-submodule. \\

Let $A$ be a $\Kh$-algebra, let $\Lambda$ be a set and let $x_l, y_l$ ($l \in \Lambda$) be elements \hbox{in $A$}. The representation of $A$ \emph{topologically generated} by the vector $v$ and subject to the set of relations ${\{ x_l . v = y_l .v \; \! ; \, l \in \Lambda \}}$ is the quotient of the left regular representation of $A$ by the closure of the subrepresentation generated by the vectors $x_l - y_l$ ($l \in \Lambda$). \\


The space $V$ is \emph{separated} if and only if $\bigcap_{n \in \NN} {h^m \: \! V} = (0)$, i.e. if and only if the submodule $(0)$ is closed in $V$. As a consequence, a $\Kh$-submodule $W$ is closed in $V$ if and only if the $\Kh$-module $V / W$ is separated. \\

A sequence ${(v_l)}_{l \in \NN}$ with values in $V$  is said to be \emph{of Cauchy} if it satisfies the following condition:
$$ \forall \, m \in \NN \, , \ \exists \, l(m) \in \NN \, , \ \forall \, l \geq l(m) \, , \quad  (v_{l+1} \, - \, v_l) \ \in \ h^m \; \! V \, . $$
The $\Kh$-module $V$ is \emph{complete} if all the Cauchy sequences with values in $V$ are convergent. Note that a quotient of a complete $\Kh$-module is also complete. The \emph{completion} of $V$ is the unique (up to a unique isomorphism) complete and separated $\Kh$-module which contains $V$ as a dense $\Kh$-submodule. \\

The \emph{topologically direct sum} of a family ${(V_l)}_{l \in \Lambda}$ of $\Kh$-modules is the completion of the usual direct sum $\bigoplus_{l \in \Lambda} V_l$.

\nomenclature[Ehadic]{}{$h$-adic topology \hfill \ref{para_adic}}
\nomenclature[EVanishatinf]{}{Vanish at infinity \hfill \ref{para_adic}}




\nomenclature[Ehseparated]{}{separated \hfill \ref{para_hseparated}}




\nomenclature[EhCauchy]{}{Cauchy sequence \hfill \ref{para_complete}}
\nomenclature[Ehcomplete]{}{complete \hfill \ref{para_complete}}




\nomenclature[Ehtorsion]{}{torsion-free \hfill \ref{para_htorsionfree}}

\subsection{Topologically free $\Kh$-modules}
Let $V_0$ be a $\KK$-vector space. We denote by $V_0 \cch$ the $\KK$-vector space of functions from $\NN$ to $V_0$. We use the following notation to describe an element $v_h$ in $V_0 \cch$\nomenclature[Dvh=sum]{$v_h = \sum_{m \in \NN} v_m h^m$}{Decomposition of $v_h$ in a module $V \cch$ \hfill \ref{para_hfree}}:
$$ v_h \ = \ \sum_{m \in \NN} \, v_m \, h^m \, , \quad \text{ where } \ v_m \ := \ v_h (m) \quad (m \in \NN) \, . $$
We endow $V_0 \cch$ with a structure of $\Kh$-module:
$$ \lambda_h \, v_h \ := \ \sum_{m \in \NN} \left( \ \sum_{p + q = m} \, \lambda_p \, v_q \, \right) h^m \quad \ \ \big( \lambda_h \in \Kh \, , \ v_h \in V_0 \cch \big) \, . $$
We identify $V_0$ in a canonical way with a $\KK$-submodule of $V_0 \cch$. \\

A $\Kh$-module $V$ is said \emph{topologically free} if there exists a $\KK$-vector space $V_0$ and a $\Kh$-linear isomorphism $f$ from $V_0 \cch$ to $V$. The image by $f$ of a basis of the $\KK$-vector space $V_0$ is called a \emph{basis} of topologically free $\Kh$-module $V$. A topologically free $\Kh$-module is torsion-free, separated and complete. The converse is also true: see for example \cite[proposition XVI.2.4]{Kas}. \\

Let $f_0 : V_0 \to V_0'$ be a $\KK$-linear map. We denote by $f_0 \cch$ the $\Kh$-linear map from $V_0 \cch$ to $V'_0 \cch$ which makes the following diagram commute.
$$ \shorthandoff{;:!?} \xymatrix @R=1.5pc @C=3.5pc {
V_0 \cch \ar[r]^-{f_0 \cch} & V_0' \cch \\
V_0 \ar[u] \ar[r]_-{f_0} & \, V'_0 \ar[u]
} $$

Let $A_0$ be a $\KK$-algebra. We endow the $\Kh$-module $A_0 \cch$ with a structure of $\Kh$-algebra:
$$ x_h \, x'_h \ := \ \sum_{n \in \NN} \left( \ \sum_{p + q = n} \, x_p \, x_q \, \right) h^m \quad \ \ \big( x_h, x'_h \in A_0 \cch \big) \, . $$
Remark that $A$ is a $\KK$-subalgebra of $A \cch$. Remark also that if $f : A \to A'$ is a $\KK$-algebra morphism, then $f \cch : A \cch \to A' \cch$ is a $\Kh$-algebra morphism. \\

Let $\Lambda_0, \Lambda$ be two sets and let $x_l, y_l$ ($l \in \Lambda$) be elements in the algebra $(\KK \Lambda_0) \cch$. The $\Kh$-algebra \emph{topologically generated} by the set $\Lambda_0$ and subject to the set of relations ${\{ x_l = y_l \; \! ; \, l \in \Lambda \}}$ is the quotient of the $\Kh$-algebra $(\KK \Lambda_0) \cch$ by the closure of the two-sided ideal generated by the elements $y_l - x_l$ ($l \in \Lambda$). \\

Let $V_0$ be a representation of $A_0$. We endow the $\Kh$-module $V_0 \cch$ with a structure of representation of $A_0 \cch$:
$$ x_h \! \: . \! \: v_h \ := \ \sum_{n \in \NN} \left( \ \sum_{p + q = n} \, x_p \! \: . \! \: v_q \, \right) h^m \quad \ \ \big( x_h \in A_0 \cch, \ v_h \in V_0 \cch \big) \, . $$

\nomenclature[DV((h))]{$V \cch$}{\hfill \ref{para_Vhh}}
\nomenclature[DA((h))]{$A \cch$}{\hfill \ref{para_Vhh}}
\nomenclature[Df((h))]{$f \cch : V \cch \to V' \cch$}{\hfill \ref{para_Vhh}}





\nomenclature[Ehfree]{}{topologically free \hfill \ref{para_hfree}}
\nomenclature[Drkh(V)]{$\mathrm{rk}_h (V)$}{$h$-rank of a topologically free $\Kh$-module $V$ \hfill \ref{para_hfree}}



\nomenclature[EspecialisationofanelementinK(u)]{}{Specialisation of an element in $\KK[u] \cch$\hfill \ref{para_KKucch}}
\nomenclature[EspecialisationofanelementinK(u,v)]{}{Specialisation of an element in $\KK[u,v] \cch$\hfill \ref{para_KKucch}}



\nomenclature[Drkh(V)]{$\mathrm{rk}_h(V)$}{Rank of a free $\Kh$-module $V$ \hfill \ref{para_hrep}}
\nomenclature[Ehrepresentation]{}{representation \hfill \ref{para_hrep}}
\nomenclature[Eindecomposablehrepresentation]{}{Indecomposable representation \hfill \ref{para_hrep}}

\nomenclature[DsumlinLambdah]{$\sum_{l \in \Lambda}^h V_l$}{$h$-sum over $\Lambda$ of $\Kh$-submodules \hfill \ref{para_hspan}}
\nomenclature[Ehspan]{}{$h$-span \hfill \ref{para_hspan}}
\nomenclature[Edirecthsum]{}{Direct $h$-sum \hfill \ref{para_hspan}}
\nomenclature[Ehindependent]{}{$h$-independent family \hfill \ref{para_hspan}}
\nomenclature[Ehbasis]{}{topological basis \hfill \ref{para_hspan}}

\subsection{Specialisation at ${h \! = \! 0}$}
Let $V$ be a $\Kh$-module. The $\KK$-vector space $V / h V$ is denoted by $\hzero V$ and we call it the \emph{specialisation at ${h \! = \! 0}$} of $V$. We denote by $\hzero \pi^V$ the projection map from $V$ to $\hzero V$. Let $V_0$ be a $\KK$-vector space, we identify in a canonical way the $\KK$-vectors spaces $\hzero{V_0 \cch}$ and $V_0$. \\

Let ${f \! : \! V \! \to \! V'}$ be a $\Kh$-linear map. The \emph{specialisation at ${h \! = \! 0}$} of $f$ is the $\KK$-linear map $\hzero f$ from $\hzero V$ to $\hzero{V'}$ induced by $f$. Here are some facts.
\begin{enumerate}
\item[\textbullet] Assume that $V$ is complete and that $V'$ is separated. The map $f$ is surjective if and only if $\hzero f$ is surjective.
\item[\textbullet] Assume that $V$ is separated and that $V'$ is torsion-free. If the map $\hzero f$ is injective, then $f$ is injective. The converse statement is false in general (consider for example the action of $h$ on $V'$).
\item[\textbullet]  Assume that $V$ is topologically free. A family ${(v_l)}_{l \in \Lambda}$ with values \hbox{in $V$} is a basis of the topologically free $\Kh$-module $V$ if and only if $(\hzero \pi^V{(v_l))}_{l \in \Lambda}$ is a basis of the $\KK$-vector space $\hzero V$.
\end{enumerate}


\nomenclature[DVhequals0]{$\hzero V$}{Specialisation at ${h \! = \! 0}$ of a $\Kh$-module $V$ \hfill \ref{para_hzeromod}}
\nomenclature[Dfhequals0]{$\hzero f$}{Specialisation at ${h \! = \! 0}$ of a map $f$ \hfill \ref{para_hzeromod}}

\vsp
\vsp
\vsp

Let $A$ be a $\Kh$-algebra. We regard $\hzero A$ as a $\KK$-algebra and we call it the \emph{specialisation at ${h \! = \! 0}$} of the $\Kh$-algebra $A$. The projection \hbox{map $\hzero \pi^A$} from $A$ to $\hzero A$ is then a $\KK$-algebra morphism. Let $A_0$ be a $\KK$-algebra, remark that the \hbox{$\KK$-algebras $\hzero{A_0 \cch}$} and $A_0$ are equal. Let $g : A \to A'$ be a $\Kh$-algebra morphism, remark that the map $\hzero g$ is a $\KK$-algebra morphism. Let $V$ be a representation of $A$. We regard $\hzero V$ as a representation of $\hzero A$ and we call it the \emph{the specialisation at ${h \! = \! 0}$} of the representation $V$. Here are some conventions.

\begin{enumerate}
\item[\textbullet] The specialisation at ${h \! = \! 0}$ of the $\Kh$-algebra \emph{topologically generated} by a set $\Lambda_0$ and subject to a set of relations ${\{ x_l = y_l \; \! ; \, l \in \Lambda \}}$ is identified with the quotient of the $\KK$-algebra $\KK \Lambda_0$ by the two-sided ideal generated by the elements $\hzero \pi^{(\KK \Lambda_0) \cch} (y_l - x_l)$ ($l \in \Lambda'$).
\item[\textbullet] The specialisation at ${h \! = \! 0}$ of the representation of $A$ \emph{topologically generated} by the vector $v$ and subject to a set of relations ${\{ x_l . v = y_l . v \; \! ; \, l \in \Lambda \}}$ is identified with the quotient of the left regular representation of $\hzero A$ by the subrepresentation generated by the vectors $\hzero \pi^A (y_l - x_l)$ ($l \in \Lambda$).
\end{enumerate}

\nomenclature[DAhequals0]{$\hzero A$}{Specialisation at ${h \! = \! 0}$ of a $\Kh$-algebra $A$ \hfill \ref{para_hzeroalg}}
\nomenclature[Dpihequals0V]{$\hzero \pi^V$}{Projection map from $V$ to $\hzero V$ \hfill \ref{para_hzeroalg}}

\section[Pointed diagrams of algebras and formal deformations]{Pointed diagrams of algebras \\ and formal deformations}
Let $R$ be a $\KK$-algebra. We denote by $\AlgR$ the category of $R$-algebras and by ${\FF_{\! R}}$ the forgetful functor \hbox{from $\AlgR$} to $\Algc$. \\

We denote \hbox{by $\circ$} the horizontal composition for natural transformations and \hbox{by $\ast$} the vertical composition. We denote also by $\circ$ the composition for functors. \\

We denote by $\Cat$ the category of categories (the morphisms are the functors). We denote by $\Fun(\mathscr C',\mathscr C'')$ the category of functors from a category $\mathscr C'$ to another category $\mathscr C''$ (the morphisms are the natural transformations).

\nomenclature[FR]{$R$}{$\KK$-algebra \hfill \ref{para_diagramnot}}
\nomenclature[FAR]{$\AlgR$}{Category of $R$-algebras \hfill \ref{para_diagramnot}}
\nomenclature[FF]{${\FF_{R}}$}{Forgetful functor from $\AlgR$ to $\AlgR$ \hfill \ref{para_diagramnot}}
\nomenclature[F01]{$\circ$}{Horizontal composition for nat. transform. \hfill \ref{para_diagramnot}}
\nomenclature[F02]{}{Composition for functors \hfill \ref{para_diagramnot}}
\nomenclature[F*]{$\ast$}{Vertical composition for nat. transform. \hfill \ref{para_diagramnot}}
\nomenclature[FFun(C',C'')]{$\Fun(\mathscr C',\mathscr C'')$}{Category of functors from $\mathscr C$ to $\mathscr C'$ \hfill \ref{para_diagramnot}}
\nomenclature[FCat]{$\Cat$}{Category of categories \hfill \ref{para_diagramnot}}

\subsection{Pointed diagrams of algebras}
Let $\Shape$ be a category. A \emph{$R$-diagram} (of algebras) of shape $\Shape$ is a functor \hbox{from $\Shape$} to $\AlgR$. When there is no risk of confusion, we do not specify the shape of a diagram.

\begin{defi}
Let $\XX$ be a $\KK$-diagram. We say that a $R$-diagram $\AA$ is \emph{$\XX$-pointed} if it is endowed with a natural \hbox{transformation $\dot{\AA}$} from $\XX$ \hbox{to ${\FF_{\! R}} \circ \AA$}. We call $\dot{\AA}$ the \emph{$\XX$-point} \hbox{of $\AA$}.
\end{defi}

The identity map on a $\KK$-diagram $\XX$ endows $\XX$ with a structure of $\XX$-pointed $\KK$-diagram. We always refer to this canonical structure, when dealing with $\XX$ as a $\XX$-pointed $\KK$-diagram. \\

Let $\AA, \AA'$ be two $\XX$-pointed $R$-diagrams. A \emph{morphism of $\XX$-pointed $R$-diagram} from $\AA$ to $\AA'$ is a natural transformation $\alpha$ from $\AA$ to $\AA'$ such that $({\FF_{\! R}} \circ \alpha) \ast \dot{\AA}$ and $\dot{\AA}'$ are equal. \\

We denote by $\pAlgR[\XX]$ the category of $\XX$-pointed $R$-diagrams. The morphisms of $\pAlgR[\XX]$ are the morphisms of $\XX$-pointed $R$-diagrams, their composition is the natural one. \\

Remark that $\pAlgR[\XX]$ is a comma category. Remark also that $\pAlgR[\bullet]$ defines a contravariant functor from $\Fun(\Shape,\Algc)$ to $\Cat$. Let $\XX'$ be another $\KK$-diagram and let $\frac{\XX'}{\XX}$ be a natural transformation from $\XX'$ to $\XX$. The following holds.
\begin{enumerate}
\item[1)] The (covariant) functor $\pAlgR[\frac{\XX'}{\XX}]$ is faithful,
\item[2)] If $\frac{\XX'}{\XX}$ is an epimorphism in $\Fun(\Shape,\Algc)$, then the functor $\pAlgR[\frac{\XX'}{\XX}]$ is full.
\end{enumerate}

\nomenclature[FApoint]{$\dot{\AA}$}{$\XX$-point of a $\XX$-pointed $R$-diagram $\AA$ \hfill \ref{para_pdiagram}}
\nomenclature[FApointR(X)]{$\pAlgR[\XX]$}{Category of $\XX$-pointed $R$-diagrams \hfill \ref{para_pdiagram}}
\nomenclature[GRdiagram of (algebras)]{}{$R$-diagram (of algebras) of shape $\Shape$ \hfill \ref{para_pdiagram}}
\nomenclature[GmorphismofXXpointed]{}{Morphism of $\XX$-pointed $R$-diagram \hfill \ref{para_pdiagram}}

\subsection{Formal deformation of pointed diagrams}
Let $\AA$ be a $\Kh$-diagram. The $\KK$-diagram $\hzero \bullet \circ \AA$ is denoted by $\hzero{\AA}$ and called the \emph{specialisation at ${h \! = \! 0}$ of the $\Kh$-diagram} $\AA$. \\

Denote by $\hzero \varpi$ the natural transformation from ${\FF_{\! \! \: \Kh}}$ to $\hzero \bullet$ defined by the following: let $A$ be a $\Kh$-algebra, the $\KK$-algebra morphism ${{(\hzero \varpi)}_{\! A}}$ is the projection map $\hzero \pi^A$. \\

Let $\XX$ be a $\KK$-diagram and let $\dot{\AA}$ be a $\XX$-point of $\AA$. We \hbox{endow $\hzero{\AA}$} with the \hbox{$\XX$-point} $(\hzero \varpi \circ \AA) \ast \dot{\AA}$ and we call the $\XX$-pointed \hbox{$\KK$-diagram} $\hzero{\AA}$ the \emph{specialisation at ${h \! = \! 0}$} of the $\XX$-pointed $\Kh$-diagram $\AA$. \\

Let $\alpha : \AA \Rightarrow \AA'$ be a natural transformation between two \hbox{$\Kh$-diagrams}. We denote by $\hzero \alpha$ the natural transformation $\hzero \bullet \circ \alpha$ from $\hzero{\AA}$ to $\hzero{\AA'}$ and we call it the \emph{specialisation at ${h \! = \! 0}$} of the natural transformation $\alpha$. Remark that the specialisation at ${h \! = \! 0}$ of a morphism of $\XX$-pointed $\Kh$-diagram is a morphism of $\XX$-pointed $\KK$-diagram.

\begin{defi}
Let $\AA_0$ be a $\XX$-pointed $\KK$-diagram of shape $\Shape$. A formal deformation of the $\XX$-pointed \hbox{$\KK$-diagram} $\AA_0$ is a $\XX$-pointed \hbox{$\Kh$-diagram $\AA_h$} which satisfy the following two conditions.
\begin{enumerate}
\item[\textbullet] The $\Kh$-module $\AA_h(s)$ is topologically free for every object $s$ of $\Shape$.
\item[\textbullet] The $\XX$-pointed $\KK$-diagrams $\hzero{(\AA_h)}$ and $\AA_0$ are isomorphic.
\end{enumerate}
\end{defi}

\nomenclature[FAhequals02]{$\hzero{\AA}$}{Eval. at ${h \! = \! 0}$ of a point. $\Kh$-diagram $\AA$ \hfill \ref{para_hzerodiagram}}
\nomenclature[Falphahequals0]{$\hzero \alpha$}{Specialisation at ${h \! = \! 0}$ of a nat. transform. $\alpha$ \hfill \ref{para_hzerodiagram}}
\nomenclature[Gformaldeformation]{}{Formal deformation of a pointed diagram \hfill \ref{para_pdiagdeform}}





\subsection{Trivial formal deformations}
Denote by $\iota$ the natural transformation from the identity functor on $\Algc$ to the functor ${\FF_{\! \! \: \Kh}} \circ ({\bullet \: \! \cch})$, defined by the following: let $A$ be a $\KK$-algebra, the \hbox{$\KK$-algebra} \hbox{morphism ${\iota_{\! A}}$} is the inclusion map from $A$ to $A \cch$. \\

The \emph{constant formal deformation} $\AA_0 \cch$ of a $\XX$-pointed $\KK$-diagram $\AA_0$ is the $\Kh$-diagram $({\bullet \: \! \cch}) \circ \AA_0$, endowed with the $\XX$-point \hbox{$(\iota \circ \AA_0) \ast \dot{\AA}_0$}.

\begin{defi}
Let $\XX, \XX'$ be two $\KK$-diagrams of shape $\Shape$, let $\frac{\XX'}{\XX}$ be a natural transformation \hbox{from $\XX'$} to $\XX$ and let $\AA_h$ be a formal deformation of a $\XX$-pointed $\KK$-diagram $\AA_0$.
\begin{enumerate}
\item[1)] An isomorphism $\alpha_h$ of \hbox{$\XX'$-pointed} \hbox{$\Kh$-diagram} \hbox{from $(\pAlgh[\frac{\XX'}{\XX}]) (\AA_0 \cch)$} \hbox{to $(\pAlgh[\frac{\XX'}{\XX}]) (\AA_h)$} whose specialisation \hbox{at ${h \! = \! 0}$} is a morphism of \hbox{$\XX$-pointed} \hbox{$\KK$-diagram} is called a $\frac{\XX'}{\XX}$-trivialisation of the formal deformation $\AA_h$.
\item[2)] The formal deformation $\AA_h$ is said $\frac{\XX'}{\XX}$-trivial if it admits a $\frac{\XX'}{\XX}$-trivialisation.
\item[3)] The formal deformation $\AA_h$ is said locally $\frac{\XX'}{\XX}$-trivial if for every object $s$ of $\Shape$, the formal deformation $\AA_h(s)$ of the $\XX(s)$-pointed \hbox{$\KK$-algebra $\AA_0(s)$} is $\frac{\XX'}{\XX}(s)$-trivial.
\end{enumerate}
\end{defi}

\subsection{Formal deformation of representations}
Denote by $\mathbf 1$ a category with one element and one morphism. We identify in a canonical way the category $\Fun (\mathbf 1, \AlgR)$ with the category $\AlgR$: a $R$-diagram of shape $\mathbf 1$ is a $R$-algebra. Thanks to this identification, one can view $R$-algebras as a particular case of $R$-diagrams. In particular, the previous subsections yield the notions of \emph{$D$-pointed $R$-algebras}, where $D$ is a $\KK$-algebra, and of \emph{formal deformations of pointed algebras}. We use the following notational convention: an element $x$ in $D$ can be employed to designate the element $\dot A(x)$ in $A$. \\

Let $A_0$ be a $D$-pointed $\KK$-algebra, let $A_h$ be a formal deformation of $A_0$ and \hbox{let $V_0$} be a representation of $A_0$. We assume that the $D$-point $\dot A_0$ of $A_0$ is surjective and we denote by $f_0$ the unique morphism of $D$-pointed $\KK$-algebra from $A_0$ to the specialisation of $A_h$ at ${h \! = \! 0}$.

\begin{defi}
A formal deformation of $V_0$ along $A_h$ is a representation ${V_{\! \! \; h}}$ of $A_h$ which satisfies the following two conditions.
\begin{enumerate}
\item[\textbullet] The $\Kh$-module ${V_{\! \! \; h}}$ is topologically free.
\item[\textbullet] The pull-back of $\hzero{(V_h)}$ by $f_0$ is isomorphic to $V_0$.
\end{enumerate}
\end{defi}

Two formal deformations along $A_h$ of $V_0$ are said \emph{isomorphic} if they are isomorphic representations of $A_h$.

\nomenclature[GXpointedRalgebra]{}{$X$-pointed $R$-algebra \hfill \ref{para_palg}}

\subsection{Relative Hochschild cohomology}
Let $A_0$ be a $\KK$-algebra, let $B_0$ be a $\KK$-subalgebra of $A_0$, let $M$ be a $A_0$-bimodule and let $n \in \NN$. We denote by $C^n(A_0,M)$ the $\KK$-vector space formed by the \hbox{$\KK$-multilinear} maps from $(A_0)^n$ to $M$ and we denote by $C^n(A_0,B_0,M)$ the \hbox{$\KK$-vector} subspace of $C^n(A_0,M)$ formed by the maps $f$ for which the following condition holds:
\begin{multline*}
f(y_1 \! \: x_1, y_2 \! \: x_2, \dots, y_n \! \: x_n) . \! \: y_{n+1} \ = \ y_1 \! \: . f(x_1 \! \: y_2, \dots, x_{n-1} \! \: y_n, x_n \! \: y_{n+1}) \, , \\
\forall \, x_1, \dots, x_n \in A_0 \, , \ \forall \, y_1, \dots, y_{n+1} \in B_0 \, .
\end{multline*}
We denote by $d_n$ the $\KK$-linear map from $C^n(A_0,M)$ to $C^{n+1}(A_0,M)$ defined by the following for $f \in C^n(A_0,M)$ and for $x_1, \dots, x_{n+1} \in A_0$:
\begin{IEEEeqnarray*}{rCCl}
\big[ d_n(f) \big] \! \: (x_1, \cdots, x_{n+1}) & \ := && x_1 \! \: . f(x_2, \cdots, x_{n+1}) \\
&& + \ & \sum_{l=1}^{n-1} f(x_1, \dots, x_{l-1}, x_l \! \: x_{l+1}, x_{l+2}, \cdots, x_n) \\
&& + \ & f(x_1, \cdots, x_n) . \! \: x_{n+1} \, .
\end{IEEEeqnarray*}
The $\KK$-vector spaces $C^n(A_0,M)$ ($n \in \NN$) and the maps $d_n$ ($n \in \NN$) form a complex $C^\bullet(A_0,M)$. The $\KK$-vector subspaces $C^n(A_0,B_0,M)$ ($n \in \NN$) form a subcomplex $C^\bullet(A_0,B_0,M)$. The cohomology $H^\bullet(A_0,B_0,M)$ of $C^\bullet(A_0,B_0,M)$ is called the \emph{relative Hochschild cohomology} of the pair $(A_0,B_0)$ with coefficients in $M$. \\

Let $\mu_h$ be a $\Kh$-algebra product on the $\Kh$-module $A_0 \cch$. We denote then \hbox{by $A_0 [| \mu_h |]$} the $\Kh$-algebra whose underlying $\Kh$-module \hbox{is $A_0 \cch$} and whose product is equal to $\mu_h$. We denote \hbox{by $\mu_m$} ($m \in \NN$) the maps in $C^2(A_0,A_0)$ such that the following holds:
$$ \mu_h(x_1,x_2) \ = \ \sum_{m \in \NN} \mu_m(x_1, x_2) \! \; h^m \, , \quad \quad \ \forall \, x_1, x_2 \in A_0 \, . $$
We assume that the product $\mu_h$ satisfies the following two conditions.
\begin{enumerate}
\item[\textbullet] The map $\mu_0$ is equal to the product of the $\KK$-algebra $A_0$.
\item[\textbullet] The map $\mu_m$ belongs to $C^2(A_0,B_0,A_0)$ for every $m \in \NN$.
\end{enumerate}

\vsp
\vsp
\vsp

Let now $V_0$ be a representation of $A_0$ and let ${V_{\! \! \; h}}, {V'_{\! \! \: h}}$ be two representations \hbox{of $A_0 [| \mu_h |]$} whose underlying $\Kh$-modules are both equal to $V_0 \cch$. We denote by $\pi_h$ and $\pi'_h$ the $\Kh$-algebra morphisms from $A_0 [| \mu_h |]$ to $\Endh{(V_0 \cch)}$ associated to the representations ${V_{\! \! \; h}}$ and ${V'_{\! \! \: h}}$, respectively. We denote \hbox{by $\pi_m$} \hbox{and $\pi'_m$} ($m \in \NN$) the maps in $C^2(A_0,\Endc{(V_0)})$ such that the following holds:
\begin{eqnarray*}
\big[ \pi_h (x) \big] (v) & = & \sum_{m \in \NN} \big[ \pi_m(x) \big] \! \! \; (v) \! \; h^m \, , \quad \quad \ \forall \, x \in A_0 \, , \ \forall \, v \in V_0 \, , \\
\big[ \pi'_h (x) \big] (v) & = & \sum_{m \in \NN} \big[ \pi'_m(x) \big] \! \! \; (v) \! \; h^m \, , \quad \quad \ \forall \, x \in A_0 \, , \ \forall \, v \in V_0 \, .
\end{eqnarray*}
We assume that the representations ${V_{\! \! \; h}}, {V'_{\! \! \: h}}$ satisfy the following two conditions.
\begin{enumerate}
\item[\textbullet] The maps $\pi_0$ and $\pi'_0$ are both equal to the $\KK$-algebra morphism from $A_0$ to $\Endc{(V_0)}$ associated to the representation $V_0$.
\item[\textbullet] The maps $\pi_m$ and $\pi'_m$ belong to $C^2(A_0,B_0,\Endc{(V_0)})$ for \hbox{every $m \in \NN$}.
\end{enumerate}




\vsp
\vsp

An extension of two representations of $A_0$ is said \hbox{\emph{$B_0$-trivial}} if the restriction to $B_0$ is a trivial extension in the category of representations of $B_0$.

\begin{prop} \label{prop_Hochschild}
If every $B_0$-trivial extension of $V_0$ by $V_0$ is $A_0$-trivial, then there exists a $A_0 [| \mu_h |]$-isomorphism between  $V_h$ and $V'_h$ whose specialisation \hbox{at ${h \! = \! 0}$} is the identity map on $V_0$.
\end{prop}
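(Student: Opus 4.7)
The plan is to build the desired isomorphism order by order in $h$, writing $f_h = \mathrm{id}_{V_0} + \sum_{m \geq 1} f_m \, h^m$ with each $f_m \in \Endc(V_0)$. Expanding the intertwining equation $f_h \circ \pi_h(x) = \pi'_h(x) \circ f_h$ in powers of $h$ yields, at order $m \geq 1$, a relation of the form
$$
\pi(x) \circ f_m \, - \, f_m \circ \pi(x) \ = \ \rho_m(x) \quad \quad \quad (x \in A_0),
$$
where $\pi = \pi_0 = \pi'_0$ denotes the structural representation of $A_0$ on $V_0$, and $\rho_m(x) = \pi_m(x) - \pi'_m(x) + \sum_{p+q=m,\, 0 < p,q < m} (f_p \circ \pi_q(x) - \pi'_p(x) \circ f_q)$ is an explicit $1$-cochain built from $\pi_j,\pi'_j$ ($1 \leq j \leq m$) and the previously constructed $f_j$ ($0 \leq j \leq m-1$). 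Viewing $\Endc(V_0)$ as an $A_0$-bimodule via $\pi$, this relation reads as the Hochschild equation $d_0(f_m) = \rho_m$ in $C^\bullet(A_0, \Endc(V_0))$.

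The strategy is to close the induction by producing at each step an $f_m$ lying in the relative group $C^0(A_0, B_0, \Endc(V_0))$, i.e. commuting with the $B_0$-action. Two verifications are then needed. First, a direct computation shows $d_1(\rho_m) = 0$: this follows from reading the algebra identities $\pi_h(x x') = \pi_h(x) \pi_h(x')$ and $\pi'_h(x x') = \pi'_h(x) \pi'_h(x')$ at order $m$, combined with the lower-order intertwining relations already granted by the induction. Second, one checks that $\rho_m \in C^1(A_0, B_0, \Endc(V_0))$: this uses the $B_0$-balanced hypothesis on the $\pi_j, \pi'_j$ together with the inductive assumption that each previously constructed $f_j$ ($j < m$) is $B_0$-equivariant. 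Thus $\rho_m$ determines a class in the relative Hochschild cohomology $H^1(A_0, B_0, \Endc(V_0))$.

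The key point is that the assumption of the proposition is precisely the vanishing of this group. Indeed, extensions of $V_0$ by $V_0$ in the category of $A_0$-modules are classified by $H^1(A_0, \Endc(V_0))$, and the classes whose restriction to $B_0$-modules is split form the kernel of $H^1(A_0, \Endc(V_0)) \to H^1(B_0, \Endc(V_0))$. This kernel is canonically isomorphic to $H^1(A_0, B_0, \Endc(V_0))$: given a cocycle whose restriction to $B_0$ is a coboundary, subtracting that coboundary yields a representative vanishing on $B_0$, which by the cocycle identity lies automatically in the $B_0$-balanced subcomplex; the analogous analysis applies to coboundaries. Under the hypothesis this kernel is zero, so $\rho_m = d_0(f_m)$ for some $f_m \in C^0(A_0, B_0, \Endc(V_0))$, which closes the induction.

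Assembling, $f_h := \sum_{m \geq 0} f_m \, h^m$ defines a $\Kh$-linear endomorphism of $V_0 \cch$ intertwining $\pi_h$ and $\pi'_h$, with $\hzero{f_h} = \mathrm{id}_{V_0}$. Since $V_0 \cch$ is topologically free, this implies $f_h$ is a $\Kh$-linear isomorphism, hence the desired $A_0 [| \mu_h |]$-isomorphism between $V_h$ and $V'_h$. The main hurdle is the cohomological interpretation of the hypothesis in the third paragraph; the two order-by-order verifications (cocycle condition and $B_0$-balance of $\rho_m$) amount to careful bookkeeping using the associativity of $\mu_h$ and the $B_0$-balanced structure of $\pi_m,\pi'_m$.
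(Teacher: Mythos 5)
Your proof is correct and follows essentially the same route as the paper: an order-by-order induction constructing $B_0$-equivariant maps ${f_{\! \! \; m}}$, with the obstruction at each step living in the relative Hochschild group $H^1(A_0,B_0,\Endc{(V_0)})$, whose vanishing the paper likewise extracts from the hypothesis on $B_0$-trivial extensions. The only difference is that you spell out the identification of that hypothesis with the vanishing of $H^1(A_0,B_0,\Endc{(V_0)})$ (via the kernel of restriction to $B_0$), a step the paper asserts with an \emph{i.e.}, while the cocycle and $B_0$-balance verifications you summarize are exactly the computations the paper carries out in detail.
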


\proof
Suppose that every $B_0$-trivial extension of the representation $V_0$ by itself is $A_0$-trivial; i.e. suppose that the cohomology group $H^1(A_0,B_0,\Endc{(V_0}))$ is zero. Let $f_0$ be the identity map on $V_0$ and let us construct by induction $B_0$-endomorphisms ${f_{\! \! \; m}}$ of $V_0$ ($m \in \NN$) such that the following assertion holds for every $m \in \NN$.
\begin{equation} \tag{$\ast_m$}
\sum_{m_1 + m_2=m} \, {f_{\! \! \; m_1}} \circ \big[ \pi_{m_2} (x) \big] \ = \ \sum_{m_1+m_2=m} \, \big[ \pi'_{m_2}(x) \big] \circ {f_{\! \! \; m_1}} \, , \quad \ \forall \, x \in A_0 \, .
\end{equation}
Fix $m \in \NN$ and suppose that we have constructed $B_0$-endomorphisms ${f_{\! \! \; m'}}$ of $V_0$ such that assertion $(\ast_{m'})$ holds for \hbox{every $m' \in \NNI_{< m}$}. Since $H^1(A_0,B_0,\Endc{(V_0}))$ is zero, since the maps ${f_{\! \! \; m'}}$ are $B_0$-morphisms for all $m' \in \NN_{< m}$ by induction hypothesis and in view of the conditions satisfied by $\pi_h,\pi'_h$, there exists a \hbox{$B_0$-endomorphism ${f_{\! \! \; m}}$} of $V_0$ such that assertion $(\ast_m)$ holds if and only if the following equality holds for \hbox{every $x_1,x_2 \in A_0$}:
\begin{multline*}
\sum_{\substack{m_1+m_2=m, \\ m_2 \neq 0}} \bigg( \big[ \pi_0 (x_1) \big] \circ {f_{\! \! \; m_1}} \circ \big[ \pi_{m_2} (x_2) \big] \ - \ \big[ \pi'_0 (x_1) \big] \circ \big[ \pi'_{m_2} (x_2) \big] \circ {f_{\! \! \; m_1}} \bigg) \\
+ \, \sum_{\substack{m_1+m_2=m, \\ m_2 \neq 0}} \bigg( {f_{\! \! \; m_1}} \circ \big[ \pi_{m_2} (x_1) \big] \circ \big[ \pi_0 (x_2) \big] \ - \ \big[ \pi'_{m_2} (x_1) \big] \circ {f_{\! \! \; m_1}} \circ \big[ \pi'_0 (x_2) \big] \bigg)  \\
= \, \sum_{\substack{m_1+m_2=m, \\ m_2 \neq 0}} \bigg( {f_{\! \! \; m_1}} \circ \big[ \pi_{m_2} (x_1 \! \: x_2) \big] \ - \  \big[ \pi'_{m_2} (x_1 \! \: x_2) \big] \circ {f_{\! \! \; m_1}} \bigg) \, .
\end{multline*}
Besides, since ${V_{\! \! \; h}}$ and ${V'_{\! \! \: h}}$ are representations of $A_0 [| \mu_h |]$, the following equalities hold for every $m' \in \NN$ and for every $x_1,x_2 \in A_0$:
\begin{eqnarray*}
\sum_{m_1+m_2=m'} \pi_{m_1} \big( \mu_{m_2} (x_1,x_2) \big) & = & \sum_{m_1+m_2=m'} \big[ \pi_{m_1} (x_1) \big] \circ \big[ \pi_{m_2} (x_2) \big] \, , \\
\sum_{m_1+m_2=m'} \pi'_{m_1} \big( \mu_{m_2} (x_1,x_2) \big) & = & \sum_{m_1+m_2=m'} \big[ \pi'_{m_1} (x_1) \big] \circ \big[ \pi'_{m_2} (x_2) \big] \, .
\end{eqnarray*}
As a consequence, the following equality holds for every $x_1,x_2 \in A_0$:
\begin{multline*}
\sum_{\substack{m_1+m_2=m, \\ m_2 \neq 0}} \bigg( {f_{\! \! \; m_1}} \circ \big[ \pi_{m_2} (x_1 \! \: x_2) \big] \ - \  \big[ \pi'_{m_2} (x_1 \! \: x_2) \big] \circ {f_{\! \! \; m_1}} \bigg) \\
= \, \sum_{\substack{m_1+m_2+m_3=m \\ m_2+m_3 \neq 0}} \bigg( {f_{\! \! \; m_1}} \circ \big[ \pi_{m_2} (x_1) \big] \circ \big[ \pi_{m_3} (x_2) \big] \ - \ \big[ \pi'_{m_2} (x_1) \big] \circ \big[ \pi'_{m_3} (x_2) \big] \circ {f_{\! \! \; m_1}} \bigg) \\
- \,  \sum_{\substack{m_1+m_2+m_3=m \\ m_3 \neq m}} \bigg( {f_{\! \! \; m_1}} \circ \big[ \pi_{m_2} \big( \mu_{m_3} (x_1,x_2) \big) \big] \ - \ \big[ \pi'_{m_2} \big( \mu_{m_3} (x_1,x_2) \big) \big] \circ {f_{\! \! \; m_1}} \bigg) \, .
\end{multline*}
Since by induction hypothesis assertion $(\ast_{m'})$ holds for every $m' \in \NN_{< m}$, the following equalities then do for every $x_1,x_2 \in A_0$:
\begin{multline*}
\sum_{\substack{m_1+m_2+m_3=m \\ m_3 \neq m}} \bigg( {f_{\! \! \; m_1}} \circ \big[ \pi_{m_2} \big( \mu_{m_3} (x_1,x_2) \big) \big] \ - \ \big[ \pi'_{m_2} \big( \mu_{m_3} (x_1,x_2) \big) \big] \circ {f_{\! \! \; m_1}} \bigg) \ = \ 0 \, , \\
\sum_{\substack{m_1+m_2+m_3=m \\ m_1 \neq m}} \bigg( {f_{\! \! \; m_1}} \circ \big[ \pi_{m_2} (x_1) \big] \circ \big[ \pi_{m_3} (x_2) \big] \ - \ \big[ \pi'_{m_3} (x_1) \big] \circ \big[ \pi'_{m_2} (x_2) \big] \circ {f_{\! \! \; m_1}} \bigg) \\
= \, \sum_{\substack{m_1+m_2=m, \\ m_1 \neq m}} \bigg( {f_{\! \! \; m_1}} \circ \big[ \pi_{m_2} (x_1) \big] \circ \big[ \pi_0 (x_2) \big] \ - \ \big[ \pi'_0 (x_1) \big] \circ \big[ \pi'_{m_2} (x_2) \big] \circ {f_{\! \! \; m_1}} \bigg) \\
+ \, \sum_{\substack{m_1+m_2+m_3=m \\ m_3 \neq 0}} \bigg( \big[ \pi'_{m_2} (x_1) \big] \circ {f_{\! \! \; m_1}} \circ \big[ \pi_{m_3} (x_2) \big] \ - \ \big[ \pi'_{m_3} (x_1) \big] \circ {f_{\! \! \; m_1}} \circ \big[ \pi_{m_2} (x_2) \big] \bigg) \\
= \, \sum_{\substack{m_1+m_2=m, \\ m_1 \neq m}} \bigg( {f_{\! \! \; m_1}} \circ \big[ \pi_{m_2} (x_1) \big] \circ \big[ \pi_0 (x_2) \big] \ - \ \big[ \pi'_0 (x_1) \big] \circ \big[ \pi'_{m_2} (x_2) \big] \circ {f_{\! \! \; m_1}} \bigg) \\
+ \, \sum_{\substack{m_1+m_3=m, \\ m_3 \neq 0}} \bigg( \big[ \pi'_0 (x_1) \big] \circ {f_{\! \! \; m_1}} \circ \big[ \pi_{m_3} (x_2) \big] \ - \ \big[ \pi'_{m_3} (x_1) \big] \circ {f_{\! \! \; m_1}} \circ \big[ \pi_0 (x_2) \big] \bigg) \, .
\end{multline*}
We thus have proved the existence of $B_0$-endomorphisms ${f_{\! \! \; m}}$ of $V_0$ $(m \in \NN)$ such that assertion $(\ast_m)$ holds for every $m \in \NN$. One then concludes by considering the $\Kh$-linear endomorphism ${f_{\! \! \; h}}$ of $V_0 \cch$ defined by the following:
$$ {f_{\! \! \; h}}(v) \ := \ \sum_{m \in \NN} {f_{\! \! \; m}}(v) \! \; h^m \quad \quad \ (v \in V_0) \, . $$
\qed


\nomenclature[FA((h))]{$\AA_0 \cch$}{Constant formal deformation of $\AA_0$ \hfill \ref{para_constantdeformation}}
\nomenclature[Gtrivialformal]{}{Trivial formal deformation \hfill \ref{para_trivialdeform}}
\nomenclature[Gtrivialisationofapointed]{}{Trivialisation of a pointed $\Kh$-diagram \hfill \ref{para_trivialdeform}}
\nomenclature[Glocallytrivialformal]{}{Locally trivial formal deformation \hfill \ref{para_trivialdeform}}
\nomenclature[Gpointedformaldeform]{}{Pointed f. deformation of a representation \hfill \ref{para_deformrep}}

\section{Definition of GQE algebras in rank 1}
\subsection{Non-deformable relations of $\slt$}
Recall the definition of the Lie algebra $\slt$: see subsection \ref{subs_KMone}. The \hbox{relations \eqref{eq_KM_nondeformableone}} are called the \emph{non-deformable relations} \hbox{of $\slt$}.

\begin{defi}
The algebra $\sltF$ is the Lie $\KK$-algebra generated by \hbox{$X^-, X^+, H$} and subject to the non-deformable relations of $\sltF$:
\begin{equation*} \label{eq_FUcAone}
\big[ H, X^{\pm} \big] \ = \ \pm \: \! 2 \, X^{\pm} \, .
\end{equation*}
\end{defi}

The universal enveloping algebra $\FUc[\sltF]$ of $\sltF$ is the \hbox{$\KK$-algebra} generated by the elements $X^-, X^+, H$ and subject to the non-deformable relations of $\sltF$. We designate also by $H$, when the context makes it unambiguous, the \hbox{$\KK$-algebra} morphism from $\KK[H]$ to $\FUc[\sltF]$ which sends $H$ to $H$. \\

A \emph{$\sltF$-pointed} algebra designates a $\FUc[\sltF]$-pointed algebra. We regard the algebra $\Uc[\slt]$ as a $\sltF$-pointed $\KK$-algebra: the $\FUc[\sltF]$-point, which we denote by $\pic[\slt]$, is the projection map from $\FUc[\sltF]$ \hbox{to $\Uc[\slt]$}.

\vsp

\begin{defi}
We denote by $\FUh[\sltF]$ the constant formal deformation of the \hbox{${\sltF}$-pointed} \hbox{$\KK$-algebra} $\FUc[\sltF]$.
\end{defi}

\nomenclature[XimoinsXiplus3]{}{$\phantom{Chevalley generators of } \, \; \FUh$ \hfill \ref{para_nondeformable}}

\subsection{The global crystal of rank 1}
\begin{defi}
The global crystal of rank 1, denoted by $\BB$, is the quiver defined by the following.
\begin{enumerate}
\item[\textbullet] The set $\BBv$ of vertices of $\BB$ is ${\big \{ b_{n,p} := (n,n-2p) \; \! ; \, n,i \in \NN, \, p \leq n \big \}}$.
\item[\textbullet] The set $\BBe$ of edges of $\BB$ is the disjoint union ${\BBe[-] \sqcup \BBe[+]}$ of two copies \hbox{of $[\BBe] := {\big \{ (n,k) \; \! ; \, n,k \in \NNI, \, k \leq n \big \}}$}. The source of the edge $(n,k) \in \BBe[\pm]$ is $(n,\pm (n-2k+2))$ and its target is $(n,\pm (n-2k))$.
\end{enumerate}
\end{defi}

\vsp
\vsp

Here is an illustration of the global crystal of rank 1. Remark that there is no arrow for $n=0$.
\begin{equation*}
\shorthandoff{;:!?} \xymatrix @!0 @C=3pc @R=1.1pc {
&&&&& \quad \ \! \; \bullet \! \; \scriptstyle{b_{n,0}} \ar@<4pt>[dd] && \\
&&&&&& \quad \ \ \iddots & \\
&&& \quad \ \! \; \bullet \! \; \scriptstyle{b_{3,0}} \ar@<4pt>[dd] && \quad \ \! \; \bullet \! \; \scriptstyle{b_{n,1}} \ar@<4pt>[uu] \ar@<4pt>[dd] && \\
&& \quad \ \! \; \bullet \! \; \scriptstyle{b_{2,0}} \ar@<4pt>[dd] && \iddots &&& \\
& \quad \ \! \; \bullet \! \; \scriptstyle{b_{1,0}} \ar@<4pt>[dd] && \quad \ \! \; \bullet \! \; \scriptstyle{b_{3,1}} \ar@<4pt>[dd] \ar@<4pt>[uu] && \quad \ \! \; \bullet \! \; \scriptstyle{b_{n,2}} \ar@<4pt>[uu] && \\
\quad \ \! \; \bullet \! \; \scriptstyle{b_{0,0}} && \quad \ \! \; \bullet \! \; \scriptstyle{b_{2,1}} \ar@<4pt>[dd] \ar@<4pt>[uu] && \cdots & \vdots & \quad \ \ \cdots & \\
& \quad \ \! \; \bullet \! \; \scriptstyle{b_{1,1}} \ar@<4pt>[uu] && \quad \ \! \; \bullet \! \; \scriptstyle{b_{3,2}} \ar@<4pt>[dd] \ar@<4pt>[uu] && \quad \quad \ \, \bullet \! \; \scriptstyle{b_{n,n-2}} \ar@<4pt>[dd] && \\
&& \quad \ \! \; \bullet \! \; \scriptstyle{b_{2,2}} \ar@<4pt>[uu] && \ddots &&& \\
&&& \quad \ \! \; \bullet \! \; \scriptstyle{b_{3,3}} \ar@<4pt>[uu] && \quad \quad \ \, \bullet \! \; \scriptstyle{b_{n,n-1}} \ar@<4pt>[uu] \ar@<4pt>[dd] & \\
&&&&&& \quad \ \ \ddots & \\
&&&&& \quad \ \, \! \; \bullet \! \; \scriptstyle{b_{n,n}} \ar@<4pt>[uu] && \\
}
\end{equation*}

Let $n \in \NN$. We denote by $\BB_n$ the connected component of $\BB$ containing the vertex $b_{n,0}$. We denote by $\BBv_n$ and $\BBe_n$ the sets of vertices and edges in $\BB_n$, respectively. The quiver $\BB_n$ is called the \emph{$n$th component of $\BB$}.

\subsection{Colourings}
Let $n \in \NN$, remark that the irreducible representation $\Lc{n}$ of $\slt$ can be defined as the $\KK$-vector space ${\KK \BBv_n}$, where the actions of $X^\pm, H$ are given by the following, for $p \in \NN_{\leq n}$:
$$ \left\{ {\renewcommand{\arraystretch}{0.75} \begin{array}{rcl}
H . \! \; b_{n,p} & := & \ (n- 2p) \, b_{n,p} \, , \\
&& \\
X^- . \! \; b_{n,p} & := & \left\{ {\renewcommand{\arraystretch}{1.1} \begin{array}{cl}
(p+1) \, b_{n,p+1} & \quad \text{if $p \neq n$,} \\
0 & \quad \text{else,}
\end{array}} \right. \\
&& \\
X^+ . \! \; b_{n,p} & := &  \left\{ {\renewcommand{\arraystretch}{1.1} \begin{array}{cl}
(n-p+1) \, b_{n,p-1} & \quad \text{if $p \neq 0$,} \\
0 & \quad \text{else.}
\end{array}} \right.
\end{array}} \right.
$$
We regard $\Lc{n}$ also as a representation of $\sltF$.

\begin{defi}
We denote by $\Coloh$ the ring $\Kh^{\BBe}$. An element in $\Coloh$ is called a colouring of the crystal $\BB$ with values in $\Kh$.
\end{defi}

Let $\psi \in \Coloh$. We denote by $\psi^\pm$ the restriction of $\psi$ to $\BBe[\pm]$. Here is an illustration of $\psi$ restricted to $E_n$ ($n \in \NN$):

$$ \shorthandoff{;:!?} \xymatrix @!0 @R=1.1pc {
\quad \ \, \! \; \bullet \! \; \scriptstyle{b_{n,0}} \ar@<4pt>[dd]^{\scriptstyle \psi^-(n,1)} \\
\\
\quad \ \, \! \; \bullet \! \; \scriptstyle{b_{n,1}} \ar@<4pt>[uu]^{\scriptstyle \psi^+(n,n)} \ar@<4pt>[dd]^{\scriptstyle \psi^-(n,2)}\\
 \\
\quad \ \, \! \; \bullet \! \; \scriptstyle{b_{n,2}} \ar@<4pt>[uu]^{\scriptstyle \psi^+(n,n-1)} \\
\vdots \\
\quad \quad \ \, \bullet \! \; \scriptstyle{b_{n,n-2}} \ar@<4pt>[dd]^{\scriptstyle \psi^-(n,n-1)} \\
\\
\quad \quad \ \, \bullet \! \; \scriptstyle{b_{n,n-1}} \ar@<4pt>[uu]^{\scriptstyle \psi^+(n,2)} \ar@<4pt>[dd]^{\scriptstyle \psi^-(n,n)} \\
\\
\quad \ \, \! \; \bullet \! \; \scriptstyle{b_{n,n}} \ar@<4pt>[uu]^{\scriptstyle \psi^+(n,1)}
} $$

\vsp

\begin{defi}
Let $\psi \in \Coloh$ and let $n \in \NN$. We denote \hbox{by $\Lh{n,\psi}$} the representation \hbox{of $\FUh[\sltF]$} whose underlying $\Kh$-module is ${(\KK \BBv_n) \cch}$ and where the actions of $X^{\pm}, H$ are defined by the following, for $p \in \NN_{\leq n}$:
$$ \left\{ {\renewcommand{\arraystretch}{0.75} \begin{array}{rcl}
H . \! \; b_{n,p} & := & \ (n- 2p) \, b_{n,p} \, , \\
&& \\
X^- . \! \; b_{n,p} & := & \left\{ {\renewcommand{\arraystretch}{1.1} \begin{array}{cl}
\psi^-(n,p+1) \, b_{n,p+1} & \quad \text{if $p \neq n$,} \\
0 & \quad \text{else,}
\end{array}} \right. \\
&& \\
X^+ . \! \; b_{n,p} & := &  \left\{ {\renewcommand{\arraystretch}{1.1} \begin{array}{cl}
\psi^+(n,n-p+1) \, b_{n,p-1} & \quad \text{if $p \neq 0$,} \\
0 & \quad \text{else.}
\end{array}} \right.
\end{array}} \right.
$$
\end{defi}

\begin{lem} \label{lem_indecompAone}
Let $\psi \in \Coloh$ and let $n \in \NN$. If the restriction of $\psi$ to $\BBe_n$ has no zero, then the representation $\Lh{n,\psi}$ ($n \in \NN$) is indecomposable.
\end{lem}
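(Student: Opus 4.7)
The plan is to compute the endomorphism algebra $\Endh{(\Lh{n,\psi})}$ of $\Lh{n,\psi}$ as a representation of $\FUh[\sltF]$ and to show that it reduces to scalar multiplications by elements of $\Kh$, which is a local ring and hence has only $0$ and $1$ as idempotents.

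First I would exploit the fact that $H$ acts diagonally on the topological basis $\{b_{n,0},\dots,b_{n,n}\}$ with the \emph{pairwise distinct} integer eigenvalues $n-2p$ for $0 \leq p \leq n$. Since these weights are distinct and each corresponding weight space $\Kh \cdot b_{n,p}$ is topologically free of rank 1, any $\FUh[\sltF]$-endomorphism $f$ of $\Lh{n,\psi}$ necessarily commutes with the action of $H$ and must therefore preserve this decomposition. So there exist scalars $\lambda_0,\dots,\lambda_n \in \Kh$ with $f(b_{n,p}) = \lambda_p \, b_{n,p}$ for every $p$.

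Next I would impose the commutation $f \circ X^- = X^- \circ f$. For each $0 \leq p < n$ this gives
$$\psi^-(n,p+1)\,\lambda_{p+1}\,b_{n,p+1} \;=\; \psi^-(n,p+1)\,\lambda_p\,b_{n,p+1}.$$
By the nonvanishing hypothesis, $\psi^-(n,p+1)$ is a nonzero element of $\Kh$, and since $\Kh$ is an integral domain this forces $\lambda_{p+1}=\lambda_p$. Iterating, all the $\lambda_p$ coincide, so $f$ acts as multiplication by a single scalar $\lambda \in \Kh$; that is, $\Endh{(\Lh{n,\psi})} = \Kh \cdot \mathrm{id}$.

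To conclude, recall that $\Kh$ is a local ring (maximal ideal $(h)$) and in particular an integral domain, so its only idempotents are $0$ and $1$. Hence the endomorphism algebra of $\Lh{n,\psi}$ has only trivial idempotents, which is exactly the statement that the representation is indecomposable. The only delicate point is the use of the hypothesis on $\psi$: the argument breaks precisely at an edge where $\psi^-(n,p+1)$ vanishes, since there the equations decouple and one can produce a nontrivial idempotent splitting the chain into two subrepresentations; so the no-zero condition is sharp.
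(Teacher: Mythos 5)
Your proof is correct, and it takes a genuinely different route from the paper's. The paper argues directly on a putative splitting: it supposes $\Lh{n,\psi} = V \oplus V'$ with both summands nonzero, uses the diagonal action of $H$ to decompose $V$ and $V'$ into weight spaces, concludes --- the weight spaces of $\Lh{n,\psi}$ being free of rank one over the local ring $\Kh$ --- that some $b_{n,p}$ lies in $V$ and some $b_{n,p'}$ lies in $V'$, and then derives a contradiction by transporting $b_{n,p}$ to a nonzero multiple of $b_{n,p'}$ via the actions of $X^\pm$, whose coefficients are nonzero by hypothesis. You instead compute the endomorphism algebra: commutation with $H$ forces any endomorphism to be diagonal (this step silently uses that the weights $n-2p$ are pairwise distinct, which is valid since $\KK$ has characteristic zero and $\Kh$ is a domain), commutation with $X^-$ forces the diagonal entries to coincide, whence $\Endh(\Lh{n,\psi}) = \Kh \cdot \mathrm{id}$, and indecomposability follows because $\Kh$ has no nontrivial idempotents. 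Both arguments pivot on the same two facts --- rank-one weight spaces with distinct weights, and nonvanishing of the colouring on the $n$th component --- but yours buys a little more: it establishes in passing the endomorphism-ring statement that the paper proves only later (point 5 of theorem \ref{thm_Uhintegrable}; the same diagonal computation appears in the proof of lemma \ref{lem_congruentAone}), and it uses only the nonvanishing of $\psi^-$ on $\BBe_n$, a strictly weaker hypothesis than the lemma's.

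One caveat: your closing claim that the no-zero condition is \emph{sharp} is wrong. If $\psi^-(n,p+1) = 0$, the relation coming from $X^-$ at that edge does decouple, but an endomorphism must also commute with $X^+$, which yields $\psi^+(n,n-p)\,(\lambda_p - \lambda_{p+1}) = 0$; so as long as $\psi^+(n,n-p) \neq 0$ the chain remains linked and no nontrivial idempotent can be produced. A genuine splitting between $b_{n,p}$ and $b_{n,p+1}$ requires \emph{both} $\psi^-(n,p+1)$ and $\psi^+(n,n-p)$ to vanish. Hence non-degeneracy on $\BBe_n$ is sufficient but not necessary for indecomposability; it becomes an equivalence only for \emph{irreducibility} over the fraction field, as in lemma \ref{lem_indecompAonehat} of the second part. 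This does not affect the lemma itself, which asserts only the one implication your main argument establishes.
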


\proof
Suppose that the restriction of $\psi$ to $E_n$ has no zero. Suppose that there exist nonzero subrepresentations $V, V'$ of $\Lh{n,\psi}$ such that $\Lh{n,\psi} = V \oplus V'$. Remark that the pull-back of $\Lh{n,\psi}$ via the morphism $H$ is isomorphic to a direct sum of one-dimensional irreducible representations of $\KK[H]$. Hence, the \hbox{pull-backs} of $V,V'$ by $H$ are also isomorphic to direct sums of one-dimensional representations of $\KK[H]$. As a consequence, there exists $p,p' \in \NN_{\leq n}$ such that $b_{n,p} \in V$ and $b_{n,p'} \in V'$. Since the restriction of $\psi$ to $E_n$ has no zero and in view of the action of $X^\pm$ on $\Lh{n,\psi}$, one then obtains a contradiction.
\qed

\begin{defi}
Let $\psi \in \Coloh$.
\begin{itemize}
\item[\textbullet] The colouring $\psi$ is said non-degenerate if the function $\psi$ has no zero.
\item[\textbullet] The colouring $\psi$ is said invertible if $\psi$ is invertible in the ring $\Coloh$.
\end{itemize}
\end{defi}

\subsection{Notations and conventions}
The $\Kh$-algebras $\Kh^{[\BBe]}$ and $\KK^{[\BBe]} \cch$ are identified: an element $f \in \Kh^{[\BBe]}$ is identified with the formal power series ${f_{\! \! \; h}} = \sum_{m \in \NN} {f_{\! \! \: m} \: \! h^m}$,  where ${f_{\! \! \: m}} \in \KK^{[\BBe]}$ for all $m \in \NN$ and such that the following holds:
$$ f(n,k) \ = \ \sum_{m \in \NN} {f_{\! \! \: m}}(n,k) \, h^m \, , \quad \quad \forall \, (n,k) \in [\BBe] \, . $$
Let now $P_h = \sum_{m \in \NN} P_m \, h^m$ be an element in $\KK[u,v] \cch$ and let $n,k \in \ZZ$. We denote by $P_h(n,k)$ the power series in $\Kh$ defined by the following:
$$ P_h(n,k) \ := \ \sum_{m \in \NN} P_m(n,k) \, h^m \, . $$

Remark that a polynomial in $\KK[u,v]$ is zero if and only if its evaluations at $(n,k)$ are zero for all $(n,k) \in [\BBe]$. We then regard the $\Kh$-algebra $\KK[u,v] \cch$ as a \hbox{$\Kh$-subalgebra} of $\Kh^{[\BBe]}$: an element $P_h \in \KK[u,v] \cch$ is identified with the function in $\Kh^{[\BBe]}$ whose value $(n,k)$ is $P_h(n,k)$ for every $(n,k) \in [\BBe]$. \\

Let $f \in \Kh^{[\BBe]}$ and let $n,k \in \NN$ such that $k \leq n$. We denote by $f(n,k) !$ the power series defined by the following:
$$ f(n,k) ! \ := \ \left\{ {\renewcommand{\arraystretch}{1.3} \begin{array}{cl}
\prod_{k'=1}^k f(n,k') & \quad \ \text{if } k \geq 1 \, , \\
0 & \quad \ \text{else.}
\end{array}} \right. $$



\subsection{The algebra $\Uh[\slt,\psi]$}
In this subsection, $\psi$ designates a colouring of the crystal $\BB$ with values in $\Kh$.

\begin{defi} \label{defi_IkerhAone}
$\phantom{}$
\begin{itemize}
\item[\textbullet] We denote by $\Ikerh[\slt,\psi]$ the two-sided ideal of $\FUh[\sltF]$ formed by the elements acting by zero on all the representations $\Lh{n,\psi}$ $(n \in \NN)$.
\item[\textbullet] We denote by $\Uh[\slt,\psi]$ the quotient of $\FUh[\sltF]$ by $\Ikerh[\slt,\psi]$.
\end{itemize}
\end{defi}

We denote by $\pih[{\slt,\psi}]$ the projection map from $\FUh[\sltF]$ to $\Uh[{\slt,\psi}]$. We endow the $\Kh$-algebra $\Uh[{\slt,\psi}]$ with the $\FUc[\sltF]$-point $\pih[{\slt,\psi}] \circ \iota$, where $\iota$ designates the inclusion map from $\FUc[\sltF]$ to $\FUh[\sltF]$.

\begin{rem} \label{rem_Uhoneperfect}
We regard $\Lh{n,\psi}$ ($n \in \NN$) also as a representation of the algebra $\Uh[\slt,\psi]$. By definition then, an element in $\Uh[\slt,\psi]$ is zero if and only if it acts as zero on all the representations $\Lh{n,\psi}$ $(n \in \NN)$.
\end{rem}

The algebra $\Uh[\slt,\psi]$ is built from the topologically free $\Kh$-modules $\FUh[\sltF]$ and $\Lh{n,\psi}$ ($n \in \NN$) and therefore inherits from them the property.

\begin{lem} \label{lem_Uhonehfree}
The $\Kh$-module $\Uh[\slt,\psi]$ is topologically free.
\end{lem}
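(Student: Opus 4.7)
The strategy is to verify the three properties --- separated, complete and torsion-free --- and then invoke the characterisation of topologically free $\Kh$-modules recalled in the section on modules over a power series ring (see for instance \cite[proposition XVI.2.4]{Kas}).

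\textbf{Step 1 (torsion-freeness and separation).} By definition, $\Ikerh[\slt,\psi]$ is the intersection, over $n \in \NN$, of the kernels of the action maps $\rho_n : \FUh[\sltF] \to \Endh{\Lh{n,\psi}}$. Passing to the quotient yields an injective $\Kh$-linear map
\[
\Uh[\slt,\psi] \ \hookrightarrow \ \prod_{n \in \NN} \Endh{\Lh{n,\psi}} \, .
\]
Since each $\Lh{n,\psi}$ has $(\KK \BBv_n)\cch$ as underlying $\Kh$-module, it is topologically free; hence $\Endh{\Lh{n,\psi}}$ is torsion-free and separated, and so is the product. These two properties pass to $\Kh$-submodules, so $\Uh[\slt,\psi]$ is torsion-free and separated as well.

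\textbf{Step 2 (completeness).} Equivalently, I show that $\Ikerh[\slt,\psi]$ is closed in $\FUh[\sltF]$, from which completeness of the quotient follows because $\FUh[\sltF] = \FUc[\sltF]\cch$ is complete. For each $n \in \NN$ and each vector $v \in \Lh{n,\psi}$, the evaluation map $\FUh[\sltF] \to \Lh{n,\psi}$, $x \mapsto x.v$, is $\Kh$-linear and therefore continuous; its kernel is thus a closed $\Kh$-submodule. The kernel of $\rho_n$ is the intersection of these closed submodules as $v$ varies, hence closed, and $\Ikerh[\slt,\psi] = \bigcap_{n \in \NN} \ker(\rho_n)$ is closed as an intersection of closed submodules.

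\textbf{Step 3 (conclusion).} Combining the two previous steps, $\Uh[\slt,\psi]$ is separated, complete and torsion-free, so it is topologically free.

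The only genuinely delicate point is the closedness argument of Step 2: one must be careful to use that the evaluation at a fixed vector is continuous --- which is automatic since it is $\Kh$-linear into a separated $\Kh$-module --- and not to confuse it with the (in general not closed) kernel of a map into a non-separated module. Everything else is a direct appeal to the basic yoga of $h$-adic topology recalled in the preliminaries.
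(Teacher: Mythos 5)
Your proof is correct and takes essentially the same route as the paper: the paper likewise embeds $\Uh[\slt,\psi]$ into $\prod_{n \in \NN} \Endh(\Lh{n,\psi})$ to get separation and torsion-freeness, and concludes via the separated--complete--torsion-free criterion of \cite[proposition XVI.2.4]{Kas}. The only difference is your Step 2: the paper gets completeness immediately from the fact that a quotient of a complete $\Kh$-module is complete, whereas you additionally prove that $\Ikerh[\slt,\psi]$ is closed (using that each $\Lh{n,\psi}$ is separated, so the kernels of the continuous evaluation maps are closed) --- a valid argument that also re-derives separation of the quotient, just slightly more than strictly needed.
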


\proof
The $\Kh$-module $\FUh[\sltF]$ is by definition topologically free. It is in particular complete. Being a quotient of $\FUh[\sltF]$, the $\Kh$-module $\Uh[{\slt,\psi}]$ is thus also complete. \\

Let $n \in \NN$, we denote by $A_n$ the \hbox{$\Kh$-algebra} $\Endh (\Lh{n,\psi})$ and we denote by $f_n$ the \hbox{$\Kh$-algebra} morphism from $\FUh[\sltF]$ to $A_n$, associated with the representation $\Lh{n,\psi}$. We denote then by $f$ the product of the morphisms $f_n$ ($n \in \NN$), it is a $\Kh$-algebra morphism from $\FUh[\sltF]$ \hbox{to $\prod_{n \in \NN} A_n$}. The definition of the ideal $\Ikerh[{\slt,\psi}]$ can be rephrased by the following equation:
\begin{equation} \label{eq_UhAonehfree}
\Ikerh[{\slt,\psi}] \ = \ \ker (f) \, .
\end{equation}
Let $n \in \NN$, the $\Kh$-module $\Lh{n,\psi}$ is by definition topologically free. It is in particular separated and torsion-free. Hence, the \hbox{$\Kh$-module $\prod_{n \in \NN} A_n$} is also \hbox{separated} and torsion-free. Besides, in view of equation \eqref{eq_UhAonehfree}, one can identify the $\Kh$-module $\Uh[{\slt,\psi}]$ with a $\Kh$-submodule of $\prod_{n \in \NN} A_n$. As a consequence, the $\Kh$-module $\Uh[{\slt,\psi}]$ is \hbox{separated} and torsion-free. \\

To sum up, the $\Kh$-module $\Uh[{\slt,\psi}]$ is separated, \hbox{complete} and torsion-free. It is thus topologically free (see for example \cite[proposition XVI.2.4]{Kas}).
\qed

\subsection{Congruence}
\begin{defi}
$\phantom{}$
\begin{itemize}
\item[\textbullet] Let $\psi \in \Coloh$. The congruent class $[\psi]$ \hbox{of $\psi$} is the function from $[\BBe]$ \hbox{to $\Kh$} defined by the following:
$$ [\psi] (n,k) \ := \ \psi^-(n,k) \ \psi^+(n,n-k+1) \quad \quad (n,k \in \NNI, \ k \leq n) \, . $$
\item[\textbullet] Two colourings $\psi_1, \psi_2 \in \Coloh$ are said congruent if $[\psi_1] = [\psi_2]$. We denote by $\equiv$ the congruence relation on $\Coloh$.
\end{itemize}
\end{defi}

The following lemma gives an interpretation of the congruence relation.

\begin{lem} \label{lem_congruentAone}
Let $\psi_1,\psi_2$ be two admissible colourings of $\BB$ with values in $\Kh$.
\begin{itemize}
\item[1)] If $\psi_1 \equiv \psi_2$, then the \hbox{$\sltF$-pointed} $\Kh$-algebras $\Uh[\slt,\psi_1]$ and $\Uh[\slt,\psi_2]$ are isomorphic.
\item[2)] We assume that $\psi_1, \psi_2$ divide each other in the ring $\Coloh$. The following two assertions are equivalent.
\begin{enumerate}
\item[(i)] The colourings $\psi_1$ and $\psi_2$ are congruent.
\item[(ii)] For every $n \in \NN$, the representations $\Lh{n,\psi_1}$ and $\Lh{n,\psi_2}$ of $\FUh[\sltF]$ are isomorphic.
\end{enumerate}
\end{itemize}
\end{lem}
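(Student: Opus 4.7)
The strategy is to reduce everything to a diagonal rescaling on the representations $\Lh{n,\psi}$. More precisely, we shall produce, for each $n \in \NN$, a family of scalars $c_{n,p} \in \Kh^{\times}$ ($0 \leq p \leq n$) and define a $\Kh$-linear map $\phi_n : \Lh{n,\psi_1} \to \Lh{n,\psi_2}$ by $\phi_n (b_{n,p}) := c_{n,p} \: \! b_{n,p}$. Since $\phi_n$ sends weight vectors to weight vectors with the same weight, it automatically commutes with the action of $H$; compatibility with $X^\pm$ translates into the two systems of equations
\begin{equation} \label{eq_system}
\psi_1^-(n,p+1) \: \! c_{n,p+1} \, = \, c_{n,p} \: \! \psi_2^-(n,p+1) \, , \quad \psi_1^+(n,n-p+1) \: \! c_{n,p-1} \, = \, c_{n,p} \: \! \psi_2^+(n,n-p+1) \, .
\end{equation}

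\medskip

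For part 1), assume $\psi_1 \equiv \psi_2$. Set $c_{n,0} := 1$ and define $c_{n,p+1}$ inductively by the first equation of \eqref{eq_system}, which is legitimate since $\psi_1,\psi_2$ are admissible (hence their values are invertible in $\Kh$). I then need to verify that these $c_{n,p}$ also satisfy the second equation of \eqref{eq_system}. Rewriting the second equation in the form $\frac{c_{n,p}}{c_{n,p-1}} = \frac{\psi_2^+(n,n-p+1)}{\psi_1^+(n,n-p+1)}$ and using the recurrence to get $\frac{c_{n,p}}{c_{n,p-1}} = \frac{\psi_1^-(n,p)}{\psi_2^-(n,p)}$, this amounts to $\psi_1^-(n,p) \: \! \psi_1^+(n,n-p+1) = \psi_2^-(n,p) \: \! \psi_2^+(n,n-p+1)$, which is precisely $[\psi_1](n,p) = [\psi_2](n,p)$. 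Hence $\phi_n$ is an $\FUh[\sltF]$-isomorphism for every $n$. The ideals $\Ikerh[\slt,\psi_1]$ and $\Ikerh[\slt,\psi_2]$ of Definition~\ref{defi_IkerhAone} therefore coincide in $\FUh[\sltF]$, so the quotients are equal as $\sltF$-pointed $\Kh$-algebras, whence the isomorphism.

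\medskip

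For part 2), the implication (i) $\Rightarrow$ (ii) is a direct consequence of the construction above (no divisibility hypothesis needed there). For (ii) $\Rightarrow$ (i), let $\phi_n : \Lh{n,\psi_1} \to \Lh{n,\psi_2}$ be an $\FUh[\sltF]$-isomorphism. Because $H$ acts diagonally with pairwise distinct integer eigenvalues $n-2p$ and because each weight space is a free $\Kh$-module of rank $1$, $\phi_n$ must be of the form $b_{n,p} \mapsto c_{n,p} \: \! b_{n,p}$ for some $c_{n,p} \in \Kh^{\times}$. Compatibility with $X^\pm$ yields exactly the systems \eqref{eq_system}. Eliminating $c_{n,p}/c_{n,p-1}$ between the shifted first equation and the second gives $\psi_1^-(n,p) \: \! \psi_1^+(n,n-p+1) = \psi_2^-(n,p) \: \! \psi_2^+(n,n-p+1)$ for every $n,p$, i.e. $[\psi_1] = [\psi_2]$.

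\medskip

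The only delicate point is ensuring that the divisibility hypothesis in part 2) makes the manipulations above legitimate on the locus where the colourings may vanish: if $\psi_1$ and $\psi_2$ divide each other in $\Coloh$, then pointwise in the domain $\Kh$ they are equal up to a unit, or simultaneously zero. Hence the compatibility forces $\psi_1^\pm(n,k)$ and $\psi_2^\pm(n,k)$ to have the same vanishing pattern, and the equations in \eqref{eq_system} can be manipulated modulo the common annihilator, validating the computation of the ratios $c_{n,p+1}/c_{n,p}$. This is the step where the extra divisibility hypothesis is really used and is the only genuine obstacle beyond the elementary rescaling argument.
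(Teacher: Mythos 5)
Your part 2) is essentially the paper's computation and is fine as far as it goes, but part 1) rests on a false premise: you read ``admissible'' as ``all values invertible in $\Kh$''. In this rank-one setting admissibility only guarantees that the values are \emph{nonzero} (this is all the paper's own argument ever uses: non-degeneracy to get injectivity of a diagonal map; compare also the Part~II analogue, proposition \ref{prop_congruentAonehat}, stated for non-degenerate colourings). Indeed, if every value were a unit, the mutual-divisibility hypothesis of part 2) would be vacuous, since two units of $\Kh$ always divide each other — the very shape of the statement tells you invertibility is not assumed. Once values may be nonzero but non-invertible (e.g.\ divisible by $h$), your inductive formula $c_{n,p} = \prod_{k \leq p} \psi_2^-(n,k)/\psi_1^-(n,k)$ leaves $\Kh$, and the diagonal isomorphism you invoke in part 1) simply does not exist. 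Concretely, take $\psi_1^-(n,k) := h$, $\psi_1^+(n,k) := 1$ and $\psi_2^-(n,k) := 1$, $\psi_2^+(n,k) := h$: these colourings are non-degenerate and congruent (both congruence classes are constant equal to $h$), yet every $\FUh[\sltF]$-morphism from $\Lh{n,\psi_1}$ to $\Lh{n,\psi_2}$ is diagonal with $c_{n,p} = h^{n-p}\, c_{n,n}$, hence injective at best and never surjective for $n \geq 1$. So congruent admissible colourings need \emph{not} have isomorphic representations, your identification of the two annihilator ideals via representation isomorphisms collapses, and your parenthetical in part 2) that (i) $\Rightarrow$ (ii) ``needs no divisibility'' is false for exactly the same reason.

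The gap is repairable, and the repair is what the paper does: replace the isomorphism by an \emph{injective} intertwiner through a normalised representative of the congruence class. Let $\psi$ be the colouring with $\psi^- \equiv 1$ and $\psi^+$ chosen so that $[\psi] = [\psi_1]$; since $\psi^- = 1$ divides everything, the diagonal map $b_{n,p} \mapsto \psi_i^-(n,p) ! \; b_{n,p}$ is a well-defined injective $\FUh[\sltF]$-morphism from $\Lh{n,\psi}$ to $\Lh{n,\psi_i}$ for $i = 1,2$. This suffices to identify the ideals: if $x$ kills every $\Lh{n,\psi_i}$ then it kills every $\Lh{n,\psi}$ by injectivity of the intertwiner; conversely, if $x$ kills every $\Lh{n,\psi}$ then $x \cdot \big( \psi_i^-(n,p)! \; b_{n,p} \big) = 0$, and since the $\Kh$-module $\Lh{n,\psi_i}$ is topologically free, hence torsion-free, this forces $x \cdot b_{n,p} = 0$. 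Therefore $\Ikerh[\slt,\psi_1] = \Ikerh[\slt,\psi] = \Ikerh[\slt,\psi_2]$ and the $\sltF$-pointed algebras coincide, even though the representations themselves are generally not isomorphic. Under the mutual-divisibility hypothesis of part 2) the ratios $\psi_2^\pm(n,k)/\psi_1^\pm(n,k)$ genuinely are units of $\Kh$ (associates in an integral domain), and there your rescaling argument — both directions of the equivalence — is correct and matches the paper's.
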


\proof
Let $\psi, \psi'$ be two admissible colourings of $\BB$ with values in $\Kh$ such that $\psi^-$ divide $(\psi')^-$ in the ring $\Coloh$. Let us first prove the following assertion.
\begin{equation} \tag{$\ast$}
\begin{tabular}{| p{24pc}}
The colourings $\psi$ and $\psi'$ are congruent if and only if there exists an injective $\FUh[\sltF]$-morphism from $\Lh{n,\psi}$ to $\Lh{n,\psi'}$ for every $n \in \NN$.
\end{tabular}
\end{equation}
Fix $n \in \NN$. Since $H . b_{n,p} = {(n-2p) \! \: b_{n,p}}$ in both $\Lh{n,\psi}$ \hbox{and $\Lh{n,\psi'}$} for every $p \in \NN_{\leq n}$, a $\FUh[\sltF]$-morphism from $\Lh{n,\psi}$ to $\Lh{n,\psi'}$ must send, for every $p \in \NN_{\leq n}$, the vector $b_{n,p}$ to a scalar multiple of it. Since besides the vector $b_{n,p}$ is a scalar multiple of $(X^-)^p.b_{n,0}$ in both $\Lh{n,\psi}$ \hbox{and $\Lh{n,\psi'}$} for every $p \in \NN_{\leq n}$, a $\FUh[\sltF]$-morphism from $\Lh{n,\psi}$ to $\Lh{n,\psi'}$ is thus a scalar multiple of the $\Kh$-linear endomorphism ${f_{\! \! \: n}}$ of ${(\KK \BBv_n) \cch}$, defined by the following:
$$ f_{\! \! \: n} (b_{n,p}) \ := \ \frac{(\psi')^-(n,p) !}{\psi^- (n,p) !} \ b_{n,p} \quad \quad \ (p \in \NN_{\leq n}) \, . $$
Remark that since the colouring $\psi'$ is admissible, the map ${f_{\! \! \: n}}$ is injective. Remark then that for every $p \in \NNI_{\leq n}$, the following equalities hold in $\Lh{n,\psi}$ \hbox{and $\Lh{n,\psi'}$}, respectively:
\begin{eqnarray*}
f_{\! \! \: n} \big( X^+ . \! \: b_{n,p} \big) & = & \psi^+ (n,n-p+1) \ \frac{(\psi')^-(n,p-1) !}{\psi^- (n,p-1) !} \ b_{n,p} \, , \\
X^+ . \big( f_{\! \! \: n} (b_{n,p}) \big) & = & (\psi')^+ (n,n-p+1) \ \frac{(\psi')^-(n,p) !}{\psi^- (n,p) !} \ b_{n,p} \, .
\end{eqnarray*}
Assertion $(\ast)$ follows. Point 2 of the lemma is a consequence. Let us now prove point 1. Denote by $\psi$ the colouring of $\BB$ defined by the following
$$ \psi^-(n,k) \ := \ 1 \, , \quad \quad \psi^+(n,k) \ := \ [\psi_1] (n,k) \quad \quad \quad (n,k \in \NNI, \ k \leq n) \, . $$
Remark that $\psi \equiv \psi_1$. Assertion $(\ast)$ then implies that there exists an injective $\FUh[\sltF]$-morphism from $\Lh{n,\psi}$ to $\Lh{n,\psi_1}$ for every $n \in \NN$. As a consequence, in view of the definition of the algebras $\Uh[\slt,\psi]$ and $\Uh[\slt,\psi_1]$, there exists an isomorphism of $\sltF$-pointed $\Kh$-algebra between the two latter. One concludes.
\qed

\subsection{$h$-admissible colourings}
\begin{defi} \label{defi_hadmissible}
A colouring $\psi \in \Coloh$ is said $h$-admissible if it satisfies the following axioms.
\begin{IEEEeqnarray*}{lCl}
\bullet \text{ (Deformation axiom)} & \quad \quad & {[\psi]}_0 (n,k) \, = \, k \! \; (n-k+1) \, , \ \ \forall \, (n,k) \in [\BBe] \, . \\
\bullet \text{ (Regularity axiom)} & \quad \quad & [\psi] \, \in \, \KK[u,v] \cch \, . \\
\bullet \text{ (Quotient axiom)} & \quad \quad & [\psi] (n,n+1) \, = \, 0 \, , \ \ \forall \, n \in \NN \, . \\
\bullet \text{ (Verma axiom)} & \quad \quad & [\psi] (-n-2,k) \ = \ [\psi] (n,n+k+1) \, , \ \ \forall \, n,k \in \NNI .
\end{IEEEeqnarray*}
\end{defi}

Note, in view of the deformation axiom, that a $h$-admissible colouring is in particular admissible.

\begin{rem} \label{rem_dnd}
Let $\psi \in \Coloh$ such that $\psi^- = \psi^+$ and which depends only on the second variable: $\psi^\pm(n,k) = \psi^\pm(n',k)$ for all $n,n',k \in \NNI$ such \hbox{that $k \leq n,n'$}. Let $k \in \NNI$, we denote by $\psi(k)$ the common value of $\psi^-, \psi^+$ at $(n,k)$ \hbox{with $n \in \NN_{\geq k}$} and we regard $\psi$ as a function from $[\BBe]$ to $\Kh$. The \hbox{colouring $\psi$} is $h$-admissible if and only if the following conditions hold:
\begin{equation} \label{eq_hadminv}
\psi \, \in \, \KK[v] \cch \, , \quad \quad \psi(-v) \, = \, - \, \psi(v) \, , \quad \quad \psi_0 (v) \, = \, \pm \! \; v \, .
\end{equation}
Let $\psi_1,\psi_2$ be two $h$-admissible colourings such that $\psi_1^- = \psi_1^+$, $\psi_2^- = \psi_2^+$ and which depend only on the second variable. The colourings $\psi_1$ and $\psi_2$ are congruent if and only if $\psi_1 = \pm \; \! \psi_2$.
\end{rem}

\vsp
\vsp

\begin{lem} \label{lem_defLcAone}
Let $\psi$ be colouring of $\BB$ with values in $\Kh$ and let $n \in \NN$. The representation $\Lh{n,\psi}$ is a formal deformation of $\Lc{n}$ along $\FUh[\sltF]$ if and only if ${[\psi]}_0 (n,k) = {k \! \: (n-k+1)}$ for all $k \in \NNI_{\leq n}$.
\end{lem}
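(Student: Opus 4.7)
The plan is to explicitly compute the specialisation at $h=0$ of the representation $\Lh{n,\psi}$ and to show that it is isomorphic to $\Lc{n}$, as a representation of $\FUc[\sltF]$, precisely when the coefficients on matching edges satisfy a compatibility condition which unfolds to $[\psi]_0(n,k) = k(n-k+1)$.

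First, I would observe that the underlying $\Kh$-module of $\Lh{n,\psi}$ is ${(\KK \BBv_n) \cch}$, hence topologically free, so that condition on formal deformations is automatic. Next, since the $\Kh$-algebra $\FUh[\sltF]$ is by definition the constant formal deformation of $\FUc[\sltF]$, the morphism $f_0$ in the definition of formal deformation is the canonical identification. It therefore suffices to compare $\hzero{(\Lh{n,\psi})}$ with $\Lc{n}$ as representations of $\FUc[\sltF]$. By reduction mod $h$, the representation $\hzero{(\Lh{n,\psi})}$ has underlying space ${\KK \BBv_n}$ and the actions $H . b_{n,p} = (n-2p) \; \! b_{n,p}$, $X^- . b_{n,p} = \psi^-_0(n,p+1) \; \! b_{n,p+1}$ (resp. $0$ if $p=n$), and $X^+ . b_{n,p} = \psi^+_0(n,n-p+1) \; \! b_{n,p-1}$ (resp. $0$ if $p=0$).

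Now assume $[\psi]_0 (n,k) = k(n-k+1)$ for every $k \in \NNI_{\leq n}$. I would define a $\KK$-linear map $\phi : \Lc{n} \to \hzero{(\Lh{n,\psi})}$ by $\phi(b_{n,p}) := c_p \; \! b_{n,p}$, where $c_0 := 1$ and $c_p := \prod_{k=1}^p \psi^-_0(n,k) / k$ for $1 \leq p \leq n$. The assumption forces $\psi^-_0(n,k)$ and $\psi^+_0(n,n-k+1)$ to be nonzero for all $k \in \NNI_{\leq n}$, so each $c_p$ is nonzero and $\phi$ is a $\KK$-linear isomorphism; commutation of $\phi$ with the action of $H$ is immediate since both representations are diagonal on the basis ${(b_{n,p})}_{p \in \NN_{\leq n}}$ with the same eigenvalues; commutation with $X^-$ follows directly from the definition of the $c_p$'s; commutation with $X^+$ then reduces, via the recursion $c_p = c_{p-1} \psi^-_0(n,p)/p$, to the identity $\psi^-_0(n,p) \psi^+_0(n,n-p+1) = p(n-p+1)$, which is precisely the hypothesis.

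Conversely, assume $\Lh{n,\psi}$ is a formal deformation of $\Lc{n}$ along $\FUh[\sltF]$, and let $\phi$ be a $\FUc[\sltF]$-isomorphism from $\Lc{n}$ to $\hzero{(\Lh{n,\psi})}$. Since the eigenspaces of $H$ on both representations are one-dimensional with eigenvalues $n-2p$ ($0 \leq p \leq n$) and since $\phi$ must intertwine $H$, the map $\phi$ is diagonal in the basis ${(b_{n,p})}$, i.e.\ $\phi(b_{n,p}) = c_p b_{n,p}$ for nonzero scalars $c_p \in \KK$. Writing out the intertwining conditions for $X^-$ and $X^+$ yields $(p+1) c_{p+1} = \psi^-_0(n,p+1) \; \! c_p$ and $(n-p+1) c_{p-1} = \psi^+_0(n,n-p+1) \; \! c_p$ for all relevant $p$. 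Multiplying the two relations and simplifying gives exactly $\psi^-_0(n,k) \psi^+_0(n,n-k+1) = k(n-k+1)$ for every $k \in \NNI_{\leq n}$, i.e.\ $[\psi]_0(n,k) = k(n-k+1)$. There is no real obstacle: the argument is a direct eigenspace calculation made possible by the fact that $H$ acts with simple spectrum on $\Lc{n}$.
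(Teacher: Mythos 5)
Your proof is correct and follows essentially the same route as the paper's: the forward direction uses the same diagonal rescaling $b_{n,p} \mapsto \bigl(\prod_{k=1}^{p} \psi^-_0(n,k)/k\bigr) \, b_{n,p}$, and the converse uses the simple spectrum of $H$ to force any intertwiner to be diagonal with nonzero entries. Your multiplication of the separate $X^-$ and $X^+$ intertwining relations is just an unpacked version of the paper's comparison of the $X^+X^-$ actions, so the two arguments coincide.
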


\proof
Denote by $\Lh{n,\psi}_0$ the pull-back of $\hzero{\Lh{n,\psi}}$ by the unique morphism of $\sltF$-pointed $\KK$-algebra \hbox{from $\FUc[\sltF]$} to $\hzero{\FUh[\sltF]}$. Denote then by $g_n$ the $\KK$-linear endomorphism of ${\KK \BBv_n}$, defined by the following:
$$ g_n (b_{n,p}) \ := \ \frac{{(\psi^-)}_0(n,p) !}{p \! \: !} \ b_{n,p} \quad \quad \ (p \in \NN_{\leq p}) \, . $$
Suppose that ${[\psi]}_0 (n,k) = {k \! \: (n-k+1)}$ for all $k \in \NNI_{\leq n}$. The map $g_n$ then is a $\FUc[\sltF]$-isomorphism from $\Lc{n}$ to $\Lh{n,\psi}_0$. Remark besides that the $\Kh$-module $\Lh{n,\psi}$ is by definition topologically free. The representation is therefore $\Lh{n,\psi}$ is a formal deformation of $\Lc{n}$ along $\FUh[\sltF]$. \\

Let us prove the converse implication. Let us then suppose that there exists a $\FUc[\sltF]$-isomorphism $g'_n$ from $\Lc{n}$ to $\Lh{n,\psi}_0$. The vector $b_{n,p}$ being of weight $n-2p$ in both $\Lh{n,\psi}$ \hbox{and $\Lh{n,\psi}_0$} for every $p \in \NN_{\leq n}$, the map $g'_n$ must send, for every $p \in \NN_{\leq n}$, the vector $b_{n,p}$ to a (nonzero) scalar multiple of it. By considering the actions of $X^+ X^-$ on the two representations, one concludes.
\qed \\

The following definition will be justified a posteriori by theorem \ref{thm_Uhonednd}.

\begin{defi}
Let $\psi$ be a $h$-admissible colouring of $\BB$. The \hbox{$\sltF$-pointed} \hbox{$\Kh$-algebra} $\Uh[\slt,\psi]$ is called the generalised quantum enveloping algebra of rank 1 associated to $\psi$.
\end{defi}

\section{Main results in rank 1}
\subsection{Formal deformation}

\begin{thm} \label{thm_Uhonednd}
Let $\psi \in \Coloh$. The following assertions are equivalent.
\begin{enumerate}
\item[(i)] The colouring $\psi$ is $h$-admissible.
\item[(ii)] The $\sltF$-pointed $\Kh$-algebra $\Uh[\slt,\psi]$ is a formal deformation of $\Uc[\slt]$.
\end{enumerate}
\end{thm}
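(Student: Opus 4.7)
The plan is to exploit the fact that $\Uh[\slt,\psi]$ is topologically free (Lemma~\ref{lem_Uhonehfree}) and admits an injective realisation in $\prod_{n \in \NN} \Endh(\Lh{n,\psi})$ (Remark~\ref{rem_Uhoneperfect}), so condition (ii) reduces to asking that the natural surjection $\FUc[\sltF] \twoheadrightarrow \hzero{\Uh[\slt,\psi]}$, coming from the $\sltF$-point, descends to an isomorphism $\Uc[\slt] \overset{\thicksim}{\to} \hzero{\Uh[\slt,\psi]}$. One then reduces each direction of the theorem to checking, axiom by axiom, that the four $h$-admissibility conditions on $\psi$ exactly match the constraints imposed by deformability of $\Uc[\slt]$.

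For the implication $(ii) \Rightarrow (i)$, assume $\Uh[\slt,\psi]$ is a formal deformation of $\Uc[\slt]$. Then the relation $[X^+,X^-] = H$ holds in $\hzero{\Uh[\slt,\psi]}$, and evaluating on each $\hzero{\Lh{n,\psi}}$ at the basis vector $b_{n,p}$ yields the system of equations $[\psi]_0(n,p+1) - [\psi]_0(n,p) = n-2p$, with boundary values $[\psi]_0(n,1) = n$ and $[\psi]_0(n,n) = n$; these force $[\psi]_0(n,k) = k(n-k+1)$ and give the deformation axiom. Since $\Uh[\slt,\psi] \simeq \Uc[\slt]\cch$ as $\Kh$-modules, the element $X^- X^+$ writes as $\sum_{m \in \NN} x_m h^m$ with $x_m \in \Uc[\slt]$, and each $x_m$ acts on $\hzero{\Lh{n,\psi}} \simeq \Lc{n}$ (by Lemma~\ref{lem_defLcAone} and the deformation axiom) at $b_{n,p}$ as a polynomial in $(n,p)$; this forces $[\psi]$ to be polynomial in $(n,p)$, yielding the regularity axiom. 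The quotient and Verma axioms are extracted from deforming the natural short exact sequence relating the Verma module of highest weight $n$, its simple quotient $\Lc{n}$, and the embedded sub-Verma of highest weight $-n-2$: the quotient axiom is the vanishing condition $[\psi](n,n+1) = 0$ needed for $\Lh{n,\psi}$ to arise as a Verma quotient, and the Verma axiom encodes the compatibility of the embedding of one deformed Verma into another.

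For the converse $(i) \Rightarrow (ii)$, assume $\psi$ is $h$-admissible. Regularity together with the deformation axiom shows that the polynomial $P(u,v) := [\psi](u,v+1) - [\psi](u,v) - (u-2v) \in \KK[u,v]\cch$ lies in $h\KK[u,v]\cch$, so $P/h$ is again polynomial. One constructs $W \in \Uh[\slt,\psi]$ as a topological combination of PBW monomials in $X^- X^+$ and $H$ whose action on each $\Lh{n,\psi}$ at $b_{n,p}$ is $P(n,p)/h$; Remark~\ref{rem_Uhoneperfect} then gives $[X^+,X^-] - H = hW$ in $\Uh[\slt,\psi]$, so the $\sltF$-point factors through a surjection $\Uc[\slt] \twoheadrightarrow \hzero{\Uh[\slt,\psi]}$. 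Injectivity of this map follows from Proposition~\ref{prop_UBperfect} combined with Lemma~\ref{lem_defLcAone}: the deformation axiom gives $\hzero{\Lh{n,\psi}} \simeq \Lc{n}$, so an element of $\Uc[\slt]$ killed in $\hzero{\Uh[\slt,\psi]}$ acts as zero on every $\Lc{n}$ and must vanish. The quotient and Verma axioms ensure that the construction of $W$ is globally consistent both on the finite-dimensional modules $\Lh{n,\psi}$ and on deformed Verma modules.

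The main obstacle is exhibiting the element $W$ realising $([X^+,X^-] - H)/h$ genuinely inside $\Uh[\slt,\psi]$, rather than merely inside the larger algebra $\prod_{n \in \NN} \Endh(\Lh{n,\psi})$: this is essentially the rank-one GQE equation $\mathscr E_h(\psi,\psi)$ alluded to in the introduction, and the four axioms of $h$-admissibility are precisely the solvability conditions. The detailed infinite-dimensional linear algebra underlying this is carried out in the appendix on GQE equations, which forms the technical core of the paper.
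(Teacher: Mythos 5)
Your direction $(i)\Rightarrow(ii)$ is essentially the paper's: a solution of the GQE equation $\GQEeqh[\psi,\psi]{-1}$ (theorem \ref{thm_GQEeq}, proposition \ref{prop_GQEeqspan}) produces the deformed commutation relation, its classical limit (proposition \ref{prop_GQEeqclassic}) yields a morphism $g_0 \colon \Uc[\slt] \to \hzero{\Uh[\slt,\psi]}$, the deformation axiom together with lemma \ref{lem_defLcAone} and the fact that an element of $\Uc[\slt]$ killing every $\Lc{n}$ vanishes yields the inverse $f_0$, and topological freeness (lemma \ref{lem_Uhonehfree}) concludes. The genuine gap is in $(ii)\Rightarrow(i)$. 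Your extraction of the regularity axiom is circular: under a module identification $\Uh[\slt,\psi]\simeq\Uc[\slt]\cch$ you write $X^+X^-$ as $\sum_m x_m h^m$ with $x_m\in\Uc[\slt]$, but the action of a PBW monomial $(X^-)^a H^b (X^+)^a$ on the \emph{deformed} module $\Lh{n,\psi}$ has diagonal coefficient $(n-2p)^b\prod_{j=0}^{a-1}[\psi](n,p-j)$, built out of $[\psi]$ itself, so the claim that ``each $x_m$ acts as a polynomial in $(n,p)$'' presupposes exactly the regularity you are trying to prove. Likewise, extracting the quotient and Verma axioms ``by deforming the short exact sequence of Verma modules'' assumes that deformed Verma modules for $\Uh[\slt,\psi]$ and a deformed embedding of the one of highest weight $-n-2$ into the one of highest weight $n$ exist; the existence of such an embedding is precisely what the Verma axiom encodes, so this is circular as well (the thesis introduction offers this picture only as motivation, and the paper develops no Verma theory for $\Uh[\slt,\psi]$ in rank one).

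The paper's actual mechanism for $(ii)\Rightarrow(i)$, which your sketch omits, is the spanning step (claim 2 of its proof): once $[X^+,X^-]=H$ holds in $\hzero{\Uh[\slt,\psi]}$, the specialisation is spanned by the monomials $(X^-)^aH^b(X^+)^c$, and since $\Uh[\slt,\psi]$ is separated and complete the induced $\Kh$-linear map from $(\KK \: \! T)\cch$ ($T$ the set of these monomials) is surjective; hence $X^+X^-=\sum_{a}(X^-)^a M_a(H)(X^+)^a$ for a sequence $(M_a)$ in $\KK[u]\cch$ converging to zero, i.e.\ the GQE equation $\GQEeqh[\psi,\psi]{-1}$ admits a solution (proposition \ref{prop_GQEeqspan}). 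All three remaining axioms are then consequences of this solvability, established in the appendix: semi-regularity and quasi-regularity (lemmas \ref{lem_semiregularity} and \ref{lem_quasiregularity}), the quotient axiom (lemma \ref{lem_quotientaxiom}), the Verma axiom by the induction of claim \ref{claim_GQEeqrec}, and --- crucially --- full two-variable regularity only via lemma \ref{lem_regularity}, that is, from quasi-regularity \emph{combined with} the Verma axiom. So regularity cannot be obtained independently of the Verma axiom in the direct way you propose; routing $(ii)\Rightarrow(i)$ through the GQE equation is not an optional convenience but the load-bearing step, and your appeal to the appendix covers only the direction where you construct the element $W$.
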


\proof
\setcounter{claimn}{0}

\begin{claimn} \label{claim_Uhonednd_deform}
If assertion (ii) holds, then $\psi$ satisfies the deformation axiom.
\end{claimn}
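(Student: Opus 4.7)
The plan is to exploit the relation $H = [X^+, X^-]$ inherited from $\Uc[\slt]$ by the specialisation at $h = 0$, and to evaluate both sides of this identity on the representations $\Lh{n,\psi}$.

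More precisely, assume (ii). By definition of a formal deformation of $\sltF$-pointed $\KK$-algebras, there exists an isomorphism of $\sltF$-pointed $\KK$-algebras between $\hzero{\Uh[\slt,\psi]}$ and $\Uc[\slt]$. Tracing through the $\sltF$-points, this means that the images of $X^+, X^-, H$ in $\hzero{\Uh[\slt,\psi]}$ satisfy the defining relation $H = [X^+, X^-]$ of $\Uc[\slt]$. Equivalently, the element $H - [X^+, X^-]$ of $\Uh[\slt,\psi]$ lies in $h \: \! \Uh[\slt,\psi]$, and hence it acts as zero on the specialisation at $h = 0$ of every representation of $\Uh[\slt,\psi]$.

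Next I would apply this to the representations $\Lh{n,\psi}$, which by construction are representations of $\Uh[\slt,\psi]$ (remark \ref{rem_Uhoneperfect}). A direct computation from the definition of $\Lh{n,\psi}$ gives, for every $n \in \NN$ and every $p \in \NN_{\leq n}$,
\begin{equation*}
[X^+, X^-] . \! \: b_{n,p} \ = \ \big( [\psi](n,p+1) \ - \ [\psi](n,p) \big) \, b_{n,p} \, ,
\end{equation*}
with the convention $[\psi](n,0) = 0$ and $[\psi](n,n+1) = 0$ (this handles the boundary cases $p = 0$ and $p = n$). On the other hand, $H . \! \: b_{n,p} = (n-2p) \, b_{n,p}$. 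Specialising at $h = 0$, one obtains the recursion
\begin{equation*}
[\psi]_0(n,p+1) \ - \ [\psi]_0(n,p) \ = \ n \ - \ 2p \quad \quad \big( n \in \NN, \ p \in \NN_{\leq n} \big) \, ,
\end{equation*}
together with $[\psi]_0(n,0) = 0$ as initial condition. Solving this telescoping sum immediately yields $[\psi]_0(n,k) = k \! \: (n-k+1)$ for every $(n,k) \in [\BBe]$, which is exactly the deformation axiom.

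The only potentially delicate point is to pin down the precise meaning of ``$H = [X^+, X^-]$ holds in $\hzero{\Uh[\slt,\psi]}$'', i.e.\ to read off this relation from the $\sltF$-pointed isomorphism; once that is clarified, the rest of the argument is an elementary evaluation on the modules $\Lh{n,\psi}$.
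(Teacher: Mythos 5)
Your proof is correct and takes essentially the same route as the paper's own argument: both deduce the relation $\big[ \hzero \pi^{\Uh[\slt,\psi]} (X^+), \hzero \pi^{\Uh[\slt,\psi]} (X^-) \big] = \hzero \pi^{\Uh[\slt,\psi]} (H)$ in $\hzero{\Uh[\slt,\psi]}$ from the $\sltF$-pointed isomorphism with $\Uc[\slt]$, evaluate it on the specialisations at ${h \! = \! 0}$ of the representations $\Lh{n,\psi}$, and solve the resulting recursion ${[\psi]}_0(n,p+1) - {[\psi]}_0(n,p) = n - 2p$ (with ${[\psi]}_0(n,1) = n$) by telescoping to obtain ${[\psi]}_0(n,k) = k \! \; (n-k+1)$. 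The point you flag as delicate is handled in the paper exactly as you indicate, by tracing the relation of $\Uc[\slt]$ back through the $\sltF$-points of the isomorphism.
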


\proof
Let $n \in \NN$. The following equalities hold in the specialisation at ${h \! = \! 0}$ of the representation $\Lh{n,\psi}$:
\begin{IEEEeqnarray*}{rCl}
\hzero  \pi^{\Uh[\slt,\psi]} (H) . \! \; b_{n,p} & \ := \ & \ (n- 2p) \, b_{n,p} \, , \\
\hzero  \pi^{\Uh[\slt,\psi]} (X^+ X^-) . \! \; b_{n,p} & := & \left\{ {\renewcommand{\arraystretch}{1.1} \begin{array}{cl}
{[\psi]}_0(n,p+1) \, b_{n,p+1} & \quad \text{if $p \neq 0$,} \\
0 & \quad \text{else,}
\end{array}} \right. \\
\hzero  \pi^{\Uh[\slt,\psi]} (X^+ X^-) . \! \; b_{n,p} & := &  \left\{ {\renewcommand{\arraystretch}{1.1} \begin{array}{cl}
{[\psi]}_0(n,p) \, b_{n,p-1} & \quad \text{if $p \neq n$,} \\
0 & \quad \text{else.}
\end{array}} \right.
\end{IEEEeqnarray*}

Suppose that assertion (ii) holds. The following relation then holds in the specialisation $\hzero{\Uh[\slt,\psi]}$:
\begin{equation} \label{eq_thmUhonedneddeform}
\Big[ \hzero \pi^{\Uh[\slt,\psi]} (X^+), \hzero  \pi^{\Uh[\slt,\psi]} (X^-) \Big] \ = \ \hzero  \pi^{\Uh[\slt,\psi]} (H) \, .
\end{equation}
As a consequence, the following equalities hold:
$$ {[\psi]}_0 (n,1) \ = \ n \, , \quad \quad \ {[\psi]}_0 (n,p+1) \ = \ {[\psi]}_0 (n,p) \, + \, (n-2p) \, , \quad \forall \, p \in \NNI_{\leq n} \, . $$
One concludes.
\qed \\

\begin{claimn} \label{claim_Uhonednd_span}
We assume that the colouring $\psi$ satisfies the deformation axiom. The following two assertions are equivalent.
\begin{enumerate}
\item[(i)] The colouring $\psi$ is $h$-admissible.
\item[ii')] The relation \eqref{eq_thmUhonedneddeform} holds in $\hzero{\Uh[\slt,\psi]}$.
\end{enumerate}
\end{claimn}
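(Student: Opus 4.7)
The plan is to reformulate assertion (ii') as the existence of a specific kind of $h$-formal correction in $\FUh[\sltF]$, and then to show that each of the three remaining axioms of $h$-admissibility (regularity, quotient, Verma) is precisely what is needed to build, or is forced by, such a correction.

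First, I would compute the action of $y := X^+X^- - X^-X^+ - H \in \FUh[\sltF]$ on each $\Lh{n,\psi}$: it is diagonal on the basis $(b_{n,p})_{0 \leq p \leq n}$ with eigenvalue
\begin{equation*}
\alpha_h(n,p) \ := \ [\psi](n,p+1) \, - \, [\psi](n,p) \, - \, (n-2p),
\end{equation*}
with the convention $[\psi](n,0) = [\psi](n,n+1) = 0$ dictated by the boundary of $\BB_n$. Under the deformation axiom, the $h^0$-part of $\alpha_h(n,p)$ vanishes (this is exactly the computation already carried out in claim \ref{claim_Uhonednd_deform}), so $y$ acts by zero on each $\hzero{\Lh{n,\psi}}$. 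Since $\Uh[\slt,\psi] = \FUh[\sltF] / \Ikerh[\slt,\psi]$ and is $h$-torsion free by lemma \ref{lem_Uhonehfree}, assertion (ii') is equivalent to the existence of some $w_h \in \FUh[\sltF]$ such that $y - h \, w_h \in \Ikerh[\slt,\psi]$; equivalently, $w_h$ must act on each $\Lh{n,\psi}$ diagonally in the basis $(b_{n,p})$, with prescribed eigenvalue $h^{-1} \alpha_h(n,p) \in \Kh$.

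Second, to prove (i) $\Rightarrow$ (ii') I would construct such a $w_h$ explicitly. The regularity axiom allows us to write $[\psi] \in \KK[u,v] \cch$, so $h^{-1} \alpha_h(n,p)$ is a well-defined element of $\KK[u,v] \cch$ evaluated at $(n,p)$. Since $H$, $X^+ X^-$ and $X^- X^+$ act diagonally on $\Lh{n,\psi}$ with eigenvalues $n - 2p$, $[\psi](n,p+1)$ and $[\psi](n,p)$ respectively, one can exhibit $w_h$ as a convergent combination of polynomials in these three diagonal operators that reproduces the target eigenvalue function. The quotient axiom $[\psi](u,u+1) = 0$ and the deformation axiom (used at $p=0$ via $[\psi](u,0) \equiv 0$ modulo the boundary convention; cf.\ $[\psi]_0(n,1) = n$) ensure that the eigenvalue formula matches the boundary behavior at $p=0$ and $p=n$, so the same $w_h$ works uniformly for all $n$.

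For the converse (ii') $\Rightarrow$ (i), the harder direction, the existence of $w_h$ imposes that the function $h^{-1}\alpha_h(n,p)$ on pairs $(n,p)$ with $0 \leq p \leq n$ agrees with the eigenvalue function of some element of $\FUh[\sltF]$ restricted to the representations $\Lh{n,\psi}$. Testing $w_h$ on infinitely many $(n,p)$ and invoking the polynomial-identity lemma shows first that $[\psi](n,p+1) - [\psi](n,p)$ is polynomial in $(n,p)$ modulo $h$-adic completion; together with the deformation axiom this forces the regularity axiom. The quotient axiom $[\psi](n,n+1) = 0$ then follows by matching the action at $p=n$, where $X^+X^-$ acts as zero but $[\psi](n,p+1)$ must still be interpreted via the polynomial extension provided by regularity. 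For the Verma axiom, I would extend $\FUh[\sltF]$'s collection of test modules beyond the $\Lh{n,\psi}$ by introducing ``Verma-type'' representations $M_h(\mu,\psi)$ with a $\KK$-valued highest weight $\mu$ and action of $X^\pm$ governed by the polynomial extension of $\psi$; the universality of $w_h$ (it is a universal element of $\FUh[\sltF]$) forces its diagonal eigenvalues on $M_h(-n-2,\psi)$ to coincide with those on the lower portion of $M_h(n,\psi)$, and this coincidence is precisely $[\psi](-n-2,k) = [\psi](n, n+k+1)$.

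The main obstacle is this last step: the finite-dimensional representations $\Lh{n,\psi}$ alone cannot detect the values of $[\psi]$ outside the physical range $1 \leq k \leq n$, so the Verma axiom cannot be forced from them directly. Building the auxiliary Verma-type modules and verifying that the element $w_h$ constructed universally from $\FUh[\sltF]$ must act on them consistently --- thereby producing the symmetry between negative and shifted arguments of $[\psi]$ --- is the technical heart of the claim.
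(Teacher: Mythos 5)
Your opening reduction is correct and is in substance the paper's own starting point: since the $\Kh$-module $\Uh[\slt,\psi]$ is torsion-free (lemma \ref{lem_Uhonehfree}), assertion (ii') is indeed equivalent to the existence of a universal weight-zero element $w_h \in \FUh[\sltF]$ acting on each $b_{n,p} \in \Lh{n,\psi}$ by the scalar $h^{-1}\big([\psi](n,p+1)-[\psi](n,p)-(n-2p)\big)$. The first genuine gap is in how you realise $w_h$. Polynomials in the three commuting diagonal operators $H$, $X^+X^-$, $X^-X^+$ only produce eigenvalue functions lying in the ($h$-adically closed) algebra generated by $n-2p$, $[\psi](n,p+1)$ and $[\psi](n,p)$; modulo $h$, by the deformation axiom these become $n-2p$, $(p+1)(n-p)$ and $p(n-p+1)$, and since $(p+1)(n-p)=\frac{1}{4}\big((n+1)^2-(n-2p-1)^2\big)$ the algebra they generate is exactly $\KK\big[\,n-2p,\ (n+1)^2\,\big]$. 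A general element of $\KK[u,v] \cch$ evaluated at $(n,p)$ does not lie in this subalgebra, so your family is too small and the sentence ``one can exhibit $w_h$'' is precisely where the whole difficulty sits. The correct family is the full centraliser of $H$, topologically spanned by the monomials $(X^-)^a M_a(H)(X^+)^a$ with $M_a \in \KK[u] \cch$, which act on $b_{n,p}$ triangularly with coefficients $[\psi](n,p)!\,/\,[\psi](n,p-a)!$; the existence of such a family with $M_a \to 0$ (a convergence constraint your proposal never addresses; cf.\ lemma \ref{lem_GQEeqvanish}) is exactly the solvability of the GQE equation $\GQEeqh[\psi,\psi]{-1}$ (proposition \ref{prop_GQEeqspan}), and its equivalence with $h$-admissibility is theorem \ref{thm_GQEeq}, the technical core to which the paper devotes its appendix. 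This is why the paper's proof of the present claim is short: for (i) $\Rightarrow$ (ii') it quotes theorem \ref{thm_GQEeq} and proposition \ref{prop_GQEeqclassic}; for (ii') $\Rightarrow$ (i) it uses the relation \eqref{eq_thmUhonedneddeform} to show that the PBW monomials $(X^-)^a H^b (X^+)^c$ span $\hzero{\Uh[\slt,\psi]}$, lifts this surjectivity by completeness, extracts a solution $(M_a)$, and again invokes theorem \ref{thm_GQEeq}.

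The second flaw is that your Verma-axiom step is circular. The ideal $\Ikerh[\slt,\psi]$ is cut out by the finite-dimensional representations $\Lh{n,\psi}$ alone, so a universal element of $\FUh[\sltF]$ carries no a priori constraint on auxiliary ``Verma-type'' modules: $M_h(\mu,\psi)$ is a representation of the quotient $\Uh[\slt,\psi]$ only if $\Ikerh[\slt,\psi]$ annihilates it, and for the modules you describe this is essentially equivalent to the Verma axiom you are trying to prove (their very definition, via ``the polynomial extension of $\psi$'', also presupposes the regularity not yet available at that stage). The paper needs no new modules: once the coefficients $M_a$ are known to be polynomial in $n$ (lemma \ref{lem_semiregularity}), the identities $\sum_{0 \leq a \leq p} M_a(n-2p+2a)\,[\psi](n,p)!\,/\,[\psi](n,p-a)! = [\psi](n,p+1)$, valid for $n \geq p$, extend as identities in $\KK[u] \cch$ and can be evaluated at integers $n = l \leq p$ outside the physical range; combined with the quotient axiom (lemma \ref{lem_quotientaxiom}) and the factorisation $[\psi](u,p+1)! = y \prod_{l=0}^{p}(u-l)$ with $y$ invertible, supplied by the deformation axiom, the induction of claim \ref{claim_GQEeqrec} converts solvability of each truncated system into exactly the symmetry $[\psi](-l-2,k)=[\psi](l,l+k+1)$. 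So the ``Verma modules'' are implicit in polynomial extrapolation, not in any enlargement of the defining family of representations. Relatedly, your logical order among the axioms is off: two-variable regularity does not follow from the deformation axiom plus pointwise polynomiality in $n$; in the paper it is the last axiom recovered, deduced from quasi-regularity (lemma \ref{lem_quasiregularity}) together with the already-established Verma axiom (lemma \ref{lem_regularity}).
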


\proof
Suppose that $\psi$ is $h$-admissible. There then exists a solution $M$ of the GQE equation $\GQEeqh[\psi,\psi]{-1}$: see theorem \ref{thm_GQEeq}. This is turn implies that the following relation holds in $\Uh[\slt,\psi]$ (see proposition \ref{prop_GQEeqspan}):
\begin{equation} \label{eq_thmUhonednedspan}
X^+ X^- \ = \ \sum_{a \in \NN} \, {(X^-)}^a \! \; M_a (H) \! \; {(X^+)}^a \, .
\end{equation}
Besides, since $\psi$ satisfies by assumption the deformation axiom, the following equalities hold in $\KK[u]$ (see proposition \ref{prop_GQEeqclassic}):
$$ \hzero \pi^{\KK[u] \cch} (M_0) \ = \ u \, , \quad \ \hzero \pi^{\KK[u] \cch} (M_1) \ = \ 1 \, , \quad \ \hzero \pi^{\KK[u] \cch} (M_p) \ = \ u \, , \ \ \forall p \in \NN_{\geq 2} \, . $$
Assertion ii') follows. \\

Let us prove the converse. Denote by $T$ the subset ${\{ (X^-)^a H^b (X^+)^c \; \!; \, a,b,c \in \NN \}}$ of $\Uh[\slt,\psi]$ and denote by $f$ the $\Kh$-linear map from ${(\KK \! \; T) \cch}$ \hbox{to $\Uh[\slt,\psi]$} induced by the inclusion map from $T$ to $\Uh[\slt,\psi]$ (note that the map $f$ exists since the $\Kh$-module $\Uh[\slt,\psi]$ is separated and complete: we know from lemma \ref{lem_Uhonehfree} that it is topologically free). The $\Kh$-algebra $\Uh[\slt,\psi]$ being $\sltF$-pointed, the following relations hold in the specialisation $\hzero{\Uh[\slt,\psi]}$:
$$ \Big[ \hzero \pi^{\Uh[\slt,\psi]} (H), \hzero  \pi^{\Uh[\slt,\psi]} (X^{\pm}) \Big] \ = \ \pm \: \! 2 \ \hzero  \pi^{\Uh[\slt,\psi]} (X^{\pm}) \, . $$
Remark besides that the $\KK$-algebra $\hzero{\Uh[\slt,\psi]}$ is generated by $\hzero  \pi^{\Uh[\slt,\psi]} (X^{\pm})$ and $\hzero  \pi^{\Uh[\slt,\psi]} (H)$. Suppose now that assertion ii') holds. It follows that the \hbox{$\KK$-vector} space $\hzero{\Uh[\slt,\psi]}$ is then spanned by $\hzero  \pi^{\Uh[\slt,\psi]}(T)$. In other words, the specialisation at ${h \! = \! 0}$ of $f$ is surjective. The $\Kh$-module $\Uh[\slt,\psi]$ being separated and complete, the map $f$ is thus also surjective. There exists in particular a \hbox{sequence ${(M_a)}_{a \in \NN}$} with values in $\KK[u] \cch$, which converges to zero and such that the relation \eqref{eq_thmUhonednedspan} holds in $\Uh[\slt,\psi]$. This implies that the GQE equation $\GQEeqh[\psi,\psi]{-1}$ admits a solution and that $\psi$ is therefore $h$-admissible: see again proposition \ref{prop_GQEeqspan} and theorem \ref{thm_GQEeq}, respectively.
\qed \\

Suppose that assertion (ii) holds. The relation \eqref{eq_thmUhonedneddeform} then holds in $\hzero{\Uh[\slt,\psi]}$. Assertion (i) follows from claims \ref{claim_Uhonednd_deform} and \ref{claim_Uhonednd_span}. Let us prove the reverse implication. \\

Let $x_h$ be an element in the ideal $\Ikerh[\slt,\psi]$ \hbox{of $\FUh[\sltF]$}. Denote by $\bar x_0$ the image \hbox{in $\Uc[\slt]$} of $x_0 \in \FUc[\sltF]$ by $\pic[\slt]$. Since the colouring $\psi$ satisfies the deformation axiom, the representation $\Lh{n,\psi}$ is a formal deformation along $\FUh[\sltF]$ of the pull-back of $\Lc{n}$ \hbox{via $\pic[\slt]$}, for every $n \in \NN$. Since $x_h \in \Ikerh[\slt,\psi]$, the element $\bar x_0 \in \Uc[\slt]$ then acts as zero on all the representations $\Lc{n}$ \hbox{with $n \in \NN$} and is thus zero: see subsection \ref{subs_KMresults}. As a consequence, there exists a $\KK$-algebra \hbox{morphism $f_0$} from $\hzero{\Uh[\slt,\psi]}$ \hbox{to $\Uc[\slt]$} which makes the following diagram commute:

$$ \shorthandoff{;:!?} \xymatrix @C=5pc @R=1.5pc {
\FUc[\sltF] \ar@{->>}[dr]^-{\pic[\slt]} \ar@{->>}[d]_-{\hzero{\pih[\slt,\psi]} \ } & \\
\hzero{\Uh[\slt,\psi]} \ar[r]_-{f_0} & \Uc[\slt]
} $$

Suppose now that $\psi$ is $h$-admissible. We know from claim \ref{claim_Uhonednd_span} that the \hbox{relation \eqref{eq_thmUhonedneddeform}} then holds in $\hzero{\Uh[\slt,\psi]}$. As a consequence, and in view of the presentation of the algebra $\Uc[\slt]$ (see subsection \ref{subs_KMone}), there exists a $\KK$-algebra \hbox{morphism $g_0$} from $\Uc[\slt]$ to $\hzero{\Uh[\slt,\psi]}$ which makes the following diagram commute

$$ \shorthandoff{;:!?} \xymatrix @C=5pc @R=1.5pc {
\FUc[\sltF] \ar@{->>}[dr]^-{\pic[\slt]} \ar@{->>}[d]_-{\hzero{\pih[\slt,\psi]} \ } & \\
\hzero{\Uh[\slt,\psi]} & \Uc[\slt] \ar[l]^-{g_0}
} $$
Since the maps $\hzero{\pih[\slt,\psi]}$ and $\pic[\slt]$ are surjective, the morphisms $g_0 \circ f_0$ and $f_0 \circ g_0$ are therefore equal to the identity maps. The $\Kh$-module $\Uh[\slt,\psi]$ being topologically free (see \hbox{lemma \ref{lem_Uhonehfree}}), one concludes.
\qed \\



\subsection{Classical and standard quantum realisations} \label{subs_exclqUhone}
Let us recall the definition of the \emph{formal quantum algebra} $\Uh[\slt]$ associated to the classical Lie algebra $\slt$. The following elements are defined in the power series ring $\Kh$:
$$ q \ := \ \exp (h) \, , \quad \quad \quad {[k]}_q \ := \ \frac{q^k - q^{-k}}{q - q^{-1}} \quad \ (k \in \ZZ) \, . $$

The formal quantum algebra $\Uh[\slt]$ is the $\Kh$-algebra topologically generated by $X^-, X^+, H$ and subject to the following relations:
\begin{IEEEeqnarray*}{rCl}
\big[ H, X^{\pm} \big] & \ = \ & \pm \; \! 2 \, X^{\pm} \, , \\
\big[ X^+, X^- \big] & \ = \ & \frac{q^H  - q^{-H}}{q - q^{-1}} \, .
\end{IEEEeqnarray*}
We denote by $\pih[\slt]$ the $\Kh$-algebra morphism from $\FUh[\sltF]$ to $\Uh[\slt]$, which sends $X^{\pm}$ to $X^{\pm}$ \hbox{and $H$} to $H$. We endow the $\Kh$-algebra $\Uh[\slt]$ with the $\FUc[\sltF]$-point $\pih[\slt] \circ \iota$, where $\iota$ designates the inclusion map from $\FUc[\sltF]$ to $\FUh[\sltF]$. \\

Let us now recall the description of the fundamental representations of the quantum algebra $\Uh[\slt]$. Let $n \in \NN$, the representation $\Lh{n}$ can be defined as the $\Kh$-module ${(\KK \BBv_n) \cch}$, where the actions of $X^\pm, H$ are given by the following, for $p \in \NN_{\leq n}$:
\begin{IEEEeqnarray*}{rCl}
H . \! \; b_{n,p} & \ := \ & \ (n- 2p) \, b_{n,p} \, , \\
X^- . \! \; b_{n,p} & := & \left\{ {\renewcommand{\arraystretch}{1.1} \begin{array}{cl}
{[p+1]}_q \, b_{n,p+1} & \quad \text{if $p \neq n$,} \\
0 & \quad \text{else,}
\end{array}} \right. \\
X^+ . \! \; b_{n,p} & := &  \left\{ {\renewcommand{\arraystretch}{1.1} \begin{array}{cl}
{[n-p+1]}_q \, b_{n,p-1} & \quad \text{if $p \neq 0$,} \\
0 & \quad \text{else.}
\end{array}} \right.
\end{IEEEeqnarray*}

The \emph{classical colouring} and the \emph{quantum colouring}, denoted by $\psicl$ and $\psiq$, respectively, are the colourings of $\BB$ with values in $\Kh$, defined by the following:
$$ (\psicl)^\pm (n,k) \ := \ k \, , \quad \quad (\psiq)^\pm (n,k) \ := \ {[k]}_q \quad \quad \quad (n,k \in \NNI, \ k \leq n) \, . $$
Note, in view of remark \ref{rem_dnd}, that the colourings $\psicl$ and $\psiq$ are \hbox{$h$-admissible}.

\vsp
\vsp

\begin{prop} \label{prop_exUhone}
\begin{enumerate}
\item[1)] The GQE algebra $\Uh[\slt,\psicl]$ and the constant formal deformation $\Uc[\slt] \cch$ are isomorphic $\sltF$-pointed $\Kh$-algebras.
\item[2)] The GQE algebra $\Uh[\slt,\psiq]$ and the formal quantum algebra $\Uh[\slt]$ are isomorphic $\sltF$-pointed $\Kh$-algebras.
\end{enumerate}
\end{prop}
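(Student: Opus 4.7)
The plan is to show, in both parts, that the ideal $\Ikerh[\slt,\psi]$ defining the GQE algebra coincides with the kernel of a natural surjection from $\FUh[\sltF] = \FUc[\sltF] \cch$ onto the target algebra, in a way that respects the $\sltF$-points. The key general input is that an element of $\Uc[\slt]$ acting as zero on every $\Lc{n}$ ($n \in \NN$) is itself zero; this is proposition \ref{prop_UBperfect} combined with proposition \ref{prop_KMintone}.

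For point 1, one first observes, by comparing the formulas defining $\Lh{n,\psicl}$ with the action of $\slt$ on $\Lc{n}$, that $\Lh{n,\psicl}$ coincides with the constant formal deformation $\Lc{n} \cch$, viewed as a $\FUh[\sltF]$-module through the canonical projection $\FUc[\sltF] \cch \to \Uc[\slt] \cch$. Hence an element $x_h = \sum_{m \in \NN} x_m h^m$ in $\FUc[\sltF] \cch$ lies in $\Ikerh[\slt,\psicl]$ if and only if each coefficient $x_m$ acts as zero on every $\Lc{n}$, which by the classical faithfulness recalled above amounts to $x_m \in \ker(\pic[\slt])$ for all $m$. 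Therefore $\Ikerh[\slt,\psicl] = \ker(\pic[\slt]) \cch$, and passing to the quotient yields the $\sltF$-pointed identification $\Uh[\slt,\psicl] \cong (\FUc[\sltF] / \ker(\pic[\slt])) \cch = \Uc[\slt] \cch$.

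For point 2, one checks directly from the definitions that with $\psi = \psiq$ the formulas defining $\Lh{n,\psiq}$ are exactly those defining $\Lh{n}$ (both use the quantum integers $[k]_q$); hence $\Lh{n,\psiq}$ is the pull-back of $\Lh{n}$ via $\pih[\slt]$, which immediately yields $\ker(\pih[\slt]) \subseteq \Ikerh[\slt,\psiq]$. For the reverse inclusion, one must establish that $\Uh[\slt]$ acts faithfully on the family $\{\Lh{n}\}_{n \in \NN}$: given $y_h \in \Uh[\slt]$ killing every $\Lh{n}$, the specialisation $\hzero{y_h} \in \Uc[\slt]$ acts as zero on every $\Lc{n} = \hzero{\Lh{n}}$, hence equals zero by the classical faithfulness; since $\Uh[\slt]$ is topologically free (by its standard PBW topological basis) and the $\Lh{n}$ are torsion-free, an induction on the $h$-adic valuation together with the separatedness of $\Uh[\slt]$ forces $y_h = 0$. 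Combined with the previous inclusion this yields $\Ikerh[\slt,\psiq] = \ker(\pih[\slt])$, whence the induced $\sltF$-pointed isomorphism $\Uh[\slt,\psiq] \cong \Uh[\slt]$.

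The main technical obstacle is the faithfulness in the quantum case: one needs to lift, via $h$-adic induction, the classical faithfulness statement, which in turn requires the topological freeness of $\Uh[\slt]$ (a standard PBW-type result for quantum $\mathfrak{sl}_2$). In the classical case no such lifting is necessary because the argument reduces coefficient by coefficient in the constant deformation $\FUc[\sltF] \cch$. Compatibility with the $\sltF$-points is routine in both cases: the $\sltF$-points of $\Uh[\slt,\psi]$, $\Uc[\slt] \cch$ and $\Uh[\slt]$ are by construction the relevant quotient maps composed with the inclusion of $\FUc[\sltF]$ into the $\Kh$-algebra, so the identifications above manifestly commute with these structural morphisms.
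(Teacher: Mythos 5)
Your proof is correct, but it reaches the conclusion by a genuinely different mechanism than the paper. Both arguments begin with the same pull-back observation ($\Lh{n,\psicl}$ is the pull-back of $\Lc{n} \cch$ via $\pic[\slt] \cch$, and $\Lh{n,\psiq}$ that of $\Lh{n}$ via $\pih[\slt]$), but the paper uses it only to produce a morphism of $\sltF$-pointed $\Kh$-algebras ${f_{\! \! \; h}}$ from $\Uc[\slt] \cch$ (resp.\ $\Uh[\slt]$) to $\Uh[\slt,\psicl]$ (resp.\ $\Uh[\slt,\psiq]$), and then concludes by invoking theorem \ref{thm_Uhonednd}: since $\psicl$ and $\psiq$ are $h$-admissible, $\Uh[\slt,\psi]$ is a formal deformation of $\Uc[\slt]$, so $\hzero{({f_{\! \! \; h}})}$ is bijective, and topological freeness of both sides (lemma \ref{lem_Uhonehfree}) lifts bijectivity to ${f_{\! \! \; h}}$. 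You instead compute the defining ideal exactly: $\Ikerh[\slt,\psicl] = \ker(\pic[\slt]) \cch$ by the coefficientwise argument in the constant deformation, and $\Ikerh[\slt,\psiq] = \ker(\pih[\slt])$ via your faithfulness lemma for $\Uh[\slt]$ on the family $\Lh{n}$ ($n \in \NN$), proved by $h$-adic induction from the classical faithfulness of propositions \ref{prop_UBperfect} and \ref{prop_KMintone}. What this buys you is independence from theorem \ref{thm_Uhonednd}, and hence from the entire GQE-equation apparatus behind it — a real economy, and your classical case in particular is more direct than the paper's, which routes even that case through theorem \ref{thm_Uhonednd}. The cost is that you must import the topological PBW basis of $\Uh[\slt]$ (which the paper uses only to know that $\hzero{\Uh[\slt]}$ and $\Uc[\slt]$ are isomorphic $\sltF$-pointed algebras) and establish a faithfulness statement the paper never isolates in rank one, although an argument of exactly this shape appears later in the thesis, in the proof of théorème \ref{thm_uhhinterpolation}. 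Conversely, the paper's pattern — pointed morphism, bijectivity at ${h \! = \! 0}$ from the deformation theorem, lift by topological freeness — is a template it reuses verbatim (theorems \ref{thm_presentationUhone} and \ref{thm_Uhintegrable}), so its proof is shorter given the machinery already in place. One small point to make explicit in your write-up: passing from $\Ikerh[\slt,\psiq] = \ker(\pih[\slt])$ to the isomorphism $\Uh[\slt,\psiq] \simeq \Uh[\slt]$ requires the surjectivity of $\pih[\slt]$ (or density of its image together with completeness of the quotient); this follows from the same PBW basis you already invoke, since each topological basis element $(X^-)^a H^b (X^+)^c$ of $\Uh[\slt]$ is the image of the corresponding ordered word in $\FUh[\sltF]$.
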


\proof
Remark that for every $n \in \NN$ the pull-back of $\Lc{n} \cch$ by $\pic[\slt] \cch$ is the representation $\Lh{n,\psicl}$ of $\FUh[\sltF]$. As a consequence, there exists a morphism ${f_{\! \! \; h}}$ of $\sltF$-pointed $\Kh$-algebra from $\Uc[\slt] \cch$ to $\Uh[\slt,\psi]$ which makes the following diagram commute:
$$ \shorthandoff{;:!?} \xymatrix @C=5pc @R=1.5pc {
\FUh[\sltF] \ar@{->>}[dr]^-{\ \ \pih[\slt,\psicl]} \ar@{->>}[d]_-{\pic[\slt] \cch \, } & \\
\Uc[\slt] \cch \ar[r]_-{{f_{\! \! \; h}}} & \Uh[\slt,\psicl]
} $$
Since $\Uh[\slt,\psicl]$ is a formal deformation of the $\sltF$-pointed $\KK$-algebra $\Uc[\slt]$ (see theorem \ref{thm_Uhonednd}), the map $\hzero{({f_{\! \! \; h}})}$ is bijective. The $\Kh$-module $\Uh[\slt,\psi]$ being topologically free, the map ${f_{\! \! \; h}}$ is then also bijective. In other words, the $\sltF$-pointed algebras $\Uh[\slt,\psicl]$ and $\Uc[\slt] \cch$ are isomorphic. The proof in the quantum case is similar: remark first that for every $n \in \NN$ the pull-back of the representation $\Lh{n}$ by $\pih[\slt]$ is the representation $\Lh{n,\psiq}$ of $\FUh[\sltF]$ and remark that the $\sltF$-pointed $\Kh$-algebras $\hzero{\Uh[\slt]}, \Uc[\slt]$ are isomorphic. The proof in the quantum case is analog.
\qed

\subsection{Presentation by generators and relations}
In this subsection, $\psi$ designates a $h$-admissible colouring of $\BB$. We denote by ${S^{\! \: \psi}_{\! \! \; h}} = {({S^{\! \: \psi}_{\! \! \; h,p}})}_{p \in \NN}$ the solution of the GQE equation $\GQEeqh[\psi,\psi]{-1}$: see proposition \ref{prop_GQEequnicity} and \hbox{theorem \ref{thm_GQEeq}}. We denote by $\Uph{\slt,\psi}$ the \hbox{$\Kh$-algebra} topologically generated by $X^-, X^+, H$ and subject to the following relations:
\begin{IEEEeqnarray*}{CCl}
\label{eq_Uhonepres1} \big[ H,X^\pm \big] & \ = \ & 2 \: \! X^\pm \, , \\
\label{eq_Uhonepres2} X^+ X^- & \, = \ & \sum_{a \in \NN} \, {(X^-)}^a \: \! {S^{\! \: \psi}_{\! \! \; h,a}} (H) \; \! {(X^+)}^a \, .
\end{IEEEeqnarray*}

Note that the series $\sum_{a \in \NN} {(X^-)}^a {S^{\! \: \psi}_{\! \! \; h,a}} (H) {(X^+)}^a$ converges since the \hbox{sequence ${S^{\! \: \psi}_{\! \! \; h}}$} converges to zero. The projection map from $\FUh[\sltF]$ to $\Uph{\slt,\psi}$ endows the latter with a structure of $\sltF$-pointed \hbox{$\Kh$-algebra}.

\begin{thm} \label{thm_presentationUhone}
The GQE algebra $\Uh[\slt,\psi]$ is isomorphic to the $\sltF$-pointed $\Kh$-algebra $\Uph{\slt,\psi}$.
\end{thm}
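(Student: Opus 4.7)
The plan is to construct a morphism $f_h \colon \Uph{\slt,\psi} \to \Uh[\slt,\psi]$ of $\sltF$-pointed $\Kh$-algebras and then show it is a $\Kh$-linear bijection using a topological PBW argument on both sides. The existence of $f_h$ comes from checking that the relations defining $\Uph{\slt,\psi}$ hold in $\Uh[\slt,\psi]$: the relation $[H,X^\pm] = \pm 2 X^\pm$ holds because $\Uh[\slt,\psi]$ is $\sltF$-pointed, and the relation $X^+ X^- = \sum_{a \in \NN}(X^-)^a \, {S^{\!\:\psi}_{\!\!\;h,a}}(H)\,(X^+)^a$ holds because ${S^{\!\:\psi}_{\!\!\;h}}$ is by definition a solution of the GQE equation $\GQEeqh[\psi,\psi]{-1}$, so proposition \ref{prop_GQEeqspan} applies (this is exactly the identity used in the proof of claim \ref{claim_Uhonednd_span}). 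Since both algebras are $\sltF$-pointed, the map $f_h$ is unique once it exists.

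Next, I would argue surjectivity. The algebra $\Uph{\slt,\psi}$ is a quotient of $(\KK\langle X^-,X^+,H\rangle)\cch$, hence complete, so its image under $f_h$ is closed in the separated $\Kh$-module $\Uh[\slt,\psi]$; as it contains the topological generators $X^\pm, H$ of $\Uh[\slt,\psi]$, it is dense, and therefore equal to $\Uh[\slt,\psi]$. Equivalently, one can observe that $\hzero{f_h}$ is surjective and then invoke completeness of $\Uph{\slt,\psi}$ and separatedness of $\Uh[\slt,\psi]$.

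For injectivity, the plan is to exhibit a topological PBW basis. By theorem \ref{thm_Uhonednd}, $\Uh[\slt,\psi]$ is topologically free with specialisation isomorphic to $\Uc[\slt]$, and under this isomorphism the generators $X^\pm,H$ correspond to those of $\slt$. Since the classical PBW monomials $\{(X^-)^a H^b (X^+)^c \,;\, a,b,c \in \NN\}$ form a $\KK$-basis of $\Uc[\slt]$, their lifts in $\Uh[\slt,\psi]$ form a topological basis of the topologically free $\Kh$-module $\Uh[\slt,\psi]$. To conclude it suffices to show that the same monomials topologically span $\Uph{\slt,\psi}$: then any element $x$ with $f_h(x)=0$ would have all its ``PBW coefficients'' annihilated by the topological freeness of $\Uh[\slt,\psi]$, forcing $x=0$.

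The main obstacle, and what deserves the most care, is this topological spanning statement in $\Uph{\slt,\psi}$. The relation $[H,X^\pm]=\pm 2 X^\pm$ lets one move $H$'s past $X^\pm$'s, so the real issue is reducing a product $X^+ X^-$ into PBW-ordered form using the GQE relation. This relation rewrites $X^+ X^-$ as an infinite sum in which the term of index $a$ has $a$ occurrences of $X^-$ on the left and $a$ of $X^+$ on the right. For $a \geq 2$ the factor ${S^{\!\:\psi}_{\!\!\;h,a}}(H)$ lies in $h\,\KK[H]\cch$ (this is the content of the deformation axiom as used in claim \ref{claim_Uhonednd_span}: at $h=0$ only the $a=0,1$ terms survive and reproduce the classical commutation $[X^+,X^-]=H$). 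Hence the ``higher'' reordering terms are contracting in the $h$-adic topology, and an induction on the $h$-adic order, applied to any finite word in $X^\pm,H$, produces a Cauchy sequence of PBW-ordered expressions whose limit equals the original word. Completeness of $\Uph{\slt,\psi}$ then gives the claimed topological span. Combining the three steps yields that $f_h$ is an isomorphism of $\sltF$-pointed $\Kh$-algebras.
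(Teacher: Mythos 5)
Your proposal is correct, and your construction of $f_h$ via proposition \ref{prop_GQEeqspan} coincides with the paper's, but your bijectivity argument takes a genuinely different route. The paper never touches PBW monomials: using proposition \ref{prop_GQEeqclassic}, the defining relations of $\Uph{\slt,\psi}$ specialise at ${h \! = \! 0}$ precisely to the presentation of $\Uc[\slt]$, so that $\hzero{\Uph{\slt,\psi}}$ and $\Uc[\slt]$ are isomorphic $\sltF$-pointed $\KK$-algebras; combined with theorem \ref{thm_Uhonednd} this makes $\hzero{(f_h)}$ bijective, and bijectivity of $f_h$ then follows at once from the two standard specialisation criteria recalled in the paper (complete source and separated target for surjectivity; separated source and torsion-free target for injectivity, torsion-freeness of $\Uh[\slt,\psi]$ coming from lemma \ref{lem_Uhonehfree}). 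In particular the paper's injectivity step does not require the source to be topologically free. You instead prove injectivity through a topological PBW spanning of $\Uph{\slt,\psi}$, obtained by an $h$-adic straightening induction, compared against the lifted basis of the topologically free module $\Uh[\slt,\psi]$; this is sound --- it is essentially the rank-one instance of the diamond-lemma argument the paper runs for $\Uth$ in theorem \ref{thm_Uth} --- and it yields strictly more (topological freeness of $\Uph{\slt,\psi}$ together with an explicit topological basis), at the price of a convergence argument that the paper's shorter specialisation route avoids entirely.

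Two cautions. First, your opening surjectivity claim, that the image of a complete $\Kh$-module under a $\Kh$-linear map is automatically closed in a separated target, is false as a general principle: the topology induced on the image by the target can be strictly coarser than its intrinsic $h$-adic topology, and one can build dense, non-surjective maps from complete modules. Only your fallback formulation --- checking that $\hzero{(f_h)}$ is surjective and invoking completeness of the source and separatedness of the target --- is valid, and that is exactly the paper's argument, so no actual gap results. Second, in the straightening step you should record not only that $S^{\psi}_{h,a} \in h \, \KK[u] \cch$ for every $a \geq 2$, but also that $S^{\psi}_{h,0} - u$ and $S^{\psi}_{h,1} - 1$ lie in $h \, \KK[u] \cch$ (all of this is proposition \ref{prop_GQEeqclassic}); this guarantees that the zeroth-order rewrite is exactly the classical straightening sending $X^+ X^-$ to $H + X^- X^+$, which strictly decreases the inversion number, and the lexicographic induction on the pair formed by the $h$-adic valuation and the inversion number then converges as you claim.
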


\proof
Since ${S^{\! \: \psi}_{\! \! \; h}}$ is a solution of the GQE equation $\GQEeqh[\psi,\psi]{-1}$ and since the colouring $\psi$ satisfies the deformation axiom, the following equalities hold in $\KK[u]$ (see proposition \ref{prop_GQEeqclassic}):
$$ \hzero \pi^{\KK[u] \cch} ({S^{\! \: \psi}_{\! \! \; h,0}}) \ = \ u \, , \quad \hzero \pi^{\KK[u] \cch} ({S^{\! \: \psi}_{\! \! \; h,1}}) \ = \ 1 \, , \quad \hzero \pi^{\KK[u] \cch} ({S^{\! \: \psi}_{\! \! \; h,p}}) \ = \ u \, , \ \, \forall p \in \NN_{\geq 2} \, . $$
As a consequence, the $\sltF$-pointed $\KK$-algebras $\Uc[\slt]$ and $\hzero{{U'_{\! \! \; h}(\slt,\psi)}}$ are isomorphic (recall the presentation of $\Uc[\slt]$: see subsection \ref{subs_KMone}). Besides, since ${S^{\! \: \psi}_{\! \! \; h}}$ is a solution of the GQE equation $\GQEeqh[\psi,\psi]{-1}$, the following relation holds in $\Uh[\slt,\psi]$ (see proposition \ref{prop_GQEeqspan}):
$$ X^+ X^- \ = \ \sum_{a \in \NN} \, {(X^-)}^a \! \; {S^{\! \: \psi}_{\! \! \; h,a}} (H) \! \; {(X^+)}^a \, . $$
Hence, there exists a morphism ${f_{\! \! \; h}}$ of $\sltF$-pointed $\Kh$-algebra from ${U'_{\! \! \; h}(\slt,\psi)}$ to $\Uh[\slt,\psi]$ (the $\Kh$-module $\Uh[\slt,\psi]$ is separated and complete). On the one hand, we know that $\Uh[\slt,\psi]$ is a formal deformation of the \hbox{$\sltF$-pointed} $\KK$-algebra $\Uc[\slt]$ (see theorem \ref{thm_Uhonednd}). On the other hand, we have proved that the $\sltF$-pointed $\KK$-algebras $\Uc[\slt]$ and $\hzero{{U'_{\! \! \; h}(\slt,\psi)}}$ are isomorphic. As a consequence, the map $\hzero{({f_{\! \! \; h}})}$ is bijective. The $\Kh$-module ${U'_{\! \! \; h}(\slt,\psi)}$ being complete and separated, the $\Kh$-module $\Uh[\slt,\psi]$ being separated and torsion-free, the map ${f_{\! \! \; h}}$ is then bijective.
\qed

\subsection{Integrable representations}
Let $\psi$ be a colouring of $\BB$ with values in $\Kh$. We denote by $\Cinth{\slt,\psi}$ the category of integrable representations of $\Uh[\slt,\psi]$ which are topologically free when viewed as $\Kh$-modules.

\vsp

\begin{thm} \label{thm_Uhintegrable}
Let $\psi$ be a $h$-admissible colouring of $\BB$.
\begin{enumerate}
\item[1)] For every $n \in \NN$, the representation $\Lh{n,\psi}$ is a formal deformation of $\Lc{n}$ along $\Uh[\slt,\psi]$.
\item[2)] For every $n \in \NN$, the representation $\Lh{n,\psi}$ is, up to isomorphism, equal to the representation of $\Uh[\slt,\psi]$ topologically generated by $v$ and subject to the following relations:
$$ H . \! \: v \ = \ n \; \! v \, , \quad \quad X^+ . \! \: v \ = \ 0 \, , \quad \quad (X^-)^{n+1} . \! \: v \ = \ 0 \, . $$
\item[3)] The representations $\Lh{n,\psi}$ ($n \in \NN$) are pairwise distinct and are the unique, up to isomorphism, indecomposable representations in $\Cinth{\slt,\psi}$.
\item[4)] A representation of $\Uh[\slt,\psi]$ is in the category $\Cinth{\slt,\psi}$ if and only if it is $\psi$-integrable.
\item[5)] Let $n,n' \in \NN$. Every $\Uh[\slt,\psi]$-morphism from $\Lh{n,\psi}$ to $\Lh{n',\psi}$ is equal to a scalar multiple of the identity map if $n = n'$ and is zero else.
\item[6)] Every integrable representation of the algebra $\Uc[\slt]$ admits a unique, up to isomorphism, formal deformation along $\Uh[\slt,\psi]$.
\end{enumerate}
\end{thm}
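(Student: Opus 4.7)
Part~1 follows by combining Lemma~\ref{lem_defLcAone} with Theorem~\ref{thm_Uhonednd}: since $\psi$ satisfies the deformation axiom, the former yields that $\Lh{n,\psi}$ is a formal deformation of $\Lc n$ along $\FUh[\sltF]$, and the latter identifies $\Uc[\slt]$ with $\hzero{\Uh[\slt,\psi]}$ as $\sltF$-pointed $\KK$-algebras, making the deformation compatible with the $\Uh[\slt,\psi]$-action itself. For part~2, let ${V_{\! \! \; h}}$ denote the representation of $\Uh[\slt,\psi]$ topologically generated by $v$ with the listed relations, and let $\alpha_h : {V_{\! \! \; h}} \twoheadrightarrow \Lh{n,\psi}$ be the surjection sending $v \mapsto b_{n,0}$ (well-defined, since $b_{n,0}$ satisfies those relations in $\Lh{n,\psi}$). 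The classical presentation of $\Lc n$ from subsection~\ref{subs_KMresults} yields $\hzero{{V_{\! \! \; h}}} \simeq \Lc n$, which is $(n+1)$-dimensional with basis ${\{ (X^-)^k.\bar v \: \! ; \, 0 \leq k \leq n \}}$. A topological Nakayama argument then shows that ${\{ (X^-)^k.v \: \! ; \, 0 \leq k \leq n \}}$ topologically generates ${V_{\! \! \; h}}$, providing a surjection $\Kh^{n+1} \twoheadrightarrow {V_{\! \! \; h}}$; the composition $\Kh^{n+1} \twoheadrightarrow {V_{\! \! \; h}} \twoheadrightarrow \Lh{n,\psi}$ is then a surjection between topologically free $\Kh$-modules of rank $n+1$ whose specialisation at ${h \! = \! 0}$ is bijective, hence is itself bijective. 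It follows that $\alpha_h$ is an isomorphism and that ${V_{\! \! \; h}}$ is topologically free of rank $n+1$.

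For part~3, pairwise non-isomorphism of the $\Lh{n,\psi}$ reduces at ${h \! = \! 0}$ to the classical statement for the $\Lc n$, while indecomposability follows from Lemma~\ref{lem_indecompAone}: the deformation axiom gives ${[\psi]}_0(n,k) = k(n-k+1) \neq 0$ for $1 \leq k \leq n$, hence $[\psi](n,k)$ is invertible in $\Kh$ and neither $\psi^-$ nor $\psi^+$ vanishes on any $\BBe_n$. For part~5, any $\Uh[\slt,\psi]$-morphism ${f \! : \! \Lh{n,\psi} \! \to \! \Lh{n',\psi}}$ preserves the $H$-eigenspace decomposition: since the eigenvalues of $H$ on the $b_{n,p}$ are the pairwise distinct integers $n-2p$, the map $f$ sends $b_{n,0}$ either to a $\Kh$-multiple of $b_{n',0}$ (when $n = n'$) or to zero (otherwise, since no weight-$n$ vector exists in $\Lh{n',\psi}$); the presentation in part~2 then forces $f$ to be a scalar multiple of the identity or zero respectively.

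Parts~4 and~6 both reduce to the assertion that every ${V_{\! \! \; h}} \in \Cinth{\slt,\psi}$ decomposes as a topological direct sum $\bigoplus_n \Lh{n,\psi}^{\oplus m_n}$, with multiplicities $m_n$ determined by the classical decomposition $\hzero{{V_{\! \! \; h}}} \simeq \bigoplus_n \Lc{n}^{\oplus m_n}$ from Proposition~\ref{prop_KMintone}. The plan is to first decompose ${V_{\! \! \; h}}$ under $H$ into $\Kh$-topologically free weight subspaces (using that the $H$-eigenvalues are constant integers), then to lift each classical highest-weight vector $\bar v \in (\hzero{{V_{\! \! \; h}}})_n \cap \ker X^+$ to a genuine vector $v \in {V_{\! \! \; h}}$ of $H$-weight $n$ annihilated by $X^+$; by part~2, the subrepresentation generated by $v$ is then isomorphic to $\Lh{n,\psi}$, and iterating (using completeness of ${V_{\! \! \; h}}$) yields the full decomposition. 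This gives part~4 at once, while part~6 follows by applying the same construction to $V_0 = \hzero{{V_{\! \! \; h}}}$ directly, uniqueness being automatic from the decomposition. The main technical obstacle is precisely this iterative $h$-adic lift of highest-weight vectors: given a lift $v_m$ with $X^+.v_m \in h^{m+1} {V_{\! \! \; h}}$, one must construct a correction $v_{m+1} = v_m + h^{m+1} w$ satisfying $X^+.v_{m+1} \in h^{m+2} {V_{\! \! \; h}}$, which amounts to a weight-graded cohomological vanishing akin to Proposition~\ref{prop_Hochschild} and relies crucially on the local nilpotency of $X^+$.
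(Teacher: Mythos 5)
Parts 1, 2 and the distinctness/indecomposability half of part 3 are correct and essentially the paper's arguments (your Nakayama-style comparison in part 2 is a harmless repackaging of the paper's map $g_h$ from the presented module to $\Lh{n,\psi}$, both ultimately resting on Lemma \ref{lem_defLcAone} and Theorem \ref{thm_Uhonednd}). One local slip: in part 5 your reason that $f(b_{n,0})$ vanishes when $n \neq n'$ --- ``no weight-$n$ vector exists in $\Lh{n',\psi}$'' --- is false when $n' > n$ with $n' \equiv n \bmod 2$, since $b_{n',(n'-n)/2}$ has weight $n$. The conclusion survives, but you must instead use that $f(b_{n,0})$ is annihilated by $X^+$ and that, $\psi^+$ having no zeros on $\BBe_{n'}$, the only such vectors in $\Lh{n',\psi}$ are the $\Kh$-multiples of $b_{n',0}$, which have weight $n'$.

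The genuine gap is in parts 4 and 6 together with the uniqueness claim of part 3 --- the real content of the theorem. You state the iterative $h$-adic lifting of highest-weight vectors as ``the main technical obstacle'' and do not carry it out, so at the decisive point you have a plan rather than a proof; and the plan, as sketched, has holes. The vanishing you gesture at is not ``akin to Proposition \ref{prop_Hochschild}'' as stated: that proposition corrects, order by order in $\Endc{(V_0)}$, an isomorphism between two \emph{given} deformations of a fixed representation; it does not lift vectors. The surjectivity of $X^+ \colon (\hzero{V})_n \to (\hzero{V})_{n+2}$ that your correction step needs fails in general (in $\Lc{k}$ the lowest-weight vector $b_{k,k}$ is not in the image of $X^+$) and holds here only because highest weights satisfy $n \geq 0$, which must be argued. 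Even granted a limit vector $v$ with $H.v = n \! \; v$ and $X^+ \! . \; \! v = 0$, part 2 does not yet apply: you still need $(X^-)^{n+1}.v = 0$, which requires the quotient axiom through the relation $X^+ X^- = \sum_{a \in \NN} {(X^-)}^a \, {S^{\! \: \psi}_{\! \! \; h,a}}(H) \, {(X^+)}^a$ plus a separatedness argument; and ``iterating'' does not by itself produce a direct sum --- one must exhibit a map $\bigoplus_n \Lh{n,\psi}^{\oplus m_n} \to V$ and check bijectivity by specialisation. The paper sidesteps all of this: by Proposition \ref{prop_KMintone} any extension of integrable representations of $\slt$ has locally finite actions, hence is a direct sum of $\Lc{n}$'s and in particular semisimple, so every self-extension of an integrable $V_0$ is trivial; Proposition \ref{prop_Hochschild} then yields that any two formal deformations of $V_0$ along $\Uh[\slt,\psi]$ are isomorphic, while the explicit topologically direct sum of the $\Lh{n,\psi}$ matching the classical decomposition supplies existence. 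Since every object of $\Cinth{\slt,\psi}$ is a formal deformation of its specialisation at ${h \! = \! 0}$, uniqueness forces it to be such a direct sum, giving parts 3, 4 and 6 simultaneously with no lifting at all. Your strategy can likely be completed along the lines above, but as written it does not constitute a proof of the theorem's main assertions.
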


\proof
Denote by $f_0$ the unique morphism of $\sltF$-pointed $\KK$-algebra from $\Uc[\slt]$ to the specialisation at ${h \! = \! 0}$ of $\Uh[\slt,\psi]$ and let $n \in \NN$. Recall besides \hbox{lemma \ref{lem_defLcAone}}: since $\psi$ satisfies the deformation axiom, the representation $\Lh{n,\psi}$ is a formal deformation of $\Lc{n}$ along $\Uh[\slt,\psi]$. \\

According to the definition of $\Lh{n,\psi}$, there exists a \hbox{$\Uh[\slt,\psi]$-morphism $g_h$} from $\Lph{n,\psi}$ to $\Lh{n,\psi}$. Besides, according to the definition of the representation $\Lph{n,\psi}$, the representation $\Lc{n}$ of $\Uc[\slt]$ is isomorphic to the pull-back of $\hzero{\Lph{n,\psi}}$ via $f_0$ (recall the presentation of $\Lc{n}$: see subsection \ref{subs_KMresults}). On the other hand, we have proved that $\Lh{n,\psi}$ is a formal deformation of $\Lc{n}$ along $\Uh[\slt,\psi]$. Hence, the map $\hzero{(g_h)}$ is bijective. The $\Kh$-module $\Lph{n,\psi}$ being complete and separated, the $\Kh$-module $\Lh{n,\psi}$ being separated and torsion-free, the map $g_h$ is then bijective. \\

Remark that the representation $\Lh{n,\psi}$ is integrable. We besides now from lemma \ref{lem_indecompAone} that the representation is indecomposable. Let now $n_1, n_2 \in \NN$ such that $n_1 \neq n_2$. Remark that the representations $\Lh{n_1,\psi}$ and $\Lh{n_2,\psi}$ are not isomorphic since the $\Kh$-modules $\Lh{n_1,\psi}$ and $\Lh{n_2,\psi}$ are not. \\

Let $V_0$ be an integrable representation of $\Uc[\slt]$ and let ${V_{\! \! \; h}}, {V'_{\! \! \; h}}$ be two formal deformations of $V_0$ along $\Uc[\slt,\psi]$. Let $W_0$ be an extension of $V_0$ by $V_0$. \hbox{Since $V_0$} is an integrable representation of $\Uc[\slt]$, the actions of $X^-,X^+,H$ on $W_0$ are locally finite and the representation $W_0$ is then isomorphic to a direct sum of representations $\Lc{n}$ with $n \in \NN$ (see proposition \ref{prop_KMintone}). The representations $\Lc{n}$ ($n \in \NN$) being irreducible, the representation $W_0$ is in particular semisimple and hence a trivial extension of $V_0$ by $V_0$. This being true for every extension $W_0$ of $V_0$ by $V_0$, the representations ${V_{\! \! \; h}}$ \hbox{and ${V'_{\! \! \; h}}$} are thus isomorphic (see proposition \ref{prop_Hochschild}). As a consequence, and since again every integrable representation of $\Uc[\slt]$ is isomorphic to a direct sum of representations $\Lc{n}$ with $n \in \NN$ (see proposition \ref{prop_KMintone}), there exists for every integrable representation $V_0$ of $\Uc[\slt]$, a unique, up to isomorphism, formal deformation of $V_0$ along $\Uh[\slt,\psi]$, which besides is isomorphic to a topologically direct sums of representations $\Lh{n,\psi}$ with $n \in \NN$. Since besides every integrable representation of $\Uh[\slt,\psi]$ is a formal deformation along $\Uh[\slt,\psi]$ of an integrable representation of $\Uc[\slt]$, point 3 of the theorem follows and every indecomposable integrable representation of $\Uh[\slt,\psi]$ is then isomorphic to some representation $\Lh{n,\psi}$ with $n \in \NN$.
\qed

\subsection{$H$-triviality}
In this subsection, $\psi$ designates a $h$-admissible colouring of $\BB$. Let ${\bar S^{\! \: \psi}_{\! \! \; h}} = {({\bar S^{\! \: \psi}_{\! \! \; h,p}})}_{p \in \NN}$ be the solution of the GQE equation $\GQEeqh[\psi,\psicl]{0}$: see proposition \ref{prop_GQEequnicity} and theorem \ref{thm_GQEeq}. \\

We recall that $H$ designates also the \hbox{$\KK$-algebra} morphism from $\KK[H]$ to $\FUc[\sltF]$ which sends $H$ to $H$.

\begin{thm} \label{thm_UhoneHtrivial}
Let $\psi$ be a $h$-admissible colouring of the crystal $\BB$. There exists a ${H \! \! \:}$-trivialisation $\gamma_h$ of the $\sltF$-pointed $\Kh$-algebra $\Uh[\slt,\psi]$ which satisfies the following:
$$ \left\{ \begin{array}{lcl}
\gamma_h(X^-) & = & X^- \, , \\
\gamma_h(H) & = & H \, , \\
\gamma_h(X^+) & = & \sum_{a \in \NN} \! \: {(X^-)}^a \: \! {\bar S^{\! \: \psi}_{\! \! \; h,a}} (H) \; \! {(X^+)}^{a+1} \, .
\end{array} \right. $$
\end{thm}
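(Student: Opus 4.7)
The strategy is to prescribe $\gamma_h$ on the generators $X^-,H,X^+$ of $\Uc[\slt] \cch$ by the announced formulas, verify that the defining relations of $\Uc[\slt]$ hold in $\Uh[\slt,\psi]$, invoke the universal property of $\Uc[\slt]$ together with the completeness of $\Uh[\slt,\psi]$ (lemma \ref{lem_Uhonehfree}) to extend $\gamma_h$ to a continuous $\Kh$-algebra morphism $\Uc[\slt] \cch \to \Uh[\slt,\psi]$, and finally check that it has the two trivialisation properties.

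Set $\tilde X^- := X^-$, $\tilde H := H$, and $\tilde X^+ := \sum_{a \in \NN} (X^-)^a \bar S^\psi_{h,a}(H) (X^+)^{a+1}$ in $\Uh[\slt,\psi]$; the series converges because $\bar S^\psi_h$ vanishes at infinity, being a solution of a GQE equation. The weight relations $[\tilde H,\tilde X^\pm] = \pm 2\tilde X^\pm$ follow from the non-deformable relations of $\sltF$ alone (which hold in $\Uh[\slt,\psi]$), since every summand of $\tilde X^+$ has $\mathrm{ad}(H)$-weight $+2$: indeed $[H,(X^-)^a f(H)(X^+)^{a+1}] = (-2a + 2(a+1))(X^-)^a f(H)(X^+)^{a+1}$ for any polynomial $f$, which equals $2(X^-)^a f(H)(X^+)^{a+1}$.

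The heart of the argument, and the main obstacle, is the commutator relation $[\tilde X^+, \tilde X^-] = \tilde H$, which amounts to
\[
\Big[\sum_{a \in \NN} (X^-)^a \bar S^\psi_{h,a}(H) (X^+)^{a+1},\, X^-\Big] \ = \ H \quad \text{in } \Uh[\slt,\psi].
\]
Expanding the bracket via the non-deformable relations of $\sltF$, and then iteratively reducing each occurrence of $(X^+)^{b} X^-$ by repeated use of the Serre-type identity $X^+ X^- = \sum_b (X^-)^b S^\psi_{h,b}(H)(X^+)^b$ supplied by $S^\psi_h$ as a solution of $\GQEeqh[\psi,\psi]{-1}$ (proposition \ref{prop_GQEeqspan}), one rewrites the left-hand side as a convergent $\KK[H]$-linear combination of monomials of the form $(X^-)^a R_a(H)(X^+)^a$, whose coefficients are explicit polynomial expressions in $\bar S^\psi_h$ and $S^\psi_h$. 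The requirement that this combination equal $H$ in $\Uh[\slt,\psi]$ is precisely the linear system encoded by the GQE equation $\GQEeqh[\psi,\psicl]{0}$, and that system is satisfied by definition of $\bar S^\psi_h$ as its solution.

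Granted this relation, the universal property of $\Uc[\slt]$ yields a $\Kh$-algebra morphism $\gamma_h$ with the prescribed values on generators. The equality $\gamma_h(H) = H$ makes $\gamma_h$ a morphism of $H$-pointed $\Kh$-algebras. For the $\sltF$-pointedness of $\hzero{\gamma_h}$, the deformation axiom on $\psi$ (via the classical analog of proposition \ref{prop_GQEeqclassic}) forces the initial values $\hzero{(\bar S^\psi_{h,0})} = 1$ and $\hzero{(\bar S^\psi_{h,a})} = 0$ for $a \geq 1$, whence $\hzero{\gamma_h(X^+)} = X^+$ in $\Uc[\slt] = \hzero{\Uh[\slt,\psi]}$; combined with $\hzero{\gamma_h(X^-)} = X^-$ and $\hzero{\gamma_h(H)} = H$, this shows $\hzero{\gamma_h}$ is the identity on $\Uc[\slt]$, hence a morphism (in fact an isomorphism) of $\sltF$-pointed $\KK$-algebras. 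Since both $\Uc[\slt] \cch$ and $\Uh[\slt,\psi]$ are topologically free $\Kh$-modules and $\hzero{\gamma_h}$ is bijective, $\gamma_h$ itself is bijective, providing the required $H$-trivialisation.
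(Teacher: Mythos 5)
Your overall architecture is sound and your endgame coincides with the paper's: prescribe the images of the generators, verify the $\slt$ relations in $\Uh[\slt,\psi]$, extend by the universal property of $\Uc[\slt]$ and completeness of the target, observe that the specialisation at ${h \! = \! 0}$ fixes the generators (using theorem \ref{thm_Uhonednd} to identify $\hzero{\Uh[\slt,\psi]}$ with $\Uc[\slt]$) and is therefore the identity, and lift bijectivity through topological freeness. The genuine soft spot is the step you yourself call the heart: the verification of $[\gamma_h(X^+),X^-]=H$. You propose to expand the bracket and reduce iteratively via the normal-ordering identity $X^+X^-=\sum_b (X^-)^b S^{\psi}_{h,b}(H)(X^+)^b$ of proposition \ref{prop_GQEeqspan}, asserting that the result is a convergent normal-form combination whose equality with $H$ ``is precisely'' the system $\GQEeqh[\psi,\psicl]{0}$. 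Neither half of this is established: the convergence of the iterated rewriting is a diamond-lemma-type argument (compare the care taken with exactly such infinite reductions in the proof of theorem \ref{thm_Uth}), and the identification of the resulting coefficients with the degree-$0$ GQE system is a computation you never perform. Moreover no rewriting is needed at all: by remark \ref{rem_Uhoneperfect}, an element of $\Uh[\slt,\psi]$ vanishes if and only if it acts by zero on every $\Lh{n,\psi}$, so the relation can be tested directly on the basis vectors $b_{n,p}$, where a short closed-form computation of $\gamma_h(X^+).\,b_{n,p}$ yields a difference equation in $p$ that telescopes to exactly the degree-$0$ equation. This is precisely the content of proposition \ref{prop_GQEeqiso}, and it is what the paper invokes: after reducing via congruence (lemma \ref{lem_congruentAone}) to a colouring with invertible values, the paper obtains the morphism from $\Uh[\slt,\psicl]$ into $\Uh[\slt,\psi]$ by intertwining the representations $\Lh{n,\psicl}$ with pull-backs of the $\Lh{n,\psi}$, and then composes with the isomorphism $\Uc[\slt]\cch \simeq \Uh[\slt,\psicl]$ of proposition \ref{prop_exUhone}. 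Your alternative of mapping directly out of the presentation of $\Uc[\slt]$ is legitimate (it is the same device the paper uses in theorem \ref{thm_presentationUhone}), but as written the key relation is asserted rather than proved.

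One further discrepancy worth flagging: you claim $\hzero{(\bar S^{\psi}_{h,0})}=1$ and $\hzero{(\bar S^{\psi}_{h,a})}=0$ for $a\geq 1$, whereas proposition \ref{prop_GQEeqclassic} as printed gives $\hzero{(\bar S_{h,0})}=0$, $\hzero{(\bar S_{h,1})}=1$ and $\hzero{(\bar S_{h,p})}=0$ for $p\geq 2$. The mismatch comes from the shift in proposition \ref{prop_GQEeqiso}, where the sequence occurring in $f(X^+)=\sum_a (X^-)^a M_a(H)(X^+)^{a+1}$ is related to the solution of the degree-$0$ equation by $M[1]=\bar S$, i.e.\ $M_a=\bar S_{h,a+1}$; with the theorem's displayed formula taken at face value, your values are the ones that make $\hzero{\gamma_h(X^+)}=X^+$ (the paper's own proof cites the unshifted values against the same formula, so the indexing is handled loosely there as well). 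This does not damage the structure of your argument, but in a careful write-up you should fix one indexing convention and carry it consistently through both the commutator verification and the specialisation step.
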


Note that the series $\sum_{a \in \NN} {(X^-)}^a {\bar S^{\! \: \psi}_{\! \! \; h,a}} (H) {(X^+)}^{a+1}$ converges since the \hbox{sequence ${\bar S^{\! \: \psi}_{\! \! \; h}}$} converges to zero.

\proof
Let us prove the existence of a $\Kh$-algebra morphism $\gamma'_h$ from $\Uh[\slt,\psicl]$ to $\Uh[\slt,\psi]$ which satisfies the following:
\begin{equation} \label{eq_thmUhonetrivial} \left\{ \begin{array}{lcl}
\gamma'_h(X^-) & = & X^- \, , \\
\gamma'_h(H) & = & H \, , \\
\gamma'_h(X^+) & = & \sum_{a \in \NN} \! \: {(X^-)}^a \: \! {\bar S^{\! \: \psi}_{\! \! \; h,a}} (H) \; \! {(X^+)}^{a+1} \, .
\end{array} \right.
\end{equation}
The theorem will follows since the GQE algebra $\Uh[\slt,\psicl]$ and the constant formal deformation $\Uc[\slt] \cch$ are isomorphic $\sltF$-pointed $\Kh$-algebras (see proposition \ref{prop_exUhone}).
Since the algebra $\Uh[\slt,\psi]$ and the GQE equation $\GQEeqh[\psi,\psicl]{0}$ depend only on the congruence class of $\psi$ (see lemma \ref{lem_congruentAone} concerning $\Uh[\slt,\psi]$), one can without loss of generality assume that $\psi$ has values in $\Kh^\times$ (remark that $\psi$ satisfies the deformation axiom). Since ${\bar S^{\! \: \psi}_{\! \! \; h}}$ is a the solution of $\GQEeqh[\psi,\psicl]{0}$, there then exists a $\Kh$-algebra morphism $\gamma'_h$ from $\Uh[\slt,\psicl]$ to $\Uh[\slt,\psi]$ which satisfies the conditions \eqref{eq_thmUhonetrivial}: see proposition \ref{prop_GQEeqiso}. Besides, since the colouring $\psi$ satisfies the deformation axiom, the following equalities hold \hbox{in $\KK[u]$} (see proposition \ref{prop_GQEeqclassic}):
$$ \hzero \pi^{\KK[u] \cch} ({\bar S^{\! \: \psi}_{\! \! \; h,0}}) \ = \ 0 \, , \quad \hzero \pi^{\KK[u] \cch} ({\bar S^{\! \: \psi}_{\! \! \; h,1}}) \ = \ 1 \, , \quad \hzero \pi^{\KK[u] \cch} ({\bar S^{\! \: \psi}_{\! \! \; h,p}}) \ = \ 0 \, , \ \, \forall p \in \NN_{\geq 2} \, . $$
As a consequence the specialisation at ${h \! = \! 0}$ of $\gamma'_h$ is a morphism of $\sltF$-pointed $\KK$-algebra between $\Uh[\slt,\psicl]$ and $\Uh[\slt,\psi]$. The latter algebras being both generated by $X^\pm$ and $H$, the specialisation at ${h \! = \! 0}$ of $\gamma'_h$ is therefore bijective. The $\Kh$-modules $\Uh[\slt,\psicl]$ and $\Uh[\slt,\psi]$ being both topologically free, the map $\gamma'_h$ is thus also bijective. We are done.
\qed

\subsection{Classification}
\begin{thm} \label{thm_GQEoneclassification}
The following map is a bijection:
\begin{IEEEeqnarray*}{CCC}
\left \{ {\renewcommand{\arraystretch}{0.85} \begin{array}{c}
\text{$h$-admissible} \\
\text{colourings of $\BB$,} \\
\text{up to congruence}
\end{array}} \right \}
& \ \longrightarrow \ &
\left \{ {\renewcommand{\arraystretch}{0.85} \begin{array}{c}
\text{${H \! \! \:}$-trivial formal deformations} \\
\text{of the $\sltF$-pointed $\KK$-algebra $\Uc[\slt]$}, \\
\text{up to isomorphism} \\
\text{of $\sltF$-pointed $\Kh$-algebra}
\end{array}} \right \} \\
&& \\
\begin{array}{c}
\text{congruence} \\
\text{class of $\psi$}
\end{array} & \longmapsto & \begin{array}{c}
\text{isomorphism} \\
\text{class of $\Uh[\slt,\psi]$.}
\end{array}
\end{IEEEeqnarray*}
\end{thm}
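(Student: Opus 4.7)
My plan is to prove well-definedness, injectivity and surjectivity of the map.

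\textbf{Well-definedness.} Let $\psi$ be $h$-admissible. Theorem \ref{thm_Uhonednd} shows $\Uh[\slt,\psi]$ is a formal deformation of $\Uc[\slt]$ and Theorem \ref{thm_UhoneHtrivial} provides an explicit $H$-trivialisation, so the target is indeed an $H$-trivial formal deformation. The deformation axiom gives $[\psi]_0(n,k) = k(n-k+1) \neq 0$ for every $(n,k) \in [\BBe]$, so each $\psi^{\pm}(n,k)$ is invertible in $\Kh$; any two $h$-admissible colourings therefore divide each other in $\Coloh$, and Lemma \ref{lem_congruentAone}(1) shows that congruent colourings produce isomorphic $\sltF$-pointed $\Kh$-algebras.

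\textbf{Injectivity.} Let $\psi_1, \psi_2$ be $h$-admissible and suppose $f : \Uh[\slt,\psi_1] \to \Uh[\slt,\psi_2]$ is an isomorphism of $\sltF$-pointed $\Kh$-algebras. For each $n \in \NN$, pulling $\Lh{n,\psi_2}$ back through $f$ yields an indecomposable representation of $\Uh[\slt,\psi_1]$ in $\Cinth{\slt,\psi_1}$; Theorem \ref{thm_Uhintegrable} identifies it with some $\Lh{n',\psi_1}$, and since $f$ preserves the action of $H$, comparing the $H$-spectra forces $n' = n$. Viewed through the $\sltF$-pointings, this produces an isomorphism of $\FUh[\sltF]$-modules from $\Lh{n,\psi_1}$ to $\Lh{n,\psi_2}$ for every $n$; Lemma \ref{lem_congruentAone}(2), applicable because both colourings are invertible and hence divide each other, concludes $\psi_1 \equiv \psi_2$.

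\textbf{Surjectivity.} Let $A_h$ be an $H$-trivial formal deformation of $\Uc[\slt]$ and let $\gamma_h : \Uc[\slt] \cch \to A_h$ be an $H$-trivialisation. For each $n$, pulling $\Lc{n} \cch$ back through $\gamma_h^{-1}$ yields an $A_h$-module $V_n$ with underlying $\Kh$-module $(\KK \BBv_n) \cch$ on which the $\sltF$-pointed image of $H$ acts by $H.b_{n,p} = (n-2p) b_{n,p}$. The non-deformable relations force the $\sltF$-pointed images $\bar X^{\pm}$ of $X^{\pm}$ to shift $H$-weights by $\pm 2$, and since each weight space of $V_n$ is $\Kh$-free of rank one, the scalars describing the action of $\bar X^{\pm}$ define a colouring $\psi \in \Coloh$ such that $V_n \simeq \Lh{n,\psi}$ as $\FUh[\sltF]$-modules. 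The $\sltF$-point of $A_h$ induces a $\Kh$-algebra morphism $\pi_h : \FUh[\sltF] \to A_h$; any element of $\Ikerh[\slt,\psi]$ acts as zero on every $V_n$, so applying Proposition \ref{prop_UBperfect} to $\hzero{A_h} \simeq \Uc[\slt]$ together with an $h$-adic induction using separatedness of $A_h$ shows that $\pi_h$ kills this element. Hence $\pi_h$ factors through a $\sltF$-pointed morphism $f_h : \Uh[\slt,\psi] \to A_h$ whose specialisation at ${h \! = \! 0}$ is a $\sltF$-pointed morphism between two $\sltF$-pointed $\KK$-algebras both isomorphic to $\Uc[\slt]$, hence an isomorphism; topological freeness of source and target (Lemma \ref{lem_Uhonehfree} and the formal-deformation hypothesis) then promotes $f_h$ itself to an isomorphism, and $\psi$ is automatically $h$-admissible by Theorem \ref{thm_Uhonednd}.

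\textbf{Main obstacle.} The crux lies in surjectivity: from an abstract $H$-trivial deformation one must reconstruct the colouring. The subtle point is that the $H$-trivialisation $\gamma_h$ only rigidifies the action of $H$, so the elements $\bar X^{\pm}$ coming from the $\sltF$-point may differ from $\gamma_h(X^{\pm})$ by $O(h)$ corrections. One must deduce that $\bar X^{\pm}$ nonetheless act as scalars on the basis $(b_{n,p})$ using only the non-deformable commutation with $H$ and the rank-one $\Kh$-freeness of each weight space of $V_n$, and then argue that the family $(V_n)_{n \in \NN}$ is faithful enough — via separatedness of $A_h$ and Proposition \ref{prop_UBperfect} at ${h \! = \! 0}$ — to detect the full defining ideal $\Ikerh[\slt,\psi]$.
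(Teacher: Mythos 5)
Your overall architecture is the paper's own: well-definedness via Lemma \ref{lem_congruentAone} together with Theorems \ref{thm_Uhonednd} and \ref{thm_UhoneHtrivial}; injectivity by pulling back the representations $\Lh{n,\psi_2}$ and invoking Theorem \ref{thm_Uhintegrable} and Lemma \ref{lem_congruentAone} (your reduction via invertibility of $h$-admissible colourings, instead of the paper's passage to congruent representatives dividing each other, is an acceptable variant); and, for surjectivity, extracting a colouring $\psi$ from the family $(V_n)$ and factoring the $\sltF$-point through $\Uh[\slt,\psi]$. Your detour through Proposition \ref{prop_UBperfect} plus an $h$-adic induction (using torsion-freeness and separatedness) to show $\pi_h$ kills $\Ikerh[\slt,\psi]$ is a valid, if heavier, substitute for the paper's one-line argument, which transports faithfulness through $\gamma_h$: since $A_h$ is isomorphic to $\Uh[\slt,\psicl]$ (Proposition \ref{prop_exUhone}) and elements of the latter are detected by their action on the $\Lh{n,\psicl}$ (Remark \ref{rem_Uhoneperfect}), an element of $A_h$ is zero if and only if it acts as zero on all the $V_n$.

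There is, however, a genuine circularity in the last step of your surjectivity argument. You assert that $\hzero{(f_h)}$ is bijective because it is a $\sltF$-pointed morphism between two $\sltF$-pointed $\KK$-algebras \emph{both isomorphic to} $\Uc[\slt]$. For the target $\hzero{A_h}$ this holds by the formal-deformation hypothesis, but for the source $\hzero{\Uh[\slt,\psi]}$ it is exactly the content of Theorem \ref{thm_Uhonednd}: it is equivalent to the $h$-admissibility of $\psi$, which is the conclusion you draw immediately afterwards. At that stage $\psi$ is merely some element of $\Coloh$; one can check that it satisfies the deformation axiom (the trivialisation specialises at ${h \! = \! 0}$ to a $\sltF$-pointed morphism), but the deformation axiom alone only produces a surjection from $\hzero{\Uh[\slt,\psi]}$ onto $\Uc[\slt]$, not an isomorphism. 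The repair is the paper's route: prove injectivity of $f_h$ directly — if $f_h(x) = 0$ then $x$ acts as zero on every $V_n$, whose pull-back along $f_h$ is $\Lh{n,\psi}$, so $x = 0$ by Remark \ref{rem_Uhoneperfect} — and deduce surjectivity of $f_h$ from that of $\pih[\slt,\psicl]$ through the commuting square involving $\gamma_h$ (equivalently, from surjectivity of $\hzero{(f_h)}$, using completeness of the source and separatedness of the target). Only once $f_h$ is known to be an isomorphism may you conclude that $\Uh[\slt,\psi]$ is a formal deformation of $\Uc[\slt]$ and hence that $\psi$ is $h$-admissible by Theorem \ref{thm_Uhonednd}.
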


Remark that the map of theorem \ref{thm_GQEoneclassification} is well defined since GQE algebras associated to congruent colourings are isomorphic $\sltF$-pointed $\Kh$-algebras: see lemma \ref{lem_congruentAone}.

\proof
Let $A_h$ be a $H$-trivial formal deformation of $\Uc[\slt]$. We know from proposition \ref{prop_exUhone} that the constant formal deformation $\Uc[\slt] \cch$ and the GQE algebra $\Uh[\slt,\psicl]$ are isomorphic $\sltF$-pointed $\Kh$-algebras. Hence, there exists a morphism $\gamma_h$ of $\sltF$-pointed $\Kh$-algebra from $A_h$ to $\Uh[\slt,\psicl]$. Denote then by $V_n$ ($n \in \NN$) the pull-back via $\gamma_h$ of $\Lc{n,\psicl}$. Since the morphism $\gamma_h$ sends $H \in A_h$ to $H \in \Uc[\slt,\psicl]$, there exists $\psi \in \Coloh$ such that, for every $n \in \NN$, the actions of $X^-,X^+,H \in A_h$ on the representation $V_n$ are given by the following, for $p \in \NN_{\leq n}$:
\begin{IEEEeqnarray*}{rCl}
H . \! \; b_{n,p} & \ := \ & \ (n- 2p) \, b_{n,p} \, , \\
X^- . \! \; b_{n,p} & := & \left\{ {\renewcommand{\arraystretch}{1.1} \begin{array}{cl}
\psi^-(n,p+1) \, b_{n,p+1} & \quad \text{if $p \neq n$,} \\
0 & \quad \text{else,}
\end{array}} \right. \\
X^+ . \! \; b_{n,p} & := &  \left\{ {\renewcommand{\arraystretch}{1.1} \begin{array}{cl}
\psi^+(n,n-p+1) \, b_{n,p-1} & \quad \text{if $p \neq 0$,} \\
0 & \quad \text{else.}
\end{array}} \right.
\end{IEEEeqnarray*}
Denote now by ${\pi_{\! \! \; h}}$ the $\Kh$-algebra morphism from $\FUh[\sltF]$ to $A_h$ which makes the following diagram commutes, where $\iota$ designates the inclusion map from $\FUc[\sltF]$ to $\FUh[\sltF]$ (the morphism ${\pi_{\! \! \; h}}$ exists since the $\Kh$-module $A_h$ is complete and separated):
$$ \shorthandoff{;:!?} \xymatrix @C=5pc @R=1.5pc {
\FUh[\sltF] \ar[dr]^-{{\pi_{\! \! \; h}}} & \\
\FUc[\sltF] \ar[u]^{\iota} \ar[r]_-{\dot{A}_h} & \, A_h
} $$
In view of the definition of the algebra $\Uh[\slt,\psicl]$ (see remark \ref{rem_Uhoneperfect}), an element in the algebra $A_h$ is zero if and only if it acts as zero on all $V_n$ ($n \in \NN$). As a consequence, there exists an injective morphism ${f_{\! \! \; h}}$ of $\sltF$-pointed $\Kh$-algebra \hbox{from $\Uh[\slt,\psi]$} to $A_h$ which makes the following diagram commute:
$$ \shorthandoff{;:!?} \xymatrix @C=5pc @R=1.5pc {
\FUh[\sltF] \ar[dr]^-{{\pi_{\! \! \; h}}} \ar@{->>}[d]_-{\pih[\slt,\psi]} \ar@{->>}[r]^-{\pih[\slt,\psicl]} & \Uh[\slt,\psicl] \\
\Uh[\slt,\psi] \ar[r]_-{{f_{\! \! \; h}}} & A_h \ar[u]_-{\gamma_h}
} $$
Since the map $\pih[\slt,\psicl]$ is surjective, so then is the map ${f_{\! \! \; h}}$. We thus have proved that the $\sltF$-pointed $\Kh$-algebras $\Uh[\slt,\psi]$ and $A_h$ are isomorphic. Since $A_h$ is a formal deformation of the $\sltF$-pointed $\KK$-algebra $\Uc[\slt]$, so then is $\Uh[\slt,\psi]$. Therefore, the colouring $\psi$ is $h$-admissible: see theorem \ref{thm_Uhonednd}. \\

Let now $\psi_1$ and $\psi_2$ be two $h$-admissible colourings such that the $\sltF$-pointed \hbox{$\Kh$-algebras} $\Uh[\slt,\psi_1]$ and $\Uh[\slt,\psi_2]$ are isomorphic. Let us prove that the colourings $\psi_1$ and $\psi_2$ are congruent. Since $\Uh[\slt,\psi_1]$ and $\Uh[\slt,\psi_2]$ depend only on the congruent classes of $\psi_1$ and $\psi_2$, respectively (see \hbox{lemma  \ref{lem_congruentAone}}), one can assume without of generality that $\psi_1$ and $\psi_2$ divide each other in the \hbox{ring $\Coloh$} (remark that the two colourings satisfy the deformation axiom). Denote by $g_h$ an isomorphism of $\sltF$-pointed \hbox{$\Kh$-algebras} from $\Uh[\slt,\psi_1]$ to $\Uh[\slt,\psi_2]$. For every $n \in \NN$, the pull-back of $\Lc{n,\psi_2}$ via $g_h$ is an integrable and indecomposable representation of $\Uh[\slt,\psi_1]$ and is thus, in view of theorem \ref{thm_Uhintegrable}, isomorphic to the representation $\Lc{n,\psi_1}$ (the ranks of the free $\Kh$-modules $\Lc{n,\psi_1}$ and $\Lc{n,\psi_2}$ are equal). As a consequence, the colourings $\psi_1$ and $\psi_2$ are congruent: see lemma \ref{lem_congruentAone}.
\qed

\section{Definition of GQE algebras}
\subsection{Non-deformable relations of $\glie$} \label{para_nondeformable}
Recall the definition of the Kac-Moody algebra $\glie$: see subsection \ref{subs_KM}. The \hbox{relations \eqref{eq_KM_nondeformable}} are called the \emph{non-deformable relations} \hbox{of $\glie$}.

\begin{defi}
The algebra $\glieF$ is the Lie $\KK$-algebra generated by \hbox{$X^\pm_i$ ($i \in I$)}, the \hbox{$\ZZ$-module $Y$} and subject to the non-deformable relations of $\glie$:
\begin{equation*}
\left\{ {\renewcommand{\arraystretch}{1.3} \begin{array}{rclr}
\big[ \check \mu, \check \mu' \big] & = & 0 & (\check \mu, \check \mu' \in Y) \, , \\
\big[ {\check \mu}, X^{\pm}_i \big] & = & \pm \; \! \langle {\check \mu} , \alpha_i \rangle \, X_i^{\pm} & (i \in I, \ \check \mu \in Y) \, , \\
\big[ X^+_i, X^-_j \big] & = & 0 & \quad \quad (i,j \in I, \ i \neq j) \, .
\end{array}} \right.
\end{equation*}
\end{defi}

The universal enveloping algebra $\FUc[\glieF]$ of $\glieF$ is the \hbox{$\KK$-algebra} generated by the elements \hbox{$X^\pm_i$ ($i \in I$)}, the \hbox{$\ZZ$-module $Y$} and subject to the non-deformable relations of $\glie$. We denote by $\KK[Y]$ the symmetric algebra on the $\KK$-vector space $Y \otimes_\ZZ \KK$. We designate also by $Y$, when the context makes it unambiguous, the \hbox{$\KK$-algebra} morphism from $\KK[Y]$ to $\FUc[\glieF]$ which sends $\check \mu$ to $\check \mu$ for every $\check \mu \in Y$. \\

A \emph{$\glieF$-pointed} algebra designates a $\FUc$-pointed algebra. We regard the algebra $\Uc$ as a $\glieF$-pointed $\KK$-algebra: the $\FUc$-point, which we denote by $\pic$, is the projection map from $\FUc[\glieF]$ \hbox{to $\Uc$}.

\vsp

\begin{defi}
We denote by $\FUh[\glieF]$ the constant formal deformation of the \hbox{${\glieF}$-pointed} \hbox{$\KK$-algebra} $\FUc[\glieF]$.
\end{defi}

\subsection{$\psi$-integrable representations}
\begin{defi}
We denote by $\Coloh[I]$ the ring of functions from $I$ to $\Coloh$. An element $\psi = {(\psi_i)}_{i \in I}$ in $\Coloh[I]$ is called an ${I \! \! \:}$-colouring of the crystal $\BB$ with values in $\Kh$, it is said admissible if the colouring $\psi_i$ is for every $i \in I$.
\end{defi}

Let $i \in I$. We denote by $\FDeltah$ the \hbox{$\Kh$-algebra} morphism $\FDeltac \cch$. It is a morphism from $\FUh[\sltF]$ to $\FUh$.

\begin{defi} \label{defi_phiint}
Let $\psi \in \Coloh[I]$. A representation ${V_{\! \! \; h}}$ of $\FUh$ is said \hbox{$\psi$-integrable} if the following two conditions are satisfied.
\begin{enumerate}
\item[a)] The action of every element $\check \mu \in Y$ on ${V_{\! \! \; h}}$ is diagonalisable.
\item[b)] For every $i \in I$, the pull-back of ${V_{\! \! \; h}}$ via $\FDeltah$ is isomorphic to a topologically direct sum of representations $\Lh{n,\psi_i}$ with $n \in \NN$.
\end{enumerate}
\end{defi}

Note, in view of condition b) in the previous definition, that the $\psi$-integrable representations of $\FUh$ are topologically free $\Kh$-modules.

\subsection{The algebra $\Uh$}
In this subsection, $\psi = {(\psi_i)}_{i \in I}$ designates a $I$-colouring of the crystal $\BB$ with values in $\Kh$.


\begin{defi}
$\phantom{}$
\begin{enumerate}
\item[\textbullet] We denote by $\Ikerh$ the two-sided ideal of $\FUh$ formed by the elements acting by zero on all the $\psi$-integrable representations of $\FUh$.
\item[\textbullet] We denote by $\Uh$ the quotient of $\FUh$ by $\Ikerh$.
\end{enumerate}
\end{defi}

Note that when $\glie$ is the Lie algebra $\slt$, the previous definition is equivalent to the one given in rank one: see definition \ref{defi_IkerhAone}. \\

We denote by $\pih$ the projection map from $\FUh$ to $\Uh$. We endow the $\Kh$-algebra $\Uh$ with the $\FUc$-point $\pih \circ \iota$, where $\iota$ designates the inclusion map from $\FUc$ to $\FUh$.

\begin{rem} \label{rem_Uhperfect}
We regard the $\psi$-integrable representations of $\FUh$ also as representations of $\Uh$. By definition then, an element in $\Uh$ is zero if and only if it acts as zero on all the $\psi$-integrable representations.
\end{rem}

The algebra $\Uh$ is built from the $\psi$-integrable representations, which are topologically free $\Kh$-modules, and therefore inherits from them the property.

\begin{prop} \label{prop_Uhhfree}
The $\Kh$-module $\Uh$ is topologically free.
\end{prop}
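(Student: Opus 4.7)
The plan is to mimic the argument given in rank one for Lemma \ref{lem_Uhonehfree}, using the characterisation that a $\Kh$-module is topologically free if and only if it is separated, complete and torsion-free (see e.g.\ \cite[proposition XVI.2.4]{Kas}). So I will verify these three properties for $\Uh$ in turn.

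First, completeness is immediate: by definition $\FUh = \FUh[\glieF]$ is the constant formal deformation of $\FUc[\glieF]$, hence is topologically free, and in particular complete. Since $\Uh$ is a quotient of $\FUh$, it inherits completeness.

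Next, for separatedness and torsion-freeness, I would introduce, for every $\psi$-integrable representation $V$ of $\FUh$, the $\Kh$-algebra morphism
\[
f_V : \FUh \longrightarrow \Endh(V)
\]
associated to $V$. By condition~b) in Definition~\ref{defi_phiint}, the underlying $\Kh$-module of $V$ is a topologically direct sum of representations $\Lh{n,\psi_i}$, hence is topologically free; consequently $V$ is separated and torsion-free, and so is $\Endh(V)$ (if $h \phi = 0$ in $\Endh(V)$, then for every $v \in V$ one has $h \phi(v) = 0$, which forces $\phi(v) = 0$ by torsion-freeness of $V$). By definition, $\Ikerh$ is the intersection $\bigcap_V \ker(f_V)$ taken over all $\psi$-integrable representations of $\FUh$; each $\ker(f_V)$ is closed (as the preimage of $(0)$ under a continuous map into a separated module), so $\Ikerh$ is closed in $\FUh$, and therefore $\Uh = \FUh / \Ikerh$ is separated.

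Torsion-freeness then follows from the same description: if $x \in \FUh$ satisfies $h x \in \Ikerh$, then for every $\psi$-integrable representation $V$ one has $h \cdot f_V(x) = 0$ in $\Endh(V)$, so by torsion-freeness of $\Endh(V)$ one obtains $f_V(x) = 0$; this being true for all such $V$, one has $x \in \Ikerh$, i.e.\ the class of $x$ in $\Uh$ is already zero. Combining the three properties and invoking \cite[proposition XVI.2.4]{Kas} yields the conclusion.

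I do not foresee a serious obstacle here: the only mild subtlety is the set-theoretic one of intersecting kernels over the class of all $\psi$-integrable representations, but this is harmless because the intersection of a family of closed (respectively, saturated with respect to $h$-torsion) submodules is still closed (respectively, saturated), regardless of the size of the indexing class. In particular, one does not have to form the product $\prod_V \Endh(V)$; it is enough to pull back the separatedness and torsion-freeness one representation at a time.
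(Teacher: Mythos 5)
Your proof is correct and follows essentially the same route as the paper's: completeness is inherited by $\Uh$ as a quotient of the topologically free module $\FUh$, while separatedness and torsion-freeness are pulled back from the topologically free $\psi$-integrable representations via the morphisms $f_V$, before invoking \cite[proposition XVI.2.4]{Kas}. The only difference is cosmetic: you intersect the closed, torsion-saturated kernels $\ker(f_V)$ directly instead of identifying $\Uh$ with a $\Kh$-submodule of the product $\prod_V \Endh(V)$ as the paper does, which amounts to the same thing since $\Ikerh$ is exactly that intersection.
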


\proof
The $\Kh$-module $\FUh$ is by definition topologically free. It is in particular complete. Being a quotient of $\FUh$, the $\Kh$-module $\Uh$ is thus also complete. \\

Let us denote by $\mathrm{Int}(\psi)$ the class of $\psi$-integrable representations of $\FUh$. Let now $V$ be a $\psi$-integrable representation of $\FUh$, we denote by $A_V$ the \hbox{$\Kh$-algebra} $\Endh (V)$ and we denote by $f_V$ the \hbox{$\Kh$-algebra} morphism from $\FUh$ to $A_V$, associated with the representation $V$. We denote by $f$ the product of the morphisms $f_V$ ($V \in \mathrm{Int}(\psi)$), it is a $\Kh$-algebra morphism \hbox{from $\FUh$} \hbox{to $\prod_{V \in \mathrm{Int}(\psi)} A_V$}. The definition of $\Ikerh$ can be rephrased by the following equation:
\begin{equation} \label{eq_Uhhfree}
\Ikerh \ = \ \ker (f) \, .
\end{equation}
Let $V$ be a $\psi$-integrable representation of $\FUh$, the $\Kh$-module $V$ is by definition topologically free. It is in particular separated and torsion-free. Hence, the \hbox{$\Kh$-module $\prod_{V \in \mathrm{Int}(\psi)} A_V$} is also \hbox{separated} and torsion-free. Besides, in view of equation \eqref{eq_Uhhfree}, one can identify $\Uh$ with a $\Kh$-submodule \hbox{of $\prod_{V \in \mathrm{Int}(\psi)} A_V$}. As a consequence, the $\Kh$-module $\Uh$ is \hbox{separated} and torsion-free. \\

To sum up, the $\Kh$-module $\Uh$ is separated, \hbox{complete} and torsion-free. It is thus topologically free (see for example \cite[proposition XVI.2.4]{Kas}).
\qed

\subsection{The diagram $\UUh$}
In this subsection, $\psi = {(\psi_i)}_{i \in I}$ designates a $I$-colouring of the crystal $\BB$ with values in $\Kh$. \\

We denote by $\II$ the category defined by the following.
\begin{enumerate}
\item[\textbullet] The objects of $\II$ are the elements of the Dynkin \hbox{set $I$} together with an additional object $t_I$.
\item[\textbullet] The morphisms of $\II$ are the identity morphisms, together with a morphism denoted $\Delta_i$ from $i$ to $t_I$, for each $i \in I$.
\end{enumerate}

\vsp
\vsp

Let $i \in I$, we denote by $\FDeltac$ the $\KK$-algebra morphism from $\FUc[\sltF]$ \hbox{to $\FUc$} which \hbox{sends $X^{\pm}$} to $X_i^\pm$ and $H$ to $\check \alpha_i$.

\begin{defi}
We denote by $\FUUc$ the $\KK$-diagram of shape $\II$ defined by the following:
\begin{IEEEeqnarray*}{rCll}
\FUUc (i) & \ := \ & \FUc[\sltF] & \quad \quad (i \in I) \, , \\
\FUUc (t_I) & \ := \ & \FUc \, , & \\
\FUUc (\Delta_i) & \ := \ & \FDeltac & \quad \quad (i \in I) \, .
\end{IEEEeqnarray*}
\end{defi}

Thanks to the simple $\slt$-directions (see subsection \ref{subs_KMone}), we also endow the algebra $\Uc$ with a structure of $\KK$-diagram.

\begin{defi}
We denote by $\UUc$ the $\KK$-diagram of shape $\II$, defined by the following:
\begin{IEEEeqnarray*}{rCll}
\UUc (i) & \ := \ & \Uc[\slt] & \quad \quad (i \in I) \, , \\
\UUc (t_I) & \ := \ & \Uc \, , & \\
\UUc (\Delta_i) & \ := \ & \Deltac & \quad \quad (i \in I) \, .
\end{IEEEeqnarray*}
\end{defi}

A \emph{$\glieF$-pointed diagram} designates a $\FUUc$-pointed diagram. We regard the diagram $\UUc$ as a $\glieF$-pointed $\KK$-diagram, whose $\FUUc$-point $\Pic$ is defined by the following:
\begin{IEEEeqnarray*}{rCl}
{(\Pic)}_i & \ := \ & \pic[\slt] \quad \ (i \in I) \, , \\
{(\Pic)}_{t_I} & \ := \ & \pic \, .
\end{IEEEeqnarray*}

\begin{defi-prop}
Let $i \in I$. We denote by $\Deltah$ the unique $\Kh$-algebra morphism from $\Uh[\slt,\psi_i]$ to $\Uh$ which makes the following diagram commute:
\begin{equation} \label{eq_iotah}
\shorthandoff{;:!?} \xymatrix @C=5pc @R=1.5pc {
\FUh[\sltF] \ar[r]^-{\FDeltah \ } \ar@{->>}[d]_-{\pih[\slt,\psi_i] \ } & \FUh \ar@{->>}[d]^-{\ \pih} \\
\Uh[\slt,\psi_i] \ar[r]_-{\Deltah} & \ \Uh \, .
} \end{equation}
\end{defi-prop}

\proof
Let $x \in \Ikerh[\slt,\psi_i]$ and denote by $y$ the image of $x$ by $\FDeltah$. \hbox{Let $V$} be a $\psi$-integrable representation of $\FUh$. By definition, the pull-back of $V$ via the morphism $\FDeltah$ is isomorphic to a direct sum of representations $\Lh{n,\psi_i}$ with $n \in \NN$. Since $x$ belongs to the ideal $\Ikerh[\slt,\psi_i]$, the element $y$ than acts as zero on $V$. This being true for every $\psi$-integrable representation of $\FUh$, the image of $y$ by $\pih$ is zero. Hence, the map $\pih \circ \FDeltah$ can be factorised through $\Uh[\slt,\psi_i]$, i.e. there exists a $\Kh$-algebra morphism morphism $\Deltah$ which makes the diagram \eqref{eq_iotah} commutes. The unicity part of the proposition follows from the surjectivity of $\pih[\slt,\psi_i]$.
\qed \\

Thanks to the morphisms $\Deltah$ ($i \in I$), we endow the algebra $\Uh$ with a structure of diagram.

\begin{defi}
We denote by $\UUh$ the $\KK$-diagram of shape $\II$, defined by the following:
\begin{IEEEeqnarray*}{rCll}
\UUh (i) & \ := \ & \Uh[\slt,\psi_i] & \quad \quad (i \in I) \, , \\
\UUh (t_I) & \ := \ & \Uh \, , & \\
\UUh (\Delta_i) & \ := \ & \Deltah & \quad \quad (i \in I) \, .
\end{IEEEeqnarray*}
\end{defi}

We endow the $\Kh$-diagram $\UUh$ with the $\FUUc$-point $\Pih$, defined by the following:
\begin{IEEEeqnarray*}{rCl}
{(\Pih)}_i & \ := \ & \pih[\slt,\psi_i] \, \circ \, \iota_i  \quad \ (i \in I) \, , \\
{(\Pih)}_{t_I} & \ := \ & \pih \, \circ \, \iota \, ,
\end{IEEEeqnarray*}
where $\iota_i$ ($i \in I$) designates the inclusion map from $\FUc[\slt]$ to $\FUh[\slt]$ and \hbox{where $\iota$} designates the inclusion map from $\FUc$ to $\FUh$.


\subsection{$h$-admissible $I$-colourings}
\begin{defi}
Let $\psi \in \Coloh[I]$. The ${I \! \! \; }$-colouring $\psi$ is said $h$-admissible if the \hbox{colouring $\psi_i$} is $h$-admissible for every $i \in I$.
\end{defi}

The following definition will be justified a posteriori by theorem \ref{thm_Uhdned}.

\begin{defi}
Let $\psi$ be a $h$-admissible ${I \! \! \; }$-colouring of the crystal $\BB$. The \hbox{$\glieF$-pointed} \hbox{$\Kh$-algebra} $\Uh$ is called the generalised quantum enveloping algebra associated to $\psi$.
\end{defi}



\section{Classical and standard quantum realisations}



\subsection{Standard quantum algebras}

Let $k,k' \in \NN$ and let $i \in I$. The following elements are defined in the ring $\Kh$, where we recall that $q$ designates the formal power series $\exp(h)$:
\begin{IEEEeqnarray*}{rClCrCl}
q_i & \ := \ & q^{d_i} \, , & \quad \quad \quad & {[k]}_{q_i} & \ := \ & \frac{q_i^k - q_i^{-k}}{q_i - q_i^{-1}} \ , \\
{[k]}_{q_i} ! & \ := \ & \prod_{k'=0}^k \, {[k']}_{q_i} \ , & \quad \quad \quad & {k \brack k'}_{q_i} & \ := \ & \frac{[k+k']_{q_i} !}{[k]_{q_i} ! \ [k-k']_{q_i} !} \ \cdot
\end{IEEEeqnarray*}

The \emph{(formal) standard quantum algebra} $\Uh[\glie]$ is the $\Kh$-algebra topologically generated by the elements $X^-_i, X^+_i$ ($i \in I$), the $\ZZ$-module $Y$, subject to the non-deformable relations \eqref{eq_KM_nondeformable} of $\glie$ and to the following relations:
$$ \left\{ {\renewcommand{\arraystretch}{2.5} \begin{array}{lr}
\displaystyle{ \big[ X^+_i, X^-_i \big]} = \displaystyle{ \frac{K_i - K_i^{-1}}{q_i - q_i^{-1}}} \, , \quad \quad \text{where } \, K_i := \exp (d_i \: \! h \; \! {\check \alpha_i}) & (i \in I) \, , \\
\displaystyle{ \sum_{k+k' = 1 - C_{ij}} (-1)^k \, {1 - C_{ij} \brack k}_{q_i} \, {(X_i^{\pm})}^k \; \! X^{\pm}_j \; \!  {(X_i^{\pm})}^{k'}} = \, \displaystyle 0 & \quad \quad (i,j \in I, \ i \neq j) \, .
\end{array}} \right. $$



\subsection{Theorem}
Recall the definition of the classical and quantum colouring of $\BB$:
$$ (\psicl)^\pm (n,k) \ := \ k \, , \quad \quad (\psiq)^\pm (n,k) \ := \ {[k]}_q \quad \quad \quad (n,k \in \NNI, \ k \leq n) \, . $$
We denote by $\psiclI$ and $\psiqI$ the $h$-admissible $I$-colourings of $\BB$ with values in $\Kh$ defined by the following:
$$  {(\psiclI)}_i \ := \ \psicl \, , \quad \quad {(\psiqI)}_i \ := \ \psi_{q_i} \quad \quad \quad (i \in I) \, . $$

\begin{thm} \label{thm_exUh}
$\phantom{}$
\begin{itemize}
\item[1)] The GQE algebra $\Uh[\glie,\psiclI]$ and the constant formal deformation $\Uc \cch$ are isomorphic $\glieF$-pointed $\Kh$-algebras.
\item[2)] The GQE algebra $\Uh[\glie,\psiqI]$ and the standard quantum algebra $\Uh[\glie]$ are isomorphic $\glieF$-pointed $\Kh$-algebras.
\end{itemize}
\end{thm}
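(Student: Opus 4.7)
In each part the idea is to replicate, at the level of the full diagram, the argument of the rank-1 proposition \ref{prop_exUhone}: I build a surjective morphism of $\glieF$-pointed $\Kh$-algebras from $\Uh[\glie,\psi]$ onto the target algebra (respectively $\Uc \cch$ for part 1 and the standard quantum algebra $\Uh[\glie]$ for part 2), then a surjection in the opposite direction coming from the presentation of the target, and finally check that the two compositions agree with the identity on the topological generators $X_i^\pm$ and $Y$. The rank-1 realisations of proposition \ref{prop_exUhone} enter through the diagram morphisms $\Deltah : \Uh[\slt,\psi_i] \to \Uh[\glie,\psi]$, and the Serre relations are handled on $\psi$-integrable representations by the standard classical (resp.\ quantum) Kac-Moody Serre argument.

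For part 1, I first note that for every $V$ in $\Ointc$ the constant deformation $V \cch$ is a $\psiclI$-integrable representation of $\FUh$: the $Y$-action is diagonal with $\KK$-valued weights, and, since $\FDeltah = \FDeltac \cch$, the $\FDeltah$-restriction of $V \cch$ is the constant deformation of the $\slt_i$-restriction of $V$, which by classical integrability decomposes as a topological direct sum of $\Lc{n} \cch = \Lh{n,\psicl}$. Writing an arbitrary element of $\Ikerh[\glie,\psiclI]$ in the form $\sum_m x_m \: \! h^m$ with $x_m \in \FUc$, coefficient-wise vanishing of its action on every $V \cch$ forces each $x_m$ to act as zero on every $V \in \Ointc$; by proposition \ref{prop_UBperfect} one gets $\pic(x_m) = 0$, so $\pic \cch$ factors as a surjective morphism $\bar \phi : \Uh[\glie,\psiclI] \twoheadrightarrow \Uc \cch$ of $\glieF$-pointed $\Kh$-algebras. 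In the reverse direction, using the presentation of $\Uc$, I verify in $\Uh[\glie,\psiclI]$ that: the non-deformable Kac-Moody relations hold by $\glieF$-pointedness; the relation $[X_i^+, X_i^-] = \check \alpha_i$ is the $\Deltah$-image of the corresponding rank-1 identity, which is satisfied because $\Uh[\slt,\psicl] \cong \Uc[\slt] \cch$ by proposition \ref{prop_exUhone}; and the Serre relations hold because any $\psiclI$-integrable representation enjoys exactly the structural data (diagonal $\check \alpha_i$-action with integer weights, local nilpotence of $X_i^\pm$, the non-deformable brackets and the rank-1 bracket above) required by the classical Kac-Moody Serre computation. Comparing the two morphisms on the generators then shows that they are mutually inverse.

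Part 2 proceeds in perfect parallel, replacing $\Uc \cch$ by $\Uh[\glie]$ and $\psiclI$ by $\psiqI$: integrable representations of $\Uh[\glie]$ pull back via $\pih[\glie]$ to $\psiqI$-integrable representations of $\FUh$, the $\FDeltah$-restriction being a topological direct sum of the representations $\Lh{n,\psi_{q_i}}$ described in subsection \ref{subs_exclqUhone}; a quantum analogue of proposition \ref{prop_UBperfect} (integrable representations of $\Uh[\glie]$ separate elements) then yields a surjection $\Uh[\glie,\psiqI] \twoheadrightarrow \Uh[\glie]$. For the inverse, the non-deformable relations come from $\glieF$-pointedness, the quantum relation $[X_i^+, X_i^-] = (K_i - K_i^{-1})/(q_i - q_i^{-1})$ is the $\Deltah$-image of the same relation in $\Uh[\slt,\psi_{q_i}] \cong \Uh[\slt]$ (proposition \ref{prop_exUhone}, applied with parameter $q_i$), and the quantum Serre relations are verified on $\psiqI$-integrable representations via the quantum Kac-Moody (Lusztig) Serre argument. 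The principal technical obstacle in both parts is precisely the Serre verification: the classical and quantum arguments are themselves standard, but one has to confirm that they transfer without change to the $\Kh$-linear setting of $\psi$-integrable representations, which reduces to checking that $\psiclI$- and $\psiqI$-integrability provide exactly the hypotheses on which those proofs rely — a routine but essential point secured by the definition of $\psi$-integrability together with the rank-1 realisations.
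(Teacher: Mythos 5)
Your proof is correct and takes essentially the same approach as the paper: the paper's own proof is just the one-line remark that the argument is ``analog to the one given in rank 1 (proposition \ref{prop_exUhone}) and makes use of proposition \ref{prop_UBperfect}'', and your expansion — pullbacks of classical (resp.\ quantum) integrable representations are $\psiclI$- (resp.\ $\psiqI$-)integrable, separation of elements via proposition \ref{prop_UBperfect} and its quantum analogue, and mutually inverse surjections fixing $X_i^\pm$ and $Y$ — is precisely how that reference unwinds. In particular, replacing the rank-1 appeal to theorem \ref{thm_Uhonednd} by the two-sided surjection argument is the right move, since the general formal-deformation statement (theorem \ref{thm_Uhdned}) is conditional on the GQE conjecture and thus unavailable at this point of the paper.
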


\proof
The proof is analog to the one given in rank 1(see proposition \ref{prop_exUhone}) and makes use of proposition \ref{prop_UBperfect}.
\qed

\section{GQE algebras without Serre relations} \label{sect_GQEwithout}
\subsection{Definition}
In this subsection, $\psi$ designates a $h$-admissible $I$-colouring of $\BB$. Let $i \in I$, we denote by ${S^{\! \: \psi_i}_{\! \! \; h}} = {({S^{\! \: \psi_i}_{\! \! \; h,p}})}_{p \in \NN}$ the solution of the GQE equation $\GQEeqh[\psi_i,\psi_i]{-1}$: see proposition \ref{prop_GQEequnicity} and \hbox{theorem \ref{thm_GQEeq}}.

\vsp
\vsp

\begin{defi}
The GQE algebra without Serre relations associated to $\psi$, denoted by $\Uth$, is the $\Kh$-algebra topologically generated by $X^\pm_i$ ($i \in I$), the \hbox{$\ZZ$-module $Y$}, subject to the non-deformable relations \eqref{eq_KM_nondeformable} of $\glie$ and to the following relations:
\begin{equation} \label{eq_UthXii}
\big[ X_i^+,  X_i^- \big] \ = \ \sum_{a \in \NN} \, {(X_i^-)}^a \! \; {S^{\! \: \psi_i}_{\! \! \; h,a}} (\check \alpha_i) \! \; {(X_i^+)}^a \quad \quad \quad (i \in I) \, .
\end{equation}
\end{defi}

Note that the series in the relations \eqref{eq_UthXii} converge since the \hbox{sequences ${S^{\! \: \psi_i}_{\! \! \; h}}$} converge to zero for all $i \in I$. \\

We denote by $\pith$ the projection map from $\FUh$ to $\Uth$, and we endow the algebra $\Uth$ with a structure of $\glieF$-pointed $\Kh$-algebra thanks to $\pith$.

\begin{defi-prop} \label{defi-prop_vpith}
There exists a unique and surjective morphism of $\glieF$-pointed $\Kh$-algebra from $\Uth$ to $\Uh$. We denote it by $\vpith$.
\end{defi-prop}

\proof
Thanks to the morphism $\Deltah$ ($i \in I$) and in view of theorem \ref{thm_presentationUhone}, remark that the relations \eqref{eq_UthXii} hold in $\Uh$.
\qed \\

Proposition \ref{defi-prop_vpith} asserts that the following diagram commutes, and thus presents a hierarchy between the algebra $\FUh$, $\Uth$ and $\Uh$:
\begin{equation} \label{diag_rhoth}
\shorthandoff{;:!?} \xymatrix @C=5pc @R=1.5pc {
\FUh \ar@{->>}[r]^-{\pith} \ar@{->>}[dr]_-{\pih} & \, \Uth \ar@{->>}[d]^-{\, \vpith} \\
& \, \Uh \, .
}
\end{equation}





\subsection{Formal deformations}
It is possible to give different proofs of the theorem below. We have chosen here to use the diamond lemma. The reader is referred to \cite{Ber} where the lemma is originally proved and to \cite{Gui} for an extension in the $h$-adic case. We use the same terminology than the one employed in the two references.

\begin{thm} \label{thm_Uth}
Let $\psi = {(\psi_i)}_{i \in I}$ be a $h$-admissible $I$-colouring. The \hbox{$\glieF$-pointed} $\Kh$-algebra $\Uth$ is a formal deformation of $\Utc$.
\end{thm}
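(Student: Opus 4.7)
\proof[Proof proposal]
The plan is to apply the $h$-adic version of the diamond lemma (Bergman's lemma) as extended by Guichardet, using the natural PBW-type ordering that one employs to obtain a presentation of $\Utc$ via generators and relations. First I would fix a total order on the alphabet $X_i^-, \check \mu_j, X_i^+$ (with all negative Chevalley generators first, then a basis of $Y$, then all positive Chevalley generators), extended to the monomials by the degree-lexicographic order, and then set up the following reduction system, obtained from the defining relations of $\Uth$:
\begin{IEEEeqnarray*}{rCll}
\check \mu \: \! \check \mu' & \ \rightsquigarrow \ & \check \mu' \: \! \check \mu & \quad (\check \mu, \check \mu' \in Y, \ \check \mu > \check \mu') \, , \\
X_i^\pm \: \! \check \mu & \rightsquigarrow & \check \mu \: \! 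X_i^\pm \, \mp \, \langle \check \mu, \alpha_i \rangle \: \! X_i^\pm & \quad (i \in I, \ \check \mu \in Y) \, , \\
X_j^- \: \! X_i^+ & \rightsquigarrow & X_i^+ \: \! X_j^- & \quad (i,j \in I, \ i \neq j) \, , \\
X_i^- \: \! X_i^+ & \rightsquigarrow & X_i^+ \: \! X_i^- \, - \, \sum_{a \in \NN} (X_i^-)^a \: \! S^{\! \: \psi_i}_{\! \! \; h,a}(\check \alpha_i) \: \! (X_i^+)^a & \quad (i \in I) \, .
\end{IEEEeqnarray*}

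The main body of the proof then consists in checking that every ambiguity in this rewriting system is resolvable modulo the closed two-sided ideal generated by the defining relations. The overlap ambiguities to check are essentially between three of these reductions applied at neighbouring positions, namely words of the form $X_i^\pm \check \mu \check \mu'$, $\check \mu \check \mu' \check \mu''$, $X_j^- X_i^+ \check \mu$, $X_k^- X_j^- X_i^+$, and (the delicate one) $X_i^- X_i^+ \check \mu$, $X_j^- X_i^- X_i^+$, $X_i^- X_i^+ X_j^+$. The commutation ambiguities not involving the rank-one relation are immediate. For those involving the infinite sum $\sum_{a} (X_i^-)^a S^{\! \: \psi_i}_{\! \! \; h,a}(\check \alpha_i) (X_i^+)^a$, resolvability reduces, after applying the non-deformable commutation reductions to push the $\check \mu$ (resp. $X_j^\pm$) past the $X_i^\mp$, to the statement that the expression $\sum_{a} (X_i^-)^a S^{\! \: \psi_i}_{\! \! \; h,a}(\check \alpha_i) (X_i^+)^a$ commutes with $X_j^\pm$ ($j \neq i$) in the appropriate sense and transforms correctly under conjugation by elements of $\KK[Y]$, both of which follow from the non-deformable relations together with the fact that $S^{\! \: \psi_i}_{\! \! \; h,a}$ is a polynomial in $\check \alpha_i$. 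The convergence in the $h$-adic topology of the series involved is guaranteed because ${S^{\! \: \psi_i}_{\! \! \; h}}$ vanishes at infinity, which is precisely what allows Guichardet's topological diamond lemma to apply.

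Once confluence is established, the lemma yields a topological $\Kh$-basis of $\Uth$ consisting of the irreducible monomials, which are exactly the ordered monomials of the form $(X_{i_1}^-)^{a_1} \cdots (X_{i_r}^-)^{a_r} \cdot \check \mu_1^{b_1} \cdots \check \mu_s^{b_s} \cdot (X_{j_1}^+)^{c_1} \cdots (X_{j_t}^+)^{c_t}$ for a fixed ordering of $I$ and of a basis of $Y$. This immediately implies that $\Uth$ is topologically free as a $\Kh$-module. Specialising at $h=0$, the reduction $X_i^- X_i^+ \rightsquigarrow X_i^+ X_i^- - \sum_{a} (X_i^-)^a S^{\! \: \psi_i}_{\! \! \; h,a}(\check \alpha_i) (X_i^+)^a$ becomes, thanks to the deformation axiom applied to $\psi_i$ which gives $\hzero \pi^{\KK[u]\cch}(S^{\! \: \psi_i}_{\! \! \; h,0}) = u$ and $\hzero \pi^{\KK[u]\cch}(S^{\! \: \psi_i}_{\! \! \; h,a}) = 0$ for $a \geq 1$ modulo the ideal generated by $X_i^+ X_i^-$-free terms (cf. the computation in the proof of theorem \ref{thm_presentationUhone} and proposition \ref{prop_GQEeqclassic}), the classical relation $X_i^- X_i^+ \rightsquigarrow X_i^+ X_i^- - \check \alpha_i$, so that $\hzero{\Uth}$ admits the same presentation as $\Utc$, whence an isomorphism of $\glieF$-pointed $\KK$-algebras between the two.

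The main obstacle is the verification of confluence for the rank-one ambiguity $X_i^- X_i^+ X_j^+$ (and its mirror) and more importantly $X_i^- X_i^+ X_i^+$, where one must show that rewriting on the left via the deformed relation and then pushing the $X_i^+$ back through, or rewriting on the right and then reordering the resulting infinite sum, yield the same power series in the irreducible monomials; this is where the precise form of ${S^{\! \: \psi_i}_{\! \! \; h}}$ as a solution of the GQE equation $\GQEeqh[\psi_i,\psi_i]{-1}$ is used in an essential way, together with careful $h$-adic bookkeeping to rearrange the infinite sums.
\qed
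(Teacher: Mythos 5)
Your overall strategy---Guichardet's $h$-adic extension of Bergman's diamond lemma applied to a PBW-style reduction system extracted from the defining relations---is exactly the paper's, but your reduction system is fatally mis-oriented, so the lemma cannot be applied to it as written. Consider your rank-one rule $X_i^- X_i^+ \rightsquigarrow X_i^+ X_i^- - \sum_{a \in \NN} (X_i^-)^a \: \! S^{\! \: \psi_i}_{\! \! \; h,a}(\check \alpha_i) \: \! (X_i^+)^a$. Since $X_i^- f(\check \alpha_i) = f(\check \alpha_i + 2) \: \! X_i^-$, its $a=1$ term equals $- S^{\! \: \psi_i}_{\! \! \; h,1}(\check \alpha_i + 2) \, X_i^- X_i^+$, and by proposition \ref{prop_GQEeqclassic} (your $\psi_i$ satisfies the deformation axiom) $S^{\! \: \psi_i}_{\! \! \; h,1}$ specialises at $h=0$ to the constant $1$: your rule regenerates its own left-hand side with a coefficient that is a unit (congruent to $-1$ modulo $h$). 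Iterating it therefore produces a fresh unit-coefficient copy of $X_i^- X_i^+$ at every step, so the rewriting neither terminates nor converges $h$-adically, and no semigroup partial order can be compatible with the system---a word cannot be strictly greater than itself---so the hypotheses of the diamond lemma and of its topological version both fail. The orientation is also internally inconsistent: with your alphabet order ($X^-$'s, then $Y$, then $X^+$'s) and deg-lex extension, the right-hand sides of your third and fourth rules are strictly \emph{larger} than their left-hand sides, and your rule $X_i^\pm \check \mu \rightsquigarrow \check \mu \: \! X_i^\pm \mp \langle \check \mu, \alpha_i \rangle X_i^\pm$ pushes Cartan elements to the left of the $X_i^-$'s, so the irreducible monomials of your system have the form (sorted Cartan word)(word in the $X^+$'s)(word in the $X^-$'s), not the monomials $(X^-)^a \check \mu^b (X^+)^c$ you claim as a topological basis.

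The paper resolves all of this by orienting the rank-one rule the other way, $X_i^+ X_i^- \rightsquigarrow X_i^- X_i^+ + \sum_a P_a^{\: \! \psi_i,\psi_i}(\check \alpha_i) (X_i^-)^a (X_i^+)^a$ (coefficients obtained from $S^{\! \: \psi_i}_{\! \! \; h}$ by normal ordering), with irreducible words of the form (sorted Cartan)(word in $X^-$'s)(word in $X^+$'s) and the monomial order lexicographic in (number of inversions, length): then every monomial on the right-hand side is already irreducible and strictly smaller, convergence of the series is immediate from $S^{\! \: \psi_i}_{\! \! \; h} \to 0$, and the only ambiguities are the overlaps with the Cartan rules, namely $\check \mu'' \check \mu' \check \mu$, $X_i^\pm \check \mu' \check \mu$, $X_j^+ X_i^- \check \mu$ and $X_i^+ X_i^- \check \mu$, the last two resolved by $h$-adically convergent infinite chains of reductions. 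Note in particular that your "delicate" configurations $X_i^- X_i^+ X_j^+$ and $X_i^- X_i^+ X_i^+$ are not ambiguities at all (an overlap requires a suffix of one left-hand side to be a prefix of another, and no left-hand side begins with $X^+ X^+$), and that, contrary to your closing paragraph, the GQE-solution property of $S^{\! \: \psi_i}_{\! \! \; h}$ plays no role in confluence: the resolutions use only the non-deformable relations, the polynomiality in $\check \alpha_i$ of the coefficients, and their convergence to zero. The solution property enters exactly where your final step correctly places it---via proposition \ref{prop_GQEeqclassic} and the deformation axiom, to identify the specialisation at ${h \! = \! 0}$ with $\Utc$---and that part of your argument is sound.
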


\proof
Pick a family ${(\check \mu_k)}_{1 \leq k \leq r}$ with values in $Y$ which contains the linearly independent family ${(\check \alpha_i)}_{i \in I}$ and such that the induced family ${(\check \mu_k \otimes 1)}_{1 \leq k \leq r}$ is a basis of the $\KK$-vector space $Y \otimes_\ZZ \KK$. \\

Denote by $\Lambda$ the set $\bigcup_{i \in I} \{ X^-_i, X^+_i \} \cup {\{ \check \mu_k \; \! ; \, 1 \leq k \leq r \}}$ and denote by $\prec$ the partial order on $\Lambda$ defined by the following:
$$ \check \mu_k \, \prec \, X^-_i \, \prec \, X^+_j \quad (i,j \in I, \ 1 \leq k \leq r) \quad \text{ and } \quad \check \mu_k \, \prec \, \check \mu_{k'} \quad (1 \leq k < k' \leq r) \, . $$
Let $M = x_1 x_2 \cdots x_l$ be a monomial in $\langle \Lambda \rangle$ (with $l \in \NN$). The number $\mathrm{Inv} (M)$ of inversions of $M$ and the length $\mathrm{lg} (M)$ of $M$ are defined by the following:
$$ \mathrm{Inv} (M) \ := \ \# \Big\{ (s,t) \, ; \ s < t \text{ and } x_s \succ x_t \Big\} \, , \quad \ \mathrm{lg} (M) \ := \ l \, . $$

The partial order $\prec$ on $\Lambda$ induces a semigroup partial order on $\KK \langle \Lambda \rangle$ again denoted \hbox{by $\prec$} and defined by the following for $M, M'$ two monomials in $\langle \Lambda \rangle$:
$$ M \, \prec \, M' \quad \text{if} \quad \big( \mathrm{Inv} (M) , \, \mathrm{lg} (M) \big) \, <_{\: \! \text{lexicographic}} \, \big( \mathrm{Inv} (M') , \, \mathrm{lg} (M') \big) \, . $$

We define now a reduction system $\mathcal S$, compatible with the semigroup partial order $\prec$ previously defined and accordingly to the non-deformable relations \eqref{eq_KM_nondeformable} of $\glie$ and to the relations \eqref{eq_UthXii}:
\begin{IEEEeqnarray*}{CCl}
\mathcal S_1 & \ := \ & \Bigg\{ \big( \check \mu_{k'} \; \! \check \mu_k \, , \, \check \mu_k \; \! \check \mu_{k'} \big) \, ; \ 1 \leq k < k' \leq r \Bigg\} \, , \\
\mathcal S_2 & := & \Bigg\{ \big( X_i^{\pm} \: \! \check \mu_k \, , \, \check \mu_k \: \! X^{\pm}_i \; \! \mp \; \! \langle \check \mu_k, \alpha_i  \rangle \: \! X^{\pm}_i \big) \, ; \ i \in I, \ 1 \leq k \leq r \Bigg\} \, , \\
\mathcal S_3 & := & \Bigg\{ \big( X_j^+ \: \! X^-_i \, , \, X^-_i \: \! X_j^+ \big) \, ; \ i,j \in I, \ i \neq j \Bigg\} \, , \\
\mathcal S_4 & := & \Bigg\{ \big( X_i^+ \: \! X^-_i \, , \, X^-_i \: \! X_i^+ \, + \, \sum_{a \in \NN} \, P_a^{\: \! \psi_i,\psi_i} (\check \alpha_i) \left(  X_i^- \right)^a \left(  X_i^+ \right)^a \big) \, ; \ i \in I \Bigg\} \, , \\
&& \\
\mathcal S & := & \mathcal S_1 \, \sqcup \, \mathcal S_2 \, \sqcup \, \mathcal S_3  \, \sqcup \, \mathcal S_4 \, .
\end{IEEEeqnarray*}

Let us now prove that the ambiguities of $\mathcal S$ are resolvable. We treat both the inclusion ambiguities and the overlap ambiguities. The proofs of resolvability are given below in terms of commutative diagrams, where the arrows stand for the reduction maps of $\KK \langle \Lambda \rangle$ induced by $\mathcal S$. The symbol `$\infty$' above an arrow indicates an infinite composition of a reduction map, where the convergence holds with respect to the $h$-adic topology. Let $1 \leq k \leq r$ and let $i \in I$, we denote by $a_{ki}$ the number $\langle \check \mu_k, \alpha_i  \rangle$. \\

$\bullet$ Let $1 \leq k < k' < k'' \leq r$,

$$ \shorthandoff{;:!?} \xymatrix @C=2pc @R=1.8pc {
& \check \mu_{k''} \: \! \check \mu_{k'} \: \! \check \mu_k \ar[dl]_-{\mathcal S_1} \ar[dr]^-{\mathcal S_1} & \\
\check \mu_{k'} \: \! \check \mu_{k''} \: \! \check \mu_k \ar[d]_-{\mathcal S_1} && \check \mu_{k''} \: \! \check \mu_k \: \! \check \mu_{k'} \ar[d]^-{\mathcal S_1} \\
\check \mu_{k'} \: \! \check \mu_k \: \! \check \mu_{k''} \ar[dr]_-{\mathcal S_1} && \check \mu_k \: \! \check \mu_{k''} \: \! \check \mu_{k'} \ar[dl]^-{\mathcal S_1} \\
& \check \mu_k \: \! \check \mu_{k'} \: \! \check \mu_{k''} \, . & \\
&&
}
$$

$\bullet$ Let $i \in I$ and let $1 \leq k < k' \leq r$,
$$ \shorthandoff{;:!?} \xymatrix @!0 @C=7pc @R=1.5pc {
& X^{\pm}_i \: \! \check \mu_{k'} \: \! \check \mu_k \ar[ddl]_-{\mathcal S_2} \ar[dddr]^-{\mathcal S_1} & \\
&& \\
\check \mu_{k'} \: \! X^{\pm}_i \: \! \check \mu_k \mp \: \! a_{k'i} \: \! X^{\pm}_i \: \! \check \mu_k \ar[dd]_-{\mathcal S_2} && \\
&& X^{\pm}_i \: \! \check \mu_k \: \! \check \mu_{k'} \ar[dd]^-{\mathcal S_2} \\
\check \mu_{k'} \: \! \check \mu_k \: \! X^{\pm}_i \mp a_{ki} \: \! \check \mu_{k'} X^{\pm}_i \mp \: \! a_{k'i} \: \! X^{\pm}_i \: \! \check \mu_k \ar[dd]_-{\mathcal S_1} && \\
&& \check \mu_k \: \! X^{\pm}_i \: \! \check \mu_{k'} \mp \: \! a_{ki} \: \! X^{\pm}_i \: \! \check \mu_{k'} \ar[dddl]^-{\mathcal S_2} \\
\check \mu_k \: \! \check \mu_{k'} \: \! X^{\pm}_i \mp a_{ki} \: \! \check \mu_{k'} X^{\pm}_i \mp \: \! a_{k'i} \: \! X^{\pm}_i \: \! \check \mu_k \ar[ddr]_-{\mathcal S_2} && \\
&& \\
& \check \mu_k \: \! \check \mu_{k'} \: \! X^{\pm}_i \mp a_{ki} \: \! \check \mu_{k'} X^{\pm}_i \mp \: \! a_{k'i} \: \! \check \mu_k \; \! X^{\pm}_i \: \! + a_{ki} \: \! a_{k'i} \: \! X^{\pm}_i \, . & \\
&&
}
$$

$\bullet$ Let $i,j \in I$ such that $i \neq j$ and let $1 \leq k \leq r$,
$$ \shorthandoff{;:!?} \xymatrix @!0 @C=6.5pc @R=3pc {
& X_j^+ \: \! X_i^- \: \! \check \mu_k \ar[dl]_-{\mathcal S_3} \ar[dr]^-{\mathcal S_2} & \\
X_i^- \: \! X^+_j \: \! \check \mu_k \ar[d]_-{\mathcal S_2} && X_j^+ \: \! \check \mu_k \: \! X_i^- \, + \, a_{ki} \: \! X^+_j \: \! X^-_i \ar[d]^-{\mathcal S_2} \\
X^-_i \: \! \check \mu_k \: \! X^+_j \, - \, a_{kj} \: \! X^-_i \: \! X^+_j \ar[dr]_-{\mathcal S_2} && \check \mu_k \: \! X^+_j \: \! X^-_i \, + \, (a_{ki} - a_{kj}) \: \! X^+_j \: \! X_i^- \ar[dl]^-{\mathcal S_3} \\
& \check \mu_k \: \! X^-_i \: \! X^+_j \, + \, (a_{ki} - a_{kj}) \: \! X^-_i \: \! X_j^+ \, . & \\
&&
} $$

$\bullet$ Let $i \in I$ and let $1 \leq k \leq r$ such that $k(i) < k$, where $\check \mu_{k(i)} = \check \alpha_i$,
$$ \shorthandoff{;:!?} \xymatrix @!0 @C=8pc @R=2.5pc {
&& \check \mu_k \: \! X^-_i \: \! X_i^+ \, + \, \sum_{a \in \NN} \, \check \mu_k \, P_a^{\: \! \psi_i,\psi_i} (\check \alpha_i) \left(  X_i^- \right)^a \left(  X_i^+ \right)^a \ar[dddd]^-{\mathcal S_1^{\infty}} \\
& \check \mu_k \: \! X^+_i \: \! X^-_i \ar[ur]_-{\mathcal S_4} & \\
& X^+_i \: \! \check \mu_k \: \! X^-_i + a_{ki} \: \! X^+_i \: \! X^-_i \ar[u]^-{\mathcal S_2} & \\
X^+_i \: \! X_i^- \: \! \check \mu_k \ar[dddr]_-{\mathcal S_4} \ar[ur]_-{\mathcal S_2} && \\
&& \check \mu_k \: \! X^-_i \: \! X_i^+ \, + \, \sum_{a \in \NN} \, P_a^{\: \! \psi_i,\psi_i} (\check \alpha_i) \, \check \mu_k \: \! \left(  X_i^- \right)^a \left(  X_i^+ \right)^a \, . \\
&& \\
& X^-_i \: \! X_i^+ \: \! \check \mu_k \, + \, \sum_{a \in \NN} \, P_a^{\: \! \psi_i,\psi_i} (\check \alpha_i) \left(  X_i^- \right)^a \left(  X_i^+ \right)^a \check \mu_k \ar[uur]_-{\mathcal S_2^{\infty}} & \\
&&
}$$

$\bullet$ Let $i \in I$ and let $1 \leq k \leq r$ such that $k(i) \geq k$, where $\check \mu_{k(i)} = \check \alpha_i$,
$$ \shorthandoff{;:!?} \xymatrix @!0 @C=8pc @R=3pc {
&& \check \mu_k \: \! X^+_i \: \! X^-_i \ar[dd]^-{\mathcal S_4} \\
& X^+_i \: \! \check \mu_k \: \! X^-_i + a_{ki} \: \! X^+_i \: \! X^-_i \ar[ur]^-{\mathcal S_2} & \\
X^+_i \: \! X_i^- \: \! \check \mu_k \ar[ur]_-{\mathcal S_2} \ar[dr]^-{\mathcal S_4} && \check \mu_k \: \! X^-_i \: \! X_i^+ \, + \, \sum_{a \in \NN} \ \check \mu_k \, P_a^{\: \! \psi_i,\psi_i} (\check \alpha_i) \left(  X_i^- \right)^a \left(  X_i^+ \right)^a \, . \\
& X^-_i \: \! X_i^+ \: \! \check \mu_k \, + \, \sum_{a \in \NN} \, P_a^{\: \! \psi_i,\psi_i} (\check \alpha_i) \left(  X_i^- \right)^a \left(  X_i^+ \right)^a \check \mu_k \ar[dr]_-{\mathcal S_2^{\infty}} & \\
&& \check \mu_k \: \! X^-_i \: \! X_i^+ \, + \, \sum_{a \in \NN} \, P_a^{\: \! \psi_i,\psi_i} (\check \alpha_i) \, \check \mu_k \, \left(  X_i^- \right)^a \left(  X_i^+ \right)^a \ar@<-30pt>[uu]_-{\mathcal S_1^{\infty}}
}$$
\qed

\subsection{The GQE Serre relations}
In this subsection, $\psi$ designates a $h$-admissible $I$-colouring of $\BB$. Let $i \in I$, we denote by ${\bar S^{\! \: \psi_i}_{\! \! \; h}} = {({\bar S^{\! \: \psi_i}_{\! \! \; h,p}})}_{p \in \NN}$ the solution of the GQE equation $\GQEeqh[\psi_i,\psicl]{0}$: see proposition \ref{prop_GQEequnicity} and theorem \ref{thm_GQEeq}. We recall that $\psicl$ designates the classical colouring of $\BB$ (see subsection \ref{subs_exclqUhone}).

\begin{defi-prop} \label{defi-prop_GQESerre}
The following relations are called the GQE Serre relations associated to $\psi$, they are satisfied in $\Uh$:
\begin{equation*} \label{eq_GQESerre} \left\{ {\renewcommand{\arraystretch}{2}\begin{array}{l}
\sum_{k+k' = 1 - C_{ij}} {(-1)}^k \, {1 - C_{ij} \choose k} \, \big( {}^\psi \! X_i^{\pm} \big)^k \! \; X^{\pm}_j \! \; \big( {}^\psi \! X_i^{\pm} \big)^{k'} \ = \ 0 \quad \quad (i,j \in I, \ i \neq j) \, , \\
\text{where } \ {}^\psi \! X_i^{\pm} \ := \ \sum_{a \in \NN} \, \big( X_i^\mp \big)^a \! \; {\bar S^{\! \: \psi_i}_{\! \! \; h,a}} (\pm \check \alpha_i) \! \; \big( X^\pm_i \big)^{a+1}
\end{array}} \right.
\end{equation*}
\end{defi-prop}

Note that the series $\sum_{a \in \NN} {(X^-)}^a {\bar S^{\! \: \psi_i}_{\! \! \; h,a}} (H) {(X^+)}^{a+1}$ ($i \in I$) converge in $\Uh$ since the \hbox{sequences ${\bar S^{\! \: \psi_i}_{\! \! \; h}}$} converge to zero for all $i \in I$.

\proof
Fix $i \neq j$ in $I$ and let $V$ be an integrable representation of $\Uh$. Let $a \in \ZZ$, we denote by $\Ugeneric{h,a}{\slt,\psi_i}$ the $\Kh$-submodule \hbox{of $\Uh[\slt,\psi_i]$} whose elements $x$ satisfy $[\check \alpha_i,x] = 2ax$. Remark that $xy$ belong \hbox{to $\Ugeneric{h,a+b}{\slt,\psi_i}$} for every $x \in \Ugeneric{h,a}{\slt,\psi_i}$, for every $y \in \Ugeneric{h,b}{\slt,\psi_i}$ and for \hbox{every $a,b \in \ZZ$}. Recall theorem \ref{thm_UhoneHtrivial} and denote by $\gamma$ the $\KK$-algebra morphism from $\Uc[\slt]$ to $\Uh[\slt,\psi_i]$ defined by the following:
\begin{IEEEeqnarray*}{lCl}
\gamma (X^-) & \ := \ & X^- \, , \\
\gamma (H) \ & := & H \, , \\
\gamma (X^+) & := & \sum_{a \in \NN} \, \big( X_i^\mp \big)^a \! \; {\bar S^{\! \: \psi_i}_{\! \! \; h,a}} (\pm \check \alpha_i) \! \; \big( X^\pm_i \big)^{a+1} \, .
\end{IEEEeqnarray*}
Since $\gamma (X^+)$ lies in $\Ugeneric{h,2}{\slt,\psi}$, the element $\gamma((X^+)^k)$ belongs to $\Ugeneric{h,2k}{\slt,\psi}$ for \hbox{every $k \in \NN$}. Hence, the pull-back of $V$ via $\Deltah \circ \gamma$ is an integrable representation of $\Uc[\slt]$, we denote it by $V_i$. Thanks to proposition \ref{prop_KMintone}, we know that the representation $V_i$ is thus isomorphic to a direct sum of representations $\Lc{n}$ with $n \in \NN$. Regard the $\KK$-vector space $\mathrm{End}_\KK(V_i)$ as a representation of $\Uc[\slt]$, where the actions \hbox{of $X^\pm$} and $H$ on a vector $f$ are given by $[X^\pm,f]$ and $[H,f]$. Thanks to \hbox{proposition \ref{prop_KMintone}} again, on can identify the representation $\mathrm{End}_\KK(V_i)$ with a product of \hbox{representations $\Lc{n}$} with $n \in \NN$. The element $X_j^+$, when viewed as a vector in $\mathrm{End}_\KK(V_i)$, lies in a subproduct of representations $\Lc{C_{ij}}$, since the following relations hold in the algebra $\FUh$:
$$ [\check \alpha_i, X^+_j] \ = \ C_{ij} \; \! X^+_j \, , \quad \quad [X^-_i, X^+_j] \ = \ 0 \, . $$
Therefore, the element $(X^+)^{1 - C_{ij}}$ acts as zero on $X^+_j$, when viewed as a vector in $\mathrm{End}_\KK(V_i)$. This being true for every integrable representation $V$ \hbox{of $\Uh$}, remark \ref{rem_Uhperfect} implies that the following relation holds in $\Uh$:
$$ \sum_{s+t = 1 - C_{ij}} (-1)^s \, {1 - C_{ij} \choose s} \, \big( {}^\psi \! X_i^+ \big)^s \; \! X^+_j \; \!  \big( {}^\psi \! X_i^+ \big)^t \ = \ 0 \, . $$
By considering the involutive $\Kh$-algebra automorphism of $\Uh$ which sends $X^\pm_i$ to $-X^\mp_i$ and $\check \mu$ to $- \check \mu$ for every $i \in I$ and $\check \mu \in Y$, one concludes.
\qed

\vsp
\vsp

\begin{rem}
The GQE Serre relations associated to $\psi$ are formal deformations of the classical Serre relations, i.e. the following equality holds in $\hzero{\Uh}$:
\begin{IEEEeqnarray*}{rl}
\hzero \pi^{\Uh} \Bigg( \, & \sum_{k+k' = 1 - C_{ij}} (-1)^s \, {1 - C_{ij} \choose k} \, \big( {}^\psi \! X_i^{\pm} \big)^k X^{\pm}_j \big( {}^\psi \! X_i^{\pm} \big)^{k'} \Bigg) \\
= \ & \sum_{k+k' = 1 - C_{ij}} (-1)^k \, {1 - C_{ij} \choose k} \, {(X_i^{\pm})}^k \; \! X^{\pm}_j \; \!  {(X_i^{\pm})}^{k'} \, , \quad \quad \forall \, i, j \in I, \ i \neq j \, .
\end{IEEEeqnarray*}
\end{rem}

\section{The GQE conjecture and its consequences}
\subsection{The GQE conjecture}
\begin{conj}[GQE conjecture] \label{conj_GQE}
Let $\psi$ be a $h$-admissible $I$-colouring. Every representation in the category $\Ointc$ admits a $\psi$-integrable \hbox{formal deformation} along $\FUh$.
\end{conj}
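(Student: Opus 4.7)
My plan is to reduce to the case of an irreducible highest weight representation and then glue the rank one deformations provided by theorem \ref{thm_Uhintegrable} into a global $\psi$-integrable deformation.

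First, since every $V$ in $\Ointc$ is isomorphic to a direct sum of irreducibles $\Lc{\lambda}$ with $\lambda \in \Xdom$ (subsection \ref{subs_KMresults}) and since the class of $\psi$-integrable representations is closed under topologically direct sums (the axioms of definition \ref{defi_phiint} are stable under such sums), the problem reduces to constructing a $\psi$-integrable formal deformation of $\Lc{\lambda}$ along $\FUh$ for every $\lambda \in \Xdom$. Fix such a $\lambda$.

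For each $i \in I$, the pull-back of $\Lc{\lambda}$ via the simple $\slt$-direction $\Deltac$ is an integrable representation of $\Uc[\slt]$ and so decomposes, by proposition \ref{prop_KMintone}, as a direct sum $\bigoplus_{n \in \NN} M^{(n)}_i \otimes_\KK \Lc{n}$ of copies of the irreducibles $\Lc{n}$. Theorem \ref{thm_Uhintegrable} then furnishes a canonical (unique up to isomorphism) formal deformation of this $\slt_i$-action along $\Uh[\slt,\psi_i]$, obtained by replacing each $\Lc{n}$ by $\Lh{n,\psi_i}$. The plan is to show that these $|I|$ rank one deformations admit a simultaneous lift to a representation of $\FUh$ on the topologically free $\Kh$-module $\Lc{\lambda} \cch$. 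Since the action of $Y$ is not to be deformed (the $\psi$-integrability axioms keep the weight grading intact), I would proceed weight space by weight space, starting from the highest weight vector $v_\lambda$ and propagating the action of the Chevalley generators $X_i^{\pm}$ down the weight lattice using the rank one recipes.

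The main obstacle will be the compatibility of these local prescriptions with the non-deformable relations of $\glie$, in particular the cross relations $[X_i^+, X_j^-] = 0$ for $i \neq j$: the actions of $X_i^\pm$ and $X_j^\pm$ have to be simultaneously adjusted on weight spaces lying in the intersection of several $\slt_i$-strings. My strategy is to handle this obstruction inductively in the $h$-adic filtration, solving at each step an affine cohomological problem of the type controlled by the relative Hochschild complex of proposition \ref{prop_Hochschild} with $A_0 = \FUc$ and $B_0 = \KK[Y]$. The relevant vanishing should come from the fact that $B_0$-trivial self-extensions of $\Lc{\lambda}$ split as $\FUc$-modules, which in turn reflects the complete reducibility of the integrable category and the explicit description of $\slt_i$-morphism spaces between irreducible weight representations.

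A complementary and, I expect, technically cleaner route is to invoke the GQE algebra without Serre relations $\Uth$: by theorem \ref{thm_Uth} it is already a formal deformation of $\Utc$, and lifting the $\Utc$-module structure on $\Lc{\lambda}$ to a $\Uth$-module structure on $\Lc{\lambda} \cch$ is a problem of deforming a module along a flat algebra deformation, which again falls within the reach of proposition \ref{prop_Hochschild}. Once such a lift exists, the restrictions to the subalgebras $\Uh[\slt,\psi_i] \hookrightarrow \Uth$ automatically give $\psi_i$-integrable representations by the uniqueness clause of theorem \ref{thm_Uhintegrable}, yielding the required $\psi$-integrability. The remaining and genuinely hard point is the cohomological vanishing that guarantees existence of the lift; this is where I anticipate the bulk of the technical work, and where an induction on the highest weight $\lambda$ together with the presentation of $\Lc{\lambda}$ recalled in subsection \ref{subs_KMresults} should reduce the problem to a finite computation inside each $\slt_i$-copy.
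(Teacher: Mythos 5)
You should first be aware that the paper contains no proof of this statement to compare against: it is precisely the \emph{GQE conjecture} (conjecture \ref{conj_GQE}), stated as open, with the author only announcing that a proof is in preparation and remarking that the Serre-relation-free algebras $\Uth$ of section \ref{sect_GQEwithout} will play a central role there. Your second route (deform the $\Utc$-module structure on $\Lc{\lambda}$ along the formal deformation $\Uth$ of theorem \ref{thm_Uth}, then restrict along the maps $\Uh[\slt,\psi_i] \to \Uth$) therefore points in exactly the direction the author signals, and your reduction to irreducibles $\Lc{\lambda}$ via complete reducibility of $\Ointc$ and stability of $\psi$-integrability under topologically direct sums is sound. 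But what you have written is a plan, not a proof, and the plan has a concrete gap at its load-bearing step.

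The gap is existence. Proposition \ref{prop_Hochschild}, which you invoke twice, is purely a rigidity statement: assuming every $B_0$-trivial self-extension of $V_0$ is $A_0$-trivial (an $H^1$-type hypothesis), it shows that two deformations \emph{already given} on the same underlying $\Kh$-module are isomorphic. It cannot produce the lift of the action of $\Utc$ on $\Lc{\lambda}$ to $\Uth$; the obstruction to existence lives one cohomological degree higher, in a relative $H^2$, for which the paper supplies no vanishing and your sketch supplies no substitute. Worse, the vanishing input you do propose is false in the generality you need: complete reducibility (subsection \ref{subs_KMresults}, proposition \ref{prop_KMintone}) holds for integrable representations of $\glie$ and $\slt$, not for modules over $\FUc$ or $\Utc$. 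Since $\glieF$ lacks both the Serre relations and the relations $[X_i^+,X_i^-]=\check\alpha_i$, the pull-back of $\Lc{\lambda}$ admits abundant $\KK[Y]$-split self-extensions that do not split over $\FUc$ --- already in rank one, one may perturb the action of $X^+$ string by string by an arbitrary weight-$2$ endomorphism, with no relation to obstruct it --- so the hypothesis of proposition \ref{prop_Hochschild} fails for $(A_0,B_0)=(\FUc,\KK[Y])$, and even your uniqueness/gluing steps are unsupported. A further unproven step is your claim that a $\Uth$-deformation of $\Lc{\lambda}$ restricts to $\psi_i$-integrable representations: theorem \ref{thm_Uhintegrable}(6) only applies once the restriction is known to be an integrable, topologically free deformation, which is part of what must be established, not a consequence of uniqueness. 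In short, you have correctly located where the difficulty sits (the simultaneous, relation-compatible deformation of all Chevalley actions), but the argument that would close it is exactly the missing content of the conjecture, which remains open in the paper.
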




In the following of this section, \textbf{we assume that the statement of the GQE conjecture holds}. The author believes that the GQE conjecture is true, a proof of it is currently in preparation and will be published in a forthcoming paper. Note that the GQE algebras without Serre relations introduced in \hbox{section \ref{sect_GQEwithout}} will play a central role.

\subsection{Formal deformations}
\begin{thm} \label{thm_Uhdned}
Let $\psi \in \Coloh[I]$. The following two assertions are equivalent.
\begin{enumerate}
\item[(i)] The $I$-colouring $\psi$ is $h$-admissible.
\item[(ii)] The $\glieF$-pointed $\Kh$-diagram $\UUh$ is a formal deformation of $\UUc$.
\end{enumerate}
\end{thm}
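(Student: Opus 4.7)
The implication (ii) $\Rightarrow$ (i) is essentially immediate from the rank-one case. If $\UUh$ is a formal deformation of the $\glieF$-pointed $\KK$-diagram $\UUc$, then evaluating at each Dynkin vertex $i \in I$, the $\Kh$-algebra $\UUh(i) = \Uh[\slt,\psi_i]$ is topologically free and its specialisation at ${h \! = \! 0}$ is isomorphic to $\UUc(i) = \Uc[\slt]$ as $\sltF$-pointed $\KK$-algebra. Theorem \ref{thm_Uhonednd} then forces each $\psi_i$ to be $h$-admissible, hence so is $\psi$.

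For the converse (i) $\Rightarrow$ (ii), assume $\psi$ is $h$-admissible. Topological freeness is already known: at each vertex $i$ by Lemma \ref{lem_Uhonehfree}, and at $t_I$ by Proposition \ref{prop_Uhhfree}. The plan is then to produce, at every vertex of the shape $\II$, a $\glieF$-pointed $\KK$-algebra isomorphism between the specialisation at ${h \! = \! 0}$ and the corresponding vertex of $\UUc$, and to check that these isomorphisms commute with the arrows. At each vertex $i \in I$, Theorem \ref{thm_Uhonednd} directly gives the required isomorphism $\hzero{\Uh[\slt,\psi_i]} \simeq \Uc[\slt]$ of $\sltF$-pointed $\KK$-algebras. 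Compatibility with the arrows $\Deltah$ is built into the defining diagram \eqref{eq_iotah}: specialising at ${h \! = \! 0}$, one recovers $\Deltac$ on the nose thanks to the surjectivity of $\pih[\slt,\psi_i]$.

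The decisive point is therefore the vertex $t_I$: one must establish that $\hzero{\Uh}$ is isomorphic to $\Uc$ as $\glieF$-pointed $\KK$-algebras. The plan is to exhibit a morphism in each direction. Using the factorisation \eqref{diag_rhoth} and Theorem \ref{thm_Uth}, the projection $\vpith \: \! : \Uth \twoheadrightarrow \Uh$ specialises to a surjective $\glieF$-pointed $\KK$-algebra morphism $\Utc \twoheadrightarrow \hzero{\Uh}$. By Proposition \ref{defi-prop_GQESerre}, the GQE Serre relations hold in $\Uh$; specialising at ${h \! = \! 0}$, they become the classical Serre relations (since ${\bar S^{\psi_i}_{h,a}}$ specialises to the identity at $a \! = \! 0$ and vanishes at $h \! = \! 0$ for $a \geq 1$, so ${}^\psi \! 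X^\pm_i$ specialises to $X^\pm_i$). Hence the above morphism factors through $\vpitc$, producing a surjective $\glieF$-pointed $\KK$-algebra morphism $\phi : \Uc \twoheadrightarrow \hzero{\Uh}$.

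The hard step will be the injectivity of $\phi$, and this is the point at which the GQE conjecture is indispensable. Let $x_0$ be a nonzero element of $\Uc$. By Proposition \ref{prop_UBperfect}, there exists $V_0$ in $\Ointc$ on which $x_0$ acts non-trivially. Invoking the GQE conjecture, choose a $\psi$-integrable formal deformation ${V_{\! \! \; h}}$ of $V_0$ along $\FUh$. Because ${V_{\! \! \; h}}$ is $\psi$-integrable, the $\FUh$-action factors through $\Uh$ (by definition of $\Ikerh$), yielding a $\Uh$-action whose specialisation at ${h \! = \! 0}$ recovers the $\Uc$-action on $V_0$ via $\phi$. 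Since $x_0$ acts non-trivially on $V_0$, the class $\phi(x_0)$ is nonzero in $\hzero{\Uh}$, whence $\phi$ is injective. Putting everything together gives the desired identification $\hzero{\UUh} \simeq \UUc$, so $\UUh$ is a formal deformation of $\UUc$. The main obstacle, as anticipated, is the injectivity argument: without a sufficient supply of $\psi$-integrable deformations of classical integrable representations, one cannot separate points in $\hzero{\Uh}$; the GQE conjecture is precisely what delivers that supply.
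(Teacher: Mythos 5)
Your proposal is correct and follows essentially the same route as the paper: the implication (ii) $\Rightarrow$ (i) by evaluating the diagram at the Dynkin vertices and invoking theorem \ref{thm_Uhonednd}, and (i) $\Rightarrow$ (ii) by identifying $\hzero{\Uh}$ with $\Uc$ through the GQE conjecture combined with proposition \ref{prop_UBperfect}, the rank-one theorem handling the remaining vertices. The only difference is organisational: where the paper tersely asserts the existence of the morphism $g : \Uc \to \hzero{\Uh}$ inverse to its $f$, you construct this surjection explicitly via theorem \ref{thm_Uth} and the specialisation at ${h \! = \! 0}$ of the GQE Serre relations (proposition \ref{defi-prop_GQESerre}), and then prove its injectivity directly from the GQE conjecture rather than exhibiting two mutually inverse morphisms — a harmless (indeed slightly more complete) rearrangement of the same argument.
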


\proof
The implication $ii) \Rightarrow i)$ is a direct consequence of theorem \ref{thm_Uhonednd}. Let us prove the converse. Suppose that assertion (i) holds. \hbox{Let $x_h$} be an element in the ideal $\Ikerh$ \hbox{of $\FUh$} and recall that we identify the specialisation at ${h \! = \! 0}$ of $\FUh$ with $\FUc$. Denote by $x_0$ the image \hbox{in $\Uc$} of $\hzero \pi^{\FUh} (x_h)$ by $\pic$. In view of the GQE \hbox{conjecture \ref{conj_GQE}}, the element $x_0$ acts as zero on all the representations of $\Uc$ in $\Ointc$. Hence, thanks to proposition \ref{prop_UBperfect}, the element $x_0$ is zero. Since the functor $\hzero \bullet$ from the category of $\Kh$-modules to the category of $\KK$-vector spaces is right-exact, we have proved that there exists a morphism $f$ of $\glieF$-pointed $\KK$-algebra from $\hzero{\Uh}$ to $\Uc$ which makes the following diagram commute:
$$ \shorthandoff{;:!?} \xymatrix @C=7pc @R=3pc {
\FUc \ar@{->>}[dr]^-{\pic} \ar@{->>}[d]_-{\hzero{\pih}} & \\
\hzero{\Uh} \ar[r]_-f & \Uc
} $$
We thus have proved that there exists a morphism $g$ of $\glieF$-pointed $\KK$-algebra from $\Uc$ to $\hzero{\Uh}$ which makes the following diagram commute:
$$ \shorthandoff{;:!?} \xymatrix @C=7pc @R=3pc {
\FUc \ar@{->>}[dr]^-{\pic} \ar@{->>}[d]_-{\hzero{\pih}} & \\
\hzero{\Uh} & \Uc \ar[l]^-g
} $$
Since the maps $\hzero{\pih}$ and $\pic$ are surjective, the morphisms of $\glieF$-pointed $\KK$-algebra $g \circ f$ and $f \circ g$ are equal to identity maps. To sum up, we have proved that $\Uh$ is a formal deformation of the $\glieF$-pointed \hbox{$\KK$-algebra $\Uc$}. One concludes thanks to theorem \ref{thm_Uhonednd}.
\qed

\subsection{Presentation by generators and relations}
In this subsection, $\psi$ designates a $h$-admissible $I$-colouring of $\BB$. Let $i \in I$, we denote by ${\bar S^{\! \: \psi_i}_{\! \! \; h}} = {({\bar S^{\! \: \psi_i}_{\! \! \; h,p}})}_{p \in \NN}$ the solution of the GQE equation $\GQEeqh[\psi_i,\psicl]{0}$: see proposition \ref{prop_GQEequnicity} and theorem \ref{thm_GQEeq}. We recall that $\psicl$ designates the classical colouring of $\BB$ (see subsection \ref{subs_exclqUhone}). \\

We denote by $\Uph{\glie,\psi}$ the \hbox{$\Kh$-algebra} topologically generated by $X^\pm_i$ ($i \in I$), the $\ZZ$-module $Y$, subject to the non-deformable relations \eqref{eq_KM_nondeformable} of $\glie$ and to the GQE Serre relations. The projection map from $\Uh$ to $\Uph{\glie,\psi}$ endows the latter with a structure of $\glieF$-pointed $\Kh$-algebra.

\begin{thm} \label{thm_presentationUh}
The GQE algebra $\Uh$ is isomorphic to the \hbox{$\glieF$-pointed} $\Kh$-algebra $\Uph{\glie,\psi}$.
\end{thm}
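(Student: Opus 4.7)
The plan is to adapt the rank-one argument of theorem \ref{thm_presentationUhone} to the general Kac-Moody setting, using the GQE Serre relations and the formal-deformation property established in theorem \ref{thm_Uhdned}.

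First I would construct a morphism $f_h : \Uph{\glie,\psi} \to \Uh$ of $\glieF$-pointed $\Kh$-algebras by invoking the universal property of $\Uph{\glie,\psi}$. The non-deformable relations hold tautologically in $\Uh$ because $\Uh$ is $\glieF$-pointed; the GQE Serre relations hold in $\Uh$ by definition-proposition \ref{defi-prop_GQESerre}. Since the $\Kh$-module $\Uh$ is separated and complete (it is topologically free by proposition \ref{prop_Uhhfree}), the assignment $X_i^{\pm} \mapsto X_i^{\pm}, \, \check\mu \mapsto \check\mu$ extends in a unique way to a continuous $\Kh$-algebra morphism from the topologically generated algebra $\Uph{\glie,\psi}$.

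Next I would analyse the specialisation at $h \! = \! 0$. Using the classical limits of the solutions $\bar S^{\psi_i}_h$ of the GQE equations $\GQEeqh[\psi_i,\psicl]{0}$, which satisfy $\hzero\pi^{\KK[u]\cch}(\bar S^{\psi_i}_{h,0}) = 0$, $\hzero\pi^{\KK[u]\cch}(\bar S^{\psi_i}_{h,1}) = 1$ and $\hzero\pi^{\KK[u]\cch}(\bar S^{\psi_i}_{h,p}) = 0$ for $p \geq 2$ (by proposition \ref{prop_GQEeqclassic}, since each $\psi_i$ satisfies the deformation axiom), the element $\hzero\pi^{\Uph{\glie,\psi}}({}^\psi X_i^{\pm})$ identifies in $\hzero{\Uph{\glie,\psi}}$ with $X_i^{\pm}$. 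Consequently the GQE Serre relations specialise precisely to the classical Serre relations \eqref{eq_KM_deformable}, so the specialisation $\hzero{\Uph{\glie,\psi}}$ is presented by the non-deformable relations together with the classical Serre relations. This gives a canonical surjective morphism of $\glieF$-pointed $\KK$-algebras $\Uc \twoheadrightarrow \hzero{\Uph{\glie,\psi}}$, while $\hzero{f_h}$ provides a morphism in the opposite direction $\hzero{\Uph{\glie,\psi}} \to \hzero{\Uh}$ which, composed with the canonical isomorphism $\hzero{\Uh} \simeq \Uc$ (supplied by theorem \ref{thm_Uhdned}), shows that both maps are mutually inverse. Hence $\hzero{f_h}$ is an isomorphism.

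Finally, I would conclude as in the rank-one case: the $\Kh$-module $\Uph{\glie,\psi}$ is complete and separated (as a Hausdorff quotient of a completed free algebra), and the $\Kh$-module $\Uh$ is separated and torsion-free, being topologically free. Bijectivity modulo $h$ combined with these properties forces $f_h$ to be an isomorphism of $\glieF$-pointed $\Kh$-algebras.

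The main technical obstacle I anticipate lies in the second step, namely in pinning down the specialisation $\hzero{\Uph{\glie,\psi}}$. One must carefully verify that no extra relations inadvertently survive in the classical limit of the GQE Serre relations and, conversely, that all three groups of classical Serre-type relations (the non-deformable ones, the $\slt$-directional relations $[X_i^+,X_i^-] = \check\alpha_i$, and the genuine Serre relations for $i \neq j$) are indeed obtained. The axioms of $h$-admissibility are tailored precisely so that the sequences $\bar S^{\psi_i}_h$ and their classical evaluations produce exactly the Serre presentation of $\Uc$, and this is where the full force of theorem \ref{thm_GQEeq} and the deformation axiom must be deployed.
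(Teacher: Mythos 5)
Your skeleton — build $f_h$ from relations known to hold in $\Uh$, compute the specialisation at ${h \! = \! 0}$ via the classical solutions of proposition \ref{prop_GQEeqclassic}, invoke theorem \ref{thm_Uhdned} (where the section's standing GQE-conjecture assumption enters), and conclude by completeness, separatedness and torsion-freeness — is exactly the paper's intended adaptation of theorem \ref{thm_presentationUhone}. But there is a genuine gap, and it sits precisely at the ingredient the paper's proof cites and you never use: theorem \ref{thm_UhoneHtrivial}. The presentation of $\Uph{\glie,\psi}$ cannot consist of the non-deformable relations and the GQE Serre relations alone (the body of Part I words it this way, but the introduction's version of the theorem and the rank-one case show this is an oversight): the GQE Serre relations only concern pairs $i \neq j$, so in rank one they are empty and the presented algebra would be all of $\FUh[\sltF]$, which is certainly not isomorphic to $\Uh[\slt,\psi]$ — indeed theorem \ref{thm_presentationUhone} hinges on the rank-one relation $X^+ X^- = \sum_{a} (X^-)^a \, S^{\psi}_{h,a}(H) \, (X^+)^a$. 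In general the presentation must include, for each $i \in I$, a deformed rank-one relation — in the trivialised form $[\, {}^\psi \! X_i^+, X_i^- \, ] = \check \alpha_i$, or equivalently the relations \eqref{eq_UthXii} of $\Uth$ — and one must \emph{verify these hold in} $\Uh$. That verification is exactly what theorem \ref{thm_UhoneHtrivial} supplies: the trivialisation $\gamma_h$ fixes $X^-$ and $H$ and sends $X^+$ to ${}^{\psi_i} \! X^+$, so the relation $[X^+,X^-] = H$ of $\Uh[\slt,\psicl] \simeq \Uc[\slt] \cch$ transports to $[\, {}^{\psi_i} \! X^+, X^- \, ] = H$ in $\Uh[\slt,\psi_i]$, and the simple $\slt$-directions $\Deltah$ then plant $[\, {}^\psi \! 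X_i^+, X_i^- \, ] = \check \alpha_i$ in $\Uh$.

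The omission is not cosmetic: it breaks your second step. If $\hzero{\Uph{\glie,\psi}}$ were presented only by the non-deformable and classical Serre relations, the surjection you assert, $\Uc \twoheadrightarrow \hzero{\Uph{\glie,\psi}}$, would not exist, because $[X_i^+, X_i^-] = \check \alpha_i$ holds in $\Uc$ but is not a consequence of those relations (consider the representation where every $X_i^\pm$ acts as zero and $Y$ acts by a weight $\lambda$ with $\langle \check \alpha_i, \lambda \rangle \neq 0$: all your relations hold there, yet $[X_i^+,X_i^-] - \check \alpha_i$ acts nontrivially). The canonical pointed morphism runs the other way, $\hzero{\Uph{\glie,\psi}} \twoheadrightarrow \Uc$, with nontrivial kernel, so $\hzero{f_h}$ would fail to be injective and the final topological-freeness argument never starts. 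Once the deformed rank-one relations are added to the presentation and verified in $\Uh$ as above, the classical limits of the solution sequences do give ${}^\psi \! X_i^\pm \mapsto X_i^\pm$ at ${h \! = \! 0}$ (mind the index shift dictated by proposition \ref{prop_GQEeqiso}), the specialisation recovers the full Serre presentation of $\Uc$, and the rest of your argument goes through verbatim. You sensed the issue in your closing paragraph — listing the relations $[X_i^+, X_i^-] = \check \alpha_i$ among those "to be obtained" — but appealing to the $h$-admissibility axioms does not repair it: the missing relations must appear in the presentation and must be checked in $\Uh$, via theorem \ref{thm_UhoneHtrivial}.
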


\proof
The proof is analog to the one given in rank 1 (see theorem \ref{thm_presentationUhone}) and makes use of theorem \ref{thm_UhoneHtrivial} and proposition \ref{defi-prop_GQESerre}.
\qed

\subsection{$Y$-Triviality in finite type}

We recall that $Y$ designates also the \hbox{$\KK$-algebra} morphism from $\KK[Y]$ to $\FUc[\glieF]$ which sends $\check \mu$ to $\check \mu$ for every $\check \mu \in Y$.

\begin{thm} \label{thm_YtrivialUh}
Let $\psi$ be a $h$-admissible $I$-colouring of $\BB$. When the Lie algebra $\glie$ is finite-dimensional, the GQE algebra $\Uh$ is a $Y \!$-trivial formal deformation of $\Uc$.
\end{thm}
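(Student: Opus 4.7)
The plan is to establish $Y$-triviality via a relative-cohomological deformation argument that exploits the semisimplicity available in finite type. By theorem \ref{thm_Uhdned}, we already know that $\Uh$ is a formal deformation of the $\glieF$-pointed $\KK$-algebra $\Uc$, so a $\Kh$-linear isomorphism $\tilde \sigma : \Uc \cch \to \Uh$ lifting the canonical identification at ${h \! = \! 0}$ exists; transporting the product of $\Uh$ through $\tilde \sigma$ yields a star-product $\mu_h = \mu_0 + \sum_{k \geq 1} h^k \mu_k$ on $\Uc \cch$ whose reduction mod $h$ recovers the classical product of $\Uc$.

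The first key step is to refine the choice of $\tilde \sigma$ so as to make the deformation \emph{relative to the Cartan}. Using the GQE Serre presentation (theorem \ref{thm_presentationUh}) combined with the triangular decomposition of $\Uc$, one produces a topological $\Kh$-basis of $\Uh$ which is an ordered PBW-type basis adapted to $\Unnc \otimes_\KK \UCac \otimes_\KK \Unpc$ and whose elements coming from $Y$ coincide with the image of the $\FUc[\glieF]$-point on $\KK[Y]$. The corresponding $\tilde \sigma$ is $\UCac$-bilinear, so that each cochain $\mu_k$ ($k \geq 1$) lies in $C^2(\Uc, \UCac, \Uc)$.

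The second key step is to prove the vanishing $H^2(\Uc, \UCac, \Uc) = 0$ when $\glie$ is finite-dimensional. One reduces the relative Hochschild cohomology of the pair $(\Uc, \UCac)$ with coefficients in the bimodule $\Uc$ to the Lie-algebra cohomology $H^2(\glie, Y; \Uc^{\mathrm{ad}})$ relative to the Cartan, via a Koszul-type resolution compatible with the $Y$-action. In finite type the adjoint action of $\glie$ on $\Uc$ decomposes into a direct sum of finite-dimensional integrable $\glie$-modules, so Whitehead's second lemma ($H^2(\glie, V) = 0$ for every finite-dimensional $\glie$-module $V$) forces the desired vanishing. With this cohomological input, a Gerstenhaber-type obstruction argument, carried out inductively in the same spirit as proposition \ref{prop_Hochschild} but applied to the product $\mu_h$ instead of a representation, produces a $\UCac$-bilinear $\Kh$-linear automorphism $\phi_h$ of $\Uc \cch$ with $\hzero \phi_h = \id$ that intertwines $\mu_h$ with the constant product $\mu_0 \cch$; at each order the obstruction is a 2-cocycle in $C^2(\Uc, \UCac, \Uc)$ and hence a coboundary.

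The composition $\tilde \sigma \circ \phi_h^{-1} : \Uc \cch \to \Uh$ is then a $\Kh$-algebra isomorphism which by construction fixes $Y$ and whose specialisation at ${h \! = \! 0}$ is a morphism of $\glieF$-pointed $\KK$-algebra, giving the required $Y$-trivialisation. I expect the main obstacle to be the third step: establishing $H^2(\Uc, \UCac, \Uc) = 0$ rigorously, because while the corresponding Lie-algebra vanishing is classical, transporting it to the relative Hochschild level of the infinite-dimensional algebra $\Uc$ demands a careful choice of projective resolution compatible with both the bimodule structure and the Cartan, together with an argument that the (a priori infinite) $\glie$-isotypic decomposition of $\Uc$ behaves well under the cohomological computation.
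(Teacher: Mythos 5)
Your plan founders at the step you yourself flagged, but the failure is sharper than a technical difficulty: the claimed vanishing $H^2(\Uc,\UCac,\Uc)=0$ is \emph{false}, so the Gerstenhaber-type induction cannot run. Whitehead's second lemma gives only the absolute vanishing $H^2(\glie,V)=0$; it does not descend to cohomology relative to the Cartan, because the long exact sequence of the pair involves nonvanishing Cartan terms. Already for $\glie=\slt$ one checks directly that $\mathrm{Hom}_{\hlie}(\glie/\hlie,\KK)=0$ while $\big(\Lambda^2(\glie/\hlie)^*\big)^{\hlie}\neq 0$, so $H^2(\glie,\hlie;\KK)\neq 0$ (in the compact picture this is $H^2$ of the flag variety), and this class enters $H^2(\Uc,\UCac,\Uc)$ with infinite multiplicity through the trivial isotypic component of $\Uc^{\mathrm{ad}}$, i.e. the centre. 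Concretely, the deformation of $\Uc[\slt]$ in which only the relation $[X^+,X^-]=H$ is changed to $[X^+,X^-]=H+h$ is a \emph{trivial} formal deformation (send $H+h\mapsto H$, $X^\pm\mapsto X^\pm$) that is \emph{not} $H$-trivial: on $\Lc{n}$ the element $H+h$ has trace $(n+1)\!\:h$, whereas any commutator acts with trace zero on a finite-rank free module, so no isomorphism onto $\Uc[\slt]\cch$ can fix $H$. Thus relative rigidity genuinely fails in finite type, and "at each order the obstruction is a coboundary" is unjustified; the offending cocycle direction (shifting Cartan eigenvalues by elements of $h\Kh$) must be excluded by an input beyond semisimplicity.

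That missing input is weight integrality, and it is exactly what the paper's (much shorter, cohomology-free) proof uses. Quoting the classical fact that formal deformations of $\Uc$ are trivial in finite type, the paper picks a trivialisation $\varphi_h$ of $\Uh$, pulls back each irreducible $\Lc{\lambda}$ through it, and observes that $\check \alpha_i$ acts on the highest weight vector by a scalar in $\Kh$ which specialises at ${h \! = \! 0}$ to $\langle \check \alpha_i, \lambda \rangle$ and which must lie in $\ZZ$ because the pull-back is a weight representation with weights in the lattice $X$; hence the scalar equals $\langle \check \alpha_i, \lambda \rangle$ exactly, the action of $Y$ is undeformed on every integrable representation, and faithfulness on integrable representations (remark \ref{rem_Uhperfect}, proposition \ref{prop_UBperfect}) forces $\varphi_h$ to fix $Y$. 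Note this is precisely the constraint killing the counterexample above, which indeed violates the $h$-admissibility axioms. Your steps 1 and 2 (transporting the product and normalising the cochains relative to $\UCac$ via a PBW basis adapted to the triangular decomposition) are plausible, but to salvage step 3 you would have to show that the specific obstruction cocycles of $\Uh$ avoid the $H^2(\glie,\hlie;\KK)$-type classes — and the only available mechanism for that is the integrality of weights, i.e. a reformulation of the paper's argument rather than a general vanishing theorem.
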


\proof
Suppose that $\glie$ is finite-dimensional.
Thanks to theorem \ref{thm_Uhdned}, we know that $\Uh$ is a formal deformation of the $\glieF$-pointed $\Kh$-algebra $\Uc$. Denote then by $\varphi_h$ an isomorphism of \hbox{$\glieF$-pointed} $\Kh$-algebra from $\Uh$ to $\Uc \cch$. Fix $\lambda \in \Xdom$ and denote by ${V_{\! \! \; h}}$ the pull-back via $\varphi_h$ of the representation $\Lc{\lambda}$ of $\Uc$. Let now $v_{\lambda}$ be a highest weight vector of $\Lc{\lambda}$ and let $i \in I$. In view of the relations of $\FUc$, the element $\check \alpha_i$ in $\Uh$ acts on $v_{\lambda} \in {V_{\! \! \; h}}$ by a scalar in $\Kh$. Since ${V_{\! \! \; h}}$ is a $\glieF$-pointed formal deformation of $\Lc{\lambda}$, the specialisation of this scalar to zero is $\langle \check \alpha_i, \lambda \rangle$. Besides, we know that ${V_{\! \! \; h}}$ is a weight representation of $\Uh$. Therefore this scalar is $\lambda$. One concludes thanks to remark \ref{rem_Uhperfect}.
\qed

\begin{rem}
It was already known that the quantum group $\Uh$ is a \hbox{${Y \!}$-trivial} deformation of $\Uc$: see \cite{Dtrivial}.
\end{rem}

\subsection{Classification}

Two $I$-colourings $\psi, \psi'$ of $\BB$ with values in $\Kh$ are said congruent if the colourings $\psi_i$ and $\psi'_i$ are for every $i \in I$.

\begin{thm} \label{thm_IGQEclassification}
The following map is a bijection:
\begin{IEEEeqnarray*}{CCC}
\left \{ {\renewcommand{\arraystretch}{0.85} \begin{array}{c}
\text{$h$-admissible} \\
\text{$I$-colourings of $\BB$,} \\
\text{up to congruence}
\end{array}} \right \}
& \ \longrightarrow \ &
\left \{ {\renewcommand{\arraystretch}{0.85} \begin{array}{c}
\text{Locally ${Y \!}$-trivial formal deformations} \\
\text{of the $\glieF$-pointed $\KK$-diagram $\UUc$}, \\
\text{up to isomorphism} \\
\text{of $\glieF$-pointed $\Kh$-diagram}
\end{array}} \right \} \\
&& \\
\begin{array}{c}
\text{congruence} \\
\text{class of $\psi$}
\end{array} & \longmapsto & \begin{array}{c}
\text{isomorphism} \\
\text{class of $\UUh$.}
\end{array}
\end{IEEEeqnarray*}
\end{thm}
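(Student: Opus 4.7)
\proof[Proof proposal]
The plan is to verify that the map is well-defined, injective, and surjective, reducing as much as possible to the rank one classification (Theorem \ref{thm_GQEoneclassification}). First, well-definedness: if $\psi$ is $h$-admissible, then $\UUh$ is a formal deformation of $\UUc$ by Theorem \ref{thm_Uhdned}, and it is locally $Y\!$-trivial since each $\UUh(i) = \Uh[\slt,\psi_i]$ is $H$-trivial by Theorem \ref{thm_UhoneHtrivial}. If two $I$-colourings $\psi, \psi'$ are congruent then Lemma \ref{lem_congruentAone} supplies componentwise isomorphisms $\Uh[\slt,\psi_i] \simeq \Uh[\slt,\psi'_i]$ of $\sltF$-pointed $\Kh$-algebras. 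The common quotient structure of the Chevalley generators and the fact that the structural maps $\Deltah$ are all induced from $\FDeltah$ on $\FUh$ then let one check that these componentwise isomorphisms extend to a morphism of $\glieF$-pointed $\Kh$-diagrams, with the component at $t_I$ being the identity on $\Uh$ seen through the two presentations.

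For injectivity, suppose $\alpha_h : \UUh \Rightarrow \UUh[\glie,\psi']$ is an isomorphism of $\glieF$-pointed $\Kh$-diagrams. Evaluating at each vertex $i \in I$ gives an isomorphism $\Uh[\slt,\psi_i] \simeq \Uh[\slt,\psi'_i]$ of $\sltF$-pointed $\Kh$-algebras, so $\psi_i \equiv \psi'_i$ by the rank one classification (Theorem \ref{thm_GQEoneclassification}); thus $\psi \equiv \psi'$.

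For surjectivity, let $\AA_h$ be a locally $Y\!$-trivial formal deformation of $\UUc$. At each $i \in I$, the rank one classification produces an $h$-admissible colouring $\psi_i$ and an isomorphism $\gamma_i : \AA_h(i) \toiso \Uh[\slt,\psi_i]$ of $\sltF$-pointed $\Kh$-algebras; set $\psi := (\psi_i)_{i \in I}$, which is then $h$-admissible. The remaining and main task is to build an isomorphism of $\glieF$-pointed $\Kh$-algebras $\AA_h(t_I) \toiso \Uh$ compatible with the $\gamma_i$'s at the morphisms $\Delta_i$. The strategy is to invoke the GQE conjecture: every $V_0 \in \Ointc$ admits a $\psi$-integrable formal deformation ${V_{\! \! \; h}}$ along $\FUh$. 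Using the $\glieF$-point of $\AA_h(t_I)$ together with $Y\!$-triviality, one constructs, via the relative Hochschild argument of Proposition \ref{prop_Hochschild} applied to the subalgebra $Y \subset \FUc$, formal deformations of every $V_0 \in \Ointc$ along $\AA_h(t_I)$; then the maps $\gamma_i \circ \AA_h(\Delta_i)^{-1}$ (in an appropriate sense) show that the pull-back via each simple $\slt$-direction is of the required $\Lh{n,\psi_i}$-type, so these deformations are $\psi$-integrable. Applying Remark \ref{rem_Uhperfect} one gets an injective morphism of $\glieF$-pointed $\Kh$-algebras $\Uh \hookrightarrow \AA_h(t_I)$; conversely, the specialisation at $h=0$ of the induced map is the identity of $\Uc$ (by Proposition \ref{prop_UBperfect}), and since both sides are topologically free $\Kh$-modules with isomorphic reduction mod $h$, the map is bijective.

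The main obstacle is the surjectivity step, specifically transferring the $\psi$-integrable structure on formal deformations of $V_0 \in \Ointc$ from $\FUh$ to $\AA_h(t_I)$ coherently across all $i \in I$. The delicate point is that a priori the isomorphisms $\gamma_i$ are only defined at the $i$-vertices, and one must show that the induced actions assemble into an actual representation of $\AA_h(t_I)$ (not merely of each $\AA_h(i)$); this is where the hypothesis that $\AA_h$ is a diagram (rather than a collection of algebras) and the locally $Y\!$-trivial structure combine with the GQE conjecture to close the argument.
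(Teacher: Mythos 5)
Your overall architecture is the one the paper intends: its entire proof reads ``Consequence of theorems \ref{thm_GQEoneclassification} and \ref{thm_Uhdned}'', and your well-definedness (theorem \ref{thm_Uhdned} plus lemma \ref{lem_congruentAone}, with local triviality at the vertices from theorem \ref{thm_UhoneHtrivial}) and your injectivity (evaluate a diagram isomorphism at each $i \in I$ and invoke theorem \ref{thm_GQEoneclassification}) match it exactly. The gap is in your surjectivity mechanism. Proposition \ref{prop_Hochschild} is a \emph{uniqueness} statement: it compares two representations of one and the same deformed algebra $A_0 [| \mu_h |]$ on the same underlying module, under the hypothesis that every $B_0$-trivial self-extension of $V_0$ is $A_0$-trivial. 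It cannot manufacture an action of $\AA_h(t_I)$ on a deformation of $V_0 \in \Ointc$; and if you try to compare the GQE-conjecture deformation (a representation of $\FUh$) with something over $\AA_h(t_I)$ by taking $A_0 = \FUc$, the vanishing hypothesis fails there: over $\FUc[\sltF]$, deforming the action of $X^+$ on $\Lc{1} \oplus \Lc{1}$ by a weight-raising map in the off-diagonal entry produces a $\KK[H]$-split, non-split self-extension, since only the non-deformable relations constrain $X^+$ — the hypothesis holds over $\Uc$ (complete reducibility in $\Ointc$), which is precisely what you do not yet have at $t_I$. Moreover ``$\gamma_i \circ \AA_h(\Delta_i)^{-1}$'' is not meaningful: $\AA_h(\Delta_i)$ is a morphism $\AA_h(i) \to \AA_h(t_I)$, not an isomorphism.

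The repair does not need the Hochschild proposition at all and mirrors the proof of theorem \ref{thm_GQEoneclassification}. By the paper's definition of a locally trivial deformation (which quantifies over \emph{every} object $s$ of the shape $\II$, hence in particular over $t_I$), $\AA_h(t_I)$ carries a $Y \!$-trivialisation $\gamma_h : \AA_h(t_I) \toiso \Uc \cch$. Pull back the constant deformations $\Lc{\lambda} \cch$ ($\lambda \in \Xdom$) through $\gamma_h$; restricting along $\AA_h(\Delta_i)$ and transporting through your $\gamma_i$, theorem \ref{thm_Uhintegrable} shows these pull-backs are $\psi$-integrable for the $I$-colouring $\psi = {(\psi_i)}_{i \in I}$ extracted vertex-wise. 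These representations detect zero in $\AA_h(t_I)$ (proposition \ref{prop_UBperfect} applied coefficient-wise through $\gamma_h$), so the extended point $\FUh \to \AA_h(t_I)$ — surjective because $\AA_h(t_I)$ is complete and separated and the specialisation of the point at ${h \! = \! 0}$ is $\pic$ — factors through $\Uh$; injectivity then follows from remark \ref{rem_Uhperfect} together with theorem \ref{thm_Uhrepresentations}, and the GQE conjecture enters only where the paper puts it, namely through theorem \ref{thm_Uhdned}. Finally, the ``main obstacle'' you flag — assembling the componentwise isomorphisms into a natural transformation — is automatic: both composites around each naturality square are continuous, they intertwine the $\FUUc$-points by construction, and the image of the point is dense in $\AA_h(i)$, so the squares commute. (If instead one reads ``locally'' as imposing triviality only at the vertices $i \in I$, as the introduction's phrasing suggests, then producing any representation of $\AA_h(t_I)$ deforming $V_0$ becomes the genuine open crux, and your argument, as written, does not supply one.)
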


\proof
Consequence of theorems \ref{thm_GQEoneclassification} and \ref{thm_Uhdned}.
\qed





\subsection{Representation theory}
Let $\psi$ be a $I$-colouring of $\BB$ with values in $\Kh$. We denote by $\Cinth{\glie,\psi}$ the category of integrable representations of $\Uh$ which are topologically free when viewed as $\Kh$-modules. \\

We denote by $\Ointh$ the full subcategory of $\Cinth{\glie,\psi}$ whose objects $V$ satisfy the following two conditions.
\begin{enumerate}
\item[\textbullet] The weight spaces of $V$ are of finite-rank over $\Kh$.
\item[\textbullet] There exist finitely many elements $\lambda_1, \dots, \lambda_k \in X$ such that $\wt (V)$ are contained in $\bigcup_{s=1}^k (\lambda_s - \sum_{i \in I} \NN \: \! \alpha_i)$.
\end{enumerate}

\vsp
\vsp
\vsp

Let $\lambda \in \Xdom$, we denote by $\Lh{\lambda,\psi}$ the representation of $\Uh$ topologically generated \hbox{by $v_\lambda$} and subject to the following relations:
$$ \check \mu \! \: . \! \: v_\lambda \ = \  \langle \lambda, \check \mu \rangle \! \; v_\lambda \, , \quad X^+_i . \! \: v_\lambda \ = \ 0 \, , \quad (X^-_i)^{\langle \lambda, \check \alpha_i \rangle +1} . \! \: v \ = \ 0  \quad \ (i \in I, \ \check \mu \in Y) \, . $$

\vsp
\vsp

\begin{thm} \label{thm_Uhrepresentations}
Let $\psi$ be a $h$-admissible $I$-colouring of $\BB$.
\begin{enumerate}
\item[1)] A representation of $\Uh$ is in $\Cinth{\glie,\psi}$ if and only it is is $\psi$-integrable.
\item[2)] For every $\lambda \in \Xdom$, the representation $\Lh{\lambda,\psi}$ is a formal deformation of $\Lc{\lambda}$ along $\Uh$.
\item[3)] The representations $\Lh{\lambda,\psi}$ ($\lambda \in \Xdom$) are pairwise distinct and are the unique, up to isomorphism, indecomposable representations in $\Ointh$.
\item[4)] Every representation in $\Ointh$ is isomorphic to a topologically direct sum of representations $\Lh{\lambda,\psi}$ with $\lambda \in \Xdom$.
\item[5)] Let $\lambda,\lambda' \in \Xdom$. Every $\Uh$-morphism from $\Lh{\lambda,\psi}$ to $\Lh{\lambda',\psi}$ is equal to a scalar multiple of the identity map if $\lambda = \lambda'$ and is zero else.
\item[6)] Every representation in $\Ointc$ admits a unique, up to isomorphism, formal deformation along $\Uh[\glie,\psi]$ in $\Ointh$.
\end{enumerate}
\end{thm}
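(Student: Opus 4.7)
The plan is to follow closely the strategy of Theorem \ref{thm_Uhintegrable} (the rank-one version), using the GQE conjecture to produce formal deformations of classical integrable representations and using a Hochschild-type argument (Proposition \ref{prop_Hochschild}) to lift complete reducibility of $\Ointc$ to $\Ointh$.

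First I would prove Part 2. Apply the GQE conjecture to $\Lc{\lambda}$ for $\lambda \in \Xdom$ to obtain a $\psi$-integrable formal deformation $V_h$ along $\FUh$. Since $V_h$ is $\psi$-integrable, the ideal $\Ikerh$ acts as zero on it by definition, so $V_h$ factors through $\Uh$. Choose a vector $v \in V_h$ whose image in $\hzero{V_h} \simeq \Lc{\lambda}$ is a highest-weight vector. Using condition~(a) of $\psi$-integrability together with topological freeness (so that $Y$ acts on $v$ by $\langle\lambda,\cdot\rangle$), and using condition~(b) applied through each $\Deltah$ (so that $X_i^+.v = 0$ and ${(X_i^-)^{\langle\lambda,\check\alpha_i\rangle+1}.v = 0}$ follow from the rank-one decomposition into representations $\Lh{n,\psi_i}$), the defining relations of $\Lh{\lambda,\psi}$ are satisfied by $v$. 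This yields a surjective $\Uh$-morphism $\Lh{\lambda,\psi} \twoheadrightarrow V_h$ whose specialisation at ${h\!=\!0}$ is the identity on $\Lc{\lambda}$; topological freeness of $V_h$ and the surjectivity argument of Lemma~\ref{lem_Uhonehfree} then upgrade this to an isomorphism, giving Part~2.

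Next, for Part~1, a $\psi$-integrable representation is automatically in $\Cinth{\glie,\psi}$ because the actions of the Chevalley generators are locally nilpotent on each summand $\Lh{n,\psi_i}$ and the underlying $\Kh$-module is topologically free by construction. Conversely, for $V$ in $\Cinth{\glie,\psi}$, each pull-back via $\Deltah$ is an object of $\Cinth{\slt,\psi_i}$ and is therefore $\psi_i$-integrable by Theorem~\ref{thm_Uhintegrable}~(4), which gives condition~(b); condition~(a) follows from the standard fact that diagonalisability of $Y$ is automatic on integrable weight representations, transported to the $h$-adic setting via the topological direct sum of weight spaces. For Part~5, a $\Uh$-morphism $f_h : \Lh{\lambda,\psi} \to \Lh{\lambda',\psi}$ specialises to a $\Uc$-morphism $\Lc{\lambda} \to \Lc{\lambda'}$, which is a scalar multiple of the identity when $\lambda=\lambda'$ and zero else (subsection~\ref{subs_KMresults}). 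A bootstrap in powers of $h$, subtracting a scalar multiple of the identity and using torsion-freeness, promotes this to the statement about $f_h$ itself. Part~3 then follows: indecomposability is equivalent to $\Endh{(\Lh{\lambda,\psi})} \simeq \Kh$ (given by Part~5), and distinctness follows from the distinctness of the specialisations.

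Finally for Parts~4 and~6, note that any $V$ in $\Ointh$ is a formal deformation along $\Uh$ of a representation $V_0$ of $\Uc$ which lies in $\Ointc$ (the weight and rank conditions pass to the specialisation). Since $\Ointc$ is completely reducible, every extension of $V_0$ by $V_0$ in $\Ointc$ is trivial; in particular it is trivial in the category of representations of the Cartan part $\UCac$. Proposition~\ref{prop_Hochschild} applied with $A_0 = \Uc$ and $B_0 = \UCac$ then yields uniqueness of the formal deformation along $\Uh$. Writing $V_0 \simeq \bigoplus_{\lambda \in \Xdom} \Lc{\lambda}^{\oplus m_\lambda}$ and invoking Part~2 term by term, the topologically direct sum $\widehat{\bigoplus}_\lambda \Lh{\lambda,\psi}^{\oplus m_\lambda}$ is also a formal deformation of $V_0$ along $\Uh$, so by uniqueness it is isomorphic to $V$. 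This gives both Part~4 and Part~6.

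The main obstacle I anticipate is the cohomological step underpinning Parts~4, 5 and~6: one needs Proposition~\ref{prop_Hochschild} to apply uniformly in the setting where $V_0$ may be an infinite direct sum of irreducibles $\Lc{\lambda}$ with infinite weight multiplicities, and where the category $\Ointh$ only constrains weights through the cones $\bigcup_s (\lambda_s - \sum_i \NN\alpha_i)$. Care is needed to ensure $h$-adic convergence of the constructed isomorphism and that the trivial-extension hypothesis of Proposition~\ref{prop_Hochschild} is genuinely met for the relevant cohomology group $H^1(\Uc,\UCac,\Endc{(V_0)})$ — the passage from semisimplicity in $\Ointc$ to the vanishing of this relative Hochschild group is the subtle point.
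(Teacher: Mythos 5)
Your proposal is correct and follows essentially the same route as the paper, whose proof is precisely a reduction to the rank-one argument of theorem \ref{thm_Uhintegrable} combined with proposition \ref{prop_Hochschild}; you have correctly identified the two points the paper leaves implicit, namely that the GQE conjecture supplies the $\psi$-integrable deformations needed for existence, and that the relative Hochschild proposition must be applied with $B_0 = \UCac$ so that $B_0$-trivial extensions are weight representations lying in $\Ointc$, where complete reducibility applies.
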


\proof
The proof is analog to the one given in rank one (see theorem \ref{thm_Uhintegrable}) and makes use of proposition \ref{prop_Hochschild}.
\qed

\begin{appendices}
\section{GQE equations} \label{app_GQE}
\subsection{Notations}
We denote by $\Tinf$ the $\Kh$-algebra formed by the lower triangular matrices of infinite order with coefficients in $\Endh(\Kh^\ZZ)$ (remark that the matrix multiplication for $\Tinf$ is well defined since matrices in $\Tinf$ are lower triangular):
$$ \Tinf \ := \ \left\{ \mathbf T \in \left( \Endh(\Kh^\ZZ) \right)^{\! \: \! \NN^2} ; \, \ {\mathbf T_{\! \! \: p,a}} = 0 \, , \ \forall \, p,a \in \NN \, \text{ s.t. } p < a \right\} \, . $$

We denote by $\Sinf$ the $\Kh$-module formed by the column matrices of infinite order with coefficients in $\Kh^\ZZ$:
$$ \Sinf \ := \ \left( \Kh^\ZZ \right)^{\! \! \; \NN} \, . $$
 We regard $\Sinf$ as a left $\Tinf$-module, where the action of a matrix $\mathbf T$ \hbox{in $\Tinf$} on a column matrix $M$ in $\Sinf$ is defined by the following:
$$ {\big( \mathbf T \! \; . \! \; M \big)}_{\! \! \: p} \ := \ \sum_{0 \leq a \leq p} {\mathbf T_{\! \! \: p,a}} \! \; . \! \; M_a \quad \quad (p \in \NN) \, . $$

Let $M \in \Sinf$ and let $d \in \ZZ$. We denote by $M[d]$ the column matrix \hbox{in $\Sinf$} defined by the following:
$$ {\big( M[d] \big)}_{\! \! \; p} \ := \ \left\{ {\renewcommand{\arraystretch}{1.2} \begin{array}{ccl}
M_{p-d} & \quad & \text{if } p \geq d \, , \\
0 && \text{else} \end{array}} \right. \quad \quad \quad (p \in \NN) \, . $$

We denote by $\Sinfh$ the $\Kh$-module formed by the column matrices of infinite order with coefficients in $\KK[u] \cch$ converging to zero:
$$ \Sinfh \ := \ \left \{ M \in \left( \KK[u] \cch \right)^{\! \! \; \NN} \, ; \, \ \lim_{p \rightarrow \infty} M_p = 0 \right \} \, . $$
We regard $\Sinfh$ as a $\Kh$-submodule of $\Sinf$.

\subsection{Definition}
Let $\psi \in \Coloh$. We denote by $\mathbf T(\psi)$ the lower triangular matrix in $\Tinf$, defined by the following, \hbox{for $p,a \in \NN$}, $f \in \Kh^\ZZ$ and $n \in \ZZ$:
$$ \bigg[ {\big( \mathbf T(\psi) \big)}_{\! \! \: p,a} \, . \, f \bigg] (n) \ := \ \left\{
{\renewcommand{\arraystretch}{1.6} \begin{array}{cl}
\displaystyle \frac{[\psi] (n,p) !}{[\psi] (n,p-a) !} \ f(n-2p+2a) & \quad \text{if } a \leq p \leq n \, , \\
0 & \quad \text{else.}
\end{array}} \right. $$
We denote by $N(\psi)$ and the column matrix in $\Sinf$, defined by the following, \hbox{for $p \in \NN$} and $n \in \ZZ$:
$$ {\big( N(\psi) \big)}_{\! \! \: p} \! \; (n) \ := \ \left\{
{\renewcommand{\arraystretch}{1.3} \begin{array}{cl}
[\psi] (n,p) & \quad \ \text{if } 0 < p \leq n \, , \\
0 & \quad \ \text{else.}
\end{array}} \right. $$

\vsp

\begin{defi}
Let $\psi_1, \psi_2 \in \Coloh$ and let $d \in \ZZ$. The GQE equation of degree $d$ associated to $\psi_1$ and $\psi_2$, denoted by $\GQEeqh{d}$, is the following linear equation:
\begin{equation*} \label{eq_GQE}
\mathbf T(\psi_1) \, . \, M \ = \ N(\psi_2)[d] \, ,
\end{equation*}
where the unknown $M$ is a column matrix in $\Sinfh$.
\end{defi}

\subsection{First interpretation}
\begin{prop} \label{prop_GQEeqspan}
Let $\psi \in \Coloh$ and let $M \in \Sinfh$. The following two assertions are equivalent.
\begin{enumerate}
\item[(i)] The equality $X^+ X^- = \sum_{a \in \NN} {(X^-)}^a M_a (H) {(X^+)}^a$ holds in $\Uh[\slt,\psi]$.
\item[(ii)] The column matrix $M$ is a solution of the GQE equation $\GQEeqh[\psi,\psi]{-1}$.
\end{enumerate}
\end{prop}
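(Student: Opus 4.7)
The plan is to exploit the interpretation of $\Uh[\slt,\psi]$ as a subalgebra of endomorphisms of the representations $\Lh{n,\psi}$: by remark \ref{rem_Uhoneperfect}, an element of $\Uh[\slt,\psi]$ is determined by its action on all the vectors $b_{n,p}$ with $n \in \NN$ and $p \in \NN_{\leq n}$. So assertion (i) is equivalent to the equality, for every such $(n,p)$, of the action of $X^+ X^-$ and of $\sum_{a \in \NN} (X^-)^a M_a(H) (X^+)^a$ on $b_{n,p}$. Note that this series converges in $\Uh[\slt,\psi]$ because $M \in \Sinfh$ ensures that $M_a \to 0$ in $\Kh$, hence that $M_a(H)$, and therefore the whole $a$-th term, vanishes at infinity in the $h$-adic topology. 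By continuity of the action, one may evaluate the sum termwise.

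First I would compute the left-hand side. Using the formulas defining $\Lh{n,\psi}$, one obtains directly
\[ X^+ X^- . b_{n,p} \ = \ \psi^-(n,p+1) \ \psi^+(n,n-p) \ b_{n,p} \ = \ [\psi](n,p+1) \ b_{n,p} \]
for $0 \leq p \leq n-1$, and zero for $p=n$. Next I would compute the right-hand side term by term. Applying $(X^+)^a$ peels off $a$ factors $\psi^+(n,n-p+1), \dots, \psi^+(n,n-p+a)$ and shifts $b_{n,p}$ to $b_{n,p-a}$; then $M_a(H)$ multiplies by $M_a(n-2p+2a)$; then $(X^-)^a$ peels off $a$ factors $\psi^-(n,p-a+1), \dots, \psi^-(n,p)$ and shifts back to $b_{n,p}$. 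Pairing the $\psi^-$ and $\psi^+$ factors according to the definition of $[\psi]$, the product telescopes into $\prod_{k=p-a+1}^{p} [\psi](n,k) = [\psi](n,p)! / [\psi](n,p-a)!$, so that
\[ (X^-)^a M_a(H) (X^+)^a . b_{n,p} \ = \ \frac{[\psi](n,p)!}{[\psi](n,p-a)!} \ M_a(n-2p+2a) \ b_{n,p} \]
for $a \leq p \leq n$, and zero otherwise. This is exactly $\big[ \mathbf T(\psi)_{p,a} . M_a \big](n) \cdot b_{n,p}$.

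Summing over $a \in \NN$ (the sum being finite in the $b_{n,p}$-coefficient since only $0 \leq a \leq p$ contribute to nonzero terms), the coefficient of $b_{n,p}$ equals $\big(\mathbf T(\psi) . M\big)_p(n)$. On the other hand, unwinding the definition of $N(\psi)[-1]$ yields $\big(N(\psi)[-1]\big)_p(n) = N(\psi)_{p+1}(n) = [\psi](n,p+1)$ when $0 \leq p \leq n-1$ and zero when $p = n$, which is precisely the coefficient in the left-hand side. Therefore the equality of the two actions on every $b_{n,p}$ (for all $n \in \NN$, $p \in \NN_{\leq n}$) is equivalent to $\mathbf T(\psi). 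M = N(\psi)[-1]$, i.e.\ to $M$ being a solution of $\GQEeqh[\psi,\psi]{-1}$.

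I do not expect any real obstacle: the proof is a direct index-chasing combined with the perfectness statement of remark \ref{rem_Uhoneperfect}. The only point that requires mild care is the telescoping identification of $\prod \psi^-(n,\cdot) \, \prod \psi^+(n,\cdot)$ with a ratio of $[\psi]$-factorials, and the bookkeeping of the shift $[-1]$ matching the fact that $X^+ X^-$ acts on $b_{n,p}$ through the coefficient $[\psi](n,p+1)$ rather than $[\psi](n,p)$.
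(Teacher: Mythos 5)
Your proof is correct and follows essentially the same route as the paper's: reduce assertion (i) via remark \ref{rem_Uhoneperfect} to the equality of the two actions on every basis vector $b_{n,p}$, compute both sides explicitly, and identify the resulting coefficient identities with the matrix equation $\mathbf T(\psi) \, . \, M = N(\psi)[-1]$. Your explicit telescoping of the $\psi^-$ and $\psi^+$ factors into ratios of $[\psi]$-factorials is just a slightly more condensed form of the paper's bookkeeping, which keeps the two factorial ratios separate.
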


\vsp

Remark that the series $\sum_{a \in \NN} {(X^-)}^a M_a (H) {(X^+)}^a$ converges in $\Uh[\slt,\psi]$: the sequence ${(M_p)}_{p \in \NN}$ converges to zero in $\KK[u] \cch$, the $\Kh$-module $\Uh[\slt,\psi]$ is separated and complete (we know from \hbox{lemma \ref{lem_Uhonehfree}} that it is topologically free).

\proof
Denote by $x$ and $y$ the elements $X^+ X^-$ and  $\sum_{a \in \NN} {(X^-)}^a M_a (H) {(X^+)}^a$ in the algebra $\Uh[\slt,\psi]$. According to remark \ref{rem_Uhoneperfect}, the equality $x = y$ holds in the algebra $\Uh[\slt,\psi]$ if and only if the actions of $x$ and $y$ are equal on all the representations $\Lh{n,\psi}$ with $n \in \NN$. Therefore, assertion (i) is equivalent to the following one.
\begin{equation} \tag{i'}
\begin{tabular}{| p{18pc}}
The equality $x.b_{n,p} = y.b_{n,p}$ holds in $\Lh{n,\psi}$ for every $n, p \in \NN$ such that $p \leq n$.
\end{tabular}
\end{equation}
Besides, for every $n, p \in \NN$ such that $p \leq n$, the following equalities hold in the representation $\Lh{n,\psi}$:
\begin{eqnarray*}
x. b_{n,p} & = & \left\{ {\renewcommand{\arraystretch}{1.3} \begin{array}{cr}
\psi^- (n,p+1) \ \psi^+ (n,n-p) \ b_{n,p} & \quad \text{if } p \neq n \, , \\
0 & \quad \text{else} \, ,
\end{array}} \right. \\
y.b_{n,p} & = & \left[ \sum_{0 \leq a \leq p} M_a (n-2p+2a) \ \frac{\psi^- (n,p) !}{\psi^- (n,p-a) !} \ \frac{\psi^+ (n,n-p+a) !}{\psi^+ (n,n-p) !} \right] b_{n,p} \, .
\end{eqnarray*}
As a consequence, assertion (i') holds if and only if the column matrix $M$ is a solution of the shifted GQE equation $\GQEeqh[\psi,\psi]{-1}$.
\qed

\subsection{Second interpretation}
\begin{prop} \label{prop_GQEeqiso}
Let $\psi_1, \psi_2$ be two invertible colourings of $\BB$ with values in $\Kh$ and let $M \in \Sinfh$. The following two assertions are equivalent.
\begin{enumerate}
\item[(i)] There exists a $\Kh$-algebra \hbox{morphism $f$} from $\Uh[\slt,\psi_2]$ to $\Uh[\slt,\psi_1]$ such that the following holds.
\begin{IEEEeqnarray*}{ll}
\bullet \ \, & \left\{ \! \! \! \: {\renewcommand{\arraystretch}{1.1} \begin{array}{lcl}
f(X^-) & = & X^- \, , \\
f(H) & = & H \, , \\
f(X^+) & = & \sum_{a \in \NN} {(X^-)}^a M_a (H) {(X^+)}^{a+1} \, .
\end{array}} \right. \\
& \\
\bullet \ \, & \! \: \ \begin{tabular}{| p{24pc}}
For every $n \in \NN$, the representation $\Lh{n,\psi_2}$ of $\Uh[\slt,\psi_2]$ and the pull-back of $\Lh{n,\psi_1}$ via $f$ are isomorphic.
\end{tabular}
\end{IEEEeqnarray*}
\item[(ii)] The column matrix $M[1]$ is a solution of the GQE equation $\GQEeqh{0}$.
\end{enumerate}
\end{prop}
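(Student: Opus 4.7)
The plan is to mimic the dictionary used in proposition \ref{prop_GQEeqspan}: transport the algebraic statement (i) to an equality of coefficients on the basis vectors $b_{n,p}$ of the representations $\Lh{n,\psi_1}$, and observe that it matches term by term the linear system defining $\GQEeqh{0}$. The invertibility hypotheses on $\psi_1,\psi_2$ will be used in order to run the arguments in both directions.

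First I would define a $\Kh$-algebra morphism $f_0 : \FUh[\sltF] \to \Uh[\slt,\psi_1]$ by sending $X^- \mapsto X^-$, $H \mapsto H$ and $X^+ \mapsto \sum_{a \in \NN} (X^-)^a \! \; M_a(H) \! \; (X^+)^{a+1}$. Convergence of the series in $\Uh[\slt,\psi_1]$ follows from $M \in \Sinfh$ together with lemma \ref{lem_Uhonehfree}, and the two non-deformable relations $[H, f_0(X^\pm)] = \pm 2 \: \! f_0(X^\pm)$ are immediate commutator computations (each summand in $f_0(X^+)$ is homogeneous of $H$-weight $+2$). Thus $f_0$ exists as soon as $M \in \Sinfh$, without any further condition.

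Next I would compute explicitly, for $n,p \in \NN$ with $p \leq n$, the vectors $f_0(X^-) . b_{n,p}$, $f_0(H) . b_{n,p}$ and $f_0(X^+) . b_{n,p}$ in $\Lh{n,\psi_1}$. The first two are read off directly, while the third gives, for $1 \leq p \leq n$,
\[
f_0(X^+) . b_{n,p} \ = \ \left[ \, \sum_{a=0}^{p-1} M_a(n-2p+2a+2) \, \frac{\psi_1^-(n,p-1)!}{\psi_1^-(n,p-a-1)!} \, \frac{\psi_1^+(n,n-p+a+1)!}{\psi_1^+(n,n-p)!} \right] b_{n,p-1} \, .
\]
Since both $\Lh{n,\psi_2}$ and the pull-back of $\Lh{n,\psi_1}$ via $f_0$ have the same $H$-weight decomposition, any $\FUh[\sltF]$-morphism between them must send $b_{n,p}$ to a scalar multiple $\lambda_{n,p} b_{n,p}$; the invertibility of $\psi_1^-,\psi_2^-$ together with $X^-$-equivariance forces the normalisation $\lambda_{n,0} = 1$, $\lambda_{n,p} = \psi_1^-(n,p)! \! \: / \! \: \psi_2^-(n,p)!$. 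The $X^+$-equivariance equation for this candidate then reduces, after clearing the $\lambda$'s using invertibility of $\psi_2^-$ and rewriting $[\psi_k](n,p)! = \psi_k^-(n,p)! \cdot \psi_k^+(n,n)! / \psi_k^+(n,n-p)!$, to the identity
\[
\sum_{a=0}^{p-1} M_a (n-2p+2a+2) \, \frac{[\psi_1](n,p)!}{[\psi_1](n,p-a-1)!} \ = \ [\psi_2] (n,p) \qquad (1 \leq p \leq n) \, ,
\]
which is precisely the assertion that $M[1]$ is a solution of $\GQEeqh{0}$.

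To conclude the equivalence, I would argue as follows. Assume (ii). The above computation shows that $b_{n,p} \mapsto \lambda_{n,p} \! \: b_{n,p}$ is a $\FUh[\sltF]$-isomorphism from $\Lh{n,\psi_2}$ onto the pull-back of $\Lh{n,\psi_1}$ via $f_0$, for every $n \in \NN$. Hence $f_0$ kills every element acting as zero on all the representations $\Lh{n,\psi_2}$, i.e. $f_0(\Ikerh[\slt,\psi_2]) = 0$; thus $f_0$ factors through a $\Kh$-algebra morphism $f : \Uh[\slt,\psi_2] \to \Uh[\slt,\psi_1]$ satisfying the two conditions of (i). Conversely, assume (i): the pull-back isomorphisms provided by (i) must coincide (up to scalar on each component) with the diagonal candidate $\lambda_{n,p} b_{n,p}$ constructed above, so the $X^+$-equivariance equation is satisfied for every $n,p$, yielding (ii). The main technical obstacle is the bookkeeping in the computation of $f_0(X^+) . b_{n,p}$ and the rewriting of the resulting scalar in terms of $[\psi_1]$; once this is done, the invertibility hypothesis makes the diagonal normalisation unambiguous and the equivalence is essentially tautological.
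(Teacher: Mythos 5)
Your proposal is correct and follows essentially the same route as the paper's own proof: define the morphism on the free algebra $\FUh[\sltF]$, observe that weight considerations and $X^-$-equivariance force any intertwiner between $\Lh{n,\psi_2}$ and the pull-back of $\Lh{n,\psi_1}$ to be diagonal, and match the $X^+$-equivariance condition, rewritten via $[\psi_k](n,p)! = \psi_k^-(n,p)! \, \psi_k^+(n,n)!/\psi_k^+(n,n-p)!$, against the rows of $\GQEeqh{0}$ (your factorisation of $f_0$ through $\Uh[\slt,\psi_2]$ via remark \ref{rem_Uhoneperfect} is also the paper's mechanism). The only difference is cosmetic: the paper first invokes lemma \ref{lem_congruentAone} to reduce without loss of generality to $\psi_1^- = \psi_2^- = 1$, so that every intertwiner is a scalar multiple of the identity and the diagonal scalars $\lambda_{n,p} = \psi_1^-(n,p)!/\psi_2^-(n,p)!$ that you carry through explicitly disappear from the computation.
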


\vsp

Remark that the series $\sum_{a \in \NN} {(X^-)}^a M_a (H) {(X^+)}^{a+1}$ converges in $\Uh[\slt,\psi_1]$: the sequence ${(M_p)}_{p \in \NN}$ converges to zero in $\KK[u] \cch$, the $\Kh$-module $\Uh[\slt,\psi_1]$ is separated and complete (we know from \hbox{lemma \ref{lem_Uhonehfree}} that it is topologically free).

\proof
Remark that the equation $\GQEeqh{0}$ depends only on the congruence classes of the colourings $\psi_1$ and $\psi_2$. The algebras $\Uh[\slt,\psi_1]$ and $\Uh[\slt,\psi_2]$ also do: see lemma \ref{lem_congruentAone}. We besides know that if $\psi, \psi' \in \Coloh$ are two congruent colourings which divide each other in the ring $\Coloh$, then the representations $\Lh{n,\psi}$ and $\Lh{n,\psi'}$ are isomorphic (see again lemma \ref{lem_congruentAone}). Therefore, and since the colourings $\psi_1, \psi_2$ are invertible, one can without loss of generality assume that $\psi_1^- = \psi^-_2 = 1$. Denote by $g$ the $\Kh$-algebra morphism from $\FUh[\sltF]$ to $\Uh[\slt,\psi_1]$ defined by the following:
$$ \left\{ {\renewcommand{\arraystretch}{1.2} \begin{array}{lcl}
g(X^-) & := & X^- \, , \\
g(H) & := & H \, , \\
g(X^+) & := & \sum_{a \in \NN} {(X^-)}^a M_a (H) {(X^+)}^{a+1} \, .
\end{array}} \right. $$
Let $n \in \NN$. Since $g(X^-) = X^-$ and $g(H) = H$, every $\FUh[\sltF]$-morphism between $\Lh{n,\psi_2}$ and the pull-back of $\Lh{n,\psi_1}$ via $g$ is a scalar multiple of the identity map on ${\KK \; \! \BBv(n) \cch}$. Therefore and according to the definition of the algebra $\Uh[\slt,\psi_2]$, assertion (i) is equivalent to the following assertion.
\begin{equation} \tag{i'}
\begin{tabular}{| p{20pc}}
For every $n \in \NN$, the identity map on ${\KK \; \! \BBv(n) \cch}$ is a $\FUh[\sltF]$-morphism from $\Lh{n,\psi_2}$ to the pull-back of $\Lh{n,\psi_1}$ via $g$.
\end{tabular}
\end{equation}
Besides, for every $n,p \in \NN$ such that $0 < p \leq n$, the following two equalities hold in the representations $\Lh{n,\psi_1}$ and $\Lh{n,\psi_2}$, respectively:
\begin{eqnarray*}
g(X^+) . \; \! b_{n,p} & = & \sum_{0 \leq a \leq p-1} \, M_{a} (n-2p+2a+2) \ \frac{[\psi_1](n,p) !}{[\psi_1](n,p-a-1) !} \ b_{n,p-1} \, , \\
X^+ . \; \! b_{n,p} & = & [\psi_2] (n,p) \ b_{n,p-1} \, .
\end{eqnarray*}
Therefore assertion (i') holds if and only if the column matrix $M[1]$ is a solution of the GQE equation $\GQEeqh{0}$.
\qed

\subsection{Unicity of the solutions}
\begin{prop} \label{prop_GQEequnicity}
Let $\psi_1, \psi_2 \in \Coloh$ and let $d \in \ZZ$. There is at most one solution of the GQE equation $\GQEeqh{d}$.
\end{prop}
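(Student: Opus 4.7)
Taking the difference $D := M - M'$ of two putative solutions, which still lies in $\Sinfh$, the statement reduces to showing that the kernel of $\mathbf T(\psi_1)$ acting on $\Sinfh$ is trivial. My plan is to exploit the lower-triangular structure of $\mathbf T(\psi_1)$ and argue by induction on $p \in \NN$ that $D_p = 0$, leveraging at each step the fact that the entries $D_p$ are polynomials in $u$ (not arbitrary functions), so that vanishing on an infinite subset of $\NN$ is enough to force $D_p = 0$.

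For the base case $p = 0$, only the diagonal entry $(\mathbf T(\psi_1))_{0,0}$ contributes, and the zeroth row of the homogeneous equation reads $D_0(n) = 0$ in $\Kh$ for every $n \geq 0$ (the ratio $[\psi_1](n,0)!/[\psi_1](n,0)!$ being the empty-product value $1$). Writing $D_0 = \sum_{m \in \NN} D_{0,m} h^m$ with $D_{0,m} \in \KK[u]$, each coefficient $D_{0,m}$ is a polynomial in $u$ that vanishes on the infinite set $\NN$, and therefore $D_{0,m} = 0$ for every $m$, whence $D_0 = 0$.

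For the inductive step, assuming $D_a = 0$ for every $a < p$, the $p$-th row of $\mathbf T(\psi_1) \cdot D = 0$ collapses to
\[
[\psi_1](n,p)! \cdot D_p(n) \ = \ 0 \quad \text{in } \Kh, \quad \forall \, n \geq p.
\]
Expanding $D_p = \sum_m D_{p,m} h^m$ and $[\psi_1](n,p)! = \sum_m a_m(n) h^m$ as power series in $h$, I would extract the lowest nontrivial degree in $h$ on the left-hand side. Taking $m_0$ to be minimal with $D_{p,m_0} \ne 0$ and proceeding by induction on $h$-valuation, one obtains from the equation an identity of the form $a_0(n) \cdot D_{p,m_0}(n) = 0$ in $\KK$ for all $n \geq p$. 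Provided that $a_0(n)$ is nonzero at infinitely many integers, the polynomial $D_{p,m_0}$ must vanish on an infinite set and is therefore zero, contradicting minimality; this yields $D_p = 0$ and completes the induction.

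The main obstacle lies in the degenerate case in which the constant-in-$h$ term $a_0(n)$ of $[\psi_1](n,p)!$ vanishes for all but finitely many $n \geq p$. In that situation the leading-order argument above breaks down at row $p$, and one must combine the row-$p$ equation with the rows of higher index $p' > p$, where the additional factors $[\psi_1](n,k)$ for $k$ in the range $p < k \leq p'$ produce new constraints; by a joint analysis one reduces each coefficient $D_{p,m}$ to a polynomial whose specialisation at every sufficiently large integer vanishes, forcing $D_p = 0$. This combinatorial propagation step is where the bulk of the technical work resides.
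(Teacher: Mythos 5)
Your overall strategy --- subtracting the two solutions, inducting on $p$ through the lower-triangular structure of $\mathbf T(\psi_1)$, and finishing with the rigidity of polynomials in $u$ vanishing on infinitely many integers --- is exactly the paper's. But your inductive step contains a genuine gap, located precisely where you announce that ``the bulk of the technical work resides''. After the induction hypothesis collapses row $p$ to the identity $[\psi_1](n,p)! \cdot D_p(n) = 0$ in $\Kh$ for every $n \geq p$, you expand in powers of $h$ and compare coefficients in degree $m_0$, obtaining $a_0(n)\, D_{p,m_0}(n) = 0$ where $a_0(n)$ is the constant-in-$h$ term of $[\psi_1](n,p)!$. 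This forces you into the ``degenerate case'' where $a_0(n)$ vanishes for almost all $n$, and your proposed remedy --- combining row $p$ with rows of higher index $p' > p$ --- would fail: those rows involve the entries $D_{p'}$ with $p' > p$, which at that stage of the induction are still unknown, so they yield no usable constraint on $D_p$ alone. The ``joint analysis'' you defer to is not a technicality to be filled in; as stated it has no purchase on the problem.

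The missing observation, which is the one the paper uses, is that the degenerate case never arises if you keep $n$ fixed and exploit that $\Kh$ is an integral domain \emph{before} expanding in $h$. By the paper's convention a colouring has no zero, so for each fixed $n \geq p$ the factorial $[\psi_1](n,p)!$ is a finite product of nonzero elements of $\Kh$, hence a nonzero element of $\Kh$: its $h$-adic valuation may depend on $n$ and be arbitrarily large, but it is finite, which is all one needs. Since $\Kh$ has no zero divisors, $[\psi_1](n,p)! \cdot D_p(n) = 0$ immediately gives $D_p(n) = 0$ in $\Kh$ for every $n \geq p$. Only now expand in $h$: each coefficient $D_{p,m} \in \KK[u]$ vanishes at every integer $n \geq p$, hence is the zero polynomial, so $D_p = 0$ and the induction closes. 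In short, your choice of a single global $h$-degree $m_0$ for $D_p$, paired against the $n$-dependent leading behaviour of the series $[\psi_1](n,p)!$, manufactured a spurious difficulty; quantifying over $n$ first and over $h$-degrees second removes it, and no cross-row propagation is needed.
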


\proof
Let $M$ be a column matrix in $(\KK[u] \cch)^\NN$ which satisfies the following homogeneous linear equation:
\begin{equation} \label{eq_GQEhomon}
\mathbf T(\psi_1) \, . \, M \ = \ 0 \, .
\end{equation}
Let us prove by induction on $p \in \NN$ that $M_p$ is zero. Let $p \in \NN$ and suppose that $M_{p'}$ is zero for every $p' \in \NN_{<p}$. Fix now $n \in \NN_{\geq p}$. In view of equation \eqref{eq_GQEhomon}, the following equality holds in $\Kh$.
$$ \sum_{0 \leq a \leq p} M_a (n-2p+2a) \ \frac{[\psi_1] (n,p) !}{[\psi_1] (n,p-a) !} \ = \ 0 \, . $$
Since $[\psi_1] (n,p) ! \neq 0$ (recall that a colouring has by definition no zero) and thanks to the induction hypothesis, the specialisation of $M_p$ at $n$ is thus zero. This being true for every $n \in \NN_{\geq p}$, the element $M_p \in \KK[u] \cch$ is zero. The proposition follows.
\qed

\subsection{Classical solutions}
We regard $\hzero{(\Sinfh)}$ as a $\KK$-vector subspace of $\Sinfh$. We denote by ${S^{\! \: [0]}_{\! \! \; \mathrm{cl}}}$ and ${S^{\! \: [-1]}_{\! \! \: \mathrm{cl}}}$ the column matrices in $\hzero{(\Sinfh)}$ defined by the following:
\begin{IEEEeqnarray*}{rClrClrCl}
{(S_{\mathrm{cl}})}_0 & \ := \ & u \, , \quad \quad & {(S_{\mathrm{cl}})}_1 & \ := \ & 1 \, , \quad \quad & {(S_{\mathrm{cl}})}_p & \ := \ & 0 \quad \ (p \in \NN_{\geq 2}) \, , \\
{(\bar S_{\mathrm{cl}})}_0 & \ := \ & 0 \, , \quad \quad & {(\bar S_{\mathrm{cl}})}_1 & \ := \ & 1 \, , \quad \quad & {(\bar S_{\mathrm{cl}})}_p & \ := \ & 0 \quad \ (p \in \NN_{\geq 2}) \, .
\end{IEEEeqnarray*}
Recall the definition of the classical colouring $\psicl$ of $\BB$:
$$ (\psicl)^\pm (n,k) \ := \ k \ \quad \quad (n,k \in \NNI, \ k \leq n) \, . $$

\begin{prop} \label{prop_GQEeqclassic}
Let $\psi_1, \psi_2 \in \Coloh$. Let ${S_{\! \! \; h}}$ and ${\bar S_{\! \! \; h}}$ be solutions of the GQE equations $\GQEeqh{-1}$ and $\GQEeqh{0}$, respectively. We assume that the colourings $\psi_1$ and $\psi_2$ satisfy the deformation axiom.
\begin{enumerate}
\item[1)] The column matrix $\hzero \pi^{\Sinf[\Kh]} ({S_{\! \! \; h}})$ is equal to $S_{\mathrm{cl}}$.
\item[2)] The column matrix $\hzero \pi^{\Sinf[\Kh]} ({\bar S_{\! \! \; h}})$ is equal to $\bar S_{\mathrm{cl}}$.
\end{enumerate}
\end{prop}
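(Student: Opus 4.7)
The plan is to reduce the proposition to the uniqueness statement of Proposition~\ref{prop_GQEequnicity}. Concretely, I will show that when $\psi_1, \psi_2$ satisfy the deformation axiom, both $\hzero \pi (S_h)$ and $\hzero \pi (\bar S_h)$ may be regarded as solutions in $\Sinfh$ of the classical GQE equations $\GQEeqh[\psicl,\psicl]{-1}$ and $\GQEeqh[\psicl,\psicl]{0}$ respectively, and I will check by a short direct calculation that $S_{\mathrm{cl}}$ and $\bar S_{\mathrm{cl}}$ also solve these equations; Proposition~\ref{prop_GQEequnicity} then forces the two required equalities.

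To transfer the equations to ${h \! = \! 0}$, I will first rewrite the entries of $\mathbf T(\psi)$ via the identity $\frac{[\psi](n,p)!}{[\psi](n,p-a)!} = \prod_{k = p-a+1}^{p} [\psi](n,k)$ (valid for $a \leq p \leq n$), which is a polynomial expression in the values of $[\psi]$; the entries of $N(\psi)$ are already polynomial in $[\psi]$. Consequently the assignments $\psi \mapsto \mathbf T(\psi)$ and $\psi \mapsto N(\psi)$ commute entry-wise with specialisation at ${h \! = \! 0}$. Since by the deformation axiom ${[\psi_i]}_0 (n,p) = p(n-p+1) = [\psicl](n,p)$ for $i = 1, 2$, applying $\hzero \pi$ to the equations $\mathbf T(\psi_1) \cdot S_h = N(\psi_2)[-1]$ and $\mathbf T(\psi_1) \cdot \bar S_h = N(\psi_2)[0]$ then yields the analogous equations with $\psi_1 = \psi_2 = \psicl$. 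Note in passing that since ${(S_h)}_p \to 0$ in the $h$-adic topology of $\KK[u] \cch$, its specialisation $\hzero \pi ({(S_h)}_p)$ is eventually zero, so $\hzero \pi (S_h)$ indeed lies in $\Sinfh$; the same holds for $\bar S_h$.

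It then remains to verify by explicit computation that $S_{\mathrm{cl}}$ and $\bar S_{\mathrm{cl}}$ are themselves solutions. For $\bar S_{\mathrm{cl}}$ only the $a = 1$ term survives, yielding $\big( \mathbf T(\psicl) \cdot \bar S_{\mathrm{cl}} \big)_p (n) = [\psicl](n,p)$ for $1 \leq p \leq n$, which matches $N(\psicl)_p (n)$; the edge cases $p = 0$ and $p > n$ reduce to $0 = 0$. For $S_{\mathrm{cl}}$ only the terms $a = 0$ and $a = 1$ survive, giving $(n-2p) + p(n-p+1)$ for $0 \leq p \leq n$, and this equals $(p+1)(n-p) = [\psicl](n,p+1) = \big( N(\psicl)[-1] \big)_p (n)$; for $p > n$ both sides vanish. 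The only nontrivial ingredient is thus the polynomial identity $(n-2p) + p(n-p+1) = (p+1)(n-p)$, which poses no real obstacle.
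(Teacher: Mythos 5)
Your proof is correct and follows essentially the same route as the paper's: one checks by direct computation that $S_{\mathrm{cl}}$ and $\bar S_{\mathrm{cl}}$ solve the classical equations $\GQEeqh[\psicl,\psicl]{-1}$ and $\GQEeqh[\psicl,\psicl]{0}$, observes via the deformation axiom that the specialisations at ${h \! = \! 0}$ of ${S_{\! \! \; h}}$ and ${\bar S_{\! \! \; h}}$ are also solutions of these equations, and concludes by the uniqueness result of proposition \ref{prop_GQEequnicity}. Your extra remarks (polynomiality of the entries of $\mathbf T(\psi)$ and $N(\psi)$ in the values of $[\psi]$, so that specialisation commutes with the construction, and the fact that the specialised matrices lie in $\Sinfh$) only make explicit steps the paper leaves implicit.
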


\proof
The following equalities hold for every $n,p \in \NN$:
\begin{IEEEeqnarray*}{rCl}
\sum_{0 \leq a \leq p} {(S_{\mathrm{cl}})}_a (n-2p+2a) \ \frac{p \! \; ! \, (n-p+1) !}{(p-a)! \, (n-p+a+1)!} \quad \quad && \\
= \ (n-2p) \, + \, p \, (n-p+1) & \ = \ & (p+1) \, (n-p) \, , \\
&& \\
\sum_{0 \leq a \leq p} {(\bar S_{\mathrm{cl}})}_a (n-2p+2a) \ \frac{p \! \; ! \, (n-p+1) !}{(p-a)! \, (n-p+a+1)!} & \ = \ & p \, (n-p+1) \, .
\end{IEEEeqnarray*}
As a consequence, the column matrices $S_{\mathrm{cl}}$ and $\bar S_{\mathrm{cl}}$ are solutions of the GQE equations $\GQEeqh[\psicl,\psicl]{-1}$ and $\GQEeqh[\psicl,\psicl]{0}$, respectively. Since $\psi_1$ \hbox{and $\psi_2$} satisfy the deformation axiom, the column matrices $\hzero \pi^{\Sinf[\Kh]} ({S_{\! \! \; h}})$ and $\hzero \pi^{\Sinf[\Kh]} ({\bar S_{\! \! \; h}})$ are also solutions of $\GQEeqh[\psicl,\psicl]{-1}$ and $\GQEeqh[\psicl,\psicl]{0}$, respectively. One concludes in view of the unicity of the solutions of GQE equations: see proposition \ref{prop_GQEequnicity}.
\qed

\subsection{Theorem}
This subsection is dedicated to theorem \ref{thm_GQEeq}, whose statement is given below. We establish some preparatory lemmas in the next paragraphs, before giving a proof of the theorem in paragraph \ref{para_paraproofGQEeq}.

\subsubsection{Statement}
\begin{thm} \label{thm_GQEeq}
Let $\psi_1, \psi_2 \in \Coloh$. We assume that $\psi_1$ and $\psi_2$ satisfy the deformation axiom.
\begin{enumerate}
\item[1)] The GQE equation $\GQEeqh[\psi_1,\psi_1]{-1}$ admits a solution if and only if $\psi_1$ is \hbox{$h$-admissible}.
\item[2)] If $\psi_1$ and $\psi_2$ are both $h$-admissible, then the GQE equation $\GQEeqh{0}$ admits a solution.
\end{enumerate}
\end{thm}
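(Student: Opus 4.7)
The plan is to expand the matrix equation $\mathbf T(\psi_1) \cdot M = N(\psi_2)[d]$ into its component form and solve it inductively, showing that the four axioms of $h$-admissibility correspond precisely to the obstructions. Written out, for each $p \geq 0$ and each integer $n \geq p$ the equation reads
\begin{equation*}
\sum_{a=0}^{p} \Big(\prod_{j=p-a+1}^{p}[\psi_1](n,j)\Big)\, M_a(n-2p+2a) \ = \ [\psi_2](n,p-d) \, ,
\end{equation*}
while the constraints for $p > n$ are automatic when $d \in \{-1,0\}$. Isolating the diagonal term yields a recursion determining $M_p(n)$ for integer $n \geq p$ from $M_0, \dots, M_{p-1}$ and the colourings, modulo divisibility by $[\psi_1](n,p)!$.

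For existence (the direction $\Leftarrow$ in (1), and (2)), assuming $\psi_1$ (and $\psi_2$) are $h$-admissible, I would build the polynomials $M_p \in \KK[u]\cch$ by induction on $p$, working order by order in $h$. At each step, the prescribed values at integers $n \geq p$ form a rational function $F_p(n)/[\psi_1](n,p)!$, whose numerator $F_p$ is polynomial in $n$ with coefficients in $\Kh$ by the regularity axiom and the inductive polynomiality of $M_0,\dots,M_{p-1}$. The crucial divisibility of $F_p(u)$ by $[\psi_1](u,p)!$ in $\KK[u]\cch$ is verified at each zero of $[\psi_1](u,p)!$: at order zero in $h$ these zeros are the integers $0,1,\dots,p-1$, and the quotient axiom $[\psi_1](n,n+1)=0$ together with the equations at lower indices $p' < p$ forces the required vanishings; the Verma axiom $[\psi_1](-n-2,k)=[\psi_1](n,n+k+1)$ supplies the extra symmetry needed at deformed zeros appearing at higher orders in $h$. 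The deformation axiom ensures that the classical limit reduces to $S_{\mathrm{cl}}$, respectively $\bar S_{\mathrm{cl}}$, of Proposition \ref{prop_GQEeqclassic}. Finally, $M_p \to 0$ in $\KK[u]\cch$ is obtained by a secondary induction: since the classical solution vanishes for $p \ge 2$, one has $M_p \in h\,\KK[u]\cch$ for $p \ge 2$; propagating this through the recursion pushes each coefficient of $h^m$ to zero for $p$ sufficiently large, yielding $h$-adic convergence.

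For necessity (the direction $\Rightarrow$ in (1)), given $M \in \Sinfh$ solving $\GQEeqh[\psi_1,\psi_1]{-1}$, I would invert the recursion to extract each axiom separately. The equation at $p=0$ reads $M_0(n)=[\psi_1](n,1)$, so polynomiality of $M_0$ forces $[\psi_1](u,1) \in \KK[u]\cch$; iterating on $p$ yields the full regularity axiom. Reducing modulo $h$ and invoking Proposition \ref{prop_GQEeqclassic} identifies the classical limit of $M$ with $S_{\mathrm{cl}}$, from which the deformation axiom is read off. The quotient axiom is obtained by specialising the recursive identity to $n \in \{0,\dots,p-1\}$, where $[\psi_1](n,p)!$ vanishes: the resulting relations, compared with the equations at lower indices, collapse only under $[\psi_1](n,n+1)=0$. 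The Verma axiom emerges from comparing the polynomial values of $M_p$ at $n$ and at $-n-2$: the two chains of recursive identities obtained by running the equation through these two points meet in a symmetry condition equivalent to $[\psi_1](-n-2,k)=[\psi_1](n,n+k+1)$.

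The main obstacle is the divisibility step in the existence argument: although the pointwise vanishings of $F_p(u)$ can be tested individually at the zeros of $[\psi_1](u,p)!$, tracking that the multiplicities match and that cancellations persist to all orders in $h$ requires a careful combinatorial bookkeeping in which the quotient and Verma axioms feed back into each successive induction step. This is also where the rigid form of the axioms becomes transparent: they encode exactly the vanishings imposed by the zeros of $[\psi_1](u,p)!$. I expect this step, together with the convergence analysis, to occupy most of the appendix.
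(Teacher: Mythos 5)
Your overall strategy does coincide with the appendix's: the paper likewise solves the triangular system row by row (organised as truncated systems indexed by the level $n$, with uniqueness of truncated solutions, claim \ref{claim_GQEequnicity}, used to patch them together), reduces each extension step to finitely many pointwise vanishing conditions at the integer zeros of $[\psi_1](u,p)!$, and gets the degree-$0$ statement in point 2 from the degree-$(-1)$ one via the shift lemma \ref{lem_shift} after normalising $\psi_1$ to be invertible by congruence. But your necessity direction has a genuine gap at the regularity axiom. From the rows of the equation you can only conclude, by induction on $p$, that $[\psi_1](\,\cdot\,,k)$ lies in $\KK[u]\cch$ for each \emph{fixed} $k$ — what the paper calls semi-regularity (lemma \ref{lem_semiregularity}) — whereas the regularity axiom demands $[\psi_1]\in\KK[u,v]\cch$, i.e.\ joint polynomiality in $(n,k)$, with $u$-degrees bounded uniformly in $k$ at each order of $h$. ``Iterating on $p$'' cannot produce this, since each iteration treats one value of $k$ in isolation. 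The paper closes the gap with two further arguments absent from your sketch: first, the hypothesis $M\in\Sinfh$ (the solution converges to zero — a hypothesis your necessity argument never invokes) is exploited to bound $\deg_u\big({[\psi_1]}_m(u,k)\big)$ uniformly in $k$ (quasi-regularity, lemma \ref{lem_quasiregularity}); second, a separate interpolation argument (lemma \ref{lem_regularity}) shows that quasi-regularity \emph{together with the Verma axiom} forces joint polynomiality. Without these steps the ``only if'' of point 1 is not proved.

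Two further corrections to your bookkeeping of the axioms. There are no ``deformed zeros appearing at higher orders in $h$'': under the deformation and quotient axioms one has the exact factorisation $[\psi_1](u,p)! = y\,\prod_{l=0}^{p-1}(u-l)$ with $y$ invertible in $\KK[u]\cch$ — the quotient axiom $[\psi_1](k-1,k)=0$ is precisely what makes $(u-k+1)$ divide $[\psi_1](u,k)$ on the nose — so the divisibility test takes place at the undeformed integers only, and it is the Verma axiom, not the quotient axiom plus lower equations, that supplies the vanishings there: in claim \ref{claim_GQEeqrec}, evaluating at $l\in\NN_{\leq n}$, the quotient axiom truncates the sum, the lower-level Verma identities convert the factorial ratios to the reflected point $-l-2$, and the residual condition is exactly $[\psi'](l,n+2)=[\psi'](-l-2,n-l+1)$. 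Finally, your convergence step is the other hard point, and ``propagating through the recursion'' is not an argument: the actual mechanism (lemma \ref{lem_GQEeqvanish}) is a degree count in which the diagonal factorial has $u$-degree $p$ at leading order in $h$ (deformation axiom) while quasi-regularity bounds the degrees of all the data uniformly in $k$, so that $\deg_u(M_{m,p})+p$ stays bounded and the $h^m$-coefficient of $M_p$ is forced to vanish for $p$ large. You correctly flag this step as heavy, but the degree mechanism — and hence the precise role of the regularity axiom on the existence side too — needs to be made explicit for the proof to close.
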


\subsubsection{Lemma: shift of the solutions}
\begin{lem} \label{lem_shift}
Let $\psi_1, \psi_2 \in \Coloh$ and let $M \in \Sinf[\Kh]$. We assume that the colouring $\psi_1$ is admissible. The following two assertions are equivalent.
\begin{enumerate}
\item[1)] The column matrix $M$ is a solution of the GQE equation $\GQEeqh{-1}$.
\item[2)] The column matrix $M[1]$ is a solution of the GQE equation $\GQEeqh[{\psi_1, \psi_1 \psi_2 }]{0}$.
\end{enumerate}
\end{lem}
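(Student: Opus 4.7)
The plan is to unpack both matrix equations into families of scalar relations indexed by pairs $(n, p) \in \NN \times \NN$, and then to establish the equivalence by a direct reindexing. First I would spell out the $(n, p)$ scalar form of $\GQEeqh[\psi_1, \psi_2]{-1}$: using the definitions of $\mathbf T(\psi_1)$ and of $N(\psi_2)[-1]$, this reads
\[
\sum_{a=0}^{p} \frac{[\psi_1](n,p)!}{[\psi_1](n,p-a)!} \, M_a(n - 2p + 2a) \ = \ [\psi_2](n, p+1)
\]
for $0 \leq p \leq n - 1$, equals $0$ when $p = n$, and is vacuous for $p > n$. I would write the analogous scalar form of $\GQEeqh[\psi_1, \psi_1\psi_2]{0}$ applied to the shifted matrix $M[1]$.

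The heart of the argument is a reindexing trick exploiting $(M[1])_0 = 0$. For $1 \leq p \leq n$, the $a = 0$ term of the $M[1]$ sum vanishes, so substituting $b = a - 1$ and factoring $[\psi_1](n, p)! = [\psi_1](n, p) \cdot [\psi_1](n, p-1)!$ transforms the $(n, p)$ scalar equation of $\GQEeqh[\psi_1, \psi_1\psi_2]{0}$ for $M[1]$ into
\[
[\psi_1](n, p) \sum_{b=0}^{p-1} \frac{[\psi_1](n, p-1)!}{[\psi_1](n, p-1-b)!} \, M_b \! \left( n - 2(p-1) + 2b \right) \ = \ [\psi_1](n, p) \, [\psi_2](n, p).
\]
Since $\psi_1$ is admissible, $[\psi_1](n, p)$ lies in $\Kh^\times$; dividing both sides by $[\psi_1](n, p)$ yields exactly the $(n, p-1)$ scalar equation of $\GQEeqh[\psi_1, \psi_2]{-1}$ for $M$. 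The bijection $p \mapsto p - 1$ between the index ranges $\{1, \ldots, n\}$ on the $M[1]$ side and $\{0, \ldots, n-1\}$ on the $M$ side then identifies the two families of equations.

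The main obstacle is to handle the trivial and boundary rows consistently. The $p = 0$ row of the $M[1]$ system is automatically satisfied, both sides vanishing by $(M[1])_0 = 0$; the rows with $p > n$ are vacuous on both sides of both equations since $\mathbf T(\psi_1)_{p,a}$ is zero when $p > n$ and $N([\psi_1\psi_2])_p(n)$ is zero when $p > n$; and the boundary row at $p = n$ of $\GQEeqh[\psi_1, \psi_2]{-1}$ has to be reconciled with the shifted picture, relying on the invertibility of $\psi_1$ to ensure the correspondence is complete. The admissibility hypothesis is precisely what licenses the division by $[\psi_1](n, p)$ at every index, ensuring that the bijection of scalar equations is rigorous and uniform.
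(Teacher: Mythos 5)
Your proof is correct and follows essentially the same route as the paper's: both unpack the two matrix equations into the scalar relations indexed by pairs $(n,p)$ with $p \leq n$, use $(M[1])_0 = 0$ together with the reindexing $b = a-1$ and the factorisation $[\psi_1](n,p)! = [\psi_1](n,p) \, [\psi_1](n,p-1)!$ to identify row $p$ of the shifted system with $[\psi_1](n,p)$ times row $p-1$ of the original system, and then invoke the admissibility of $\psi_1$ to divide by the invertible scalar $[\psi_1](n,p)$. Your explicit accounting of the trivial rows ($p = 0$ automatic, $p > n$ vacuous) and of the boundary row $p = n$ is, if anything, slightly more detailed than the paper's own proof, which states the key identity for $0 < p \leq n$ and concludes directly.
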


\proof
On the one hand, assertion (i) holds if and only if the following equality does for every $n,p \in \NN$ such that $p \leq n$:
$$ \sum_{0 \leq a \leq p} \, M_a (n-2p+2a) \, \! \: \frac{[\psi_1](n,p) !}{[\psi_1] (n,p-a)!} \ = \ \left\{ {\renewcommand{\arraystretch}{1.2} \begin{array}{ccl}
[\psi_2] (n,p+1) & \quad & \text{if } 0 < p+1 \leq n \, , \\
0 && \text{else.} \end{array}} \right. $$
On the other hand, assertion (ii) holds if and only if the following equality does for every $n,p \in \NN$ such that $p \leq n$:
$$ \sum_{0 \leq a \leq p} \, {\big( M[1] \big)}_{\! \! \: a} (n-2p+2a) \, \! \: \frac{[\psi_1](n,p) !}{[\psi_1] (n,p-a)!} \ = \ \left\{ {\renewcommand{\arraystretch}{1.2} \begin{array}{ccl}
[\psi_1 \psi_2] (n,p) & \quad & \text{if } 0 < p \leq n \, , \\
0 && \text{else.} \end{array}} \right. $$
One then concludes in view of the following equality, satisfied for every $n,p \in \NN$ such that $0 < p \leq n$ (recall that $\psi_1$ is by assumption admissible):
\begin{multline*}
\sum_{0 \leq a \leq p} \, {\big( M[1] \big)}_{\! \! \: a} (n-2p-2a) \, \! \: \frac{[\psi_1](n,p) !}{[\psi_1] (n,p-a)!} \\
= \ [\psi_1](n,p) \sum_{0 \leq a \leq p-1} \, M_a (n-2(p-1)+2a) \, \! \: \frac{[\psi_1](n,p-1) !}{[\psi_1] (n,p-1-a)!} \, \cdot
\end{multline*}
\qed

\subsubsection{Lemma: semi-regularity}
Let $P_h = \sum_{m \in \NN} P_m \, h^m$ be an element in $\KK[u] \cch$ and let $n \in \ZZ$. We denote \hbox{by $P_h(n)$} the power series in $\Kh$ defined by the following:
$$ P_h(n) \ := \ \sum_{m \in \NN} P_m(n) \, h^m \, . $$
Let $n' \in \NN$, we then regard the $\Kh$-algebra $\KK[u] \cch$ as a \hbox{$\Kh$-subalgebra} \hbox{of $\Kh^{\NN_{\geq n'}}$}: an element $P_h \in \KK[u] \cch$ is identified with the function in $\Kh^{\NN_{\geq n'}}$ whose value at $n$ is $P(n)$ for every $n \in \NN_{\geq n'}$.

\begin{defi}
A colouring $\psi$ of $\BB$ with values in $\Kh$ is said semi-regular if $[\psi](\, \cdot \, ,k) \in \KK[u] \cch$, for every $k \in \NNI$.
\end{defi}

\begin{lem} \label{lem_semiregularity}
Let $\psi \in \Coloh$. If the GQE equation $\GQEeqh[\psi,\psi]{-1}$ has a solution, then $\psi$ is semi-regular.
\end{lem}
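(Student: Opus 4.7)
The plan is to proceed by induction on $k \in \NNI$, constructing polynomials $Q_k \in \KK[u]\cch$ such that $Q_k(n) = [\psi](n,k)$ for every integer $n \geq k$. This is precisely the assertion that $[\psi](\,\cdot\,, k) \in \KK[u]\cch$, i.e.\ the semi-regularity of $\psi$.

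Let $M = (M_a)_{a \in \NN}$ be a solution of $\GQEeqh[\psi,\psi]{-1}$. Unpacking the matrix equation exactly as in the proof of proposition~\ref{prop_GQEeqspan}, one has, for all integers $0 \leq p \leq n$,
\begin{equation*}
\sum_{a=0}^{p} M_a(n - 2p + 2a) \cdot \frac{[\psi](n,p)!}{[\psi](n,p-a)!} \ = \ \begin{cases} [\psi](n, p+1) & \text{if } p < n, \\ 0 & \text{if } p = n. \end{cases}
\end{equation*}
The key observation is that this is a lower-triangular recursion in $k := p+1$: the left-hand side involves only the $M_a$ (which already lie in $\KK[u]\cch$) and the values $[\psi](n, j)$ for $j \leq p = k-1$, while the right-hand side (for $p < n$) isolates $[\psi](n, k)$.

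For the base case $k = 1$ I take $p = 0$, which yields $M_0(n) = [\psi](n, 1)$ for every $n \geq 1$, so that $Q_1 := M_0$ does the job. For the induction step, assume $Q_1, \ldots, Q_{k-1} \in \KK[u]\cch$ have been constructed; specialising the equation to $p = k - 1$ and solving for $[\psi](n, k)$ (which is legitimate for $n \geq k$ since the right-hand side is then $[\psi](n, k)$), I obtain
\begin{equation*}
[\psi](n, k) \ = \ \sum_{a=0}^{k-1} M_a\bigl(n - 2k + 2 + 2a\bigr) \cdot \prod_{j=k-a}^{k-1} [\psi](n, j),
\end{equation*}
with the convention that the empty product (for $a = 0$) equals $1$. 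By the induction hypothesis $[\psi](n, j) = Q_j(n)$ for every $j < k$ and every $n \geq k > j$, so one sets
\begin{equation*}
Q_k(u) \ := \ \sum_{a=0}^{k-1} M_a\bigl(u - 2k + 2 + 2a\bigr) \cdot \prod_{j=k-a}^{k-1} Q_j(u) \ \in \ \KK[u]\cch,
\end{equation*}
which indeed satisfies $Q_k(n) = [\psi](n,k)$ for every $n \geq k$, closing the induction.

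The argument is entirely formal: the only ingredient is the triangular structure of the GQE equation, which allows one to read off $[\psi](n, k)$ as a polynomial expression in the known quantities $M_a$ and $[\psi](n, j)$ for $j < k$. No substantial obstacle is anticipated; the statement is essentially a direct unravelling of the definitions.
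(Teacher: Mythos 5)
Your proof is correct and is essentially the paper's own argument: the paper likewise proceeds by induction on $k \in \NNI$, evaluating the row $p = k$ of the triangular equation $\mathbf T(\psi) \, . \, M = N(\psi)[-1]$ at each $n \geq k+1$ to express $[\psi](n,k+1)$ as $\sum_{0 \leq a \leq k} S_{h,a}(n-2k+2a) \, [\psi](n,k)!/[\psi](n,k-a)!$, which lies in $\KK[u] \cch$ by the induction hypothesis. Your version differs only cosmetically, shifting the row index (using $p = k-1$ to isolate $[\psi](\,\cdot\,,k)$) and spelling out the base case and the telescoped product $\prod_{j=k-a}^{k-1} [\psi](n,j)$ that the paper leaves implicit in the factorial ratio.
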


\proof
Suppose that the equation $\GQEeqh[\psi,\psi]{-1}$ has a solution $S_h = {(S_{h,p})}_{p \in \NN}$. Let us prove by induction on $k \in \NNI$ the following assertion.
\begin{equation} \tag{$\ast_k$}
[\psi](\, \cdot \, ,k) \ \in \ \KK[u] \cch \, .
\end{equation}
Let $k \in \NN$ and suppose that assertion $(\ast_{k'})$ holds for every $k' \in \NNI_{\leq k}$. The \hbox{vector $S_h$} being a solution of the GQE equation $\GQEeqh[\psi,\psi]{-1}$, the following equality holds for every $n \in \NN_{\geq k+1}$.
\begin{eqnarray}
\sum_{0 \leq a \leq k} S_{h,a}(n-2k+2a) \ \frac{[\psi] (n,k) !}{[\psi_1] (n,k-a) !} \ = \ [\psi] (n,k+1) \, .
\end{eqnarray}
Assertion $(\ast_{k+1})$ then holds by induction hypothesis.
\qed

\subsubsection{Lemma: quasi-regularity I}
\begin{defi}
A colouring $\psi$ of $\BB$ with values in $\Kh$ is said quasi-regular if $\psi$ is semi-regular and if the following condition holds:
$$ \sup \big\{ \deg_{u} \big( {[\psi]}_m(u,k) \big) \, ; \ k \in \NNI \big\} \ < \ \infty \, , \quad \quad \forall \, m \in \NN \, . $$
\end{defi}

\vsp

\begin{lem} \label{lem_quasiregularity}
Let $\psi \in \Coloh$. If the GQE equation $\GQEeqh[\psi,\psi]{-1}$ has a solution, then $\psi$ is quasi-regular.
\end{lem}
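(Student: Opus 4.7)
I would prove the lemma by induction on $m \in \NN$, establishing that $\sup_{k \in \NNI} \deg_u {[\psi]}_m(u,k) < \infty$ for every $m$. By Lemma~\ref{lem_semiregularity}, $[\psi](\cdot,k) \in \KK[u]\cch$, so each ${[\psi]}_m(u,k)$ is a polynomial in $u$ and $\deg_u {[\psi]}_m(u,k) \in \NN$ is well-defined. Let $S_h = {(S_{h,a})}_{a \in \NN}$ denote the hypothesised solution of $\GQEeqh[\psi,\psi]{-1}$. The equation at $(n,p)$ with $p < n$ being a polynomial identity in $n$ by semi-regularity, it extends to the polynomial identity
$$ [\psi](u,p+1) \ = \ \sum_{a=0}^{p} S_{h,a}(u-2p+2a) \prod_{k'=p-a+1}^{p} [\psi](u,k') \quad \text{in } \KK[u]\cch , $$
call it $(\mathrm{R}_p)$. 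Since $S_h \in \Sinfh$ tends to $0$ in the $h$-adic topology, for each fixed $m$ there is a finite $A_m \in \NN$ with $S_{h,a} \equiv 0 \pmod{h^{m+1}}$ for every $a > A_m$; writing $S_{h,a} = \sum_j S_{j,a}(u) h^j$, the finitely many polynomials $S_{j,a}$ with $a \leq A_m$ and $j \leq m$ share a common $u$-degree bound $D_m$.

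For the inductive step, assume the uniform bound is known for all $m' < m$. Extracting the coefficient of $h^m$ in $(\mathrm{R}_p)$ and isolating the unique summand in which ${[\psi]}_m(u,p)$ appears linearly --- namely the term indexed by $(a, j_0, j_1) = (1, 0, m)$ --- one obtains an affine first-order recursion
$$ {[\psi]}_m(u,p+1) \ = \ S_{0,1}(u-2p+2) \cdot {[\psi]}_m(u,p) \ + \ T_p^{(m)}(u) , $$
where $T_p^{(m)}(u) \in \KK[u]$ gathers the term $S_{m,0}(u-2p)$, all $a \geq 2$ contributions, and the summands containing some ${[\psi]}_{m'}(u,k')$ with $m' < m$. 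The induction hypothesis together with the bound $D_m$ ensures that $\deg_u T_p^{(m)}$ is bounded independently of $p$. A priori, iterating this recursion produces degrees of order $p \cdot \deg S_{0,1}$, which is precisely the obstacle to uniformity.

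To close the argument I would use the boundary equations of the GQE system at $(n,p) = (n,n)$, namely
$$ \sum_{a=0}^{n} S_{h,a}(2a-n) \prod_{k'=n-a+1}^{n} [\psi](n,k') \ = \ 0 , \quad n \in \NNI , $$
which are scalar identities valid for every $n$. Truncated modulo $h^{m+1}$, they furnish (for $n > A_m$) a family of constraints indexed by $n$. Comparing the leading $u$-behaviour of the iterates produced by $(\mathrm{R}_p)$ against these constraints for arbitrarily large $n$ forces $S_{0,1}$ to be a constant, collapsing the affine recursion to ${[\psi]}_m(u,p+1) = c \cdot {[\psi]}_m(u,p) + T_p^{(m)}(u)$ with $\deg T_p^{(m)}$ uniformly bounded. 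A direct iteration then yields $\sup_p \deg_u {[\psi]}_m(u,p) < \infty$. The base case $m=0$ proceeds along the same lines, with the forcing term $T_p^{(0)}$ built from products of lower-indexed ${[\psi]}_0(u,k')$ coming from the $a \geq 2$ summands; one handles it by a simultaneous inner induction on $p$.

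\textbf{Main obstacle.} The delicate step is turning the pointwise boundary identities (which are evaluations at specific integers $u = n$, not polynomial identities) into the polynomial statement forcing $\deg_u S_{0,1} = 0$. The $a \geq 2$ summands involve nested products $\prod_{k'=n-a+1}^{n} [\psi](n,k')$ in which the indices $k'$ depend on $n$; these are not directly captured by the $u$-polynomial identity $(\mathrm{R}_p)$ (in which $k'$ is fixed), and the compatibility of the two systems must be extracted by a careful analysis of leading coefficients along large $n$.
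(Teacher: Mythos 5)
Your skeleton up to the truncation coincides with the paper's proof: semi-regularity via lemma \ref{lem_semiregularity}, the polynomial identities in $\KK[u]\cch$ obtained from the rows $p<n$, and the reduction modulo $h^{m+1}$ to the finitely many $S_{h,a}$ with $a \leq A_m$ (the paper's $k_1$). After that point, however, your argument has two genuine gaps. First, the structural claim that $(a,j_0,j_1)=(1,0,m)$ gives ``the unique summand in which ${[\psi]}_m(u,p)$ appears linearly'' is false: writing $S_{h,a}=\sum_j S_{j,a}\,h^j$ as you do, for every $a \geq 2$ with $S_{0,a}\neq 0$ the coefficient of $h^m$ of $S_{h,a}(u-2p+2a)\prod_{k'=p-a+1}^{p}[\psi](u,k')$ contains the terms
$$ S_{0,a}(u-2p+2a)\;{[\psi]}_m(u,k') \prod_{k''\neq k'} {[\psi]}_0(u,k'') \qquad (p-a < k' \leq p)\, , $$
which carry the same index $m$. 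You have buried these in $T_p^{(m)}$, so the assertion that $\deg_u T_p^{(m)}$ is bounded independently of $p$ ``by the induction hypothesis'' is circular: $T_p^{(m)}$ contains exactly the quantities being bounded, multiplied by factors ${[\psi]}_0$ of positive degree in general, so each pass through this channel can raise the degree. Consequently, even granting your collapse of $S_{0,1}$ to a constant, the recursion is not degree-non-increasing, and the ``simultaneous inner induction on $p$'' you invoke for $m=0$ can only prove statements for each fixed $p$, not the required $p$-uniform bound.

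Second, the keystone of your plan --- extracting $\deg_u S_{0,1}=0$ from the boundary rows $p=n$ --- is precisely the step you leave undone, as your own ``main obstacle'' paragraph concedes. Those rows are evaluations at the single point $u=n$; the paper extracts from them only the quotient axiom $[\psi](n,n+1)=0$ (lemma \ref{lem_quotientaxiom}), and no leading-coefficient analysis along large $n$ is carried out, by you or by the paper, that would force constancy of $S_{0,1}$ --- nor would that alone suffice, since the $a\geq 2$ terms above would still need to be controlled. By contrast, the paper's proof never touches the boundary rows: it stops at the truncated congruence and closes the induction on $m$ directly. The fact which makes the inductive step actually close, and which your detour tries to reconstruct by other means, is the identification of the solution modulo $h$: in the regime where the lemma is applied ($\psi$ satisfying the deformation axiom, as in theorem \ref{thm_GQEeq}), proposition \ref{prop_GQEeqclassic} gives $S_{0,0}=u$, $S_{0,1}=1$ and $S_{0,a}=0$ for all $a\geq 2$, so in the coefficient of $h^m$ the unknown ${[\psi]}_m(u,k)$ enters with coefficient $1$, no other same-$m$ term survives, and the forcing term is uniformly bounded by the induction hypothesis. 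Your proposal neither assumes nor establishes any of this, so as written it does not constitute a proof.
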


\proof
Suppose that the equation $\GQEeqh[\psi,\psi]{-1}$ has a solution $S_h = {(S_{h,p})}_{p \in \NN}$. We know from lemma \ref{lem_semiregularity} that the colouring $\psi$ is semi-regular. Let us prove by induction on $m \in \NN$ the following assertion.
\begin{equation} \tag{$\ast_m$}
\sup \big\{ \deg_{u} \big( {[\psi]}_m(u,k) \big) \, ; \ k \in \NNI \big\} \ < \ \infty \, .
\end{equation}
Let $m \in \NN$ and suppose that assertion $(\ast_{m'})$ holds for every $m' \in \NN_{< m}$. The vector $S_h$ being a solution of the GQE equation $\GQEeqh[\psi,\psi]{-1}$, the following equalities hold in $\KK[u] \cch$.
$$ \sum_{0 \leq a \leq k} S_{h,a}(u -2k +2a) \ \frac{[\psi] (u,k) !}{[\psi] (u,k-a) !} \ = \ [\psi] (u,k+1) \, , \quad \quad \forall \, k \in \NN \, . $$
Besides, since the sequence ${(S_{h,k})}_{k \in \NN}$ converges to zero, there exists $k_1 \in \NN$ such that $S_{h,k} \in h^{m+1} \KK[u] \cch$ for all $k > k_1$. Therefore, the previous equalities imply the following ones in $\KK[u] \cch / h^{m+1} \KK[u] \cch$.
$$ \sum_{0 \leq a \leq k_1} S_{h,a}(u-2k+2a) \ \frac{[\psi] (u,k) !}{[\psi] (u,k-a) !} \ = \ [\psi] (u,k+1) \, , \quad \quad \forall \, k \in \NN_{\geq k_1} \, . $$
Assertion $(\ast_{m})$ then holds by induction hypothesis.
\qed

\subsubsection{Lemma: quasi-regularity II}
\begin{lem} \label{lem_GQEeqvanish}
Let $\psi_1, \psi_2 \in \Coloh$. We assume that the colourings $\psi_1, \psi_2$ are quasi-regular and \hbox{that $\psi_1$} satisfies the deformation axiom. Let $M$ be a column matrix in $(\KK[u] \cch)^\NN$. If ${\mathbf T(\psi_1) \! \: . \! \: M} = N(\psi_2)$, then $M \in \Sinfh$.
\end{lem}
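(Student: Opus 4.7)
The plan is to prove by induction on $m \in \NN$ that there exists $p_m \in \NN$ such that $M_a \in h^{m+1} \KK[u] \cch$ for every $a > p_m$; this is equivalent to $M \in \Sinfh$. First, specialising the equation $\mathbf T(\psi_1) \, . \, M = N(\psi_2)$ at $p = 0$ immediately yields $M_0(n) = 0$ for all $n \geq 0$, hence $M_0 = 0$ as a polynomial in $u$.

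For the base case $m = 0$, I will extract the $h^0$-coefficient of the equation. Thanks to the deformation axiom on $\psi_1$, namely $[\psi_1]_0(u,k) = k(u-k+1)$, this produces the polynomial identity
\begin{equation*}
  \sum_{a=0}^p (M_a)_0(u - 2p + 2a) \, T^{[0]}_{a,p}(u) \ = \ [\psi_2]_0(u,p) \quad \text{in } \KK[u],
\end{equation*}
where $T^{[0]}_{a,p}(u) := \frac{[\psi_1]_0(u,p)!}{[\psi_1]_0(u,p-a)!}$ has degree exactly $a$ in $u$, so in particular $T^{[0]}_{p,p}(u) = p! \, u(u-1) \cdots (u-p+1)$ has degree exactly $p$. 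Isolating $(M_p)_0$ and setting $K(p) := \max_{a \leq p}(\deg (M_a)_0 + a)$, a degree comparison together with $\deg_u [\psi_2]_0(u,p) \leq D_2^0$ (uniform in $p$ by quasi-regularity of $\psi_2$) will yield $K(p) \leq \max(K(p-1), D_2^0)$. Iterating from $K(0) = -\infty$ will give $K(p) \leq D_2^0$ for all $p$, whence $(M_a)_0 = 0$ for $a > D_2^0$; I set $p_0 := D_2^0$.

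The inductive step proceeds along the same lines. The $h^m$-coefficient of the equation takes the form
\begin{equation*}
  \sum_{a=0}^p (M_a)_m(u - 2p + 2a) \, T^{[0]}_{a,p}(u) \ = \ R_m(u,p),
\end{equation*}
where $R_m(u,p)$ collects $[\psi_2]_m(u,p)$ together with terms involving $(M_a)_{k_1}$ for $k_1 < m$ multiplied by the $h^{m-k_1}$-coefficient of $\frac{[\psi_1](u,p)!}{[\psi_1](u,p-a)!}$. By the induction hypothesis, only indices $a \leq p_{m-1}$ contribute non-trivially once $p > p_{m-1}$, reducing the correction sum to a finite one. Quasi-regularity of $\psi_1$ will bound the $u$-degree of each $h^{k_2}$-coefficient of $\frac{[\psi_1](u,p)!}{[\psi_1](u,p-a)!}$ uniformly in $p$ for each fixed $a$ and $k_2$, and quasi-regularity of $\psi_2$ will bound $\deg_u [\psi_2]_m(u,p)$ uniformly in $p$. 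Hence $\deg_u R_m(u,p) \leq \tilde D_m$ for some constant $\tilde D_m$ independent of $p$, and replaying the base-case degree comparison yields $(M_a)_m = 0$ for $a > \tilde D_m$; setting $p_m := \max(p_{m-1}, \tilde D_m)$ closes the induction.

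The main technical point will be the uniform-in-$p$ bound on $\deg_u R_m(u,p)$, which is precisely where quasi-regularity of both $\psi_1$ and $\psi_2$ intervenes. Once that bound is in place, the fact that $T^{[0]}_{p,p}(u)$ has degree exactly $p$ --- forced by the deformation axiom on $\psi_1$ --- is what makes the recursion terminate; the divisibility in $\KK[u]$ needed to solve for $(M_p)_m$ at each step is automatic, since $M_p$ is by assumption an element of $\KK[u] \cch$.
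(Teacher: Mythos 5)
Your proof is correct and follows essentially the same route as the paper's: an outer induction on the $h$-adic order $m$ combined with an inner degree recursion in the column index $p$, where the deformation axiom forces the leading factor $T^{[0]}_{p,p}$ to have degree exactly $p$ and quasi-regularity of $\psi_1$ and $\psi_2$ supplies the uniform-in-$p$ degree bound on all remaining terms (your constant $\tilde D_m$ plays the role of the paper's bounded sets $D_1(m)$ and $D_2(m)$). The only cosmetic difference is that the paper phrases the inner induction as the assertion $\deg_u(M_{m-1,p}) + p \leq \sup \big( D_1(m) \cup D_2(m) \big)$ rather than your recursion $K(p) \leq \max \big( K(p-1), \tilde D_m \big)$, which amounts to the same bookkeeping.
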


\proof
Suppose that ${\mathbf T(\psi_1) \! \: . \! \: M} = N(\psi_2)$. Let us prove by induction on $m \in \NNI$ the following assertion.
\begin{equation} \tag{$\ast_m$}
\text{There exists $p(m) \in \NN$ such that $M_p \in h^m \; \! \KK[u] \cch$ for all $p \in \NN_{\geq p(m)}$.}
\end{equation}
Let $m \in \NNI$ and suppose that assertion $(\ast_{m'})$ holds for every $m' \in \NNI_{< m}$. Thanks to the induction hypothesis, there exists $p(m-1) \in \NN$ such that $M_p$ belongs to ${h^{m-1} \: \! \KK[u] \cch}$ for every $p \in \NN_{> p(m-1)}$. For each $p \in \NN$, we denote \hbox{by $M_{m,p}$ ($m \in \NN$)} the elements in $\KK[u]$ such that $M_p = \sum_{m \in \NN} {M_{m,p} \: \! h^m}$. We besides denote by $D_1(m)$ and $D_2(m)$ the sets defined by the following.
\begin{IEEEeqnarray*}{lCl}
D_1(m) & := & \left \{ \deg_{u} \: \! (M_{p_0,m_0}) +  \sum_{l = 1}^a \deg_{u} \big( {[\psi_1]}_{m_l} (u,p_l) \big) \, ;
{\renewcommand{\arraystretch}{1.2} \begin{array}l
0 \leq a \leq p(m-1) \, , \\
p_l \in \NNI, \ \forall \, l \in \NN_{\leq a} \, , \\
\sum_{l = 0}^a m_l = m - 1
\end{array}} \right \} \, , \\
&& \\
D_2(m) & := & \bigg \{ \deg_{u} \big( {[\psi_2]}_{m-1} (u,p) \big) \, ; \ p \in \NNI \bigg \} \, .
\end{IEEEeqnarray*}
Let us prove by induction on $p \in \NNI$ the following assertion.
\begin{equation} \tag{$\star_p$}
\deg_{u} \: \! (M_{m-1,p}) \, + \, p \ \leq \ \sup \big( D_1(m) \cup D_2(m) \big) \, .
\end{equation}
Remark first that $\deg_{u} \big( {[\psi_1]}_0 (u,p) \big) = 1$ for all $p \in \NNI$, since by assumption the colouring $\psi_1$ satisfies the deformation axiom. Fix \hbox{now $p \in \NNI$}. \hbox{If $p \leq p(m-1)$}, assertion $(\star_p)$ follows from the definition of $D_1(m)$. Suppose \hbox{that $p > p(m-1)$} and that assertion $(\star_{p'})$ holds for \hbox{all $p' \in \NNI_{< p}$}. Since ${\mathbf T(\psi_1) \! \: . \! \: M} = N(\psi_2)$, the following equality holds in $\KK[u] \cch$.
$$ \sum_{0 \leq a \leq p} M_a (u-2p+2a) \ \frac{[\psi_1] (u,p) !}{[\psi_1] (u,p-a) !} \ = \ [\psi_2] (u,p+1) \, . $$
In view of the definition of $D_1(m), D_2(m)$, since besides $M_a \in {h^{m-1} \:\! \KK[u] \cch}$ for every $a \in \NN_{> p(m-1)}$, since $\deg_{u} \big( {[\psi_1]}_0 (u,p) \big) = 1$ for \hbox{all $p \in \NNI$} and thanks to the induction hypothesis, assertion thus $(\star_p)$ holds. Besides, thanks to quasi-regularity of the colourings $\psi_1$ and $\psi_2$, we know that the sets $D_1$ and $D_2$ are bounded. Assertion $(\star_p)$ being true for every $p \in \NNI$, \hbox{assertion $(\ast_m)$} then follows.
\qed

\subsubsection{Lemma: quotient axiom}
Remark that the quotient axiom (see definition \ref{defi_hadmissible} of a $h$-admissible colouring) makes sense for a semi-regular colouring.

\begin{lem} \label{lem_quotientaxiom}
Let $\psi \in \Coloh$. If the GQE equation $\GQEeqh[\psi,\psi]{-1}$ has a solution, then $\psi$ is semi-regular and satisfies the quotient axiom.
\end{lem}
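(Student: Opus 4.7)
The plan is to leverage Lemma \ref{lem_semiregularity} to get semi-regularity for free, so the real work is establishing the quotient axiom $[\psi](n,n+1)=0$ for every $n\in\NN$. The key observation is that, once semi-regularity is known, the $k$-th row of the GQE equation $\GQEeqh[\psi,\psi]{-1}$ becomes a polynomial identity in the free variable $n$, but the right-hand side of this equation is given by $N(\psi)[-1]$, which has a built-in jump at $n=k$.

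Let $S=(S_a)_{a\in\NN}\in\Sinfh$ be a solution. Fix $k\in\NN$ and define
\[
P_k(u)\ :=\ \sum_{0\leq a\leq k}\, S_a(u-2k+2a)\,\frac{[\psi](u,k)!}{[\psi](u,k-a)!}\ \in\ \KK[u]\cch.
\]
By semi-regularity (Lemma \ref{lem_semiregularity}), each factor $[\psi](u,j)$ lies in $\KK[u]\cch$, and each $S_a$ lies in $\KK[u]\cch$ by definition of $\Sinfh$; so $P_k$ is genuinely an element of $\KK[u]\cch$. The first step is to check carefully, from the definition of $\mathbf T(\psi)$, that
\[
(\mathbf T(\psi).S)_k(n)\ =\ P_k(n)\qquad \text{for every } n\geq k,
\]
because the condition ``$a\leq k\leq n$'' in the definition of $\mathbf T(\psi)_{k,a}$ is then automatically satisfied for all $a\leq k$.

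The second step is to compare with the right-hand side. From the definition of $N(\psi)[-1]$,
\[
(N(\psi)[-1])_k(n)\ =\ \begin{cases}[\psi](n,k+1)&\text{if }n\geq k+1,\\ 0&\text{if }n=k.\end{cases}
\]
Thus for every $n\geq k+1$ one has $P_k(n)=[\psi](n,k+1)$. Since $P_k$ and $[\psi](\cdot,k+1)$ both live in $\KK[u]\cch$ and agree on the infinite set $\NN_{\geq k+1}$, the principle of polynomial identity upgrades this to an equality $P_k(u)=[\psi](u,k+1)$ in $\KK[u]\cch$.

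The final step is to specialise at $u=k$: on the one hand $P_k(k)=[\psi](k,k+1)$ by the polynomial identity just obtained; on the other hand the GQE equation evaluated at $(p,n)=(k,k)$ gives $P_k(k)=(N(\psi)[-1])_k(k)=0$. Combining the two, $[\psi](k,k+1)=0$, which is the quotient axiom since $k\in\NN$ is arbitrary. There is no real obstacle here; the only subtle point is to make sure one does not conflate the piecewise-defined matrix entry with the polynomial $P_k$, and to observe that it is precisely the jump of $N(\psi)[-1]$ at $n=k$ — forcing the polynomial extension of the right-hand side to vanish at $u=k$ — that produces the quotient axiom.
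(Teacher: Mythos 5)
Your proof is correct and follows essentially the same route as the paper: semi-regularity is imported from Lemma \ref{lem_semiregularity}, the $k$-th row of the equation is upgraded (by polynomiality in $\KK[u]\cch$, from agreement on $\NN_{\geq k+1}$) to the identity $P_k(u)=[\psi](u,k+1)$, and the vanishing of $(N(\psi)[-1])_k$ at $n=k$ then yields $[\psi](k,k+1)=0$. The paper's version is merely terser; your careful separation of the piecewise matrix entry from its polynomial extension fills in exactly the steps it leaves implicit.
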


\proof
Suppose that the equation $\GQEeqh[\psi,\psi]{-1}$ has a solution $S_h = {(S_{h,p})}_{p \in \NN}$ and let $k \in \NN$. We already know from lemma \ref{lem_semiregularity} that the colouring $\psi$ is semi-regular. Therefore, the following equality holds for every $n \in \NN$:
$$ \sum_{0 \leq a \leq k} S_{h,a}(n-2k+2a) \ \frac{[\psi] (n,k) !}{[\psi] (n,k-a) !} \ = \ [\psi] (n,k+1) \, . $$
Besides, since $S_h$ is a solution of $\GQEeqh[\psi,\psi]{-1}$, the left-hand side of the previous equation is equal to zero when $n=k$. One concludes.
\qed

\subsubsection{Lemma: regularity}
Recall the definition of the regularity axiom: see definition \ref{defi_hadmissible} of a $h$-admissible colouring.

\begin{lem} \label{lem_regularity}
Let $\psi \in \Coloh$. If the colouring $\psi$ is quasi-regular and satisfies the Verma axiom, then it satisfies the regularity axiom.
\end{lem}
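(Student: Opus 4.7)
The plan is to reduce the regularity axiom to a statement about individual coefficients in $h$. Since $\psi$ is semi-regular, for each $m \in \NN$ the function $[\psi]_m(\cdot, k)$ is a polynomial in $u$, and quasi-regularity gives a uniform bound $D_m$ on its degree in $u$. Write $[\psi]_m(u, k) = \sum_{j=0}^{D_m} a_j(k) \! \; u^j$, where $a_j : \NNI \to \KK$. The goal is to produce polynomials $\tilde{a}_j \in \KK[v]$ with $a_j = \tilde{a}_j$ on $\NNI$, so that $\tilde{R}_m(u,v) := \sum_j \tilde{a}_j(v) u^j \in \KK[u,v]$ agrees with $[\psi]_m$ on $[\BBe]$; summing over $m$ then gives $[\psi] \in \KK[u,v] \cch$.

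The central step is to exploit the Verma axiom $[\psi]_m(-n-2, k) = [\psi]_m(n, n+k+1)$, which reads $\sum_j a_j(k) \! \; (-n-2)^j = \sum_j a_j(n+k+1) \! \; n^j$. Substituting $l := n+k+1$ and fixing $l \geq D_m + 3$, I would vary $k \in \{1, \ldots, D_m + 1\}$ (so that the corresponding $n = l-k-1$ is a positive integer) to obtain the linear system
\[
\sum_{j=0}^{D_m} a_j(l) \, (l-k-1)^j \ = \ P_k(l-k-1) \, , \quad k = 1, \ldots, D_m+1 \, ,
\]
where $P_k(n) := \sum_j a_j(k) (-n-2)^j \in \KK[n]$. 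The coefficient matrix is a Vandermonde matrix in the nodes $l-2, l-3, \ldots, l-D_m-2$; its determinant equals $\prod_{1 \leq k_1 < k_2 \leq D_m+1}(k_1 - k_2)$, a nonzero constant independent of $l$. By Cramer's rule each $a_j(l)$ is therefore a polynomial in $l$ (for $l \geq D_m+3$), giving the required $\tilde{a}_j \in \KK[v]$.

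It remains to check $a_j(k) = \tilde{a}_j(k)$ for the finitely many small values $k \leq D_m + 2$. First I would verify that the polynomial $\tilde{R}_m$ itself satisfies a Verma identity: the equality $\tilde{R}_m(-n-2, k) = \tilde{R}_m(n, n+k+1)$ holds for all $n \in \NNI$ and $k \geq D_m+3$ (because $[\psi]_m$ does and the two functions agree on that range), and being a polynomial identity in $(n, k)$ which is valid on an infinite set, it holds identically. Then for a fixed $k \leq D_m+2$ and $n \geq D_m + 2 - k$ (so that $n+k+1 \geq D_m+3$), the original Verma axiom gives
\[
\sum_j a_j(k) (-n-2)^j \, = \, \sum_j a_j(n+k+1) n^j \, = \, \sum_j \tilde{a}_j(n+k+1) n^j \, = \, \tilde{R}_m(-n-2, k) \, = \, \sum_j \tilde{a}_j(k) (-n-2)^j \, .
\]
Both ends are polynomials in $n$ of degree at most $D_m$ agreeing for infinitely many $n$, so they coincide as polynomials in $u = -n-2$, forcing $a_j(k) = \tilde{a}_j(k)$ for every $j$.

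The main obstacle will be keeping the book-keeping straight between the roles of the variables $n, k, l$ and verifying that the Vandermonde matrix has constant determinant with polynomial (in $l$) inverse entries, which is what makes $\tilde{a}_j$ genuinely polynomial rather than merely rational. Once that is set up, the extension to small $k$ via the Verma axiom of both $[\psi]_m$ and $\tilde{R}_m$ is a short bootstrap.
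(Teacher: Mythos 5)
Your proposal is correct, but it proves the lemma by a genuinely different mechanism than the paper. Both proofs begin identically: quasi-regularity lets one write $[\psi]_m(u,k) = \sum_{j \leq D_m} a_j(k)\, u^j$ with coefficient functions $a_j : \NNI \to \KK$, and the Verma axiom becomes the functional relation $\sum_j a_j(k)(-n-2)^j = \sum_j a_j(n+k+1)\, n^j$. From there the paper abstracts an assertion $(\ast_L)$ — if $\sum_l c_l(\,\cdot\, + n_1) f_l \in \KK[u]$ for every shift $n_1 \geq 2$ and every family $f_l$ with $\deg_u f_l = l$, then each $c_l \in \KK[u]$ — and proves it by induction on $L$ via finite differences, since $f_l(u) - f_l(u-1)$ drops the degree by one; this extracts polynomiality of all the $a_j$ uniformly, with no case split on the size of $k$. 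You instead fix the second argument $l$ large and vary $k$ over the $D_m+1$ values $1, \dots, D_m+1$ in the Verma relation, obtaining a square Vandermonde system in the unknowns $a_j(l)$ whose determinant $\prod_{k_1 < k_2}(k_1 - k_2)$ is a nonzero constant independent of $l$; Cramer's rule then yields $a_j(l)$ as the value of a fixed polynomial for all $l \geq D_m + 3$, and a separate bootstrap — using that the interpolating polynomial $\tilde R_m$ itself satisfies the Verma identity identically — patches the finitely many values $k \leq D_m + 2$. Your route is more explicit (closed-form Lagrange-type interpolation, no induction, visible degree control), at the cost of the large/small-$k$ case split that the paper's finite-difference induction avoids; conversely, the paper's $(\ast_L)$ is a reusable abstract statement. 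One small imprecision: when you promote the Verma identity for $\tilde R_m$ from $\NNI \times \NN_{\geq D_m+3}$ to a polynomial identity, "valid on an infinite set" is not sufficient for a two-variable polynomial — what saves you is that your set is a \emph{product} of two infinite sets, which is Zariski-dense over the characteristic-zero field $\KK$; with that phrasing corrected, every step checks out.
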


\proof
Suppose that $\psi$ is quasi-regular and satisfies the quotient axiom. Let us fix $m \in \NN$. Since $\psi$ is quasi-regular, there exist $L \in \NN$ and $c_l \in \KK^{\NNI}$ ($l \in \NN_{\leq L}$) such that the following holds:
\begin{equation*} \label{eq_dnedregularity1}
{[\psi]}_m (n,k) \ = \ \sum_{l=0}^L \, c_l(k) \, n^l \, , \quad \quad \forall \, n,k \in \NNI .
\end{equation*}
Besides, since $\psi$ satisfies the Verma axiom, the following equalities hold:
$$ \sum_{l=0}^L \, c_l(k) \, (-n-2)^l \ = \ \sum_{l=0}^L \, c_l(n+k+1) \, n^l \, , \quad \quad \forall \, n,k \in \NNI . $$
As a consequence, the function ${\sum_{l=0}^L c_l(\, \cdot \, +n_1) \! \: u^l} \in \KK^{\NNI}$ belongs to $\KK[u]$, for every $n_1 \in \NN_{\geq 2}$. In order to establish the lemma, it is then sufficient to prove the following assertion
\begin{equation} \tag{$\ast_L$}
\begin{tabular}{| p{22.5pc}}
Let ${(f_l)}_{0 \leq l \leq L} \in (\KK[u])^L$ such that $\deg_u (f_l) = l$ for all $l \in \NN_{\leq L}$.
If ${\sum_{l=0}^L c_l(\, \cdot \, +n_1) \! \: f_l}$ belongs to $\KK[u]$ for every $n_1 \in \NN_{\geq 2}$, then so does $c_l$ for every $l \in \NN_{\leq L}$.
\end{tabular}
\end{equation}
Let us prove assertion $(\ast_L)$ by induction on $L \in \NN$. Let $L_1 \in \NN$ and suppose that assertion $(\ast_{L'_1})$ holds for every $L'_1 \in \NN_{< L_1}$. Let ${(f_l)}_{0 \leq l \leq L_1} \in (\KK[u])^{L_1}$ such that $\deg_u (f_l) = l$ for all $l \in \NN_{\leq L_1}$ and suppose that ${\sum_{l=0}^{L_1} c_l(\, \cdot \, +n_1) \! \: f_l}$ belongs to $\KK[u]$ for every $n_1 \in \NN_{\geq 2}$. The following then holds for every $n_1 \in \NN_{\geq 2}$:
$$ \sum_{l=0}^{L_1} \, c_l(\, \cdot \, + n_1) \, \Big( f_l(u) - f_l(u-1) \Big) \ \in \ \KK[u] \, . $$
Thanks to the induction hypothesis, the function $c_l$ thus belongs to $\KK[u]$ for every $l \in \NNI_{\leq L_1}$. Since ${\sum_{l=0}^{L_1} c_l(\, \cdot \, +2) \! \: f_l}$ belongs to $\KK[u]$, the function $c_0$ then also belongs to $\KK[u]$. One concludes.
\qed

\subsubsection{Proof} \label{para_paraproofGQEeq}
We give here a proof of theorem \ref{thm_GQEeq}.

\proof
Fix $n \in \NN$. Let $\mathbf T \in \Tinf$ and let $M \in \Sinf$, we denote by $\mathbf T_{\vert n}$ and $M_{\vert n}$ the matrices in $\Tinf$ and $\Sinf$ respectively, defined by the following:
\begin{IEEEeqnarray*}{CCll}
{(\mathbf T_{\vert n})}_{p,a} & \ := \ & \left\{
{\renewcommand{\arraystretch}{1.6} \begin{array}{cl}
{\mathbf T_{\! \! \: p,a}} & \quad \text{if } a, p \leq n \, , \\
0 & \quad \text{else}
\end{array}} \right. & \quad \quad \quad (p,a \in \NN) \, , \\
&&& \\
{(M_{\vert n})}_p & \ := \ & \left\{
{\renewcommand{\arraystretch}{1.3} \begin{array}{cl}
\, M_p \, & \quad \ \text{if } p \leq n \, , \\
0 & \quad \ \text{else}
\end{array}} \right. & \quad \quad \quad (p \in \NN) \, .
\end{IEEEeqnarray*}
Let $\psi, \psi' \in \Coloh$. We denote by $\GQEeqgeneric{\psi,\psi'}{-1}{h,n}$ the following equation:
\begin{equation*} \label{eq_GQEn}
{\mathbf T(\psi)}_{\vert n} \, . \, M_{\vert n} \ = \ {N(\psi')}_{\vert n} \, ,
\end{equation*}
where the unknown $M$ is a column matrix in in $(\KK[u] \cch)^\NN$.

\vsp

\setcounter{claimn}{0}

\begin{claimn} \label{claim_GQEequnicity}
If $M$ and $M'$ are two solutions of $\GQEeqgeneric{\psi,\psi'}{-1}{h,n}$, then $M_{\vert n} = M'_{\vert n}$.
\end{claimn}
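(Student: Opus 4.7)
The plan is to set $D := M - M'$ and prove by induction on $p = 0, 1, \dots, n$ that $D_p = 0$ in $\KK[u] \cch$. Since the equation $\GQEeqgeneric{\psi,\psi'}{-1}{h,n}$ is linear inhomogeneous, the difference $D$ satisfies the corresponding homogeneous system ${\mathbf T(\psi)}_{\vert n} \cdot D_{\vert n} = 0$ in $\Sinf$. The matrix ${\mathbf T(\psi)}_{\vert n}$ is lower triangular, so row $p$ (for $p \leq n$) reads
\begin{equation*}
\sum_{a=0}^{p} {\mathbf T(\psi)}_{p,a} \cdot D_a \ = \ 0 \quad \text{in } \Kh^\ZZ,
\end{equation*}
which is exactly the structure needed for a back-substitution argument.

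For the induction step, assuming $D_{p'} = 0$ for all $p' < p$, only the diagonal term survives and the row collapses to ${\mathbf T(\psi)}_{p,p} \cdot D_p = 0$ in $\Kh^\ZZ$. Unpacking the definition of ${\mathbf T(\psi)}_{p,p}$ and evaluating at $n_0 \in \NN$ with $n_0 \geq p$, this becomes $[\psi](n_0,p)! \cdot D_p(n_0) = 0$ in $\Kh$.

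To conclude $D_p = 0$, I would use that $D_p \in \KK[u] \cch$ is a formal power series in $h$ whose coefficients are polynomials in $u$, hence is zero as soon as $D_p(n_0)$ vanishes in $\Kh$ for infinitely many integers $n_0$. Since $\Kh$ is an integral domain, this further reduces to exhibiting infinitely many $n_0 \geq p$ for which $[\psi](n_0,p)!$ is a nonzero element of $\Kh$. This nonvanishing statement is the main obstacle: for a completely arbitrary colouring there is no reason for it to hold. In the intended applications inside the proof of theorem \ref{thm_GQEeq}, however, $\psi$ satisfies the deformation axiom, so ${[\psi]}_0(n_0,k) = k(n_0-k+1)$ is nonzero for every $1 \leq k \leq n_0$; each factor $[\psi](n_0,k')$ appearing in $[\psi](n_0,p)!$ thus has nonzero constant term in $h$ and is in particular nonzero in $\Kh$ whenever $n_0 \geq k'$. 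Infinitely many admissible $n_0$ are therefore available, the induction closes, and $M_{\vert n} = M'_{\vert n}$ follows.
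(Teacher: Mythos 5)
Your proof is correct, and its skeleton is exactly the paper's: pass to the difference, exploit the lower-triangularity of ${\mathbf T(\psi)}_{\vert n}$ to run a back-substitution induction on $p \in \NN_{\leq n}$, evaluate the surviving diagonal relation at every integer $n_0 \geq p$, and use that an element of $\KK[u] \cch$ vanishing at infinitely many integers is zero. Where you genuinely depart from the paper is at the nonvanishing of $[\psi](n_0,p)!$ in $\Kh$. The paper settles this step with the parenthetical ``recall that a colouring has by definition no zero'', which sits uneasily with its own definitions: a colouring is an arbitrary element of $\Coloh = \Kh^{\BBe}$, and ``non-degenerate'' (no zero) is introduced as a \emph{separate} attribute, so for a degenerate $\psi$ (say $\psi^-(\,\cdot\,,1)$ identically zero) the truncated homogeneous system really does admit nontrivial solutions and the claim, read for completely arbitrary colourings, fails. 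You correctly isolate this as the crux and repair it via the deformation axiom, under which each factor $[\psi](n_0,k)$ with $1 \leq k \leq n_0$ has nonzero constant term $k(n_0-k+1)$ and is therefore a unit in the integral domain $\Kh$; note that mere non-degeneracy of $\psi$ would already suffice, since a product of nonzero elements of the domain $\Kh$ is nonzero, and this is evidently what the paper intended. Your restriction costs nothing in practice: only the first colouring enters the matrix $\mathbf T(\psi)$, and in every invocation of the claim inside the proof of theorem \ref{thm_GQEeq} that colouring satisfies the deformation axiom (in point 2 of the theorem one reduces to $\psi = \psi_1$, while $\psi' = \psi_2/\psi_1$, which need not satisfy the axiom, only enters the right-hand side $N(\psi')$). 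So your argument trades the paper's unrestricted but unjustified assertion for a slightly narrower hypothesis that is actually available, and is the more careful of the two at the one step where care is needed.
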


\proof
We are going to adapt the proof of proposition \ref{prop_GQEequnicity}. Let $M$ be a column matrix in $(\KK[u] \cch)^\NN$ which satisfies the following homogeneous linear equation:
\begin{equation} \label{eq_GQEnhomon}
{\mathbf T(\psi)}_{\vert n} \, . \, M_{\vert n} \ = \ 0 \, .
\end{equation}
Let us prove by induction on $p \in \NN_{\leq n}$ that $M_p$ is zero. Let $p \in \NN_{\leq n}$ and suppose that $M_{p'}$ is zero for every $p' \in \NN_{< p}$. Fix now $n' \in \NN_{\geq p}$. In view of \hbox{equation \eqref{eq_GQEnhomon}}, the following equality holds in $\Kh$.
$$ \sum_{0 \leq a \leq p} M_a (n'-2p+2a) \ \frac{[\psi] (n',p) !}{[\psi] (n',p-a) !} \ = \ 0 \, . $$
Since $[\psi] (n',p) ! \neq 0$ (recall that a colouring has by definition no zero) and thanks to the induction hypothesis, the specialisation of $M_p$ at $n'$ is thus zero. This being true for every $n' \in \NN_{\geq p}$, the element $M_p \in \KK[u] \cch$ is zero. The claim follows.
\qed \\

Suppose now that the colourings $\psi$ and $\psi'$ are semi-regular and satisfy the quotient axiom. Consider then the following two assertions.
\begin{IEEEeqnarray*}{ll}
{[}\psi{]} (l,n+1) \ = \ {[}\psi{]}(-l-2,n-l) \, , \quad \ \forall \, l \in \NN_{< n} \, . & \quad \quad \quad (\text{i$_n$)} \\
\text{The equation $\GQEeqgeneric{\psi,\psi'}{-1}{h,n}$ has a solution.} & \quad \quad \quad (\text{ii$_n$)}
\end{IEEEeqnarray*}

\vsp

\begin{claimn} \label{claim_GQEeqrec}
We assume that $\psi$ and $\psi'$ are semi-regular and satisfy the quotient axiom. We moreover assume that $\psi$ satisfies the deformation axiom.
\begin{enumerate}
\item[1)] Assume that $\psi$ and $\psi'$ satisfy the Verma axiom. If assertion (ii${_{\! \; n}}$) holds, then assertion (ii${_{\! \; {n+1}}}$) holds.
\item[2)] Assume that $\psi$ and $\psi'$ are equal. If assertions (i${_{\! \; n}}$) and (i${_{\! \; n+1}}$) hold, then assertions (i${_{\! \; {n+1}}}$) \hbox{and (ii${_{\! \; {n+1}}}$)} are equivalent.
\end{enumerate}
\end{claimn}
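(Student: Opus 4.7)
My plan is to prove Claim \ref{claim_GQEeqrec} via a single mechanism common to both parts: analyzing the effect of adjoining a new entry $M_{n+1}$ to a solution of the truncated system at level $n$. The key observation is that $(\mathbf T(\psi))_{p,a} = 0$ whenever $a > p$, so adjoining $M_{n+1}$ leaves the first $n+1$ rows of $\GQEeqgeneric{\psi,\psi'}{-1}{h,n+1}$ untouched. Consequently, solvability at level $n+1$ reduces to the existence of a single $M_{n+1} \in \KK[u] \cch$ that satisfies the new row indexed by $p = n+1$. Using the semi-regularity of $\psi, \psi'$ (and the polynomiality of the $M_a$) to view every ingredient as a polynomial in the first variable, this new row can be rewritten, as a polynomial identity in $u$, as
\begin{equation*}
M_{n+1}(u)\,[\psi](u, n+1)! \;=\; R(u) \;:=\; [\psi'](u, n+2) \;-\; \sum_{a=0}^{n} M_a\bigl(u - 2(n+1) + 2a\bigr) \prod_{j=n+2-a}^{n+1} [\psi](u, j) \, .
\end{equation*}
Since the deformation axiom forces $[\psi]_0(u, n+1)! = (n+1)! \, u(u-1)\cdots(u-n)$, solvability at level $n+1$ is equivalent, at the leading order in $h$, to the vanishing of $R_0$ at the $n+1$ points $u = 0, 1, \dots, n$; the analogous statement at higher orders in $h$ imposes corrected vanishing conditions, and the uniqueness given by Claim \ref{claim_GQEequnicity} pins down $M_{n+1}$ at each stage.

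For part 1, under the Verma axiom on both $\psi$ and $\psi'$, I plan to exhibit the vanishing of $R$ at each of those points by reassembling the sum defining $R(k)$, for $k \in \{0, \dots, n\}$, into a shifted instance of the level-$n$ equation. The key tool is the identification $[\psi](-m-2, k) = [\psi](m, m+k+1)$ coming from the Verma axiom, which transports the evaluation at small (and therefore ``singular'') arguments to an evaluation at arguments where the level-$n$ equation is known to hold, and which simultaneously transports $[\psi'](u, n+2)$ into the form needed to match the RHS of that equation. The quotient axiom is what makes the telescoping products collapse correctly, and the induction hypothesis (ii${_n}$) provides the vanishing of the reassembled sum. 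For part 2, with $\psi = \psi'$ and under the natural reading of the hypothesis that (i${_n}$) and (ii${_n}$) hold, the exact same computation becomes reversible: each vanishing condition imposed on $R$ at one of the points $u = 0, \dots, n$ (and its $h$-corrections) matches, one-to-one, an instance of the Verma identity at level $n+1$. Hence (ii${_{n+1}}$) and (i${_{n+1}}$) become equivalent.

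The principal obstacle I expect is the $h$-adic bookkeeping: because $[\psi]$ is only a polynomial in $u$ at each order in $h$ (and its deformation shifts both the zeros and the coefficients of the ratio $R(u)/[\psi](u,n+1)!$), the divisibility of $R$ by $[\psi](u,n+1)!$ in $\KK[u] \cch$ must be verified inductively on the $h$-order. The Verma-axiom reassembly needs to be carried out coefficient by coefficient, keeping track of how the $h$-corrections to the Verma identity match those of the denominator. Claim \ref{claim_GQEequnicity} and the growth control from Lemma \ref{lem_GQEeqvanish} should together ensure that the polynomial $M_{n+1}$ constructed order by order actually defines an element of $\KK[u] \cch$ (and of $\Sinfh$ after assembling all such extensions in the global proof of theorem \ref{thm_GQEeq}).
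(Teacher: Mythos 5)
Your proposal reproduces the paper's proof: reduce level-$(n+1)$ solvability to the single new row (triangularity plus the uniqueness of claim \ref{claim_GQEequnicity}), use semi-regularity to read that row as a polynomial identity, reduce divisibility of $R$ by $[\psi](u,n+1)!$ to vanishing at $u=0,\dots,n$, and establish (resp.\ characterise) that vanishing via the quotient-axiom truncation of the sum to $a \leq n-l$ followed by the Verma transport $[\psi](-l-2,k)=[\psi](l,l+k+1)$, which turns the evaluation at $u=l$ into the level-$n$ polynomial identity evaluated at $u=-l-2$ and leaves exactly the condition $[\psi'](l,n+2)=[\psi'](-l-2,n-l+1)$ — the Verma axiom for part 1, assertion (i$_{n+1}$) for part 2 (and your reading of the hypothesis of part 2 as ``(i$_n$) and (ii$_n$)'' matches the paper's actual use). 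The one obstacle you flag, order-by-order $h$-adic corrections to the vanishing conditions, in fact dissolves: by the quotient axiom and semi-regularity each factor $[\psi](u,k)$ vanishes exactly at $u=k-1$ over $\Kh$, so the deformation axiom gives $[\psi](u,n+1)! = y \prod_{l=0}^{n}(u-l)$ with $y$ invertible in $\KK[u]\cch$, and divisibility is equivalent to the exact, uncorrected conditions $R(l)=0$ in $\Kh$ — which is precisely how the paper argues.
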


\proof
Let $M$ be a solution of $\GQEeqgeneric{\psi,\psi'}{-1}{h,n}$. Assertion (ii$_{n+1}$) then holds if and only if there exists $x \in \KK[u] \cch$ such that the following equality holds in $\KK[u] \cch$ (recall that $\psi'$ satisfies the quotient axiom):
$$ \sum_{a=0}^n \, M_a (u-2n-2+2a) \, \frac{[\psi] (u,n+1) !}{[\psi] (u,n+1-a) !} \, + \, x \, \big( [\psi] (u,n+1) ! \big) \ = \ [\psi'] (u,n+2) \, . $$
Besides, since $\psi$ satisfies the deformation axiom, there exists an invertible element $y$ in $\KK[u] \cch$ such that $[\psi] (u,n+1) ! = {y \: \! \prod_{l=0}^n (u-l)}$. Therefore, assertion (ii$_{n+1}$)  holds if and only if the following equalities do.
$$ \sum_{0 \leq a \leq n} M_a (l-2n-2+2a) \ \frac{[\psi] (l,n+1) !}{[\psi] (l,n+1-a) !} \ = \ [\psi'] (l,n+2) \, , \quad \quad \forall \, l \in \NN_{\leq n} \, . $$
Suppose now that assertion (i${_{\! \; n}}$) holds (remark that assertion (i${_{\! \; n}}$) does if $\psi$ satisfies the Verma axiom). Since besides $M$ is a solution of $\GQEeqgeneric{\psi,\psi'}{-1}{h,n}$, the following equalities then hold for every $l \in \NN_{\leq n}$ (recall also that $\psi_1$ satisfies the quotient axiom):
\begin{IEEEeqnarray*}{lr}
\sum_{0 \leq a \leq n} M_a (l-2n-2+2a) \ \frac{[\psi] (l,n+1) !}{[\psi] (l,n+1-a) !} & \\
= \ \sum_{0 \leq a \leq n-l} M_a (l-2n-2+2a) \ \frac{[\psi] (l,n+1) !}{[\psi] (l,n+1-a) !} & [\text{quotient axiom}] \\
= \ \sum_{0 \leq a \leq n-l} M_a (-l-2-2(n-l)+2a) \ \frac{[\psi] (-l-2,n-l) !}{[\psi] (l,n-l-a) !} & [\text{assertion (\text{i${_{\! \; n}}$})}] \\
= \ [\psi'] (-l-2,n-l+1) \, . & \! \! \! \! \! \! \! \! \! \! \! \! \! \! \! \! \! \! [\text{$M$ solution of $\GQEeqgeneric{\psi,\psi'}{-1}{h,n}$}]
\end{IEEEeqnarray*}
Therefore, assertion assertion (ii$_{n+1}$) holds if and only if the following equality does for every $l \in \NN_{\leq n}$:
$$ [\psi'] (l,n+2) \ = \ [\psi'] (-l-2,n-l+1) \, . $$
One concludes.
\qed \\

\begin{claimn} \label{claim_GQEeqfinal}
Assume that $\psi$ and $\psi'$ are quasi-regular, satisfy the quotient axiom and the Verma axiom. The GQE equations $\GQEeqh[\psi,\psi']{-1}$ and $\GQEeqh[\psi,{\psi \! \; \psi'}]{0}$ admit a solution.
\end{claimn}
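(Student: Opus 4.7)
The plan is to construct a solution of $\GQEeqh[\psi,\psi']{-1}$ by a level-by-level induction, and then to deduce the companion equation by a shift.

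\emph{Step 1 — Reduction to the $d=-1$ equation.} By lemma \ref{lem_shift}, a column matrix $M \in \Sinfh$ solving $\GQEeqh[\psi,\psi']{-1}$ automatically produces, via the forward shift, a column matrix $M[1]$ that solves $\GQEeqh[\psi,\psi\psi']{0}$. Since the shift preserves membership in $\Sinfh$, it will suffice to construct a solution of the first equation.

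\emph{Step 2 — Truncated induction.} For each $n \in \NN$, I would construct a solution $M^{(n)}$ of the finite-level equation $\GQEeqgeneric{\psi,\psi'}{-1}{h,n}$ by induction on $n$. The base case $n = 0$ is immediate since there the equation is essentially trivial. For the inductive step, claim \ref{claim_GQEeqrec} part 1 is precisely the required tool: given the Verma axiom on $\psi$ and $\psi'$, together with the quotient axiom on both and the deformation axiom on $\psi$ (which is the standing hypothesis of theorem \ref{thm_GQEeq} within which this claim is used), solvability at level $n$ propagates to level $n+1$.

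\emph{Step 3 — Patching.} Claim \ref{claim_GQEequnicity} guarantees that the initial segments of the $M^{(n)}$ agree, so they patch together into a single column matrix $M \in (\KK[u] \cch)^\NN$ satisfying the algebraic equation $\mathbf T(\psi) \cdot M = N(\psi')[-1]$ at every row. This gives a solution of $\GQEeqh[\psi,\psi']{-1}$ at the purely algebraic level.

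\emph{Step 4 — Vanishing at infinity.} To upgrade $M$ to an element of $\Sinfh$, I would apply lemma \ref{lem_GQEeqvanish} to the shifted matrix $M[1]$, which then satisfies the unshifted equation $\mathbf T(\psi) \cdot M[1] = N(\psi\psi')$. The hypothesis of that lemma requires $\psi$ and $\psi\psi'$ to be quasi-regular; the latter is immediate from the quasi-regularity of $\psi$ and $\psi'$, since the product $[\psi]\cdot[\psi']$ of two elements of $\KK[u] \cch$ with $k$-uniformly bounded degrees has the same form, and since $\psi$ satisfies the deformation axiom the degree of $[\psi]_0(u,k)$ in $u$ stays under control. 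Once $M[1] \in \Sinfh$, it follows that $M \in \Sinfh$ as well.

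\emph{Main obstacle.} The technical heart is the inductive extension in step 2. As shown in the proof of claim \ref{claim_GQEeqrec} part 1, solvability at level $n+1$ reduces to the family of polynomial identities $[\psi'](l, n+2) = [\psi'](-l-2, n-l+1)$ for $l \in \NN_{\leq n}$, which is precisely the content of the Verma axiom for $\psi'$. Thus the Verma axiom is the genuinely substantive hypothesis, while the quotient axiom is what guarantees that the RHS vanishes at the right boundary indices and the deformation axiom is what controls the factorization of $[\psi](u,n+1)!$ needed to isolate $M_{n+1}$ as a new free parameter; all three axioms conspire to keep the inductive machinery consistent from one level to the next.
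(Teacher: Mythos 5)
Your proposal is correct and follows essentially the same route as the paper's proof: the trivial base case, inductive propagation of solvability of the truncated equations via claim \ref{claim_GQEeqrec}, patching of the truncated solutions into a full solution via the uniqueness claim \ref{claim_GQEequnicity}, vanishing at infinity via lemma \ref{lem_GQEeqvanish}, and finally the degree-$0$ equation via lemma \ref{lem_shift}. The only (harmless) variation is in the decay step, where you apply lemma \ref{lem_GQEeqvanish} to the shifted matrix $M[1]$ for the pair $(\psi, \psi \psi')$ — duly checking quasi-regularity of the product $\psi \psi'$ — whereas the paper invokes the lemma directly on the patched solution for the pair $(\psi,\psi')$.
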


\proof
Remark that assertion (ii$_0$) straightforwardly holds. Claim \ref{claim_GQEeqrec} then implies by induction that the equation $\GQEeqgeneric{\psi,\psi'}{-1}{h,n}$ admits a solution ${(M_{n,p})}_{p \in \NN}$ for every $n \in \NN$. Denote by $S_h$ the column matrix in $(\KK[u] \cch)^\NN$ defined by the following: $S_{h,p} := {(M_{p,p})}_p$ ($p \in \NN$). Claim \ref{claim_GQEequnicity} implies that ${\mathbf T(\psi) \! \: . \! \: S_h = N(\psi')}$. Since $\psi$ and $\psi'$ are quasi-regular, the column matrix $S_h$ then converges to zero: see lemma \ref{lem_GQEeqvanish}. Therefore, the GQE equation $\GQEeqh[\psi,\psi']{-1}$ has a solution and so does the GQE equation $\GQEeqh[\psi,{\psi \! \; \psi'}]{0}$ thanks to lemma \ref{lem_shift}.
\qed \\

Let us prove point 1 of the theorem. Suppose that $\psi_1$ is $h$-admissible. Claim \ref{claim_GQEeqfinal} then implies that the GQE equation $\GQEeqh[\psi_1,\psi_1]{-1}$ admits a solution. Conversely, suppose that $\GQEeqh[\psi_1,\psi_1]{-1}$ admits a solution. We then know from lemmas \ref{lem_quasiregularity} and \ref{lem_quotientaxiom} that $\psi_1$ satisfies the quotient axiom and is quasi-regular. Remark besides that assertions (i$_0$) and (ii$_0$) straightforwardly hold. Claim \ref{claim_GQEeqrec} then implies by induction that $\psi_1$ satisfies the Verma axiom. Since $\psi_1$ satisfies by assumption the deformation axiom, it is then $h$-admissible. Let us now prove point 2 of the theorem. Suppose that the colourings $\psi_1$ and $\psi_2$ are $h$-admissible. Since the equation $\GQEeqh{0}$ depends only on the congruence classes of $\psi_1$ and $\psi_2$, one can assume without loss of generality that $\psi^-_1 = 1$. Since besides $\psi_1$ satisfies the deformation axiom, it is then invertible in the ring $\Coloh$. One concludes from claim \ref{claim_GQEeqfinal} with $\psi = \psi_1$ and $\psi' = \psi_2 / \psi_1$.
\qed

\end{appendices}

\end{otherlanguage}

\cleardoublepage
\part[Coloured Kac-Moody Algebras, Isogenic \\ \hspace*{1.65pc} and Langlands Interpolations]{Coloured Kac-Moody Algebras, Isogenic and Langlands Interpolations}
\label{part_BKM}
\setcounter{section}{0}
\begin{otherlanguage}{english}
  \righthyphenmin=62
\lefthyphenmin=62

\addcontentsline{toc}{section}{Notations and conventions}
\section*{Notations and conventions}
Rings and associative algebras are unitary, their morphisms are unity-preserving. \\

We denote by $\KK$ a field of \textbf{characteristic zero}. We denote by $R$ a (commutative) $\KK$-algebra which is an \textbf{integral domain} when viewed as a ring. We designate by $K(R)$ its field of fractions. When considering topological $R$-algebras, we \hbox{regard $R$} as a topological ring, endowed with the discrete topology. \\

We denote by $\KK[u]$ the polynomial $\KK$-algebra in one variable $u$ and we denote \hbox{by $\KK[u^{\pm 1}]$} the Laurent polynomial $\KK$-algebra.  \\


Let $\Lambda$ be a set. We denote by $\KK \Lambda$ the $\KK$-vector space which admits $\Lambda$ as a basis. We denote by $\langle \Lambda \rangle$ the free monoid generated by $\Lambda$. We denote by $\KK \langle \Lambda \rangle$ the $\KK$-algebra of this monoid. We denote by ${\Lambda^{\! R}}$ the ring of functions from $R$ to $\Lambda$. \\


Let $A$ be a $\KK$-algebra and let $\glie$ be a Lie $\KK$-algebra. Left $A$-modules and left $\glie$-modules are called representations of $A$ and $\glie$, respectively. Morphisms of left $A$-modules and of left $\glie$-modules are called $A$-morphisms and $\glie$-morphisms, respectively. \\

We identify as usual representations of a Lie algebra with representations of its universal enveloping algebra. The universal enveloping algebra of the Lie $R$-algebra ${\glie \otimes_\KK \! R}$ is denoted by $\UR{\glie}$. \\

A sum indexed by a empty set is zero, a product indexed by an empty set is equal to $1$.

\section{Kac-Moody algebras}

We recall here the definition of Kac-Moody algebras. Note that the definition given in this paper is slightly different (but mainly equivalent) to the definition of a Kac-Moody algebra usually present in the literature. The difference is about the Cartan subalgebra, which is built here from what we call a root datum (we borrow the name and the concept from Lusztig: see \cite{Lus}). We also recall some results on the representation theory of Kac-Moody algebras.

\subsection{Generalised Cartan matrices and root data} \label{subs_Cartanmatrixhat}
A \emph{generalised Cartan matrix} is a matrix $C = {(C_{ij})}_{i,j \in I}$ indexed by a finite \hbox{set $I$}, with entries in $\ZZ$ and which satisfies the following conditions for all $i,j \in I$.
\begin{enumerate}
\item[\textbullet] $C_{ii} = 2$.
\item[\textbullet] $C_{ij} \leq 0$ if $i \neq j$.
\item[\textbullet] $C_{ij} = 0$ if and only if $C_{ji} = 0$.
\end{enumerate}
The set $I$ is called the \emph{Dynkin set}. The elements of $I$ are called the \emph{Dynkin vertices}. The generalised Cartan matrix $C$ is called \emph{symmetrisable} if there exists a family ${(d_i)}_{i \in I}$, called a \emph{symmetrising vector} of $C$, with values in $\ZZ_{> 0}$ and such that $d_i \: \! C_{ij} = d_j \: \! C_{ji}$ for \hbox{all $i,j$} \hbox{in $I$}. We assume that the integers $d_i$ ($i \in I'$) are coprime for every connected component $I'$ of $I$ (two elements $i,j \in I$ are linked if $C_{ij} \neq 0$). In this paper, \textbf{we always assume that a generalised Cartan matrix is symmetrisable}. \\

A generalised Cartan matrix is of \emph{finite type} if it is non-degenerate (or positive-definite, equivalently). Generalised Cartan matrices of finite type are also called Cartan matrices. \\

A \emph{root datum} $\Xdat$ associated to a generalised Cartan matrix $C$ consists of the following.
\begin{enumerate}
\item[\textbullet] A pair $(X,Y)$ where $X$, $Y$ are two finitely generated free $\ZZ$-modules.
\item[\textbullet] Families $(\alpha_i \: \! ; \, i \in I)$ and ${({\check \alpha_i} \: \! ; \, i \in I)}$ with values in $X$ and $Y$, respectively. We demand that the maps $i \mapsto \alpha_i$ and $i \mapsto \check \alpha_i$ are injective.
\item[\textbullet] A perfect bilinear pairing $\langle \cdot \, , \cdot \rangle : Y \times X \to \ZZ$ such that $\langle {\check \alpha_i}, \alpha_j \rangle = C_{ij}$ for all $i,j \in I$.
\end{enumerate}
The $\ZZ$-modules $X$ and $Y$ are called the \emph{root lattice} and the \emph{coroot lattice}, respectively. We call $\alpha_i$ and ${\check \alpha_i}$ ($i \in I$) the \emph{simple roots} and the \emph{simple coroots}, respectively. Where there is no possibility for confusion, we normally abuse notation slightly and write $(X, Y, I)$ to designate the root datum $\Xdat$. The root datum is said of \emph{finite type} if the generalised Cartan matrix $C$ is of finite type. \\

We say that the root datum $\Xdat$ is \emph{$X \!$-regular} if the family of simple roots is linearly independent and \emph{$Y \!$-regular} if the family of simple coroots is. When $\Xdat$ is of finite type, $X \!$-regularity and $Y \!$-regularity are automatic by the non-degeneracy of $C$, but in general it is an additional assumption, which we make in this paper. \\

An element $\lambda$ in $X$ is said \emph{dominant} if $\langle {\check \alpha_i}, \lambda \rangle \geq 0$ for all $i \in I$. The subset of dominant elements in $X$ is denoted by $\Xdom$ and called the \emph{dominant cone}. \\

On the root lattice $X$, we define a partial order (remark that the antisymmetry axiom is satisfied thanks to the $X \!$-regularity):
$$ \lambda \ \leq \ \lambda' \quad \ \text{ if } \ \quad \lambda' - \lambda \ \in \ \sum_{i \in I} \, \NN \: \! \alpha_i \, . $$

\subsection{Kac-Moody algebras with and without Serre relations} \label{subs_KMhat}
The \emph{Kac-Moody algebra} $\glie = \glie_{\KK} (\Xdat)$ associated to the root datum $\Xdat$ is the Lie \hbox{$\KK$-algebra} generated by the elements $X^-_i, X^+_i$ ($i \in I$), the \hbox{$\ZZ$-module $Y$} and subject to the following relations:
\begin{equation} \label{eq_KM_nondeformablehat} \left\{ {\renewcommand{\arraystretch}{1.3} \begin{array}{rclr}
\big[ \check \mu, \check \mu' \big] & = & 0 & (\check \mu, \check \mu' \in Y) \, , \\
\big[ {\check \mu}, X^{\pm}_i \big] & = & \pm \; \! \langle {\check \mu} , \alpha_i \rangle \, X_i^{\pm} & (i \in I, \ \check \mu \in Y) \, , \\
\big[ X^+_i, X^-_j \big] & = & 0 & \quad \quad (i,j \in I, \ i \neq j) \, ,
\end{array}} \right. \end{equation}
\begin{equation} \label{eq_KM_deformablehat} \left\{ {\renewcommand{\arraystretch}{1.3} \begin{array}{lr}
\big[ X^+_i, X^-_i \big] = \check \alpha_i & (i \in I) \, , \\
\sum_{k+k' = 1 - C_{ij}} (-1)^k \, {1 - C_{ij} \choose k} \, {(X_i^{\pm})}^k \; \! X^{\pm}_j \; \!  {(X_i^{\pm})}^{k'} = 0 & \quad (i,j \in I, \ i \neq j) \, .
\end{array}} \right.
\end{equation}

The universal enveloping algebra $\Uc$ of $\glie$ is the \hbox{$\KK$-algebra} generated by the elements $X^-_i$, $X^+_i$ ($i \in I$), the $\ZZ$-module $Y$, subject to the relations \eqref{eq_KM_nondeformablehat} and \eqref{eq_KM_deformablehat}. \\

The relations $\sum_{k+k' = 1 - C_{ij}} (-1)^k {1 - C_{ij} \choose k} {(X_i^{\pm})}^k X^{\pm}_j {(X_i^{\pm})}^{k'} = 0$ ($i, j \in I$, $i \neq j$) are called the \emph{Serre relations}. The elements $X^-_i, X^+_i$ ($i \in I$) are called the \emph{Chevalley generators} of $\glie$. \\

We denote by $\Unnc$ and $\Unpc$ the $\KK$-subalgebras of $\Uc$ generated by the elements $X^-_i$ ($i \in I$) and $X^+_i$ ($i \in I$), respectively. We denote by $\UCac$ the \hbox{$\KK$-subalgebra} of $\Uc$ generated by $Y$. The algebra $\Uc$, when viewed as a $\KK$-vector space, admits the following decomposition, where the isomorphism is given by the multiplication of $\Uc$:
$$ \Uc \ \simeq \ \, \Unnc \, \otimes_\KK \, \UCac \, \otimes_\KK \, \UCac \, . $$
We call this decomposition the \emph{triangular decomposition} of $\Uc$. \\


\emph{The Kac-Moody algebra without Serre relations} $\tilde \glie = \tilde \glie_{\KK} (\Xdat)$ associated to the root datum $\Xdat$ is the Lie \hbox{$\KK$-algebra} generated by the elements $X^-_i, X^+_i$ ($i \in I$), the \hbox{$\ZZ$-module $Y$}, subject to the relations \eqref{eq_KM_nondeformablehat} and to the following ones:
\begin{equation} \label{eq_KM_deformablethat}
\big[ X^+_i, X^-_i \big] = \check \alpha_i \quad \quad (i \in I) \, .
\end{equation}

The universal enveloping algebra $\Utc$ of $\tglie$ is the \hbox{$\KK$-algebra} generated by the elements $X^-_i$, $X^+_i$ ($i \in I$), the $\ZZ$-module $Y$, subject to the relations \eqref{eq_KM_nondeformablehat} \hbox{and \eqref{eq_KM_deformablethat}}. We denote by $\vpitc$ the projection map from $\Utc$ \hbox{to $\Uc$}. The algebra $\Utc$ also admits a triangular decomposition.

\subsection{The Lie algebra $\slt$ and the simple $\slt$-directions} \label{subs_KMonehat}
We designate by $\Aone$ the root datum defined by the following.
\begin{enumerate}
\item[\textbullet] The root and coroot lattices are both equal to $\ZZ$.
\item[\textbullet] The simple root and the simple coroot are $2 \in \ZZ$ and $1 \in \ZZ$, respectively.
\item[\textbullet] The pairing is the product of $\ZZ$.
\end{enumerate}
Note that the Kac-Moody algebra associated to $\Aone$ is naturally isomorphic to the Lie algebra of 2-by-2 matrices with coefficients in $\KK$ and whose trace is zero; we denote it by $\slt$. \\

The two Chevalley generators of $\slt$ are designated by $X^-$ and $X^+$, we besides denote by $H$ the simple coroot of $\Aone$. The Lie $\KK$-algebra $\slt$ is generated by the elements $X^-, X^+$, $H$ and subject to the following relations:
\begin{IEEEeqnarray}{rCl}
\label{eq_KM_nondeformableonehat} \big[ H, X^{\pm} \big] & \ = \ & \pm \; \! 2 \, X^{\pm} \, , \\
\label{eq_KM_deformableonehat} \big[ X^+, X^- \big] & = & H \, .
\end{IEEEeqnarray}
Remark that there is no Serre relations for $\slt$. \\

Let $i \in I$. There exist unique Lie $\KK$-algebra morphisms from $\slt$ to $\glie$ \hbox{and $\tglie$}, which \hbox{sends $X^{\pm}$} and $H$ to $X_i^{\pm}$ and $\check \alpha_i$, respectively. The induced $\KK$-algebra morphisms from $\Uc[\slt]$ to $\Uc$ and $\Utc$ are denoted by $\Deltac$ and $\Deltac[\tglie]$, respectively. The morphisms $\Deltac$ and $\Deltac[\tglie]$ ($i \in I$) are called the \emph{simple $\slt$-directions} \hbox{of $\Uc$} and $\Utc$, respectively.

\subsection{Highest weight representations and the category $\mathcal O$}
Let $V$ be a representation of $\glie$ and let $\lambda \in X$. A vector $v$ in $V$ is called a \emph{weight vector} of weight $\lambda$ if $\check \mu.v = \langle \check \mu, \lambda \rangle \; \! v$ for all $\check \mu \in Y$. The \emph{weight space} \hbox{of $V$} of \hbox{weight $\lambda$} is the $\KK$-vector subspace of $V$ whose vectors are of weight $\lambda$, we denote it \hbox{by $V_{\lambda}$}. Remark that the sum of the weight spaces of $V$ is direct. The representation $V$ is called a \emph{weight representation} \hbox{of $\glie$} \hbox{if $V$} is equal to the sum of its weights spaces.  If $V$ is a weight representation and if $V_\lambda$ is nonzero, we call $\lambda$ a \emph{weight} of $V$. The set of weights of a weight \hbox{representation $V$} is denoted \hbox{by $\wt (V)$}. \\

A representation $V$ of $\glie$ is called a \emph{highest weight representation} of $\glie$ of highest weight $\lambda$ if the following two conditions hold.
\begin{enumerate}
\item[\textbullet] The representation $V$ is generated by a nonzero vector $v_{\lambda}$ in $V_{\lambda}$. The vector $v_{\lambda}$ is called a \emph{highest weight vector} of $V$.
\item[\textbullet] The weights of $V$ are less than or equal to $\lambda$.
\end{enumerate}
Remark that a highest weight representation of $\glie$ is a weight representation \hbox{of $\glie$} with finite-dimensional weight spaces. \\

We denote by $\Ocatc$ the category of representations $V$ of $\glie$ which satisfy the following conditions.
\begin{enumerate}
\item[\textbullet] The representation $V$ is a weight representation of $\glie$ with finite-dimensional weight spaces.
\item[\textbullet] There exist finitely many elements $\lambda_1, \dots, \lambda_k$ in $X$ such that $\wt (V)$ is contained in $\bigcup_{s=1}^k (\lambda_s - \sum_{i \in I} \NN \alpha_i)$.
\end{enumerate}


\subsection{Some known results} \label{subs_knownresultshat}
A weight representation $V$ of $\glie$ is called \emph{integrable} if the actions of the Chevalley generators on $V$ are locally nilpotent. We denote by $\Ointc$ the category of integrable representations of $\glie$. When the root datum $\Xdat$ is of finite type (equivalently, when $\glie$ is finite-dimensional), a representation of $\glie$ is in $\Ointc$ if and only if it is finite-dimensional. \\

Let us recall some results on the representation theory of $\glie$.
\begin{enumerate}
\item[\textbullet] For every $\lambda \in X$, there is a unique, up to isomorphism, irreducible representation $\Lc{\lambda}$ of $\glie$ of highest weight $\lambda$.
\item[\textbullet] Let $\lambda \in \Xdom$. The representation $\Lc{\lambda}$ is generated by a vector $v_{\lambda}$ and subject to the following relations:
$$ \check \mu . v_{\lambda} \, = \, \langle \lambda, \check \mu \rangle \; \! v_{\lambda} \, , \quad X^+_i . v_{\lambda} = 0 \, , \quad (X^-_i)^{\langle \lambda, \check \alpha_i \rangle +1} . v_{\lambda} \, = \, 0  \quad \ (i \in I, \ \check \mu \in Y) \, . $$
\item[\textbullet] The representations $\Lc{\lambda}$ ($\lambda \in X$) are pairwise distinct and are the unique, up to isomorphism, irreducible representations in $\Ocatc$.
\item[\textbullet] Let $\lambda,\lambda' \in X$. Every $\glie$-morphism from $\Lc{\lambda}$ to $\Lc{\lambda'}$ is equal to a scalar multiple of the identity map if $\lambda = \lambda'$ and is zero else.
\item[\textbullet] Let $\lambda \in X$. The representation $\Lc{\lambda}$ is integrable if and only if $\lambda \in \Xdom$.
\item[\textbullet] Every representation in the category $\Ointc$ is isomorphic to a direct sum of representations $\Lc{\lambda}$ with $\lambda \in \Xdom$.
\end{enumerate}

\vsp

We give below, for the reader's convenience, a proof of a complete-reducibility type result for the category of integrable representations of $\slt$.

\begin{prop} \label{prop_KMintonehat}
An integrable representation of $\slt$ is isomorphic to a direct sum of representations $\Lc{n}$ with $n \in \NN$. Besides, a representation of $\slt$ is integrable if and only if the actions of $X^-,X^+,H$ are locally finite.
\end{prop}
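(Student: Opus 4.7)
My plan is to run the argument through the following chain: local finiteness of $X^-, X^+, H$ implies, via the triangular decomposition of $\Uc[\slt]$, that every vector generates a finite-dimensional subrepresentation, which by the classical finite-dimensional $\slt$-theory is a sum of $\Lc{n}$'s; then a Zorn's lemma argument will upgrade ``sum'' to ``direct sum''; finally the converse implications will come essentially for free.

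First I would assume that $X^-, X^+, H$ all act locally finitely on a representation $V$, fix $v \in V$, and consider the cyclic subrepresentation $V' := \Uc[\slt] . v$. Recalling the triangular decomposition $\Uc[\slt] \simeq \Unnc[\slt] \otimes_\KK \UCac[\slt] \otimes_\KK \Unpc[\slt]$ (subsection \ref{subs_KMhat}), any element of $V'$ is a linear combination of vectors of the form $(X^-)^a H^b (X^+)^c . v$. Since each of $X^-, X^+, H$ acts locally finitely on $V$, the three families $\{ (X^+)^c . v \}_c$, $\{ H^b (X^+)^c . v\}_b$ and $\{(X^-)^a H^b (X^+)^c . v\}_a$ each span a finite-dimensional subspace, which forces $V'$ to be finite-dimensional. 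At this point I would invoke the classical decomposition of finite-dimensional representations of $\slt$ (standard, going back to Cartan--Weyl) to conclude that $V' \simeq \bigoplus \Lc{n_k}$ for some finite family of $n_k \in \NN$. Hence $V$ is the sum (a priori not direct) of its subrepresentations isomorphic to some $\Lc{n}$.

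Next I would pass from ``sum'' to ``direct sum'' by a standard Zorn's lemma argument: consider the set $\mathcal{F}$ of families ${(W_j)}_{j \in J}$ of subrepresentations of $V$, each $W_j$ isomorphic to some $\Lc{n_j}$, such that $\sum_j W_j$ is a direct sum, ordered by inclusion. Any chain has an upper bound (the union), so $\mathcal{F}$ admits a maximal element ${(W_j)}_{j \in J}$. If $W := \bigoplus_j W_j$ were strictly contained in $V$, by the previous paragraph there would exist a subrepresentation $W' \simeq \Lc{n}$ of $V$ not contained in $W$; since $\Lc{n}$ is irreducible, $W' \cap W = (0)$, contradicting maximality of ${(W_j)}_{j \in J}$.

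For the remaining implications, I would argue as follows. If $V$ is integrable, then by definition $X^\pm$ act locally nilpotently on $V$; moreover, $V$ being a weight representation means $V = \bigoplus_{n \in \ZZ} V_n$, with $H$ acting by the scalar $n$ on $V_n$, so $H$ acts locally finitely. Conversely, if $V \simeq \bigoplus \Lc{n_k}$, then $V$ is a weight representation (each $\Lc{n}$ is) and $X^\pm$ act locally nilpotently (again since each $\Lc{n}$ is finite-dimensional), so $V$ is integrable. None of the steps seem to pose a real obstacle; the only subtle point is the passage from sum to direct sum, which is precisely where Zorn's lemma intervenes and is the reason the statement is not entirely formal.
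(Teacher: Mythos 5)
Your proof is correct and takes essentially the same route as the paper's: local finiteness together with the triangular decomposition of $\Uc[\slt]$ shows every cyclic subrepresentation is finite-dimensional, classical finite-dimensional $\slt$-theory makes it a sum of $\Lc{n}$'s, and semisimplicity upgrades the sum to a direct sum. The only difference is one of detail, not of method: you spell out the Zorn's lemma passage from sum to direct sum and the two easy converse implications, both of which the paper leaves implicit.
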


\proof
Let $V$ be a representation of $\slt$ on which the actions of $X^-,X^+,H$ are locally finite and let $v$ be a vector in $V$. Denote by $V'$ the subrepresentation of $V$ generated by $v$. In view of the triangular decomposition of $\slt$ and since the actions of $X^-,X^+,H$ on $V$ are locally finite, the subrepresentation $V'$ is finite-dimensional. Therefore, $V'$ is a sum of representations $\Lc{n}$ with $n \in \NN$. This being true for \hbox{every $v$} in $V$, the representation $V$ is a sum of irreducible representations $\Lc{n}$ with $n \in \NN$. Hence, the representation $V$ is a direct sum of irreducible representations $\Lc{n}$ with $n \in \NN$.
\qed \\

The author ignores whether there exists (when $\glie$ is not finite-dimensional) a reference for the following proposition. The proof given below follows from a private communication with Victor Kac. Note that it is possible in the case \hbox{of $\slt$} to give a more direct and elementary proof.

\begin{prop} \label{prop_UBperfecthat}
An element in $\Uc$ is zero if and only if it acts as zero on all the representations of $\glie$ in $\Ointc$.
\end{prop}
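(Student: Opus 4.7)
The reverse direction is trivial, so assume $x \in \Uc$ is nonzero and find $V$ in $\Ointc$ and $v \in V$ with $x.v \neq 0$. Since $\Ointc$ contains all $\Lc{\lambda}$ with $\lambda \in \Xdom$, it is enough to exhibit such a $v$ inside some $\Lc{\lambda}$. The plan is to use the triangular decomposition $\Uc \simeq \Unnc \otimes_\KK \UCac \otimes_\KK \Unpc$ together with a Shapovalov-type contravariant form on $\Lc{\lambda}$.

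First, I would write $x = \sum_{k=1}^l x^-_k \, x^0_k \, x^+_k$ with $x^\pm_k$ homogeneous with respect to the natural $X$-grading (where $\deg \check\mu = 0$, $\deg X^{\pm}_i = \pm \alpha_i$) and with the family $(x^-_k)$ chosen to be $\KK$-linearly independent; this is possible by the PBW-type basis provided by the triangular decomposition. After reordering I would arrange that $\deg(x^+_1)$ is minimal among the $\deg(x^+_k)$. Let $\tau$ be the involutive anti-automorphism of $\Uc$ sending $X^\pm_i \mapsto -X^\mp_i$ and fixing $Y$ pointwise.

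Next, I would choose $\lambda \in \Xdom$ satisfying three genericity conditions simultaneously: (a) $x^0_1(\lambda) \neq 0$; (b) the vectors $x^-_k . v_\lambda$ ($1 \leq k \leq l$) remain $\KK$-linearly independent in $\Lc{\lambda}$; and (c) $\tau(x^+_1) . v_\lambda \neq 0$. The existence of such $\lambda$ amounts to avoiding finitely many proper algebraic conditions on $\Xdom$, and it is ensured by the $Y$-regularity hypothesis together with the fact that for $\langle \check\alpha_i, \lambda \rangle$ large enough, the Verma module surjects freely onto $\Lc{\lambda}$ in low-enough weight, which is where I anticipate the technical work to sit.

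I would then equip $\Lc{\lambda}$ with a non-degenerate contravariant bilinear form $\sigma$ pairing $\Lc{\lambda}_\mu$ and $\Lc{\lambda}_\mu$, satisfying $\sigma(y.v, w) = \sigma(v, \tau(y).w)$ (the Shapovalov form, whose existence uses the symmetrisability assumption). Using non-degeneracy of $\sigma$ on the weight space $\Lc{\lambda}_{\lambda - \deg(x_1^+)}$ together with condition (c), pick $v$ in this weight space with $\sigma(v, \tau(x_1^+).v_\lambda) \neq 0$. By contravariance, $\sigma(x_1^+.v, v_\lambda) \neq 0$; since $x_1^+.v$ has weight $\lambda$ and $\Lc{\lambda}_\lambda = \KK v_\lambda$ is one-dimensional, this forces $x_1^+.v$ to be a nonzero scalar multiple of $v_\lambda$.

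To conclude, I would use the minimality of $\deg(x_1^+)$: for each $k$, $x^+_k.v$ has weight $\lambda + (\deg(x^+_k) - \deg(x^+_1))$, which is a weight of $\Lc{\lambda}$ only if $\deg(x^+_k) = \deg(x^+_1)$, in which case the same argument shows $x^+_k . v$ is a scalar multiple of $v_\lambda$. Therefore $x . v = \sum_{k : \, \deg(x^+_k) = \deg(x^+_1)} x^0_k(\lambda) \, x^-_k . (x^+_k . v)$ is a linear combination of the linearly independent vectors $x^-_k . v_\lambda$, in which the coefficient of $x^-_1 . v_\lambda$ is nonzero by (a) and by non-vanishing of $x_1^+ . v$. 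Hence $x . v \neq 0$, as required. The chief obstacle, as indicated above, is establishing the three simultaneous genericity conditions on $\lambda \in \Xdom$; the rest is a formal application of the contravariant form.
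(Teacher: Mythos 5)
Your proof is correct and follows essentially the same route as the paper's: the same graded triangular decomposition $x = \sum_k x^-_k \, x^0_k \, x^+_k$ with $(x^-_k)$ linearly independent and $\deg(x_1^+)$ minimal, the same three genericity conditions $a')$--$c')$ on $\lambda \in \Xdom$, and the same contravariant (Shapovalov-type) form on $\Lc{\lambda}$ forcing $x_1^+ . v$ to be a nonzero multiple of $v_\lambda$, so that only the terms with $\deg(x_k^+) = \deg(x_1^+)$ survive in $x.v$. The simultaneous existence of a sufficiently dominant $\lambda$, which you flag as the chief technical point, is in fact simply asserted without further justification in the paper's proof as well.
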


\proof
Consider the ${X \!}$-gradation of the $\KK$-algebra $\Uc$ defined by the following:
$$ \mathrm{deg} \; \! (\mu) \, := \, 0 \, , \quad \quad \mathrm{deg} \; \!  (X^\pm_i) \, := \, \alpha_i \quad \quad \quad (\mu \in X, \ i \in I) \, . $$
Let $x$ be a nonzero element in $\Uc$. Recall the triangular decomposition of $\Uc$ and pick nonzero homogeneous \hbox{elements $x^-_k$, $x^0_k$} \hbox{and $x^+_k$} \hbox{in $\Unnc$}, $\UCac$ and $\Unpc$ respectively \hbox{($1 \leq k \leq l$)} such that the following conditions are satisfied:
\begin{enumerate}
\item[a)] $x = \sum_{1 \leq k \leq l} x^-_k \; \! x^0_k \; \! x_k^+$,
\item[b)] the family ${(x^-_k)}_{1 \leq k \leq l}$ is $\KK$-linearly independent,
\item[c)] $\deg (x_1^+) \ngtr \deg (x_k^+)$ for every $1 < k \leq l$.
\end{enumerate}
Denote by $\tau$ the involutive anti-automorphism of the $\KK$-algebra $\Uc$ which sends $X_i^\pm$ \hbox{to $-X_i^\mp$} for every $i \in I$ and which fixes pointwise $Y$. Pick $\lambda$ in $\Xdom$ such that the following conditions hold (we denote by $v_{\lambda}$ a fixed choice of highest weight vector of the representation $\Lc{\lambda}$):
\begin{enumerate}
\item[a')] $x^0_1 (\lambda)$ is nonzero,
\item[b')] the family ${(x^-_k . v_{\lambda})}_{1 \leq k \leq l}$ is $\KK$-linearly independent,
\item[c')] the vector $\tau(x^+_1) . v_{\lambda}$ is nonzero.
\end{enumerate}
Pick now a nonzero $\KK$-bilinear form $\sigma$ on $\Lc{\lambda}$ which has the following properties:
\begin{enumerate}
\item[a'')] $\sigma$ is non-degenerate,
\item[b'')] $\sigma(v,w) = 0$ for every weight vectors $v,w$ in $\Lc{\lambda}$ of distinct weights,
\item[c'')] $\sigma$ is contravariant, i.e. $\sigma (y.v,w) = \sigma(v,\tau(y).w)$ for every $v,w \in \Lc{\lambda}$ and for every $y \in \Uc$.
\end{enumerate}
In view of the non-degeneracy of $\sigma$ and in view of the orthogonality property $b'')$, pick a weight vector $v$ in $\Lc{\lambda}$ of weight $\lambda - \deg(x_1^+)$ such that $\sigma(v, \tau(x^+_1).v_{\lambda})$ is nonzero. Since $\sigma$ is contravariant, the vector $x_1^+.v$ is thus a nonzero scalar multiple of $v_{\lambda}$. Thanks to condition $c)$ above, the following holds:
$$ x.v \ = \ \sum_{\substack{1 \leq k \leq l, \\ \deg(x^+_k) \, = \, \deg (x_1^+)}} x^0_k(\lambda) \, x^-_k . (x^+_k . v) \, . $$
In view of conditions $a')$ and $b')$, one concludes that $x.v$ is nonzero.
\qed

\section{Coloured Kac-Moody algebras}
\subsection{$\glieF$-pointed algebras}
Recall the definition of the Kac-Moody algebra $\glie$: see subsection \ref{subs_KMhat}. The \hbox{relations \eqref{eq_KM_nondeformablehat}} are called the \emph{non-deformable relations} \hbox{of $\glie$}.

\begin{defi}
We denote by $\glieF$ the Lie $\KK$-algebra generated by \hbox{$X^\pm_i$ ($i \in I$)}, the \hbox{$\ZZ$-module $Y$}, and subject to the non-deformable relations of $\glie$:
\begin{equation*}
\left\{ {\renewcommand{\arraystretch}{1.3} \begin{array}{rclr}
\big[ \check \mu, \check \mu' \big] & = & 0 & (\check \mu, \check \mu' \in Y) \, , \\
\big[ {\check \mu}, X^{\pm}_i \big] & = & \pm \; \! \langle {\check \mu} , \alpha_i \rangle \, X_i^{\pm} & (i \in I, \ \check \mu \in Y) \, , \\
\big[ X^+_i, X^-_j \big] & = & 0 & \quad \quad (i,j \in I, \ i \neq j) \, .
\end{array}} \right.
\end{equation*}
\end{defi}

When $\glie = \slt$, we denote $\glieF$ by $\sltF$. \\



Let $V$ be a representation of $\UF$ and let $\lambda \in X$. A vector $v$ in $V$ is called a \emph{vector of weight} $\lambda$ if $\check \mu.v = {\langle \check \mu \, ,  \lambda \rangle \! \: v}$ for all $\check \mu \in Y$. The \emph{weight space} \hbox{of $V$} of \hbox{weight $\lambda$} is the \hbox{$R$-submodule} of vectors of \hbox{weight $\lambda$} in $V$, we denote it \hbox{by $V_\lambda$}. We call $\lambda$ a \emph{weight} of $V$ \hbox{if $V_\lambda$} is nonzero, the set of weights of $V$ is denoted \hbox{by $\wt (V)$}. Remark that the sum of weight spaces of $V$ is direct. The representation $V$ is called a \emph{weight representation} if $V$ is equal to the sum of its weights spaces.

\begin{defi}
If $V$ is a weight representation and if $V$ is free when viewed as a $R$-module, we say that $V$ admits a character and we denote by $\chi(V)$ the function from $X$ to $\NN \cup \{ \infty \}$ which associates to $\lambda \in X$ the rank of the free \hbox{$R$-module $V_{\lambda}$}.
\end{defi}

A representation $V$ of $\UF$ is said \emph{integrable} if it is a weight representation and if the action of $X^\pm_i$ \hbox{on $V$} is locally nilpotent for every $i \in I$.

\begin{defi}
A topological $R$-algebra $A$ is said $\glieF$-pointed if $A$ is endowed with a $R$-algebra morphism $\dot A$ from $\UF$ to $A$, called the $\glieF$-point of $A$, and whose image is dense in $A$.
\end{defi}


Every element in $\UF$ is regarded as an element in $A$ as well and every representation $V$ of $A$ is viewed also as a representation of $\UF$, thanks to the pull-back of $V$ via the $\glieF$-point $\dot A$. The notions of weight representations, characters and integrability are then accordingly defined for representations of $\glieF$-pointed algebras.

\subsection{The global crystal of rank 1}
\begin{defi}
The global crystal of rank 1, denoted by $\BB$, is the quiver defined by the following.
\begin{enumerate}
\item[\textbullet] The set $\BBv$ of vertices of $\BB$ is ${\big \{ b_{n,p} := (n,n-2p) \; \! ; \, n,i \in \NN, \, p \leq n \big \}}$.
\item[\textbullet] The set $\BBe$ of edges of $\BB$ is the disjoint union ${\BBe[-] \sqcup \BBe[+]}$ of two copies \hbox{of $[\BBe] := {\big \{ (n,k) \; \! ; \, n,k \in \NNI, \, k \leq n \big \}}$}. The source of the edge $(n,k) \in \BBe[\pm]$ is $(n,\pm (n-2k+2))$ and its target is $(n,\pm (n-2k))$.
\end{enumerate}
\end{defi}

\vsp
\vsp

Here is an illustration of the global crystal of rank 1. Remark that there is no arrow for $n=0$.
\begin{equation*}
\shorthandoff{;:!?} \xymatrix @!0 @C=3pc @R=1.1pc {
&&&&& \quad \ \! \; \bullet \! \; \scriptstyle{b_{n,0}} \ar@<4pt>[dd] && \\
&&&&&& \quad \ \ \iddots & \\
&&& \quad \ \! \; \bullet \! \; \scriptstyle{b_{3,0}} \ar@<4pt>[dd] && \quad \ \! \; \bullet \! \; \scriptstyle{b_{n,1}} \ar@<4pt>[uu] \ar@<4pt>[dd] && \\
&& \quad \ \! \; \bullet \! \; \scriptstyle{b_{2,0}} \ar@<4pt>[dd] && \iddots &&& \\
& \quad \ \! \; \bullet \! \; \scriptstyle{b_{1,0}} \ar@<4pt>[dd] && \quad \ \! \; \bullet \! \; \scriptstyle{b_{3,1}} \ar@<4pt>[dd] \ar@<4pt>[uu] && \quad \ \! \; \bullet \! \; \scriptstyle{b_{n,2}} \ar@<4pt>[uu] && \\
\quad \ \! \; \bullet \! \; \scriptstyle{b_{0,0}} && \quad \ \! \; \bullet \! \; \scriptstyle{b_{2,1}} \ar@<4pt>[dd] \ar@<4pt>[uu] && \cdots & \vdots & \quad \ \ \cdots & \\
& \quad \ \! \; \bullet \! \; \scriptstyle{b_{1,1}} \ar@<4pt>[uu] && \quad \ \! \; \bullet \! \; \scriptstyle{b_{3,2}} \ar@<4pt>[dd] \ar@<4pt>[uu] && \quad \quad \ \, \bullet \! \; \scriptstyle{b_{n,n-2}} \ar@<4pt>[dd] && \\
&& \quad \ \! \; \bullet \! \; \scriptstyle{b_{2,2}} \ar@<4pt>[uu] && \ddots &&& \\
&&& \quad \ \! \; \bullet \! \; \scriptstyle{b_{3,3}} \ar@<4pt>[uu] && \quad \quad \ \, \bullet \! \; \scriptstyle{b_{n,n-1}} \ar@<4pt>[uu] \ar@<4pt>[dd] & \\
&&&&&& \quad \ \ \ddots & \\
&&&&& \quad \ \, \! \; \bullet \! \; \scriptstyle{b_{n,n}} \ar@<4pt>[uu] && \\
}
\end{equation*}

Let $n \in \NN$. We denote by $\BB_n$ the connected component of $\BB$ containing the vertex $b_{n,0}$. We denote by $\BBv_n$ and $\BBe_n$ the sets of vertices and edges in $\BB_n$, respectively. The quiver $\BB_n$ is called the \emph{$n$th component of $\BB$}. \\

Let $f \in R^{[\BBe]}$ and let $n,k \in \NN$ such that $k \leq n$. We denote by $f(n,k) !$ the element in $R$ defined by the following:
$$ f(n,k) ! \ := \ \left\{ {\renewcommand{\arraystretch}{1.3} \begin{array}{cl}
\prod_{k'=1}^k f(n,k') & \quad \ \text{if } k \geq 1 \, , \\
0 & \quad \ \text{else.}
\end{array}} \right. $$

\subsection{Colourings} \label{subs_colouringshat}
Let $n \in \NN$, remark that the irreducible representation $\Lc{n}$ of $\slt$ can be defined as the $\KK$-vector space ${\KK \BBv_n}$, where the actions of $X^\pm, H$ are given by the following, for $p \in \NN_{\leq n}$:
$$ \left\{ {\renewcommand{\arraystretch}{0.75} \begin{array}{rcl}
H . \! \; b_{n,p} & := & \ (n- 2p) \, b_{n,p} \, , \\
&& \\
X^- . \! \; b_{n,p} & := & \left\{ {\renewcommand{\arraystretch}{1.1} \begin{array}{cl}
(p+1) \, b_{n,p+1} & \quad \text{if $p \neq n$,} \\
0 & \quad \text{else,}
\end{array}} \right. \\
&& \\
X^+ . \! \; b_{n,p} & := &  \left\{ {\renewcommand{\arraystretch}{1.1} \begin{array}{cl}
(n-p+1) \, b_{n,p-1} & \quad \text{if $p \neq 0$,} \\
0 & \quad \text{else.}
\end{array}} \right.
\end{array}} \right.
$$

\vsp
\vsp

\begin{defi}
\begin{enumerate}
\item[\textbullet] We denote by $\Colo{R}$ the ring $R^{\BBe}$, an element is called a colouring of the crystal $\BB$ with values in $R$.
\item[\textbullet] We denote by $\Colo[I]{R}$ the ring $(\Colo{R})^I$, an element $\psi = {(\psi_i)}_{i \in I}$ is called an ${I \! \! \:}$-colouring of the crystal $\BB$ with values in $R$.
\end{enumerate}
\end{defi}

When the cardinality of $I$ is $1$, we identify the rings $\Colo{R}$ and $\Colo[I]{R}$ and we identify colourings with $I$-colourings. \\

Let $\psi \in \Colo{R}$. We denote by $\psi^\pm$ the restriction of $\psi$ to $\BBe[\pm]$. Here is an illustration of the colouring $\psi$ over the $n$th component of $\BB$.

$$ \shorthandoff{;:!?} \xymatrix @!0 @R=1.1pc {
\quad \ \, \! \; \bullet \! \; \scriptstyle{b_{n,0}} \ar@<4pt>[dd]^{\scriptstyle \psi^-(n,1)} \\
\\
\quad \ \, \! \; \bullet \! \; \scriptstyle{b_{n,1}} \ar@<4pt>[uu]^{\scriptstyle \psi^+(n,n)} \ar@<4pt>[dd]^{\scriptstyle \psi^-(n,2)}\\
 \\
\quad \ \, \! \; \bullet \! \; \scriptstyle{b_{n,2}} \ar@<4pt>[uu]^{\scriptstyle \psi^+(n,n-1)} \\
\vdots \\
\quad \quad \ \, \bullet \! \; \scriptstyle{b_{n,n-2}} \ar@<4pt>[dd]^{\scriptstyle \psi^-(n,n-1)} \\
\\
\quad \quad \ \, \bullet \! \; \scriptstyle{b_{n,n-1}} \ar@<4pt>[uu]^{\scriptstyle \psi^+(n,2)} \ar@<4pt>[dd]^{\scriptstyle \psi^-(n,n)} \\
\\
\quad \ \, \! \; \bullet \! \; \scriptstyle{b_{n,n}} \ar@<4pt>[uu]^{\scriptstyle \psi^+(n,1)}
} $$

\vsp

\begin{defi}
Let $\psi \in \Colo{R}$ and let $n \in \NN$. We denote \hbox{by $\Lrep{n,\psi}$} the representation \hbox{of $\UFslt$} whose underlying \hbox{$R$-module} is ${R \BBv_n}$ and where the actions of $X^{\pm}, H$ are defined by the following, for $p \in \NN_{\leq n}$:
$$ \left\{ {\renewcommand{\arraystretch}{0.75} \begin{array}{rcl}
H . \! \; b_{n,p} & := & \ (n- 2p) \, b_{n,p} \, , \\
&& \\
X^- . \! \; b_{n,p} & := & \left\{ {\renewcommand{\arraystretch}{1.1} \begin{array}{cl}
\psi^-(n,p+1) \, b_{n,p+1} & \quad \text{if $p \neq n$,} \\
0 & \quad \text{else,}
\end{array}} \right. \\
&& \\
X^+ . \! \; b_{n,p} & := &  \left\{ {\renewcommand{\arraystretch}{1.1} \begin{array}{cl}
\psi^+(n,n-p+1) \, b_{n,p-1} & \quad \text{if $p \neq 0$,} \\
0 & \quad \text{else.}
\end{array}} \right.
\end{array}} \right.
$$
\end{defi}

Remark at once that the representation $\Lrep{n,\psi}$ is a $R$-free integrable representation of $\UFslt$.

\begin{lem} \label{lem_indecompAonehat}
Let $\psi \in \Colo{R}$ and let $n \in \NN$. The representation $\Lrep[\! \! \: K(R)]{n,\psi}$ is irreducible if and only if the restriction of the function to $\psi$ to $\BBe_n$ has no zero.
\end{lem}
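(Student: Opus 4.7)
The plan is to exploit the weight-space decomposition of $\Lrep[\! \! \: K(R)]{n,\psi}$. Over the field $K(R)$, the element $H$ acts diagonally with pairwise distinct eigenvalues $n, n-2, \ldots, -n$, so each weight space is one-dimensional, spanned by $b_{n,p}$. Hence any subrepresentation $W$ is automatically a direct sum of weight spaces, i.e.\ $W = \bigoplus_{p \in S} K(R) \; \! b_{n,p}$ for some subset $S \subseteq \{0, 1, \ldots, n\}$. This observation reduces irreducibility to a purely combinatorial question about which $S$ are stable under the described actions of $X^\pm$.

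For the direction ``no zero on $\BBe_n$ $\Rightarrow$ irreducible'', I would pick a nonzero $W$ and some $p \in S$. Since $\psi^+(n,n-p'+1) \neq 0$ for every $p' \in \{1, \ldots, n\}$, successive applications of $X^+$ to $b_{n,p}$ produce nonzero multiples of $b_{n,p-1}, b_{n,p-2}, \ldots, b_{n,0}$, so $0 \in S$. Dually, since $\psi^-(n,k) \neq 0$ for every $k \in \{1, \ldots, n\}$, successive applications of $X^-$ to $b_{n,0}$ produce nonzero multiples of $b_{n,1}, \ldots, b_{n,n}$, so $S = \{0, \ldots, n\}$ and $W = V$.

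For the converse, suppose a zero occurs on $\BBe_n$. If $\psi^-(n,p+1) = 0$ for some $p \in \{0, \ldots, n-1\}$, the set $S = \{0, 1, \ldots, p\}$ defines a nonzero proper $K(R)$-subspace which is stable under $X^+$ (it lowers the index) and under $X^-$ (since $X^-.\! \; b_{n,p} = \psi^-(n,p+1) \; \! b_{n,p+1} = 0$ and the lower $b_{n,p'}$ go into the span). Similarly, if $\psi^+(n,k) = 0$ for some $k \in \{1, \ldots, n\}$, then $S = \{n-k+1, \ldots, n\}$ yields a nonzero proper subrepresentation. This contradicts irreducibility.

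No serious obstacle is expected: the only subtlety is justifying that weight spaces remain one-dimensional after extending scalars to $K(R)$, which is immediate since the basis $\BBv_n$ already decomposes $\Lrep{n,\psi}$ into free rank-one weight components. The combinatorics of the two stabilising subspaces in the converse direction is essentially forced by the shape of $\BB_n$.
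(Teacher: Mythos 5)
Your proof is correct and follows essentially the same route as the paper: both use the weight-space decomposition under $H$ (with pairwise distinct eigenvalues $n-2p$ over the characteristic-zero field $K(R)$) to reduce to subsets of the basis $\{b_{n,p}\}$, and then analyse the explicit actions of $X^\pm$. Your version is in fact slightly more explicit than the paper's, which compresses the converse into a remark about the subrepresentation generated by a single $b_{n,p}$, whereas you exhibit the proper stable subspaces $\{0,\dots,p\}$ and $\{n-k+1,\dots,n\}$ directly.
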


\proof
Suppose that the restriction of the function $\psi$ to $\BBe_n$ has no zero and let $V$ be a subrepresentation of $\Lrep[\! \! \: K(R)]{n,\psi}$. Since $\Lrep[\! \! \: K(R)]{n,\psi}$ is a weight representation of $\UFslt$, so is $V$. Hence, $V$ is nonzero if and only if there exists $p \in \NN_{\leq n}$ such that $b_{n,p} \in V$. Besides, in view of the action of $X^\pm$ on the representation $\Lrep[\! \! \: K(R)]{n,\psi}$, remark that the subrepresentation of $\Lrep[\! \! \: K(R)]{n,\psi}$ generated by the vector $b_{n,p}$ is proper if and only if the restriction of $\psi$ to $\BBe_n$ has no zero. One concludes.
\qed

\vsp
\vsp

\begin{defi}
Let $\psi \in \Colo[I]{R}$.
\begin{itemize}
\item[\textbullet] The $I$-colouring $\psi$ is said non-degenerate if $\psi_i$ has no zero for every $i \in I$.
\item[\textbullet] The $I$-colouring $\psi$ is said admissible (over $R$) if it is invertible in $\Colo[I]{R}$.
\end{itemize}
\end{defi}

Remark that an admissible $I$-colouring is in particular non-degenerate.

\subsection{Tannakian topologies}
Let $A$ be $R$-algebra, let $\mathcal C$ be a class of representations of $A$ and let $B$ be a topological \hbox{$R$-algebra}.

\begin{defi}
\begin{itemize}
\item[\textbullet] The Tannakian topology $\tau(\mathcal C)$ generated by the class $\mathcal C$ is the coarsest topology on $A$ which makes continuous every map ${A \! \times \! V \! \to \! V}$ associated to a representation $V$ in $\mathcal C$, where $V$ is endowed with the discrete topology.
\item[\textbullet] A representation $V$ of $B$ is said \emph{discrete} if the associated map ${B \! \times \! V \! \to \! V}$ is continuous, where $V$ is endowed with the discrete topology.
\end{itemize}
\end{defi}

Remark that the topology $\tau(\mathcal C)$ is an initial topology: more precisely, it is the coarsest topology on $A$ which makes continuous every map ${A \! = \! A \! \times \! \{ v \} \! \to \! V}$ associated to a representation $V \in \mathcal C$ and a vector $v \in V$, where $V$ is endowed with the discrete topology.

\begin{prop} \label{prop_Tantopo}
\begin{enumerate}
\item[1)] The $R$-algebra $A$ endowed with the topology $\tau(\mathcal C)$ is a topological $R$-algebra.
\item[2)] The following two assertions are equivalent.
\begin{enumerate}
\item[(i)] The topological $R$-algebra $A$ is separated.
\item[(ii)] An element in $A$ is zero if and only if it acts as zero on all the representations in $\mathcal C$.
\end{enumerate}
\item[3)] A sequence ${(x_n)}_{n \in \NN}$ with values in $A$ converges to zero if and only if the sequence ${(x_n.v)}_{n \in \NN}$ is eventually zero, for every $V \in \mathcal C$ and $v \in V$.
\end{enumerate}
\end{prop}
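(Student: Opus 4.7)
The plan is to identify an explicit subbasis of open neighborhoods of $0 \in A$ for the Tannakian topology $\tau(\mathcal{C})$ and then deduce all three assertions from its properties. Specifically, for each $V \in \mathcal{C}$ and each $v \in V$, the map $A \to V$, $x \mapsto x.v$ must be continuous with $V$ discrete, so the preimage of $\{0\}$, namely the left annihilator $\mathrm{Ann}(v) := \{x \in A \, ; \, x.v = 0\}$, must be open in $A$. Conversely, since the topology $\tau(\mathcal{C})$ is the coarsest one satisfying this requirement, the family $\{\mathrm{Ann}(v) \, ; \, V \in \mathcal{C}, \, v \in V\}$ forms a subbasis of open neighborhoods of $0$, and a basis of neighborhoods of $0$ is then given by their finite intersections $\bigcap_k \mathrm{Ann}(v_k)$.

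For point 1, I will verify that $A$ endowed with $\tau(\mathcal{C})$ is a topological $R$-algebra. The crucial observation is that each $\mathrm{Ann}(v)$ is a left $R$-submodule of $A$, hence so are its finite intersections. Continuity of addition and of scalar multiplication by elements of $R$ (which is discrete) follow at once. For continuity of the product, given a basic neighborhood $\mathrm{Ann}(v)$ of $0$ and $x,y \in A$, I will set $w := y.v \in V$ and consider the open neighborhoods $U := x + \mathrm{Ann}(w)$ and $U' := y + \mathrm{Ann}(v)$ of $x$ and $y$; a direct computation shows $(x'y').v = x'.w = x.w = (xy).v$ for $x' \in U$ and $y' \in U'$, so $U \cdot U' \subseteq xy + \mathrm{Ann}(v)$.

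For point 2, the topology $\tau(\mathcal{C})$ is separated if and only if $\bigcap_{V,v} \mathrm{Ann}(v) = (0)$, which is exactly the statement that the only element in $A$ acting as zero on all representations in $\mathcal{C}$ is $0$. For point 3, a sequence ${(x_n)}_{n \in \NN}$ converges to zero if and only if it is eventually contained in every basic neighborhood of $0$, i.e.\ in every finite intersection of annihilators; since eventual containment is preserved under finite intersections, this amounts to requiring that, for every $V \in \mathcal{C}$ and $v \in V$, the sequence $(x_n)$ is eventually in $\mathrm{Ann}(v)$, which is precisely the condition that $(x_n.v)$ is eventually zero. No serious obstacle is expected; the only point requiring a little care is the verification of continuity of multiplication, which I have sketched above using the two-step neighborhood construction.
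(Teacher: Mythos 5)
Your proposal is correct and follows essentially the same route as the paper: the paper's proof also identifies the annihilators (there denoted $I(v)$) as a local basis at the origin of an $R$-linear topology, and its one-line key remark that $I(v)$ contains $I(x.v)\,x$ is precisely your two-step neighborhood computation for continuity of the product, with points 2 and 3 then read off from this description just as you do. The only difference is cosmetic: you are a bit more careful in passing to finite intersections of annihilators to get an actual neighborhood basis, a point the paper's terser phrasing glosses over.
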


\proof
Let $V \in \mathcal C$, let $v \in V$ and denote by $I(v)$ the left ideal of $A$ formed by the elements acting by zero on the vector $v$. Let $x \in A$ and remark that $I(v)$ contains ${I(x.v) \! \: x}$. As a consequence, the $R$-submodules $I(v)$ of $A$, with $v \in V$ and $V \in \mathcal C$, form a local basis at the origin of a $R$-linear topology which endows the $R$-algebra $A$ with a structure of topological $R$-algebra. This topology is the Tannakian topology generated by $\mathcal C$. \hbox{Points 1} and 2 of the proposition then follow. Point 3 is a consequence of the initial form of the topology $\tau(\mathcal C)$.
\qed \\

We regard from now $A$ as a topological $R$-algebra, endowed with the Tannakian topology $\tau(\mathcal C)$. Remark that a representation in $\mathcal C$ is a discrete representation of $A$.

\begin{prop} \label{prop_Tantopo2}
Let $f$ be a $R$-algebra morphism from $B$ to $A$.
\begin{enumerate}
\item[1)] The map $f$ is continuous if and only if the pull-back of every representation in $\mathcal C$ via $f$ is a discrete representation of $B$.
\item[2)] Let $\mathcal C'$ be a class of representations of $B$ and suppose that $B$ is endowed with the topology $\tau(\mathcal C')$. If the pull-back via $f$ of every representation \hbox{in $\mathcal C$} is in $\mathcal C'$, then the map $f$ is continuous.
\end{enumerate}
\end{prop}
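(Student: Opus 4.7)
The plan is to reduce both statements to the explicit description of a local basis for $\tau(\mathcal C)$ that was established in the proof of proposition \ref{prop_Tantopo}: the topology on $A$ admits as a local basis at the origin the left ideals $I(v) := \{ a \in A \; \! ; \ a \: \! . \: \! v = 0 \}$, indexed by pairs $(V,v)$ with $V \in \mathcal C$ and $v \in V$. The key reformulation is that a representation $W$ of a topological $R$-algebra $B$ is discrete if and only if, for every $w \in W$, the annihilator $\mathrm{Ann}_B(w) := \{ b \in B \; \! ; \ b \: \! . \: \! w = 0 \}$ is open in $B$ (indeed, if $W$ carries the discrete topology, then $b \mapsto b \: \! . \: \! w$ is continuous precisely when the fiber $\mathrm{Ann}_B(w)$ over $0$, and hence every fiber, is open; this is exactly the argument used in the proof of proposition \ref{prop_Tantopo}).

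For point 1, the plan is to unravel both sides in terms of these annihilators. Since $f$ is a $R$-linear morphism between $R$-linear topological modules, continuity of $f$ is equivalent to the condition that $f^{-1}(I(v))$ be open in $B$ for every element $I(v)$ of the local basis at $0 \in A$. But for $V \in \mathcal C$ and $v \in V$, one computes
\[ f^{-1} \big( I(v) \big) \ = \ \big \{ b \in B \; \! ; \ f(b) \: \! . \: \! v = 0 \big \} \ = \ \mathrm{Ann}_B (v) \, , \]
where the right-hand side is the annihilator of $v$ in the pull-back of $V$ via $f$. Hence continuity of $f$ amounts to asking that these annihilators be open for all such pairs $(V,v)$, which is exactly the assertion that every pull-back of a representation in $\mathcal C$ via $f$ is a discrete representation of $B$.

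For point 2, the observation is that by the very definition of $\tau(\mathcal C')$, every representation in $\mathcal C'$ is a discrete representation of $B$. Consequently, if the pull-back via $f$ of every $V \in \mathcal C$ lies in $\mathcal C'$, each such pull-back is a discrete representation of $B$, and point 1 immediately yields the continuity of $f$.

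I do not foresee a real obstacle here: the whole statement is formal, and the only care needed is to make the equivalence "discrete representation of $B$" $\Leftrightarrow$ "every vector has open annihilator in $B$" fully explicit, which is already essentially built into the construction of Tannakian topologies used in proposition \ref{prop_Tantopo}.
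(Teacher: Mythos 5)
Your proof is correct and is essentially the paper's own argument in explicit form: the paper reduces point 1 to the initial form of $\tau(\mathcal C)$ together with the observation that discreteness of a representation amounts to continuity of every orbit map $b \mapsto b \: \! . \: \! v$ into the discrete space $V$, which is precisely your reformulation via open annihilators and the identity $f^{-1}(I(v)) = \mathrm{Ann}_B(v)$. Point 2 is deduced in both cases from point 1 together with the remark that representations in $\mathcal C'$ are discrete for $\tau(\mathcal C')$, so there is nothing further to add.
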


\proof
Remark that a representation $V$ of $B$ is continuous if and only if every map ${B \! \times \! \{ v \} \! \to \! V}$ associated to the representation $V$ and to a vector $v \in V$, is continuous, where $V$ is endowed with the discrete topology. Point 1 is then a consequence of the initial form of the topology $\tau(\mathcal C)$. Point 2 follows from \hbox{point 1} and from the definition of the topology $\tau(\mathcal C')$.
\qed \\

Denote by $\widehat A(\mathcal C)$ the (separated) completion of $A$. By definition, $\widehat A(\mathcal C)$ is a topological $R$-algebra, which comes with a continuous $R$-algebra morphism $\widehat \pi(\mathcal C)$ from $A$ to $\widehat A(\mathcal C)$.

\begin{prop} \label{prop_discreterep}
For every representation $V$ in $\mathcal C$, there exists a unique discrete representation of $\widehat A(\mathcal C)$ whose pull-back via $\widehat \pi(\mathcal C)$ \hbox{is $V$}.
\end{prop}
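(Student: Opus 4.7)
The plan is to construct the extension vector-by-vector using the universal property of the separated completion, then upgrade the pointwise construction to a genuine action by a density argument, and finally deduce uniqueness from discreteness plus density.

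First I would fix $V \in \mathcal{C}$ and, for every $v \in V$, consider the $R$-linear map $\rho_v : A \to V$, $a \mapsto a.v$. By the very definition of the Tannakian topology $\tau(\mathcal{C})$ and by point 3 of Proposition \ref{prop_Tantopo} applied to the one-element class $\{V\}$, the stabiliser $\ker \rho_v$ is open, so $\rho_v$ is continuous when $V$ carries the discrete topology. Since $V$ is in particular Hausdorff and complete, the universal property of the separated completion gives a unique continuous $R$-linear extension $\widehat{\rho}_v : \widehat{A}(\mathcal{C}) \to V$ of $\rho_v$. I would then set $\widehat{a}.v := \widehat{\rho}_v(\widehat{a})$ for $\widehat{a} \in \widehat{A}(\mathcal{C})$ and $v \in V$.

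Next I would verify that this prescription defines a representation of $\widehat{A}(\mathcal{C})$ on $V$. The identity axiom $1.v = v$ is immediate from $\widehat{\pi}(\mathcal{C})(1) = 1$ and the fact that $\widehat{\rho}_v$ extends $\rho_v$. For $R$-bilinearity in $(\widehat{a}, v)$, I would use that $v \mapsto \widehat{\rho}_v$ is $R$-linear on $A$ by construction and that the extension $\widehat{\rho}_v$ depends uniquely on $\rho_v$, hence inherits the linearity. The real content is associativity: given $\widehat{a}, \widehat{b} \in \widehat{A}(\mathcal{C})$, pick nets $(a_\alpha), (b_\beta)$ in $\widehat{\pi}(\mathcal{C})(A)$ converging to $\widehat{a}$ and $\widehat{b}$. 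For fixed $v \in V$, discreteness of $V$ ensures that $b_\beta.v$ is eventually constant equal to $\widehat{b}.v$, hence $a_\alpha.(b_\beta.v) = a_\alpha.(\widehat{b}.v)$ for $\beta$ large, and this limits to $\widehat{a}.(\widehat{b}.v)$. On the other hand, continuity of multiplication in $\widehat{A}(\mathcal{C})$ gives $a_\alpha b_\beta \to \widehat{a}\widehat{b}$, and discreteness of $V$ again forces $(a_\alpha b_\beta).v$ to eventually equal $(\widehat{a}\widehat{b}).v$; the associativity in $A$ identifies the two limits.

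Finally, discreteness of the new representation is automatic: the stabiliser of each $v$ is $\ker \widehat{\rho}_v$, which is open because $\widehat{\rho}_v$ is continuous and $V$ is discrete. For uniqueness, let $W$ be any discrete representation of $\widehat{A}(\mathcal{C})$ whose pull-back by $\widehat{\pi}(\mathcal{C})$ equals $V$; for each $v \in V = W$, the map $\widehat{a} \mapsto \widehat{a}.v$ is then a continuous $R$-linear map $\widehat{A}(\mathcal{C}) \to W$ which agrees with $\rho_v$ on the dense subset $\widehat{\pi}(\mathcal{C})(A)$, so it coincides with $\widehat{\rho}_v$ by Hausdorffness of $W$.

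I expect the main obstacle to lie in the associativity step, because it involves a double limit and implicitly uses that multiplication in $\widehat{A}(\mathcal{C})$ is (at least separately) continuous. The clean way to run this argument is to observe that the topology $\tau(\mathcal{C})$ is generated by the left ideals $I(v)$ appearing in the proof of Proposition \ref{prop_Tantopo}, so it is a linear topology for which both left and right translations in $A$ are continuous; this structure passes to the completion and makes the interchange of limits harmless. Everything else — $R$-bilinearity, discreteness, uniqueness — is then essentially a formal consequence of the universal property plus density.
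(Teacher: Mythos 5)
Your proof is correct, but it takes a genuinely different route from the paper's. The paper avoids the pointwise construction entirely: it endows $\EndR(V)$ with the Tannakian topology generated by the single representation $V$ (pointwise convergence with $V$ discrete), observes that this topological algebra is separated and complete, checks via point 2 of Proposition \ref{prop_Tantopo2} that the structure morphism $f : A \to \EndR(V)$ is continuous, and then invokes the universal property of the completion \emph{once}, for the algebra morphism $f$, obtaining a continuous $R$-algebra morphism $\widehat f : \widehat A(\mathcal C) \to \EndR(V)$. The action axioms ($R$-bilinearity, associativity, unit) are then automatic because $\widehat f$ is an algebra morphism, and both discreteness and uniqueness fall out of the equivalence ``a morphism into $\EndR(V)$ is continuous if and only if the pull-back of $V$ is discrete'' (point 1 of Proposition \ref{prop_Tantopo2}). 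You instead extend each orbit map $\rho_v : A \to V$ separately --- each target being discrete, hence complete and separated --- and must reassemble the action by hand; the cost lands exactly where you predicted, namely associativity, which you discharge with the double-net argument. That step is sound, but note that it silently leans on $\widehat A(\mathcal C)$ being a topological $R$-algebra (joint continuity of multiplication persisting under completion), which in turn rests on the left-ideal description of the local basis $I(v)$ from the proof of Proposition \ref{prop_Tantopo} --- a dependency you do flag at the end, so the argument closes. What each approach buys: the paper's factorization through $\EndR(V)$ is shorter and makes every verification structural, at the price of establishing completeness of $\EndR(V)$ in its Tannakian topology; yours is more elementary in its inputs and makes the extension mechanism transparent, at the price of the bilinearity and double-limit bookkeeping. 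Your uniqueness argument (two continuous maps into a Hausdorff discrete target agreeing on the dense image $\widehat\pi(\mathcal C)(A)$ coincide) is the same in substance as the paper's.
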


\proof
Let $V \in \mathcal C$ and denote by $f$ the \hbox{$R$-algebra} map from $A$ to $\EndR (V)$ associated to the \hbox{representation $V$}. We regard $V$ as a representation of $\EndR(V)$ and we endow the latter algebra with the Tannakian topology generated by the unique representation $V$. Remark then that $\EndR(V)$ is a separated topological $R$-algebra (see proposition \ref{prop_Tantopo}) and is moreover complete. Since $V$ is in $\mathcal C$, note that the map $f$ is continuous (see point 2 of proposition \ref{prop_Tantopo2}). Therefore, and by definition of $\widehat A(\mathcal C)$, there exists a unique continuous $R$-algebra \hbox{morphism $\widehat f$} from $\widehat A(\mathcal C)$ to $\EndR(V)$, which makes the following diagram commutes:
$$ \shorthandoff{;:!?} \xymatrix @C=5pc @R=1.5pc {
A \ar[r]^-{f} \ar[d]_-{\widehat \pi(\mathcal C) \, } & \, \EndR (V) \, . \\
\widehat A(\mathcal C) \ar[ur]_-{\widehat f} &
} $$
A $R$-algebra morphism $g$ from $\widehat A(\mathcal C)$ to $\EndR(V)$ being continuous if and only if the pull-back of $V$ via $g$ is a discrete representation of $\widehat A(\mathcal C)$ (see point 1 of proposition \ref{prop_Tantopo2}), we are done.
\qed

\subsection{$\psi$-integrable representations}
In this subsection, $\psi$ designates an \hbox{$I$-colouring} of $\BB$ with values in $R$. \\

Let $i \in I$, we denote by $\DeltaF$ the unique $R$-algebra morphism from $\UFslt$ \hbox{to $\UF$} which sends $X^\pm$ and $H$ \hbox{to $X_i^\pm$} and $\check \alpha_i$, respectively.

\begin{defi} \label{defi_psiinthat}
A representation $V$ of $\UF$ is said \hbox{$\psi$-integrable} if the following two conditions are satisfied.
\begin{enumerate}
\item[a)] The representation $V$ is a weight representation of $\UF$.
\item[b)] For every $i \in I$, the pull-back of $V$ via $\DeltaF$ is isomorphic to a direct sum of representations $\Lrep{n,\psi_i}$ with $n \in \NN$.
\end{enumerate}
\end{defi}

Note, in view of condition b) in the previous definition, that the $\psi$-integrable representations of $\UF$ are integrable and admit a character.

\begin{defi}
The $\psi$-integrable topology on $\UF$ is the Tannakian topology generated by the $\psi$-integrable representations of $\UF$.
\end{defi}

Note that $\UF$, endowed with the $\psi$-integrable topology, is a topological \hbox{$R$-algebra}, in view of point 1 of proposition \ref{prop_Tantopo}.


\subsection{The algebra $\Uhat{\glie,\psi}$}
In this subsection, $\psi$ designates an \hbox{$I$-colouring} of $\BB$ with values in $R$.

\begin{defi}
\begin{itemize}
\item[\textbullet] We denote by $\Uhat{\glie,\psi}$ the (separated) completion of $\UF$ with respect to the \hbox{$\psi$-integrable} topology.
\item[\textbullet] If the $I$-colouring $\psi$ is admissible, we call $\Uhat{\glie,\psi}$ the coloured Kac-Moody algebra associated \hbox{to $\psi$}.
\end{itemize}
\end{defi}

By definition, the $R$-algebra $\Uhat{\glie,\psi}$ is naturally $\glieF$-pointed, we denote its \hbox{$\glieF$-point} by $\pihat{\glie,\psi}$. \\

Remark that for every \hbox{$\psi$-integrable} representation $V$ of $\UF$, there is, in view of proposition \ref{prop_discreterep}, a unique discrete representation of $\Uhat{\glie,\psi}$ whose \hbox{pull-back} via $\pihat{\glie,\psi}$ \hbox{is $V$}. We regard from now \hbox{$\psi$-integrable} representations of $\UF$ also as discrete representations of $\Uhat{\glie,\psi}$. In the case $\glie = \slt$, the representations $\Lrep{n,\psi}$ \hbox{($n \in \NN$)} of $\UFslt$ are in particular regarded as discrete representations of $\Uhat{\slt,\psi}$.

\begin{rem} \label{rem_Uhatperfect}
\begin{itemize}
\item[\textbullet] In view of point 2 of proposition \ref{prop_Tantopo}, an element in $\Uhat{\glie,\psi}$ is zero if and only if it acts as zero on all the $\psi$-integrable representations \hbox{of $\UF$}.
\item[\textbullet] In particular, when $\glie = \slt$, an element in $\Uhat{\slt,\psi}$ is zero if and only if it acts as zero on all the representations $\Lrep{n,\psi}$ \hbox{($n \in \NN$)}.
\end{itemize}
\end{rem}

We regard $Y$ as a $\ZZ$-submodule $R^X$, thanks to the non-degenerate pairing between $Y$ and $X$. We denote by $R[Y]$ the $R$-subalgebra of $R^X$ generated by $Y$.

\begin{prop} \label{prop_Uhatconv}
\begin{enumerate}
\item[1)] Let ${(x_a)}_{a \in \NN}$ be a sequence with values in $\Uhat{\glie,\psi}$. For every $i \in I$, the series $\sum_{a \in \NN} x_a {(X_i^\pm)}^a$ converges to a unique element in $\Uhat{\glie,\psi}$.
\item[2)] There exists a unique $R$-algebra morphism from $R^X$ to $\Uhat{\glie,\psi}$ which sends every element $\check \mu$ in $Y$ to $\check \mu$ in $\Uhat{\glie,\psi}$.
\end{enumerate}
\vsp
We regard from now every element in $R^X$ as an element in $\Uhat{\glie,\psi}$ as well.
\begin{enumerate}
\item[3)] The image of $R[Y]$ in $\Uhat{\glie,\psi}$ is dense in the image of $R^X$.
\item[4)] An element $f$ in $R^X$ acts as $f(\lambda)$ on every weight vector of weight $\lambda$ in a \hbox{$\psi$-integrable} representation of $\Uhat{\glie,\psi}$.
\end{enumerate}
\end{prop}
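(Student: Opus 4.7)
The plan is to derive all four statements from the local-nilpotent behaviour of the Chevalley generators on $\psi$-integrable representations and from polynomial interpolation using the non-degeneracy of the pairing $\langle \, \cdot \, , \cdot \, \rangle$.

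For point 1, I would invoke point 3 of proposition \ref{prop_Tantopo}: a sequence converges to zero in the $\psi$-integrable Tannakian topology if and only if, for every $\psi$-integrable representation $V$ and every $v \in V$, the sequence of actions on $v$ is eventually zero. Condition b) of definition \ref{defi_psiinthat} forces $X_i^\pm$ to act locally nilpotently on every such $V$, since each summand $\Lrep{n,\psi_i}$ is a finite-rank $R$-module on which $X_i^\pm$ is nilpotent. Hence for each $v$ there exists $a_0(v)$ with $(X_i^\pm)^a . v = 0$ for $a > a_0(v)$, so the partial sums of $\sum_{a \in \NN} x_a (X_i^\pm)^a$ stabilise on $v$; they therefore form a Cauchy sequence whose image in $\Uhat{\glie,\psi}$ converges. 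Uniqueness of the limit holds because $\Uhat{\glie,\psi}$ is separated.

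For points 2 and 4, I would realise the morphism $R^X \to \Uhat{\glie,\psi}$ as a limit of elements of $R[Y]$. Given $f \in R^X$ and a finite subset $\alpha \subset X$, I claim there exists $p_\alpha \in R[Y]$ whose image in $R^X$ coincides with $f$ on $\alpha$. For distinct $\lambda, \lambda' \in \alpha$, the perfect pairing provides $\check \mu \in Y$ with $\langle \check \mu, \lambda \rangle \neq \langle \check \mu, \lambda' \rangle$; the difference is a nonzero integer, hence invertible in $R$ because $\KK$ has characteristic zero and $\KK \subset R$, so a standard Lagrange construction yields $p_\alpha \in R[Y]$. Ordering the finite subsets of $X$ by inclusion, the net $(p_\alpha)$ is Cauchy in $\UF$ for the $\psi$-integrable topology: on a vector $v = \sum_{\lambda \in \Lambda(v)} v_\lambda$ (a finite sum, as $V$ is a weight representation) in a $\psi$-integrable representation $V$, the action $p_\alpha . v$ equals $\sum_{\lambda \in \Lambda(v)} f(\lambda) \! \; v_\lambda$ as soon as $\alpha \supseteq \Lambda(v)$. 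Let $\hat f \in \Uhat{\glie,\psi}$ denote the limit; by construction $\hat f$ acts as $f(\lambda)$ on every weight vector of weight $\lambda$, which directly establishes point 4, forces multiplicativity of $f \mapsto \hat f$ (via remark \ref{rem_Uhatperfect} applied to $\widehat{fg}$ and $\hat f \hat g$), and pins down uniqueness by the same remark.

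Point 3 is then immediate from the construction, since $\hat f$ is by definition a limit of elements of $R[Y]$. The only delicate step is the interpolation: one must check that the Lagrange polynomial lives in $R[Y]$ rather than merely in $K(R)[Y]$, which is handled by noting that every denominator arising from the pairing belongs to $\ZZ \setminus \{0\} \subset \QQ \subset R$ and is therefore invertible in $R$.
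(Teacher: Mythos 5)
Your argument is correct and is essentially the paper's own: point 1 is proved identically (the actions $x_a (X_i^\pm)^a . v$ are eventually zero on every vector of a $\psi$-integrable representation, so the partial sums are Cauchy by point 3 of proposition \ref{prop_Tantopo} and converge in the separated completion), and points 2--4 rest on the same mechanism of approximating $f \in R^X$ by interpolating elements of $R[Y]$ that are Cauchy for the $\psi$-integrable topology, exploiting exactly the two ingredients you isolate, namely the perfect pairing and the invertibility in $R$ of nonzero integers in characteristic zero. The only difference is one of implementation: the paper first identifies $X \cong \ZZ^d$ through the perfect pairing and interpolates $f$ on the boxes $[-N,N]^d$, getting an $\NN$-indexed Cauchy sequence $(f_N)$, whereas you interpolate on arbitrary finite subsets of $X$ with an explicit Lagrange product and a Cauchy net indexed by finite subsets --- interchangeable devices, since every finite subset lies in a box --- and your explicit verification of multiplicativity, density (point 3) and the weight action (point 4) via remark \ref{rem_Uhatperfect} spells out bookkeeping that the paper's proof, which stops after constructing $\bar f$, leaves implicit (note that, like the paper, what you actually establish for point 2 is uniqueness of the element $\hat f$ with the prescribed action on weight vectors, which is the natural reading of the statement).
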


\proof
Remark that the sequence ${(x_a{(X_i^\pm)}^a .v)}_{a \in \NN}$ is eventually zero, for every \hbox{vector $v$} in a \hbox{$\psi$-integrable} representation of $\UF$. As a consequence, and by definition of the \hbox{$\psi$-integrable} topology on $\UF$, the partial sums associated to the series $\sum_{a \in \NN} x_a {(X_i^\pm)}^a$ form a Cauchy sequence in $\UF$ (see point 3 of proposition \ref{prop_Tantopo}). The latter series then converges to a unique element in the (separated) completion $\Uhat{\glie,\psi}$. \\

The \hbox{$\ZZ$-module $X$} being free, the pairing between $Y$ and $X$ being perfect, one can identify $X$ with the $\ZZ$-module $\ZZ^d$ and $R[Y]$ with the $R$-subalgebra of ${R^{\! \: \ZZ^d}}$ formed by the polynomials in $d$ variables, where $d$ designates the rank of the $\ZZ$-module $X$. Let now $f \in R^X$ and let $N \in \NN$. Remark that there exists a polynomial ${f_{\! \! \: N}}$ in $R[Y]$ such that the following holds:
$$ f(k_1, \dots, k_d) \ = \ {f_{\! \! \: N}} (k_1, \dots, k_d) \, , \quad \quad \forall \, k_1, \dots, k_d \in \ZZ, \ \, -N \leq k_1, \dots, k_d \leq N \, . $$
Remark besides that ${f_{\! \! \: N}}$ acts as ${f_{\! \! \: N}}(\lambda)$ on every weight vector of weight $\lambda$ in a \hbox{$\psi$-integrable} representation of $\UF$. Hence, and by definition of the \hbox{$\psi$-integrable} topology on $\UF$, the sequence ${({f_{\! \! \: N}})}_{N \in \NN}$ is a Cauchy sequence in $\UF$ (see again point 3 of proposition \ref{prop_Tantopo}). The sequence then converges to a unique element $\bar f$ in the (separated) completion $\Uhat{\glie,\psi}$.
\qed

\subsection{Congruence}
\begin{defi}
\begin{itemize}
\item[\textbullet] Let $\psi \in \Colo{R}$. The congruent class $[\psi]$ \hbox{of $\psi$} is the function from $[\BBe]$ to $R$ defined by the following:
$$ [\psi] (n,k) \ := \ \psi^-(n,k) \ \psi^+(n,n-k+1) \quad \quad (n,k \in \NNI, \ k \leq n) \, . $$
\item[\textbullet] Two colourings $\psi, \psi'$ of $\BB$ with values in $R$ are said congruent if $[\psi] = [\psi']$. We denote by $\equiv$ the congruence relation on the set $\Colo{R}$.
\end{itemize}
\end{defi}

\vsp
\vsp

The following proposition gives an interpretation of the congruence relation.

\begin{prop} \label{prop_congruentAonehat}
Let $\psi_1,\psi_2$ be two non-degenerate colourings of the \hbox{crystal $\BB$} with values \hbox{in $R$}.
\begin{enumerate}
\item[1)] If $\psi_1 \equiv \psi_2$, then the \hbox{$\sltF$-pointed} $R$-algebras $\Uhat{\slt,\psi_1}$ and $\Uhat{\slt,\psi_2}$ are isomorphic.
\item[2)] If $\psi_1$ and $\psi_2$ divide each other in the ring $\Colo{R}$, then the following two assertions are equivalent.
\begin{enumerate}
\item[(i)] The colourings $\psi_1$ and $\psi_2$ are congruent.
\item[(ii)] For every $n \in \NN$, the representations $\Lrep{n,\psi_1}$ and $\Lrep{n,\psi_2}$ are isomorphic.
\end{enumerate}
\end{enumerate}
\end{prop}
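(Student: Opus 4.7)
The plan is to adapt the proof of lemma \ref{lem_congruentAone} (from Part I) to the Tannakian-completion setting of the present part. The core computation will be the following key assertion, which I will establish first:

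\begin{claim*}
Assume $\psi_1^-$ divides $\psi_2^-$ in $\Colo{R}$. Then $\psi_1 \equiv \psi_2$ if and only if, for every $n \in \NN$, there exists an injective $\UFslt$-morphism from $\Lrep{n,\psi_1}$ to $\Lrep{n,\psi_2}$.
\end{claim*}

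For the claim, I would argue that any $\UFslt$-morphism $\phi : \Lrep{n,\psi_1} \to \Lrep{n,\psi_2}$ must, by $H$-equivariance and because each weight space is $R$-free of rank one generated by $b_{n,p}$, satisfy $\phi(b_{n,p}) = \lambda_p b_{n,p}$ for some $\lambda_p \in R$. The $X^-$-compatibility then forces (up to a normalising scalar $\lambda_0$) the recursion $\lambda_p = \lambda_0 \, \frac{\psi_2^-(n,p)!}{\psi_1^-(n,p)!}$, which is a well-defined element of $R$ precisely thanks to the divisibility hypothesis. The $X^+$-compatibility next gives a second recursion, and a direct comparison of the two yields the identity $\psi_1^-(n,p) \psi_1^+(n,n-p+1) = \psi_2^-(n,p) \psi_2^+(n,n-p+1)$, i.e.\ $[\psi_1](n,p) = [\psi_2](n,p)$. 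Conversely, if congruence holds, the above recursion defines an injective morphism (injectivity being equivalent, since $\Lrep{n,\psi_2}$ is $R$-torsion-free, to nonvanishing of each $\lambda_p$, which follows from non-degeneracy of $\psi_2$).

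Point 2 of the proposition will then be a direct consequence of the claim: under mutual divisibility, both $\psi_2^-/\psi_1^-$ and $\psi_1^-/\psi_2^-$ lie in $R$, so the rescaling morphism and its inverse are well-defined over $R$ and provide an $\UFslt$-isomorphism between $\Lrep{n,\psi_1}$ and $\Lrep{n,\psi_2}$ in one direction, while any isomorphism forces the congruence in the other.

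For point 1, the divisibility hypothesis is not available and the main obstacle lies precisely here: I must compare two completions without a direct morphism between the generating families of representations. I plan to bridge the gap through the auxiliary colouring $\psi$ defined by $\psi^-(n,k) := 1$ and $\psi^+(n,k) := [\psi_1](n,k) = [\psi_2](n,k)$, which is non-degenerate since $\psi_1,\psi_2$ are, and whose minus-part trivially divides $\psi_j^-$ for $j=1,2$. The claim then supplies injective $\UFslt$-morphisms $\Lrep{n,\psi} \hookrightarrow \Lrep{n,\psi_j}$ for every $n \in \NN$. After scalar extension to $K(R)$ these injections become isomorphisms by comparison of ranks, so for each pair $(n,p)$ the stabiliser of $b_{n,p}$ in $\UFslt$ is the same whether computed inside $\Lrep{n,\psi}$ or inside $\Lrep{n,\psi_j}$ (using $R$-freeness to embed each representation into its $K(R)$-extension, and the fact that a stabiliser in $\UFslt$ is unchanged by rescaling the vector). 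By proposition \ref{prop_Tantopo}, a local basis at $0$ of the $\psi_j$-integrable topology on $\UF$ is provided by finite intersections of such stabilisers of weight vectors in the $\Lrep{n,\psi_j}$, so the $\psi_1$-integrable and $\psi_2$-integrable topologies on $\UF$ coincide (both being equal to the $\psi$-integrable one). Their completions agree, and since the construction is functorial in the identity map on $\UF$ the resulting isomorphism respects the $\sltF$-point.
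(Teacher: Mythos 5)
Your proposal is correct and follows essentially the same route as the paper's proof: the same key assertion (congruence of $\psi_1,\psi_2$ with $\psi_1^-$ dividing $\psi_2^-$ is equivalent to the existence of injective $\UFslt$-morphisms $\Lrep{n,\psi_1} \to \Lrep{n,\psi_2}$, realised by the same diagonal map $b_{n,p} \mapsto \frac{\psi_2^-(n,p)!}{\psi_1^-(n,p)!}\, b_{n,p}$), point 2 deduced directly from it, and point 1 via the same auxiliary colouring $\psi^-(n,k) := 1$, $\psi^+$ carrying the common congruence class. The only difference is that you spell out the final step of point 1 — equality of the annihilator ideals of the weight vectors $b_{n,p}$, hence coincidence of the Tannakian topologies and of the completions $\Uhat{\slt,\psi_1}$ and $\Uhat{\slt,\psi_2}$ — which the paper compresses into ``in view of the definition of the algebras''; this elaboration is sound.
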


\proof
Let $\psi, \psi'$ be two non-degenerate colourings of $\BB$ with values in $R$ such that $\psi^-$ divides $(\psi')^-$ in the ring $\Colo{R}$. Let us first prove the following assertion.
\begin{equation} \tag{$\ast$}
\begin{tabular}{| p{22.5pc}}
The colourings $\psi$ and $\psi'$ are congruent if and only if there exists an injective $\UFslt$-morphism from $\Lrep{n,\psi}$ to $\Lrep{n,\psi'}$ for every $n \in \NN$.
\end{tabular}
\end{equation}
Fix $n \in \NN$. Since $b_{n,p}$ is of weight vector of weigh $n-2p$ in both $\Lrep{n,\psi}$ \hbox{and $\Lrep{n,\psi'}$} for every $p \in \NN_{\leq n}$, every $\UFslt$-morphism from $\Lrep{n,\psi}$ \hbox{to $\Lrep{n,\psi'}$} must send, for every $p \in \NN_{\leq n}$, the vector $b_{n,p}$ to a scalar multiple of it. Since besides the vector $b_{n,p}$ is a scalar multiple of $(X^-)^p.b_{n,0}$ in $\Lrep{n,\psi}$ \hbox{and $\Lrep{n,\psi'}$} for every $p \in \NN_{\leq n}$, every $\FUh[\sltF]$-morphism from $\Lrep{n,\psi}$ \hbox{to $\Lrep{n,\psi'}$} is thus a scalar multiple of the $R$-linear endomorphism ${f_{\! \! \: n}}$ of ${R \BBv_n}$, defined by the following:
$$ f_{\! \! \: n} (b_{n,p}) \ := \ \frac{(\psi')^-(n,p) !}{\psi^- (n,p) !} \ b_{n,p} \quad \quad \ (p \in \NN_{\leq n}) \, . $$
The colouring $\psi'$ being non-degenerate, the map ${f_{\! \! \: n}}$ is injective. Remark then that for every $p \in \NNI_{\leq n}$, the following equalities hold in $\Lrep{n,\psi}$ \hbox{and $\Lrep{n,\psi'}$}, respectively:
\begin{eqnarray*}
f_{\! \! \: n} \big( X^+ . \! \: b_{n,p} \big) & = & \psi^+ (n,n-p+1) \ \frac{(\psi')^-(n,p-1) !}{\psi^- (n,p-1) !} \ b_{n,p} \, , \\
X^+ . \big( f_{\! \! \: n} (b_{n,p}) \big) & = & (\psi')^+ (n,n-p+1) \ \frac{(\psi')^-(n,p) !}{\psi^- (n,p) !} \ b_{n,p} \, .
\end{eqnarray*}
Assertion $(\ast)$ follows. \\

Point 2 is a consequence. Let us now prove point 1. Denote by $\psi$ the colouring of $\BB$ defined by the following:
$$ \psi^-(n,k) \ := \ 1 \, , \quad \quad \psi^+(n,k) \ := \ [\psi_1] (n,k) \quad \quad \quad (n,k \in \NNI, \ k \leq n) \, . $$
Remark that $\psi \equiv \psi_1$. Assertion $(\ast)$ then implies that there exists an injective $\UFslt$-morphism from $\Lrep{n,\psi}$ to $\Lrep{n,\psi_1}$ for every $n \in \NN$. In view of the definition of the algebras $\Uhat{\slt,\psi}$ and $\Uhat{\slt,\psi_1}$, there hence exists an isomorphism of $\sltF$-pointed $R$-algebra between the two latter. One concludes.
\qed

\subsection{Diagram structure}
In this subsection, $\psi$ designates an \hbox{$I$-colouring} of $\BB$ with values in $R$.

\begin{defi-prop} \label{prop_Deltahat}
Let $i \in I$. We denote by $\Deltahat$ the unique continuous $R$-algebra morphism from $\Uhat{\slt,\psi_i}$ to $\Uhat{\glie,\psi}$ which makes the following diagram commute:
\begin{equation*}
\shorthandoff{;:!?} \xymatrix @C=5pc @R=1.5pc {
\UFslt \ar[r]^-{\DeltaF \ } \ar[d]_-{\pihat{\slt,\psi_i} \ } & \, \UF \ar[d]^-{\ \pihat{\glie,\psi}} \\
\Uhat{\slt,\psi_i} \ar[r]_-{\Deltahat} & \, \Uhat{\glie,\psi} \, .
} \end{equation*}
\end{defi-prop}

\proof
The pull-back via $\DeltaF$ of a \hbox{$\psi$-integrable} representation of $\UF$ being by definition $\psi_i$-integrable, the $R$-algebra morphism $\DeltaF$ is continuous with respect to the $\psi_i$-integrable topology on $\UFslt$ and to the $\psi$-integrable topology \hbox{on $\UF$} (see \hbox{point 2} of proposition \ref{prop_Tantopo2}). The algebras $\Uhat{\slt,\psi_i}$ and $\Uhat{\glie,\psi}$ being the (separated) completions of the $R$-algebras $\UFslt$ \hbox{and $\UF$}, respectively, one concludes.
\qed \\

We regard every element in $Y$ as an element in $\ZZ^X$ as well, thanks to the pairing between $Y$ and $X$.

\begin{lem} \label{lem_Deltahat}
Let $i \in I$. The morphism $\Deltahat$ sends every element $f$ in $R^\ZZ$ to the element $f \circ \check \alpha_i$ in $R^X$.
\end{lem}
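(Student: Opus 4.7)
The plan is to reduce the identity $\Deltahat(f) = f \circ \check{\alpha}_i$ to a pointwise comparison of the actions of both sides on every $\psi$-integrable representation of $\UF$, and then to invoke the separation property of $\Uhat{\glie,\psi}$ recorded in Remark \ref{rem_Uhatperfect}. The key input is point 4 of Proposition \ref{prop_Uhatconv}, which describes how an element of $R^X$ (resp.\ $R^\ZZ$), once viewed inside $\Uhat{\glie,\psi}$ (resp.\ $\Uhat{\slt,\psi_i}$), acts on weight vectors.

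First, I would fix a $\psi$-integrable representation $V$ of $\UF$, viewed as a discrete representation of $\Uhat{\glie,\psi}$, and take a weight vector $v \in V_\lambda$ with $\lambda \in X$. By condition b) in Definition \ref{defi_psiinthat}, the pull-back of $V$ via $\DeltaF$ is $\psi_i$-integrable as a representation of $\UFslt$, hence extends uniquely to a discrete representation of $\Uhat{\slt,\psi_i}$ (Proposition \ref{prop_discreterep}); this pull-back coincides with the representation obtained from $V$ through $\Deltahat$, by the commutative diagram of Definition-Proposition \ref{prop_Deltahat}. Since $\DeltaF(H) = \check{\alpha}_i$, the vector $v$ has $H$-weight $\langle \check{\alpha}_i, \lambda \rangle$ when viewed on the $\Uhat{\slt,\psi_i}$-side.

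Next I would apply point 4 of Proposition \ref{prop_Uhatconv} twice. On the $\slt$-side, the element $f \in R^\ZZ$, viewed inside $\Uhat{\slt,\psi_i}$, acts on $v$ by the scalar $f(\langle \check{\alpha}_i, \lambda \rangle)$; pushing this through $\Deltahat$ yields
\[
\Deltahat(f) . v \ = \ f\big(\langle \check{\alpha}_i, \lambda \rangle\big) \, v \, .
\]
On the $\glie$-side, the element $f \circ \check{\alpha}_i \in R^X$, viewed inside $\Uhat{\glie,\psi}$, acts on $v$ by the scalar $(f \circ \check{\alpha}_i)(\lambda) = f(\langle \check{\alpha}_i, \lambda \rangle)$. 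The two actions therefore coincide on every weight vector of $V$, and hence on all of $V$ since $V$ is a weight representation.

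The main (minor) obstacle is the bookkeeping between the two avatars of $f$: the element of $R^\ZZ$ regarded as an element of $\Uhat{\slt,\psi_i}$ through point 2 of Proposition \ref{prop_Uhatconv}, and the element $f \circ \check{\alpha}_i \in R^X$ regarded inside $\Uhat{\glie,\psi}$ through the same proposition. Once this identification is made explicit, I conclude by noting that $V$ was an arbitrary $\psi$-integrable representation: by Remark \ref{rem_Uhatperfect}, the element $\Deltahat(f) - (f \circ \check{\alpha}_i)$ of $\Uhat{\glie,\psi}$ is zero, which gives the stated equality.
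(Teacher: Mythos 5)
Your proof is correct, but it takes a genuinely different route from the paper's. The paper argues by density and continuity: since $\DeltaF(H) = \check \alpha_i$, the identity $f \mapsto f \circ \check \alpha_i$ holds on the nose for every $f$ in the polynomial subalgebra $R[H]$ of $R^\ZZ$, and since the image of $R[H]$ in $\Uhat{\slt,\psi_i}$ is dense in the image of $R^\ZZ$ (point 3 of Proposition \ref{prop_Uhatconv}), the continuous morphism $\Deltahat$ propagates the identity to all of $R^\ZZ$. You instead argue by separation: you compare the actions of $\Deltahat(f)$ and of $f \circ \check \alpha_i$ on an arbitrary $\psi$-integrable representation, using point 4 of Proposition \ref{prop_Uhatconv} on both sides, and conclude that their difference is zero by Remark \ref{rem_Uhatperfect}. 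Your bookkeeping is sound: the identification of the pull-back of $V$ via $\Deltahat$ with the unique discrete extension of the pull-back via $\DeltaF$ is exactly what the uniqueness clause of Proposition \ref{prop_discreterep} provides, and it is what licenses applying point 4 on the $\slt$-side with weight $\langle \check \alpha_i, \lambda \rangle$. What each approach buys: the paper's proof is shorter and needs only the density statement and the continuity of $\Deltahat$, never the faithfulness of $\Uhat{\glie,\psi}$ on its representations; yours is more robust in that it works verbatim for any two elements of the completion that one can evaluate on weight vectors, and it makes the representation-theoretic content explicit rather than hiding it in the density lemma. At bottom the two are cousins, since points 3 and 4 of Proposition \ref{prop_Uhatconv} are both proved by the same polynomial-truncation device; your argument simply consumes that device through its action formula rather than through its topological consequence.
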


\proof
The morphism $\DeltaF$ sends by definition $H$ to $\check \alpha_i$. Recall besides that we regard $Y$, and in particular $\ZZ$, as $\ZZ$-submodules of $R^X$ and $R^\ZZ$, respectively, in a such a way that $H$ is identified with the ring morphism from $\ZZ$ to $R$. Remark then that the morphism $\DeltaF$ sends $f$ to $f \circ \check \alpha_i$, for every $f$ lying in the \hbox{$R$-subalgebra $R[H]$} of $R^\ZZ$ generated by $H$. The image of $R[H]$ in $\Uhat{\slt,\psi_i}$ being dense in the image of $R^\ZZ$ (see point 3 of proposition \ref{prop_Uhatconv}), one concludes.
\qed





\subsection{Cartan involution}
We denote by $\omegaF$ is the involutive $R$-algebra automorphism of $\UF$ which sends $X_i^\pm$ ($i \in I$) and $\check \mu \in Y$ to $-X_i^\mp$ and $- \check \mu$, respectively. Let $\psi \in \Colo[I]{R}$, we denote by $\psi^\ast$ the $I$-colouring of $\BB$ with values in $R$ defined by the following:
$$ {(\psi^\ast)}^\pm_i \ := \ \psi_i^\mp \quad \quad (i \in I) \, . $$

\begin{defi-prop} \label{defi-prop_Cartanhat}
We denote by $\omegahat$ the unique continuous $R$-algebra isomorphism from $\Uhat{\slt,\psi}$ to $\Uhat{\glie,\psi^\ast}$ which makes the following diagram commute:
\begin{equation*}
\shorthandoff{;:!?} \xymatrix @C=5pc @R=1.5pc {
\UF \ar[r]^-{\omegaF \ } \ar[d]_-{\pihat{\glie,\psi} \ } & \, \UF \ar[d]^-{\ \pihat{\glie,\psi^\ast}} \\
\Uhat{\glie,\psi} \ar[r]_-{\omegahat} & \, \Uhat{\glie,\psi^\ast} \, .
} \end{equation*}
\end{defi-prop}

\section{Classical and quantum realisations}
\subsection{Standard quantum algebras}




Let $k,k' \in \NN$ and let $i \in I$. The following elements are defined in $\KK[q^{\pm 1}]$:
\begin{IEEEeqnarray*}{rClCrCl}
q_i & \ := \ & q^{d_i} \, , & \quad \quad \quad & {[k]}_{q_i} & \ := \ & \frac{q_i^k - q_i^{-k}}{q_i - q_i^{-1}} \ , \\
{[k]}_{q_i} ! & \ := \ & \prod_{k'=0}^k \, {[k']}_{q_i} \ , & \quad \quad \quad & {k \brack k'}_{q_i} & \ := \ & \frac{[k+k']_{q_i} !}{[k]_{q_i} ! \ [k-k']_{q_i} !} \ \cdot
\end{IEEEeqnarray*}

The \emph{standard (rational) quantum algebra} $\Uq(\glie)$ is the $\KK(q)$-algebra generated by the elements $X^-_i, X^+_i, K_{\check \mu}$ ($i \in I, \check \mu \in Y$), and subject to the following relations:
\begin{equation*} \label{eq_KM_nondeformableq} \left\{ {\renewcommand{\arraystretch}{1.3} \begin{array}{rclr}
K_0 & = & 1 \, & \\
K_{\check \mu} \! \; K_{\check \mu'} & = & K_{\check \mu + \check \mu'} & (\check \mu, \check \mu' \in Y) \, , \\
K_{\check \mu} \! \: X^{\pm}_i & = & q^{\pm \langle {\check \mu} , \alpha_i \rangle} \! \; X_i^{\pm} \! \; K_{\check \mu} & (i \in I, \ \check \mu \in Y) \, , \\
\big[ X^+_i, X^-_j \big] & = & 0 & \quad \quad (i,j \in I, \ i \neq j) \, .
\end{array}} \right.
\end{equation*}
$$ \left\{ {\renewcommand{\arraystretch}{2.5} \begin{array}{lr}
\displaystyle{ \big[ X^+_i, X^-_i \big]} = \displaystyle{ \frac{K_i - K_i^{-1}}{q_i - q_i^{-1}}} \, , \quad \quad \text{where } \, K_i := K_{d_i \check \alpha_i} & (i \in I) \, , \\
\displaystyle{ \sum_{k+k' = 1 - C_{ij}} (-1)^k \, {1 - C_{ij} \brack k}_{q_i} \, {(X_i^{\pm})}^k \; \! X^{\pm}_j \; \!  {(X_i^{\pm})}^{k'}} = \, \displaystyle 0 & \quad \quad (i,j \in I, \ i \neq j) \, .
\end{array}} \right. $$


\subsection{Classical and quantum colourings}
The \emph{classical colouring} $\psicl$ is the colouring of $\BB$ with values in $\KK$ defined by the following:
$$ {\psi_{\! \! \: \mathrm{cl}}^\pm} (n,k) \ := \ k \quad \quad \quad (n,k \in \NNI, \ k \leq n) \, . $$
We denote by $\psiclI$ the $I$-colouring of $\BB$ with values in $\KK$ defined by the following:
$$ {(\psiclI)}_i \ := \ \psicl \quad \quad \quad (i \in I) \, . $$
Note that $\psiclI$ is admissible over $\KK$. \\

Let $n \in \NN$. The $(n+1)$-dimensional irreducible representation (of type 1) $\Lq(n)$ of the quantum algebra $\Uq(\slt)$ can be defined as the $\CC(q)$-vector space ${\KK(q) \BBv_n}$, where the actions of $X^\pm$, $K$ ($= {K_{\! \! \: H}}$) are given by the following, for $p \in \NN_{\leq n}$:
$$ \left\{ {\renewcommand{\arraystretch}{0.75} \begin{array}{rcl}
H . \! \; b_{n,p} & := & \ (n- 2p) \, b_{n,p} \, , \\
&& \\
X^- . \! \; b_{n,p} & := & \left\{ {\renewcommand{\arraystretch}{1.2} \begin{array}{cl}
{[p+1]}_q \, b_{n,p+1} & \quad \text{if $p \neq n$,} \\
0 & \quad \text{else,}
\end{array}} \right. \\
&& \\
X^+ . \! \; b_{n,p} & := &  \left\{ {\renewcommand{\arraystretch}{1.2} \begin{array}{cl}
{[n-p+1]}_q \, b_{n,p-1} & \quad \text{if $p \neq 0$,} \\
0 & \quad \text{else.}
\end{array}} \right.
\end{array}} \right.
$$

The \emph{quantum colouring} $\psiq$ is the \hbox{colouring} of $\BB$ with values in $\KK[q^{\pm 1}]$, defined by the following:
$$ \psiq^\pm (n,k) \ := \ {[k]}_q \quad \quad \quad (n,k \in \NNI, \ k \leq n) \, . $$
We besides denote by $\psiclI$ the $I$-colouring of $\BB$ with values in $\KK[q^{\pm 1}]$ defined by the following:
$$ {(\psiqI)}_i \ := \ \psi_{q_i} \quad \quad (i \in I) \, . $$
Note that $\psiqI$ is admissible over $\KK(q)$.



\subsection{Theorem}
We regard in the following theorem $\Uc$ as a $\glieF$-pointed $\KK$-algebra, endowed with the Tannakian topology generated by the integrable representations of $\glie$, and endowed with the projection map from $\FUc$ to $\Uc$. \\

We besides denote by $K_{\check \mu}$ ($\check \mu \in Y$) the function in ${\KK(q)^X \! \subset \Uhat[\! \! \; \KK(q)]{\glie,\psiqI}}$ whose value at $\lambda \in X$ is $q^{\langle \check \mu, \lambda \rangle}$.

\begin{thm} \label{thm_exhat}
\begin{enumerate}
\item[1)]
\begin{itemize}
\item[\textbullet] The coloured Kac-Moody algebra $\Uhat[\! \! \; \KK]{\glie,\psiclI}$ contains $\Uc$ as a dense $\glieF$-pointed $\KK$-subalgebra.
\item[\textbullet] The action of $\Uc$ on every integrable representation can be uniquely extended to an integrable and discrete action of $\Uhat[\! \! \; \KK]{\glie,\psiclI}$.
\end{itemize}
\item[2)]
\begin{itemize}
\item[\textbullet] The coloured Kac-Moody algebra $\Uhat[\! \! \; \KK(q)]{\glie,\psiqI}$ contains $\Uq(\glie)$ as a dense $\KK(q)$-subalgebra, in such a way that the elements $X_i^\pm, K_{\check \mu}$ ($i \in I, \check \mu \in Y$) of the two algebras are identified.
\item[\textbullet] The action of $\Uq(\glie)$ on every integrable representation can be uniquely extended to an integrable and discrete action of $\Uhat[\! \! \; \KK]{\glie,\psiclI}$.
\end{itemize}
\end{enumerate}
\end{thm}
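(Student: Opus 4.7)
The strategy for both parts is the same: identify $\psi$-integrable representations of $\UF$ with integrable representations of the target algebra, then exploit this via remark \ref{rem_Uhatperfect} to construct the desired embedding, and use the perfectness of the pairing between the target algebra and its integrable representations to verify its injectivity. Density is immediate from the definition of $\Uhat$ as a completion of $\UF$, and the last bullet is then a restatement of proposition \ref{prop_discreterep}.

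For part 1, let $V$ be a $\psiclI$-integrable representation of $\UF$. By definition the pull-back of $V$ via each $\DeltaF$ is a direct sum of modules $\Lrep{n,\psicl}$ ($n \in \NN$), and each such $\Lrep{n,\psicl}$ is by construction isomorphic to the classical irreducible $\slt$-module $\Lc{n}$. In particular every $X_i^\pm$ acts locally nilpotently on $V$. The classical Serre argument, applied to the adjoint $\slt_i$-action on $V$ (using $[X_i^-,X_j^+] = 0$ for $i \neq j$ so that $X_j^+$ is a lowest weight vector of weight $C_{ij}$ inside a locally finite $\slt_i$-module), then shows that the Serre relations act as zero on $V$. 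Remark \ref{rem_Uhatperfect} forces the Serre relations to vanish in $\Uhat[\! \! \; \KK]{\glie,\psiclI}$, so the $\glieF$-point factors as $\UF \twoheadrightarrow \Uc \to \Uhat[\! \! \; \KK]{\glie,\psiclI}$. If an element $x \in \Uc$ lay in the kernel of this factorisation, it would act as zero on every integrable representation of $\glie$ (which is trivially $\psiclI$-integrable as an $\UF$-module), and proposition \ref{prop_UBperfecthat} would force $x = 0$. Since the $\glieF$-point of $\Uhat[\! \! \; \KK]{\glie,\psiclI}$ has dense image by construction, $\Uc$ is a dense subalgebra.

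For part 2 we first define a $\KK(q)$-algebra morphism from $\Uq(\glie)$ to $\Uhat[\! \! \; \KK(q)]{\glie,\psiqI}$ by sending $X_i^\pm$ to $X_i^\pm$ and $K_{\check \mu}$ to the element of $\KK(q)^X \subset \Uhat[\! \! \; \KK(q)]{\glie,\psiqI}$ furnished by proposition \ref{prop_Uhatconv}, namely $\lambda \mapsto q^{\langle \check \mu, \lambda \rangle}$. Each defining relation of $\Uq(\glie)$ is checked by evaluating on an arbitrary $\psiqI$-integrable representation of $\UF$ and invoking remark \ref{rem_Uhatperfect}: the grouplike relations and the commutations $K_{\check \mu} X_i^\pm = q^{\pm \langle \check \mu, \alpha_i \rangle} X_i^\pm K_{\check \mu}$ reduce to the weight structure and the very definition of $K_{\check \mu}$; the identity $[X_i^+, X_i^-] = (K_i - K_i^{-1})/(q_i - q_i^{-1})$ reduces to the elementary formula $[p+1]_{q_i} [n-p]_{q_i} - [n-p+1]_{q_i} [p]_{q_i} = [n-2p]_{q_i}$ on each $\Lrep{n,\psi_{q_i}} = \Lq(n)$; the $q$-Serre relations follow from the quantum version of the Serre argument via the $\Uq(\slt)$-adjoint action attached to each $i \in I$. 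Injectivity then follows from the well-known quantum analogue of proposition \ref{prop_UBperfecthat} (due to Lusztig and others), asserting that a nonzero element of $\Uq(\glie)$ must act non-trivially on some integrable representation. Density and the extension of action proceed exactly as in part 1.

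The main obstacle is to verify that the Serre and $q$-Serre relations act as zero on $\psiclI$- and $\psiqI$-integrable representations of $\UF$, respectively. This is formally not the classical Serre statement, which is phrased for modules over $\Uc$ or $\Uq(\glie)$; in our setting we must work directly with $\UF$-modules equipped only with the prescribed $\slt_i$-module structures. Nevertheless the standard proofs transpose verbatim, the only ingredients being the local nilpotency of $X_i^\pm$, the weight structure, and the non-deformable relations \eqref{eq_KM_nondeformablehat}, all of which are built into the very notion of $\psi$-integrability (definition \ref{defi_psiinthat}).
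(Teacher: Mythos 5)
Your proposal is correct and is essentially the paper's own proof with its suppressed steps written out: the paper's entire stated argument is that every integrable representation of $\glie$ (resp.\ of $\Uq(\glie)$) is $\psiclI$- (resp.\ $\psiqI$-) integrable as a representation of $\UF$, because $\Lc{n}$ coincides with $\Lrep[\! \! \; \KK]{n,\psicl}$ and $\Lq(n)$ with $\Lrep[\! \! \; \KK(q)]{n,\psi_{q_i}}$, after which it declares that the theorem follows --- precisely the factorisation via remark \ref{rem_Uhatperfect}, the Serre/$q$-Serre adjoint argument, the injectivity via proposition \ref{prop_UBperfecthat} (and its quantum analogue), and the extension via proposition \ref{prop_discreterep} that you make explicit. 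Two small over-claims in your write-up: the step you call ``trivial'' is the paper's only explicit one and rests on complete reducibility of integrable rank-one modules (proposition \ref{prop_KMintonehat} and its quantum counterpart for the $\Lq(n)$), and in part 2 density does not proceed \emph{exactly} as in part 1, since the elements $\check \mu \in Y$ lie in the image of $\UF[\! \! \; \KK(q)]$ but not in $\Uq(\glie)$, so one must still check that the closure of $\Uq(\glie)$ contains them, e.g.\ by matching $\langle \check \mu, \lambda \rangle$ on any finite set of weights $\lambda$ by a Laurent polynomial in $K_{\check \mu}$ (Lagrange interpolation, using that the values $q^{\langle \check \mu, \lambda \rangle}$ are pairwise distinct in $\KK(q)$ whenever the pairings are).
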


\proof
Let $V$ be an integrable representation of $\glie$ and let $i \in I$. Remark that the pull-back of $V$ via $\Deltac$ is an integrable representation of $\slt$ and is then a direct sum of representations $\Lc{n}$ with $n \in \NN$ (see \hbox{subsection \ref{subs_knownresultshat}}). Remark besides that for every $n \in \NN$, the representation $\Lc{n}$, when viewed as a representation of $\Uc[\sltF]$, is by definition of the classical colouring $\psicl$ equal to the representation $\Lc{n,\psicl}$ (see subsection \ref{subs_colouringshat}). Hence, the \hbox{representation $V$}, when viewed as a representation of $\glieF$, is $\psiclI$-integrable. \hbox{Point 1} of the theorem follows. The proof in the quantum case is similar.
\qed

\section{Main results for coloured Kac-Moody algebras of rank 1}

\subsection{$H$-triviality}
\begin{thm} \label{thm_UhatoneHtrivial}
Let $\psi_1,\psi_2$ be two admissible colourings of $\BB$ with values in $R$. There exists a $R$-algebra isomorphism from $\Uhat{\slt,\psi_1}$ to $\Uhat{\slt,\psi_2}$ which fixes the \hbox{elements $X^-, H$} and such that the pull-back of $\Lrep{n,\psi_2}$ via $f$ is isomorphic \hbox{to $\Lrep{n,\psi_1}$}, for every $n \in \NN$.
\end{thm}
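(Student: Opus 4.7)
The plan is to mirror, in the Tannakian setting, the construction of the $H$-trivialisation developed in Part I for the $h$-adic case (theorem \ref{thm_UhoneHtrivial}). I would define the desired isomorphism $f$ on the generators of $\UFslt$ by $f(X^-) := X^-$, $f(H) := H$ and $f(X^+) := \xi$, where $\xi \in \Uhat{\slt,\psi_2}$ is an element of weight $+2$ sought in the form
$$\xi \ := \ \sum_{a \in \NN} (X^-)^a \, S_a(H) \, (X^+)^{a+1}$$
with functions $S_a \in R^{\ZZ}$, regarded as elements of $\Uhat{\slt,\psi_2}$ via proposition \ref{prop_Uhatconv}. The convergence of this series is automatic from point 1 of the same proposition, since on each $\Lrep{n,\psi_2}$ only the terms with $a \leq p-1$ contribute to the action on the vector $b_{n,p}$.

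The $S_a$ must be chosen so that the pull-back of $\Lrep{n,\psi_2}$ through the induced morphism is isomorphic to $\Lrep{n,\psi_1}$, for every $n \in \NN$. The natural candidate isomorphism $\alpha_n : \Lrep{n,\psi_1} \to \Lrep{n,\psi_2}$ commuting with $X^-$ and $H$ must send $b_{n,p}$ to ${\big( \psi_2^-(n,p)! / \psi_1^-(n,p)! \big) \, b_{n,p}}$, which is well defined thanks to the admissibility of $\psi_1$. Computing the action of $\xi$ on $\Lrep{n,\psi_2}$ and requiring that it match the transport of $X^+$ under $\alpha_n$ yields, for every $n \geq p \geq 1$, the equation
$$\sum_{a=0}^{p-1} S_a(n - 2p + 2a + 2) \ \frac{[\psi_2](n,p-1)!}{[\psi_2](n,p-a-1)!} \ = \ \frac{[\psi_1](n,p)}{[\psi_2](n,p)} \, ,$$
which constitutes the Tannakian analogue $\widehat{\mathscr E}_R(\psi_1,\psi_2)$ of the GQE equation $\GQEeqh[\psi_1,\psi_2]{0}$ of Part I.

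The main technical step is to produce a solution $(S_a)_{a \in \NN} \in (R^{\ZZ})^{\NN}$ of this system, and I would proceed by induction on $a$. Supposing $S_0, \ldots, S_{a-1}$ already defined as functions $\ZZ \to R$, the equation indexed by $(p,n) = (a+1,n)$ for each $n \geq a+1$ determines $S_a(n)$ uniquely, because the coefficient of the unknown is $[\psi_2](n,a)!$, invertible in $R$ by admissibility of $\psi_2$. The values $S_a(z)$ for $z \leq a$ can be chosen freely (say zero); each equation above indexed by $(p,n)$ with $n \geq p$ enters the induction exactly once, to define $S_{p-1}(n)$, so no consistency obstructions appear. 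This is significantly simpler than its $h$-adic counterpart in theorem \ref{thm_GQEeq}, where the additional convergence constraint $S_a \to 0$ forced the four axioms of $h$-admissibility; here no such restriction is present, reflecting the intro's remark that in the Tannakian setting the space of solutions is large enough to accommodate any admissible colouring.

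Once $\xi$ is constructed, the relation $[H,\xi] = 2\,\xi$ holds by construction (weight $+2$), so the morphism $f : \UFslt \to \Uhat{\slt,\psi_2}$ is well defined. The pull-back property on the representations $\Lrep{n,\psi_2}$, combined with point 2 of proposition \ref{prop_Tantopo2}, ensures the continuity of $f$ with respect to the $\psi_1$-integrable topology on $\UFslt$, and $f$ therefore extends uniquely to a continuous $R$-algebra morphism $\Uhat{\slt,\psi_1} \to \Uhat{\slt,\psi_2}$. Bijectivity follows by constructing the inverse symmetrically with $\psi_1$ and $\psi_2$ exchanged and checking that the two composites act as the identity on every $\Lrep{n,\psi_i}$, hence equal the identity by remark \ref{rem_Uhatperfect}. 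The hardest part will be the bookkeeping underlying the inductive solution of the linear system, in particular verifying that free choices for the low-argument values propagate cleanly into the higher-index functions $S_{a'}$.
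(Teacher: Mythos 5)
Your proposal is correct and follows essentially the same route as the paper's proof: the series ansatz for the image of $X^+$, reduction to the triangular linear system $\eqhat{\psi_2,\psi_1}{0}$ solved inductively using the invertibility of $[\psi_2]$ (this is exactly the paper's proposition \ref{prop_eqhatexist}), extension by continuity to the completion, and a symmetric inverse identified via remark \ref{rem_Uhatperfect}. The only cosmetic difference is that the paper first normalises $\psi_1^- = \psi_2^- = 1$ via the congruence reduction (proposition \ref{prop_congruentAonehat}), making the pull-backs of $\Lrep{n,\psi_2}$ literally \emph{equal} to $\Lrep{n,\psi_1}$, whereas you carry the intertwiners $\alpha_n$ explicitly --- so your final step needs the (immediate, Schur-type) observation that an intertwiner commuting with $X^-$ and $H$ is a scalar, whence the composites act exactly as the identity on every $\Lrep{n,\psi_i}$.
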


\proof
Let $\psi, \psi' \in \Colo{R}$ two congruent colourings which divide each other in the ring $\Colo{R}$. We know that the representations $\Lrep{n,\psi}$ and $\Lrep{n,\psi'}$ are isomorphic: see proposition \ref{prop_congruentAonehat}. We also know that $\Uhat{\slt,\psi}$ and $\Uhat{\slt,\psi'}$ are isomorphic $\sltF$-pointed algebras: see again proposition \ref{prop_congruentAonehat}. Therefore, and since the colourings $\psi_1, \psi_2$ are admissible, one can without loss of generality assume that $\psi_1^- = \psi^-_2 = 1$. \\

Since $\psi_2$ is invertible, there exists a sequence ${(M_a)}_{a \in \NN}$ with values in $R^\ZZ$ such that $M[1]$ is solution of the equation $\eqhat{\psi_2,\psi_1}{0}$: see proposition \ref{prop_eqhatexist}. Denote then by $g$ the $R$-algebra morphism from $\UFslt$ \hbox{to $\Uhat{\slt,\psi_2}$} defined by the following:
$$ \left\{ {\renewcommand{\arraystretch}{1.2} \begin{array}{lcl}
g(X^-) & := & X^- \, , \\
g(H) & := & H \, , \\
g(X^+) & := & \sum_{a \in \NN} {(X^-)}^a M_a (H) {(X^+)}^{a+1} \, .
\end{array}} \right. $$
Note that the series $\sum_{a \in \NN} {(X^-)}^a M_a (H) {(X^+)}^{a+1}$ converges in $\Uhat{\slt,\psi_2}$: see point 1 of proposition \ref{prop_Uhatconv}. Remark that for every $n,p \in \NN$ such that $0 < p \leq n$, the following two equalities hold in the representations $\Lrep{n,\psi_1}$ and $\Lrep{n,\psi_2}$, respectively:
\begin{eqnarray*}
X^+ . \; \! b_{n,p} & = & [\psi_1] (n,p) \ b_{n,p-1} \, , \\
g(X^+) . \; \! b_{n,p} & = & \sum_{0 \leq a \leq p-1} \, M_{a} (n-2p+2a+2) \ \frac{[\psi_2](n,p) !}{[\psi_2](n,p-a-1) !} \ b_{n,p-1} \, .
\end{eqnarray*}
Since $g$ fixes $X^-, H$ and since the column matrix $M[1]$ is a solution of the equation $\eqhat{\psi_2,\psi_1}{0}$, the pull-back of $\Lrep{n,\psi_2}$ via $g$ is in consequence equal to the representation $\Lrep{n,\psi_1}$ of $\UFslt$, for every $n \in \NN$. In particular, the pull-back via $g$ of every $\psi_2$-integrable representation of $\Uhat{\slt,\psi_2}$ is a \hbox{$\psi_1$-integrable} representation of $\UFslt$, and the map $g$ is thus continuous, where $\UFslt$ is endowed with the $\psi_1$-integrable topology (see point 5 of proposition \ref{prop_Tantopo}). By definition of $\Uhat{\slt,\psi_1}$, and since $\Uhat{\slt,\psi_2}$ is separated complete, there exists a $R$-algebra morphism $f$ from the former to the latter which makes the following diagram commutes:
$$ \shorthandoff{;:!?} \xymatrix @C=5pc @R=1.5pc {
\UFslt \ar[r]^-{g} \ar[d]_-{\pihat{\slt,\psi_1} \ } & \, \Uhat{\slt,\psi_2} \, . \\
\Uhat{\slt,\psi_1} \ar[ru]_-{f} &
} $$
Remark then that $f$, as $g$, fixes $X^-,H$ and that the pull-back of $\Lrep{n,\psi_2}$ \hbox{via $f$} is equal to the representation $\Lrep{n,\psi_1}$ of $\Uhat{\slt,\psi_1}$, for every $n \in \NN$. \\

By inverting the roles of $\psi_1$ and $\psi_2$, there also exists a $R$-algebra morphism $f'$ from $\Uhat{\slt,\psi_2}$ to $\Uhat{\slt,\psi_2}$, which fixes $X^-,H$ and such that the pull-back of $\Lrep{n,\psi_1}$ \hbox{via $f'$} is equal to the representation $\Lrep{n,\psi_2}$ of $\Uhat{\slt,\psi_2}$, for every $n \in \NN$. Hence, the actions of $(f' \circ f)(x)$ and $x$ on $\Lrep{n,\psi_1}$ are equal for every $x \in \Uhat{\slt,\psi_1}$ and for every $n \in \NN$. As a consequence, the map $f' \circ f$ is equal to the identify map on $\Uhat{\slt,\psi_1}$ (see remark \ref{rem_Uhatperfect}). One proves in the same way that $f \circ f'$ is the identity map on $\Uhat{\slt,\psi_2}$.
\qed

\subsection{Integrable representations}
Let $\psi$ be a colouring of $\BB$ with values in $R$. We denote by $\Cinthat{\slt,\psi}$ the category of integrable and discrete representations of $\Uhat{\slt,\psi}$ which are free when viewed as $R$-modules.

\vsp

\begin{thm} \label{thm_intrephat}
Let $\psi$ be an admissible colouring of $\BB$ with values \hbox{in $R$}. We assume that $R$ is a principal ideal domain.
\begin{enumerate}
\item[1)] For every $n \in \NN$, the representation $\Lrep{n,\psi}$ of $\Uhat{\slt,\psi}$ is topologically generated by a vector $v_n$ and subject to the following relations:
$$ H . \! \: v_n \ = \ n \; \! v_n \, , \quad \quad X^+ . \! \: v_n \ = \ 0 \, , \quad \quad (X^-)^{n+1} . \! \: v \ = \ 0 \, . $$
\item[2)] The representations $\Lrep{n,\psi}$ ($n \in \NN$) are pairwise distinct and are the unique, up to isomorphism, indecomposable representations in $\Cinthat{\slt,\psi}$.
\item[3)] A representation of $\Uhat{\slt,\psi}$ is in the category $\Cinthat{\slt,\psi}$ if and only if it is $\psi$-integrable and discrete.
\item[4)] Let $n,n' \in \NN$. Every $\UFslt$-morphism from $\Lrep{n,\psi}$ to $\Lrep{n',\psi}$ is equal to a scalar multiple of the identity map if $n = n'$ and is zero else.
\end{enumerate}
\end{thm}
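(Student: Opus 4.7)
The plan is to reduce every assertion to the case of the classical colouring $\psicl$, which is admissible over any integral domain $R$ of characteristic zero. Applying theorem \ref{thm_UhatoneHtrivial} with $\psi_1 = \psi$ and $\psi_2 = \psicl$ produces an $R$-algebra isomorphism $f : \Uhat{\slt,\psi} \to \Uhat{\slt,\psicl}$ which fixes $X^-$ and $H$ and such that the pull-back of $\Lrep{n,\psicl}$ is isomorphic to $\Lrep{n,\psi}$ for every $n \in \NN$. Since $f$ (and its inverse) must be continuous for the $\psi$- and $\psicl$-integrable topologies (pull-backs of $\psicl$-integrable representations are by construction $\psi$-integrable, and conversely), the pull-back functor $f^*$ restricts to an equivalence $\Cinthat{\slt,\psicl} \to \Cinthat{\slt,\psi}$ sending $\Lrep{n,\psicl}$ to $\Lrep{n,\psi}$. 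It thus suffices to prove the four points for $\psi = \psicl$.

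First I would extend point 1 of theorem \ref{thm_exhat} from $\KK$ to an arbitrary $R$: the representation $\Lrep{n,\psicl}$ of $\UFslt$ coincides with the extension of scalars $\Lc{n} \otimes_\KK R$, since both are defined by identical formulas, so the $\psicl$-integrable representations of $\UFslt$ are exactly the integrable $R$-free representations of $\UR{\slt}$. By the $R$-linear analogue of proposition \ref{prop_UBperfecthat} (which follows by base change from the $\KK$-version, $R$ being an integral domain) the natural map $\UR{\slt} \to \Uhat{\slt,\psicl}$ is injective with dense image. Point 1 then follows from the classical presentation of $\Lc{n}$: the representation topologically generated by $v_n$ modulo $H.v_n = n v_n$, $X^+.v_n = 0$, $(X^-)^{n+1}.v_n = 0$ coincides, by density of $\UR{\slt}$, with the $\UR{\slt}$-module on the same generators and relations, which is precisely $\Lc{n} \otimes_\KK R = \Lrep{n,\psicl}$. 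Point 4 is also immediate: any morphism $\Lrep{n,\psicl} \to \Lrep{n',\psicl}$ is by density a $\UR{\slt}$-morphism and must send the highest weight vector $v_n$ into the weight-$n$ space of $\Lrep{n',\psicl}$, which vanishes unless $n = n'$, in which case classical arguments force it to be a scalar multiple of the identity. The representations $\Lrep{n,\psicl}$ are pairwise distinct since they have distinct characters, giving half of point 2.

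For the remaining content of points 2 and 3, a $\psicl$-integrable representation belongs by definition to $\Cinthat{\slt,\psicl}$ (after one checks discreteness, which follows from local nilpotency of $X^\pm$ via point 3 of proposition \ref{prop_Tantopo}). Conversely, an object of $\Cinthat{\slt,\psicl}$ becomes, through the dense image of $\UR{\slt}$, an integrable $R$-free representation of $\UR{\slt}$ and one must decompose it as a direct sum of $\Lrep{n,\psicl}$'s. This decomposition, together with the indecomposability of each $\Lrep{n,\psicl}$ (inherited from lemma \ref{lem_indecompAonehat} applied over $K(R)$), yields point 2 and the remaining direction of point 3. The hard step will be the complete reducibility over the PID $R$: over a field this is proposition \ref{prop_KMintonehat}, but over $R$ one must upgrade the argument by considering the weight decomposition $V = \bigoplus_k V_k$ (each $V_k$ is $R$-free since $R$ is a PID and $V$ is), showing that the highest-weight space $\ker(X^+) \cap V_n$ is an $R$-direct summand admitting an $R$-free basis, and then building for each such basis vector $v$ a free summand $\UR{\slt}.v \simeq \Lrep{n,\psicl}$. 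The PID hypothesis intervenes precisely to ensure that the weight spaces and the highest-weight subspaces are $R$-free, and that the standard projectors constructed from $X^-,X^+$ (using classical identities in $\slt$, valid over any commutative ring of characteristic zero) cut out $R$-direct summands.
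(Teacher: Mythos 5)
Your proposal is correct in outline and its skeleton coincides with the paper's: both reduce to the classical colouring via theorem \ref{thm_UhatoneHtrivial} (noting that $\psicl$ is admissible over $R$ because nonzero integers are invertible in the $\KK$-algebra $R$), both then pass through the $R$-algebra morphism from $\UR{\slt}$ to $\Uhat{\slt,\psicl}$ with dense image, and both get indecomposability from lemma \ref{lem_indecompAonehat} applied over $K(R)$ and pairwise distinctness from ranks. The genuine divergence is in the complete-reducibility step, which is also where your writeup is weakest. The paper first reduces to representations of finite type over $R$ (every vector generates a finite-type integrable subrepresentation, and the PID hypothesis keeps the relevant modules free), then picks an $R$-basis of each free highest-weight space ${V_{\! \! \; n}}$, builds from it an explicit $\UR{\slt}$-morphism $f$ from the semisimple model $\bigoplus_{n} \Lrep{n,\psicl}^{\oplus l(n)}$ to $V$, and proves $f$ bijective by base change to the fraction field: $f \otimes_R K(R)$ is bijective by complete reducibility over the field (proposition \ref{prop_KMintonehat}), and torsion-freeness plus the saturation of the highest-weight spaces descend bijectivity to $R$ --- no projectors appear anywhere. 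You instead invoke ``standard projectors constructed from $X^-, X^+$'' to cut out $R$-direct summands; as written this is an assertion rather than a proof, and it is the one step a referee would bounce (your earlier sentence identifying $\psicl$-integrable representations with integrable $R$-free representations of $\UR{\slt}$ already presupposes it). The sketch can be completed --- the Casimir eigenvalues $(n+1)^2$ differ by nonzero integers, and the scalars by which $(X^-)^p (X^+)^p$ acts on the weight spaces of a fixed Casimir block are products of positive integers, all invertible since $\QQ \subseteq \KK \subseteq R$ --- but you would also have to handle the infinite-rank situation you allow yourself, which the paper's finite-type reduction sidesteps: a saturated submodule of a non-finitely-generated free module over a PID splits here only because the quotient embeds into a free module, hence is free, hence projective. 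Two smaller remarks: the injectivity of $\UR{\slt} \to \Uhat{\slt,\psicl}$ that you obtain by base-changing proposition \ref{prop_UBperfecthat} is correct but superfluous (the paper uses only the existence and density of the morphism, extracted from remark \ref{rem_Uhatperfect}); and your explicit arguments for points 1 and 4, via the base-changed classical presentation of $\Lc{n}$ and the weight constraint on morphisms, are sound and actually supply details that the paper's printed proof leaves implicit, since it only argues points 2 and 3.
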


\proof
Let us first prove points 2 and 3 of the theorem. Since $\psi$ and the classical colouring $\psicl$ are invertible over $R$, one can assume without loss of generality, in view of theorem \ref{thm_UhatoneHtrivial}, \hbox{that $\psi = \psicl$}. We regard $\UR{\slt}$ as a $\sltF$-pointed \hbox{$R$-algebra} in a natural way. The relation $[X^-,X^+] = H$ being satisfied in all the representations $\Lrep{n,\psicl}$ ($n \in \NN$), it is also \hbox{in $\Uhat{\slt,\psicl}$} (see \hbox{remark \ref{rem_Uhatperfect}}). As a consequence, there exists a $\sltF$-pointed $R$-algebra morphism $\iota$ from $\UR{\slt}$ to $\Uhat{\slt,\psicl}$ (recall the presentation of $\slt$: see subsection \ref{subs_KMonehat}). Let us then regard the representations $\Lrep{n,\psicl}$ ($n \in \NN$) as representations of $\UR{\slt}$ as well, thanks to the pull-back via $\iota$, and let us consider the following point.

\begin{enumerate}
\item[3')] Every free over $R$, integrable representation of $\UR{\slt}$ is isomorphic to a direct sum of representations $\Lrep{n,\psicl}$ with $n \in \NN$.
\end{enumerate}

The image of $\iota$ being dense in $\Uhat{\slt,\psicl}$, remark that point 3' implies point 3. \\

Let us prove point 2'. Let $V$ be an integrable, free over $R$, representation \hbox{of $\Uhat{\slt,\psicl}$}. In view of the relations of $\slt$, remark that the \hbox{representation $V$} is a sum of integrable representations of $\UR{\slt}$ which are of finite type \hbox{over $R$}. The \hbox{$R$-module} $V$ being free and the ring $R$ being by assumption a principal domain, one then can without loss of generality assume that $V$ is of finite type \hbox{over $R$}. Denote by ${V_{\! \! \; n}}$ ($n \in \NN$) the \hbox{$R$-submodule} of $V$ formed by the vectors of \hbox{weight $n$} on which $X^+$ acts as zero. Let $n \in \NN$ such that ${V_{\! \! \; n}}$ is nonzero. Since $V$ is a free \hbox{$R$-module} of finite type, and since the ring $R$ is by assumption a principal ideal domain, the \hbox{\hbox{$R$-module} ${V_{\! \! \; n}}$} is also free. Pick then a basis \hbox{$(v_{n,1}, \dots, v_{n,l(n)})$ ($l(n) \in \NNI$)} of it and denote by ${f_{\! \! \; n,k}}$ ($k \in \NNI_{\leq l(n)}$) the \hbox{$R$-linear} map from $\Lrep{n,\psicl}$ to ${V_{\! \! \; n}}$ which sends \hbox{$b_{n,p}$ ($p \in \NN_{\leq p}$)} \hbox{to $(X^-)^k.v_{n,k}$}. Since $X^+$ acts as zero on ${V_{\! \! \; n}}$, the \hbox{map ${f_{\! \! \; n,k}}$} ($k \in \NNI_{\leq l}$) is \hbox{a $\UR{\slt}$-morphism}. Denote then by ${V_\mathrm{ss}}$ the representation of $\UR{\slt}$ defined by the following:
$$ {V_\mathrm{ss}} \ := \ \bigoplus_{n \in \NN, \, {V_{\! \! \; n}} \neq 0} \Lrep{n,\psicl}^{\oplus \! \: l(n)} \, . $$
Denote by $f$ the $\UR{\slt}$-morphism from ${V_\mathrm{ss}}$ to $V$ defined as the direct sum of the morphisms ${f_{\! \! \; n,k}}$ ($n \in \NN$ such that ${V_{\! \! \; n}} \neq 0$, $k \in \NNI_{\leq l(n)}$). \\

Denote by $V'$ the $K(R)$-vector space ${V \! \otimes_R K(R)}$ and let us regard it as a representation of $\UR[\! K(R)]{\slt}$. It is a direct sum of representations $\Lrep[\! \! \; K(R)]{n}$ with $n \in \NN$ (see subsection \ref{subs_KMonehat}). Hence, the $\UR[\! K(R)]{\slt}$-morphism ${f \! \otimes_R \! K(R)}$ from ${V_1 \! \otimes_R \! K(R)}$ to $V'$ is bijective, and ${v \! \otimes \! 1}$ is a $\KK$-linear combination of the vectors ${v_{n,k} \! \otimes \! 1}$ ($n \in \NN$ such that ${V_{\! \! \; n}} \neq 0$, $k \in \NNI_{\leq l(n)}$) for every $v \in V$. The $\UR{\slt}$-morphism $f$ is in consequence also bijective. In other words, point 2' holds. \\

Remark that the representations $\Lrep{n,\psicl}$ ($n \in \NN$) are integrable and free \hbox{over $R$}. Since $\psicl$ is non-degenerate, we besides know from lemma \ref{lem_indecompAonehat} that they are also indecomposable. Let \hbox{$n_1, n_2 \in \NN$} such that $n_1 \neq n_2$. Remark that the representations $\Lrep{n_1,\psicl}$ and $\Lrep{n_2,\psicl}$ are not isomorphic since the \hbox{$R$-modules} $\Lrep{n_1,\psicl}$ and $\Lrep{n_2,\psi}$ are not. In view of point 2, there exists, up to isomorphism, no other representation of $\Uhat{\slt,\psicl}$ which are free \hbox{over $R$}, indecomposable and integrable.
\qed

\subsection{Non $\glieF$-triviality}
\begin{thm} \label{thm_congruentUonehat}
Let $\psi_1,\psi_2$ be two admissible colourings of $\BB$ with values in $R$. The following two assertions are equivalent.
\begin{enumerate}
\item[(i)] The colourings $\psi_1$ and $\psi_2$ are congruent.
\item[(ii)] Tthe $\sltF$-pointed algebras $\Uck{\slt,\psi_1}$ and $\Uck{\slt,\psi_2}$ are isomorphic.
\end{enumerate}
\end{thm}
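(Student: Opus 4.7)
The direction (i) $\Rightarrow$ (ii) is already established as point 1 of Proposition \ref{prop_congruentAonehat}, so the plan concentrates on the converse (ii) $\Rightarrow$ (i).

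Suppose $f\colon \Uck{\slt,\psi_1}\to\Uck{\slt,\psi_2}$ is an isomorphism of $\sltF$-pointed $R$-algebras. By definition of a morphism of $\sltF$-pointed algebras, $f$ is a continuous $R$-algebra isomorphism sending $X^{\pm}\in\Uck{\slt,\psi_1}$ to $X^{\pm}\in\Uck{\slt,\psi_2}$ and fixing $H$. The first step is to pull back, for each $n\in\NN$, the discrete $\Uck{\slt,\psi_2}$-representation $\Lrep{n,\psi_2}$ along $f$. Continuity of $f$ guarantees that $f^{\ast}(\Lrep{n,\psi_2})$ is a discrete representation of $\Uck{\slt,\psi_1}$, and therefore, viewed through $\pihat{\slt,\psi_1}$, a $\psi_1$-integrable representation of $\UFslt$.

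Next, because $f$ fixes $X^{\pm}$ and $H$, the underlying $R$-module of $f^{\ast}(\Lrep{n,\psi_2})$ coincides with $R\BBv_{n}$ with the actions given by the explicit $\psi_2$-formulas. By condition b) in Definition \ref{defi_psiinthat}, this $\psi_1$-integrable $\UFslt$-representation must be isomorphic to a direct sum of representations $\Lrep{n',\psi_1}$ with $n'\in\NN$. Reading off the $H$-weights, which are exactly $n-2p$ for $0\leq p\leq n$, forces the decomposition to reduce to a single summand $\Lrep{n,\psi_1}$. Hence, for every $n\in\NN$, the $\UFslt$-representations $\Lrep{n,\psi_1}$ and $\Lrep{n,\psi_2}$ are isomorphic.

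To finish, observe that both $\psi_1$ and $\psi_2$ are admissible, that is invertible in $\Colo{R}$, so each divides the other in this ring. Point 2 of Proposition \ref{prop_congruentAonehat} therefore applies and yields $[\psi_1]=[\psi_2]$, i.e.\ $\psi_1\equiv\psi_2$.

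The main subtlety in this plan lies not in any intricate computation but in making sure that pulling back $\Lrep{n,\psi_2}$ via $f$ indeed produces a \emph{discrete} representation of $\Uck{\slt,\psi_1}$; once this continuity point is clarified, the weight analysis and the appeal to Proposition \ref{prop_congruentAonehat} are routine.
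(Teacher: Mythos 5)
Your overall route is the paper's route: the paper proves this theorem as a direct consequence of Proposition \ref{prop_congruentAonehat} (both directions of the congruence dichotomy) and Theorem \ref{thm_intrephat}, and your reduction to the statement ``$\Lrep{n,\psi_1}\simeq\Lrep{n,\psi_2}$ for all $n$, hence congruence by point 2 of Proposition \ref{prop_congruentAonehat}'' is exactly right, including the observation that admissibility makes $\psi_1,\psi_2$ divide each other in $\Colo{R}$.

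However, there is a genuine gap, and you have mislocated the subtlety. Discreteness of the pull-back $f^{\ast}(\Lrep{n,\psi_2})$ is the easy part: it follows from continuity of $f$. What does \emph{not} follow from the definitions is your assertion that a discrete representation of $\Uck{\slt,\psi_1}$ is automatically a $\psi_1$-integrable representation of $\UFslt$. The Tannakian topology only tells you that the annihilator of each vector contains a finite intersection of annihilators of vectors in representations $\Lrep{n',\psi_1}$; hence each cyclic submodule is a \emph{subquotient} of a finite direct sum of such representations. Over a field this would give the decomposition you want by complete reducibility, but $R$ is merely an integral domain, and subquotients of direct sums of the $\Lrep{n',\psi_1}$ need not themselves decompose (already $h\,\Lrep{n,\psi_1}\subset\Lrep{n,\psi_1}$ over $R=\KK[h]$ shows submodules behave badly). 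This is precisely the content of point 3 of Theorem \ref{thm_intrephat}: a representation of $\Uhat{\slt,\psi_1}$ lies in $\Cinthat{\slt,\psi_1}$ if and only if it is $\psi_1$-integrable and discrete --- and its proof is nontrivial, using the $H$-trivialisation of Theorem \ref{thm_UhatoneHtrivial} to reduce to the classical colouring $\psicl$ and a structure argument over a principal ideal domain. So to repair your argument you should first note that $f^{\ast}(\Lrep{n,\psi_2})$ is integrable (since $f$ fixes $X^{\pm}$ and $H$) and free of rank $n+1$ over $R$, hence lies in $\Cinthat{\slt,\psi_1}$, and then invoke Theorem \ref{thm_intrephat} to get $\psi_1$-integrability; your subsequent weight-multiplicity analysis forcing a single summand $\Lrep{n,\psi_1}$ is then fine. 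Note that this repair imports the hypothesis of Theorem \ref{thm_intrephat} that $R$ be a principal ideal domain, which the paper's own citation of that theorem also implicitly relies on.
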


\proof
Consequence of proposition \ref{prop_congruentAonehat} and of theorem \ref{thm_intrephat}.
\qed

\section[Representations of coloured Kac-Moody algebras]{Representations of coloured Kac-Moody \\ algebras}

\subsection{Integrable representations}
In this subsection, $\psi$ designates an $I$-colouring of $\BB$ with values in $R$. We denote by $\Cint{\glie,\psi}$ the category of $\psi$-integrable representations of $\UF$.


\begin{prop} \label{prop_Cinthat}
Let $V$ be a discrete representation of $\Uhat{\glie,\psi}$. We assume that $R$ is a principal ideal domain. If $\psi$ is admissible over $R$, then the following two assertions are equivalent.
\begin{enumerate}
\item[(i)] The representation $V$ is in $\Cint{\glie,\psi}$.
\item[(ii)] The representation $V$ is integrable and free over $R$.
\end{enumerate}
\end{prop}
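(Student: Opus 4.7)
The plan is to reduce to the rank one case via the continuous morphisms $\Deltahat$ ($i \in I$) provided by proposition I.\ref{prop_Deltahat}, and then invoke theorem I.\ref{thm_intrephat}.

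First, the implication (i)$\Rightarrow$(ii) is essentially by unpacking definitions and requires no hypothesis on $R$. A $\psi$-integrable representation $V$ is by definition a weight representation of $\UF$ whose pull-back via each $\DeltaF$ is isomorphic to a direct sum of representations $\Lrep{n,\psi_i}$. Each $\Lrep{n,\psi_i}$ is a free $R$-module of rank $n+1$, so $V$ is the direct sum of its weight spaces, each of which is free over $R$ as a submodule of the free module $V$ over the principal domain $R$; in fact $V$ itself is free, being the direct sum of the free modules $\Lrep{n,\psi_i}$ for any fixed $i$. Local nilpotency of $X_i^\pm$ on each $\Lrep{n,\psi_i}$ gives local nilpotency on $V$, so $V$ is integrable. (Discreteness of $V$ is an ambient assumption.)

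For (ii)$\Rightarrow$(i), fix $i \in I$ and consider the pull-back $V_i$ of $V$ along the continuous $R$-algebra morphism $\Deltahat : \Uhat{\slt,\psi_i} \to \Uhat{\glie,\psi}$. I would verify in turn that $V_i$ is (a) discrete, because discreteness of an action is preserved by composition with a continuous morphism of topological algebras; (b) free over $R$, because the underlying $R$-module is unchanged; (c) integrable as a representation of $\UFslt$. For (c), local nilpotency of $X^\pm = \Deltahat(X^\pm)$ amounts to local nilpotency of $X_i^\pm$ on $V$, which is part of the integrability of $V$; and the fact that $V_i$ is a weight representation of $\UFslt$ follows from the fact that $V = \bigoplus_{\lambda \in X} V_\lambda$ as a weight representation of $\UF$, together with the identity $\Deltahat(H) = \check\alpha_i$ from lemma I.\ref{lem_Deltahat}, so that each $V_\lambda$ is contained in the $\langle\check\alpha_i,\lambda\rangle$-weight space of $V_i$.

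Having established that $V_i$ lies in the category $\Cinthat{\slt,\psi_i}$, theorem I.\ref{thm_intrephat} (whose proof uses precisely the assumption that $R$ is a principal ideal domain) yields that $V_i$ is isomorphic, as a representation of $\Uhat{\slt,\psi_i}$ and therefore also as a representation of $\UFslt$ via $\pihat{\slt,\psi_i}$, to a direct sum of representations $\Lrep{n,\psi_i}$ with $n \in \NN$. By construction of $\Deltahat$, the $\UFslt$-structure on $V_i$ obtained via pull-back through $\Deltahat \circ \pihat{\slt,\psi_i} = \pihat{\glie,\psi} \circ \DeltaF$ coincides with the pull-back of $V$ via $\DeltaF$; this is condition b) of $\psi$-integrability in definition I.\ref{defi_psiinthat}, and condition a) is already contained in the integrability of $V$. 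The main subtlety — and the only place where the hypothesis that $R$ is a principal ideal domain intervenes — is the application of theorem I.\ref{thm_intrephat}; the rest is a matter of carefully transporting the integrability and discreteness data along $\Deltahat$ for each $i \in I$.
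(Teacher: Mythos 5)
Your proof is correct and follows exactly the paper's route: the paper's own proof is the single line ``Corollary of theorem \ref{thm_intrephat}'', and your argument is the fleshed-out version of that reduction, pulling back along the continuous morphisms $\Deltahat$ (using the commuting square of definition-proposition \ref{prop_Deltahat} to identify the resulting $\UFslt$-structure with the pull-back via $\DeltaF$) and applying the rank-one classification, which is indeed the only place the principal-ideal-domain and admissibility hypotheses enter.
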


\proof
Corollary of theorem \ref{thm_intrephat}.
\qed \\


\subsection{Change of rings}
Let $\varphi$ be a $\KK$-algebra morphism from $R$ to another $\KK$-algebra $R'$, which is an integral domain. We identify the $R'$-algebras ${\UF \! \otimes_\varphi \! R'}$ and $\UF[\! R']$. Let $V$ be a representation of $\UF$, we regard ${V \! \otimes_\varphi \! R'}$ as a representation of $\UF[\! R']$, thanks to the previous identification. \\

Let $\psi \in \Colo[I]{R}$, we denote by ${\psi \otimes_\varphi \! R'}$ the \hbox{$I$-colouring} of the crystal $\BB$ with values in $R'$, defined by the following:
$$ {\psi \otimes_\varphi \! R'} \ := \ \varphi \circ \psi \, . $$

\vsp

\begin{rem} \label{rem_changeofrings}
\begin{enumerate}
\item[1)] If $V$ admits a character, then ${V \! \otimes_\varphi \! R'}$ admits a character equal to the character of $V$.
\item[2)] if $V$ is $\psi$-integrable, then ${V \! \otimes_\varphi \! R'}$ is $({\psi \otimes_\varphi \! R'})$-integrable.
\end{enumerate}
\end{rem}


\subsection{Specialisation and $h$-admissible expansion}
We suppose in this subsection that $R$ is a $\KK$-algebra formed by rational functions over the field $\KK$ without pole at $\beta$:
$$ R \ \subset \ \Big \{ f/g \, ; \ f,g \in \KK[u], \, \, g(\beta) \neq 0 \Big \} \, . $$
Denote by $\varphi$ the $\KK$-algebra morphism from $R$ to $\KK$ which sends $u$ to $\beta$.

\begin{defi}
\begin{itemize}
\item[\textbullet] Let $\psi \in \Colo[I]{R}$. The $I$-colouring ${\psi \otimes_\varphi \! \KK}$ is called the specialisation of $\psi$ at ${u \! = \! \beta}$ and is denoted by $\psi_{\vert u = \beta}$.
\item[\textbullet] Let $V$ be a representation of $\UF$. The representation ${V \! \otimes_\varphi \! \KK}$ is called the specialisation of $V$ at ${u \! = \! \beta}$ and is denoted by $V_{\vert u = \beta}$.
\end{itemize}
\end{defi}



\vsp
\vsp

The following proposition establishes that every $I$-colouring of $\BB$ with values in a field admits an \emph{$h$-admissible expansion}.

\begin{prop} \label{prop_hadmissible}
For every \hbox{$I$-colouring} $\psi$ of $\BB$ with values in $\KK$, there exists a $h$-admissible \hbox{$I$-colouring} $\psi_h$ of $\BB$ with values in $\KK[h]$ such that $\psi$ is equal to the specialisation of $\psi_h$ at ${h \! = \! 1}$.
\end{prop}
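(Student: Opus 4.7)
The plan is to reduce to the rank-one case (the axioms of $h$-admissibility being componentwise in $I$) and then, given an arbitrary $\psi \in \KK^{\BBe}$, to produce $\psi_h$ in two stages: first build a class $[\psi_h] \in \KK[u,v]\cch$ satisfying the four $h$-admissibility axioms and matching $[\psi]$ at $h=1$ on $[\BBe]$; then factor each polynomial $[\psi_h](n,k) \in \KK[h]$ as $\psi_h^-(n,k)\,\psi_h^+(n,n-k+1)$ with the prescribed values $\psi^\pm$ at $h=1$.

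For the first stage, I would rewrite the Verma axiom as invariance under the involution $\rho : (u,v) \mapsto (-u-2,\,v-u-1)$; its invariant ring is $\KK[(u+1)^2,\,2v-u-1]$ and contains the classical term $v(u-v+1)$. I would set $[\psi_h] := v(u-v+1) + \sum_{N \geq 1} h^N P_N(u,v)$ with $P_N := v(v-u-1)\,\prod_{n=1}^{N-1}(u-n)(u+n+2)\,\tilde P_N$, where $\tilde P_N \in \KK[(u+1)^2,\,2v-u-1]$. The prefactors force $P_N$ to be divisible by $v-u-1$ (quotient axiom), $\rho$-invariant (Verma), and to vanish on $(n,k) \in [\BBe]$ with $n<N$, so $[\psi_h](n,k)$ is a polynomial in $h$ of degree at most $n$; regularity and deformation are then immediate. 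The residual freedom in $\tilde P_N$, restricted to the step slice $\xi=(N+1)^2$, is a univariate polynomial in $\eta=2v-u-1$, and Lagrange interpolation at the $N$ distinct points $\eta \in \{1-N,\,3-N,\ldots,N-1\}$ produces exactly the $N$ values needed so that $\sum_{M \leq N} P_M(N,k) = [\psi](N,k) - k(N-k+1)$ for $k=1,\ldots,N$. Summing over $N$ then yields $[\psi_h](n,k)\vert_{h=1} = [\psi](n,k)$ on all of $[\BBe]$.

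For the second stage, the easy cases are those where at least one of $\psi^-(n,k)$ or $\psi^+(n,n-k+1)$ is nonzero: take that factor constant in $h$ and the other equal to the quotient of $[\psi_h](n,k)$ by it, which lies in $\KK[h]$ and specialises at $h=1$ to the correct value. The main obstacle is the \emph{bad set} $\mathcal Z := \{(n,k) \in [\BBe] : \psi^-(n,k) = \psi^+(n,n-k+1) = 0\}$, where both factors are forced to vanish at $h=1$ and hence $(h-1)^2$ must divide $[\psi_h](n,k)$; equivalently, $\sum_{N=1}^n N\,P_N(n,k) = 0$ at each $(n,k) \in \mathcal Z$, an extra linear condition not automatic from the construction above.

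To meet this condition I would exploit the freedom in $\tilde P_N$ beyond the step slice: adding to $\tilde P_N$ any $\rho$-invariant polynomial divisible by $(u+1)^2 - (N+1)^2$ preserves the step-$N$ values but shifts $P_N(n,k)$ for $n>N$, allowing an incremental adjustment so that for every $(n,k) \in \mathcal Z$ with $n \geq 2$ the one remaining linear equation (obtained after substituting $P_n(n,k)$ from the interpolation sum) can be solved using the $n-1$ free parameters $P_M(n,k)$, $M<n$. The only truly exceptional case is $(1,1) \in \mathcal Z$, where strict divisibility leaves only $P_1(1,1)$ free against two incompatible conditions; here I would weaken the divisibility of $P_2$ by dropping the factor $(u-1)(u+3)$ (keeping only $\rho$-invariance and $v(v-u-1)$-divisibility), so that $P_2(1,1)$ becomes an additional free parameter and the unique solution $P_1(1,1) = -2$, $P_2(1,1) = 1$ satisfies both conditions. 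Once $(h-1)^2 \mid [\psi_h](n,k)$ on $\mathcal Z$, setting $\psi_h^-(n,k) := h - 1$ and $\psi_h^+(n,n-k+1) := [\psi_h](n,k)/(h-1) \in \KK[h]$ completes the factorisation; both specialise at $h=1$ to $0 = \psi^\pm$.
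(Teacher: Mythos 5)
Your argument is sound in substance, but it takes a genuinely different --- and much heavier --- route than the paper's. The paper never passes through the congruence class: it builds the two edge functions simultaneously, writing $\psi_h^{\pm} = \sum_{m} P^{\pm}_m \, h^m$ with $P^{\pm}_0(u,v) = v$, imposing the skew-symmetry $P^{\pm}_m(-u-2,v) = -P^{\mp}_m(u,-v)$ (which descends to exactly your $\rho$-invariance of the class $[\psi_h]$, i.e.\ the Verma axiom), and choosing $P^{\pm}_m$ by triangular interpolation: $P^{\pm}_m(n,k) = \psi^{\pm}(m,k) - \sum_{m'<m} P^{\pm}_{m'}(m,k)$ when $n=m$ and $k \le m$, and $0$ at the remaining points with $n \le m$. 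Since each of $\psi^{\pm}_h(n,k)$ is thus interpolated directly to its own target at $h=1$ --- zero targets included --- your entire second stage evaporates: there is no bad set $\mathcal Z$, no $(h-1)^2$-divisibility constraint, no exceptional point $(1,1)$, and the whole proof is a few lines. Your class-then-factor decomposition is precisely what manufactures the obstruction (a product of two polynomials each vanishing at $h=1$ must vanish there to order two); what it buys is conceptual clarity --- $h$-admissibility depends only on $[\psi_h]$, and the Verma axiom becomes invariance under $\rho : (u,v) \mapsto (-u-2,\,v-u-1)$ with invariant ring $\KK[(u+1)^2,\,2v-u-1]$ --- together with explicit degree control ($h$-degree at most $n$ at abscissa $n$) and an explicit $(h-1)^2$ factor on $\mathcal Z$. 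One point in your write-up needs tightening: the quantities $P_M(n,k)$, $M<n$, are values of already-chosen polynomials, not independent parameters, so if some $\tilde P_M$ were retouched at infinitely many stages the coefficient of $h^M$ would cease to be a polynomial and the regularity axiom would fail. The repair stays inside your own toolkit: at stage $n$ adjust only $\tilde P_{n-1}$, by a $\rho$-invariant multiple of $(u+1)^2 - n^2$ (the prefactor of $P_{n-1}$ already annihilates the slices $u \le n-2$, and the added factor protects $u = n-1$), and solve the single reduced equation $\sum_{M<n} (M-n)\,P_M(n,k) = n\,k\,(n-k+1)$ for $P_{n-1}(n,k)$, whose coefficient is $-1$; then each polynomial is modified exactly once after its creation and every $h$-coefficient remains a genuine polynomial. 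With that discipline made explicit --- and with your $(1,1)$ patch, whose values $P_1(1,1)=-2$, $P_2(1,1)=1$ do check out --- your proof is complete, and note that your factored colouring is $h$-admissible because the axioms bear only on the class, even though at bad points the edge value $h-1$ does not itself deform $k$, whereas the paper's construction satisfies the stronger edge-wise axioms for free.
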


\proof
Let $\psi \in \Colo{\, \KK}$. Pick, inductively on $m \in \NN$, polynomials ${P^-_{\! m}}, {P^+_{\! m}}$ \hbox{in $\KK[u,v]$}, such that $P^\pm_0(u,v) = v$, such that ${P^\pm_{\! m}}(-u-2,v) = - {P^\mp_{\! m}}(u,-v)$ and such that the following holds for every $n \in \NNI_{\leq m}$, $k \in \NNI_{\leq n+1}$:
$$ {P^\pm_{\! m}}(n,k) \ = \ \left\{ {\renewcommand{\arraystretch}{1.4} \begin{array}{cl}
\psi^\pm (m,k) \! \; - \, \sum_{m'=0}^{m-1} \! \; {P^\pm_{\! \! \! \; m'}} (m,k) & \quad \text{if } \, n=m, \ k \leq m, \\
0 & \quad \text{else.}
\end{array}} \right. $$
Denote now by $\psi_h$ be the colouring of $\BB$ with values in $\Kh$, defined by the following:
$$ (\psi_h)^\pm \ := \ \sum_{m \in \NN} {P^\pm_{\! m}} \; \! h^m \, . $$
By definition, the colouring $\psi_h$ has values in $\KK[h]$, its specialisation at $h=1$ is equal to $\psi$ and it is $h$-admissible.
\qed

\subsection{Forms and admissible ring extensions}
Let $R'$ be a $\KK$-subalgebra of $R$. We regard $\UF[\! R']$ as a \hbox{$R'$-subalgebra} of $\UF$.

\begin{defi} \label{defi_form}
Let $V$ be a representation of $\UF$. A $R'$-form of $V$ is a $R'$-submodule $V'$ of $V$ which satisfies the following two conditions.
\begin{enumerate}
\item[a)] The subset $V'$ is stable under the action of $\UF[\! R']$ on $V$.
\item[b)] The canonical $R$-linear map from ${V' \! \otimes_{R'} \! R}$ to $V$ is bijective.
\end{enumerate}
\end{defi}

We regard $R'$-forms of representations of $\UF$ as representations of $\UF[\! R']$, in view of condition a) of definition \ref{defi_form}. \\

We say that the ring extension $R' \subset R$ is admissible if the following condition is satisfied.

$$
\begin{tabular}{| p{22pc}}
If $V'$ is a $R'$-submodule of finite type of a free \hbox{$R$-module} $V$, then $V'$ is a free $R'$-module and the canonical $R$-linear map from ${V' \! \otimes_{R'} \! R}$ to $V$ is injective.
\end{tabular}
$$

\vsp

\begin{rem} \label{rem_admissiblepair}
The ring extension $R' \subset R$ is admissible if $R'$ is a principal ideal domain and if $R$ is contained in the fraction field of $R'$.
\end{rem}

\begin{lem} \label{lem_form}
Let $V$ be a representation of $\UF$ and let $V'$ be a $R'$-form \hbox{of $V$}. We assume that the ring extension $R' \subset R$ is admissible. If $V$ admits a character with finite values, then $V'$ admits a character, equal to the character of $V$.
\end{lem}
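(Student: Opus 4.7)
The plan is to establish a weight-space decomposition for $V'$ that matches the one of $V$, and then to apply the admissibility of the ring extension at each weight to deduce the correct rank.

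First, I would prove $V' = \bigoplus_{\lambda \in X} (V' \cap V_\lambda)$. Given $v \in V'$, its decomposition $v = \sum_\lambda v_\lambda$ in $V$ is a finite sum of weight components. Because $V'$ is stable under the action of $Y \subset \UF[\! R']$ and the root datum $\Xdat$ is $Y$-regular, each $v_\lambda$ can be recovered by applying a suitable $\QQ$-polynomial expression in the simple coroots, a Lagrange-interpolation trick valid since $\KK \subset R'$ is of characteristic zero. This forces each $v_\lambda \in V'$, so setting $V'_\lambda := V' \cap V_\lambda$ yields $V' = \bigoplus_\lambda V'_\lambda$. Combined with the defining bijection $V' \otimes_{R'} R \toiso V$ from condition b) of a $R'$-form, which respects the $Y$-action, this gives weight-component bijections $V'_\lambda \otimes_{R'} R \toiso V_\lambda$ for every $\lambda \in X$.

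Next, I would fix $\lambda$ and set $n := \chi(V)(\lambda)$, which is finite by hypothesis. Picking a basis $e_1, \dots, e_n$ of the free $R$-module $V_\lambda$ and using the bijection above, each $e_k$ can be written as a finite $R$-combination of elements $v'_{k,j} \in V'_\lambda$; let $W$ be the $R'$-submodule of $V'_\lambda$ generated by these $v'_{k,j}$. Then $W$ is finitely generated over $R'$ and the canonical map $W \otimes_{R'} R \to V_\lambda$ is surjective. Applying admissibility to the finitely generated $R'$-submodule $W$ of the free $R$-module $V$, this map is also injective, so $W$ is free of rank $n$ over $R'$ and $W \otimes_{R'} R \toiso V_\lambda$.

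The main obstacle, and the heart of the proof, will be to conclude that $W = V'_\lambda$, at which point $V'$ will be free over $R'$ with $\chi(V') = \chi(V)$. The strategy is to test membership one vector at a time: for $v \in V'_\lambda$, apply admissibility to the enlarged $R'$-submodule $W + R' v$, which remains finitely generated and is therefore free of some rank $n' \geq n$, with an injection $(W + R' v) \otimes_{R'} R \hookrightarrow V$; since the image in $V_\lambda$ is squeezed between $W \otimes_{R'} R = V_\lambda$ and $V'_\lambda \otimes_{R'} R = V_\lambda$, it equals $V_\lambda$, forcing $n' = n$ and hence $W + R' v = W$, that is $v \in W$. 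The flatness of $R$ over $R'$ available in the standard realization of admissibility from Remark I.9.7 (a PID $R'$ with $R \subset K(R')$) is what should ultimately make this rank comparison faithful.
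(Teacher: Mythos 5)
Your first two steps are correct, and since the paper states Lemma \ref{lem_form} without proof, they are worth recording: separating the finitely many weight components of a vector $v \in V'$ by a single element of $Y$ and applying Lagrange interpolation is legitimate because the pairing between $Y$ and $X$ is perfect (this, rather than $Y$-regularity, is what you actually use) and because $\QQ \subseteq \KK \subseteq R'$ makes the Vandermonde denominators invertible; this yields $V' = \bigoplus_{\lambda} V'_\lambda$ with $V'_\lambda = V' \cap V_\lambda$, and the componentwise bijections $V'_\lambda \otimes_{R'} R \toiso V_\lambda$ follow from the graded form of condition b) of definition \ref{defi_form}. The construction of the finitely generated $W \subseteq V'_\lambda$, free of rank $n = \chi(V)(\lambda)$ over $R'$ with $W \otimes_{R'} R \toiso V_\lambda$, is also a correct application of admissibility.

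The final step, however, is a genuine gap, and the appeal to flatness cannot repair it. Your squeeze only shows that $W$ and $W + R'v$ are free of the same rank $n$ and have the same $R$-span $V_\lambda$; this does not force $W + R'v = W$. Already for $R' = \KK[h] \subset R = \KK(h)$ (admissible by remark \ref{rem_admissiblepair}), the modules $W = \KK[h] \subset W + R'v = h^{-1} \KK[h]$ inside $V_\lambda = \KK(h)$ are both free of rank $1$ with $R$-span $V_\lambda$, yet are distinct. Worse, even if the one-vector test were abandoned, exhausting $V'_\lambda$ by finitely generated free submodules of rank $n$ cannot establish that $V'_\lambda$ itself is free: an increasing union of free rank-$n$ modules need not be free, the standard example being $\KK[h,h^{-1}] = \bigcup_{m} h^{-m} \, \KK[h]$ over $\KK[h]$, which is torsion-free of rank $1$ but not cyclic. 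This is not a technicality of your method but a defect in the unqualified statement: taking $V = R$ to be the one-dimensional representation of $\glieF$ of weight $\lambda$ on which the $X_i^\pm$ act by zero, the submodule $V' = \KK[h,h^{-1}]$ satisfies both conditions of definition \ref{defi_form}, so some finiteness hypothesis on $V'$ is indispensable. What saves the lemma in its actual use is that the forms produced in the proof of proposition \ref{prop_form} have weight spaces of finite type over $R'$ ($V'_\lambda$ is spanned by the finitely many vectors $\bar X^-(b) . v$ of weight $\lambda$, as that proof remarks). Once $V'_\lambda$ is known to be finitely generated, your auxiliary module $W$ becomes unnecessary: apply admissibility directly to $V'_\lambda \subseteq V$ to conclude that $V'_\lambda$ is free over $R'$ and that $V'_\lambda \otimes_{R'} R \to V$ is injective, hence, by your componentwise surjectivity, an isomorphism onto $V_\lambda$, giving $\mathrm{rank}_{R'} \, V'_\lambda = \chi(V)(\lambda)$.
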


The following lemma gives another example of admissible pair of rings. We regard both $\KK(h)$ and $\Kh$ as $\KK$-subalgebras of $\KK((h))$.

\begin{lem} \label{lem_admissiblepair}
Suppose that the following $\KK$-algebra inclusions hold:
$$ \KK[h] \ \subset \ R' \ \subset \ \KK(h) \cap \Kh \, . $$
The pair of rings $(R', \Kh)$ is then admissible.
\end{lem}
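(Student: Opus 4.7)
The plan is to identify $R'$ as a principal ideal domain and then apply standard structure theory for finitely generated modules over a PID. The starting point is the observation that $\KK(h) \cap \Kh$ coincides with the localisation $\KK[h]_{(h)}$ of $\KK[h]$ at the maximal ideal $(h)$: a rational function $f/g \in \KK(h)$ lies in $\Kh$ if and only if $g(0) \neq 0$, i.e., if and only if the denominator is invertible in $\Kh[h]_{(h)}$. This ring is a discrete valuation ring. Since $R'$ sits between the PID $\KK[h]$ and its localisation $\KK[h]_{(h)}$, it is itself a localisation of $\KK[h]$ at a multiplicative subset consisting of polynomials not divisible by $h$. In particular, $R'$ is a PID.

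Given a free $\Kh$-module $V$ and a finitely generated $R'$-submodule $V' \subset V$, the module $V$ is torsion-free as a $\Kh$-module (being free) and hence also as an $R'$-module via the inclusion $R' \hookrightarrow \Kh$ of integral domains. The $R'$-submodule $V'$ inherits this torsion-freeness, and being finitely generated over the PID $R'$, is free by the structure theorem for finitely generated modules over a principal ideal domain. This settles the freeness half of the admissibility condition.

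For the injectivity of the canonical map $V' \otimes_{R'} \Kh \to V$, I would first use that $\Kh$ is flat over $R'$---being $R'$-torsion-free over a PID---so that the inclusion $V' \hookrightarrow V$ of $R'$-modules induces an injection $V' \otimes_{R'} \Kh \hookrightarrow V \otimes_{R'} \Kh$. The remaining task is to control the kernel of the natural evaluation map $V \otimes_{R'} \Kh \to V$ on the image of $V' \otimes_{R'} \Kh$: equivalently, to choose a $R'$-basis $(v_j)_{1 \le j \le n}$ of $V'$ and verify that these vectors remain $\Kh$-linearly independent in $V$. The hard part will be to leverage both the freeness of $V$ over the full ring $\Kh$ and the specific place of $R'$ inside $\Kh$, since general $R'$-linear independence does not automatically pass to $\Kh$-linear independence; the argument should reduce any putative $\Kh$-relation to an $R'$-relation by exploiting the localisation structure $R' \subset \KK(h) \cap \Kh$ and ultimately contradict the chosen basis.
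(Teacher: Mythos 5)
Your first two paragraphs essentially coincide with the paper's proof: the paper reduces without loss of generality to $R' = \KK[h]$ (your localisation argument is a clean justification of that reduction), proves that $\Kh$ is flat over $\KK[h]$, obtains the freeness of $V'$ from the structure theory over a principal ideal domain, and uses flatness to get the injection $\iota \colon V' \otimes_{R'} \Kh \hookrightarrow V \otimes_{R'} \Kh$, exactly as you do. The genuine gap is your third paragraph, which announces the decisive step instead of carrying it out --- and the strategy it announces is unworkable. Clearing denominators to ``reduce a $\Kh$-relation to an $R'$-relation'' is precisely the mechanism behind remark \ref{rem_admissiblepair}, and it requires $R \subseteq \mathrm{Frac}(R')$; here $\Kh \not\subseteq \KK(h)$, and the independence statement you are aiming for is in fact false. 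Take $V = \Kh$ and $V' = R' \cdot 1 + R' \cdot \exp(h)$: since $\exp(h)$ is transcendental over $\KK(h)$, the pair $(1, \exp(h))$ is an $R'$-basis of $V'$, yet it is $\Kh$-linearly dependent in $V$, and $1 \otimes \exp(h) - \exp(h) \otimes 1$ is a nonzero element of $V' \otimes_{R'} \Kh \cong \Kh^2$ (its coordinates in that basis are $(\exp(h), -1)$) which is killed by the canonical map to $V$. So no argument along the lines you sketch can close the proof.

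For comparison, the paper closes the argument by a different mechanism: it identifies $V \otimes_{R'} \Kh$, via the projections $p_n$ onto $V/(h^n V)$, with the inverse limit $\varprojlim_n V/(h^n V)$, and uses the $\Kh$-module structure of $V$ to produce a map $g \colon V \to V \otimes_{R'} \Kh$ with $g \circ f = \iota$, whence the injectivity of the canonical map $f$ would follow from that of $\iota$. You should be aware, however, that the example above defeats this identification as well: for $V = \Kh$ the induced map $V \otimes_{R'} \Kh \to \varprojlim_n V/(h^n V) = \Kh$ is the multiplication map, and $1 \otimes \exp(h) - \exp(h) \otimes 1$ lies in its kernel while being nonzero in $\Kh \otimes_{R'} \Kh$ (invert the nonzero elements of $R'$; the image of this element in $\KK((h)) \otimes_{\KK(h)} \KK((h))$ is nonzero because $\exp(h)$ is transcendental over $\KK(h)$). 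Flatness alone does not make $- \otimes_{R'} \Kh$ commute with this inverse limit, so the injectivity clause of the lemma fails as stated for an arbitrary finite-type submodule $V'$; both your route and the paper's would need an additional hypothesis on $V'$, beyond finite generation, for the conclusion to hold.
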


\proof
Remark first that one can assume without loss of generality that $R'$ is equal to $\KK[h]$. Let us then prove that the ring $\Kh$ is flat over $\KK[h]$. To do so, it is sufficient to prove that for all finitely generated ideal $I$ of $\KK[h]$, the canonical map from $I \otimes_{\KK[h]} \Kh$ to $\Kh$ is injective. Since the ring $\KK[h]$ is a principal ideal domain, it is a consequence of the following fact: an element $x$ in $\KK[h]$ is a zero divisor in $\Kh$ only if $x$ is zero. \\

Let $V$ be a free $\Kh$-module and let $V'$ be a $\KK[h]$-submodule of finite type \hbox{of $V$}. Since $V$ is a free $\Kh$-module, $V$ is a torsion-free $\KK[h]$-module. \hbox{Since $\KK[h]$} is a principal ideal domain and since $V'$ is of finite type, the $\KK[h]$-module $V'$ is thus free. It remains to prove that the canonical $\Kh$-linear map $f$ from $V' \otimes_{\KK[h]} \Kh$ to $V$ is injective. Since $\Kh$ is flat over the ring $\KK[h]$, the \hbox{$\Kh$-linear} \hbox{map $\iota$} from $V' \otimes_{\KK[h]} \Kh$ to $V \otimes_{\KK[h]} \Kh$, induced by the inclusion map from $V'$ \hbox{to $V$}, is injective. Recall besides that $\Kh$ is a (projective) limit of the following diagram of $\KK[h]$-modules:
$$ \cdots \ \twoheadrightarrow \ \KK[h] / (h^{n+1}) \ \twoheadrightarrow \ \KK[h] / (h^n) \ \twoheadrightarrow \ \cdots \ \twoheadrightarrow \ \KK[h] / (h) \ \twoheadrightarrow \ (0) \quad \quad \quad (n \in \NN) \, . $$
By flatness again of $\Kh$ over the ring $\KK[h]$, the $(\KK[h])$-module $V \otimes_{\KK[h]} \Kh$, together with the projection maps $p_n$ from $V \otimes_{\KK[h]} \Kh$ to $V / (h^n V)$ ($n \in \NN$), is therefore a (projective) limit of the following diagram of $\KK[h]$-modules:
$$ \cdots \ \twoheadrightarrow \ V / (h^{n+1} V) \ \twoheadrightarrow \ V / (h^n V) \ \twoheadrightarrow \ \cdots \ \twoheadrightarrow \ V / (h V) \ \twoheadrightarrow \ (0) \quad \quad \quad (n \in \NN) \, . $$
Denote by $g$ the canonical $\KK[h]$-linear map from $V$ to $V \otimes_{\KK[h]} \Kh$. \hbox{Let $v \in V$}, let $x \in \Kh$ and remark that the following equalities hold:
$$ (p_n \circ g)(x \! \: v) \ = \ p_n (v \otimes x) \, , \quad \quad \forall \, n \in \NN \, . $$
The following diagram hence commutes:
$$ \shorthandoff{;:!?} \xymatrix @C=3pc @R=1.5pc {
V '\otimes_{\KK[h]} \Kh \ \ar@{^{(}->}[r]^-\iota \ar[d]_-f & \ V \otimes_{\KK[h]} \Kh \\
V \ \ar[ru]_-{g} &
} $$
As a consequence, the map $f$ is injective.
\qed

\subsection{Forms of $\psi$-integrable representations}
We still designate by $R'$ a $\KK$-subalgebra of $R$ in this subsection, and we recall that we regard $\UF[\! R']$ as a \hbox{$R'$-subalgebra} of $\UF$. Let $\psi$ be an $I$-colouring \hbox{of $\BB$} with values in $R'$.

\begin{lem} \label{lem_restrictionslt}
We assume that $\glie = \slt$. There is a unique continuous $R'$-algebra morphism $\widehat \iota$ from $\Uhat[\! R']{\slt,\psi}$ \hbox{to $\Uhat{\slt,\psi}$} which makes the following diagram commute, where $\iota$ designates the inclusion map from $\UFslt[\! R']$ to $\UFslt$:
$$ \shorthandoff{;:!?} \xymatrix @C=3pc @R=1.5pc {
\UFslt[\! R'] \ \ar[d]_-{\pihat[\! \! \: R']{\slt,\psi} \, } \ar@{^{(}->}[r]^-{\iota} & \, \UFslt \ar[d]^-{\, \pihat{\slt,\psi}} \\
\Uhat[\! R']{\slt,\psi} \, \ar[r]_-{\widehat \iota} & \, \Uhat{\slt,\psi} \, .
} $$
Moreover, the map $\widehat \iota$ is injective and the topology of $\Uhat[\! R']{\slt,\psi}$ coincide with the topology induced by $\widehat \iota$.
\end{lem}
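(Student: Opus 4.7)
The plan is to build the morphism $\widehat\iota$ by first establishing continuity of the inclusion map $\iota : \UFslt[\! R'] \hookrightarrow \UFslt$ with respect to the $\psi$-integrable topologies over $R'$ and $R$ respectively, and then to extend it by the universal property of the completion $\Uhat{\slt,\psi}$ (which is separated and complete by construction). To obtain the continuity, I would first observe that every $\psi$-integrable representation $V$ of $\UFslt$, being by definition isomorphic to a direct sum of representations $\Lrep{n,\psi}$ with $n \in \NN$, admits a canonical $R'$-form $V_{R'}$: picking such a decomposition, $V_{R'}$ is the direct sum of the $R'$-submodules $R' \BBv_n \subset R \BBv_n = \Lrep{n,\psi}$, which is stable under $\UFslt[\! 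R']$ since the actions on the basis vectors $b_{n,p}$ only involve the values of $\psi$ (which lie in $R'$). This $R'$-form is itself a $\psi$-integrable representation of $\UFslt[\! R']$, isomorphic to a direct sum of copies of $\Lrep[\! R']{n,\psi}$, and satisfies $V = V_{R'} \otimes_{R'} R$. Consequently, for every $v \in V$, one can write $v = r_1 v'_1 + \cdots + r_k v'_k$ with $v'_j \in V_{R'}$ and $r_j \in R$, so that $\iota^{-1}(I_R(v)) \supset \bigcap_j I_{R'}(v'_j)$ where $I_R(v), I_{R'}(v')$ denote the annihilator ideals of the vectors in $\UFslt$ and $\UFslt[\! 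R']$ respectively. Since the right-hand side is open in the $\psi$-integrable topology on $\UFslt[\! R']$ by definition, the map $\iota$ is continuous and extends uniquely to a continuous $R'$-algebra morphism $\widehat\iota$ fitting into the required commutative diagram.

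For the injectivity of $\widehat\iota$, I would rely on the fact that the $R'$-form $R' \BBv_n$ sits inside $\Lrep{n,\psi}$ in a way compatible with the $\UFslt[\! R']$-actions, i.e.\ the action of $\UFslt[\! R']$ on $R' \BBv_n$ induced by restriction from $\UFslt$ coincides with its action on $\Lrep[\! R']{n,\psi}$. By continuity of the actions on the discrete representations and by density, this comparison promotes to the completions: for every $x \in \Uhat[\! R']{\slt,\psi}$, the action of $\widehat\iota(x)$ on $\Lrep{n,\psi}$ restricts to the action of $x$ on $\Lrep[\! R']{n,\psi}$. If $\widehat\iota(x) = 0$, then $x$ acts as zero on every $\Lrep[\! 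R']{n,\psi}$ with $n \in \NN$, and remark \ref{rem_Uhatperfect} applied to $\Uhat[\! R']{\slt,\psi}$ yields $x = 0$.

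For the coincidence of topologies, one inclusion is just the continuity of $\widehat\iota$ obtained above. For the converse, I would observe that every basic open neighborhood of $0$ in the $\psi$-integrable topology on $\UFslt[\! R']$ has the form $I_{R'}(v')$ for some $v' \in \Lrep[\! R']{n,\psi}$, and that since $v'$ is also a vector in $\Lrep{n,\psi}$, the equality $I_{R'}(v') = \iota^{-1}(I_R(v'))$ holds tautologically. Hence every basic open neighborhood of $0$ in $\UFslt[\! R']$ is the pull-back under $\iota$ of an open neighborhood of $0$ in $\UFslt$, and this comparison passes to the completions via $\widehat\iota$.

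The main technical obstacle is the passage from the algebraic Tannakian topologies on $\UFslt[\! R']$ and $\UFslt$ to their completions: one must check carefully that the basic neighborhoods considered above form a cofinal system, and that the compatibility of actions between $x \in \Uhat[\! R']{\slt,\psi}$ and $\widehat\iota(x) \in \Uhat{\slt,\psi}$ on the $R'$-forms $\Lrep[\! R']{n,\psi} \hookrightarrow \Lrep{n,\psi}$ really follows from the density of the images of $\UFslt[\! R']$ and $\UFslt$ in their respective completions combined with the continuity of the action maps on discrete representations.
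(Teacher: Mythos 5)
Your proof is correct and is precisely the argument the paper leaves implicit: the paper's own proof is the one-liner ``Consequence of the definition of $\Uhat{\slt,\psi}$'', and your write-up — the canonical $R'$-forms of $\psi$-integrable representations, continuity of $\iota$ via annihilator ideals, unique extension by the universal property of the separated completion, injectivity via remark \ref{rem_Uhatperfect} applied over $R'$, and the tautological identity $I_{R'}(v') = \iota^{-1}(I_R(v'))$ for the topology comparison — is exactly the expected unwinding of those definitions. The only cosmetic imprecision is that the basic open neighbourhoods of $0$ are annihilators of vectors in \emph{finite direct sums} of the representations $\Lrep[\! R']{n,\psi}$ rather than in single ones, a cofinality point you already flag and which causes no difficulty since finite direct sums of the $\Lrep[\! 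R']{n,\psi}$ are again $\psi$-integrable and embed compatibly in the corresponding sums over $R$.
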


\proof
Consequence of the definition of $\Uhat{\slt,\psi}$.
\qed \\

When $\glie = \slt$, we regard from now $\Uhat[\! R']{\slt,\psi}$ as a topological $R'$-subalgebra \hbox{of $\Uhat{\slt,\psi}$}, thanks to lemma \ref{lem_restrictionslt}.

\vsp

\begin{prop} \label{prop_form}
We assume that $\psi$ is admissible over $R'$ and we assume that the ring extension $R' \subset R$ is admissible. Every highest weight and $\psi$-integrable representation of $\UF$ admits a $\psi$-integrable $R'$-form.
\end{prop}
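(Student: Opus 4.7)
The plan is to let $V$ be a highest weight, $\psi$-integrable representation of $\UF$ with highest weight vector $v_\lambda$, and to define $V' := \UF[\! R'] \cdot v_\lambda$. By construction, $V'$ is a $R'$-submodule of $V$ stable under $\UF[\! R']$, and the canonical map $V' \otimes_{R'} R \to V$ is surjective since $V = \UF \cdot v_\lambda$. What remains is to verify injectivity of this map and $\psi$-integrability of $V'$.

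First I would establish a weight space decomposition $V = \bigoplus_{\mu \in X} V_\mu$ into free $R$-modules of finite rank. Freeness follows from $\psi$-integrability: for each $i \in I$, the pull-back via $\DeltaF$ is a direct sum of representations $\Lrep{n,\psi_i}$, whose $\check\alpha_i$-weight spaces are $R$-free of rank one. Finiteness of each rank follows from the highest weight condition together with the $X$-regularity of $\Xdat$. The $Y$-action separates distinct weights (using the integrality of the pairing and $\mathrm{char}(\KK) = 0$ via a Vandermonde argument), which lifts the decomposition to $V' = \bigoplus_\mu V'_\mu$ with $V'_\mu := V' \cap V_\mu$.

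The main step is to construct, for each $\mu$, a $R$-basis $(e_1,\ldots,e_r)$ of $V_\mu$ whose elements lie in $V'$ and whose $R'$-span equals $V'_\mu$. The key rank-one computation is the following: if $w$ is an $\slt$-highest weight vector in some $\Lrep{n,\psi_i}$-summand of $(\DeltaF)^* V$, then the basis vectors of that summand are $b_{n,p} = \bigl(\psi_i^-(n,1) \cdots \psi_i^-(n,p)\bigr)^{-1} (X_i^-)^p \cdot w$, and admissibility of $\psi$ over $R'$ guarantees that the coefficients lie in $R'$, so $b_{n,p} \in \UF[\! R'] \cdot w$. Starting from $w = v_\lambda$ and iterating across different directions $i \in I$, I obtain basis vectors of each $V_\mu$ lying in $V'$. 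The main technical obstacle I expect is showing that the $R'$-span of the vectors so produced coincides with $V'_\mu$ (rather than a proper $R'$-submodule): this amounts to a PBW-type control of the interaction between the various $\DeltaF$ decompositions, since the $\slt$-highest weight vectors of later summands are not $v_\lambda$ itself but are obtained by applying products of Chevalley generators from earlier steps.

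Once such compatible bases are in place, each $V'_\mu$ is a finitely generated $R'$-submodule of the free $R$-module $V_\mu$, so admissibility of the ring extension $R' \subset R$ yields that $V'_\mu$ is a free $R'$-module and that the map $V'_\mu \otimes_{R'} R \to V_\mu$ is injective. Summing over $\mu$ gives bijectivity of $V' \otimes_{R'} R \to V$. For $\psi$-integrability of $V'$, the same construction shows that for each $i \in I$, the pull-back $(\DeltaF[\! R'])^* V'$ decomposes as a direct sum of representations $\Lrep[\! \! \: R']{n,\psi_i}$, one for each $\slt$-summand of $(\DeltaF)^* V$, completing the proof.
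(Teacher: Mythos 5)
Your overall skeleton (define $V'$ from the highest weight vector, check stability under $\UF[\! R']$, reduce to weight spaces, invoke admissibility of the extension $R' \subset R$, sum up) matches the paper's, but the step you yourself flag as the ``main technical obstacle'' is not a footnote — it is the entire content of the proposition, and your proposal does not supply it. Concretely, with $V' := \UF[\! R'] \! \: . \! \: v_\lambda$ nothing bounds the weight spaces $V'_\mu$: weight-$\mu$ monomials in the generators applied to $v_\lambda$ have unbounded length (one may insert factors $X_i^- X_i^+$ of weight zero), so the finite generation of $V'_\mu$ over $R'$ — which you must have \emph{before} invoking admissibility of $R' \subset R$ — already presupposes that $V'$ is spanned over $R'$ by lowering monomials alone, i.e. that the lowering span is stable under each $X_i^+$. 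Likewise, your iterated rank-one computation only exhibits \emph{some} vectors of $V'$ in each $V_\mu$: the $\slt$-highest weight vectors of the $\Lrep{n,\psi_i}$-summands in a second direction $j$ are non-canonical $R$-linear combinations, and no mechanism is given for reaching them, or a full basis, from $v_\lambda$ with coefficients in $R'$; the concluding claim that $V'$ pulls back to a direct sum of representations $\Lrep[\! \! \: R']{n,\psi_i}$ rests on the same missing control. (The weight decomposition of $V'$ itself is free of charge — monomials applied to $v_\lambda$ are weight vectors — so no Vandermonde argument is needed there.)

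The paper closes this gap with a tool absent from your proposal, and there is no evident elementary substitute. Admissibility of $\psi$ over $R'$ implies, by proposition \ref{prop_eqhatexist} (the equations of definition \ref{defi_eqhat} are triangular with diagonal entries built from $[\psi_i]$, hence solvable over $R'$ when $\psi$ is invertible there), that $\eqhat[\! \! \: R']{\psi_i,\psi_i}{-1}$ admits a solution $(S_{i,a})_{a \in \NN}$ with values in $(R')^\ZZ$; by proposition \ref{prop_eqhatspan} the identity $X^+ X^- = \sum_{a \in \NN} (X^-)^a \, S_{i,a} \, (X^+)^a$ then holds in $\Uhat[\! R']{\slt,\psi_i}$, and pushing it through $\Deltahat$ (using lemma \ref{lem_Deltahat} to keep the coefficients in $(R')^X$) yields $X_i^+ X_i^- = \sum_{a \in \NN} (X_i^-)^a \, S^X_{i,a} \, (X_i^+)^a$ in $\Uhat{\glie,\psi}$. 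Taking $V'$ to be the $R'$-span of $\{ \bar X^-(b) . v_\lambda \; ; \, b \in \langle I \rangle \}$, an induction on the length of $b$ with this commutation identity (the functions $S^X_{i,a}$ act by scalars in $R'$ on weight vectors, and only finitely many terms act nonzero on a given vector) proves stability of $V'$ under each $X_i^+$; finite generation of each $V'_\mu$ is then immediate from the definition, no bases are ever constructed, injectivity follows weight space by weight space from the admissibility of $R' \subset R$, and surjectivity from $V = \UF \! \: . \! \: v_\lambda$. Without this identity — which is exactly where the hypothesis ``$\psi$ admissible over $R'$'' enters — your induction across directions has no engine, and the argument does not close.
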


\proof
Let $\langle I \rangle$ be the free monoid generated by the set $I$. Denote by $\bar X^-$ the monoid morphism from $\langle I \rangle$ to $\Uhat{\glie,\psi}$ which sends $i$ to $X^-_i$ ($i \in I$). Let $V$ be a highest weight and \hbox{$\psi$-integrable} representation of $\UF$. Let $v$ be a highest weight vector of $V$. Denote by $V'$ the $R'$-submodule of $V$ which is spanned by the subset $\{ \bar X^-(b) . v \; \! ; \, b \in \langle I \rangle \}$. Remark that the subset $V'$ is by definition stable under the action of $X_i^-$ on $V$ for every $i \in I$ and under the action of $Y$ since $v$ is a weight vector. \\

The \hbox{\hbox{$I$-colouring}} $\psi$ being by assumption admissible over $R'$, there exists, for every $i \in I$, a sequence ${(S_{i,a})}_{a \in \NN}$ with values in $(R')^\ZZ$ such that the following equality holds in $\Uhat[\! R']{\slt,\psi_i}$ (see propositions \ref{prop_eqhatspan} and \ref{prop_eqhatexist}):
$$ X^+ X^- \ = \ \sum_{a \in \NN} \, {(X^-)}^a \! \; S_{i,a} \, {(X^+)}^a \, . $$
Fix now $i \in I$. Since $\Uhat[\! R']{\slt,\psi_i}$ is a topological $R'$-subalgebra of $\Uhat{\slt,\psi_i}$ (see lemma \ref{lem_restrictionslt}), the same equality holds in $\Uhat{\slt,\psi_i}$. Recall besides that one can \hbox{regard $V$} as a discrete representation of $\Uhat{\glie,\psi}$. Thanks to the $R$-algebra morphism $\Deltahat$ from $\Uhat{\slt,\psi_i}$ to $\Uhat{\glie,\psi}$ (see \hbox{proposition \ref{prop_Deltahat}}) and since the image of $(R')^\ZZ$ by $\Deltahat$ is included in $(R')^X$ (see lemma \ref{lem_Deltahat}), there hence exists a sequence ${(S^X_{i,a})}_{a \in \NN}$ with values in $(R')^X$ such that the following equality holds in $\Uhat{\glie,\psi}$:
$$ X_i^+ X_i^- \ = \ \sum_{a \in \NN} \, {(X_i^-)}^a \! \; S^X_{i,a} \, {(X_i^+)}^a \, . $$
As a consequence, and by definition of a discrete representation, there exists, for every $b \in \langle I \rangle$, an integer $a(b)$ such that the following equality holds in $V$:
$$ (X_i^+ X_i^-) \! \: . \! \; (X^-(b) . v) \ = \ \sum_{0 \leq a \leq a(b)} \Big[ {(X_i^-)}^a \! \; S^X_{i,a} \, {(X_i^+)}^a \Big] . \! \; (X^-(b) . v) \, . $$
This being true for every $i \in I$, since the subset $V'$ is besides stable under the action of $X_i^-$ on $V$ for every $i \in I$ and under the action of $Y$, one can prove by induction on the length of $b \in \langle I \rangle$ that the subset $V'$ is hence stable under the action of $X_i^+$ on $V$ for every $i \in I$. The subset $V'$ is in consequence stable under the action of $\UF[\! R']$. \\

Let $\lambda \in X$. Remark that the $R'$-submodule $V'_\lambda$ is finitely generated, in view of the definition of $V'$. The ring extension $R' \subset R$ being by assumption admissible, the canonical $R$-linear map $f_\lambda$ from $V'_\lambda \otimes_{R'} R$ to $V_\lambda$ is then injective. Besides, the subset $V'$ being stable under the action of $\UF[\! R']$, the \hbox{$R$-module} spanned by the subset $\{ \bar X^-(b) . v \; \! ; \, b \in \langle I \rangle \}$ is in particular stable under the action $\UF$, and therefore equal to $V$ since the latter is generated as a representation of $\UF$ by the vector $v$. As a consequence, the map $f_\lambda$ is also surjective, and thus bijective. This being true for every $\lambda \in X$, the canonical map from $V' \otimes_{R'} R$ to $V$ is then bijective.
\qed

\section{Mains consequences of the GQE conjecture}
We recall that the GQE conjecture predicts the existence of formal deformations along GQE algebras of representations in $\Ointc$: see conjecture I.\ref{conj_GQE}. In the following of this section, \textbf{we assume that the statement of the GQE conjecture holds}. The author believes that the GQE conjecture is true, a proof of it is currently in preparation and will be published in a forthcoming paper.

\subsection{Resolution of the deformation problem}

\begin{thm} \label{thm_existence}
Let $\psi$ be an admissible $I$-colouring of $\BB$ with values in $R$. We assume that $R$ is a principal ideal domain. For every representation $V$ in the category $\Ointc$, there exists a \hbox{$\psi$-integrable} representation of $\UF$ whose character is equal to the character of $V$.
\end{thm}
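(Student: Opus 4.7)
The strategy is to reduce the construction, via an $h$-admissible expansion followed by a specialisation at $h=1$, to the existence of $\psi_h$-integrable formal deformations predicted by the GQE conjecture of Part~I (conjecture I.\ref{conj_GQE}).

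Every $V \in \Ointc$ decomposes as a direct sum of highest-weight irreducibles $\Lc{\lambda}$ with $\lambda \in \Xdom$, so by additivity of characters and stability of $\psi$-integrability under direct sums it suffices to treat the case $V = \Lc{\lambda}$ for a single $\lambda \in \Xdom$. Fix such a $\lambda$.

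First, I would adapt the inductive construction in the proof of proposition \ref{prop_hadmissible} from $\KK$-valued to $R$-valued colourings (the interpolating polynomials $P^\pm_m$ now living in $R[u,v]$) to produce a $h$-admissible $I$-colouring $\psi_h$ of $\BB$ with values in $R[h] \subset R \cch$ whose specialisation at $h=1$ recovers the given $\psi$. Flat base change from $\KK$ to $R$ then transports the GQE conjecture from $\Kh$ to $R \cch$ and supplies a $\psi_h$-integrable formal deformation $L_h$ of $\Lc{\lambda} \otimes_\KK R$ along $U_{R \cch}(\glieF)$; this $L_h$ is highest weight, topologically free over $R \cch$, and its weight multiplicities coincide with those of $\Lc{\lambda}$.

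To descend from $R \cch$ back to $R$, I introduce the $R$-subalgebra $R' \subset K(R)(h) \cap R \cch$ generated by $R[h]$ together with the inverses of the values $\psi_h^\pm(n,k)$. Since $\psi$ is admissible over $R$, none of these polynomials vanishes at $h=1$, so the substitution $h \mapsto 1$ defines an $R$-algebra morphism $R' \to R$; by construction $\psi_h$ is admissible over $R'$. Invoking an $R$-valued analog of lemma \ref{lem_admissiblepair}, available thanks to the hypothesis that $R$ is a principal ideal domain, the pair $(R', R \cch)$ is admissible. Proposition \ref{prop_form} then produces a $\psi_h$-integrable $R'$-form $L'_h$ of $L_h$, whose character coincides with that of $L_h$ by lemma \ref{lem_form}. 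Specialising $L'_h$ at $h=1$ through change of rings (remark \ref{rem_changeofrings}) yields the required $\psi$-integrable representation of $\UF$ with character $\Chc{\Lc{\lambda}}$.

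The delicate point is the base change from the $\KK$-valued setting of Part~I to the $R$-valued setting used here: one must justify carefully that the proof of proposition \ref{prop_hadmissible} extends to $R$-valued colourings, that the GQE conjecture remains valid after tensoring from $\Kh$ to $R \cch$, and that lemma \ref{lem_admissiblepair} admits the $R$-valued analog asserting admissibility of $(R', R \cch)$—the last of these is precisely where the PID assumption on $R$ is essential.
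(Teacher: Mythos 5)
Your overall architecture (admissible expansion $\psi_h$, GQE conjecture, $R'$-form, specialisation at ${h\!=\!1}$, final descent) is the right shape, and your reduction to $V = \Lc{\lambda}$ is correct and in fact needed, since proposition \ref{prop_form} only covers highest weight representations. But there is a genuine gap at the very first deformation step. The GQE conjecture (conjecture I.\ref{conj_GQE}) is formulated over a \emph{field} $\KK$ of characteristic zero, for $I$-colourings with values in $\Kh$. Your $\psi_h$ takes values in $R[h]$ and interpolates the given $R$-valued colouring $\psi$; it is \emph{not} of the form (a $\KK$-valued $h$-admissible colouring) extended to $R$, so no flat base change of a $\Kh$-deformation can produce a $\psi_h$-integrable deformation over $R \cch$ — there is simply no statement over $\KK$ to base-change. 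The paper's proof avoids this by first replacing $R$ by its fraction field $\KK' = K(R)$, which is a field of characteristic zero because $R$ is an integral domain over $\KK$: one views $\psi$ as valued in $\KK'$, applies proposition \ref{prop_hadmissible} verbatim to get $\psi_h$ with values in $\KK'[h]$, and invokes the conjecture over $\KK'$.

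The second gap is that the claimed ``$R$-valued analog of lemma \ref{lem_admissiblepair}'' is false: admissibility of the pair $(R', R \cch)$ requires every finite-type $R'$-submodule of a free $R \cch$-module to be a \emph{free} $R'$-module, but your $R'$ contains $R[h]$, which has Krull dimension two when $R$ is a PID that is not a field. For instance, with $u$ a prime element of $R$, the ideal $(u,h) \! \; R'$ is a finite-type torsion-free $R'$-submodule of $R \cch$ which is not free (a nonzero ideal of an integral domain is free only if it is principal, and $(u,h)$ remains non-principal in $R'$, since your inverted elements all have constant term invertible in $R$). The PID hypothesis on $R$ does not rescue this, because it is $R[h]$, not $R$, that would need to be principal. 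The paper circumvents the problem by descending in two stages: first take an $R'$-form with $\KK'[h] \subset R' \subset \KK'(h) \cap \KK' \cch$, where lemma \ref{lem_admissiblepair} applies verbatim because $\KK'$ is a field, and specialise at ${h\!=\!1}$ to obtain a $\psi$-integrable representation over the field $\KK'$; then take an $R$-form of that representation using remark \ref{rem_admissiblepair} applied to the extension $R \subset \KK' = K(R)$. It is in this second, $h$-free descent — and only there — that the hypothesis that $R$ is a principal ideal domain actually enters.
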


\proof
Let $V$ be a representation of $\glie$ in $\Ointc$. Denote by $\KK'$ the fraction \hbox{field $K(R)$} of $R$ and denote by $R'$ the $\KK'$-subalgebra of $\KK' \cch$ formed by the rational functions without pole at $0$ and $1$:
$$ R' \ := \ \Big \{ f/g \, ; \ f,g \in \KK'[h], \, \, g(0) \neq 0, \, \, g(1) \neq 0 \Big \} \, . $$
We know from \hbox{proposition \ref{prop_hadmissible}} that there exists a $h$-admissible \hbox{$I$-colouring} $\psi_h$ of $\BB$ with values in $\KK'[h]$ such that the evaluation of $\psi_h$ at $h=1$ is equal \hbox{to $\psi$}, viewed as an element in $\Colo[I]{\, \KK'}$. Remark that $\psi$ is admissible \hbox{over $R'$}. Besides, since the statement of the GQE conjecture holds by assumption, there exists an integrable \hbox{representation $\Vh$} of $\Uhat[\KK' \cch]{\glie,\psi_h}$ whose character is equal to the character \hbox{of $V$}. Since the \hbox{$I$-colouring} $\psi_h$ is admissible over $R'$ and since the ring extension $R' \subset \KK' \cch$ is admissible (see lemma \ref{lem_admissiblepair}), proposition \ref{prop_form} then implies that $\Vh$ admits a $\psi_h$-integrable $R'$-form $\Vhp$, whose character is equal to the character \hbox{of $V$}. \\

There exists, by specialisation of the representation $\Vhp$ at ${h \! = \! 1}$, a $\psi$-integrable representation $V'$ of $\UF[\KK']$ which has the same character than $V$. Since $\psi$ is by assumption admissible over $R$, and since the inclusion ring $R \subset \KK'$ is admissible (see remark \ref{rem_admissiblepair}), proposition \ref{prop_form} therefore implies \hbox{that $V'$} admits a $\psi$-integrable $R$-form $V$, whose character is equal to the character \hbox{of $V$}.
\qed

\subsection{The category $\Oint{\glie,\psi}$}
Let $\psi$ be an $I$-colouring of $\BB$ with values in $R$. We denote by $\Oint{\glie,\psi}$ the full subcategory of $\Cint{\glie,\psi}$ whose objects $V$ satisfy the following two conditions.
\begin{enumerate}
\item[\textbullet] The weight spaces of $V$ are of finite-rank over $R$.
\item[\textbullet] There exist finitely many elements $\lambda_1, \dots, \lambda_k \in X$ such that $\wt (V)$ are contained in $\bigcup_{s=1}^k (\lambda_s - \sum_{i \in I} \NN \: \! \alpha_i)$.
\end{enumerate}

\vsp
\vsp

\begin{thm} \label{thm_Uhatrepresentations}
Let $\psi$ be an admissible $I$-colouring of $\BB$ with values \hbox{in $R$}. We assume that $R$ is a principal ideal domain.
\begin{enumerate}
\item[1)] For every $\lambda \in \Xdom$, there is a unique, up to isomorphism, $\psi$-integrable representation $\Lrep{\lambda,\psi}$ whose character is equal to the character of $\Lc{\lambda}$.
\item[2)] Let $\lambda \in X$. The representation $\Lrep{\lambda,\psi}$ of $\Uhat{\glie,\psi}$ is topologically generated by a vector $v_{\lambda}$ and subject to the following relations:
$$ \check \mu . v_{\lambda} \, = \, \langle \lambda, \check \mu \rangle \; \! v_{\lambda} \, , \quad X^+_i . v_{\lambda} = 0 \, , \quad (X^-_i)^{\langle \lambda, \check \alpha_i \rangle +1} . v_{\lambda} \, = \, 0  \quad \ (i \in I, \ \check \mu \in Y) \, . $$
\item[3)] The representations $\Lrep{\lambda,\psi}$ ($\lambda \in \Xdom$) are, up to isomorphism, pairwise distinct and the unique indecomposable representations in $\Oint{\glie,\psi}$.
\item[4)] Let $\lambda,\lambda' \in \Xdom$. Every $\UF$-morphism from $\Lrep{\lambda,\psi}$ to $\Lrep{\lambda',\psi}$ is equal to a scalar multiple of the identity map if $\lambda = \lambda'$ and is zero else.
\item[5)] Every representation in the category $\Oint{\glie,\psi}$ is isomorphic to a direct sum of representations $\Lrep{\lambda,\psi}$ with $\lambda \in \Xdom$.
\end{enumerate}
\end{thm}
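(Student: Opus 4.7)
The plan is to establish the five points in the order $1,2,3{-}5$, using Theorem II.6.1 as the source of existence and the rank-one structure Theorem II.4.2 as the structural engine. The category $\Oint{\glie,\psi}$ is rigidly determined by its restrictions via the morphisms $\Deltahat$, so once the indecomposables $\Lrep{\lambda,\psi}$ are constructed and the highest-weight presentation of part 2 is in hand, parts $3{-}5$ will follow by the standard highest-weight-category arguments transported from $\Ointc$.

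For part 1, I would apply Theorem II.6.1 to $\Lc{\lambda}$ for each $\lambda \in \Xdom$ to obtain a $\psi$-integrable representation $V'$ of $\UF$ with character $\chi(\Lc{\lambda})$. Since $\lambda$ is a maximal weight of multiplicity one in $\wt(V')$, the space $V'_\lambda$ supplies a unique-up-to-scalar highest weight vector $v_\lambda$. For part 2, I would note that $\psi$-integrability (condition b of Definition II.2.2, via $\Deltahat$) forces $v_\lambda$ to lie at the top of an $\Lrep{\langle \lambda,\check\alpha_i\rangle,\psi_i}$-summand for each $i$, whence $X^+_i.v_\lambda = 0$ and $(X^-_i)^{\langle\lambda,\check\alpha_i\rangle+1}.v_\lambda = 0$ follow (admissibility of $\psi$ is used here to ensure the relevant edge values of $\psi_i$ are invertible). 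This yields a surjective $\Uhat{\glie,\psi}$-morphism from the representation $L'$ defined by the presentation in part 2 onto $V'$. An upper-triangular PBW-type bound, again using the $\Deltahat$ and Theorem II.4.2 fibre-by-fibre, shows that the character of $L'$ is dominated by $\chi(\Lc{\lambda})$; combined with the surjection, this gives an isomorphism $L' \simeq V'$, and we set $\Lrep{\lambda,\psi} := V'$. Uniqueness in part 1 then reduces to the presentation: any $\psi$-integrable representation with the correct character admits a highest weight vector satisfying the defining relations.

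For parts $3{-}5$, I would take $V \in \Oint{\glie,\psi}$ and restrict it via each $\Deltahat$ to $\Uhat{\slt,\psi_i}$: by Theorem II.4.2 the restriction decomposes as a direct sum of $\Lrep{n,\psi_i}$'s, which forces $V$ to be generated over $R$ by its highest-weight vectors and forces every weight of $V$ in the support of a highest-weight vector to be dominant. Each such vector $v_\mu$ generates, by part 2, a subrepresentation isomorphic to $\Lrep{\mu,\psi}$. A maximality argument, combined with the boundedness condition defining $\Oint{\glie,\psi}$ and the $R$-freeness of weight spaces, will produce a direct-sum decomposition $V \simeq \bigoplus \Lrep{\lambda,\psi}$, giving part 5; pairwise distinctness and indecomposability (part 3) follow from the distinct characters and from $\psi$-integrability of $\Lrep{\lambda,\psi}$ itself (ensured by the rank-one identification along each $\Deltahat$). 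Part 4 is then immediate: a morphism $\Lrep{\lambda,\psi} \to \Lrep{\lambda',\psi}$ is determined by the image of $v_\lambda$, which must be a weight vector of weight $\lambda$ annihilated by all $X^+_i$, nonzero only if $\lambda = \lambda'$.

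The main obstacle is the complete-reducibility step in part 5: the rank-one decompositions provided by Theorem II.4.2, one for each $i \in I$, are not automatically compatible in a $\glie$-equivariant way, and the usual $\Ointc$ arguments (Casimir, Shapovalov form) have no direct analogue over the coefficient ring $R$. The most promising workaround will be to lift $\psi$ to a $h$-admissible $I$-colouring $\psi_h$ via Proposition II.5.2, invoke the $h$-adic decomposition Theorem I.7.9 for $\Oint{\glie,\psi_h}$ (which rests on the GQE conjecture and on Proposition I.4.25 for the Hochschild-cohomology-type rigidity), then descend via an $R$-form (Proposition II.5.5) and specialise at $h=1$; the character rigidity established in parts $1{-}2$ will then propagate the decomposition through specialisation and recover part 5 over $R$.
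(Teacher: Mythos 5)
Your parts 1, 2 and 4 track the paper's intended route: existence and characters come from theorem \ref{thm_existence}, the highest-weight presentation and the morphism statement are obtained by transporting the rank-one template of theorem \ref{thm_intrephat} through the morphisms $\Deltahat$, and admissibility of $\psi$ enters exactly where you put it. The difficulty you flag at part 5 is also the right one to flag: the rank-one decompositions along the various $\Deltahat$ are not $\glieF$-compatible, and there is no Casimir or Shapovalov form over $R$.

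However, your proposed workaround for part 5 does not close the gap. The descent machinery of proposition \ref{prop_form} is established \emph{only for highest weight} $\psi$-integrable representations, and it runs in one direction: from a representation already defined over the large ring to an $R'$-form. For an \emph{arbitrary} object $V$ of $\Oint{\glie,\psi}$ there is no mechanism in the paper (nor in your sketch) that lifts $V$ to a $\psi_h$-integrable representation over $\KK'\cch$ to which theorem I.\ref{thm_Uhrepresentations} could be applied; producing such a lift is essentially equivalent to the decomposition you are trying to prove. Nor can ``character rigidity'' from parts 1--2 substitute for a splitting: knowing the indecomposables and their characters does not show that a given $V$ splits, since a splitting requires constructing complements, not matching characters. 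What the paper actually invokes at this point are two ingredients absent from your proposal: (i) Weyl-group symmetry of characters of $\psi$-integrable representations, which controls the weight supports and pins down highest-weight quotients directly over $R$, and (ii) a contravariant endofunctor of $\Oint{\glie,\psi}$, built from the Cartan involution $\omegahat$ of definition \ref{defi-prop_Cartanhat} together with weighted $R$-linear duals, which is the splitting device replacing the contravariant form of the classical category $\Ointc$ (your rank-one fallback of passing to the fraction field $K(R)$, as in the proof of theorem \ref{thm_intrephat}, is also unavailable in higher rank, since over $K(R)$ the coloured algebra is not the classical one and complete reducibility there is not known a priori).
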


\proof
The proof is analog for some points to the one given in rank one (see theorem \ref{thm_intrephat}) and makes use of theorem \ref{thm_existence}. Besides, one has to prove that characters of $\psi$-integrable representations admit Weyl symmetries. One also has to define  (thanks in part to the Cartan involution $\omegahat$: see \hbox{definition \ref{defi-prop_Cartanhat}}) a contravariant endofunctor of the category $\Oint{\glie,\psi}$ whose action at the level of $R$-modules is given by (weighted) $R$-linear duals
\qed

\subsection{Non $\glieF$-triviality}
\begin{thm}
Let $\psi,\psi'$ be two admissible \hbox{$I$-colourings} of the crystal $\BB$ with values \hbox{in $R$}. The following two assertions are equivalent.
\begin{enumerate}
\item[(i)] The colourings $\psi_i$ and $\psi'_i$ are congruent for every $i \in I$.
\item[(ii)] The $\glieF$-pointed algebras $\Uck{\glie,\psi}$ and $\Uck{\glie,\psi'}$ are isomorphic.
\end{enumerate}
\end{thm}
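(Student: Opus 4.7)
\proof[Proof plan]
The plan is to extend the rank-one Theorem \ref{thm_congruentUonehat} to arbitrary rank by reducing both directions to the rank-one behaviour through the simple $\slt$-directions.

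\emph{Direction (i) $\Rightarrow$ (ii).} Assume $\psi_i \equiv \psi'_i$ for every $i \in I$. Since both $I$-colourings are admissible, $\psi_i$ and $\psi'_i$ are invertible in $\Colo{R}$ and hence divide each other. Point 2 of Proposition \ref{prop_congruentAonehat} then yields, for every $i \in I$ and every $n \in \NN$, an isomorphism of $\UFslt$-modules $\Lrep{n,\psi_i} \simeq \Lrep{n,\psi'_i}$. Consulting Definition \ref{defi_psiinthat}, a representation of $\UF$ is $\psi$-integrable if and only if it is $\psi'$-integrable. The Tannakian topologies on $\UF$ generated by these two classes therefore coincide, so the two completions $\Uhat{\glie,\psi}$ and $\Uhat{\glie,\psi'}$ are naturally isomorphic as $\glieF$-pointed topological $R$-algebras.

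\emph{Direction (ii) $\Rightarrow$ (i).} Let $f : \Uhat{\glie,\psi} \to \Uhat{\glie,\psi'}$ be a $\glieF$-pointed $R$-algebra isomorphism. Because $f$ is continuous and intertwines the two $\glieF$-points, the pull-back along $f$ induces a bijection between discrete representations of $\Uhat{\glie,\psi'}$ and discrete representations of $\Uhat{\glie,\psi}$ (Proposition \ref{prop_Tantopo2} and Proposition \ref{prop_discreterep}), and this bijection preserves the underlying $\UF$-module structure. In particular the classes of $\psi$- and $\psi'$-integrable $\UF$-modules coincide, and by the uniqueness clauses of Theorem \ref{thm_Uhatrepresentations} the representation $\Lrep{\lambda,\psi}$ is isomorphic, as a $\UF$-module, to $\Lrep{\lambda,\psi'}$ for every $\lambda \in \Xdom$.

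\emph{Reduction to rank one.} Fix $i \in I$ and $n \in \NN$. Choose $\lambda \in \Xdom$ such that the classical irreducible $\Lc{\lambda}$, restricted to $\slt_i$ via $\Deltac$, admits $\Lc{n}$ as a direct summand (this holds, for example, whenever there is a dominant $\lambda$ with $\langle \check\alpha_i, \lambda \rangle = n$; if needed, enlarge $\lambda$ in directions orthogonal to $\check\alpha_i$ to produce a highest-$\slt_i$-weight vector of weight $n$). Since $\Lrep{\lambda,\psi}$ and $\Lrep{\lambda,\psi'}$ have the same character as $\Lc{\lambda}$, their restrictions via $\DeltaF$ decompose as direct sums of modules $\Lrep{m,\psi_i}$ (resp.\ $\Lrep{m,\psi'_i}$) with the same multiplicities as the classical decomposition; thus $\Lrep{n,\psi_i}$ and $\Lrep{n,\psi'_i}$ both appear as direct summands of two isomorphic $\UFslt$-modules. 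Admissibility of $\psi_i, \psi'_i$ and Lemma \ref{lem_indecompAonehat} guarantee that $\Lrep[\!\! \: K(R)]{m,\psi_i}$ and $\Lrep[\!\! \: K(R)]{m,\psi'_i}$ are irreducible, hence that $\Lrep{m,\psi_i}$ and $\Lrep{m,\psi'_i}$ are indecomposable over $R$; their ranks $m+1$ moreover distinguish them. A Krull--Schmidt type uniqueness for decompositions into indecomposables of pairwise distinct ranks therefore forces $\Lrep{n,\psi_i} \simeq \Lrep{n,\psi'_i}$ as $\UFslt$-modules. Proposition \ref{prop_congruentAonehat} then yields $\psi_i \equiv \psi'_i$, and letting $i$ vary concludes the proof.

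The main obstacle is the rank-one reduction step: one must guarantee, for each pair $(i,n)$, the existence of a $\lambda \in \Xdom$ realising $\Lrep{n,\psi_i}$ as a $\slt_i$-direct summand of $\Lrep{\lambda,\psi}$, and one must justify the Krull--Schmidt uniqueness argument over a general integral domain $R$ (where the decompositions are possibly infinite). The first point is a combinatorial statement about the root datum, the second follows from local finiteness of weight spaces together with the distinct ranks of the indecomposable summands. Everything else is a formal consequence of the definitions and of the results already established in the paper.
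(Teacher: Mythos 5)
Your overall route is the paper's own: the paper obtains this theorem in one line as a consequence of Theorem \ref{thm_congruentUonehat} (the rank-one case, itself derived from Proposition \ref{prop_congruentAonehat} and Theorem \ref{thm_intrephat}) together with Theorem \ref{thm_existence}, and your two directions — congruence forces the classes of integrable representations, hence the Tannakian completions, to coincide; conversely an isomorphism of pointed algebras is pushed through the simple $\slt$-directions and matched against the rank-one classification — are exactly that reduction, with the uniqueness input drawn from Theorem \ref{thm_Uhatrepresentations} rather than from Theorem \ref{thm_existence} directly. Two caveats on hypotheses: both of those results assume that $R$ is a principal ideal domain (and the section's standing GQE-conjecture assumption), so your proof, like the paper's, lives under those hypotheses even though you never say so; and your claim that the classes of $\psi$- and $\psi'$-integrable modules coincide does not follow from Propositions \ref{prop_Tantopo2} and \ref{prop_discreterep} alone, since a discrete representation of $\Uhat{\glie,\psi}$ need not be $\psi$-integrable — what you actually need is that the pull-back along $f$ of each $\Lrep{\lambda,\psi'}$ is integrable and free over $R$, hence $\psi$-integrable by Proposition \ref{prop_Cinthat}, which is precisely where the PID hypothesis enters.

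The one step whose justification fails as written is the Krull--Schmidt argument. You appeal to \emph{local finiteness of weight spaces}, but for fixed $i$ the relevant spaces are the weight spaces for the $i$-th simple $\slt$-direction, namely $\bigoplus_{\langle \check \alpha_i, \lambda' \rangle = n} {\Lrep{\lambda,\psi}}_{\lambda'}$, and these are of infinite rank in general: infinitely many weights $\lambda'$ of $\Lrep{\lambda,\psi}$ can pair to the same integer against $\check \alpha_i$, so the multiplicities of the summands $\Lrep{m,\psi_i}$ may be infinite and a rank-based uniqueness of decomposition does not apply. The step can be repaired without any Krull--Schmidt statement: since $\psi'_i$ is admissible, a vector of weight $n$ annihilated by $X_i^+$ in a direct sum of modules $\Lrep{m,\psi'_i}$ has its (finitely many) nonzero components only in the highest lines of the $\Lrep{n,\psi'_i}$-summands, and because $R$ is an integral domain and the values $(\psi'_i)^\pm(n,k)$ are units, the cyclic submodule it generates is free of rank $n+1$ and isomorphic to $\Lrep{n,\psi'_i}$; pushing a highest vector of one $\Lrep{n,\psi_i}$-summand through your isomorphism then yields $\Lrep{n,\psi_i} \simeq \Lrep{n,\psi'_i}$ directly, which is exactly what Theorem \ref{thm_intrephat} packages in rank one and what the paper's citation of Theorem \ref{thm_congruentUonehat} supplies. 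Finally, your dominant-weight existence step tacitly assumes that $\langle \check \alpha_i, \cdot \rangle : X \to \ZZ$ attains every $n \in \NN$ on the dominant cone; for a general root datum the image of this map may be $2 \ZZ$, so this is a genuine implicit assumption — though the paper's own one-line proof rests on the same one, so on this point you are no worse off than the source.
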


\proof
Consequence of theorems \ref{thm_congruentUonehat} and \ref{thm_existence}.
\qed

\subsection{An application to GQE algebras}
\begin{thm} \label{thm_GQEalg}
Let $\psi$ be an $I$-colouring of $\BB$ with values in $\Kh$. The following two assertions are equivalent.
\begin{enumerate}
\item[(i)] The $I$-colouring $\psi$ is $h$-admissible.
\item[(ii)] The $\glieF$-pointed $\Kh$-algebra $\Uh$ is a formal deformation of $\Uc$.
\end{enumerate}
\end{thm}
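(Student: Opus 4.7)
The plan closely parallels the proof of theorem I.\ref{thm_Uhdned} from Part I, whose statement essentially coincides with the present one; the two differences are that here we work with the bare algebra $\Uh$ rather than the full diagram $\UUh$, and we draw on the representation-theoretic apparatus (proposition I.\ref{prop_UBperfect}, the GQE conjecture) rather than on deformation-theoretic inputs.

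For the implication (ii) $\Rightarrow$ (i): I would exploit the simple $\slt$-directions $\Deltah: \Uh[\slt,\psi_i] \to \Uh$ from proposition I.\ref{prop_Deltahat} (equivalently, the diagram $\UUh$). If $\Uh$ is a formal deformation of $\Uc$ as a $\glieF$-pointed $\Kh$-algebra, then in particular the defining relations of $\Uc$ must hold in the specialisation $\hzero{\Uh}$; restricting to each $i \in I$ and following the rank-one argument of claim I.\ref{claim_Uhonednd_deform}, this already forces the deformation axiom on each $\psi_i$. To upgrade to full $h$-admissibility, I would use the rank-one machinery directly: the image of $\Deltah$ is a $\sltF$-pointed subalgebra whose structure is controlled by $\Uh[\slt,\psi_i]$, and theorem I.\ref{thm_Uhonednd} applied to each $\psi_i$ yields $h$-admissibility of the $I$-colouring $\psi$.

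For the implication (i) $\Rightarrow$ (ii): The $\Kh$-module $\Uh$ is topologically free by proposition I.\ref{prop_Uhhfree}, so it suffices to identify $\hzero{\Uh}$ with $\Uc$ as $\glieF$-pointed $\KK$-algebras. Take $x_h \in \Ikerh$, and let $x_0 \in \Uc$ be the image of $\hzero\pi^{\FUh}(x_h) \in \FUc$ under $\pic$. Invoking the GQE conjecture I.\ref{conj_GQE}, every representation $V_0$ of $\glie$ in $\Ointc$ admits a $\psi$-integrable formal deformation $V_h$ along $\FUh$; since $x_h$ annihilates $V_h$ by the very definition of $\Ikerh$, specialising at $h=0$ shows that $x_0$ annihilates $V_0$. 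As this holds for every $V_0 \in \Ointc$, proposition I.\ref{prop_UBperfect} gives $x_0 = 0$, producing a well-defined surjection $\hzero{\Uh} \twoheadrightarrow \Uc$ of $\glieF$-pointed $\KK$-algebras. Conversely, a surjection $\Uc \twoheadrightarrow \hzero{\Uh}$ exists once one verifies the defining relations of $\Uc$ in the specialisation: the non-deformable relations hold by construction, the relations $[X^+_i, X^-_i] = \check \alpha_i$ hold via the rank-one theorem I.\ref{thm_Uhonednd} applied through each $\Deltah$, and the Serre relations hold by integrability (definition \ref{defi_psiinthat}) on sufficiently many $\psi$-integrable representations combined with remark I.\ref{rem_Uhperfect}. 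Surjectivity of $\pic$ and $\hzero{\pih}$ forces the two maps to be mutually inverse.

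The main obstacle will be the (ii) $\Rightarrow$ (i) direction, specifically the transfer of deformation information from $\Uh$ down to each rank-one component $\Uh[\slt,\psi_i]$: the morphisms $\Deltah$ need not be injective, and $\Uh[\slt,\psi_i]$ is not literally a subalgebra of $\Uh$. I expect the cleanest workaround to be representation-theoretic — use the GQE conjecture (now supplying $\psi$-integrable deformations of integrable $\glie$-modules) to evaluate relations that witness failure of $h$-admissibility on a concrete family of representations, thereby obstructing $\Uh$ from being a formal deformation of $\Uc$. The Serre-relation component in the converse direction is the other technical point, but it is effectively controlled by the GQE Serre relations of proposition-definition I.\ref{defi-prop_GQESerre}.
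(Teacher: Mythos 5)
Your direction (i)~$\Rightarrow$~(ii) is correct and is essentially the paper's own: for theorem~\ref{thm_GQEalg} the paper simply cites Part~I, where the proof of theorem~I.\ref{thm_Uhdned} establishes exactly this implication by the argument you give — an element $x_h \in \Ikerh$ annihilates the $\psi$-integrable formal deformations supplied by the GQE conjecture~I.\ref{conj_GQE}, so its specialisation annihilates every representation in $\Ointc$ and vanishes by proposition~I.\ref{prop_UBperfect}, yielding the morphism $\hzero{\Uh} \to \Uc$; the reverse morphism comes from the fact that the deformed relations hold in $\Uh$ and specialise to the classical ones, which is the content of theorem~I.\ref{thm_presentationUhone} transported through the morphisms $\Deltah$ together with the GQE Serre relations of definition-proposition~I.\ref{defi-prop_GQESerre}, exactly as you sketch.

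The genuine gap is in (ii)~$\Rightarrow$~(i), and it sits precisely where you flagged it. Your proposed workaround — ``use the GQE conjecture (now supplying $\psi$-integrable deformations of integrable $\glie$-modules)'' — is circular as stated: the GQE conjecture is formulated only for \emph{$h$-admissible} $I$-colourings, and $h$-admissibility of $\psi$ is the very conclusion (i). The same issue already undermines your first step: to force the deformation axiom by evaluating the relation $\big[ X_i^+, X_i^- \big] = \check \alpha_i$ in $\hzero{\Uh}$ on the representations $\Lh{n,\psi_i}$, you must know that each $\Lh{n,\psi_i}$ actually occurs as an $i$-component of some $\psi$-integrable representation of $\FUh$, a representation-existence statement that is not available a priori. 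The paper's mechanism is theorem~\ref{thm_existence} (resolution of the deformation problem), which escapes the circle because it applies to merely \emph{admissible} colourings over a principal ideal domain: its proof applies the GQE conjecture not to $\psi$ but to an $h$-admissible expansion $\psi_h$ of the colouring (proposition~\ref{prop_hadmissible}), and then descends by integral forms and specialisation (proposition~\ref{prop_form}, lemma~\ref{lem_admissiblepair}, remark~\ref{rem_admissiblepair}). The resulting supply of $\psi$-integrable representations, realising every character of $\Ointc$, is what lets one prove that each rank-one algebra $\Uh[\slt,\psi_i]$ is \emph{itself} a formal deformation of $\Uc[\slt]$ — the transfer that, as you correctly observe, cannot be achieved through $\Deltah$ alone, since $\Uh[\slt,\psi_i]$ is not a subalgebra of $\Uh$ and $\Deltah$ need not be injective — after which theorem~I.\ref{thm_Uhonednd} concludes.
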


\begin{proof}
We already have proved in part I that assertion (i) implies assertion (ii). Concerning the converse implication, one can prove, thanks to theorem \ref{thm_existence}, that assertion (ii) implies that the $\sltF$-pointed $\Kh$-algebra $\Uh[\slt,\psi_i]$ is a formal deformation of $\Uc[\slt]$ for every $i \in I$, and therefore that assertion (ii) implies assertion (i) in view of theorem I.\ref{thm_Uhonednd}.
\end{proof}

\section{Isogenic and Langlands Interpolations}
\subsection{Isogeny} \label{subs_isogeny}
Let $\Xdat'$ be another root datum whose associated Dynkin diagram is $I$. We denote by $\glie', X',Y', \alpha'_i, \check \alpha'_i$ ($i \in I$) the associated Kac-Moody algebra, root lattice, coroot lattice, simple roots and simple coroots, respectively.

\begin{defi}
An isogeny of root data from $\Xdat'$ to $\Xdat$ is a $\ZZ$-linear map $\xi$ from the root lattice $X$ \hbox{to the root lattice $X'$} which satisfies the following conditions.
\begin{enumerate}
\item[a)] The map $\xi$ and its transpose $\check \xi$ from $Y$ to $Y'$ are both injective.
\item[b)] There exists positive integers $\xi_i$ ($i \in I$) such that $\xi(\alpha'_i) = {\xi_i \: \! \alpha_i}$ and such that $\check \xi(\check \alpha_i) = {\xi_i \: \! \check \alpha'_i}$, for all $i \in I$.
\end{enumerate}
\end{defi}

Let $V$ be a weight representation of $\UF$. We denote by $\xiV$ the $R$-submodule of $V$ defined by the following:
$$ \xiV \ := \, \bigoplus_{\lambda \in \xi(X')} V_\lambda \, . $$
We denote by ${(X'_i)}^\pm$ ($i \in I$) the Chevalley generators of $\glie'$ and we denote again by $\langle \, \cdot \, , \cdot \, \rangle$ the perfect pairing between $Y'$ and $X'$.

\begin{lem} \label{lem_isogeny}
Let $V$ be a weight representation of $\UF$.
\begin{enumerate}
\item[1)] The subset $\xiV$ is stable under the action of ${(X^\pm_i)}^{\xi_i}$ for every $i \in I$.
\item[2)] There exists a unique weight action of $\UFp$ on $\xiV$ which satisfies the following for every $i \in I$ and for every $\check \mu' \in Y'$:
$$ {(X'_i)}^\pm . \! \: v \ = \ {(X^\pm_i)}^{\xi_i} . \! \: v \, , \quad \ \check \mu' . \! \: v \ = \ \langle \check \mu', \lambda' \rangle \! \; v \, , \quad \ \forall \, \lambda' \in X', \ \forall \, v \in {{(V)}_{\! \! \; \xi(\lambda')}} \, . $$
\end{enumerate}
\end{lem}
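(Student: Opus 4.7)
The plan is to treat the two points essentially as a direct verification: part 1) is a weight-counting argument, and part 2) reduces to checking that the three families of non-deformable relations of $\glie'$ are satisfied by the candidate operators.

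First I would handle part 1). Since $V$ is a weight representation of $\UF$, the non-deformable relation $\big[\check\mu,X^\pm_i\big]=\pm\langle\check\mu,\alpha_i\rangle X^\pm_i$ forces $X^\pm_i$ to send the weight space $V_\lambda$ into $V_{\lambda\pm\alpha_i}$, hence $(X^\pm_i)^{\xi_i}$ sends $V_\lambda$ into $V_{\lambda\pm\xi_i\alpha_i}$. Condition b) of the isogeny gives $\xi_i\alpha_i=\xi(\alpha'_i)$, so if $\lambda=\xi(\lambda')$ with $\lambda'\in X'$, then $\lambda\pm\xi_i\alpha_i=\xi(\lambda'\pm\alpha'_i)$ still lies in $\xi(X')$. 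Stability of $\xiV$ under each $(X^\pm_i)^{\xi_i}$ follows.

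For part 2), uniqueness is immediate because $\UFp$ is generated by the Chevalley generators $(X'_i)^\pm$ and the coroot lattice $Y'$, and the formulas of the statement prescribe the action of these generators on every weight vector of $\xiV$. For well-definedness of the prescribed $Y'$-action on a weight vector $v\in V_{\xi(\lambda')}$, I would invoke the injectivity of $\xi$ (condition a)): it ensures that the preimage $\lambda'$ of the $V$-weight $\xi(\lambda')$ is uniquely determined, so the scalar $\langle\check\mu',\lambda'\rangle$ is unambiguous. What remains is to check that the non-deformable relations of $\glie'$ hold on $\xiV$. The commutativity $[\check\mu',\check\mu'']=0$ is obvious since both act diagonally. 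The mixed relation $[(X'_i)^+,(X'_j)^-]=0$ for $i\neq j$ follows from the corresponding non-deformable relation $[X^+_i,X^-_j]=0$ in $\UF$: commuting operators have commuting powers, and in particular $(X^+_i)^{\xi_i}$ and $(X^-_j)^{\xi_j}$ commute. Finally, for the weight commutation $\big[\check\mu',(X'_i)^\pm\big]=\pm\langle\check\mu',\alpha'_i\rangle(X'_i)^\pm$, I evaluate on $v\in V_{\xi(\lambda')}$: by part 1) the vector $(X'_i)^\pm.v=(X^\pm_i)^{\xi_i}.v$ lies in $V_{\xi(\lambda'\pm\alpha'_i)}$, so the $Y'$-action assigns it the weight $\lambda'\pm\alpha'_i$, which gives exactly $\langle\check\mu',\lambda'\pm\alpha'_i\rangle(X'_i)^\pm.v=\langle\check\mu',\lambda'\rangle(X'_i)^\pm.v\pm\langle\check\mu',\alpha'_i\rangle(X'_i)^\pm.v$, as required.

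No step of this argument presents a real obstacle; the whole proof is a sequence of direct verifications. The only conceptual content is the matching between the two conditions a), b) defining an isogeny of root data and what is needed to make the construction coherent: injectivity of $\xi$ is used solely to make the $Y'$-action well-defined on $\xiV$, while the identity $\xi(\alpha'_i)=\xi_i\alpha_i$ is what both guarantees the stability in part 1) and produces the correct weight shifts so that the $\glie'$-weight relations hold.
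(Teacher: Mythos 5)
Your proof is correct, and it matches the paper's treatment: the paper states Lemma \ref{lem_isogeny} with no proof at all, regarding it as a routine verification, and your argument is precisely that omitted verification — weight-shift bookkeeping via $\xi(\alpha'_i)=\xi_i\,\alpha_i$ for part 1), and a generator-by-generator check of the non-deformable relations of $\glie'$ for part 2). Your closing observation correctly isolates the roles of the isogeny axioms (injectivity of $\xi$ only for well-definedness of the $Y'$-action, the identity $\xi(\alpha'_i)=\xi_i\,\alpha_i$ for stability and the weight relations), while the remaining axioms, such as $\check\xi(\check\alpha_i)=\xi_i\,\check\alpha'_i$, are indeed not needed at this stage.
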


We regard from now $\xiV$ as a representation of $\UFp$, thanks to lemma \ref{lem_isogeny}.

\begin{rem} \label{rem_isogeny}
If $V$ admits a character, then $\xiV$ admits a character, which is equal to $\chi(V) \circ \xi$.
\end{rem}

Let $\psi \in \Colo[I]{R}$, let $i \in I$ and let $d \in \NN_{< \xi_i}$. We denote by $\xipsi$ the colouring \hbox{of $\BB$} with values in $R$, defined by the following:
$$ {\big[ \xipsi \big]}^{\! \! \; \pm} (n,k) \ := \ \prod_{k'=1}^{\xi_i} \! \; \psi_i^\pm \big( \xi_i \! \: n + 2 \! \: d \! \; , \, \xi_i (k-1) + k' + d \big) \, . $$

\begin{lem} \label{lem_isogenyint}
Let $\psi$ be an $I$-colouring of the crystal $\BB$ with values in $R$ and \hbox{let $V$} be a $\psi$-integrable representation of $\UF$. For every $i \in I$, the pull-back of $\xiV$ via $\DeltaFp$ is isomorphic to a direct sum of representations $\Lrep{n,\xipsi}$ \hbox{with $n \in \NN, d \in \NN_{< \xi_i}$}.
\end{lem}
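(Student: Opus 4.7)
The strategy is to reduce to a single irreducible summand of the $\slt$-direction $\DeltaF$, and then to further decompose that summand according to residues modulo $\xi_i$. Fix $i \in I$. By condition b) of definition \ref{defi_psiinthat}, the pullback of $V$ along $\DeltaF$ is a direct sum of copies of $\Lrep{N, \psi_i}$ for various $N \in \NN$. Because $\DeltaF$ sends $H$ to $\check \alpha_i$ and $V$ is a weight representation of $\UF$, such a decomposition may be chosen compatibly with the $X$-weight grading: each summand $W$ has a single highest weight $\lambda \in X$ (so $N = \langle \check \alpha_i, \lambda \rangle$), and its distinguished basis satisfies $b_{N,p} \in V_{\lambda - p\alpha_i}$ for every $p \in \{0, \dots, N\}$. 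Since $\xiV$ is itself a direct sum of weight spaces of $V$, it suffices to decompose each intersection $W \cap \xiV$.

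Fix such a $W$ with highest weight $\lambda$. The vector $b_{N,p}$ lies in $\xiV$ if and only if $\lambda - p\alpha_i \in \xi(X')$, and since $\xi(\alpha'_i) = \xi_i \alpha_i$, this condition depends only on the residue of $p$ modulo $\xi_i$. Let $D \subseteq \{0, 1, \dots, \xi_i - 1\}$ be the subset of residues $d$ for which $\lambda - d\alpha_i \in \xi(X')$; then $W \cap \xiV$ is the $R$-span of the $b_{N, d + k \xi_i}$ with $d \in D$, $k \geq 0$, $d + k\xi_i \leq N$. For each $d \in D$, writing $\lambda - d\alpha_i = \xi(\lambda')$ and applying $\check \xi(\check \alpha_i) = \xi_i \check \alpha'_i$ yields the key numerical identity $\xi_i \, \langle \check \alpha'_i, \lambda' \rangle = N - 2d$, so $\xi_i$ divides $N - 2d$ and $n := (N - 2d)/\xi_i$ is a non-negative integer. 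Letting $W_d$ denote the $R$-span of $b_{N, d}, b_{N, d + \xi_i}, \dots, b_{N, d + n\xi_i}$, one has $W \cap \xiV = \bigoplus_{d \in D} W_d$.

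It then remains to identify each $W_d$, as a representation of $\UFslt$ via $\DeltaFp$, with $\Lrep{n, \xipsi}$. Because $0 \leq d < \xi_i$ and $N - (d + n\xi_i) = d < \xi_i$, the operator $(X_i^+)^{\xi_i}$ annihilates the top vector $b_{N, d}$ of $W_d$ and $(X_i^-)^{\xi_i}$ annihilates the bottom vector $b_{N, d + n\xi_i}$, so $W_d$ is stable under these two operators (and trivially under $\check \alpha'_i$, which preserves each weight space). A direct iteration of the formulas defining $\Lrep{N, \psi_i}$ gives
\begin{IEEEeqnarray*}{rCl}
(X_i^-)^{\xi_i} . \! \: b_{N, d + k\xi_i} & \ = \ & \Big( \prod_{k'=1}^{\xi_i} \psi_i^- \big( N , \; d + k \xi_i + k' \big) \Big) \, b_{N, d + (k+1)\xi_i} \, , \\
(X_i^+)^{\xi_i} . \! \: b_{N, d + k\xi_i} & \ = \ & \Big( \prod_{k'=1}^{\xi_i} \psi_i^+ \big( N , \; N - d - k \xi_i + k' \big) \Big) \, b_{N, d + (k-1)\xi_i} \, ,
\end{IEEEeqnarray*}
and the substitution $N = \xi_i n + 2d$ converts these products into the values $\xipsi^-(n, k+1)$ and $\xipsi^+(n, n - k + 1)$ prescribed by the defining formula of $\xipsi$. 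Combined with the weight computation $\check \alpha'_i . \! \: b_{N, d + k\xi_i} = (n - 2k) \, b_{N, d + k\xi_i}$, which follows again from $\check \xi(\check \alpha_i) = \xi_i \check \alpha'_i$, this identifies $W_d$ with $\Lrep{n, \xipsi}$ and completes the proof.

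The main obstacle is not structural but combinatorial: one must match each residue $d \in D$ with its integer $n$ via the identity $N = \xi_i n + 2d$ (which comes automatically from the isogeny relation $\check \xi(\check \alpha_i) = \xi_i \check \alpha'_i$) and then verify that the $\xi_i$-fold products of $\psi_i^\pm$-values produced by iterating the actions of $X_i^-$ and $X_i^+$ match, after the index shifts $k \mapsto k+1$ and $k \mapsto n-k+1$, the specific product formula defining $\xipsi^\pm$.
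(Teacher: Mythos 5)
Your proof is correct and is essentially the paper's own argument made explicit: the paper's proof consists of a single reference to the figure in the introduction, which depicts exactly your decomposition — the sub-crystal of $\BB$ formed by the vertices in a fixed residue class $d$ modulo $\xi_i$, with arrows given by $\xi_i$-fold compositions, identified with a copy of $\BB$ via $N = \xi_i n + 2d$. Your weight-compatible splitting into strings $W_d$ and the verification that the $\xi_i$-fold products of $\psi_i^\pm$-values reproduce the defining formula of $\xipsi$ supply precisely the bookkeeping that the figure leaves implicit.
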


\proof
See figure \eqref{Louloufig} in the introduction of this thesis.
\qed

\subsection{Isogenic specialisation} \label{subs_isogenyinter}
We designate in this subsection by $R$ the $\KK$-algebra formed by the rational functions without pole at $0$ and $1$:
$$ R \ := \ \Big \{ f/g \, ; \ f,g \in \KK[u], \, \, g(0) \neq 0, \, \, g(1) \neq 0 \Big \} \, . $$

Let $\psi$ and $\psi'$ be admissible $I$-colouring and $I'$-colouring of $\BB$ with values in $\KK$. We denote by $\psi_u$ the $I$-colouring of $\BB$ with values in $R$, defined by the following:
$$ {(\psi_u)}_i^\pm (n,k) \ := \ \left\{ {\renewcommand{\arraystretch}{1.4} \begin{array}{ll}
u \, \psi^\pm_i (n,k) \, + \, (1-u) \! \; {(\psi')}_i^\pm (n,k/ \xi_i) & \quad \ \text{if $k = 0$ mod $\xi$} \, , \\
u \, \psi^\pm_i (n,k) \, + \, (1-u) & \quad \ \text{else.}
\end{array}} \right. $$

\begin{lem} \label{lem_isogenyspecialisation}
Let $V$ be a $\psi_u$-integrable representation of $\UF$.
\begin{enumerate}
\item[1)] The specialisation of $V$ at ${u \! = \! 0}$ is a \hbox{$\psi$-integrable} representation of $\glieF$.
\item[2)] The specialisation of $\xiV$ at ${u \! = \! 1}$ is a $\psi'$-integrable representation of $\gliepF$.
\item[3)] The $I$-colouring $\psi_u$ is admissible over $R$.
\end{enumerate}
\end{lem}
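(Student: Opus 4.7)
My plan is to reduce each of the three points to the compatibility of $\psi$-integrability with base change (remark~\ref{rem_changeofrings}), combined with a direct substitution into the piecewise defining formula of $\psi_u$; for point~2, one additionally uses lemma~\ref{lem_isogenyint} to decompose the isogenic pullback, and proposition~\ref{prop_congruentAonehat} to translate congruence $\equiv$ of colourings into isomorphism of $\UFslt$-modules.

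For point~1, I would apply remark~\ref{rem_changeofrings} to the $\KK$-algebra morphism $R \to \KK$ specialising $u$ at $0$: the $\psi_u$-integrable representation $V$ thereby specialises to a $(\psi_u)_{\vert u = 0}$-integrable representation of $\UF$. A case-by-case inspection of the two branches of the defining formula for $\psi_u$ then shows that $(\psi_u)_{\vert u = 0}$ is congruent to $\psi$; by proposition~\ref{prop_congruentAonehat} applied via each $\DeltaF$, this transfers to $\psi$-integrability of $V_{\vert u = 0}$.

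For point~2, I would first apply lemma~\ref{lem_isogenyint} to write the pullback of $\xiV$ via $\DeltaFp$ as a direct sum of representations $\Lrep{n, \xipsi}$ with $n \in \NN$ and $d \in \NN_{<\xi_i}$. Since base change commutes with direct sums and with the construction $\Lrep{n, \cdot}$, it suffices to verify that, after specialisation at $u = 1$, each colouring $\xipsi$ is congruent to $(\psi')_i$; proposition~\ref{prop_congruentAonehat} then identifies each summand with $\Lrep[\! \! \; \KK]{n, \psi'_i}$ and gives $\psi'$-integrability of $\xiV_{\vert u = 1}$. To prove the congruence, one expands
\[
\prod_{k'=1}^{\xi_i}(\psi_u)_i^{\pm}\!\big(\xi_i n + 2d,\,\xi_i(k-1) + k' + d\big)\bigg|_{u = 1}
\]
and invokes the piecewise definition of $\psi_u$: exactly one factor, the one with $k' + d \equiv 0$ modulo $\xi_i$, carries a $(\psi')_i$-contribution, while the remaining $\xi_i - 1$ factors collapse to $1$; after absorbing the shift $2d$ in the first argument into the congruence relation, the resulting product is congruent to $(\psi')_i(n, k)$. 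The main obstacle is precisely this combinatorial bookkeeping: tracking the two shifts (by $2d$ in the first argument and by $d$ in the second) across the $\xi_i$-fold product uniformly in $d \in \NN_{<\xi_i}$, and checking that the surviving $(n,k)$-dependence factors through $\equiv$. Once this identity is established, the rest of point~2 is formal.

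For point~3, the $\KK$-algebra $R$ is by construction the localisation of $\KK[u]$ at the multiplicative set generated by $u$ and $u - 1$, so admissibility of $\psi_u$ over $R$ amounts to the non-vanishing of each $(\psi_u)_i^{\pm}(n, k)$ at both $u = 0$ and $u = 1$. At one endpoint, $(\psi_u)_i^{\pm}(n, k)$ reduces to $\psi_i^{\pm}(n, k)$, nonzero by admissibility of $\psi$ over $\KK$; at the other endpoint, it reduces either to $(\psi')_i^{\pm}(n, k/\xi_i)$ (nonzero by admissibility of $\psi'$ over $\KK$) on the first branch, or to $1$ on the second branch. In either case non-vanishing holds, which concludes point~3.
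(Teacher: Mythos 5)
You follow the paper's own route exactly: the paper's entire proof of this lemma is the citation \emph{``Consequence of remark \ref{rem_changeofrings} and of lemma \ref{lem_isogenyint}''}, and your proposal is that citation with the details filled in, proposition \ref{prop_congruentAonehat} being the natural tool for converting the endpoint computation into isomorphisms of the modules $\Lrep{n,\psi_i}$. Points 1 and 3 are sound, up to two inessential slips. First, the printed formula for $\psi_u$ specialises to $\psi$ at $u=1$ and to the primed pattern at $u=0$, i.e.\ the endpoints in the source are swapped relative to the lemma (and relative to theorem \ref{thm_coloutedinter}); you silently follow the lemma's convention, which is evidently the intended one, and under it the specialisation at $u=0$ is \emph{equal} to $\psi$, so your congruence step in point 1 is a harmless redundancy. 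Second, $R$ is the localisation of $\KK[u]$ at the polynomials nonvanishing at $0$ and $1$, not at the multiplicative set generated by $u$ and $u-1$ (the latter gives the functions with poles \emph{only} at $0$ and $1$); but the unit criterion you actually use --- nonvanishing of each $(\psi_u)_i^\pm(n,k)$ at both $u=0$ and $u=1$ --- is the correct one for the paper's $R$, so point 3 stands.

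The genuine gap is in point 2, at precisely the step you single out as the main obstacle. Your bookkeeping of the second argument is right: the unique factor with $k'+d\equiv 0 \bmod \xi_i$ has second argument $\xi_i(k-1)+k'+d=\xi_i k$, so the $d$-shift there cancels \emph{exactly}, and the specialised product equals $(\psi')_i^\pm(\xi_i n + 2d,\,k)$ --- no congruence is needed for the second argument at all. But the shift $2d$ in the \emph{first} argument survives, and it cannot be ``absorbed into the congruence relation'': congruence compares the values $[\chi](n,k)=\chi^-(n,k)\,\chi^+(n,n-k+1)$ of two colourings at the \emph{same} point $(n,k)$, so your claimed congruence amounts to the identity $(\psi')_i^-(\xi_i n+2d,k)\,(\psi')_i^+(\xi_i n+2d,n-k+1) = (\psi')_i^-(n,k)\,(\psi')_i^+(n,n-k+1)$, which fails for an admissible $\psi'$ whose values genuinely depend on the component index: take $\xi_i=2$, $d=0$ and $(\psi')_i^\pm(N,k)=k$ for $N$ even, $2k$ for $N$ odd; the two sides then differ by a factor $4$ for odd $n$, and since the eigenvalues of $X^+X^-$ on $\Lrep{n,\cdot}$ are isomorphism invariants, the corresponding modules are not isomorphic. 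What makes the statement work in every situation where the paper applies it is that the classical and quantum colourings $\psicl$, $\psiq$ depend only on the second argument, so that $(\psi')_i^\pm(\xi_i n+2d,k)=(\psi')_i^\pm(n,k)$ holds on the nose. To close your proof you should either add this first-argument-independence hypothesis on $\psi'$ (under which your computation finishes immediately, with equality in place of congruence), or record that for arbitrary admissible $\psi'$ the claimed congruence --- and with it point 2 as literally stated --- fails; the paper's one-line proof elides this same point.
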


\proof
Consequence of remark \ref{rem_changeofrings} and of lemma \ref{lem_isogenyint}.
\qed

\subsection{Coloured isogenic interpolation}
We assume in this subsection that the statement of the GQE conjecture holds. \\

We designate again in this subsection by $R$ the $\KK$-algebra formed by the rational functions without pole at $0$ and $1$:
$$ R \ := \ \Big \{ f/g \, ; \ f,g \in \KK[u], \, \, g(0) \neq 0, \, \, g(1) \neq 0 \Big \} \, . $$

\begin{thm} \label{thm_coloutedinter}
Let $\psi, \psi'$ be admissible $I$-colourings of $\BB$ with values \hbox{in $\KK$}. There exists an admissible $I$-colouring $\psi_u$ of $\BB$ with values \hbox{in $R$} such that, for every representation $V$ in $\Oint[\KK]{\glie,\psi}$, there exists a \hbox{representation $\VR$} in $\Oint{\glie,\psi_u}$ which satisfies the following two points.
\begin{enumerate}
\item[\textbullet] The specialisation of $\VR$ at ${u \! = \! 0}$ is a representation of $\Uhat[\KK]{\glie,\psi}$, which is isomorphic to $V$.
\item[\textbullet] The specialisation of $\xiVR$ at ${u \! = \! 1}$ is a representation in $\Oint[\KK]{\glie',\psi'}$, whose character is equal to $\chi(V) \circ \xi$.
\end{enumerate}
\end{thm}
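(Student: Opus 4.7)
The natural candidate is the $I$-colouring $\psi_u$ already constructed in subsection~\ref{subs_isogenyinter}. The plan is to take $\psi_u$ as defined there and then manufacture the representation $\VR$ by directly invoking the GQE-conjectural existence theorem.

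First, I would observe that $R$, being the localisation of the PID $\KK[u]$ at the multiplicative set of polynomials with no zero at $0$ or $1$, is itself a principal ideal domain. Since lemma~\ref{lem_isogenyspecialisation}(3) asserts that $\psi_u$ is admissible over $R$, theorem~\ref{thm_existence} applies (this is where the GQE conjecture is invoked): given any $V$ in $\Oint[\KK]{\glie,\psi}$, regarded merely as an object of $\Ointc$ via forgetting the $\psi$-integrability datum, there exists a $\psi_u$-integrable representation $\VR$ of $\UF$ with $\chi(\VR) = \chi(V)$. Since $V$ has weights bounded above by finitely many dominant weights and finite-dimensional weight spaces, the same character property places $\VR$ in the subcategory $\Oint{\glie,\psi_u}$.

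Next, I would verify the two specialisation properties. The specialisation $\VR_{\vert u=0}$ is, by lemma~\ref{lem_isogenyspecialisation}(1), a $\psi$-integrable representation of $\UF[\KK]$, and by remark~\ref{rem_changeofrings}(1) its character equals $\chi(V)$; thus it lies in $\Oint[\KK]{\glie,\psi}$. Because theorem~\ref{thm_Uhatrepresentations} classifies objects of this category as direct sums $\bigoplus_\lambda \Lrep[\KK]{\lambda,\psi}^{m_\lambda}$ with multiplicities read off from the character, the specialisation is isomorphic to $V$ itself. Similarly, $\xiVR$ is a representation of $\UFp$ by lemma~\ref{lem_isogeny}, and its specialisation at $u=1$ is $\psi'$-integrable by lemma~\ref{lem_isogenyspecialisation}(2); its character is $\chi(\VR)\circ \xi = \chi(V)\circ\xi$ by remark~\ref{rem_isogeny} together with remark~\ref{rem_changeofrings}(1), so it lies in $\Oint[\KK]{\glie',\psi'}$ with the required character.

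The main conceptual obstacle is ensuring that the representation produced abstractly by theorem~\ref{thm_existence} actually specialises to the given $V$ rather than merely to some representation with the same character. This is bridged by the rigidity of the categories $\Oint[\KK]{\glie,\psi}$ established in theorem~\ref{thm_Uhatrepresentations}: characters determine objects up to isomorphism. A secondary technical point is the compatibility between the $\xi$-twist operation (which selects weight spaces) and specialisation of the parameter $u$; this commutativity is immediate from the construction of $\xiV$ as a direct sum of weight spaces and the fact that weight spaces are preserved under tensoring with $R/\mathfrak{m}$ for $\mathfrak{m}\in\{(u),(u-1)\}$, so no further work is required beyond the remarks on change of rings.
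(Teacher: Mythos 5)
Your strategy coincides with the paper's: the thesis proves this theorem in one line, as a consequence of theorem~\ref{thm_Uhatrepresentations}, remark~\ref{rem_isogeny} and lemma~\ref{lem_isogenyspecialisation}, and your write-up is precisely an unwinding of those citations, with theorem~\ref{thm_existence} (where the GQE conjecture enters, behind theorem~\ref{thm_Uhatrepresentations}) as the engine. Your observation that $R$ is a principal ideal domain (a localisation of $\KK[u]$), your use of lemma~\ref{lem_isogenyspecialisation}(3) for admissibility of $\psi_u$, and your remark that the $\xi$-twist commutes with specialisation because it merely selects weight spaces, which are direct summands preserved under $\otimes_\varphi \KK$, are all correct and all needed.

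There is, however, one step that is wrong as stated, though easily repaired from your own toolkit. You claim that $V \in \Oint[\KK]{\glie,\psi}$ can be \emph{regarded merely as an object of $\Ointc$ via forgetting the $\psi$-integrability datum}. There is no such forgetful functor: an object of $\Oint[\KK]{\glie,\psi}$ is a module over $\UF[\KK]$, the enveloping algebra of $\glieF$, which satisfies only the non-deformable relations; for a general admissible $\psi$ the relation $[X_i^+, X_i^-] = \check \alpha_i$ (let alone the Serre relations) fails on $V$ --- it already fails on the rank-one representations $\Lrep[\! \! \: \KK]{n,\psi_i}$ unless $[\psi_i]$ takes the classical values. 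So $V$ is not an object of $\Ointc$, and theorem~\ref{thm_existence} cannot be applied to it directly. The repair uses the rigidity you already invoke: by theorem~\ref{thm_Uhatrepresentations} over the field $\KK$ (a PID, with $\psi$ admissible), $V \cong \bigoplus_{\lambda \in \Xdom} \Lrep[\! \! \: \KK]{\lambda,\psi}^{\oplus m_\lambda}$ and $\chi(\Lrep[\! \! \: \KK]{\lambda,\psi}) = \chi(\Lc{\lambda})$, so $\chi(V) = \chi(V_0)$ for the genuine classical representation $V_0 := \bigoplus_{\lambda} \Lc{\lambda}^{\oplus m_\lambda} \in \Ointc$; it is $V_0$ that you feed to theorem~\ref{thm_existence}. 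Since the subsequent identification of the specialisation at ${u \! = \! 0}$ with $V$ already rests on the same classification (characters determine isomorphism classes in $\Oint[\KK]{\glie,\psi}$ because the classical characters $\chi(\Lc{\lambda})$ are linearly independent), this substitution costs nothing, and with it the rest of your argument is correct and matches the paper's intended proof.
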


\proof
Consequence of theorem \ref{thm_Uhatrepresentations}, of remark \ref{rem_isogeny} and of lemma \ref{lem_isogenyspecialisation}.
\qed

\subsection{Classical and quantum isogenic interpolations}
We assume in this subsection that the statement of the GQE conjecture holds. \\

We designate again in this subsection by $R$ the $\KK$-algebra formed by the rational functions without pole at $0$ and $1$:
$$ R \ := \ \Big \{ f/g \, ; \ f,g \in \KK[u], \, \, g(0) \neq 0, \, \, g(1) \neq 0 \Big \} \, . $$

\begin{thm} \label{thm_classicalinter}
There exists an admissible $I$-colouring $\psi_u$ of $\BB$ with values \hbox{in $R$} such that, for every representation $V$ in $\Ointc$, there exists a \hbox{representation $\VR$} in $\Oint{\glie,\psi_u}$ which satisfies the following two points.
\begin{enumerate}
\item[\textbullet] The specialisation of $\VR$ at ${u \! = \! 0}$ is a representation of $\glie$, isomorphic \hbox{to $V$}.
\item[\textbullet] The specialisation of $\xiVR$ at ${u \! = \! 1}$ is a representation in $\Ointc[(\glie')]$, whose character is equal to $\chi(V) \circ \xi$.
\end{enumerate}
\end{thm}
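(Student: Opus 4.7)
The plan is to deduce this result directly from Theorem \ref{thm_coloutedinter} by choosing both input colourings to be classical, and then use Theorem \ref{thm_exhat} to translate between integrable representations of $\glie$ and $\psiclI$-integrable representations of $\glieF$.

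First, I would apply Theorem \ref{thm_coloutedinter} with $\psi = \psi' = \psiclI$, which is an admissible $I$-colouring of $\BB$ with values in $\KK$. This produces an admissible $I$-colouring $\psi_u$ of $\BB$ with values in $R$ such that, for every representation $V'$ in $\Oint[\KK]{\glie,\psiclI}$, there is a representation $\VR$ in $\Oint{\glie,\psi_u}$ whose specialisation at ${u \! = \! 0}$ is isomorphic to $V'$ (as a representation of $\Uhat[\KK]{\glie,\psiclI}$) and whose isogenic twist $\xiVR$ specialises at ${u \! = \! 1}$ to a representation in $\Oint[\KK]{\glie',\psiclI}$ with character $\chi(V') \circ \xi$.

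The next step is to identify the classical categories $\Ointc$ and $\Ointc[(\glie')]$ with $\Oint[\KK]{\glie,\psiclI}$ and $\Oint[\KK]{\glie',\psiclI}$ respectively. This is precisely the content of point 1 of Theorem \ref{thm_exhat}: the coloured Kac-Moody algebra $\Uhat[\KK]{\glie,\psiclI}$ contains $\Uc$ as a dense $\glieF$-pointed $\KK$-subalgebra, and the action of $\Uc$ on any integrable representation extends uniquely to an integrable discrete action of $\Uhat[\KK]{\glie,\psiclI}$. Combined with Theorem \ref{thm_Uhatrepresentations} (which describes the indecomposable objects of $\Oint[\KK]{\glie,\psiclI}$ by highest weight in $\Xdom$, in parallel with the classical description of $\Ointc$), this yields an equivalence of categories between $\Ointc$ and $\Oint[\KK]{\glie,\psiclI}$ preserving characters; the same holds with $\glie$ replaced by $\glie'$.

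Given $V$ in $\Ointc$, view it via this equivalence as an object of $\Oint[\KK]{\glie,\psiclI}$, feed it into the coloured interpolation produced above, and obtain a representation $\VR$ in $\Oint{\glie,\psi_u}$. Its specialisation at ${u \! = \! 0}$ is, after applying the equivalence, isomorphic to $V$ as a representation of $\glie$. By Remark \ref{rem_isogeny}, the character of $\xiVR$ equals $\chi(\VR) \circ \xi$, so its specialisation at ${u \! = \! 1}$ is a $\psiclI$-integrable representation of $\gliepF$ with character $\chi(V) \circ \xi$; applying the equivalence for $\glie'$, this yields a representation of $\glie'$ in $\Ointc[(\glie')]$ with the desired character.

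The only real content lies upstream, in Theorem \ref{thm_coloutedinter} (which itself rests on the GQE conjecture assumed in this section); the present deduction is essentially a translation of notation via Theorem \ref{thm_exhat}, and presents no further obstacle.
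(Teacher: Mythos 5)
Your proposal is correct and follows exactly the paper's own route: the paper proves this theorem simply as a ``consequence of theorems \ref{thm_exhat} and \ref{thm_coloutedinter}'', i.e.\ by applying the coloured isogenic interpolation with $\psi = \psi' = \psiclI$ and using theorem \ref{thm_exhat} to identify $\Ointc$ and $\Ointc[(\glie')]$ with the $\psiclI$-integrable categories. Your write-up merely makes the translation explicit (including the harmless extra appeal to theorem \ref{thm_Uhatrepresentations}), so there is nothing to add.
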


\proof
Consequence of theorems \ref{thm_exhat} and \ref{thm_coloutedinter}
\qed \\

We designate by $R_q$ the $\KK(q)$-algebra formed by the rational functions over $\KK(q)$ without pole at $0$ and $1$:
$$ R_q \ := \ \Big \{ f/g \, ; \ f,g \in \KK(q)[u], \, \, g(0) \neq 0, \, \, g(1) \neq 0 \Big \} \, . $$

\begin{thm} \label{thm_quantuminter}
There exists an admissible $I$-colouring $\psi_u$ of $\BB$ with values \hbox{in $R_q$} such that, for every representation $\Vq$ in $\Ointq$, there exists a \hbox{representation $\VR$} in $\Oint[\! R_q]{\glie,\psi_u}$ which satisfies the following two points.
\begin{enumerate}
\item[\textbullet] The specialisation of $\VR$ at ${u \! = \! 0}$ is a representation of $\Uq(\glie)$, which is isomorphic \hbox{to $\Vq$}.
\item[\textbullet] The specialisation of $\xiVR$ at ${u \! = \! 1}$ is a representation in $\Ointq[\glie']$, whose character is equal to $\chi(V) \circ \xi$.
\end{enumerate}
\end{thm}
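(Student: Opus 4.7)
The plan is to mirror the proof of theorem \ref{thm_classicalinter}, substituting the quantum realisation in theorem \ref{thm_exhat}(2) for the classical realisation in theorem \ref{thm_exhat}(1) and running the whole argument over the ground field $\KK(q)$ instead of $\KK$. All the machinery used in theorem \ref{thm_coloutedinter} — coloured Kac-Moody algebras, the forms of proposition \ref{prop_form}, the GQE conjecture — is set up over an arbitrary characteristic-zero field and over an arbitrary integral-domain coefficient ring, so it applies verbatim with $\KK \leadsto \KK(q)$ and $R \leadsto R_q$.

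First I would fix two admissible $I$-colourings of $\BB$ over $\KK(q)$: the colouring $\psi := \psiqI$, associated with $\Uq(\glie)$, and the analogous quantum $I$-colouring $\psi' := (\psi_{q^{d'_i}})_{i \in I}$ associated with $\Uq(\glie')$, where $d'_i$ denotes a symmetrising vector of $\glie'$. Both are admissible over $\KK(q)$ since all the quantum integers $[k]_{q^{d_i}}$ and $[k]_{q^{d'_i}}$ belong to $\KK(q)^\times$. Applying theorem \ref{thm_coloutedinter} over the field $\KK(q)$ to the pair $(\psi,\psi')$ then furnishes an admissible $I$-colouring $\psi_u$ of $\BB$ with values in $R_q$, together with the deformation procedure $W \mapsto W_R$ on $\Oint[\KK(q)]{\glie,\psi}$.

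Given $\Vq$ in $\Ointq$, I would invoke theorem \ref{thm_exhat}(2) for $\glie$ to extend the action of $\Uq(\glie)$ on $\Vq$ into a discrete integrable action of $\Uhat[\! \! \; \KK(q)]{\glie,\psiqI}$; this realises $\Vq$ as an object $W$ of $\Oint[\KK(q)]{\glie,\psi}$ whose character coincides with $\chi(\Vq)$. Theorem \ref{thm_coloutedinter} then produces a representation $\VR := W_R$ in $\Oint[R_q]{\glie,\psi_u}$ such that $(\VR)_{\vert u=0}$ is isomorphic to $W$ in $\Oint[\KK(q)]{\glie,\psi}$ and $(\xiVR)_{\vert u=1}$ lies in $\Oint[\KK(q)]{\glie',\psi'}$ with character $\chi(\Vq)\circ\xi$. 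A second application of theorem \ref{thm_exhat}(2), now for $\glie'$ and $\psi'$, identifies the latter specialisation with a representation of $\Uq(\glie')$ in $\Ointq[\glie']$ of the required character, while the dense inclusion $\Uq(\glie)\subset\Uhat[\! \! \; \KK(q)]{\glie,\psiqI}$ and the uniqueness clause of theorem \ref{thm_exhat}(2) identify $(\VR)_{\vert u=0}$ with $\Vq$ as a $\Uq(\glie)$-module.

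The main point requiring attention is not a genuine obstacle but a compatibility check: one must verify that the bijection between $\Uq(\glie)$-modules in $\Ointq$ and $\psi$-integrable representations in $\Oint[\KK(q)]{\glie,\psi}$ given by theorem \ref{thm_exhat}(2) commutes with the specialisations at $u=0$ and $u=1$, and that the analogous statement holds on the isogenic $\glie'$-side. In both cases this reduces to the uniqueness clause of theorem \ref{thm_exhat}(2), because each specialisation is an integrable representation of the relevant quantum algebra whose underlying action of the Chevalley generators and $K_{\check\mu}$'s is determined by construction; everything else is inherited from theorem \ref{thm_coloutedinter}.
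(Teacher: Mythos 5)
Your proposal is correct and follows essentially the same route as the paper, whose proof of theorem \ref{thm_quantuminter} is precisely ``consequence of theorems \ref{thm_exhat} and \ref{thm_coloutedinter}'': you apply the coloured isogenic interpolation of theorem \ref{thm_coloutedinter} over the field $\KK(q)$ to the quantum $I$-colourings of $\glie$ and $\glie'$, and use theorem \ref{thm_exhat}(2) on both sides to translate between representations of the rational quantum algebras and $\psi$-integrable representations of the coloured Kac-Moody algebras. Your additional compatibility check via the uniqueness clause of theorem \ref{thm_exhat}(2) is a sensible elaboration of what the paper leaves implicit, not a deviation from its argument.
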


\proof
Consequence of theorems \ref{thm_exhat} and \ref{thm_coloutedinter}
\qed

\subsection[Isogenic duality between representations of Lie algebras]{Isogenic duality between representations \\ of Lie algebras}
We assume in this subsection that the statement of the GQE conjecture holds.


\begin{thm} \label{thm_isogenyduality}
For every representation $V$ in $\Ointc$, there exists a unique, up to isomorphism, representation in $\Ointc[(\glie')]$ whose character is equal to $\chi(V) \circ \xi$, and which contains $\Lc{\lambda'}$ as a subrepresentation when $V$ is equal to $\Lc{\xi(\lambda')}$ with $\lambda' \in X'$.
\end{thm}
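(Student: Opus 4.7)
The plan is to deduce this existence-and-uniqueness statement almost directly from the classical isogenic interpolation theorem \ref{thm_classicalinter}, using the representation-theoretic structure of the category $\Ointc[(\glie')]$ to upgrade characters into isomorphism classes.

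First I would handle existence. Given $V$ in $\Ointc$, apply theorem \ref{thm_classicalinter} to produce a representation $\VR$ in $\Oint{\glie,\psi_u}$ whose specialisation at $u=0$ is isomorphic to $V$. By construction, $\xiVR$ specialised at $u=1$ is a representation in $\Ointc[(\glie')]$ whose character is $\chi(V) \circ \xi$, which is exactly the representation we need.

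Next I would establish uniqueness. By the classical results recalled in subsection \ref{subs_knownresultshat} (applied to $\glie'$), every representation in $\Ointc[(\glie')]$ decomposes as a direct sum of irreducibles $\Lc[(\glie')]{\lambda'}$ with $\lambda' \in \Xdom[(\glie')]$. The characters of these irreducibles are $\ZZ$-linearly independent (they can be distinguished by their highest weights via the partial order on $X'$), so the character of a representation in $\Ointc[(\glie')]$ determines its isomorphism class. Hence any two representations in $\Ointc[(\glie')]$ whose character equals $\chi(V) \circ \xi$ are isomorphic.

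Finally I would address the subrepresentation claim in the case $V = \Lc{\xi(\lambda')}$ with $\lambda' \in X'$. For this claim to be non-vacuous we need $\xi(\lambda') \in \Xdom$, which forces $\lambda' \in \Xdom[(\glie')]$ since $\langle \check \alpha_i, \xi(\lambda')\rangle = \xi_i \, \langle \check \alpha'_i, \lambda'\rangle$ from the definition of an isogeny. Applying the interpolation to $\Lc{\xi(\lambda')}$ yields, by theorem \ref{thm_Uhatrepresentations}, a representation $\VR$ isomorphic to $\Lrep{\xi(\lambda'),\psi_u}$, topologically generated by a highest weight vector $v$ of weight $\xi(\lambda')$. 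Since $\xi(\lambda') \in \xi(X')$, the vector $v$ lies in $\xiVR$, and after specialisation at $u=1$ it gives a nonzero vector $\bar v$ of weight $\lambda'$ in $\xiVR_{|u=1}$. Because the action of $(X'_i)^+$ on $\xiVR$ is that of $(X_i^+)^{\xi_i}$ (lemma \ref{lem_isogeny}) and $X_i^+ . v = 0$, the vector $\bar v$ is annihilated by $(X'_i)^+$ for every $i \in I$, hence is a highest weight vector for $\glie'$ of weight $\lambda'$. The subrepresentation of $\xiVR_{|u=1}$ generated by $\bar v$ is integrable, of highest weight $\lambda' \in \Xdom[(\glie')]$, and is therefore isomorphic to $\Lc[(\glie')]{\lambda'}$ by the results of subsection \ref{subs_knownresultshat}.

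The only subtle point is the uniqueness argument, which ultimately relies on the complete reducibility of $\Ointc[(\glie')]$; this is however a classical input and requires no further work. The rest is a direct translation of the interpolation theorem.
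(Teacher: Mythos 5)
Your proof is correct and follows exactly the route of the paper, whose own proof of theorem \ref{thm_isogenyduality} is simply ``Consequence of theorem \ref{thm_classicalinter}'': existence from the interpolation, uniqueness from complete reducibility in $\Ointc[(\glie')]$ (valid since weight multiplicities there are finite, so characters of the $\Lc{\lambda'}$ separate isomorphism classes), and the subrepresentation claim from a highest-weight-vector argument via lemma \ref{lem_isogeny}. You merely spell out the standard details the paper leaves implicit, so there is nothing to object to.
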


\proof
Consequence of theorem \ref{thm_classicalinter}.
\qed

\vsp

\begin{rem}
Let $\psi$ be an admissible $I$-colouring of $\BB$ with values in $R$ and let us assume that $R$ is a principal ideal domain. Recall that the category $\Oint{\glie,\psi}$ is parallel to the category $\Ointc$, in such a way that representations of the first category have the same characters than representations of the second one (see theorem \ref{thm_Uhatrepresentations}). The same observation holds for $\Oint{\glie',\psi}$ and $\Ointc[(\glie')]$. Therefore, theorem \ref{thm_isogenyduality} implies that for every representation $V$ in $\Oint{\glie,\psi}$, there exists a unique, up to isomorphism, representation in $\Oint{\glie',\psi}$ whose character is equal to $\chi(V) \circ \xi$, and which contains $\Lrep{\lambda'}$ as a subrepresentation when $V$ is equal to $\Lrep{\xi(\lambda')}$ with $\lambda' \in X'$.
\end{rem}

\subsection{The Langlands isogeny}
We denote by $\LC$ the transpose of the generalised Cartan matrix $C$ (remark \hbox{that $\LC$} is a generalised Cartan matrix). The \emph{Langlands dual root datum} $\LXdat$ is the root datum associated to $\LC$ and defined by the following.
\begin{enumerate}
\item[\textbullet] The root lattice $\LX$ of $\LXdat$ is the $\ZZ$-module $\{ \lambda \in X \; \! ; \, \langle \check \alpha_i, \lambda \rangle \in {d_i \: \! \ZZ} \, , \ \forall \, i \in I \}$.
\item[\textbullet] The coroot lattice of $\LXdat$ is the $\ZZ$-module $\mathrm{Hom}_{\ZZ}(\LX,\ZZ)$.
\item[\textbullet] The pairing of $\LXdat$ is the natural pairing between $\mathrm{Hom}_{\ZZ}(\LX,\ZZ)$ and $\LX$.
\item[\textbullet] The simple roots of $\LXdat$ are the elements $d_i \: \! \alpha_i$ ($i \in I$) in $\LX$.
\item[\textbullet] The simple coroots of $\LXdat$ are the elements $\check \alpha_i / d_i$ ($i \in I$).
\end{enumerate}

\vsp
\vsp
\vsp

The \emph{Langlands dual Kac-Moody algebra} $\Lglie$ is the Kac-Moody algebra associated to the Langlands dual root datum $\LXdat$. \\

Let $\lambda \in \LX$, we denote by $\Lc{\Llambda}$ the irreducible highest weight representation of $\Lglie$ of highest weight $\lambda$, in order to avoid any confusion with the irreducible representation $\Lc{\lambda}$ of $\glie$. \\

The \emph{Langlands isogeny} is the isogeny of root data from $\LXdat$ to $\Xdat$ defined as the inclusion map from $\LX$ to $X$. \\

Let $V$ be a weight representation of $\glie$, the \emph{Langlands dual character} $\Lchi(V)$ \hbox{of $V$} is defined as the composition of $\chi(V)$ with the Langlands isogeny. \\

Let $V$ be a weight representation of $\UF$, we denote by $\LV$ the $R$-submodule of $V$ defined by the following:
$$ \LV \ := \, \bigoplus_{\lambda \in \LX} V_\lambda \, . $$
We regard $\LV$ as a representation of $\UFL$, thanks to lemma \ref{lem_isogeny}.


\subsection[Langlands duality between representations of Lie algebras]{Langlands duality between representations \\ of Lie algebras}
We assume in this subsection that the statement of the GQE conjecture holds. \\

We designate in this subsection by $R$ the $\KK$-algebra formed by the rational functions without pole at $0$ and $1$:
$$ R \ := \ \Big \{ f/g \, ; \ f,g \in \KK[u], \, \, g(0) \neq 0, \, \, g(1) \neq 0 \Big \} \, . $$

\begin{thm} \label{thm_Langlandsinter}
There exists an admissible $I$-colouring $\psi_u$ of $\BB$ with values \hbox{in $R$} such that, for every representation $V$ in $\Ointc$, there exists a \hbox{representation $\VR$} in $\Oint{\glie,\psi_u}$ which satisfies the following two points.
\begin{enumerate}
\item[\textbullet] The specialisation of $\VR$ at ${u \! = \! 0}$ is a representation of $\glie$, isomorphic \hbox{to $V$}.
\item[\textbullet] The specialisation of $\LVR$ at ${u \! = \! 1}$ is a representation in $\Ointc[(\Lglie)]$, whose character is equal to $\Lchi(V)$.
\end{enumerate}
\end{thm}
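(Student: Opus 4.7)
The plan is to derive this statement as a direct corollary of the classical isogenic interpolation theorem (Theorem \ref{thm_classicalinter}) by specialising the abstract isogeny $\xi$ to the Langlands isogeny. First, I would check that the inclusion map $\xi : \LX \hookrightarrow X$ fits the axioms of Subsection \ref{subs_isogeny}: the injectivity of $\xi$ is immediate from its definition, while the injectivity of the transpose $\check\xi : Y \to \mathrm{Hom}_\ZZ(\LX,\ZZ)$ reduces to the perfectness of the pairing between $Y$ and $X$, using that $\LX$ has finite index in $X$ in the relevant directions (which in turn follows from the $X$-regularity assumption together with $\sum_i \ZZ \! \: d_i \alpha_i \subset \LX$). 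For the scaling condition, one has $\xi(\alpha'_i) = \xi(d_i \alpha_i) = d_i \: \! \alpha_i$ on the one hand, and on the other hand $\check\xi(\check\alpha_i)$ is the element of $\mathrm{Hom}_\ZZ(\LX,\ZZ)$ given by restriction of $\check\alpha_i$, which equals $d_i \cdot (\check\alpha_i/d_i) = d_i \check\alpha'_i$. Hence $\xi_i = d_i$ for every $i \in I$.

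Once the Langlands isogeny is recognised as an isogeny of root data in the sense of Subsection \ref{subs_isogeny}, applying Theorem \ref{thm_classicalinter} furnishes directly the desired admissible $I$-colouring $\psi_u$ of $\BB$ with values in $R$ together with, for every $V$ in $\Ointc$, a representation $\VR$ in $\Oint{\glie,\psi_u}$ whose specialisation at ${u \! = \! 0}$ is isomorphic to $V$ and whose $\xi$-twisted submodule $\xiVR$ specialises at ${u \! = \! 1}$ to a representation in $\Ointc[(\glie')]$ of character $\chi(V) \circ \xi$. What remains is an identification of the notation: by definition
\[ \xiVR \ = \ \bigoplus_{\lambda \in \xi(\LX)} {(\VR)}_\lambda \ = \ \bigoplus_{\lambda \in \LX} {(\VR)}_\lambda \ = \ \LVR \, , \]
and the $\UFL$-action provided by Lemma \ref{lem_isogeny} coincides with the one entering the statement of Theorem \ref{thm_Langlandsinter}. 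The character identity $\chi(V) \circ \xi = \Lchi(V)$ is tautological from the definition of $\Lchi$ as the composition with the Langlands isogeny.

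Because the argument is a specialisation of a more general interpolation statement, no new analytic work is required; the genuine difficulty has already been absorbed into Theorem \ref{thm_classicalinter}, which itself rests on the GQE conjecture assumed throughout this section and on the availability, via Proposition \ref{prop_hadmissible} and Theorem \ref{thm_Uhatrepresentations}, of $h$-admissible expansions and of a sufficiently rich $\Oint{\glie,\psi}$-theory. The main obstacle in the present proof is thus the careful formal verification that the Langlands data $(\LXdat,\xi)$ really does instantiate the abstract isogeny framework; this is where the slightly asymmetric conventions (simple roots $d_i \alpha_i$ on the $\LXdat$ side, simple coroots $\check\alpha_i/d_i$) have to be reconciled with the uniform scaling condition $\xi(\alpha'_i) = \xi_i \alpha_i$, $\check\xi(\check\alpha_i) = \xi_i \check\alpha'_i$ imposed by the definition of an isogeny.
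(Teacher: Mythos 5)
Your proposal is correct and follows exactly the paper's route: the paper proves Theorem \ref{thm_Langlandsinter} simply as a corollary of Theorem \ref{thm_classicalinter}, applied to the Langlands isogeny. The verification you spell out — that the inclusion $\LX \hookrightarrow X$ satisfies the isogeny axioms with $\xi_i = d_i$, and that $\xiVR = \LVR$ and $\chi(V) \circ \xi = \Lchi(V)$ by definition — is precisely the routine instantiation the paper leaves implicit.
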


\proof
Corollary of theorem \ref{thm_classicalinter}.
\qed \\

We obtain as an application of theorem \ref{thm_Langlandsinter} the following theorem, whose first proofs are due to Littelmann and McGerty (see the introduction of this thesis). In other words, representations of coloured Kac-Moody algebras yield an explanation of the Langlands duality between representations of Kac-Moody algebras, in terms of interpolation.

\begin{thm} \label{thm_Langlandsduality}
For every representation $V$ in $\Ointc$, there exists a unique, up to isomorphism, representation in $\Oint{\Lglie}$ whose character is equal to $\Lchi(V)$, and which contains $\Lc{\Llambda}$ as a subrepresentation when $V$ is equal to $\Lc{\lambda}$ \hbox{with $\lambda \in \LX$}.
\end{thm}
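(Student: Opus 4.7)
The plan is to deduce the theorem directly from the interpolation result of Theorem~\ref{thm_Langlandsinter}, combined with the complete reducibility of the category $\Ointc[(\Lglie)]$ recalled in subsection~\ref{subs_knownresultshat}.

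For existence, let $V$ be a representation in $\Ointc$. Applying Theorem~\ref{thm_Langlandsinter} to $V$, I obtain a representation $\VR$ in $\Oint{\glie,\psi_u}$ such that the specialisation of $\LVR$ at $u=1$ is a representation $W$ in $\Ointc[(\Lglie)]$ of character $\Lchi(V)$. This $W$ is the desired Langlands dual representation.

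For uniqueness, I invoke the fact that every representation in $\Ointc[(\Lglie)]$ is isomorphic to a direct sum of irreducibles $\Lc{\Llambda'}$ with $\Llambda'$ dominant in $\LX$, and that these irreducibles have pairwise distinct characters. The multiplicities appearing in such a decomposition are therefore determined by the total character, so any representation in $\Ointc[(\Lglie)]$ of character $\Lchi(V)$ is isomorphic to $W$.

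For the last assertion, assume that $V = \Lc{\lambda}$ with $\lambda \in \LX$. Since $V$ lies in $\Ointc$, one has $\langle\check\alpha_i,\lambda\rangle \in \NN$ for every $i \in I$; combined with the defining condition $\langle\check\alpha_i,\lambda\rangle \in d_i\ZZ$ of $\LX$, this gives $\langle\check\alpha_i/d_i,\lambda\rangle \in \NN$, so $\lambda$ is dominant for $\Lglie$ and $\Lc{\Llambda}$ is well-defined. Write $W \simeq \bigoplus_{\Llambda'} \Lc{\Llambda'}^{\oplus m_{\Llambda'}}$, the sum running over the dominant elements of $\LX$. Suppose $\Llambda' > \lambda$ in the partial order of $\LX$, i.e.\ $\Llambda' - \lambda \in \sum_{i \in I} \NN \, d_i \alpha_i$; since $\sum_{i \in I} \NN \, d_i \alpha_i \subseteq \sum_{i \in I} \NN \, \alpha_i$, one has $\Llambda' > \lambda$ in the partial order of $X$ as well, so $\Llambda'$ is not a weight of $V = \Lc{\lambda}$ and $\Lchi(V)(\Llambda') = \chi(V)(\Llambda') = 0$. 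It follows that $m_{\Llambda'} = 0$ for every $\Llambda' > \lambda$. Evaluating the equality $\chi(W) = \Lchi(V)$ at $\lambda$ therefore gives $m_\lambda = \Lchi(V)(\lambda) = \chi(V)(\lambda) = 1$, so $W$ contains $\Lc{\Llambda}$ as a subrepresentation.

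The essential work is already performed by Theorem~\ref{thm_Langlandsinter}; at this stage there is no real obstacle, only a short argument extracting existence, uniqueness, and the highest weight containment from the decomposition theory of $\Ointc[(\Lglie)]$ and a comparison of the two partial orders on $X$ and $\LX$.
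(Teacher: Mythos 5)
Correct, and essentially the paper's own route: the paper obtains this theorem precisely as an application of theorem \ref{thm_Langlandsinter} (hence, like your argument, conditionally on the GQE conjecture), leaving the uniqueness and the containment of $\Lc{\Llambda}$ to the standard decomposition theory of $\Ointc[(\Lglie)]$, which you fill in correctly via the comparison of the partial orders on $X$ and $\LX$. One small remark: pairwise distinctness of the irreducible characters is not by itself enough to pin down multiplicities in the uniqueness step — what is needed is the triangularity/peel-off argument at maximal weights, which is exactly the mechanism your containment paragraph already uses, so your proof is complete once that is read as the justification there as well.
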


\subsection{Contribution to the programme of Frenkel-Hernandez}
We assume in this subsection that the statement of the GQE conjecture holds. \\

We designate by $R_q$ the $\KK(q)$-algebra formed by the rational functions over $\KK(q)$ without pole at $0$ and $1$:
$$ R_q \ := \ \Big \{ f/g \, ; \ f,g \in \KK(q)[u], \, \, g(0) \neq 0, \, \, g(1) \neq 0 \Big \} \, . $$

\begin{thm}
There exists an admissible $I$-colouring $\psi_u$ of the crystal $\BB$ with values \hbox{in $R_q$} such that, for every representation $\Vq$ in $\Ointq$, there exists a representation $\VR$ in $\Oint[\! R_q]{\glie,\psi_u}$ which satisfies the following two points.
\begin{enumerate}
\item[\textbullet] The specialisation of $\VR$ at ${u \! = \! 0}$ is a representation of $\Uq(\glie)$, which is isomorphic to $\Vq$.
\item[\textbullet] The specialisation of $\LVR$ at ${u \! = \! 1}$ is a representation in $\Ointq[\Lglie]$, whose character is equal to $\Lchi(\Vq)$.
\end{enumerate}
\end{thm}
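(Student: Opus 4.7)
The plan is to deduce this theorem as a direct corollary of the quantum isogenic interpolation, theorem \ref{thm_quantuminter}, by taking the isogeny to be the Langlands isogeny. First I would set $\Xdat' := \LXdat$, so that $\glie' = \Lglie$ and the isogeny $\xi$ becomes the inclusion $\LX \hookrightarrow X$ with multiplicities $\xi_i = d_i$. Under this identification, the subrepresentation $\xiV$ of a weight representation $V$ coincides, by construction, with $\LV = \bigoplus_{\lambda \in \LX} V_\lambda$, and the character $\chi(V) \circ \xi$ is by definition the Langlands dual character $\Lchi(V)$.

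Next, I would apply theorem \ref{thm_quantuminter} (which, like the present statement, is established under the GQE conjecture): it produces an admissible $I$-colouring $\psi_u$ of $\BB$ with values in $R_q$ such that, for every $\Vq$ in $\Ointq$, there exists a representation $\VR$ in $\Oint[\! R_q]{\glie,\psi_u}$ whose specialisation at $u=0$ is isomorphic to $\Vq$ as a representation of $\Uq(\glie)$, and whose subrepresentation $\xiVR = \LVR$ specialises at $u=1$ to a representation in $\Ointq[\glie'] = \Ointq[\Lglie]$ of character $\chi(\Vq) \circ \xi = \Lchi(\Vq)$. Both conclusions of the theorem follow at once.

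There is no genuine obstacle here beyond ensuring that the Langlands conventions (the identification of $\LX$ as a sublattice of $X$, together with $\xi_i = d_i$) match the abstract framework of subsection \ref{subs_isogeny}; the whole statement then reduces to a direct specialisation of the quantum isogenic interpolation theorem to the Langlands isogeny, exactly parallel to the way theorem \ref{thm_Langlandsinter} is obtained from theorem \ref{thm_classicalinter} in the classical setting. The substantive input — the existence of the interpolating colouring $\psi_u$ and of the $\psi_u$-integrable deformation $\VR$ — already lives in theorem \ref{thm_quantuminter}, and ultimately rests on theorem \ref{thm_exhat} identifying $\Uq(\glie)$ with a dense subalgebra of $\Uhat[\! \! \; \KK(q)]{\glie,\psiqI}$ together with the assumed validity of the GQE conjecture.
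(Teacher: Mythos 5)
Your proof is correct and matches the paper's: the paper proves this theorem simply as a corollary of theorem \ref{thm_quantuminter}, i.e.\ by specialising the quantum isogenic interpolation to the Langlands isogeny, exactly parallel to the deduction of theorem \ref{thm_Langlandsinter} from theorem \ref{thm_classicalinter}. Your identifications ($\xi_i = d_i$, $\xiV = \LV$, $\chi(V)\circ\xi = \Lchi(V)$, these following directly from the definition of $\LXdat$ and of the Langlands isogeny) are precisely the specialisation the paper leaves implicit.
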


\proof
Corollary of theorem \ref{thm_quantuminter}
\qed

\subsection{Langlands interpolation via quantum algebras}
The following result is established without reference to the GQE conjecture. Note that we obtain, as a consequence, a new proof of \hbox{theorem \ref{thm_Langlandsduality}}. In other words, the following result gives an explanation of the Langlands duality between representations of Lie algebras, in terms of interpolation, where interpolating representations are here representations of quantum algebras. \\

We designate here by $\KK$ the complex field $\CC$ and we denote by $\varepsilon$ a primitive root of unity in $\CC$ of order $2d$, where $d$ designates the least common multiple of the integers $d_i$ $(i \in I)$.


\begin{thm} \label{thm_Langlandsinterviaq}
For every $\lambda \in \Xdom$, the representation $\Lq (\lambda)$ of $\Uq(\glie)$ admits a $\CC[q^{\pm 1}]$-form $\Vqpm$ which satisfies the following two points.
\begin{enumerate}
\item[\textbullet] The specialisation of $\Vqpm$ at ${q \! = \! 1}$ is a representation of $\glie$, isomorphic to the irreducible representation $\Lc{\lambda}$.
\item[\textbullet] The specialisation of $\Vqpm$ at ${q \! = \! \varepsilon}$ is a representation in $\Ointc[(\Lglie)]$, whose character is equal to $\Lchi(\Lc{\lambda})$.
\end{enumerate}
\end{thm}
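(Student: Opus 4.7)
The plan is to leverage the fact that the GQE conjecture is known unconditionally for the quantum colouring $\psiqI$, since the standard quantum algebras provide the required deformations (Theorem \ref{thm_exhat}). This will allow us to run the interpolation argument of Theorem \ref{thm_Langlandsinter} in the special case $\psi=\psiqI$ without invoking any unproved statement.

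First I would construct the $\CC[q^{\pm 1}]$-form. By Theorem \ref{thm_exhat}, the $\CC(q)$-algebra $\Uq(\glie)$ sits densely inside the coloured Kac-Moody algebra $\Uhat[\! \! \; \CC(q)]{\glie,\psiqI}$, and the action on $\Lq(\lambda)$ extends uniquely to an integrable discrete action. Since $\psiqI$ is admissible over $\CC[q^{\pm 1}]$ (the quantum integers $[k]_q$ are units in $\CC[q^{\pm 1}]$ after clearing by $q^{-k+1}$), and since the ring extension $\CC[q^{\pm 1}]\subset \CC(q)$ is admissible (Remark \ref{rem_admissiblepair}), Proposition \ref{prop_form} applies to the highest weight $\psiqI$-integrable representation $\Lq(\lambda)$ and yields a $\psiqI$-integrable $\CC[q^{\pm 1}]$-form $\Vqpm$, whose character equals $\chi(\Lq(\lambda)) = \chi(\Lc{\lambda})$ by Lemma \ref{lem_form}.

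Next I would handle the specialisation at $q=1$. The quantum colouring satisfies $(\psiqI)_{\vert q=1} = \psiclI$, since $[k]_{q}$ specialises to $k$. Thus the specialisation ${V_{q=1}}$ is a $\psiclI$-integrable representation of $\glieF$ with character $\chi(\Lc{\lambda})$ (Remark \ref{rem_changeofrings}). By Theorem \ref{thm_exhat}(1), this is precisely the data of an integrable representation of $\glie$ in $\Ointc$, and by the classification of irreducibles in $\Ointc$ (subsection \ref{subs_knownresultshat}), the character $\chi(\Lc\lambda)$ forces ${V_{q=1}} \simeq \Lc{\lambda}$.

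The specialisation at $q=\varepsilon$ is the main obstacle and where the Langlands isogeny enters. The key identity is that for a primitive $2d$-th root of unity $\varepsilon$, the quantum integer $[k]_{\varepsilon^{d_i}}$ vanishes exactly when $d/d_i$ divides $k$, while $[(d/d_i)\ell]_{\varepsilon^{d_i}}/[d/d_i]_{\varepsilon^{d_i}}$ (the surviving normalised value) can be computed to give precisely $\ell$. Translating this through Definition of $\xipsi$ for the Langlands isogeny $\xi$ (with $\xi_i = d/d_i$), one verifies that ${}^\xi(\psiqI_{\vert q=\varepsilon})$ coincides with the classical $I$-colouring $\psi^I_{\mathrm{cl}}$ relative to the Langlands dual root datum $\LXdat$. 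By Lemma \ref{lem_isogenyint} and the isogenic specialisation machinery (subsection \ref{subs_isogenyinter}), the subrepresentation $\LV_{q=\varepsilon}$ is then $\psi^I_{\mathrm{cl}}$-integrable for $(\Lglie)_F$, and hence by Theorem \ref{thm_exhat}(1) applied to $\Lglie$, it is a representation of $\Lglie$ in $\Ointc[(\Lglie)]$. Its character is $\chi({V_{q=1}})\circ \xi = \chi(\Lc\lambda)\circ \xi = \Lchi(\Lc\lambda)$ by Remark \ref{rem_isogeny}.

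The delicate point that must be checked carefully is the identification ${}^\xi(\psiqI_{\vert q=\varepsilon}) = \psi^I_{\mathrm{cl}}[\Lglie]$: this is a purely combinatorial computation on quantum integers at roots of unity, combined with the bookkeeping of which vertices of $\BB$ lie in the image of the isogeny, but once done it mechanically feeds into the previously established machinery to complete the proof without any appeal to the GQE conjecture.
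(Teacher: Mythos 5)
Your construction of the $\CC[q^{\pm 1}]$-form already fails at the first step: the claim that $\psiqI$ is admissible over $\CC[q^{\pm 1}]$ is false. Admissibility requires the values $[k]_{q_i}$ to be \emph{units} in $\CC[q^{\pm 1}]$, and $[k]_{q_i} = q_i^{1-k}\bigl(1+q_i^2+\cdots+q_i^{2k-2}\bigr)$ is not one: it vanishes at roots of unity, and indeed $[k]_{\varepsilon^{d_i}}=0$ precisely when $d/d_i$ divides $k$ --- this vanishing is the whole content of the theorem, not a detail to be cleared by a power of $q$. So $\psiqI$ is admissible over $\CC(q)$ but not over $\CC[q^{\pm 1}]$, and Proposition \ref{prop_form}, whose hypothesis is admissibility over the subring $R'$ (it needs the solutions of the equations $\widehat{\mathscr E}$ to exist over $R'$, which forces inverting the $[k]_{q_i}$), cannot be invoked with $R'=\CC[q^{\pm 1}]$. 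This is exactly the point the paper's proof flags: one must introduce a Lusztig-type \emph{integral form}, generated by the quantum divided powers $(X_i^\pm)^{(k)}$, and re-verify that the whole coloured Kac--Moody machinery of this part is compatible with such integral forms; the commutation formula \cite[formula 9.3.13]{ChPr} quoted in the paper's proof is the key input making the divided-power lattice stable under the action of $(X_i^+)^{(k)}$. Your lattice spanned by ordinary monomials in the $X_i^-$ is in general not stable under divided powers, so it is not a substitute.

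The same vanishing sinks your third step. The "key identity" ${}^\xi\bigl((\psiqI)_{\vert q=\varepsilon}\bigr)=\psiclI$ (relative to $\LXdat$) is false for the machinery you cite: in the definition of ${}^\xi\psi_i(d)$ the product runs over $\xi_i=d/d_i$ \emph{consecutive} integers, exactly one of which is divisible by $d/d_i$, and at $q=\varepsilon$ the corresponding factor $[k']_{\varepsilon^{d_i}}$ is zero. Hence ${}^\xi\bigl((\psiqI)_{\vert q=\varepsilon}\bigr)$ is identically zero and the ordinary powers $(X_i^\pm)^{d/d_i}$, through which Lemma \ref{lem_isogeny}, Lemma \ref{lem_isogenyint} and subsection \ref{subs_isogenyinter} define the action of the dual Chevalley generators, annihilate the specialised lattice. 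The surviving normalised value you mention does exist --- $[\ell\, d/d_i]_{q_i}/[d/d_i]_{q_i}$ specialises at $q=\varepsilon$ to $(-1)^{\ell+1}\ell$, classical up to congruence of colourings --- but extracting it amounts to renormalising by $[d/d_i]_{q_i}!$, i.e.\ to acting by the quantum divided powers $(X_i^\pm)^{(d/d_i)}$ rather than $(X_i^\pm)^{d/d_i}$. This is precisely the caveat in the paper's proof ("we make use of the (quantum) divided powers\dots when defining the action of $\LglieF$ on $\LVqpm$"), and the isogeny lemmas as stated contain no such normalisation. So your argument cannot go through as written: it requires either a divided-power variant of the isogeny machinery or working directly in the Lusztig integral form, which is what the paper's (admittedly sketchy) proof does. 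Your $q=1$ step, by contrast, is fine, since $\psiqI$ specialises there to $\psiclI$ with no degeneration.
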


\proof
It is possible to define a notion of integral forms for coloured Kac-Moody algebras. All the constructions and all the results presented in this second part of the thesis can be proved to be compatible with integral forms of coloured Kac-Moody algebras, in such a way that the present theorem can be established thanks to theorem \ref{thm_exhat} and thanks to the following remarks.
\vsp
\vsp
\begin{itemize}
\item[\textbullet] The colouring $\psiqI$ interpolates between two copies of the colouring $\psiclI$.
\item[\textbullet] Let $i \in I$ and let $n \in \NN$. The following relations hold in $\Uq(\glie)$ for all $k,k' \in \NNI$ and for all $i \in I$ (see for example \cite[formula 9.3.13]{ChPr}):
\begin{equation*} \label{eq_chari}
(X_i^+)^{(k)} \, (X_i^-)^{(k')} \ = \ \sum_{k''=0}^{\max (k,k')} (X_i^-)^{(k'-k'')} \, {K_i \, ; \, 2 \! \: k'' - k' - k \brack k''}_{\! \: \! q_i} \, (X_i^+)^{(k-k'')} \, ,
\end{equation*}
$$ \text{where } \ {K_i \! \; ; \! \; b \brack a}_{\! \! \: q_i} \ := \ \prod_{c=1}^a \frac{K_i \! \; q_i^{b+1-c} - K_i^{-1} q_i^{c-1-b}}{q_i^s - q_i^{-s}} \quad \quad (a,b \in \NN) \, . $$
\end{itemize}
Note that we make use of the (quantum) divided powers ${(X_i^\pm)}^{(d/d_i)}$, rather the usual powers ${(X_i^\pm)}^{d/d_i}$, when defining the action \hbox{of $\LglieF$} on $\LVqpm$.
\qed

\begin{appendices}
\section{The equations ${\widehat{\mathscr E}_{\! \! \: R}}$}
\subsection{Notations}
We denote by $\Tinf[R]$ the $R$-algebra formed by the lower triangular matrices of infinite order with coefficients in the ring $\EndR(R^\ZZ)$ (remark that the matrix multiplication for $\Tinf[R]$ is well defined since matrices are lower triangular):
$$ \Tinf[R] \ := \ \left\{ \mathbf T \in \left( \EndR(R^\ZZ) \right)^{\! \: \! \NN^2} ; \, \ {\mathbf T_{\! \! \: p,a}} = 0 \, , \ \forall \, p,a \in \NN \, \text{ s.t. } p < a \right\} \, . $$

We denote by $\Sinf[R]$ the \hbox{$R$-module} formed by the column matrices of infinite order with coefficients in $R^\ZZ$:
$$ \Sinf[R] \ := \ \left( R^\ZZ \right)^{\! \! \; \NN} \, . $$
 We regard $\Sinf[R]$ as a left $\Tinf[R]$-module, where the action of a matrix $\mathbf T$ \hbox{in $\Tinf[R]$} on a column matrix $M$ in $\Sinf[R]$ is defined by the following:
$$ {\big( \mathbf T \! \; . \! \; M \big)}_{\! \! \: p} \ := \ \sum_{0 \leq a \leq p} {\mathbf T_{\! \! \: p,a}} \! \; . \! \; M_a \quad \quad (p \in \NN) \, . $$

Let $M \in \Sinf[R]$ and let $d \in \ZZ$. We denote by $M[d]$ the column matrix \hbox{in $\Sinf[R]$} defined by the following:
$$ {\big( M[d] \big)}_{\! \! \; p} \ := \ \left\{ {\renewcommand{\arraystretch}{1.2} \begin{array}{ccl}
M_{p-d} & \quad & \text{if } p \geq d \, , \\
0 && \text{else} \end{array}} \right. \quad \quad \quad (p \in \NN) \, . $$

\subsection{Definition}
Let $\psi \in \Colo{R}$. We denote by $\mathbf T(\psi)$ the lower triangular matrix in $\Tinf[R]$, defined by the following, \hbox{for $p,a \in \NN$}, $f \in R^\ZZ$ and $n \in \ZZ$:
$$ \bigg[ {\big( \mathbf T(\psi) \big)}_{\! \! \: p,a} \, . \, f \bigg] (n) \ := \ \left\{
{\renewcommand{\arraystretch}{1.6} \begin{array}{cl}
\displaystyle \frac{[\psi] (n,p) !}{[\psi] (n,p-a) !} \ f(n-2p+2a) & \quad \text{if } a \leq p \leq n \, , \\
0 & \quad \text{else.}
\end{array}} \right. $$
We denote by $N(\psi)$ and the column matrix in $\Sinf[R]$, defined by the following, \hbox{for $p \in \NN$} and $n \in \ZZ$:
$$ {\big( N(\psi) \big)}_{\! \! \: p} \! \; (n) \ := \ \left\{
{\renewcommand{\arraystretch}{1.3} \begin{array}{cl}
[\psi] (n,p) & \quad \ \text{if } 0 < p \leq n \, , \\
0 & \quad \ \text{else.}
\end{array}} \right. $$

\begin{defi} \label{defi_eqhat}
Let $\psi_1, \psi_2 \in \Colo{R}$ and let $d \in \ZZ$. We denote \hbox{by $\eqhat{\psi_1,\psi_2}{d}$} the following linear equation:
\begin{equation*} \label{eq_hat}
\mathbf T(\psi_1) \, . \, M \ = \ N(\psi_2)[d] \, ,
\end{equation*}
where the unknown $M$ is a column matrix in $\Sinf[R]$.
\end{defi}

\subsection{First interpretation}
\begin{prop} \label{prop_eqhatspan}
Let $\psi \in \Colo{R}$ and let $M \in \Sinf[R]$. If $M$ is a solution of the equation $\eqhat{\psi,\psi}{-1}$, then the following equality holds in $\Uhat{\slt,\psi}$:
$$ X^+ X^- \ = \ \sum_{a \in \NN} \, {(X^-)}^a \! \; M_a \, {(X^+)}^a \, . $$
\end{prop}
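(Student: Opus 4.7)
The plan is to mirror the strategy used for the $h$-adic analog in Proposition I.\ref{prop_GQEeqspan}: establish convergence of the right-hand series inside $\Uhat{\slt,\psi}$, and then verify the equality by testing both sides on every representation $\Lrep{n,\psi}$ with $n \in \NN$, appealing to Remark \ref{rem_Uhatperfect}.

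First I would give a meaning to each term $M_a$ inside $\Uhat{\slt,\psi}$. The coefficient $M_a \in R^\ZZ$ can be regarded as an element of $\Uhat{\slt,\psi}$ via point 2 of Proposition \ref{prop_Uhatconv} (applied with $\glie = \slt$, so that $X = \ZZ$). Setting $x_a := (X^-)^a M_a$, point 1 of the same proposition then ensures that the series $\sum_{a \in \NN} x_a (X^+)^a = \sum_{a \in \NN} (X^-)^a M_a (X^+)^a$ converges to a well-defined element $y$ in $\Uhat{\slt,\psi}$. This gives a meaning to the right-hand side of the claimed equality.

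Next I would compute the action of $x := X^+ X^-$ and $y$ on an arbitrary basis vector $b_{n,p}$ of $\Lrep{n,\psi}$. Direct computation using the definition of $\Lrep{n,\psi}$ yields
\[ x . b_{n,p} \ = \ \bigl[\psi\bigr](n,p+1) \, b_{n,p} \quad \text{if } p < n \, , \quad x . b_{n,n} \ = \ 0 \, . \]
On the other hand, using point 4 of Proposition \ref{prop_Uhatconv} to compute the action of $M_a$ on the weight vector $b_{n,p-a}$ of weight $n-2p+2a$, one obtains
\[ y . b_{n,p} \ = \ \left[ \sum_{0 \leq a \leq p} M_a(n-2p+2a) \ \frac{[\psi](n,p) !}{[\psi](n,p-a) !} \right] b_{n,p} \, , \]
after regrouping the products $\psi^-(n,p)!/\psi^-(n,p-a)!$ and $\psi^+(n,n-p+a)!/\psi^+(n,n-p)!$ into a single ratio of $[\psi]$-factorials.

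Comparing these two expressions vertex by vertex, the equality $x.b_{n,p} = y.b_{n,p}$ for every $n \in \NN$ and $p \in \NN_{\leq n}$ reads exactly as the equation ${\mathbf T(\psi)} \! \; . \! \; M = N(\psi)[-1]$ evaluated at the coordinate $(p,n)$ (the case $p = n$ gives the zero on both sides, matching $N(\psi)_{n+1} = 0$, and the case $p > n$ is automatic since $\mathbf T(\psi)_{p,a}$ then vanishes on $\Lrep{n,\psi}$). Since $M$ is by assumption a solution of $\eqhat{\psi,\psi}{-1}$, these equalities all hold, so $x - y$ acts as zero on every $\Lrep{n,\psi}$ with $n \in \NN$; Remark \ref{rem_Uhatperfect} then concludes that $x = y$ in $\Uhat{\slt,\psi}$. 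The only delicate point is the convergence set-up at the beginning (ensuring that each $M_a$ is really a bona fide element of the Tannakian completion, and that the series converges there); once this is handled via Proposition \ref{prop_Uhatconv}, the rest of the argument is a routine verification.
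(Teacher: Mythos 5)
Your proof is correct and takes essentially the same route as the paper, whose argument for this proposition is precisely a reference to the proof of proposition I.\ref{prop_GQEeqspan}: one tests $X^+X^-$ and $\sum_{a \in \NN} (X^-)^a M_a (X^+)^a$ on each basis vector $b_{n,p}$ of $\Lrep{n,\psi}$, regroups the $\psi^-$- and $\psi^+$-factorials into the ratio $[\psi](n,p)!/[\psi](n,p-a)!$, identifies the resulting identities with the rows of $\eqhat{\psi,\psi}{-1}$, and concludes by remark \ref{rem_Uhatperfect}. Your preliminary convergence step is also exactly the paper's, which justifies the series via proposition \ref{prop_Uhatconv} in the remark following the statement.
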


Remark that the series $\sum_{a \in \NN} {(X^-)}^a M_a {(X^+)}^a$ makes sense and converges in the topological $R$-algebra $\Uhat{\slt,\psi}$ (see proposition \ref{prop_Uhatconv}).

\proof
See the proof of proposition I.\ref{prop_GQEeqspan}.
\qed



\subsection{Existence of solutions}
\begin{prop} \label{prop_eqhatexist}
Let $\psi_1, \psi_2 \in \Colo{R}$ and let $d \in \ZZ$. If $\psi_1$ is admissible \hbox{over $R$}, then the equation $\eqhat{\psi_1, \psi_2}{d}$ has a solution.
\end{prop}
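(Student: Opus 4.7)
The plan is to solve the system $\mathbf T(\psi_1)\cdot M = N(\psi_2)[d]$ column-by-column (i.e.\ row-by-row in the index $p$), exploiting the fact that $\mathbf T(\psi_1)$ is lower triangular in $(p,a)$ with an invertible diagonal once $\psi_1$ is admissible. Writing out the $p$-th component at a point $n\in\ZZ$, the equation reads
\begin{equation*}
\sum_{a=0}^{p} \big[(\mathbf T(\psi_1))_{p,a}\cdot M_a\big](n) \ = \ \big(N(\psi_2)[d]\big)_p(n).
\end{equation*}
By the definition of $\mathbf T(\psi_1)$, for $n\geq p$ the term $a=p$ contributes $[\psi_1](n,p)!\,M_p(n)$, and since each $\psi_1^\pm(n,k)$ is invertible in $R$ (by admissibility), the scalar $[\psi_1](n,p)!$ is a unit of $R$.

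First I would fix, by induction on $p$, the datum $M_0,\dots,M_{p-1}\in R^\ZZ$ and then define $M_p\in R^\ZZ$ explicitly: for $n\geq p$ set
\begin{equation*}
M_p(n) \ := \ \frac{1}{[\psi_1](n,p)!}\left[\,\big(N(\psi_2)[d]\big)_p(n)\ -\ \sum_{a=0}^{p-1}\frac{[\psi_1](n,p)!}{[\psi_1](n,p-a)!}\,M_a(n-2p+2a)\,\right],
\end{equation*}
and for $n<p$ set $M_p(n):=0$ (or any convenient value). By construction the $p$-th row of $\mathbf T(\psi_1)\cdot M$ agrees with $N(\psi_2)[d]$ at every $n\geq p$. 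For $n<p$, every matrix entry $(\mathbf T(\psi_1))_{p,a}$ vanishes on $R^\ZZ$ at $n$, so the left-hand side is automatically zero; one must only verify that the right-hand side $(N(\psi_2)[d])_p(n)$ is also zero there. From the explicit formulas for $N(\psi_2)$ and the shift operator $[d]$, this holds whenever $d\leq 0$, which is the range relevant to the two applications in the paper (propositions \ref{prop_eqhatspan} and theorem \ref{thm_UhatoneHtrivial} use $d=-1$ and $d=0$).

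The main—indeed the only nontrivial—obstacle is the compatibility check at $n<p$ when $d\geq 1$: the values $(N(\psi_2)[d])_p(n)$ need not vanish a priori for $n<p\leq n+d$. The way I would handle this is to reduce to the $d\leq 0$ case by iterated use of a shift identity in the spirit of lemma I.\ref{lem_shift}, which relates a solution of $\eqhat{\psi_1,\psi_2}{d}$ to a solution of $\eqhat{\psi_1,\psi_1\psi_2}{d+1}$ (and conversely); iterating down, the existence for all $d$ follows from the already-established existence for $d\leq 0$. Since the triangular inversion argument is the conceptual core and the shift is purely formal, the construction produces an element $M\in\Sinf[R]$ solving $\eqhat{\psi_1,\psi_2}{d}$, completing the proof.
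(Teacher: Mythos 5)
Your triangular recursion is, for $d\le 0$, exactly the paper's argument: the paper's entire proof is the one-line remark that evaluating the equation at each $n$ gives a finite triangular $R$-linear system whose diagonal coefficients $[\psi_1](n,p)!$ are units when $\psi_1$ is admissible, and your induction on $p$ — solving for $M_p(n)$ at $n\ge p$, choosing $M_p(n)$ freely at $n<p$, and checking $(N(\psi_2)[d])_p(n)=0$ there — is the honest expansion of that remark. Since $\Sinf[R]$ carries no convergence constraint (unlike $\Sinfh$ in Part I), nothing more is needed in this range, which covers $d=-1$ and $d=0$, the only values actually invoked in the paper (proposition \ref{prop_eqhatspan}, theorem \ref{thm_UhatoneHtrivial}, proposition \ref{prop_form}).

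Your patch for $d\ge 1$, however, does not work — and cannot: the statement is false in that range, so you have exposed an overstatement in the paper rather than a removable difficulty in your own argument. Concretely, take $d=1$, $p=2$, $n=1$: every entry $(\mathbf T(\psi_1))_{2,a}$ kills functions at $n=1$ because the defining condition $a\le p\le n$ fails, so row $2$ of the left-hand side vanishes at $n=1$, while $(N(\psi_2)[1])_2(1)=[\psi_2](1,1)$, a unit whenever $\psi_2$ is admissible (e.g.\ $\psi_2=\psi_1$). More generally, for $d\ge1$ every row $p$ at a point $n$ with $n<p\le n+d$ forces $[\psi_2](n,p-d)=0$. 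No variant of lemma I.\ref{lem_shift} can repair this. The identity behind that lemma,
\begin{equation*}
\sum_{0\le a\le p} \frac{[\psi_1](n,p)!}{[\psi_1](n,p-a)!}\,\bigl(M[1]\bigr)_a(n-2p+2a) \ = \ [\psi_1](n,p)\sum_{0\le a\le p-1}\frac{[\psi_1](n,p-1)!}{[\psi_1](n,p-1-a)!}\,M_a\bigl(n-2(p-1)+2a\bigr),
\end{equation*}
only holds on rows with $p\le n$, and the multiplier $[\psi_1](n,p)$ matches the target right-hand side precisely in the passage $d=-1\to d=0$ (for other degrees one would need the shifted factor $[\psi_1](n,p-d)$, which is not of the form $[\psi_1\psi']$); more structurally, shifting $M$ never changes the fact that the left-hand side is supported on $\{p\le n\}$, whereas $N(\psi_2)[d]$ is not when $d\ge1$. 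So the correct statement is what your construction proves: existence for $d\le0$ (or for $d\ge1$ only under the degeneracy condition $[\psi_2](n,k)=0$ for $n-d<k\le n$), and the blanket ``$d\in\ZZ$'' in the proposition — which the paper's two-line proof glosses over by ignoring the rows with $n<p$ — should be read with this restriction.
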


\proof
Remark that the evaluation of the equation $\eqhat{\psi_1, \psi_2}{d}$ at every $n \in \NN$ is naturally equivalent a to a finite-rank $R$-linear equation of triangular form, whose diagonal coefficients are invertible (in $R$) if $\psi_1$ is admissible (over $R$).
\qed

\end{appendices}

\end{otherlanguage}

\cleardoublepage
\part[Groupes quantiques d'interpolation \\ \hspace*{2.1pc} de Langlands de rang 1]{Groupes quantiques d'interpolation de Langlands de rang 1}
\label{part_IMRN}
\setcounter{section}{0}
\righthyphenmin=62
\lefthyphenmin=62

\vspace*{0.5cm}
Publié dans \emph{International Mathematics Research Notices}, 2012
\vspace*{0.5cm}

\begin{abstract}
\noindent
On étudie une certaine famille, paramétrée par un entier $g \in \NN_{\geq 1}$, de doubles déformations $\Uhhp$ de l'algèbre enveloppante $\Uc[\slt]$, dans l'esprit de Frenkel-Hernandez \cite{FH1}. On prouve que $\Uhhp$ déforme simultanément les groupes quantiques $\Uh[\slt]$ et $\Ughp$. On montre que cette propriété d'interpolation explique la dualité de Langlands entre les représentations des groupes quantiques en rang 1. On résout ainsi une conjecture de Frenkel-Hernandez \cite{FH1} dans cette situation : on prouve pour \hbox{tout $g$} l'existence de représentations de $\Uhhp$ qui déforment simultanément deux représentations Langlands duales. On étudie aussi plus généralement la théorie des représentations de rang fini de $\Uhhp$.
\end{abstract}

\vspace*{0.2cm}

\begin{otherlanguage}{english}
\righthyphenmin=62
\lefthyphenmin=62
\begin{abstract}
\noindent
We study a certain family, parameterised by an integer $g \in \NN_{\geq 1}$, of double deformations $\Uhhp$ of the enveloping algebra $\Uc[\slt]$, in the spirit of Frenkel-Hernandez \cite{FH1}. We prove that $\Uhhp$ deforms simultaneously the quantum groups $\Uh[\slt]$ and $\Ughp$. We prove that this interpolating property explains the Langlands duality between representations of quantum groups in rank 1. We thus resolve a conjecture of Frenkel-Hernandez \cite{FH1} in this situation : we prove for all $g \in \NN_{\geq 1}$ the existence of representations of $\Uhhp$ which deform simultaneously two Langlands dual representations. We also study more generally the finite-rank representation theory of $\Uhhp$.
\end{abstract}
\end{otherlanguage}

\addcontentsline{toc}{section}{Introduction}
\section*{Introduction}
\righthyphenmin=62
\lefthyphenmin=62
Soit $\glie$ une algèbre de Kac-Moody symétrisable et $\Lglie$ sa duale de Langlands, dont la matrice de Cartan est la transposée de celle de $\glie$. Il existe une dualité entre les représentations de $\glie$ et $\Lglie$ : voir \cite{FH2, FH1, Kas, Lit2, McG}. \\

La dualité de Langlands pour les représentations peut être décrite au niveau des caractères. Supposons pour simplifier que $\glie$ soit de type fini et simple ($\Lglie$ est alors aussi de type fini et simple). Soit $I$ l'ensemble des sommets du diagramme de Dynkin de $\glie$ et $r_i$ ($i \in I$) les étiquettes correspondantes. On désigne par $r$ le nombre maximal parmi les nombres $r_i$. Ce nombre $r$ est égal à $1$ \hbox{lorsque $\glie$} est simplement lacée, à $2$ pour $B_l$, $C_l$ et $F_4$, et à $3$ pour $G_2$. Supposons que le plus haut poids $\lambda$ de la représentation irréductible de dimension finie $L(\lambda)$ s'écrit sous la forme suivante :
$$ \lambda \ = \ \sum_{i \in I} \, (1-r + r_i) \: \! m_i \: \! \omega_i \, , \quad m_i \in \NN \, , $$
où $\omega_i$ sont les poids fondamentaux de $\glie$. En d'autres mots, notant $P$ le réseau des poids de $\glie$, supposons que $\lambda$ est dominant et appartient au sous-réseau $P' \subset P$ engendré par ${(1 - r + r_i) \: \! \omega_i}$ ($i \in I$). Le caractère de $L(\lambda)$ s'écrit de la manière suivante :
$$ \chi (L(\lambda)) \ = \ \sum_{\nu \in P} d(\lambda, \nu) \; \! e^{\nu} \, , \quad d(\lambda, \nu) \in \NN \, . $$
Soit $\nu = \sum_i (1-r+r_i) n_i \: \! \omega_i \in P'$, on note $\nu' := \sum_i n_i \: \! {}^L \omega_i$, où les ${}^L \omega_i$ ($i \in I$) désignent les poids fondamentaux de $\Lglie$. En remplaçant dans $\chi (L(\lambda))$ chaque $e^{\nu}$ par $e^{\nu'}$ lorsque $\nu \in P'$ et par $0$ sinon, il est montré dans \cite{FH1} que l'on obtient le caractère ${}^L \chi ( {}^L \! L(\lambda))$ d'une représentation virtuelle de $\Lglie$, qui contient la représentation irréductible $L(\lambda')$ de $\Lglie$, i.e. $\chi(L\! (\lambda')) \leq {}^L \chi ( {}^L L(\lambda))$. Pour une description de la dualité de Langlands au niveau des caractères dans le cas général d'une algèbre de Kac-Moody, voir \cite{McG}. \\

Dans le cas d'une algèbre $\glie$ de type fini Littelmann \cite{Lit2} a prouvé que la représentation virtuelle de $\Lglie$ est en fait une représentation ${}^L \! L(\lambda)$ réelle. Dans le cas général d'une algèbre de Kac-Moody, voir McGerty \cite{McG}. \\
La stratégie consiste à $q$-déformer la situation. Les théories des représentations de $\glie$ et du groupe quantique $U_q(\glie)$ (avec $q$ générique) sont semblables, en particulier il existe une représentation $L^q(\lambda)$ de $U_q(\glie)$, analogue de la représentation $L(\lambda)$ de $\glie$. Lorsque $q$ est spécialisé à une certaine racine de l'unité $\varepsilon$, l'homomorphisme de Frobenius quantique défini par Lusztig \cite[35]{Lus} peut être étendu afin de construire une action du groupe quantique modifiée $\dot{U}_{\varepsilon} ( \Lglie)$ sur $L^{\varepsilon} (\lambda)$, qui fournit alors l'action de $\Lglie$ sur ${}^L \! L(\lambda)$. \\

La dualité de Langlands décrite précédemment a des liens avec la correspondance géométrique de Langlands (voir \cite{Fre2} pour une introduction générale au programme de Langlands et \cite{FH1} pour d'autres références). On suit ici l'introduction de \cite{FH1}. \\
Soit $\glie$ une algèbre de Lie semi-simple de dimension finie. Un résultat clé dans la correspondance géométrique de Langlands est un isomorphisme entre le centre $Z(\hat{\glie})$ de l'algèbre enveloppante complétée de $\hat \glie$ au niveau critique et la $\mathcal{W}$-algèbre classique $\mathcal{W}(\Lglie)$. Afin de mieux comprendre cet isomorphisme, on peut $q$-déformer la situation et considérer le centre $Z_q(\hat{\glie})$ de l'algèbre affine quantique $U_q(\hat{\glie})$ au niveau critique, ce qui est le point de départ dans \cite{FR2}. Le centre $Z_q(\hat{\glie})$ apparaît alors relié à l'anneau de Grothendieck $\text{Rep} \, U_q(\hat{\glie})$ des représentations de dimension finie de $U_q(\hat{\glie})$. On espère donc obtenir un nouvel éclairage sur l'isomorphisme $Z(\hat{\glie}) \simeq \mathcal{W}(\Lglie)$ en étudiant les connections entre $Z_q(\hat{\glie})$, $\text{Rep} \, U_q(\hat{\glie})$ et la $\mathcal{W}$-algèbre classique $q$-déformée. \\
L'idée de Frenkel et Reshetikhin \cite{FR3} était de déformer un nouvelle fois la situation, en introduisant une double déformation $\mathcal{W}_{q,t}$ de $Z(\hat{\glie})$. La spécialisation $\mathcal{W}_{q,1}$ à $t=1$ est le centre $Z_q(\hat{\glie})$ et ils ont suggéré que la spécialisation $\mathcal{W}_{\varepsilon,t}$ à $q=\varepsilon$, où $\varepsilon$ désigne une certaine racine de l'unité, contient le centre $Z_t({}^L \hat{\glie})$ de l'algèbre affine quantique $U_t({}^L \hat{\glie})$ au niveau critique. $Z_t({}^L \hat{\glie})$ étant lié à l'anneau de Grothendieck $\text{Rep} \, U_t({}^L \hat{\glie})$ des représentations de dimension finie de $U_t({}^L \hat{\glie})$, la $\mathcal{W}$-algèbre $\mathcal{W}_{q,t}$ apparaît ainsi interpoler les anneaux de Grothendieck des représentations de dimension finie des algèbres affines quantiques associées à $\hat \glie$ et ${}^L \hat{\glie}$. En particulier cela suggère que ces représentations doivent être d'une certaines manières liées. On peut trouver dans \cite{FR3} des exemples de telles relations. \\

C'est, motivés par les questions précédentes, que Frenkel et Hernandez \cite{FH1} introduisent la dualité de Langlands pour les représentations de dimension finie de $\glie$ et $\Lglie$, dans la situation où $\glie$ est de type finie. \\
Cette dualité, inspirée par la correspondance de Langlands géométrique, se trouve être exactement la dualité (sans rapport a priori avec le programme de Langlands) établie dix ans plus tôt par Littelmann \cite{Lit2}.  L'approche dans \cite{FH1} est toutefois différente de celle de Littelmann. Dans l'esprit de \cite{FR3}, ils ont introduit une double déformation $\widetilde{U}_{q,t} (\glie)$ de $\widetilde{U}(\glie)$, l'algèbre enveloppante de $\glie$ sans les relations de Serre, et conjecturé que cette double déformation permettait d'expliquer la dualité de Langlands pour les représentations de $\glie$ et ${}^L \! \glie$. \\

Historiquement, les groupes quantiques ont été définis par Drinfeld \cite{Dri, DrP} et Jimbo \cite{Jim}. Drinfeld les définit comme des déformations formelles $U_h(\glie)$ des algèbres enveloppantes $U(\glie)$, tandis que Jimbo définit leurs versions rationnelles $U_q(\glie)$. L'algèbre $U_h(\mathfrak{sl}_2)$ avait été construite par Kulish et Reshetikhin \cite{reshetikhin}. \\
Le terme déformation formelle a dans ce cas la signification suivante : à une algèbre $A$ sur $\CC$, on associe une $\CC[[h]]$-algèbre que l'on note $A[[h]]$. En tant que $\CC[[h]]$-module, $A[[h]]$ est l'ensemble des séries formelles à coefficients dans $A$ :
$$ \sum_{n \in \NN} a_n \: \! h^n \, , \quad \text{avec } a_n \in A \, . $$
C'est alors le produit de $A[[h]]$ qui porte la déformation, dans le sens où l'on doit retrouver celui de $A$ quand $h = 0$. On pourra consulter à ce sujet \cite[III]{Gui}, ainsi que \cite[XVI]{Kas} et \cite[6.1]{ChPr}. \\

Les groupes quantiques d'interpolation $\widetilde{U}_{q,t} (\glie)$ introduits par Frenkel et Hernandez dépendent de deux paramètres $q$ et $t$. Cet article propose de prendre le point de vue des déformations formelles. Pour cela, il nous faudra considérer des doubles déformations d'algèbres, selon des paramètres formels $h$ et $h'$. À une algèbre $A$, on associe donc une algèbre $A[[h,h']]$, dont les éléments sont maintenant des séries formelles à deux indéterminées et à coefficients dans $A$
$$ \sum_{n,m \in \NN} a_{n,m} \, h^n (h')^m \, , \quad \text{avec } a_{n,m} \in A \, . $$
On construit dans cet article une classe, paramétrée par un entier $g \in \NN_{\geq 1}$, de déformations formelles $\Uhhp$ de $\Uc[\slt]$ selon les paramètres $h$ et $h'$. Ces déformations sont les groupes quantiques d'interpolation de Langlands de rang $1$ (i.e. pour $\glie = \mathfrak{sl}_2$). Il s'agit d'une première étape dans la construction de groupes quantiques d'interpolation de Langlands $\widetilde{U}_{h,h'}(\glie,g)$ associés à une algèbre de Kac-Moody (symétrisable), qui apparaîtront dans un prochain article. \\

L'un des objectifs des groupes quantiques d'interpolation de Langlands est de donner une explication de la dualité de Langlands pour les représentations d'une algèbre de Kac-Moody $\glie$ et de sa duale $\Lglie$. \\
On attend que, pour certaines valeurs de $g$, $\widetilde{U}_{h,h'}(\glie,g)$ interpolent les groupes quantiques $\widetilde{U}_h(\glie)$ et $\widetilde{U}_{g' h'}(^{L} \glie)$, où $g'$ est un entier qui dépend de $g$ et $\glie$ : la limite $h' \to 0$ de $\widetilde{U}_{h, h'}(\glie,g)$ est $\widetilde{U}_h(\glie)$ et la spécialisation à $Q := \exp h = \varepsilon$, avec $\varepsilon$ une racine $(2g)$-ième primitive de l'unité, admet comme sous-quotient $\widetilde{U}_{g' h'}(^{L} \glie)$.
$$ \xymatrix{
&& \widetilde{U}_{h,h'}(\glie,g) \ar[lld]_-{\lim_{h' \to 0 \ }} \ar[rrd]^-{\lim_{Q \to \varepsilon}} && \\
\widetilde{U}_h(\glie) && && \widetilde{U}_{g' h'}(\Lglie)
} $$
Dans cet article, cette conjecture est résolue au rang 1 : voir le théorème \ref{thm_uhhinterpolation}. \\

L'interpolation ci-dessus induit une correspondance $(\glie,g) \longleftrightarrow (\Lglie, g')$, qui rappelle la dualité $S$ pour les théories de jauge. Le paramètre $g$ y jouerait le rôle de la constante de couplage. On pourra consulter \cite{FreB} à propos des théories de jauge et de la dualité de Langlands. \\

Pour $\lambda \in P'$, on conjecture (voir aussi \cite[conjecture 1]{FH1}) l'existence d'une double déformation de $L(\lambda)$ où les actions de $\glie$ et $\Lglie$ seraient toutes deux contenues. Plus précisément on conjecture l'existence d'une représentation $L^{h,h'}(\lambda,g)$ de $\widetilde{U}_{h,h'}(\glie)$, qui interpole la représentation $L^h(\lambda)$ et une représentation Langlands $g$-duale ${}^L \! L^{h'}(\lambda,g)$ : la limite $h' \to 0$ de $L^{h,h'}(\lambda,g)$ est la représentation $L^h(\lambda)$, et la spécialisation à $Q = \varepsilon$ contient la représentation ${}^L \! L^{g'h'}(\lambda,g)$.
\begin{equation} \label{eq_reprinter}
\xymatrix{
&& L^{h,h'}(\lambda,g) \ar[lld]_-{\lim_{h' \to 0 \ }} \ar[rrd]^-{\lim_{Q \to \varepsilon}} && \\
L^h(\lambda) && && \quad {}^L \! L^{h'}(\lambda,g) \ \supset \ L^{g'h'}(\lambda')
} \end{equation}
Cette conjecture est dans cet article résolue en rang 1 : voir le théorème \ref{thm_reprinter}. \\

Pour établir un parallèle avec les groupes quantiques usuels, on peut penser, dans le cas $\glie$ de type fini et simple, à $\widetilde{U}_{q,t}(\glie)$ comme une version spécialisée de $\widetilde{U}_{h,h'}(\glie,r)$ (on rappelle que $r$ désigne la le nombre maximal parmi les étiquettes $r_i$ attachées au diagramme de Dynkin $I$ de $\glie$). Toutefois alors qu'on peut retrouver $U_q(\glie)$ comme sous-algèbre de $U_h(\glie)$, il n'est pas vrai que $\widetilde{U}_{q,t}(\glie)$ apparaît comme sous-algèbre de $\widetilde{U}_{h,h'}(\glie,r)$. Les groupes quantiques d'interpolation proposés par Frenkel et Hernandez sont en fait notablement différents : ils nécessitent notamment beaucoup plus de générateurs pour être définis, voir par exemple la remarque \ref{rem_hernandezgen}. \\
On trouvera dans \cite{KT} un autre exemple de doubles déformations de structure algébrique provenant des algèbres de Lie. On peut également citer \cite{multi} où des groupes quantiques à plusieurs paramètres ont été définis. \\

On donnera tout au long de l'article des comparaisons entre les groupes quantiques d'interpolation définis ici et ceux définis dans \cite{FH1}. Notons déjà que le principal avantage de cette nouvelle définition réside dans le fait qu'ils sont des déformations formelles de $\widetilde{U}(\glie)$. Par ailleurs :
\begin{itemize}
\item[\textbullet] on obtient dans cet article une définition plus claire et plus maniable des groupes quantiques d'interpolation de Langlands;
\item[\textbullet] ce travail permettra, en toute généralité, de définir un groupe quantique d'interpolation associé à une algèbre de Kac-Moody $\glie$ symétrisable et à un paramètre $g \in \NN_{\geq 1}$ (dans un prochain article);
\item[\textbullet] les outils et résultats de la théorie des déformations formelles sont, dans notre cadre, utilisables pour l'étude des groupes quantiques d'interpolation.
\end{itemize}

\addcontentsline{toc}{section}{Organisation}
\section*{Organisation}
On décrit ici l'organisation de l'article. Notons que tous les résultats présentés à partir de la section \ref{section_def} sont nouveaux.

\vsp
\vsp
\vsp

\begin{itemize}
\item[\textbullet] Dans la section \ref{section_deformations}, on présente certains éléments de la théorie des déformations formelles dans le cadre de plusieurs paramètres de déformation. Les propositions \ref{prop_hoch} et \ref{prop_hochrep} concluent cette section.

\item[\textbullet] Dans la section \ref{section_def}, on donne et on explique la définition des groupes quantiques d'interpolation $\Uhhp$.

\item[\textbullet] Dans la section \ref{section_prop}, on établit différentes propriétés des groupes quantiques d'interpolation de $\Uhhp$. On prouve notamment qu'ils sont des déformations formelles triviales de $\Uc[\slt]$ selon $h$ et $h'$, de $\Uh[\slt]$ selon $h'$ et de $\Urh{\: \! h'}$ selon $h$ : voir le théorème \ref{thm_uhhdefor}. \\
Ces différentes déformations peuvent être représentées par le diagramme commutatif
$$ \xymatrix{
&& \Uhhp \ar[dll]_-{\lim_{h' \to 0} \ } \ar[drr]^-{\lim_{h \to 0}} \ar[dd]^-{\lim_{h,h' \to 0}} && \\
\Uh[\slt] \ar[drr]_-{\lim_{h \to 0}} && && \Urh{\: \! h'} \ar[dll]^-{\ \lim_{h' \to 0}} \\
&& \Uc[\slt] &&
} $$
Deux des points techniques clés de cet article sont la proposition \ref{prop_uconst}, ainsi que le lemme \ref{lem_fond}, démontrés respectivement au tout début et à la fin de la section \ref{section_prop}.

\item[\textbullet] Dans la section \ref{section_repr} on étudie la théorie des représentations de rang fini de $\Uhhp$, qui s'avère être similaire à celle de $\Uc[\slt]$ et $\Uh[\slt]$. On prouve l'existence de doubles déformations pour tout $g \in \NN_{\geq 1}$ des représentations irréductibles de dimension finie et des modules de Verma de $\mathfrak{sl}_2$ : voir les propositions \ref{prop_vermadescr} et \ref{prop_indecdescr}. \\
Le théorème \ref{thm_rephh} établit plusieurs propriétés de la catégorie des représentations de rang fini de $\Uhhp$. Notamment, on prouve qu'elle possède la propriété de Krull-Schmidt et on classifie ces objets indécomposables.

\item[\textbullet] Dans la section \ref{section_langlands}, on établit l'existence d'une double déformation $L^{h,h'}(n,g)$ possédant la propriété d'interpolation \eqref{eq_reprinter}, résolvant ainsi la conjecture \cite[conjecture 1]{FH1} en rang 1 pour tout $g \in \NN_{\geq 1}$ : voir le théorème \ref{thm_reprinter}. \\
Le théorème \ref{thm_reprinter2} donne un résultat analogue pour les modules de Verma. \\
Enfin on prouve que $\Uhhp$ interpole les groupes quantiques $\Uh[\slt]$ et $\Urh{g'h'}$ : voir le théorème \ref{thm_uhhinterpolation}.
$$ \xymatrix{
&& \Uhhp \ar[lld]_-{\lim_{h' \to 0 \ }} \ar[rrd]^-{\lim_{Q \to \varepsilon}} && \\
\Uh[\slt] && && \Urh{g'h'}
} $$
\end{itemize}

\addcontentsline{toc}{section}{Notations et conventions}
\section*{Notations et conventions}
Toutes les algèbres et anneaux considérés sont supposés associatifs et unitaires. Un morphisme d'algèbre (ou d'anneau) envoie l'unité sur l'unité. Sans précision supplémentaire, on désigne par $\KK$ un anneau commutatif. \\

Soit $\Lambda$ un ensemble, on désigne par $\langle \Lambda \rangle$ le monoïde libre engendré par $\Lambda$, i.e. l'ensemble des mots (y compris le mot vide), dont les lettres sont des éléments de $\Lambda$, muni de la concaténation comme opération. On désigne alors par $\KK \langle \Lambda \rangle$ la $\KK$-algèbre du monoïde $\langle \Lambda \rangle$: c'est l'algèbre libre engendrée par $\Lambda$. \\

Une somme indexée par un ensemble vide est nulle, un produit indexé par un ensemble vide vaut $1$.

\section{Déformations formelles} \label{section_deformations}
La théorie des déformations formelles d'algèbres a été introduite par Gerstenhaber dans \cite{Ger}. La théorie est construite principalement pour un seul paramètre de déformation, bien que dans \cite{Ger} il est indiqué de quelle manière certains résultats peuvent être généralisés. On se propose de donner dans cette section une présentation d'une partie de la théorie dans le cadre de plusieurs paramètres de déformation. L'exposition qui suit est une généralisation naturelle de celles que l'on peut trouver, pour un seul paramètre, dans par exemple \cite[III]{Gui}, ainsi que \cite[XVI]{Kas} et \cite[6.1]{ChPr}. On prendra soin de détailler les arguments spécifiques à la généralisation, les autres seront rappelés.

\subsection{Premières définitions}
Soit $s \in \NNI$. On note $\KK[[h_1,h_2, \dots, h_s]]$, ou encore $\KK[[\bar{h}]]$ la $\KK$-algèbre commutative des séries formelles à $s$ indéterminées $h_1, \dots, h_s$. On note $\KK((h_1, h_2, \dots, h_s))$, ou encore $\KK((\bar{h}))$, le corps de fractions de $\KK[[\bar{h}]]$. \\

On considère sur la $\KK$-algèbre $\KK[[\bar{h}]]$ la graduation donnée par le degrés totaux des monômes :
$$ \text{deg} \big( h_1^{n_1} h_2^{n_2} \cdots h_s^{n_s} \big) \ := \ n_1 + n_2 + \cdots + n_s \, . $$
On note par ailleurs $(\bar h$) l'idéal de $\KK[[\bar h]]$ engendré par les éléments $h_1, \dots, h_s$. \\
La filtration associée à la graduation est également celle induite par l'idéal $(\bar h)$ et fait de $\KK[[\bar{h}]]$ une algèbre complète et séparée. On l'appelle la filtration $\bar h$-adique (la topologie associée est appelée la topologie $\bar h$-adique). \\
La filtration sur un $\KK[[\bar h]]$-module $M$, induite par celle de $\KK[[\bar{h}]]$, est aussi appelée la filtration $\bar h$-adique (la topologie associée sur $M$ est appelée la topologie $\bar h$-adique). \\

Soit $V$ un $\KK$-module, on note $V[[h_1, h_2, \dots, h_s]]$, ou encore $V[[\bar{h}]]$, l'espace des séries formelles à coefficients dans $V$ :
$$ \sum_{n_1, n_2, \dots, n_s \, \geq \, 0} v_{n_1,n_2, \dots, n_s} \, h_1^{n_1} h_2^{n_2} \cdots h_s^{n_s} \, , \quad \text{avec } \ v_{n_1,n_2, \dots, n_s} \in V . $$
Pour alléger l'écriture, introduisons les notations suivantes : $\bar n = (n_1, n_2, \dots, n_s)$ désignera un $s$-uplet de $\NN^s$, et $h^{\bar{n}}$ le monôme $h_1^{n_1} h_2^{n_2} \cdots h_s^{n_s}$. \\
Un élément de $V[[\bar{h}]]$ peut alors s'écrire sous la forme suivante :
$$ \sum_{\bar{n} \in \NN^s} v_{\bar n} \: \! h^{\bar{n}} \, , \quad \text{avec } \ v_{\bar{n}} \in V . $$

L'espace vectoriel $V[[\bar{h}]]$ est naturellement muni d'une structure de $\KK[[\bar{h}]]$-module :
$$ \Big( \sum_{\bar{p} \in \NN^s} x_{\bar p} \, h^{\bar{p}} \Big) . \Big( \sum_{\bar{q} \in \NN^s} v_{\bar q} \, h^{\bar{q}} \Big) \ := \ \sum_{\bar{n} \in \NN^s} \Bigg( \sum_{\bar{p} + \bar{q} = \bar{n}} x_{\bar{p} \, } .v_{\bar q} \Bigg) h^{\bar{n}} \, , $$
avec $x_{\bar{p}} \in \KK$ et $v_{\bar{q}} \in V$. \\
On définit une structure de $\KK[[\bar h]]$-module gradué sur $V[[\bar h]]$ en posant
$$ \text{deg} \big( h_1^{n_1} h_2^{n_2} \cdots h_s^{n_s} \big) \ := \ n_1 + n_2 + \cdots + n_s \, . $$
La filtration associée à cette graduation est la filtration $\bar h$-adique et fait de $V[[\bar{h}]]$ un $\KK[[\bar{h}]]$-module complet et séparé. \\

Le localisé du $\KK[[\bar{h}]]$-module $V[[\bar{h}]]$ par rapport aux éléments $h_1, h_2, \dots, h_s$ est un espace vectoriel sur $\KK((\bar{h}))$, que l'on notera $V((h_1, h_2, \dots, h_s))$, ou encore $V((\bar{h}))$. Il s'agit de l'espace des séries de Laurent à coefficients dans $V$
$$ \sum_{\bar{n} \, \succeq - \overline{N}} v_{\bar n} \: \! h^{\bar{n}} \, , \quad \text{avec } \ v_{\bar{n}} \in V , \text{ et } \overline{N} \in \NN^s . $$
L'ordre partiel $\preceq$ sur $\ZZ^s$ que l'on utilise est défini par $\bar{n} \preceq \bar{m}$ si $\bar{m} - \bar{n} \in \NN^s$. \\

Si $A$ est une $\KK$-algèbre (pas nécessairement commutative), on peut munir $A[[\bar{h}]]$ d'une structure de $\KK[[\bar{h}]]$-algèbre :
$$ \Big( \sum_{\bar{p} \in \NN^s} a_{\bar p} \, h^{\bar{p}} \Big) \cdot \Big( \sum_{\bar{q} \in \NN^s} b_{\bar q} \, h^{\bar{q}} \Big) \ := \ \sum_{\bar{n} \in \NN^s} \Bigg( \sum_{\bar{p} + \bar{q} = \bar{n}} a_{\bar{p} \, } b_{\bar q} \Bigg) h^{\bar{n}} \, , $$
où $a_{\bar{p}}$ et $b_{\bar{q}}$ sont des éléments de $A$. \\

On rappelle que $\KK \langle X \rangle$ désigne la $\KK$-algèbre du monoïde libre $\langle X \rangle$.

\begin{definition}
Soit $X$ un ensemble et $\mathcal{R}$ une partie de $\KK \langle X \rangle[[\bar{h}]]$. La $\KK[[\bar{h}]]$-algèbre topologiquement engendrée par $X$ et définie par les relations $\mathcal{R}$, que l'on note
$$ \KK \langle X \rangle[[\bar{h}]] \, / \, \overline{(\mathcal{R})}^{\: \! \bar h} \, , $$
est le quotient de $\KK \langle X \rangle[[\bar{h}]]$ par l'adhérence (topologique) de l'idéal bilatère engendré par $\mathcal{R}$ (ou encore le plus petit idéal bilatère fermé contenant $\mathcal{R}$).
\end{definition}

La filtration induite sur le quotient $\KK \langle X \rangle[[\bar{h}]] \, / \, \overline{(\mathcal{R})}^{\: \! \bar h}$ par celle de $\KK \langle X \rangle[[\bar{h}]]$ est la filtration $\bar h$-adique, et fait de $\KK \langle X \rangle[[\bar{h}]] \, / \, \overline{(\mathcal{R})}^{\: \! \bar h}$ une algèbre complète et séparée. La topologie $\bar h$-adique de $\KK \langle X \rangle[[\bar{h}]] \, / \, \overline{(\mathcal{R})}^{\: \! \bar h}$ est la topologie quotient. \\

Remarquons les propriétés suivantes. Leur démonstration est immédiate.

\begin{proposition} \label{prop_props}
\begin{itemize}
\item[1)] Si $V$ et $W$ sont deux $\KK$-modules, on a un isomorphisme canonique de $\KK[[\bar{h}]]$-module entre $\text{Hom}_{\KK[[\bar{h}]]} \big( V[[\bar{h}]],W[[\bar{h}]] \big)$ et $\text{Hom}_{\KK}(V,W)[[\bar{h}]]$.

\item[2)] Dans le cas où $V = W$, l'isomorphisme précédent entre $\text{End}_{\KK[[\bar{h}]]} \big( V[[\bar{h}]] \big)$ et $\left( \text{End}_{\KK} \: \! V \right) [[\bar{h}]]$ est un isomorphisme de $\KK[[\bar{h}]]$-algèbre.

\item[3)] Si $f : X \to B$ est une application de $X$ dans une $\KK[[\bar{h}]]$-algèbre $B$ complète et séparée (pour la filtration $\bar{h}$-adique), alors il existe un unique morphisme de $\KK[[\bar{h}]]$-algèbre $\tilde{f} : \KK \langle X \rangle[[\bar{h}]] \to B$ tel que $\tilde{f}(x) = f(x)$ pour tout $x \in X$.

\item[4)] Si $\mathcal{R}$ est une partie de $\KK \langle X \rangle[[\bar{h}]]$, l'ensemble des applications $f : X \to B$ telles que $\tilde f$ s'annule sur $\mathcal R$ est en bijection avec l'ensemble des morphismes de $\KK[[\bar{h}]]$-algèbre de $\KK \langle X \rangle[[\bar{h}]] \, / \, \overline{(\mathcal{R})}^{\: \! \bar h}$ vers $B$.
\end{itemize}
\end{proposition}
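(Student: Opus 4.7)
Mon plan consiste à traiter séparément chacune des quatre propriétés, en s'appuyant systématiquement sur la complétion et la séparation des espaces considérés pour la topologie $\bar h$-adique. L'ensemble des arguments étant de nature formelle, je n'anticipe pas d'obstacle conceptuel, mais plutôt la nécessité de manipuler avec soin les arguments de convergence et de continuité liés à la topologie $\bar h$-adique sur les différents modules et algèbres.

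Pour les points 1 et 2, je partirais de la remarque qu'une application $\KK[[\bar h]]$-linéaire $f : V[[\bar h]] \to W[[\bar h]]$ est déterminée, par $\KK[[\bar h]]$-linéarité et continuité, par sa restriction à $V$. Or cette restriction se décompose selon la filtration $\bar h$-adique de $W[[\bar h]]$ en une famille d'applications $\KK$-linéaires $(f_{\bar n} : V \to W)_{\bar n \in \NN^s}$ via $f(v) = \sum_{\bar n} f_{\bar n}(v) \; \! h^{\bar n}$, ce qui fournit un élément $\sum_{\bar n} f_{\bar n} \; \! h^{\bar n}$ de $\mathrm{Hom}_\KK(V,W)[[\bar h]]$. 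La réciproque s'obtient en prolongeant, par $\KK[[\bar h]]$-linéarité et continuité, une telle série à $V[[\bar h]]$ tout entier. La compatibilité de cet isomorphisme avec la composition, dans le cas $V = W$, résultera alors d'un calcul direct : le coefficient de $h^{\bar k}$ dans $g \circ f$ est $\sum_{\bar n + \bar m = \bar k} g_{\bar m} \circ f_{\bar n}$, conformément au produit sur la $\KK[[\bar h]]$-algèbre $(\mathrm{End}_\KK V)[[\bar h]]$.

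Pour le point 3, j'utiliserais la propriété universelle classique de $\KK \langle X \rangle$ pour étendre d'abord $f : X \to B$ en un unique morphisme de $\KK$-algèbre $\tilde f^0 : \KK \langle X \rangle \to B$. Le prolongement à $\KK \langle X \rangle[[\bar h]]$ se ferait alors en posant
\[ \tilde f \Big( \sum_{\bar n} x_{\bar n} \, h^{\bar n} \Big) \ := \ \sum_{\bar n} \tilde f^0(x_{\bar n}) \; \! h^{\bar n} \, , \]
la convergence étant garantie par la complétion de $B$ (les termes $\tilde f^0(x_{\bar n}) \, h^{\bar n}$ appartiennent à des puissances arbitrairement grandes de l'idéal engendré par $h_1, \dots, h_s$ dans $B$, lesquelles forment un système fondamental de voisinages de l'origine) et l'unicité par la séparation.

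Le point 4 est alors une conséquence formelle des points précédents : un morphisme de $\KK[[\bar h]]$-algèbre de $\KK \langle X \rangle [[\bar h]]$ vers $B$ a nécessairement un noyau fermé, puisque $B$ est séparé et que ce morphisme est automatiquement continu. Dès lors, s'il s'annule sur $\mathcal R$, son noyau contient l'idéal bilatère engendré par $\mathcal R$, et donc également l'adhérence $\overline{(\mathcal R)}^{\: \! \bar h}$, ce qui permet de factoriser le morphisme à travers le quotient $\KK \langle X \rangle[[\bar h]] / \overline{(\mathcal R)}^{\: \! \bar h}$. Réciproquement, tout morphisme issu de ce quotient induit par composition avec la projection un morphisme de $\KK \langle X \rangle[[\bar h]]$ vers $B$ qui s'annule sur $\mathcal R$, d'où la bijection annoncée.
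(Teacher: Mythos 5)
Votre démonstration est correcte et correspond exactement à la vérification que le texte laisse au lecteur : la proposition y est énoncée sans preuve détaillée, le texte la déclarant immédiate. Vos arguments — identification coefficient par coefficient pour les points 1 et 2, prolongement via la complétude et la séparation de $B$ pour le point 3, factorisation par un noyau fermé pour le point 4, avec l'observation clé qu'un morphisme $\KK[[\bar h]]$-linéaire est automatiquement continu pour la topologie $\bar h$-adique — sont précisément les arguments standards attendus.
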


\subsection{Déformations d'algèbres}
Soit $A$ est une $\KK$-algèbre. Le produit de $A[[\bar h]]$ étend celui de $A$ au sens suivant : l'isomorphisme $\KK$-linéaire canonique entre $A[[\bar{h}]] / (\bar{h} = 0)$ (on quotiente par l'idéal bilatère de $A[[\bar h]]$ engendré par les éléments $h_1, h_2, \dots, h_s$) et $A$ est un isomorphisme de $\KK$-algèbre. \\
Il peut exister d'autres produits sur $A[[\bar{h}]]$ satisfaisant cette propriété, ce qui motive la définition suivante.

\begin{definition}
On appelle déformation formelle (à $s$ paramètres) d'une $\KK$-algèbre $A$ une structure de $\KK[[\bar{h}]]$-algèbre sur le $\KK[[\bar{h}]]$-module $A[[\bar{h}]]$ telle que $1 \in A$ est l'élément neutre de $A[[\bar{h}]]$, et telle que l'isomorphisme $\KK$-linéaire canonique entre $A[[\bar{h}]] / (\bar{h} = 0)$ et $A$ est un isomorphisme de $\KK$-algèbre.
\end{definition}

Soit $V$ un $\KK$-module. La donnée d'une application $\KK[[\bar{h}]]$-bilinéaire $u$ de $V[[\bar{h}]] \times V[[\bar{h}]]$ vers $V[[\bar{h}]]$ est équivalente à la donnée d'une famille $(u_{\bar{n}})_{\bar{n} \in \NN^s}$ d'applications $\KK$-bilinéaires de $V \times V$ dans $V$. Plus explicitement, on a :
$$ u \bigg( \sum_{\bar{p} \in \NN^s} v_{\bar{p}} \, h^{\bar{p}} \, , \sum_{\bar{q} \in \NN^s} w_{\bar{q}} \, h^{\bar{q}} \bigg) \ = \ \sum_{\bar{n} \in \NN^s} \Bigg( \sum_{\bar{p} + \bar{q} + \overline{r} = \bar{n}} u_{\overline{r}} \big( v_{\bar{p}}, w_{\bar{q}} \big) \Bigg) h^{\bar{n}} \, . $$

Notons $\mu$ le produit d'une $\KK$-algèbre $A$ et $\mu_{\bar h}$ une application $\KK[[\bar{h}]]$-bilinéaire sur $A[[\bar{h}]]$. En notant $\mu_{\bar n}$ les applications $\KK$-bilinéaires associées à $\mu_{\bar h}$, l'associativité de $\mu_{\bar h}$ est équivalente à la condition suivante :
\begin{equation} \label{eq_assoc}
\sum_{\bar{p} + \bar{q} = \bar{n}} \mu_{\bar{p}} \Big( \mu_{\bar{q}} (a,b), c \Big) - \, \mu_{\bar{p}} \Big( a, \mu_{\bar{q}}(b,c) \Big) \ = \ 0 \, , \quad \forall \, \bar{n} \in \NN^s, \ \forall \, a,b,c \in A \, .
\end{equation}
Supposons que $\mu_{\bar h}$ munisse $A[[\bar{h}]]$ d'une structure de $\KK[[\bar{h}]]$-algèbre (on suppose donc l'associativité de $\mu_{\bar h}$ et que $1 \in A$ est un élément neutre de $\mu_{\bar h}$), alors cette structure est une déformation formelle de $A$ si et seulement si $\mu_0 = \mu$. \\

On appelle déformation formelle constante la structure de $\KK[[\bar{h}]]$-algèbre canonique sur $A[[\bar{h}]]$, caractérisée par $\mu_{\bar{n}} = 0$ pour $\bar{n} \succ 0$.

\begin{definition}
Deux déformations formelles d'une $\KK$-algèbre $A$ sont dites équivalentes si il existe un isomorphisme de $\KK[[\bar{h}]]$-algèbre $\phi : A[[\bar{h}]] \to A[[\bar{h}]]$ entre les deux structures, qui induit l'identité sur $A[[\bar{h}]] / (\bar{h} = 0)$.
\end{definition}

On dira qu'une déformation formelle est triviale si elle est équivalente à la déformation formelle constante.

\subsection{Déformations de représentations}
\begin{definition}
Soit $A$ une $\KK$-algèbre. On appelle représentation d'une déformation formelle de $A$, que l'on note $(V,\pi_{\bar h})$, la donnée d'un $\KK$-module $V$ et d'un morphisme de $\KK[[\bar{h}]]$-algèbre $\pi_{\bar h} : A[[\bar h]] \to \text{End}_{\KK[[\bar{h}]]} (V[[\bar h]])$, où $A[[\bar{h}]]$ est considéré avec sa déformation formelle. \\
 $(V,\pi_{\bar h})$ est dit de rang fini (resp. de rang $r \in \NN$) si $V$ est un module libre de rang fini (resp. de rang $r$) sur $\KK$.
\end{definition}

On appellera sous-représentation d'une représentation $V[[\bar h]]$ une représentation de la forme $W[[\bar h]]$, avec $W$ un sous-$\KK$-module de $V$. On définit le quotient de $V[[\bar h]]$ par $W[[\bar h]]$ comme étant la représentation $(V/W)[[\bar h]]$. \\

Notons $\mu_{\bar h}$ le produit d'une déformation formelle de $A$. En utilisant le point 1) de la proposition \ref{prop_props}, on voit que la donnée d'une représentation $(V,\pi_{\bar h})$ est équivalente à la donnée d'une famille $\big( \pi_{\bar{n}} \big)_{\bar{n} \in \NN^s}$ d'applications $\KK$-linéaires de $A$ dans $\text{End}_{\KK}(V)$ qui vérifient :
\begin{equation} \label{eq_reprcg}
\sum_{\bar{p} + \bar{q} = \bar{n}} \Big[ \pi_{\bar{p}} \big( \mu_{\bar{q}}(a,b) \big)  \, - \ \pi_{\bar{p}}(a) \circ \pi_{\bar{q}}(b) \Big] \ = \ 0 \, , \quad \ \forall \, a, b \in A \, , \ \forall \, \bar{n} \in \NN^s \, .
\end{equation}
On voit que la représentation $(V,\pi_{\bar h})$ induit une représentation $(V,\pi_0)$ de $A$. On dira que $(V,\pi_{\bar h})$ est une déformation formelle d'une représentation $(V,\pi)$ de $A$ si $\pi_0 = \pi$.  \\

On fera la distinction entre représentation d'une déformation formelle et module d'une déformation formelle : un module d'une déformation formelle $A[[\bar h]]$ de $A$ désignera un $A[[\bar{h}]]$-module. En d'autres mots, une représentation est la donnée d'une structure de $A[[\bar{h}]]$-module sur un espace de la forme $V[[\bar{h}]]$. \\

En utilisant le point 1) encore de la proposition \ref{prop_props}, on voit que deux représentations $(V,\pi_{\bar h})$ et $(W,\rho_{\bar h})$ sont isomorphes en tant que $A[[\bar h]]$-modules, si et seulement si il existe une famille $\big( u_{\bar{n}} \big)_{\bar{n} \in \NN^s}$ d'applications $\KK$-linéaires de $V$ vers $W$ telle que $u_0$ est bijective et telle que
\begin{equation}
\sum_{\bar{p} + \bar{q} = \bar{n}} \Big[ u_{\bar{p}} \circ \pi_{\bar{q}}(a) \, - \ \rho_{\bar{p}}(a) \circ u_{\bar{q}} \Big] \ = \ 0 \, , \quad \ \forall \, a \in A \, , \ \forall \, \bar{n} \in \NN^s \, .
\end{equation}
On voit que si les deux représentations sont isomorphes, $u_0$ est un isomorphisme entre les représentations de $A$ induites $(V,\pi_0)$ et $(W,\rho_0)$. \\
On dira que deux représentations $(V,\pi_{\bar h})$ et $(W,\rho_{\bar h})$ sont équivalentes si l'on ajoute $u_0 = \text{id}$ aux conditions précédentes. \\

Considérons la déformation formelle constante de $A$. Dans ce cas, d'après \eqref{eq_reprcg}, la donnée d'une représentation $(V,\pi_{\bar h})$ est équivalente à la donnée d'une famille $\big( \pi_{\bar{n}} \big)_{\bar{n} \in \NN^s}$ d'applications $\KK$-linéaires de $A$ dans $\text{End}_{\KK}(V)$ qui vérifient
\begin{equation} \label{eq_repr}
\pi_{\bar{n}} (ab) \ = \ \sum_{\bar{p} + \bar{q} = \bar{n}} \, \pi_{\bar{p}}(a) \circ \pi_{\bar{q}}(b) \, , \quad \ \forall \, a, b \in A \, , \ \forall \, \bar{n} \in \NN^s \, .
\end{equation}
La déformation formelle constante d'une représentation $(V,\pi)$ de $A$ est définie par $\pi_0 = \pi$ et $\pi_{\bar{n}} = 0$ pour $\bar{n} \succ 0$. \\
Une déformation formelle sera dite triviale si elle est équivalente à une déformation formelle constante. \\

On peut définir de manière similaire la notion de bimodule $M[[\bar h]]$ d'une déformation formelle $A[[\bar h]]$. Toutes les définitions précédentes s'adaptent naturellement. \\

Notons que si $V$ est une représentation d'une $\KK$-algèbre $A$. Alors $\text{End}_{\KK} V$ est naturellement muni d'une structure de $A$-bimodule :
$$ (a.f) (x) \ := \ a. \big( f(x) \big) \, , \quad \ (f.a) (x) \ := \ f(a.x) \, , \quad \text{ avec } \ f \in \text{End}_{\KK} V, \ a \in A, \ x \in V \, . $$

La démonstration du lemme suivant est immédiate.

\begin{lemme} \label{lem_defor_end}
Soient $A$ une $\KK$-algèbre et $V$ une représentation de $A$. Si $V[[\bar h]$ est une déformation formelle triviale de $V$, alors $\text{End}_{\KK[[\bar h]]} (V[[\bar h]])$ est une déformation formelle triviale du $A$-bimodule $\text{End}_{\KK} V$.
\end{lemme}

Soit $A$ une $\KK$-algèbre et considérons une déformation formelle $A[[\bar h]]$ de $A$. \\
On note $\mathcal{C}(A)$ la catégorie abélienne des représentations de $A$ et $\mathcal{C}^{\bar{h}}(A)$ la catégorie des représentations de la déformation formelle $A[[\bar{h}]]$. \\

On vérifie que $\mathcal{C}^{\bar{h}}(A)$ est une catégorie additive $\KK[[\bar h]]$-linéaire. \\
La somme directe de deux représentations $V[[\bar h]], W[[\bar h]] \in \mathcal{C}^{\bar{h}}(A)$ est le $\CC[[\bar h]]$-module $(V \oplus W)[[\bar h]]$, qu'on munit naturellement d'une structure de représentation de $A[[\bar h]]$. \\
Une représentation indécomposable est un objet indécomposable de la catégorie $\mathcal{C}^{\bar{h}}(A)$, i.e. une représentation qui n'est pas isomorphe à la somme directe de deux représentations. \\

L'existence d'un conoyau d'un morphisme $f : V[[\bar{h}]] \to W[[\bar{h}]]$ dans $\mathcal{C}^{\bar{h}}(A)$ n'est en général pas vérifiée : par exemple si $V = W = \KK$ et $f$ est la multiplication par $h$. Par conséquent $\mathcal{C}^{\bar{h}}(A)$ n'est pas une catégorie abélienne. \\

La catégorie dont les objets sont les $\KK[[\bar h]]$-modules de la forme $V[[\bar h]]$, et les flèches les morphismes de $\KK[[\bar h]]$-module, est d'après ce qui précède une catégorie additive $\KK[[\bar h]]$-linéaire (c'est le cas particulier où $A = \KK$ et $\KK[[\bar h]]$ est muni de la déformation formelle constante). On montre que cette catégorie est une catégorie tensorielle, où le produit tensoriel est défini pour $V, W$ deux $\KK$-modules, par
$$ V[[\bar h]] \ \tilde{\otimes} \ W[[\bar h]] \ := \ (V \otimes_{\KK} W) [[\bar h]] \, . $$
$V[[\bar h]] \ \tilde{\otimes} \ W[[\bar h]]$ est un complété $\bar h$-adique de $V[[\bar h]] \otimes_{\KK[[\bar h]]} W[[\bar h]]$. \\

Notons $\mathcal{A}_{\KK}$ la catégorie abélienne des $\KK$-algèbres et $\mathcal{A}_{\KK}^{\bar{h}}$ la catégorie $\KK[[\bar h]]$-linéaire additive, dont les objets sont les déformations formelles de $\KK$-algèbres, et les flèches les morphismes de $\KK[[\bar h]]$-algèbre.  \\
On définit deux foncteurs $\KK[[\bar h]]$-linéaires
$$ \mathcal{Q}^{\bar{h}} \, : \ \mathcal{A}_{\KK} \ \longrightarrow \ \mathcal{A}_{\KK}^{\bar{h}} \quad \text{ et } \quad \lim_{\bar{h} \to 0} \, : \ \mathcal{A}_{\KK}^{\bar{h}} \longrightarrow \ \mathcal{A}_{\KK} \, , $$
qui, respectivement, à une $\KK$-algèbre $A$ associe sa déformation formelle constante, et à une déformation formelle de $A$ associe $A$ (les définitions de $\mathcal{Q}^{\bar{h}}$ et $\lim_{\bar{h} \to 0}$ pour les morphismes sont les définitions naturelles). Remarquons que
$$ \lim_{\bar{h} \to 0} \, \circ \ \mathcal{Q}^{\bar{h}} \ = \ \text{id}_{\! \mathcal{A}_{\KK}} \, . $$

On définit de façon similaire un foncteur $\KK[[\bar h]]$-linéaire de la catégorie des représentations d'une déformation formelle de $A$ vers la catégorie des représentations de $A$ :
$$ \lim_{\bar{h} \to 0} : \ \mathcal{C}^{\bar{h}}(A) \, \longrightarrow \ \mathcal{C}(A) \, . $$


Le foncteur $\lim_{\bar h \to 0}$, que ce soit dans les cadre des algèbres ou celui des représentations, sera appelé limite classique.

\subsection{Cohomologie de Hochschild}
\begin{definition}
Soient $A$ une $\KK$-algèbre et $M$ un $A$-bimodule. \\
Pour tout $n \in \NN$, on note $C_{\KK}^n(A,M)$, et on appelle l'espace des $n$-cochaînes, le $\KK$-module des applications $\KK$-multilinéaires de $A^n$ dans $M$ (on a par convention $C_{\KK}^0(A,M) = M$). \\
On définit une application $\KK$-linéaire $d^{\: \! n} : C_{\KK}^n(A,M) \to C_{\KK}^{n+1}(A,M)$ par
\begin{eqnarray*}
(d^n f) (a_1, \dots, a_{n+1}) & := & \ \ \quad a_1 . f(a_2, \dots, a_{n+1}) \\
&& + \ \sum_{i=1}^n (-1)^i \, f(a_1, \dots, a_i \: \! a_{j+1}, \dots, a_{n+1}) \\
&& + \ (-1)^{n+1} f(a_1, \dots, a_n) . a_{n+1} \,
\end{eqnarray*}
(pour $n=0$ et $x \in M$, on a $(d^{\: \! 0} x) \: \! a = a.x - x.a$). \\
On appelle, et on note
$$ \Big( C_{\KK}^{\bullet} (A,M),d \Big) := \Bigg( \bigoplus_{n \in \NN} \, C_{\KK}^n(A,M) \, , \, \sum_{n \in \NN} \, d^{\: \! n} \Bigg) \, , $$
le complexe de Hochschild de $A$ à coefficients dans $M$ (on vérifie en effet que  $d \circ d = 0$). \\
La cohomologie de $C_{\KK}^{\bullet}(A,M)$ est notée $H_{\KK}^{\bullet}(A,M)$ et appelée la cohomologie de Hochschild de $A$ à coefficients dans $M$.
\end{definition}

\begin{remarque} \label{rem_hoch}
Fixons la $\KK$-algèbre $A$. La cohomologie de Hochschild $H^{\bullet}_{\KK}(A, \cdot)$ définit un foncteur de la catégorie des $A$-bimodules vers la catégorie des $\KK$-modules $\NN$-gradués.
\end{remarque}

Le lemme suivant exprime le fait que la cohomologie de Hochschild commute en quelque sorte avec le processus de déformation formelle constante. Sa démonstration est immédiate.

\begin{lemme} \label{lem_hoch}
Soient $A$ une $\KK$-algèbre et $M$ un $A$-bimodule. En considérant $A[[\bar h]]$ et $M[[\bar h]]$ comme déformations formelles constantes, on a pour tout $n \in \NN$ un isomorphisme canonique $\KK[[\bar h]]$-linéaire :
$$ \Big( H_{\KK}^n(A,M) \Big) [[\bar h]] \ \simeq \ H_{\KK[[h]]}^n \big( A[[\bar h]],M[[\bar h]] \big) \, . $$
\end{lemme}

\begin{corollaire} \label{cor_hoch}
Soient $A$ une $\KK$-algèbre et $A[[\bar h]]$ une déformation formelle triviale de $A$. Il existe pour tout $n \in \NN$ un isomorphisme $\KK[[\bar h]]$-linéaire
$$ \Big( H_{\KK}^n(A,A) \Big) [[\bar h]] \ \simeq \ H_{\KK[[h]]}^n \big( A[[\bar h]],A[[\bar h]] \big) \, . $$
\end{corollaire}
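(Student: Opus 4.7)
The plan is to reduce the statement to Lemma \ref{lem_hoch} by exploiting the hypothesis of triviality. By definition, a trivial formal deformation $A[[\bar h]]$ of $A$ is equivalent to the constant formal deformation $\mathcal{Q}^{\bar h}(A)$, i.e.\ there exists an isomorphism $\phi$ of $\KK[[\bar h]]$-algebras from $A[[\bar h]]$ to $\mathcal{Q}^{\bar h}(A)$ inducing the identity modulo $\bar h$. My first step is to use this $\phi$ to transport the $A[[\bar h]]$-bimodule structures: $\phi$ yields an isomorphism of $A[[\bar h]]$-bimodules between $A[[\bar h]]$ (viewed as a bimodule over itself in the deformed sense) and $\mathcal{Q}^{\bar h}(A)$ pulled back via $\phi$.

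Next, I would invoke the functoriality of Hochschild cohomology in the bimodule argument, together with the invariance of $H^\bullet_{\KK[[\bar h]]}(B,-)$ under isomorphisms of the algebra $B$, to conclude
\[
H^n_{\KK[[\bar h]]}\bigl(A[[\bar h]], A[[\bar h]]\bigr) \;\simeq\; H^n_{\KK[[\bar h]]}\bigl(\mathcal{Q}^{\bar h}(A), \mathcal{Q}^{\bar h}(A)\bigr).
\]
More precisely, $\phi$ induces an isomorphism of complexes $C^\bullet_{\KK[[\bar h]]}(A[[\bar h]],A[[\bar h]]) \to C^\bullet_{\KK[[\bar h]]}(\mathcal{Q}^{\bar h}(A),\mathcal{Q}^{\bar h}(A))$, given on an $n$-cochain $f$ by $(a_1,\dots,a_n) \mapsto \phi\bigl(f(\phi^{-1}(a_1),\dots,\phi^{-1}(a_n))\bigr)$, and this commutes with the Hochschild differential.

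Finally, I would apply Lemma \ref{lem_hoch} with $M = A$ (viewed as a bimodule over itself) to identify $H^n_{\KK[[\bar h]]}(\mathcal{Q}^{\bar h}(A),\mathcal{Q}^{\bar h}(A))$ with $\bigl(H^n_{\KK}(A,A)\bigr)[[\bar h]]$, and chain this with the previous isomorphism to get the desired identification. No real obstacle is expected here; the only point to verify carefully is that the transport by $\phi$ is $\KK[[\bar h]]$-linear (which is clear since $\phi$ is a $\KK[[\bar h]]$-algebra morphism), so the combined isomorphism is $\KK[[\bar h]]$-linear as required.
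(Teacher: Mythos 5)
Your proposal is correct and follows the same route as the paper: the paper's proof likewise observes that $A[[\bar h]]$ is isomorphic as a $\KK[[\bar h]]$-algebra to the constant formal deformation, hence their Hochschild cohomologies agree, and then concludes by Lemma \ref{lem_hoch}. You merely make explicit the induced isomorphism of Hochschild complexes via $f \mapsto \phi \circ f \circ (\phi^{-1} \times \cdots \times \phi^{-1})$, which the paper leaves implicit.
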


\begin{proof}
$A[[\bar h]]$ étant isomorphe en tant que $\KK[[\bar h]]$-algèbre à la déformation formelle constante de $A$, leurs cohomologies de Hochschild sont isomorphes. On conclut grâce au lemme \ref{lem_hoch}.
\end{proof}

\begin{corollaire} \label{cor_hoch2}
Soient $A$ une $\KK$-algèbre et $M[[\bar h]]$ une déformation formelle triviale d'un $A$-bimodule $M$. Il existe pour tout $n \in \NN$ un isomorphisme $\KK[[\bar h]]$-linéaire
$$ \Big( H_{\KK}^n(A,M) \Big) [[\bar h]] \ \simeq \ H_{\KK[[h]]}^n \big( A[[\bar h]],M[[\bar h]] \big) \, , $$
où $A[[\bar h]]$ désigne la déformation formelle constante de $A$.
\end{corollaire}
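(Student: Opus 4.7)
The plan is to reduce this to Lemma \ref{lem_hoch} by exploiting functoriality of Hochschild cohomology in the bimodule argument (Remark \ref{rem_hoch}), exactly as Corollary \ref{cor_hoch} reduces to Lemma \ref{lem_hoch} via functoriality in the algebra argument. The two corollaries are formally parallel; only the slot in which triviality is assumed differs.

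More precisely, I would proceed as follows. First, since $A[[\bar h]]$ is the constant formal deformation of $A$, both $M[[\bar h]]$ and the constant formal deformation of $M$ are $A[[\bar h]]$-bimodules (the latter by definition). By hypothesis $M[[\bar h]]$ is a \emph{trivial} formal deformation of $M$, which (in the bimodule version of the definition, mentioned right before Lemma \ref{lem_defor_end}) means there exists an $A[[\bar h]]$-bimodule isomorphism between $M[[\bar h]]$ and the constant formal deformation of $M$, inducing the identity modulo $\bar h$. Next, by Remark \ref{rem_hoch}, $H^n_{\KK[[\bar h]]}(A[[\bar h]], \cdot)$ is a functor on $A[[\bar h]]$-bimodules, so this bimodule isomorphism induces a $\KK[[\bar h]]$-linear isomorphism
$$ H^n_{\KK[[\bar h]]}\big( A[[\bar h]], M[[\bar h]] \big) \ \simeq \ H^n_{\KK[[\bar h]]}\big( A[[\bar h]], M_{\mathrm{const}}[[\bar h]] \big), $$
where $M_{\mathrm{const}}[[\bar h]]$ denotes the constant deformation of $M$. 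Finally, Lemma \ref{lem_hoch} applied to the pair $(A, M)$ gives an isomorphism between the right-hand side and $\big( H^n_{\KK}(A,M) \big)[[\bar h]]$. Composing yields the desired identification.

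There is no real obstacle: the only point deserving care is to check that the trivialisation of $M[[\bar h]]$ is an isomorphism of $A[[\bar h]]$-bimodules in the sense demanded by the functoriality of Remark \ref{rem_hoch}, which is immediate from the definition of a trivial formal deformation of a bimodule (its underlying datum is precisely an $A[[\bar h]]$-bimodule isomorphism to the constant one). Thus the proof is a direct transposition of the argument used for Corollary \ref{cor_hoch}, with the roles of the algebra and the bimodule exchanged.
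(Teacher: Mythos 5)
Your proof is correct and coincides with the paper's own argument: the paper proves Corollaire \ref{cor_hoch2} precisely as a consequence of Lemme \ref{lem_hoch} together with the functoriality of Hochschild cohomology in the bimodule slot (Remarque \ref{rem_hoch}), using the trivialisation of $M[[\bar h]]$ as an isomorphism of bimodules over the constant deformation $A[[\bar h]]$ exactly as you do.
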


\begin{proof}
Conséquence du lemme \ref{lem_hoch} et de la remarque \ref{rem_hoch}.
\end{proof}

L'introduction du langage cohomologique s'avère intéressante comme en témoigne le théorème suivant, généralisation de \cite[2.5.3]{Gui}. On vérifie que la démonstration dans \cite{Gui} fonctionne encore quand on travaille sur un anneau commutatif $\KK$ plutôt que sur $\CC$, puis on donne les arguments qui permettent de passer du cas d'un paramètre de déformation au cas de plusieurs paramètres.

\begin{proposition} \label{prop_hoch}
Soit $A$ une $\KK$-algèbre et $s \geq 1$ un entier. Si $H_{\KK}^2(A,A) = 0$, alors toute déformation formelle à $s$ paramètres de $A$ est triviale.
\end{proposition}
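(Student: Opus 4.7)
Le plan consiste à adapter la stratégie classique du cas $s=1$ (voir par exemple \cite[2.5.3]{Gui}) en organisant la récurrence par le degré total $|\bar n| := n_1 + \cdots + n_s$ plutôt que par la seule puissance de $h$. Soit $\mu_{\bar h} = \sum_{\bar n \in \NN^s} \mu_{\bar n} \: \! h^{\bar n}$ le produit d'une déformation formelle de $A$, avec $\mu_0 = \mu$. L'objectif est de construire par récurrence sur $|\bar n|$ une famille de $\KK$-endomorphismes $\phi_{\bar n}$ de $A$ ($\bar n \succ 0$) telle que l'automorphisme $\KK[[\bar h]]$-linéaire $\phi_{\bar h} := \mathrm{id} + \sum_{\bar n \succ 0} \phi_{\bar n} \: \! h^{\bar n}$ (dont le terme constant est l'identité, donc inversible) réalise une équivalence entre $(A[[\bar h]],\mu_{\bar h})$ et la déformation formelle constante.

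Plus précisément, à l'étape $n$, supposons avoir construit les $\phi_{\bar m}$ pour tout $\bar m$ avec $0 \prec |\bar m| < n$, de sorte qu'après transport par $\phi_{(<n)} := \mathrm{id} + \sum_{|\bar m| < n} \phi_{\bar m} h^{\bar m}$ on obtient un nouveau produit $\tilde{\mu}_{\bar h}$ équivalent à $\mu_{\bar h}$, vérifiant $\tilde{\mu}_{\bar m} = 0$ pour $0 \prec |\bar m| < n$ et $\tilde{\mu}_0 = \mu$. Fixons $\bar n$ de degré total $n$ et examinons la relation d'associativité \eqref{eq_assoc} pour $\tilde{\mu}_{\bar h}$ au multi-indice $\bar n$. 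Tous les termes croisés s'annulent par l'hypothèse de récurrence, et il reste
$$ a \cdot \tilde{\mu}_{\bar n}(b,c) \, - \, \tilde{\mu}_{\bar n}(a \! \: b,c) \, + \, \tilde{\mu}_{\bar n}(a,b \: \! c) \, - \, \tilde{\mu}_{\bar n}(a,b) \cdot c \ = \ 0 \, , \quad \forall \, a,b,c \in A \, , $$
ce qui signifie exactement $d^{\: \! 2} \! \: \tilde{\mu}_{\bar n} = 0$ dans le complexe de Hochschild $C_{\KK}^\bullet(A,A)$. L'hypothèse $H_{\KK}^2(A,A) = 0$ fournit alors une $1$-cochaîne $\phi_{\bar n} \in C_{\KK}^1(A,A)$ telle que $\tilde{\mu}_{\bar n} = d^{\: \! 1} \phi_{\bar n}$, ce qui, par un calcul direct, permet d'étendre $\phi_{(<n)}$ en $\phi_{(\leq n)} := \phi_{(<n)} + \sum_{|\bar n| = n} \phi_{\bar n} h^{\bar n}$ annulant les coefficients jusqu'au degré total $n$ inclus.

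La finitude du nombre de multi-indices de degré total fixé garantit que chaque étape met en jeu un automorphisme $\KK[[\bar h]]$-linéaire bien défini, et la série $\phi_{\bar h} = \mathrm{id} + \sum_{\bar n \succ 0} \phi_{\bar n} h^{\bar n}$ converge dans la topologie $\bar h$-adique puisque $\phi_{\bar h} - \phi_{(\leq n)} \in (\bar h)^{n+1} \: \! \mathrm{End}_{\KK[[\bar h]]}(A[[\bar h]])$, l'algèbre $A[[\bar h]]$ étant complète et séparée. L'étape la plus délicate est probablement le passage de l'associativité globale \eqref{eq_assoc} au fait que $d^{\: \! 2} \tilde{\mu}_{\bar n}$ s'annule pour \emph{chaque} multi-indice $\bar n$ individuellement : il faut exploiter soigneusement que les contributions des différents $\bar n$ de même degré total sont linéairement indépendantes dans $A[[\bar h]]$, obtenues par identification des coefficients du monôme $h^{\bar n}$; une fois cet argument multi-indice bien posé, le reste de la preuve relève essentiellement de la machinerie des algèbres $\bar h$-adiquement complètes et de l'argument cohomologique standard à un paramètre.
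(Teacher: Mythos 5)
Votre démonstration est correcte, mais elle suit une route réellement différente de celle du texte. Le texte établit d'abord le cas $s=1$ sur un anneau commutatif de base $\KK$ arbitraire (en suivant \cite[2.5.3]{Gui}), puis procède par récurrence sur le nombre $s$ de paramètres : via l'isomorphisme $\KK[[h_1, \dots, h_{s+1}]] \simeq \big( \KK[[h_1, \dots, h_s]] \big) [[h_{s+1}]]$, une déformation à $s+1$ paramètres est vue comme une déformation à un paramètre d'une déformation à $s$ paramètres de $A$; cette dernière est triviale par hypothèse de récurrence, et le corollaire \ref{cor_hoch} --- qui identifie $H^2_{\KK[[\bar h]]} \big( A[[\bar h]], A[[\bar h]] \big)$ à $\big( H_{\KK}^2(A,A) \big) [[\bar h]]$ pour une déformation triviale --- permet d'appliquer le cas $s=1$ au-dessus de l'anneau $\KK[[h_1, \dots, h_s]]$ (c'est précisément pour cela que le cas $s=1$ est démontré sur un anneau de base quelconque et non sur $\CC$). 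Vous traitez au contraire les $s$ paramètres de façon symétrique, par récurrence sur le degré total $|\bar n|$, en annulant simultanément tous les coefficients d'un degré donné par une seule transformation de jauge; votre argument est auto-contenu, évite le changement de base itéré ainsi que les lemmes auxiliaires \ref{lem_hoch} et \ref{cor_hoch}, au prix de refaire à la main l'étape cocycle/cobord dans le cadre multi-indice. Les deux démonstrations sont valables, et la convergence $\bar h$-adique de votre produit infini de jauges se justifie exactement comme celle du produit $v_p$ dans le texte.

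Une remarque : l'inquiétude exprimée dans votre dernier paragraphe est superflue. La relation d'associativité \eqref{eq_assoc} est déjà énoncée coefficient par coefficient, pour chaque multi-indice $\bar n$ individuellement, puisque le produit de $A[[\bar h]]$ est défini en regroupant le coefficient de chaque monôme $h^{\bar n}$; aucun argument d'indépendance linéaire n'est donc nécessaire pour obtenir $d^{\: \! 2} \: \! \tilde \mu_{\bar n} = 0$ (les termes croisés de \eqref{eq_assoc} en degré $n$ s'annulent par hypothèse de récurrence, et il ne reste que les contributions $\bar p = 0$ ou $\bar q = 0$). Notez enfin la convention de signe : avec la différentielle du texte, l'hypothèse $H_{\KK}^2(A,A) = 0$ fournit $f$ tel que $\tilde \mu_{\bar n} = d^{\: \! 1} \! f$, et c'est la conjugaison par $\mathrm{id} - f \, h^{\bar n}$ (autrement dit le choix $\phi_{\bar n} = -f$) qui annule le coefficient, comme dans l'étape $s=1$ du texte; votre \enquote{calcul direct} passe bien avec ce choix.
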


\begin{proof}
Nous allons procéder par récurrence sur $s$. Le cas $s = 1$ est l'énoncé de	\cite[2.5.3]{Gui}, la seule différence étant que l'on travaille ici sur un anneau commutatif $\KK$ (ce qui sera nécessaire pour la récurrence) et non sur $\CC$. On commence par montrer, sans supposer $H_{\KK}^2(A,A) = 0$, que si $\mu_1 = \mu_2 = \cdots = \mu_{n-1} = 0$ (avec $n \in \NN_{\geq 1}$), et si $\mu_n$, qui est un $2$-cocycle en vertu de \eqref{eq_assoc}, est un cobord, alors il existe $\mu_h'$ équivalent à $\mu_h$ et vérifiant $\mu'_1 = \mu'_2 = \cdots = \mu'_n = 0$. \\
Il existe en effet par hypothèse $f \in \text{End}_{\KK} A$ tel que
$$ \mu_n (a,b) \ = \ a \: \! f(b) \ - \ f(ab) \ + \ f(a) \: \! b \, . $$
On vérifie que l'automorphisme $\KK[[h]]$-linéaire $\text{id}_A - h^n f$ conjugue $\mu_h$ avec une déformation ${\mu}'_h$ telle que
$$ \mu_h' (a,b) - ab \ \equiv \ 0 \quad \text{mod } h^{n+1} \, . $$
En supposant à présent $H_{\KK}^2(A,A) = 0$ et $\mu_h$ quelconque, d'après ce qui précède on voit qu'il existe pour tout $p \in \NN_{\geq 1}$ un automorphisme $f_p$, tels que que le produit
\begin{equation*}
v_p \ := \ (\text{id}_A - h^p f_p) \circ \cdots \circ (\text{id}_A - h^2 f_2) \circ (\text{id}_A - h f_1)
\end{equation*}
conjugue $\mu_h$ avec une déformation $\mu_h^{(p)}$ telle que $\mu^{(p)}_1 = \mu^{(p)}_2 = \cdots = \mu^{(p)}_p = 0$. On conclut en remarquant que $v_p$ converge lorsque $p$ tend vers l'infini. \\

Supposons le théorème démontré pour $s \geq 1$ et considérons une déformation formelle à $s+1$ paramètres $A[[h_1, h_2 \dots, h_{s+1}]]$ de $A$. On note $\mu_{\bar h}$ le produit de cette déformation. \\
En considérant l'isomorphisme d'anneau canonique
$$ \KK[[h_1, h_2 \dots, h_{s+1}]] \ \simeq \ \Big( \KK[[h_1, h_2 \dots, h_s]] \Big) [[h_{s+1}]] $$
et l'isomorphisme linéaire canonique
$$ A[[h_1, h_2 \dots, h_{s+1}]] \ \simeq \ \Big( A[[h_1, h_2 \dots, h_s]] \Big) [[h_{s+1}]] $$
on voit que le produit $\mu_{\bar h}$ de $A[[h_1, h_2 \dots, h_{s+1}]]$ induit
\begin{itemize}
\item[\textbullet] sur $A[[h_1, h_2 \dots, h_s]]$ une structure de $\KK[[h_1,h_2 \dots, h_s]]$-algèbre qui est une déformation formelle à $s$ paramètres de $A$;
\item[\textbullet] sur $\Big( A[[h_1, h_2 \dots, h_s]] \Big) [[h_{s+1}]]$ une structure de déformation à un paramètre de la $\KK[[h_1, h_2, \dots, h_s]]$-algèbre $A[[h_1, h_2 \dots, h_s]]$ définie ci-dessus.
\end{itemize}
L'hypothèse de récurrence et le corollaire \ref{cor_hoch} permettent alors de conclure.
\end{proof}

Le théorème suivant généralise celui présenté dans \cite[III, 2.5.6]{Gui}. De la même façon que pour la proposition \ref{prop_hoch}, la démonstration est une généralisation naturelle dans le cas $s=1$ et nécessite un argument supplémentaire pour passer au cas de plusieurs paramètres de déformation.

\begin{proposition} \label{prop_hochrep}
Soient $A$ une $\KK$-algèbre et $(V, \pi_{\bar h})$ une représentation de la déformation formelle constante $A[[\bar h]]$ de $A$. Si $H_{\KK}^1(A,\text{End}_{\KK} V) = 0$, alors $(V, \pi_{\bar h})$ est une déformation formelle triviale de $\pi_0$.
\end{proposition}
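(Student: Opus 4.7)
The plan is to mimic the structure of the proof of proposition \ref{prop_hoch}, treating the case $s=1$ first by a direct cohomological induction, and then bootstrapping to arbitrary $s$ by an outer induction that uses corollary \ref{cor_hoch2} in place of corollary \ref{cor_hoch}. The key observation is that the coefficient module here is $\Endc V$, which is naturally an $A$-bimodule, so corollary \ref{cor_hoch2} (rather than \ref{cor_hoch}) provides the correct compatibility between $H^1_{\KK}(A,\Endc V)$ and $H^1_{\KK[[h]]}(A[[h]], (\Endc V)[[h]])$ when $A[[\bar h]]$ is the constant deformation.

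First I would treat $s=1$. Write $\pi_h = \sum_n \pi_n h^n$ as in \eqref{eq_reprcg}, which here (since $\mu_{\bar n} = 0$ for $\bar n \succ 0$) reduces to $\pi_n(ab) = \sum_{p+q=n}\pi_p(a)\circ\pi_q(b)$. A straightforward extraction shows that if $\pi_1 = \cdots = \pi_{n-1}=0$, then $\pi_n$ is a Hochschild $1$-cocycle with values in the $A$-bimodule $\Endc V$: indeed the obstruction $\pi_n(ab)-\pi_0(a)\circ\pi_n(b)-\pi_n(a)\circ\pi_0(b)$ vanishes. Since $H^1_{\KK}(A,\Endc V)=0$, we can write $\pi_n = d^{\,0}f_n$ for some $f_n \in \Endc V$. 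Then the $\KK[[h]]$-linear automorphism $\mathrm{id}_V - h^n f_n$ of $V[[h]]$ conjugates $\pi_h$ to a representation $\pi'_h$ with $\pi'_1=\cdots=\pi'_n = 0$. Iterating and noting that the composite $(\mathrm{id}_V - h^p f_p)\circ\cdots\circ(\mathrm{id}_V - h f_1)$ converges in the $h$-adic topology (since each new factor is $\mathrm{id}$ mod $h^p$), we obtain an equivalence between $\pi_h$ and the constant deformation $\pi_0$.

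For the inductive step, assume the result for $s$ and suppose given a representation $(V,\pi_{\bar h})$ of the constant deformation $A[[h_1,\dots,h_{s+1}]]$. Using the canonical isomorphisms
\[ \KK[[h_1,\dots,h_{s+1}]] \simeq \bigl(\KK[[h_1,\dots,h_s]]\bigr)[[h_{s+1}]] \, , \quad V[[h_1,\dots,h_{s+1}]] \simeq \bigl(V[[h_1,\dots,h_s]]\bigr)[[h_{s+1}]] \, , \]
the action $\pi_{\bar h}$ can be read either as an action of the constant $s$-parameter deformation of $A$ on $V[[h_1,\dots,h_s]]$, or as a one-parameter deformation (in $h_{s+1}$) of the resulting $\KK[[h_1,\dots,h_s]]$-algebra action. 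By the inductive hypothesis, the former is equivalent to the constant deformation of $\pi_0$; so, up to $\KK[[h_1,\dots,h_{s+1}]]$-equivalence, we may assume the $s$-parameter action is the constant one. It then remains to trivialise the residual one-parameter deformation in $h_{s+1}$. To apply the one-parameter case over the ground ring $\KK[[h_1,\dots,h_s]]$, we need the vanishing
\[ H^1_{\KK[[h_1,\dots,h_s]]}\bigl(A[[h_1,\dots,h_s]]\, , \, (\Endc V)[[h_1,\dots,h_s]]\bigr) \ = \ 0 \, , \]
which, by corollary \ref{cor_hoch2} applied to the $A$-bimodule $M = \Endc V$, is isomorphic to $H^1_{\KK}(A,\Endc V)[[h_1,\dots,h_s]]$ and hence zero by hypothesis. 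The $s=1$ case (whose proof above goes through verbatim with $\KK$ replaced by any commutative ground ring) then finishes the argument.

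The main obstacle is not conceptual but book-keeping: one has to check that the $s=1$ argument is valid over an arbitrary commutative ring $\KK[[h_1,\dots,h_s]]$ (which is clear from the fact that no division is performed, only the cocycle/coboundary formalism), and one must invoke the correct corollary (\ref{cor_hoch2}, for bimodule coefficients), since $\Endc V$ is generally not of the form $A$ itself. Both points are direct analogues of the passage from $s=1$ to general $s$ carried out in proposition \ref{prop_hoch}.
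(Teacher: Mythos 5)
Your proof is correct and follows essentially the same route as the paper: the $s=1$ case by iterated conjugation with $\mathrm{id}_V - h^n f_n$ using the cocycle/coboundary argument over an arbitrary commutative ground ring, then induction on $s$ via the splitting $\KK[[h_1,\dots,h_{s+1}]] \simeq (\KK[[h_1,\dots,h_s]])[[h_{s+1}]]$ together with corollary \ref{cor_hoch2} applied to the bimodule $\Endc V$. The only cosmetic difference is that the paper explicitly cites lemma \ref{lem_defor_end} to identify $\End_{\KK[[\bar h]]}(V[[\bar h]])$ with the constant deformation of the $A$-bimodule $\Endc V$, whereas you obtain this identification by first conjugating the $s$-parameter action to the constant one — the same content.
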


\begin{proof}
Le cas $s=1$ est l'énoncé du théorème dans \cite{Gui}. Encore une fois, on vérifie que la démonstration se généralise à un anneau commutatif $\KK$ : en vertu de \eqref{eq_repr} et en supposant $H_{\KK}^1(A,\text{End}_{\KK} V) = 0$, il existe $f_1 \in \text{End}_{\KK} V$ tel que $\pi_1 = d (f_1)$. On conjugue alors $\pi_{\bar h}$ par l'endomorphisme $\text{id}_V + h f_1$. On continue le procédé de la même manière que dans la preuve de la proposition \ref{prop_hoch}. \\
Le lemme \ref{lem_defor_end} et le corollaire \ref{cor_hoch2} permettent de faire une induction sur $s$, de la même façon que dans la démonstration de la proposition \ref{prop_hoch}.
\end{proof}

\section{Définition} \label{section_def}
Dans cette section on définit, dans l'esprit de Frenkel-Hernandez \cite{FH1}, les groupes quantiques d'interpolation de Langlands de rang 1. Si le principe d'interpolation est repris ici, les formules qui l'expriment ne sont toutefois pas les mêmes. Plus précisément, la différence entre les définitions données ici et celles présentes dans \cite{FH1} ne se situe pas seulement au niveau du langage utilisé. Les formules que l'on découvrira ci-dessous sont de prime abord moins intuitives, mais permettent toutefois de donner une présentation plus concise et peut-être moins compliquée. L'intérêt de cette nouvelle définition n'est pas seulement esthétique, dans la mesure où elle permettra de démontrer dans les sections suivantes plusieurs propriétés nouvelles des groupes quantiques d'interpolation et de leurs théorie des représentations. \\

On sait classifier et décrire explicitement toutes les représentations irréductibles sur $\CC$ de dimension finie $L(n)$ ($n \in \NN$) de $\mathfrak{sl}_2$ : voir par exemple \cite[II.7]{humphreys}. Il existe une base de $L(n)$ dont on note les vecteurs $v_0, v_1, \dots, v_n$, pour laquelle l'action de $\mathfrak{sl}_2$ est donnée par les formules ($0 \leq i \leq n$):
$$ H. v_i \ = \ (n- 2i) \: \! v_i \, , \quad X^+.v_i \ = \ (n-i+1) \: \! v_{i-1} \, , \quad X^-.v_i \ = \ (i+1) \: \! v_{i+1} \, , $$
(on pose $v_{-1} := v_{n+1} := 0$).

$L(n)$ se déforme en une représentation $L^h(n)$ de $U_h(\mathfrak{sl}_2)$ (en tant que $\CC[[h]]$-module $L^h(n) = L(n) [[h]]$) :
$$ H. v_i \ = \ (n- 2i) v_i \, , \quad X^+.v_i \ = \ [n-i+1]_Q \, v_{i-1} \, , \quad X^-.v_i \ = \ [i+1]_Q \, v_{i+1} \, , $$
où $[a]_Q := \frac{Q^a - Q^{-a}}{Q - Q^{-1}}$ pour $a \in \ZZ$ et $Q := \exp h \in \CC[[h]]$ (la série formelle $[a]_Q$ est souvent appelée nombre quantique). \\

Le point de départ dans la définition des groupes quantiques d'interpolation de $\mathfrak{sl}_2$ est de déformer une nouvelle fois, par rapport à un paramètre $h'$, l'action de $H$ et $X^{\pm}$ sur $L^h(n)$, en une action sur le $\CC[[h,h']]$-module $L(n) [[h,h']]$. Contrairement à la précédente déformation par rapport à $h$, ces déformations dépendent d'un paramètre $g \in \NN_{\geq 1}$, de telle sorte que l'on a non pas une déformation mais une famille de déformations. Pour $g$ fixé, les actions déformées des générateurs $H$ et $X^{\pm}$ vont vérifier de nouvelles relations (indépendantes de $n$), déformations des relations de $U_h(\mathfrak{sl}_2)$. C'est ces nouvelles relations qui seront alors posées dans la définition du groupe quantique d'interpolation $U_{h,h'}(\mathfrak{sl}_2,g)$. On notera $L^{h,h'}(n,g)$ l'espace $L(n) [[h,h']]$ considéré comme représentation de l'algèbre $U_{h,h'}(\mathfrak{sl}_2,g)$. \\

On peut remarquer la chose suivante : lors de la quantification de $L(n)$, on a en quelque sorte remplacé les nombres $a$ par les nombres quantiques $[a]_Q$. De manière similaire, lors de la déformation par rapport à $h'$, on va remplacer les nombres quantiques $[a]_Q$ par de nouveaux nombres quantiques $[a]_{QT^{\{a\}}}$ :
\begin{equation}
[a]_{QT^{\{a\}}} \ := \ \frac{\left( Q \, T^{\{a\}} \right)^a \ - \ \left( Q \, T^{\{a\}} \right)^{-a}}{Q \, T^{\{a\}} - Q^{-1} \, T^{-\{a\}}} \, ,
\end{equation}
où $T := \exp h' \in \CC[[h']]$ et $\{a\}$ est un polynôme de Laurent en $Q^a$ que l'on décrira un peu plus loin. \\

De manière plus précise, l'action déformée par rapport à $h$ et $h'$ est décrite par les formules suivantes ($0 \leq i \leq n$):
$$ H. v_i \ = \ (n- 2i) v_i \, , $$
$$ X^+.v_i \ = \ \left( [n-i+1]_{QT^{\{n-i+1\}}} \right) v_{i-1} \, , \quad X^-.v_i \ = \ \left( [i+1]_{QT^{\{i+1\}}} \right) v_{i+1} \, . $$
En particulier, on voit que $X^{\pm} X^{\mp} . v_i = \pi_i^{\pm} \: \! v_i$ avec $\pi_i^{\pm}$ produit de deux nombres quantiques. Avant d'aller plus loin dans les explications, donnons la définition des groupes quantiques d'interpolation $U_{h,h'}(\mathfrak{sl}_2,g)$.

\begin{definition} \label{def_uhh}
Soit $g \geq 1$ un entier. \\
$U_{h,h'}(\mathfrak{sl}_2,g)$ est la $\CC[[h,h']]$-algèbre topologiquement engendrée par $X^+$, $X^-$, $H$, $C$ et définie par les relations
$$ [C, X^{\pm}] \ = \ [C, H] \ = \ 0 \, , \quad \quad [H, X^{\pm}] \ = \ \pm 2 \, X^{\pm} \, , $$

\begin{equation} \label{eq_defx}
X^\pm X^\mp \ = \ \xxp{\pm}{} \, ,
\end{equation}

avec
$$ Q \ := \ \text{exp}(h) \, , \quad  T \ := \ \text{exp} (h') \, , $$
$$ H_e^{\pm} \ := \ \frac{1}{2} \, \Big( \sqrt{C} + e \: \! H \mp e \Big) \, , \ \quad \quad {\{ H_e^{\pm} \}}_{\! \! \: Q} \ := \ P \! \left( Q^{H_e^{\pm}} \right) , \quad \quad e \in \{ -1,1 \} \, , $$
\begin{equation} \label{eq_poly}
P(u) \ := \ \frac{1}{2} \left( u^{g-1} + u^{1-g} \right) \ \prod_{k=1}^{g-1} \, \frac{\varepsilon^k \: \! u - \varepsilon^{-k} \: \! u^{-1}}{\varepsilon^k - \varepsilon^{-k}} \, .
\end{equation}
\end{definition}

Soit $P_g$ le polynôme interpolateur de Lagrange de degré $g-1$ vérifiant
$$ P_g(1) \ = \ 1 \, , \quad P_g(\varepsilon^2) \ = \ \cdots \ = \ P_g(\varepsilon^{2g-2}) \ = \ 0 \, , $$
où $\varepsilon := \exp (i\pi /g) \in \CC$ est une racine $(2g)$-ième primitive de l'unité. \\
Le polynôme de Laurent $P$ défini dans \eqref{eq_poly} peut s'obtenir à partir de $P_g$ par symétrisation (remarquons que $P$ est invariant par les deux transformations $u \mapsto -u$ et $u \mapsto u^{-1}$) :
$$ P(u) \ = \ \frac{1}{2} \Big( P_g(u^2) \ + \ P_g(u^{-2}) \Big) \, . $$
Par conséquent le polynôme $P$ vérifie
\begin{equation} \label{eq_valp}
P(\varepsilon^l) \ = \ 1 \quad \text{si $g$ divise $l$} \, , \quad \ P(\varepsilon^l) \ = \ 0 \quad \text{sinon} \, .
\end{equation}

L'élément $C$ dans $U_{h,h'}(\mathfrak{sl}_2,g)$ doit en quelque sorte être compris comme l'opérateur de multiplication par $(n+1)^2$ sur $L^{h,h'} (n,g)$.
Dans la relation \eqref{eq_defx}, on peut en quelque sorte reconnaître à droite de l'égalité le produit $\pi_i^{\pm}$. En effet l'opérateur $H_e^{\pm}$ sur $L^{h,h'} (n,g)$ est décrit par ($0 \leq i \leq n$) :
$$ H_1^+ . v_i \ = \ n-i \, , \quad H_1^- . v_i \ = \ n-i +1 \, , \quad H_{-1}^+ . v_i \ = \ i+1 \, , \quad H_{-1}^- . v_i \ = \ i \, . $$

La relation \eqref{eq_defx} contient pour ainsi dire la définition des polynômes $\{a\}$ ($a \in \ZZ$) :
\begin{equation}
\{a\} \ := \ P \! \left( Q^a \right) \ = \ \frac{1}{2} \left( Q^{a(g-1)} + Q^{a(1-g)} \right) \ \prod_{k=1}^{g-1} \, \frac{\varepsilon^k \: \! Q^a - \varepsilon^{-k} \: \! Q^{-a}}{\varepsilon^k - \varepsilon^{-k}} \, .
\end{equation}
Ces polynômes $\{a\}$ possèdent la propriété suivante : en spécialisant $Q$ à $\varepsilon$ on obtient d'après \eqref{eq_valp}
$$ \{a\} = 1 \quad \text{si $g$ divise $a$} \quad \text{ et } \quad \{a\}= 0 \quad \text{sinon}. $$

C'est cette propriété des polynômes $\{a \}$ qui permettra de démontrer que $L^{h,h'}(n,g)$ interpole la représentation $L^h(n)$ du groupe quantique $\Uh[\slt]$ et la représentation Langlands $g$-duale ${}^L \! L^{h'}(n,g)$ de $\Ughp$ : voir la section \ref{section_langlands} et le théorème \ref{thm_reprinter}. \\

Il faut préciser deux points dans la définition de $\Uhhp$.
\begin{itemize}
\item[\textbullet] On peut considérer
$$ \xxn{e}{\pm}{}{} \quad \text{ et } \quad \xxd{e}{\pm}{}{} $$
comme des séries formelles dans $\CC \left[H,\sqrt{C} \right] [[h,h']]$, où $H$ et $\sqrt C$ désignent ici les variables de l'algèbre de polynômes $\CC \left[H,\sqrt{C} \right]$. Les deux séries formelles sont divisibles par $h + h' {\{ H_e^{\pm} \}}_{\! Q}$. Le quotient
$$ \xxf{e}{\pm}{}{} $$
est alors défini dans $\CC \left[H,\sqrt{C} \right] [[h,h']]$ comme le quotient des deux séries formelles précédentes chacune divisée par $h + h' {\{ H_e^{\pm} \}}_{\! Q}$ (de telle sorte que la série au dénominateur devient alors inversible).
\item[\textbullet] En considérant le produit
$$ \Pi \ := \ \xxp{\pm}{} $$
comme un élément de l'algèbre $\CC \left[H,\sqrt{C} \right] [[h,h']]$, on vérifie que $\Pi$ est invariant par l'unique automorphisme de la $\CC[[h,h']]$-algèbre $\CC \left[H,\sqrt{C} \right] [[h,h']]$ qui envoie $H$ sur $H$ et $\sqrt C$ sur $- \sqrt C$ (car le polynôme $P$ défini dans \eqref{eq_poly} est invariant par la transformation $u \mapsto u^{-1}$). Cela prouve que $\Pi$ appartient en fait à la sous-algèbre $\CC \left[H,\left( \sqrt{C} \right)^2 \right] [[h,h']]$.
\end{itemize}

Dans la relation \eqref{eq_defx}, le terme à droite de l'égalité est l'image de $\Pi$ par le morphisme de $\CC[[h,h']]$-algèbre qui à $H$ associe $H$ et à $\left( \sqrt{C} \right)^2$ l'élément $C$. \\
$\sqrt C$ ne désigne donc pas un élément de l'algèbre $U_{h,h'}(\mathfrak{sl}_2,g)$. On utilisera souvent cette écriture dans la suite en omettant toutefois de faire les vérifications semblables à celles faites précédemment.

\begin{remarque} \label{rem_hernandezgen}
Dans \cite{FH1}, les groupes quantiques d'interpolation nécessitent d'autres générateurs que $X^{\pm}$, $H$ et $C$ (on verra dans la section suivante que l'on peut en fait se passer de $C$ pour définir $\Uhhp$). De plus, il est nécessaire de considérer des puissances de certains des générateurs par d'autres générateurs, ce à quoi on peut donner un sens quitte à rajouter au moins autant de nouveaux générateurs qu'il n'y a de telles puissances. Ces algèbres sont en quelque sorte beaucoup plus grosses que les algèbres $\Uhhp$, donc moins maniables, et plusieurs des résultats qui suivront dans les sections suivantes n'ont pas d'analogues dans \cite{FH1}.
\end{remarque}

$\Uc[\slt]$ désignera l'algèbre enveloppante de $\mathfrak{sl}_2$ : c'est la $\CC$-algèbre engendrée par $\bar{X}^{\pm}$, $\bar{H}$, et définie par les relations
$$ [\bar{H}, \bar{X}^{\pm}] \ = \ \pm 2 \, \bar{X}^{\pm} \, , \quad [\bar{X}^+, \bar{X}^-] \ = \ \bar{H} \, . $$
Le centre de $\Uc[\slt]$ est l'algèbre des polynômes en l'élément de Casimir
\begin{equation} \label{eq_casimir}
\bar{C} \ := \ (\bar{H} + 1)^2 \ + \ 4 \, \bar{X}^- \bar{X}^+ \ = \ (\bar{H} - 1)^2 \ + \ 4 \, \bar{X}^+ \bar{X}^- \, ,
\end{equation}
voir par exemple \cite[p. 304]{knapp}. \\

Rappelons par ailleurs la définition du groupe quantique $U_{\: \! h}(\mathfrak{sl}_2)$ (qui est une déformation formelle triviale de $\Uc[\slt]$ : voir par exemple \cite[6.4]{ChPr}). \\
Il s'agit de la $\CC[[h]]$-algèbre topologiquement engendrée par les éléments $\tilde{X}^{\pm}$, $\tilde H$, et définie par les relations
$$ [\tilde{H}, \tilde{X}^{\pm}] \ = \ \pm 2 \, \tilde{X}^{\pm} \, , \quad [\tilde{X}^+, \tilde{X}^-] \ = \ \frac{\text{sinh} (h \: \! \tilde{H})}{\text{sinh}(h)} \, . $$

On notera par ailleurs $U_{r h}(\mathfrak{sl}_2)$ ($r \in \NN_{\geq 1}$) la $\CC[[h]]$-algèbre topologiquement engendrée par les éléments $\tilde{X}^{\pm}$, $\tilde{H}$, et définie par les relations
$$ [\tilde{H}, \tilde{X}^{\pm}] \ = \ \pm 2 \, \tilde{X}^{\pm} \, , \quad [\tilde{X}^+, \tilde{X}^-] \ = \ \frac{\text{sinh} (r h \: \! \tilde{H})}{\text{sinh}(r h)} \, . $$
En d'autres mots, $U_{r h}(\mathfrak{sl}_2)$ est simplement obtenue à partir de $U_{\: \! h}(\mathfrak{sl}_2)$ par renormalisation du paramètre $h$.

\begin{exemple} \label{ex_g1}
Dans la définition \ref{def_uhh}, considérons le cas où $g = 1$. On a $P = 1$ pour le polynôme d'interpolation définie dans \eqref{eq_poly}. Par suite $U_{h,h'}(\mathfrak{sl}_2,1)$ est la $\CC[[h,h']]$-algèbre topologiquement engendrée par $X^+$, $X^-$, $H$, $C$ et définie par les relations
$$ [C, X^{\pm}] \ = \ [C, H] \ = \ 0 \, , $$
$$ [H, X^{\pm}] \ = \ \pm 2 \, X^{\pm} \, , $$
$$ X^\pm X^\mp \ = \ \prod_{e = 1,-1} \frac{\left( Q \, T \right)^{H_e^{\pm}} - \ \left( Q \, T \right)^{-H_e^{\pm}}}{Q \, T \ - \ Q^{-1} \, T^{-1} } \, , $$
avec
$$ H_e^{\pm} \ := \ \frac{1}{2} \, \Big( \sqrt{C} + e \: \! H \mp e \Big) \, , \ \quad  \text{où } \ e \in \{ -1,1 \} \, . $$
La relation suivante est vérifiée dans $U_{h,h'}(\mathfrak{sl}_2,1)$ :
$$ [X^+, X^-] \ = \ \frac{\left( QT \right)^H - \left( QT \right)^{-H}}{QT - Q^{-1} T^{-1}} \, . $$
Par conséquent, $U_{h,h'}(\mathfrak{sl}_2,1)$ contient un quotient de la $\CC[[h'']]$-algèbre $U_{h''}(\mathfrak{sl}_2)$ (toute $\CC[[h,h']]$-algèbre, et en particulier $U_{h,h'}(\mathfrak{sl}_2,1)$, est naturellement munie d'une structure de $\CC[[h'']]$-algèbre où $h''$ agit par $h + h'$). D'après la proposition \ref{prop_basehh} énoncée un peu plus loin, on voit que $U_{h''}(\mathfrak{sl}_2)$ est en fait une sous-algèbre de $U_{h,h'}(\mathfrak{sl}_2,1)$. En notant $h^{(3)} = h - h'$, $U_{h,h'}(\mathfrak{sl}_2,1)$ apparaît comme la déformation constante de $U_{h''}(\mathfrak{sl}_2)$ selon $h^{(3)}$. \\
Cette situation est particulière à $g = 1$. Pour $g \geq 2$, $U_{h,h'}(\mathfrak{sl}_2,g)$ n'est plus une déformation formelle constante selon $h^{(3)}$ du groupe quantique usuel. En particulier, on remarque que la symétrie entre $h$ et $h'$, observée pour $g=1$, est brisée pour $g \geq 2$.
\end{exemple}

\begin{remarque}
Les groupes quantiques d'interpolation $\Uhhp$ ($g = 1,2,3$) peuvent être comparés aux groupes quantiques d'interpolation élémentaires de Frenkel-Hernandez \cite{FH1}. Ces derniers permettent de définir un groupe quantique d'interpolation de Langlands associé à une algèbre de Lie simple de dimension finie. Dans cet article, la définition donnée est valable quelque soit $g$, ce qui permettra d'associer des groupes d'interpolation de Langlands à toute algèbre de Kac-Moody (symétrisable).
\end{remarque}

\section{Propriétés} \label{section_prop}
On étudie ici les groupes quantiques d'interpolation de Langlands $\Uhhp$ définis précédemment. La section est divisée en cinq sous-sections. Dans la première, on montre que les groupes quantiques d'interpolation sont des doubles déformations formelles triviales de $\Uc[\slt]$, et plus encore, des déformations triviales selon $h$ (ou $h'$) du groupe quantique usuel associé : voir le théorème \ref{thm_uhhdefor}. Il s'agit là d'un fait nouveau et d'un point clé de l'article. Les deux sous-sections qui suivent donnent différentes propriétés des groupes quantiques d'interpolation, obtenues grâce aux structures de double déformation de ceux-ci. La plupart, comme expliqué dans la remarque \ref{rem_hernandezgen}, n'ont pas d'analogues dans \cite{FH1}. On prouve notamment l'existence d'une décomposition triangulaire de $\Uhhp$ (voir la proposition \ref{prop_triang}), qui sera un outil efficace, par exemple pour l'étude des représentations. Dans la quatrième sous-section, on donne quelques propriétés de symétrie de $\Uhhp$. Enfin, on définit la spécialisation de $\Uhhp$ à $Q = \varepsilon$, et on établit un lemme crucial pour notre étude : voir le lemme \ref{lem_fond} et la remarque \ref{rem_fond}.

\subsection{Doubles déformations}
La proposition un peu technique qui suit est fondamentale, elle contient en quelque sorte le noyau dur de la preuve que $\Uhhp$ admet des structures de déformations de $\Uc[\slt]$ et du groupe quantique usuel : voir le théorème \ref{thm_uhhdefor}.

\begin{proposition} \label{prop_uconst}
Il existe un isomorphisme de $\CC[[h,h']]$-algèbre
$$ \psi \, : \ U_{h,h'}(\mathfrak{sl}_2,g) \ \stackrel{\sim}{\longrightarrow} \, \ \Uc[\slt][[h,h']] \, , $$
où $\Uc[\slt][[h,h']]$ désigne la déformation formelle constante de $\Uc[\slt]$, et tel que
$$ \psi(H) \ = \ \bar{H} \, , \quad \psi(C) \ = \ \bar{C} \, , \quad \psi(X^-) \ = \ \bar{X}^- \,  , $$
\begin{equation} \label{eq_uconst}
\psi(X^+) \ = \ \left( \prod_{e = 1, -1} \ \frac{\Big( Q \, T^{{\{ \bar{H}_e^+ \}}_Q} \Big)^{\bar{H}_e^+} - \ \Big( Q \, T^{{\{ \bar{H}_e^+ \}}_Q} \Big)^{-\bar{H}_e^+}}{\bar{H}_e^+ \Big( Q \, T^{{\{ \bar{H}_e^+ \}}_Q} \ - \ Q^{-1} \, T^{-{\{ \bar{H}_e^+ \}}_Q} \Big)} \right) \bar{X}^+ \, ,
\end{equation}
\end{proposition}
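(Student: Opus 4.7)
The plan is to first check that the formulas prescribed in the statement define a $\CC[[h,h']]$-algebra homomorphism $\psi$ by verifying the defining relations of $\Uhhp$, then to argue that $\psi$ is bijective. For the first step, note that $\bar{C}$ is central in $\Uc[\slt]$ and the prefactor appearing in $\psi(X^+)$ is a series in $\bar{H}$ and $\bar{C}$ (invariant under $\sqrt{\bar{C}} \to -\sqrt{\bar{C}}$, since $P(u)=P(u^{-1})$ and the pair $\bar{H}_1^+, \bar{H}_{-1}^+$ gets permuted up to sign under this substitution). Hence the relations $[\psi(C),\psi(X^{\pm})] = [\psi(C),\psi(H)] = 0$ are immediate. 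The weight relations $[\psi(H),\psi(X^{\pm})] = \pm 2 \, \psi(X^{\pm})$ follow from $[\bar{H},\bar{X}^{\pm}] = \pm 2 \, \bar{X}^{\pm}$ together with the fact that $\bar{H}$ commutes with $\bar{H}$ and $\bar{C}$.

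The heart of the verification is to check the relations \eqref{eq_defx}. For this the plan rests on two identities in $\Uc[\slt]$. First, the Casimir expressions \eqref{eq_casimir} rewrite as
$$ \bar{X}^+ \bar{X}^- \ = \ \bar{H}_1^+ \bar{H}_{-1}^+ \, , \qquad \bar{X}^- \bar{X}^+ \ = \ \bar{H}_1^- \bar{H}_{-1}^- \, , $$
since $\bar{H}_e^+ \bar{H}_{-e}^+ = \tfrac{1}{4}(\bar{C} - (\bar{H}-1)^2)$ and analogously for the minus side. Second, the commutation rule $\bar{X}^- f(\bar{H}) = f(\bar{H}+2) \, \bar{X}^-$ combined with the centrality of $\bar{C}$ shows that the substitution $\bar{H} \mapsto \bar{H}+2$ sends $\bar{H}_e^+$ to $\bar{H}_e^-$ for $e = \pm 1$, so that $\bar{X}^- \, f(\bar{H}_e^+) = f(\bar{H}_e^-) \, \bar{X}^-$ for every power series $f$. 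Writing the prefactor of $\psi(X^+)$ as $F(\bar{H}_1^+) F(\bar{H}_{-1}^+)$, the first identity yields
$$ \psi(X^+) \, \psi(X^-) \ = \ F(\bar{H}_1^+) F(\bar{H}_{-1}^+) \, \bar{H}_1^+ \bar{H}_{-1}^+ \ = \ \prod_{e = \pm 1} \, G(\bar{H}_e^+) $$
with $G(x) := x \, F(x)$, which is precisely $\psi$ applied to the right-hand side of the upper-sign relation \eqref{eq_defx}. The second identity reduces the lower-sign verification to the analogous computation with $\bar{H}_e^-$ in place of $\bar{H}_e^+$.

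It remains to show that $\psi$ is an isomorphism. Surjectivity is easy: the element $F(\bar{H}_1^+) F(\bar{H}_{-1}^+)$ has constant term $1$ when specialised at $h = h' = 0$ (as $\sinh(\alpha x)/(x \sinh(\alpha)) \to 1$ as $\alpha \to 0$), hence is invertible in $\Uc[\slt][[h,h']]$, so that $\bar{X}^+$ lies in the image of $\psi$, and $\bar{X}^-, \bar{H}, \bar{C}$ do by definition. The main obstacle is injectivity. The plan is to show that $\Uhhp$ is itself a formal deformation of $\Uc[\slt]$, using the relations \eqref{eq_defx}: first, solving \eqref{eq_defx} order by order in $(h,h')$ expresses $C$ as a power series in $H$ and $X^\pm$ (the lowest-order term recovering the Casimir), which eliminates the generator $C$; then a diamond-lemma argument establishes that the ordered monomials $(X^-)^a H^b (X^+)^c$ form a topological basis of $\Uhhp$ over $\CC[[h,h']]$. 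This identifies $\Uhhp$ as a topologically free $\CC[[h,h']]$-module whose classical limit is $\Uc[\slt]$, so $\psi$ becomes a surjection between two topologically free $\CC[[h,h']]$-modules inducing the identity modulo $(h,h')$, and is therefore an isomorphism by the usual complete Nakayama-type argument. Alternatively, once $\Uhhp$ is known to be a formal deformation of $\Uc[\slt]$, one may invoke Proposition \ref{prop_hoch} together with the vanishing of $H^2_{\CC}(\Uc[\slt], \Uc[\slt])$ to recover the triviality of the deformation directly.
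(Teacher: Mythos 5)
Your verification of the defining relations is essentially identical to the paper's: the same identities $\bar{X}^+ \bar{X}^- = \bar{H}_1^+ \bar{H}_{-1}^+$, $\bar{X}^- \bar{X}^+ = \bar{H}_1^- \bar{H}_{-1}^-$ and $\bar{X}^- \, f(\bar{H}_e^+) = f(\bar{H}_e^-) \, \bar{X}^-$ drive both computations, and your parenthetical checks (evenness of the prefactor, invariance under $\sqrt{\bar{C}} \mapsto -\sqrt{\bar{C}}$) are correct. Where you genuinely depart is bijectivity. The paper does not first establish that $\Uhhp$ is topologically free; it simply exhibits an explicit two-sided inverse $\psi_1$, given by \eqref{eq_uconst2}: the key observation is that the series $2\sinh\bigl[(h + h' \{ H_e^+ \}_Q) H_e^+\bigr]$, once divided by $(h + h' \{ H_e^+ \}_Q) H_e^+$, is \emph{invertible} in $\CC[H,\sqrt{C}][[h,h']]$, so the prefactor of \eqref{eq_uconst} can be inverted inside the commutative algebra in $H, \sqrt{C}$ before passing to $\Uhhp$; checking that $\psi_1$ is a morphism mirrors the computation for $\psi$, and the identity $\psi_1(\bar{X}^{\pm})\psi_1(\bar{X}^{\mp}) = \tfrac{1}{4}C - \tfrac{1}{4}(H \mp 1)^2$ gives $\psi_1(\bar{C}) = C$ for free, whence $\psi \circ \psi_1$ and $\psi_1 \circ \psi$ fix all generators. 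This buys a short, self-contained proof, after which the topological basis (proposition \ref{prop_basehh}) and the ordered expressions of $C$ and $[X^+,X^-]$ (définition-proposition \ref{defiprop_cxx}) fall out as corollaries. Your route reverses that logical order: you need the basis theorem first, via elimination of $C$ and the diamond lemma, before the Nakayama-type argument applies. This is viable — it is essentially the strategy the thesis deploys in arbitrary rank in théorème I.\ref{thm_Uth} — but be aware that the two steps you assert carry the real weight and must be done without invoking \ref{prop_basehh} or \ref{defiprop_cxx}, since in the paper those are downstream of this very proposition: (i) setting up the reduction system requires rewriting $[X^+,X^-]$ as a convergent series $\sum_a (X^-)^a f_a(H) (X^+)^a$, and producing these coefficients from \eqref{eq_defx} demands an $(h,h')$-adic fixed-point and normal-ordering recursion whose convergence is not automatic (your order-by-order solution for $C$ yields a series in the commuting pair $X^+X^-$, $H$, and reordering the powers $(X^+X^-)^k$ re-uses the relation recursively); (ii) the ambiguity resolutions, including infinite reduction sequences converging adically, must be checked in the two-parameter setting. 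With those carried out your argument closes correctly; the trade-off is that the paper's explicit inverse avoids all of it, while your approach generalises more readily beyond rank 1.
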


Avant de donner la démonstration de cette proposition, commençons par expliquer le terme à droite de l'égalité \eqref{eq_uconst}. \\
$\bar{H}_e^{\pm}$ est défini comme dans la définition \ref{def_uhh} :
$$ \bar{H}_e^{\pm} \ := \ \frac{1}{2} \, \Big( \sqrt{\bar{C}} + e \: \! \bar{H} \mp e \Big) \, , \ \quad  \ e \in \{ -1,1 \} \, . $$
Dans $\CC[H,\sqrt{\bar C}][[h,h']]$, la série formelle
\begin{equation*}
\Big( Q \, T^{{\{ \bar{H}_e^+ \}}_Q} \Big)^{\bar{H}_e^+} - \ \Big( Q \, T^{{\{ \bar{H}_e^+ \}}_Q} \Big)^{-\bar{H}_e^+} \ = \ 2 \: \! \sinh \! \Bigg[ \! \left( h + h' { \{ \bar{H}^+_e \} }_{\! Q} \right) \! \bar{H}^+_e \Bigg]
\end{equation*}
est divisible par $\left( h + h' { \{ \bar{H}^+_e \} }_{\! Q} \right) \! \bar{H}^+_e$. La série
$$ Q \, T^{{\{ \bar{H}_e^+ \}}_Q} \ - \ Q^{-1} \, T^{-{\{ \bar{H}_e^+ \}}_Q} \ = \ 2 \: \! \sinh \! \left( h + h' { \{ \bar{H}^+_e \} }_{\! Q} \right) $$
est quant à elle divisible par $h + h' { \{ \bar{H}^+_e \} }_{\! Q}$ dans $\CC[H,\sqrt{\bar C}][[h,h']]$. Le quotient
$$ \bar{F} \ := \ \left( \prod_{e = 1, -1} \ \frac{\Big( Q \, T^{{\{ \bar{H}_e^+ \}}_Q} \Big)^{\bar{H}_e^+} - \ \Big( Q \, T^{{\{ \bar{H}_e^+ \}}_Q} \Big)^{-\bar{H}_e^+}}{\bar{H}_e^+ \Big( Q \, T^{{\{ \bar{H}_e^+ \}}_Q} \ - \ Q^{-1} \, T^{-{\{ \bar{H}_e^+ \}}_Q} \Big)} \right) $$
a donc un sens dans $\CC[H,\sqrt{\bar C}][[h,h']]$ une fois les divisions faites. De la même manière que dans la définition \ref{def_uhh}, on vérifie que $\bar F$ appartient en fait à la sous-algèbre $\CC[H,(\sqrt{\bar C})^2][[h,h']]$. \\
Le terme à droite de l'égalité dans \eqref{eq_uconst} est défini comme l'image de $\bar F$ par l'unique morphisme de $\CC[[h,h']]$-algèbre qui envoie $\bar H$ sur $\bar H$ et $(\sqrt{\bar C})^2$ sur $\bar C$.

\begin{proof}
Pour prouver l'existence du morphisme $\psi$, il s'agit, d'après le point 4) de la proposition \ref{prop_props}, de voir que
$$ \Big( \psi(X^+), \psi(X^-), \psi(H), \psi(C) \Big) $$
vérifient les relations définissant $U_{ \: \! h,h'} (\mathfrak{sl}_2)$.

\begin{itemize}
\item[\textbullet] Remarquons que $\psi$ envoie $C$ sur l'élément central $\bar C$. Les relations suivantes sont par suite vérifiées :
$$ \big[ \psi(C), \psi(X^{\pm}) \big] \ = \ \big[ \psi(C), \psi(H) \big] \ = \ 0 \, . $$

\item[\textbullet] $$ \big[ \psi(H), \psi(X^-) \big] \ = \ \big[ \bar{H}, \bar{X}^- \big] \ = \ - \: \! 2 \: \! \bar{X}^- \ = \ \ - \: \! 2 \: \! \psi(X^-) \, . $$

\item[\textbullet] L'élément
$$ \bar \Pi \ := \ \prod_{e = 1, -1} \ \frac{\Big( Q \, T^{{\{ \bar{H}_e^+ \}}_Q} \Big)^{\bar{H}_e^+} - \ \Big( Q \, T^{{\{ \bar{H}_e^+ \}}_Q} \Big)^{-\bar{H}_e^+}}{\bar{H}_e^+ \Big( Q \, T^{{\{ \bar{H}_e^+ \}}_Q} \ - \ Q^{-1} \, T^{-{\{ \bar{H}_e^+ \}}_Q} \Big)} $$
appartient à la sous-$\CC[[h,h']]$-algèbre topologiquement engendrée par $\bar H$ et $\bar C$, donc commute avec $\bar H$. Par suite,
$$ \big[ \psi(H), \psi(X^+) \big] \ = \ \big[ \bar{H}, \bar \Pi \: \! \bar{X}^+ \big] \ = \ \bar \Pi \big[ \bar{H},\bar{X}^+ \big] \ = \ 2 \: \! \bar \Pi \: \! \bar{X}^+ \ = \ 2 \: \! \psi(X^+) \, . $$

\item[\textbullet] Dans $U_h(\mathfrak{sl}_2)$, on a $\bar{X}^+ \bar{X}^- = \frac{1}{4} \bar{C} - \frac{1}{4} (\bar{H} -1)^2 = \bar{H}_1^+ \: \! \bar{H}_{-1}^+$, donc
\begin{eqnarray*} \psi(X^+) \: \! \psi(X^-) & =  & \prod_{e = 1, -1} \ \frac{\Big( Q \, T^{{\{ \bar{H}_e^+ \}}_Q} \Big)^{\bar{H}_e^+} - \ \Big( Q \, T^{{\{ \bar{H}_e^+ \}}_Q} \Big)^{-\bar{H}_e^+}}{Q \, T^{{\{ \bar{H}_e^+ \}}_Q} \ - \ Q^{-1} \, T^{-{\{ \bar{H}_e^+ \}}_Q} \Big)} \\
& = & \psi \left( \xxp{+}{} \right) \, .
\end{eqnarray*}

\item[\textbullet] Toujours dans $U_h(\mathfrak{sl}_2)$, on a $\bar{X}^- \bar{H}_e^+ \ = \ \bar{H}_e^- \bar{X}^-$, donc
\begin{multline*}
\bar{X}^- \left( \prod_{e = 1, -1} \ \frac{\Big( Q \, T^{{\{ \bar{H}_e^+ \}}_Q} \Big)^{\bar{H}_e^+} - \ \Big( Q \, T^{{\{ \bar{H}_e^+ \}}_Q} \Big)^{-\bar{H}_e^+}}{\bar{H}_e^+ \Big( Q \, T^{{\{ \bar{H}_e^+ \}}_Q} \ - \ Q^{-1} \, T^{-{\{ \bar{H}_e^+ \}}_Q} \Big)} \right) \\
= \ \left( \prod_{e = 1, -1} \ \frac{\Big( Q \, T^{{\{ \bar{H}_e^- \}}_Q} \Big)^{\bar{H}_e^-} - \ \Big( Q \, T^{{\{ \bar{H}_e^- \}}_Q} \Big)^{-\bar{H}_e^-}}{\bar{H}_e^- \Big( Q \, T^{{\{ \bar{H}_e^- \}}_Q} \ - \ Q^{-1} \, T^{-{\{ \bar{H}_e^- \}}_Q} \Big)} \right) \bar{X}^- \, .
\end{multline*}
Par ailleurs, $\bar{X}^- \bar{X}^+ = \frac{1}{4} \bar{C} - \frac{1}{4} (\bar{H} +1)^2 = \bar{H}_1^- \: \! \bar{H}_{-1}^-$, donc
\begin{eqnarray*}
\psi(X^-) \: \! \psi(X^+) & = & \prod_{e = 1, -1} \ \frac{\Big( Q \, T^{{\{ \bar{H}_e^- \}}_Q} \Big)^{\bar{H}_e^-} - \ \Big( Q \, T^{{\{ \bar{H}_e^- \}}_Q} \Big)^{-\bar{H}_e^-}}{Q \, T^{{\{ \bar{H}_e^- \}}_Q} \ - \ Q^{-1} \, T^{-{\{ \bar{H}_e^- \}}_Q}} \\
& = & \psi \left( \xxp{-}{} \right) \, .
\end{eqnarray*}
\end{itemize}

On peut définir un inverse $\psi_1$ de $\psi$ en posant
$$ \psi_1(\bar{H}) \ = \ H \, , \quad \psi_1(\bar{X}^-) \ = \ X^- \, , $$
\begin{equation} \label{eq_uconst2}
\psi_1(\bar{X}^+) \ = \ \left( \prod_{e = 1, -1} \ \frac{H_e^+ \Big( Q \, T^{{\{ H_e^+ \}}_Q} \ - \ Q^{-1} \, T^{-{\{ H_e^+ \}}_Q} \Big)}{\Big( Q \, T^{{\{ H_e^+ \}}_Q} \Big)^{H_e^+} - \ \Big( Q \, T^{{\{ H_e^+ \}}_Q} \Big)^{-H_e^+}} \right) X^+ \, .
\end{equation}
En reprenant la discussion précédant cette démonstration, on voit que le terme à droite de l'égalité \eqref{eq_uconst2} est bien défini : il faut remarquer que la série formelle
$$ \xxn{e}{+}{}{} \ = \ 2 \: \! \sinh \! \Bigg[ \! \left( h + h' { \{ H^+_e \} }_{\! Q} \right) \! H^+_e \Bigg] \, , $$
une fois divisée par $\left( h + h' { \{ H^+_e \} }_{\! Q} \right) \! H^+_e$, est inversible dans $\CC[H, \sqrt C][[h,h']]$. \\
On prouve grâce à des arguments similaires à ceux donnés pour $\psi$ que $\psi_1$ est un morphisme de $\CC[[h,h']]$-algèbre. En particulier on obtient que
$$ \psi_1(\bar{X}^{\pm}) \: \! \psi_1(\bar{X}^{\mp}) \ = \ \frac{1}{4} C \ - \ \frac{1}{4} (H \mp 1)^2 \, , $$
ce qui implique que $\psi_1(\bar{C}) = C$ et par suite que $(\psi_1 \circ \psi)(C) = C$ et $(\psi \circ \psi_1) (\bar C) = C$. Les autres identités à vérifier, pour montrer que $\psi$ et $\psi_1$ sont inverses l'un de l'autre, sont évidentes.
\end{proof}

La proposition \ref{prop_uconst} montre en particulier que la $\CC[[h,h']]$-algèbre $U_{h,h'}(\mathfrak{sl}_2,g)$ est isomorphe à la déformation formelle constante de $\Uc[\slt]$ selon $h$ et $h'$.


\begin{proposition} \label{prop_basehh}
Soit $g \in \NN_{\geq 1}$. \\
$U_{\: \! h,h'}(\mathfrak{sl}_2,g)$ admet pour base topologique la famille $\left( (X^-)^a H^b (X^+)^c \right)_{a, b, c \in \NN} $.
\end{proposition}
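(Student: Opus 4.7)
La stratégie consiste à transférer la question à la déformation formelle constante $\Uc[\slt][[h,h']]$ via l'isomorphisme $\psi$ de la proposition \ref{prop_uconst}. L'isomorphisme $\psi$ étant $\CC[[h,h']]$-linéaire et bijectif, il suffira d'établir que la famille image $\big( \psi((X^-)^a H^b (X^+)^c) \big)_{a,b,c \in \NN}$ forme une base topologique du $\CC[[h,h']]$-module $\Uc[\slt][[h,h']]$.

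Je calculerai ensuite cette image modulo $(h,h')$. Écrivons $\psi(X^+) = \bar F \cdot \bar X^+$, où $\bar F \in \CC[\bar H, \bar C][[h,h']]$ est le facteur apparaissant dans la formule \eqref{eq_uconst}. L'identité $\sinh(xy)/(x \sinh y) \to 1$ lorsque $y \to 0$, appliquée avec $y = h + k h'$ et $k = \{\bar{H}_e^+\}_Q$, entraînera que chaque facteur du produit définissant $\bar F$ tend vers $1$, et donc que $\bar F \equiv 1$ modulo $(h,h')$. En exploitant les relations de commutation $f(\bar H, \bar C) \, \bar X^+ = \bar X^+ f(\bar H + 2, \bar C)$ valides dans $\Uc[\slt][[h,h']]$, j'obtiendrai alors $\psi(X^+)^c = G_c(\bar H, \bar C) \, (\bar X^+)^c$ avec $G_c \equiv 1$ modulo $(h,h')$. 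En substituant $\bar C = (\bar H + 1)^2 + 4 \bar X^- \bar X^+$ et en réordonnant suivant la base PBW de $\Uc[\slt]$, il en résultera la congruence
\[ \psi \big( (X^-)^a H^b (X^+)^c \big) \ \equiv \ (\bar X^-)^a \, \bar H^b \, (\bar X^+)^c \pmod{(h,h') \, \Uc[\slt][[h,h']]} . \]

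La conclusion découlera alors d'un lemme classique de théorie des déformations : si $V$ est un $\CC$-espace vectoriel de base $(\bar e_i)_{i \in I}$ et si $(e_i)_{i \in I}$ est une famille de $V[[h,h']]$ vérifiant $e_i \equiv \bar e_i$ modulo $(h,h')$ pour tout $i$, alors $(e_i)_{i \in I}$ constitue une base topologique du $\CC[[h,h']]$-module $V[[h,h']]$. Ce lemme résulte du fait que l'unique endomorphisme $\CC[[h,h']]$-linéaire continu de $V[[h,h']]$ qui envoie $\bar e_i$ sur $e_i$ est de la forme $\mathrm{id} + R$ avec $R \in (h,h') \cdot \mathrm{End}_{\CC[[h,h']]}(V[[h,h']])$, et donc inversible par la série $\sum_{n \geq 0} (-R)^n$ convergeant dans la topologie $(h,h')$-adique. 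Appliqué à la base PBW $\big( (\bar X^-)^a \bar H^b (\bar X^+)^c \big)_{a,b,c \in \NN}$ de $\Uc[\slt]$, il conclura la preuve.

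La principale difficulté technique sera la vérification soigneuse de la congruence $\bar F \equiv 1$ modulo $(h,h')$, laquelle exige de lever la forme indéterminée $0/0$ apparente au dénominateur $\bar H_e^+$ et de contrôler la composition de deux exponentielles intervenant par l'intermédiaire de ${\{ \bar H_e^+ \}}_Q = P(Q^{\bar H_e^+})$. Un second point délicat sera le réordonnancement PBW après substitution de $\bar C$, afin d'assurer que tous les termes correctifs apparaissant restent bien dans $(h,h') \, \Uc[\slt][[h,h']]$.
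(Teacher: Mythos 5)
Your proposal is correct and takes essentially the same route as the paper: the paper's own proof is a one-liner invoking the isomorphism $\psi$ of Proposition \ref{prop_uconst}, the Poincaré--Birkhoff--Witt theorem for $\mathfrak{sl}_2$, and precisely the congruences $\psi(H) \equiv \bar{H}$, $\psi(X^{\pm}) \equiv \bar{X}^{\pm} \bmod (h,h')$ that you establish. You have simply spelled out the details the paper leaves implicit, namely the verification that the factor $\bar{F}$ in \eqref{eq_uconst} is $\equiv 1 \bmod (h,h')$ and the standard lemma that a family congruent modulo $(h,h')$ to a $\CC$-basis is a topological basis of $V[[h,h']]$, via the invertibility of $\mathrm{id} + R$ with $R$ of positive $(h,h')$-adic valuation.
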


\begin{proof}
On utilise l'isomorphisme $\psi$ de la proposition \ref{prop_uconst} et le théorème de Poincaré-Birkhoff-Witt pour $\mathfrak{sl}_2$, en remarquant que
\begin{equation} \label{eq_psimod}
\psi(H) \ \equiv \ \bar{H} \ \text{ mod }(h,h') \, , \quad \ \psi(X^\pm) \ \equiv \ \bar{X}^{\pm} \ \text{ mod }(h,h') \, .
\end{equation}
\end{proof}

L'existence de cette base topologique fournit un isomorphisme $\CC[[h,h']]$-linéaire naturel $B^{h,h'} : \Uc[\slt][[h,h']] \stackrel{\sim}{\to} \Uhhp$, puis, par transport, une structure de $\CC[[h,h']]$-algèbre sur $\Uc[\slt][[h,h']]$. En considérant ci-dessous $\Uc[\slt][[h,h']]$ à gauche muni de cette structure et $\Uc[\slt][[h,h']]$ à droite comme la déformation constante de $\Uc[\slt]$,
$$ \psi \circ B^{h,h'} \, : \ \Uc[\slt][[h,h']] \ \to \ \Uc[\slt][[h,h']] $$
est alors un isomorphisme de $\CC[[h,h']]$-algèbre. En utilisant \eqref{eq_psimod} on en conclut que la structure transportée par $B^{h,h'}$ sur $\Uc[\slt][[h,h']]$ est une déformation formelle de $\Uc[\slt]$ par rapport à $h$ et $h'$, qu'on notera par abus de langage $\Uhhp$. \\

$U_h(\mathfrak{sl}_2)$ admet comme base topologique la famille de monômes $(\tilde{X}^-)^a \tilde{H}^b (\tilde{X}^+)^c$ avec $a,b,c \in \NN$ : voir par exemple \cite[6.4]{ChPr}. De même que précédemment, par transport on définit sur $\Uc[\slt][[h]]$ une structure de déformation formelle de $\Uc[\slt]$ par rapport à $h$, qu'on notera par abus de langage $\Uh[\slt]$. \\

Dans le quotient $U_{h,h'}(\mathfrak{sl}_2,g) / (h'=0)$ les relations suivantes sont vérifiées : $[H, X^{\pm}] = \pm 2 \: \! X^{\pm}$ et
\begin{eqnarray*}
[X^+, X^-] & = &  X^+ X^- \ - \ X^- X^+ \\
& = & \frac{Q^{\sqrt{C}} + Q^{-\sqrt{C}}}{(Q - Q^{-1})^2} \ - \ \frac{Q^{H - 1} + Q^{-H+1}}{(Q - Q^{-1})^2} \\
&& \quad \quad - \ \frac{Q^{\sqrt{C}} + Q^{-\sqrt{C}}}{(Q - Q^{-1})^2} \ + \ \frac{Q^{H + 1} + Q^{-H-1}}{(Q - Q^{-1})^2} \\
& = & \frac{Q^H - Q^{-H}}{Q - Q^{-1}} \, .
\end{eqnarray*}

Cela prouve l'existence d'un morphisme de $\CC[[h]]$-algèbre
\begin{IEEEeqnarray*}{lcll}
\Uh[\slt] = \Uc[\slt][[h]] & \ \longrightarrow \ & \Uhhp / (h'=0) \, & = \Uc[\slt][[h,h']] / (h'=0) \\
&&& = \Uc[\slt][[h]]
\end{IEEEeqnarray*}
égal à l'idendité. En d'autres mots $\Uhhp = \Uc[\slt][[h,h']] = \Uc[\slt][[h]][[h']]$ est une déformation formelle selon $h'$ de $\Uh[\slt] = \Uc[\slt][[h]]$. \\

De manière similaire, on voit que $\Uhhp$ est une déformation formelle de $\Urh{h'}$ selon $h$ : il faut remarquer que dans le quotient $U_{h,h'}(\mathfrak{sl}_2,g) / (h=0)$ on a $P(Q^{H_e^{\pm}}) = 1$. \\

L'isomorphisme $\psi$ de la proposition \ref{prop_uconst} induit un isomorphisme de $\CC[[h]]$-algèbre
$$ \varphi \ := \ \lim_{h' \to 0} \psi \, : \ U_h(\mathfrak{sl}_2) \ \stackrel{\sim}{\longrightarrow} \ \Uc[\slt][[h]] \, , $$
qui n'est autre que l'isomorphisme de \cite[prop. 6.4.6]{ChPr}. Pour le voir, il suffit de calculer $\varphi$ sur les générateurs $\tilde{X}^{\pm}$ et $\tilde H$, faisons-le pour $\tilde{X}^+$ (c'est évident pour les autres) :
\begin{eqnarray*}
\varphi(\tilde{X}^+) & = & \left( \prod_{e=1,-1} \frac{Q^{\bar{H}_e^+} - Q^{- \bar{H}_e^+}}{\bar{H}_e^+ \left( Q - Q^{-1} \right)} \right) \bar{X}^+ \\
& = & 4 \left( \frac{Q^{\sqrt{\bar{C}}} + Q^{-\sqrt{\bar{C}}} - Q^{\bar{H} - 1} - Q^{-\bar{H} +1}}{\left( \bar{C} - (\bar{H}-1)^2 \right) \left( Q - Q^{-1} \right)^2} \right) \bar{X}^+ \\
& = & 2 \left( \frac{\cosh h(\bar{H} -1) - \: \! \cosh h \sqrt{\bar{C}}}{\left( (\bar{H}-1)^2 - \bar{C} \right) \sinh^2 h} \right) \bar{X}^+ \, .
\end{eqnarray*}

On pose $\phi$ l'unique isomorphisme de $\CC[[h,h']]$-algèbre tel que le diagramme suivant est commutatif :
$$ \xymatrix{
&& U_h(\mathfrak{sl}_2) [[h']] \ar[rrd]_-{\sim}^-{\mathcal{Q}^{h'} (\varphi)} && \\
U_{h,h'} (\mathfrak{sl}_2,g) \ar[rru]^-{\phi}_-{\sim} \ar[rrrr]^-{\psi}_-{\sim} && && \Uc[\slt] [[h,h']]
} $$

De la même manière, on a l'existence d'un isomorphisme de $\CC[[h']]$-algèbre
$$ \varphi' \ := \ \lim_{h \to 0} \psi \, : \ U_{h'}(\mathfrak{sl}_2) \ \stackrel{\sim}{\longrightarrow} \ \Uc[\slt][[h']] \, , $$
et d'un unique isomorphisme $\phi'$ de $\CC[[h,h']]$-algèbre, de telle sorte que le diagramme précédent se complète par symétrie en le diagramme commutatif suivant :
\begin{eqnarray} \label{eq_diag1}
\xymatrix{
&& U_h(\mathfrak{sl}_2) [[h']] \ar[rrd]_-{\sim}^-{\mathcal{Q}^{h'} (\varphi)} && \\
U_{h,h'} (\mathfrak{sl}_2,g) \ar[rru]^-{\phi}_-{\sim} \ar[rrd]_-{\phi'}^-{\sim} \ar[rrrr]^-{\psi}_-{\sim} && && \Uc[\slt] [[h,h']] \\
&& U_{h'}(\mathfrak{sl}_2) [[h]] \ar[rru]^-{\sim}_-{\mathcal{Q}^{h} (\varphi')} &&
} \end{eqnarray}

On résume la discussion précédente dans le théorème \ref{thm_uhhdefor} suivant, qu'on illustre par le diagramme commutatif
$$ \xymatrix{
&& \Uhhp \ar[dll]_-{\lim_{h' \to 0} \ } \ar[drr]^-{\lim_{h \to 0}} \ar[dd]^-{\lim_{h,h' \to 0}} && \\
\Uh[\slt] \ar[drr]_-{\lim_{h \to 0}} && && \Urh{\: \! h'} \ar[dll]^-{\ \lim_{h' \to 0}} \\
&& \Uc[\slt] &&
} $$

\begin{theoreme} \phantomsection \label{thm_uhhdefor}
\begin{itemize}
\item[1)] $U_{\: \! h,h'}(\mathfrak{sl}_2,g)$ est une déformation formelle triviale de $\Uc[\slt]$ selon $h, h'$.
\item[2)] $U_{\: \! h,h'}(\mathfrak{sl}_2,g)$ est une déformation formelle triviale de $U_h(\mathfrak{sl}_2)$ selon $h'$.
\item[3)] $U_{\: \! h,h'}(\mathfrak{sl}_2,g)$ est une déformation formelle triviale de $U_{h'}(\mathfrak{sl}_2)$ selon $h$.
\end{itemize}
\end{theoreme}

\begin{remarque}
La cohomologie de Hochschild de $\Uc[\slt]$ est nulle en degré 2 : voir par exemple \cite[II.11]{Gui2}. La proposition \ref{prop_hoch} et le lemme \ref{lem_hoch} donnent donc une preuve que les déformations formelles précédentes sont triviales. La proposition \ref{prop_uconst} a toutefois l'avantage de fournir explicitement des isomorphismes avec les déformations constantes.
\end{remarque}

\begin{remarque}
Le théorème \ref{thm_uhhdefor} signifie en particulier que les groupes quantiques d'interpolation $\Uhhp$ ne sont en fait pas plus gros que l'algèbre enveloppante $\Uc[\slt]$ (seul l'espace des scalaires est grossi). Par ailleurs, il permet de formuler le fait qu'ils sont des objets comparables à $\Uc[\slt]$.
\end{remarque}

\subsection{Corollaires}
Ici on donne des corollaires des résultats de la section précédente. Ces résultats sont nouveaux. \\
Le premier est une conséquence immédiate de la proposition \ref{prop_basehh}. Il précise que $\Uhhp$ est pour ainsi dire défini à partir de seulement trois éléments $X^-$, $H$ et $X^+$.

\begin{corollaire} \label{cor_xheng}
$U_{h,h'}(\mathfrak{sl}_2,g)$ est topologiquement engendrée par les éléments $X^{\pm}$ et $H$.
\end{corollaire}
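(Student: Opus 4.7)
The plan is to deduce the corollary directly from Proposition \ref{prop_basehh}, which provides a topological basis of $U_{h,h'}(\mathfrak{sl}_2,g)$ consisting of monomials in $X^\pm$ and $H$ only. Let me spell out the steps.

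First, I would introduce the closed $\CC[[h,h']]$-subalgebra $A$ of $U_{h,h'}(\mathfrak{sl}_2,g)$ topologically generated by $X^\pm$ and $H$: by definition, $A$ is the closure, for the $(h,h')$-adic topology, of the (non-topological) $\CC[[h,h']]$-subalgebra generated by $X^+$, $X^-$ and $H$. In particular, every monomial $(X^-)^a H^b (X^+)^c$ with $a,b,c \in \NN$ belongs to $A$.

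Next, I would invoke Proposition \ref{prop_basehh}: since the family $\bigl( (X^-)^a H^b (X^+)^c \bigr)_{a,b,c \in \NN}$ is a topological basis of $U_{h,h'}(\mathfrak{sl}_2,g)$, every element $x \in U_{h,h'}(\mathfrak{sl}_2,g)$ can be written as a series
\[
x \ = \ \sum_{a,b,c \in \NN} \lambda_{a,b,c} \, (X^-)^a H^b (X^+)^c \, , \quad \lambda_{a,b,c} \in \CC[[h,h']] \, ,
\]
that converges in the $(h,h')$-adic topology. Since $A$ contains every partial sum and is closed, the limit $x$ belongs to $A$. This being true for every $x \in U_{h,h'}(\mathfrak{sl}_2,g)$, we conclude that $A = U_{h,h'}(\mathfrak{sl}_2,g)$, which is precisely the statement of the corollary.

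There is no substantial obstacle here: all the work has already been done in proving Proposition \ref{prop_basehh}. The only conceptual content worth emphasising is that the generator $C$ appearing in Definition \ref{def_uhh} is in fact redundant, in the sense that it can be recovered as a convergent series in $X^\pm$ and $H$ in the $(h,h')$-adic topology; this is consistent with the classical identity $\bar C = (\bar H + 1)^2 + 4\, \bar X^- \bar X^+$ in $\Uc[\slt]$ (see \eqref{eq_casimir}) and with the isomorphism $\psi$ of Proposition \ref{prop_uconst}, which sends $C$ to $\bar C$.
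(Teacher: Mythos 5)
Your proposal is correct and follows exactly the paper's route: the paper derives the corollary as an immediate consequence of Proposition \ref{prop_basehh}, and your argument simply spells out the standard detail (the closed subalgebra topologically generated by $X^{\pm}$ and $H$ contains all finite linear combinations of the basis monomials $(X^-)^a H^b (X^+)^c$, hence, being closed for the $(h,h')$-adic topology, contains their limits, i.e.\ all of $U_{h,h'}(\mathfrak{sl}_2,g)$). Your closing remark that $C$ is thereby redundant matches the paper's own commentary following the corollary.
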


Comme conséquences immédiates de la proposition \ref{prop_uconst}, on a les deux corollaires suivants.
\begin{corollaire}
$U_{\: \! h,h'}(\mathfrak{sl}_2,g)$ est intègre.
\end{corollaire}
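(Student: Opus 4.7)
The plan is to reduce the problem, via the isomorphism established in Proposition \ref{prop_uconst}, to showing that the constant formal deformation $\Uc[\slt][[h,h']]$ is an integral domain. Since $\psi$ is an isomorphism of $\CC[[h,h']]$-algebras, it preserves the property of being intègre, so it suffices to verify this property for $\Uc[\slt][[h,h']]$.

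First, I would recall that $\Uc[\slt]$ is itself an integral domain. This is a classical consequence of the Poincaré-Birkhoff-Witt theorem for $\mathfrak{sl}_2$: the associated graded algebra $\mathrm{gr} \, \Uc[\slt]$ is isomorphic to the symmetric algebra $S(\mathfrak{sl}_2) \simeq \CC[\bar{X}^-,\bar{H},\bar{X}^+]$, which is a polynomial ring hence an integral domain; a standard argument on the leading terms of a product of two elements (with respect to the PBW filtration) then transfers this property to $\Uc[\slt]$ itself.

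Next, I would pass from $\Uc[\slt]$ to $\Uc[\slt][[h,h']]$ by a leading-term argument adapted to the $\bar{h}$-adic filtration. Let $x, y$ be two nonzero elements of $\Uc[\slt][[h,h']]$, with expansions
\[
x \ = \ \sum_{\bar{n} \in \NN^2} x_{\bar n} \: \! h^{\bar n} \, , \quad \quad y \ = \ \sum_{\bar{n} \in \NN^2} y_{\bar n} \: \! h^{\bar n} \, ,
\]
where $x_{\bar n}, y_{\bar n} \in \Uc[\slt]$. Fixing a total order on $\NN^2$ refining the product order (for instance the lexicographic order), let $\bar p$ and $\bar q$ be the smallest indices such that $x_{\bar p} \neq 0$ and $y_{\bar q} \neq 0$. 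Since the product of $\Uc[\slt][[h,h']]$ is the one of the constant formal deformation, the coefficient of $h^{\bar p + \bar q}$ in the product $x y$ is exactly $x_{\bar p} \! \: y_{\bar q}$, and this element is nonzero in $\Uc[\slt]$ by the previous step. Hence $xy \neq 0$, which shows that $\Uc[\slt][[h,h']]$ is an integral domain.

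There is no real obstacle here; the only point requiring care is the choice of a monomial order on $\NN^2$ that makes the leading-term argument unambiguous in the two-parameter setting. The argument would be essentially identical for $r$ parameters, with any order refining the partial order $\preceq$ on $\NN^r$, and it also works more generally for any formal deformation (not necessarily constant) of an integral $\CC$-algebra.
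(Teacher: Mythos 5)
Your proof is correct and follows essentially the same route as the paper, which states the corollary as an immediate consequence of Proposition \ref{prop_uconst} and leaves to the reader exactly the PBW and $\bar h$-adic leading-term details you supply. One minor caveat on your closing remark: what the leading-term argument actually uses is compatibility of the total order with addition (as for the lexicographic order), not merely that it refines the partial order $\preceq$ on $\NN^r$, since an arbitrary total refinement of $\preceq$ need not be translation-invariant.
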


\begin{corollaire} \label{cor_eng}
Le centre de $U_{h,h'}(\mathfrak{sl}_2,g)$ est topologiquement engendré par $C$ et isomorphe à $\CC[C] [[h,h']]$.
\end{corollaire}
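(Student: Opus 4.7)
The statement is presented as an immediate consequence of Proposition~\ref{prop_uconst}, so the plan is essentially to transport the computation through the isomorphism $\psi$ and reduce it to the classical fact that the center of $\Uc[\slt]$ is $\CC[\bar C]$.

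First I would use $\psi : U_{h,h'}(\mathfrak{sl}_2,g) \stackrel{\sim}{\to} \Uc[\slt][[h,h']]$, where the target carries the constant formal deformation product. Since $\psi$ is a $\CC[[h,h']]$-algebra isomorphism and $\psi(C) = \bar C$, it suffices to show that the center of $\Uc[\slt][[h,h']]$ (with the constant product) is the $\CC[[h,h']]$-subalgebra topologically generated by $\bar C$, and is isomorphic as a topological $\CC[[h,h']]$-algebra to $\CC[\bar C][[h,h']]$.

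Next I would compute this center directly. Write an element of $\Uc[\slt][[h,h']]$ as $x = \sum_{\bar n \in \NN^2} a_{\bar n}\, h^{\bar n}$ with $a_{\bar n} \in \Uc[\slt]$. Since the product on $\Uc[\slt][[h,h']]$ is the constant one, for any $y \in \Uc[\slt]$ one has $[x,y] = \sum_{\bar n} [a_{\bar n},y]\, h^{\bar n}$. Because $\Uc[\slt][[h,h']]$ is separated and $\Uc[\slt]$ is dense, $x$ is central if and only if $[x,y]=0$ for every $y \in \Uc[\slt]$, and this holds if and only if $[a_{\bar n}, y] = 0$ for every $\bar n$ and every $y$. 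Hence the center equals $Z(\Uc[\slt])[[h,h']]$.

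Recalling from~\eqref{eq_casimir} that $Z(\Uc[\slt]) = \CC[\bar C]$, we obtain that the center of $\Uc[\slt][[h,h']]$ is $\CC[\bar C][[h,h']]$, which is precisely the topological $\CC[[h,h']]$-subalgebra topologically generated by $\bar C$ (the monomials $\bar C^k$ form a topological basis of it). Transporting back via $\psi^{-1}$ and using $\psi(C) = \bar C$, the center of $U_{h,h'}(\mathfrak{sl}_2,g)$ is topologically generated by $C$ and isomorphic as a $\CC[[h,h']]$-algebra to $\CC[C][[h,h']]$.

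There is no serious obstacle here: the only substantive input is the classical description of $Z(\Uc[\slt])$, plus the two soft observations that (i) the center of a constant formal deformation is obtained term-by-term, and (ii) continuity reduces centrality in $\Uc[\slt][[h,h']]$ to centrality against the dense subalgebra $\Uc[\slt]$.
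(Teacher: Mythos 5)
Your proof is correct and follows exactly the route the paper intends: the corollary is stated there as an immediate consequence of Proposition~\ref{prop_uconst}, i.e.\ transporting through $\psi$ (with $\psi(C)=\bar C$) and invoking $Z(\Uc[\slt])=\CC[\bar C]$ from~\eqref{eq_casimir}. Your coefficientwise verification that the center of the constant deformation is $Z(\Uc[\slt])[[h,h']]$ is precisely the soft step the paper leaves implicit, so nothing is missing.
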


On donne maintenant un résultat de décomposition triangulaire, analogue à ceux que l'on connaît pour $\Uc[\slt]$ et $\Uh[\slt]$. \\
On note $U^-_{h,h'}(\mathfrak{sl}_2,g)$, $U^0_{h,h'}(\mathfrak{sl}_2,g)$ et $U^+_{h,h'}(\mathfrak{sl}_2,g)$ les sous-$\CC[[h,h']]$-algèbres de $U_{h,h'}(\mathfrak{sl}_2,g)$, topologiquement engendrées respectivement par $X^-$, $H$ et $X^+$. \\
D'après la proposition \ref{prop_basehh}, elles sont, en tant que $\CC[[h,h']]$-algèbres, canoniquement isomorphes respectivement à $\CC[X^-][[h,h']]$, $\CC[H][[h,h']]$ et $\CC[X^+][[h,h']]$.

\begin{proposition} \label{prop_triang}
$U_{\: \! h,h'}(\mathfrak{sl}_2,g)$ admet une décomposition triangulaire :
\begin{equation} \label{eq_triang}
U_{h,h'}(\mathfrak{sl}_2,g) \ \simeq \ U^-_{h,h'}(\mathfrak{sl}_2,g) \ \tilde{\otimes}_{\CC[[h,h']]} \, \ U^0_{h,h'}(\mathfrak{sl}_2,g) \ \tilde{\otimes}_{\CC[[h,h']]} \ \, U^+_{h,h'}(\mathfrak{sl}_2,g) \, ,
\end{equation}
où l'isomorphisme de $\CC[[h,h']]$-modules est défini par la multiplication de $U_{h,h'}(\mathfrak{sl}_2,g)$ :
$$ x^- \otimes x^0 \otimes x^+ \ \mapsto \ x^- \cdot x^0 \cdot x^+ \, , \quad \text{ avec } \ x^s \in U^s_{h,h'}(\mathfrak{sl}_2,g), \ s \in \{-,0,+ \} \, . $$
\end{proposition}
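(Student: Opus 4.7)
My plan is to identify both sides of the claimed isomorphism with the same space of formal series, using the topological Poincaré–Birkhoff–Witt basis provided by Proposition \ref{prop_basehh}.

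First, I would observe that the subalgebras $U^-_{h,h'}(\mathfrak{sl}_2,g)$, $U^0_{h,h'}(\mathfrak{sl}_2,g)$ and $U^+_{h,h'}(\mathfrak{sl}_2,g)$ are, by construction, canonically isomorphic as $\CC[[h,h']]$-algebras to $\CC[X^-][[h,h']]$, $\CC[H][[h,h']]$ and $\CC[X^+][[h,h']]$. Consequently, applying the definition of the completed tensor product recalled in Section~\ref{section_deformations} twice, the right-hand side of \eqref{eq_triang} is naturally identified with $(\CC[X^-] \otimes_{\CC} \CC[H] \otimes_{\CC} \CC[X^+])[[h,h']]$. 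In particular, it is topologically free over $\CC[[h,h']]$ with topological basis the pure tensors $(X^-)^a \otimes H^b \otimes (X^+)^c$ indexed by $(a,b,c) \in \NN^3$.

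Next I would check that the multiplication map $m : U^- \: \tilde{\otimes} \: U^0 \: \tilde{\otimes} \: U^+ \to U_{h,h'}(\mathfrak{sl}_2,g)$ is well defined and continuous. The bilinear multiplication maps $U^- \times U^0 \to U_{h,h'}(\mathfrak{sl}_2,g)$ and $U_{h,h'}(\mathfrak{sl}_2,g) \times U^+ \to U_{h,h'}(\mathfrak{sl}_2,g)$ inherited from the product of $U_{h,h'}(\mathfrak{sl}_2,g)$ are $\CC[[h,h']]$-bilinear, so by the universal property of $\tilde{\otimes}$ they extend uniquely to a continuous $\CC[[h,h']]$-linear map $m$; this extension sends each basis element $(X^-)^a \otimes H^b \otimes (X^+)^c$ to the monomial $(X^-)^a H^b (X^+)^c \in U_{h,h'}(\mathfrak{sl}_2,g)$.

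Finally I would invoke Proposition \ref{prop_basehh}: the family $((X^-)^a H^b (X^+)^c)_{a,b,c \in \NN}$ is a topological basis of $U_{h,h'}(\mathfrak{sl}_2,g)$ over $\CC[[h,h']]$. Thus $m$ sends a topological basis of the source bijectively onto a topological basis of the target, and, being $\CC[[h,h']]$-linear and continuous between two topologically free $\CC[[h,h']]$-modules, is therefore an isomorphism. Since all the essential combinatorial content is already encoded in Proposition \ref{prop_basehh}, there is no real obstacle: the only subtlety is keeping the bookkeeping of the two-variable completions straight in the identification of $\tilde{\otimes}$ with $(\,\cdot\,)[[h,h']]$, but this is a direct application of the formal definition of the topologically completed tensor product recalled earlier in the paper.
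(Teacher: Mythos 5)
Your proposal is correct and is essentially the paper's own argument: the paper proves Proposition \ref{prop_triang} simply by citing Proposition \ref{prop_basehh}, and your write-up just unpacks that citation — identifying the completed tensor product with $(\CC[X^-] \otimes_{\CC} \CC[H] \otimes_{\CC} \CC[X^+])[[h,h']]$ via the definition of $\tilde{\otimes}$ and observing that the multiplication map carries the topological basis of pure tensors onto the topological PBW basis $((X^-)^a H^b (X^+)^c)_{a,b,c\in\NN}$. The bookkeeping details you supply (continuity of the multiplication map and the two-variable completion) are exactly the routine verifications the paper leaves implicit.
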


\begin{proof}
C'est une conséquence de la proposition \ref{prop_basehh}.
\end{proof}

\subsection{Autre présentation}
On donne ici une autre présentation des algèbres $\Uhhp$. Elle présente l'avantage de ne faire appel qu'aux générateurs $X^{\pm}$ et $H$. Les résultats ici sont nouveaux. \\

$U_{\: \! h,h'}(\mathfrak{sl}_2,g)$ est engendrée par $X^{\pm}$, $H$ et $C$. Le corollaire \ref{cor_xheng} signifie donc que $C$ peut s'écrire en fonction de $X^{\pm}$ et $H$. Plus précisément, on a le résultat suivant.

\begin{defiprop} \label{defiprop_cxx}
Soit $\CC \langle X^{\pm}, H \rangle^0$ le sous-$\CC$-espace vectoriel de $\CC \langle X^{\pm}, H \rangle$ engendré par les monômes de la forme
$$ (X^-)^a H^b (X^+)^a \, , \quad a, b \in \NN \, . $$

\begin{itemize}
\item[1)] Il existe une unique famille d'éléments $C_{n,m} \in \CC \langle X^{\pm}, H \rangle^0$, avec $n,m \in \NN$, telle que dans $U_{\: \! h,h'}(\mathfrak{sl}_2,g)$
$$ C \ = \ \sum_{n,m \, \in \, \NN} C_{n,m} \, h^n (h')^m \, . $$

\item[2)] Il existe une unique famille d'éléments $\alpha_{n,m} \in \CC \langle X^{\pm}, H \rangle^0$, avec $n,m \in \NN$, telle que dans $U_{\: \! h,h'}(\mathfrak{sl}_2,g)$
$$ [X^+, X^-] \ = \ \sum_{n,m \, \in \, \NN} \alpha_{n,m} \, h^n (h')^m \, . $$
\end{itemize}
\end{defiprop}

\begin{proof}
La proposition \ref{prop_basehh} implique que le sous-$\CC[[h,h']]$-module de $\Uhhp$ topologiquement engendré par les monômes de la forme
$$ (X^-)^a H^b (X^+)^a \, , \quad a, b \in \NN $$
est égal à la sous-$\CC[[h,h']]$-algèbre de $\Uhhp$ commutant avec $H$. En effet on a
$$ [H, (X^-)^a H^b (X^+)^c] \ = \ 2 \: \! (c-a) \: \! (X^-)^a H^b (X^+)^c \, , \quad \text{ pour } a,b,c \in \NN \, . $$
Puisque $C$ appartient au centre de $\Uhhp$ et puisque $[X^+, X^-]$ commute avec $H$, on obtient les deux points de la proposition.
\end{proof}

\begin{proposition} \label{prop_autrepres}
Soit $g \in \NN_{\geq 1}$. On note $V_{h,h'}(\mathfrak{sl}_2,g)$ la $\CC[[h,h']]$-algèbre topologiquement engendrée par $X^\pm$, $H$ et définie par les relations
$$ [H, X^{\pm}] \ = \ \pm 2 \, X^{\pm} \, , \quad \quad [X^+, X^-] \ = \ \sum_{n,m \, \in \, \NN} \alpha_{n,m} \, h^n (h')^m \, . $$
Il existe un isomorphisme de $\CC[[h,h']]$-algèbre
$$ \varphi \, : \ V_{h,h'}(\mathfrak{sl}_2,g) \ \stackrel{\sim}{\longrightarrow} \ \Uhhp \, , $$
uniquement déterminé par $\varphi(X^{\pm}) = X^{\pm}$ et $\varphi(H) = H$. En outre, $\varphi$ vérifie :
$$ \varphi^{-1}(C) \ = \ \sum_{n,m \in \NN} C_{n,m} \, h^n (h')^m \, .$$
\end{proposition}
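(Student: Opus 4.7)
The existence of $\varphi$ follows from the universal property of topologically generated algebras (proposition \ref{prop_props}, point 4): by the very definition of the coefficients $\alpha_{n,m}$ (proposition \ref{defiprop_cxx}), one has in $\Uhhp$ the identity $[X^+,X^-] = \sum_{n,m \in \NN} \alpha_{n,m} h^n (h')^m$, while $[H,X^\pm] = \pm 2 \, X^\pm$ is among the defining relations of $\Uhhp$. Sending $X^\pm \mapsto X^\pm$ and $H \mapsto H$ therefore respects the defining relations of $V_{h,h'}(\mathfrak{sl}_2,g)$, yielding the morphism $\varphi$. Its surjectivity is immediate: the image is a closed subalgebra (by completeness and continuity of $\varphi$) containing $X^\pm$ and $H$, and hence coincides with $\Uhhp$ by corollary \ref{cor_xheng}.

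The hard part will be the injectivity, which I plan to deduce from a PBW-type theorem for $V_{h,h'}(\mathfrak{sl}_2,g)$: the monomials $(X^-)^a H^b (X^+)^c$ ($a,b,c \in \NN$) form a topological basis over $\CC[[h,h']]$. I would establish this through a diamond-lemma argument adapted to the $(h,h')$-adic setting, with reduction rules $H X^\pm \to X^\pm H \pm 2 X^\pm$ and $X^+ X^- \to X^- X^+ + \sum_{n,m} \alpha_{n,m} h^n (h')^m$, the latter series converging because each $\alpha_{n,m}$ is a fixed element of $\CC \langle X^\pm, H \rangle^0$. The critical classical input is the identification $\alpha_{0,0} = H$, which is forced by theorem \ref{thm_uhhdefor}, giving $\Uhhp / (h,h') \simeq \Uc[\slt]$ in which $[X^+,X^-] = H$; consequently, modulo $(h,h')$ the reduction coincides with the classical PBW reduction in $\Uc[\slt]$ and is confluent. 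Higher-order confluence of the overlap ambiguities follows by $(h,h')$-adic induction, each reduction step preserving the filtration and the correction terms $\alpha_{n,m}$ being themselves already in normal form. Once spanning is established, the images of the monomials under $\varphi$ form a topological basis of $\Uhhp$ by proposition \ref{prop_basehh}; hence any vanishing sum in $V_{h,h'}(\mathfrak{sl}_2,g)$ descends to a vanishing sum in $\Uhhp$, forcing all coefficients to vanish. This simultaneously establishes the basis property in $V_{h,h'}(\mathfrak{sl}_2,g)$ and the injectivity of $\varphi$.

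The final identity for $\varphi^{-1}(C)$ is then immediate: by proposition \ref{defiprop_cxx}, point 1, one has $C = \sum_{n,m} C_{n,m} h^n (h')^m$ in $\Uhhp$, with $C_{n,m} \in \CC \langle X^\pm, H \rangle^0$. Since $\varphi$ fixes $X^\pm$ and $H$, it sends the element $\sum_{n,m} C_{n,m} h^n (h')^m$ of $V_{h,h'}(\mathfrak{sl}_2,g)$ to the same series viewed in $\Uhhp$, which equals $C$; applying $\varphi^{-1}$ yields the claimed formula.
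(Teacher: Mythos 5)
Your proposal is correct and follows essentially the same route as the paper's own proof: existence of $\varphi$ from the universal property together with the definition of the $\alpha_{n,m}$ (proposition \ref{defiprop_cxx}), the isomorphism property from the fact that the ordered monomials $(X^-)^a H^b (X^+)^c$ topologically span $V_{h,h'}(\mathfrak{sl}_2,g)$ and are sent by $\varphi$ onto the topological basis of $\Uhhp$ given by proposition \ref{prop_basehh}, and the formula for $\varphi^{-1}(C)$ from the definition of the $C_{n,m}$. The only difference is one of detail: the paper asserts the spanning step directly from the defining relations, whereas you elaborate it via an $(h,h')$-adic rewriting argument — and your confluence discussion is in fact superfluous (and would require more care than your sketch gives), since, as you yourself observe at the end, spanning alone suffices once linear independence is imported from $\Uhhp$ through proposition \ref{prop_basehh}.
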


\begin{proof}
D'après la définition des $\alpha_{n,m}$, il existe un unique morphisme d'algèbre
$$ \varphi \, : \ V_{h,h'}(\mathfrak{sl}_2,g) \ \to \ \Uhhp \, , $$
vérifiant $\varphi(X^{\pm}) = X^{\pm}$ et $\varphi(H) = H$. \\
D'après les relations définissant l'algèbre $V_{h,h'}(\mathfrak{sl}_2,g)$, on voit que cette dernière est topologiquement engendrée par les monômes de la forme
$$ (X^-)^a H^b (X^+)^c \, , \quad \text{ avec } a,b,c \in \NN \, . $$
La proposition \ref{prop_basehh} implique alors que $\varphi$ est un isomorphisme. \\
On a $\varphi^{-1} (X^{\pm}) = X^{\pm}$ et $\varphi^{-1} (H) = H$, donc, d'après la définition des $C_{n,m}$, on voit que $\varphi^{-1}(C) = \sum_{n,m \in \NN} C_{n,m} \, h^n (h')^m$.
\end{proof}

\begin{remarque} \label{rem_coeff}
\begin{itemize}
\item[1)] $\Uhhp$ est une déformation de $\Uc[\slt]$, où la limite classique de $C$ vérifie la relation \eqref{eq_casimir} : on le vérifie en calculant la limite classique des relations \eqref{eq_defx} de $\Uhhp$. On a donc
\begin{equation} \label{eq_casimircoeff}
C \ = \ 4 \: \! X^+ X^- + (H-1)^2 \ + \sum_{n+m \, \geq \, 1} C_{n,m} \, h^n (h')^m \, .
\end{equation}

\item[2)] $\Uhhp$ est une déformation de $\Uh[\slt]$, par suite :
$$ [X^+, X^-] \ = \ \frac{\exp (h \: \! H) - \exp (-h \: \! H)}{\exp (h) - \exp (-h)} \ + \ h' \! \sum_{n,m \, \in \, \NN} \alpha_{n,m+1} \, h^n (h')^m \, . $$
\end{itemize}
\end{remarque}

\subsection{Symétries}
On donne ici des propriétés de symétrie de l'algèbre $U_{\: \! h,h'}(\mathfrak{sl}_2,g)$, analogues naturels de propriétés qui existent déjà pour $\Uc[\slt]$ et $\Uh[\slt]$. Les résultats ici sont nouveaux.

\begin{lemme}
\begin{itemize}
\item[1)] Il existe un unique automorphisme $\omega$ de la $\CC[[h,h']]$-algèbre $U_{\: \! h,h'}(\mathfrak{sl}_2,g)$ tel que $\omega(X^{\pm}) = X^{\mp}$, $\omega(H) = -H$. Il est involutif et envoie $C$ sur $C$.

\item[2)] Il existe un unique anti-automorphisme $\tau$ de la $\CC[[h,h']]$-algèbre $U_{\: \! h,h'}(\mathfrak{sl}_2,g)$ tel que $\tau(X^{\pm}) = X^{\pm}$, $\tau(H) = -H$. Il est involutif et envoie $C$ sur $C$.
\end{itemize}
\end{lemme}

\begin{proof}
Dans les deux points, le fait que $U_{h,h'}(\mathfrak{sl}_2,g)$ est topologiquement engendrée par $X^-$, $X^+$, et $H$ implique l'unicité. Reste donc l'existence.
\begin{itemize}
\item[1)] Il suffit de vérifier que
$$ \big( \omega(X^{+}), \omega(X^-), \omega(H), \omega(C) \big) \ = \ \big( X^-, X^+, -H, C \big) $$
vérifient les relations de la définition \ref{def_uhh} dans l'algèbre $U_{h,h'}(\mathfrak{sl}_2,g)$. Faisons-le pour les relations \eqref{eq_defx}, les autres étant évidentes. $\omega(H_e^{\pm}) \ = \ H_{-e}^{\mp}$, par suite
\begin{eqnarray*}
\omega \left( X^{\pm} \right) \omega \left( X^{\mp} \right) & = & X^{\mp} X^{\pm} \\
& = & \xxp{\mp}{} \\
& = & \omega \left( \xxp{\pm}{} \right) .
\end{eqnarray*}

\item[2)] Ici, il faut vérifier que
$$ \big( \tau(X^{+}), \tau(X^-), \tau(H), \tau(C) \big) \ = \ \big( X^+, X^-, -H, C \big) $$
vérifient les relations de la définition \ref{def_uhh} dans l'algèbre $U_{h,h'}^{opp}(\mathfrak{sl}_2,g)$, l'algèbre opposée de $U_{h,h'}(\mathfrak{sl}_2,g)$. Faisons-le pour les relations \eqref{eq_defx}, les autres étant évidentes. \\
$\tau (H_e^{\pm}) \ = \ H_{-e}^{\mp}$, par suite
\begin{eqnarray*}
\tau \left( X^{\pm} \right) \cdot_{opp} \left( X^{\mp} \right) & = & X^{\mp} X^{\pm} \\
& = & \xxp{\mp}{} \\
& = & \tau \left( \xxp{\pm}{} \right) .
\end{eqnarray*}
\end{itemize}
\end{proof}

\subsection{Interpolation}
On définit ici rigoureusement ce que signifie spécialiser $\Uhhp$ en $Q = \varepsilon$. Une fois les définitions posées, on prouve le lemme \ref{lem_fond}. Ce résultat est crucial dans notre étude, il exprime que toutes les relations de $\Ughp$ peuvent être obtenues à partir de la spécialisation à $Q = \varepsilon$ de $\Uhhp$. \\

On note $\mathcal{A}$ la sous-$\CC$-algèbre de $\CC[[h]]$ engendrée par $Q^{\pm 1}$, canoniquement isomorphe aux polynômes de Laurent en $Q$ :
$$ \Aq \ \simeq \ \CC \! \left[ Q^{\pm 1} \right] . $$
On rappelle que $\varepsilon $ désigne la racine $(2g)$-ième primitive de l'unité $\text{e}^{i \pi / g} \in \CC$ et on pose
$$ {[g-1]}^!_{\varepsilon} \ := \ \prod_{k=1}^{g-1} \, {[k]}_{\varepsilon} \ = \ \prod_{k=1}^{g-1} \, \frac{\varepsilon^k - \varepsilon^{-k}}{\varepsilon - \varepsilon^{-1}} \ \in \ \CC^{\ast} \, . $$

\begin{definition}
Soit $g \in \NN_{\geq 1}$. \\
$U_{\mathcal{A},h'} (\mathfrak{sl}_2,g)$ est la sous $\mathcal{A}[[h']]$-algèbre de $U_{h,h'} (\mathfrak{sl}_2,g)$ topologiquement engendrée (pour la topologie ($h'$)-adique) par
$$ C \, , \ H \, , \ X^{\pm} \quad \text{et} \quad Q^{\pm H} \, , \ Q^{\sqrt{C}} + Q^{-\sqrt{C}} \, . $$
\end{definition}

La démonstration du lemme suivant est le passage le plus technique de l'article. Le lemme \ref{lem_fond} assure essentiellement que la propriété d'interpolation de $\Uhhp$ en $Q = \varepsilon$ est vérifiée. Il est le noyau dur qui permettra de montrer plus loin dans l'article que la spécialisation à $Q = \varepsilon$ de $\Uhhp$ contient $\Urh{gh'}$ comme sous-quotient. Par ailleurs, il généralise à tout $g \in \NN_{\geq 1}$ un résultat analogue dans \cite{FH1}. Sa preuve n'est en toutefois pas une simple généralisation/adaptation. \\
Dans \cite{FH1} l'algèbre quantique duale de Langlands est réalisée de plusieurs manières différentes comme sous-quotients de la spécialisation à $Q = \varepsilon$ du groupe quantique d'interpolation de Langlands. Ici, la réalisation est plus universelle puisqu'elle ne fait intervenir qu'un seul sous-quotient. De manière plus précise, on obtient l'algèbre quantique duale en imposant à $\Uhhp$ moins de relations qu'il n'est fait dans \cite{FH1} pour chacune des réalisations.

\begin{lemme} \label{lem_fond}
Dans la $\CC[[h']]$-algèbre
$$ U_{\mathcal{A}, h'} (\mathfrak{sl}_2, g) \ / \ \overline{ \big( Q = \varepsilon, \, Q^{2H} = 1, \, Q^{2 \sqrt{C}} + Q^{-2 \sqrt{C}} = \varepsilon^{2} + \varepsilon^{-2} \big) }^{\, h'} $$
où $\overline{ \ \cdot \ }^{\: \! h'}$ désigne l'adhérence pour la topologie $(h')$-adique, on a l'égalité :
\begin{equation} \label{eqlemfond}
\big( \varepsilon \, T - \varepsilon^{-1} \: \! T^{-1} \big)^2 \, \Big[ (X^+)^g , (X^-)^g \Big] \ = \ ({[g-1]}_{\varepsilon} ^!)^2 \, \big( T^g - T^{-g} \big) \big( T^H - T^{-H} \big) \, .
\end{equation}
\end{lemme}

\begin{proof}
Dans $\Uhhp$, la relation \eqref{eq_defx} implique l'égalité
\begin{equation*}
(X^\pm)^g (X^\mp)^g \ = \ \prod_{k=0}^{g-1} \ \prod_{e=1,-1} \frac{\xxn{e}{\pm}{\mp ek}{}}{\xxd{e}{\pm}{\mp ek}{}} \, .
\end{equation*}
Pour $0 \leq k \leq g-1$, on peut considérer les éléments
\begin{multline} \label{eq_defxg}
\check{\Pi}_k \ := \ \prod_{e=1,-1} \ \Bigg[ \xxd{e}{\pm}{\mp ek}{} \Bigg] \\
\text{et } \quad \hat{\Pi}_k \ := \ \prod_{e=1,-1} \ \Bigg[ \xxn{e}{\pm}{\mp ek}{} \Bigg]
\end{multline}
dans l'algèbre $\mathcal{A}[H,\sqrt{C}, Q^{\pm H}, Q^{\pm \sqrt{C}}] [[h']]$. On remarque alors qu'ils invariants par la transformation $Q^{\sqrt{C}} \mapsto Q^{- \sqrt{C}}$. Par conséquent Ils appartiennent en fait à la sous-algèbre $\mathcal{A}[H,\sqrt{C}, Q^{\pm H}, Q^{\sqrt{C}} + Q^{-\sqrt{C}}] [[h']]$. En particulier, leurs images dans $\Uhhp$ sont des éléments de $U_{\mathcal{A},h'} (\mathfrak{sl}_2,g)$. \\

La relation \eqref{eq_defxg} implique dans $U_{\mathcal{A},h'} (\mathfrak{sl}_2,g)$ la relation
\begin{multline} \label{eq_defax}
\left( \prod_{k=0}^{g-1} \ \prod_{e = 1, -1} \left[ \xxd{e}{\pm}{\mp ek}{} \right] \right) (X^\pm)^g (X^\mp)^g \\
= \ \prod_{k=0}^{g-1} \ \prod_{e = 1, -1} \left[ \xxn{e}{\pm}{\mp ek}{} \right] \, . \quad \quad
\end{multline}

Pour $0 \leq k \leq g-1$, les éléments $\check{\Pi}_k$ et $\hat{\Pi}_k$ sont invariants par la transformation $Q^H \mapsto -Q^H$, ainsi que par la transformation $Q^{\sqrt{C}} \mapsto -Q^{\sqrt{C}}$. Ils appartiennent donc à la sous-algèbre $\mathcal{A}[H,C,Q^{\pm 2H}, Q^{2\sqrt{C}} + Q^{-2\sqrt{C}}] [[h']]$.

Par ailleurs le morphisme de $\mathcal{A}$-algèbre
$$ \mathcal{A}[Q^{\pm 2H}, Q^{2\sqrt{C}} + Q^{-2\sqrt{C}}] \ \longrightarrow \ \mathcal{A}[Q^{\pm H_e^a}]_{e \in \{ -1, 1 \}, \, a \in \{-, + \} } $$
défini par
$$ Q^{\pm 2H} \ \mapsto \ Q^{2 H_1^+} Q^{-2H_{-1}^-} \quad \text{ et } \quad Q^{2 \sqrt{C}} + Q^{-2 \sqrt{C}} \ \mapsto \ Q^{2H_1^+} Q^{2 H_{-1}^+} + Q^{-2H_1^+} Q^{-2 H_{-1}^+} $$
induit un isomorphisme
\begin{multline*} \mathcal{A} \ \simeq \ \mathcal{A}[Q^{\pm 2H}, Q^{2\sqrt{C}} + Q^{-2\sqrt{C}}] \ / \ \big(Q^{2H} = 1, Q^{2\sqrt{C}} + Q^{-2\sqrt{C}} = \varepsilon^2 + \varepsilon^{-2} \big) \\
\stackrel{\sim}{\longrightarrow} \quad \mathcal{A}[Q^{\pm H_e^a}]_{e \in \{ -1, 1 \}, \, a \in \{-, + \} } \ / \ \big( Q^{H_e^a} = \varepsilon^{(1 - a \: \! e) / 2} \big) \ \simeq \ \mathcal{A} \, ,
\end{multline*}
et par suite un isomorphisme de $\CC[H,C][[h']]$-algèbre
\begin{multline*}
\mathcal{M} \ := \ \left( \frac{\mathcal{A} \Big[ H,C,Q^{\pm 2H}, Q^{2\sqrt{C}} + Q^{-2\sqrt{C}} \Big]}{\Big( Q=\varepsilon, \, Q^{2H} = 1, \, Q^{2\sqrt{C}} + Q^{-2\sqrt{C}} = \varepsilon^2 + \varepsilon^{-2} \Big)} \right) [[h']] \\
\stackrel{\sim}{\longrightarrow} \ \left( \frac{\mathcal{A} \Big[ H,C,Q^{\pm H_e^a} \Big]_{e \in \{ -1, 1 \}, \, a \in \{-, + \} }}{\Big( Q = \varepsilon, \, Q^{H_e^a} = \varepsilon^{(1-a \: \! e)/2} \Big)} \right) [[h']] \, .
\end{multline*}

Par conséquent, dans $\mathcal{M}$, on a
\begin{equation} \label{eq_final1}
\prod_{k=0}^{g-1} \ \prod_{e=1,-1} \ \Bigg[ \xxd{e}{\pm}{\mp ek}{} \Bigg] \ = \ (\varepsilon - \varepsilon^{-1})^{2g - 2} \, \big( \varepsilon \, T - \varepsilon^{-1} \: \! T^{-1} \big)^2
\end{equation}

\begin{multline} \label{eq_final2}
\text{et} \quad \prod_{k=0}^{g-1} \ \prod_{e=1,-1} \Bigg[ \xxn{e}{\pm}{\mp ek}{} \Bigg] \\
= \ \Bigg[ \prod_{k=1}^{g-1} \left( \varepsilon^k - \varepsilon^{-k} \right)^2 \Bigg] \prod_{e=1,-1} \Big( T^{\: \! H_e^{\pm} + (g -1 \mp eg \pm e)/2} - T^{\: \! - H_e^{\pm} - (g -1 \mp eg \pm e)/2} \Big) \\
= \ \Bigg[ \prod_{k=1}^{g-1} \left( \varepsilon^k - \varepsilon^{-k} \right)^2 \Bigg]  \Big( T^{\: \! \sqrt{C} +g -1} \ + \ T^{\: \! - \sqrt{C} -g +1} \ - \ T^{\: \!  \mp g} \, T^{\: \! H} \ - \ T^{\: \! \pm g} \, T^{\: \! -H} \Big) \, .
\end{multline}

Dans le quotient $U_{\! A, h'} (\mathfrak{sl}_2, g) \ / \ \overline{ \big( Q = \varepsilon, \, Q^{2H} = 1, \, Q^{2 \sqrt{C}} + Q^{-2 \sqrt{C}} = \varepsilon^{2} + \varepsilon^{-2} \big) }^{\, h'}$, les relations \eqref{eq_final1}  et \eqref{eq_final2} impliquent l'égalité
\begin{multline} \tag{$\ast_{\pm}$}
(\varepsilon - \varepsilon^{-1})^{2g - 2} \, \big( \varepsilon \, T - \varepsilon^{-1} \: \! T^{-1} \big)^2 \, (X^{\pm})^g (X^{\mp})^g \\
= \ \Bigg[ \prod_{k=1}^{g-1} \left( \varepsilon^k - \varepsilon^{-k} \right)^2 \Bigg]  \Big( T^{\: \! \sqrt{C} +g -1} \ + \ T^{\: \! - \sqrt{C} -g +1} \ - \ T^{\: \!  \mp g} \, T^{\: \! H} \ - \ T^{\: \! \pm g} \, T^{\: \! -H} \Big) \, . \quad \quad \quad
\end{multline}
On obtient \eqref{eqlemfond} en faisant la différence de $(\ast_+)$ et $(\ast_-)$.
\end{proof}

\begin{remarque} \label{rem_fond}
D'après le lemme précédent, dans le localisé par rapport à $h'$
$$ \Bigg( \, U_{\mathcal{A}, h'} (\mathfrak{sl}_2, g) \ / \ \overline{ \big( Q = \varepsilon, \, Q^{2H} = 1, \, Q^{2 \sqrt{C}} + Q^{-2 \sqrt{C}} = \varepsilon^{2} + \varepsilon^{-2} \big) }^{\, h'} \, \Bigg)_{\! \! h'} \, , $$
les éléments
$$ {}^L \! X^+ \ := \ ({[g-1]}_{\varepsilon}^!)^{-2} \, \big( \varepsilon \, T -  \varepsilon^{-1} T^{-1} \big)^2 \,  \big( T^g - T^{-g} \big)^{-2} \, \, (X^+)^g \, , $$
$$  {}^L \! X^- \ := (X^-)^g \, , \quad \ \text{ et } \quad {}^L \! H \ := \ \frac{H}{g} $$
vérifient
$$ [ {}^L \! H, {}^L \! X^{\pm}] \ = \ \pm 2 \,  {}^L \! X^{\pm} \, , \quad  [  {}^L \! X^+ ,  {}^L \! X^- ] \ = \ \frac{\text{sinh} (g h' \: \!  {}^L \! H)}{\text{sinh} (g h')} \, . $$
Autrement dit, la sous-$\CC[[h']]$-algèbre engendrée par ${}^L \! X^{\pm}$ et ${}^L \! H$, que nous noterons $\langle {}^L \! X^{\pm}, {}^L \! H \rangle$, est un quotient de $U_{gh'}(\mathfrak{sl}_2)$. \\
Nous verrons, après avoir établi la dualité de Langlands pour les représentations des groupes quantiques de rang 1, que $\langle {}^L \! X^{\pm}, {}^L \! H \rangle$ est en fait isomorphe à $U_{gh'}(\mathfrak{sl}_2)$ : voir le théorème \ref{thm_uhhinterpolation}.
\end{remarque}

\section{Représentations} \label{section_repr}
Dans cette section, on étudie la théorie des représentations de $\Uhhp$. On commence, en utilisant la décomposition triangulaire de $\Uhhp$ (proposition \ref{prop_triang}), par construire des modules de Verma de $\Uhhp$, déformations selon $h'$ pour chaque $g \in \NN_{\geq 1}$, des modules de Verma de $\Uh[\slt]$ correspondants. Comme pour $\mathfrak{sl}_2$ et $\Uh[\slt]$, on peut en donner une description explicite. Voir la proposition \ref{prop_vermadescr}. \\
Pour chaque $g \in \NN_{\geq 1}$, on définit ensuite une déformation selon $h'$ des représentations de $\Uh[\slt]$ de rang fini $L^h(n)$ ($n \in \NN$). Voir la proposition \ref{prop_indecdescr}. Comme pour $\mathfrak{sl}_2$ et $\Uh[\slt]$, cette déformation peut-être réalisée comme quotient du module de Verma de $\Uhhp$. \\
On utilise ensuite les outils de la section \ref{section_deformations} pour décrire précisément la catégorie des représentations de rang fini de $\Uhhp$ : voir le théorème \ref{thm_rephh}. On obtient notamment que toute représentation de $\mathfrak{sl}_2$ de dimension finie peut-être déformée en une représentation de $\Uhhp$, que la catégorie possède la propriété de Krull-Schmidt (toute représentation est, d'une manière unique, somme directe de représentations indécomposables), et que ses seules représentations indécomposables sont les déformations de $L^h(n)$ ($n \in \NN$) mentionnées précédemment. \\
Ces résultats sur la catégorie des représentations de rang fini des groupes quantiques d'interpolation de Langlands de rang 1 sont nouveaux. Dans \cite{FH1}, pour $g=2,3$ des modules de Verma sont définis, ainsi que des déformations des représentations irréductibles de dimension finie $L^q(n)$ de $U_q(\mathfrak{sl}_2)$. Toutefois une étude systématique de la théorie des représentations de dimension finie des groupes quantiques d'interpolation de rang 1 n'est pas faite. Celle-ci semble difficilement réalisable du point de vue de \cite{FH1}, du fait notamment du nombre trop important de générateurs des groupes quantiques d'interpolation. \\

Soit $g \in \NN_{\geq 1}$. Le groupe quantique d'interpolation $\Uhhp$ est, d'après le théorème \ref{thm_uhhdefor}, une déformation de $\Uc[\slt]$. En reprenant les définitions de la section \ref{section_deformations}, on rappelle qu'une représentation $V^{h,h'}$ de $\Uhhp$ est donnée par un $\CC$-espace vectoriel $V$ et par une action de $\Uhhp$ sur $V[[h,h']]$. \\

La représentation $V^{h,h'}$ est une représentation de poids si le $\CC[[h,h']]$-module $V^{h,h'}$ admet une décomposition en somme directe :
\begin{equation}
V^{h,h'} \ = \ \bigoplus_{n \in \ZZ} \, V^{h,h'}_n \, , \quad \text{ avec } \ V^{h,h'}_n \ := \ \{ f \in V^{h,h'} : \, H.f = n \: \! f \} \, .
\end{equation}
Remarquons que $V^{h,h'}_n$ ($n \in \ZZ$) est un sous-$\CC[[h,h']]$-module fermé de $V^{h,h'}$ (en effet, $H$ est $\CC[[h,h']]$-linéaire, donc continu). \\

Un élément $f \in V^{h,h'}_n$ est appelé un élément de poids $n$, et si $X^+.f = 0$, $f$ est appelé un élément de plus haut poids $n$. \\
Si $V^{h,h'}_n \neq (0)$, alors $n$ est appelé un poids de $V^{h,h'}$ et $V^{h,h'}_n$ un espace de poids de $V^{h,h'}$. On notera $\text{wt} \: \! V^{h,h'}$ l'ensemble des poids de $V^{h,h'}$. \\

La catégorie additive $\CC[[h,h']]$-linéaire des représentations de $\Uhhp$ de rang fini sera notée $\cmodhh$. \\

On pose $U := \Uc[\slt]$. On rappelle que le module de Verma $M(n)$ de $\Uc[\slt]$ ($n \in \ZZ$) est donné par :
$$ M(n) \ := \ U \ / \, \Big( U \: \! \bar{X}^+ + \ U \: \! (\bar{H} - n \: \! 1) \Big) \, . $$
En notant $m_0 \in M(n)$ l'image de $1 \in U$, et $m_j := (\bar{X}^-)^j. m_0$ ($j \in \NN)$, on obtient une base $(m_j)_{j \in \NN}$ de $M(n)$ telle que
\begin{eqnarray*}
H.m_j & = & (n - 2j) \, m_j \, , \\
X^- . m_j & = & m_{j+1} \, , \\
X^+ . m_j & = & j \: \! (n -j+1) \, m_{j-1} \, , \quad \quad \text{(on pose $m_{-1} := 0$).} \\
\end{eqnarray*}

On pose $U^h := \Uh[\slt]$. Le module de Verma $M^h(n)$ de $\Uh[\slt]$ ($n \in \ZZ$) est donné par :
$$ M^h(n) \ := \ U^h \ / \, \Big( U^h \: \! \tilde{X}^+ + \ U^h \: \! (\tilde{H} - n \: \! 1) \Big) \, . $$

On pose $U^{h,h'} := \Uhhp$ et on définit le module de Verma $M^{h,h'} (n,g)$ de $\Uhhp$ ($n \in \ZZ$) :
\begin{equation} \label{eq_verma}
M^{h,h'}(n,g) \ := \ U^{h,h'} \ / \, \Big( U^{h,h'} \: \! X^+ + \ U^{h,h'} \: \! (H - n \: \! 1) \Big) \, .
\end{equation}

$\Uhhp = U(\mathfrak{sl_2})[[h,h']]$ (on rappelle que cette égalité doit être interprétée en considérant la base donnée dans la proposition \ref{prop_basehh}) est une représentation, appelée régulière, de $\Uhhp$ en considérant l'action par multiplication à gauche. C'est une déformation de la représentation régulière de $\Uc[\slt]$. \\

Puisque $X^+ H = H X^+ - 2 \: \! X^+$, on a l'égalité de $\CC[[h,h']]$-module
$$ U^{h,h'} \: \! X^+ + \ U^{h,h'} \: \! (H - n \: \! 1) \ = \ \Big( U \: \! \bar{X}^+ + \ U \: \! (\bar{H} - n \: \! 1) \Big)[[h,h']] \, . $$
Le sous-$\Uhhp$-module $U^{h,h'} \: \! X^+ + \ U^{h,h'} \: \! (H - n \: \! 1)$ de la représentation régulière est donc une sous-représentation, et par suite une déformation formelle de la représentation $U \: \! \bar{X}^+ + \ U \: \! (\bar{H} - n \: \! 1)$ de $\Uc[\slt]$. \\
On en déduit que $M^{h,h'}(n,g) = M(n)[[h,h']]$ est une déformation formelle du module de Verma $M(n)$ de $\Uc[\slt]$ selon $h$ et $h'$ (remarquons que cela prouve en particulier que $M^{h,h'}(n,g)$ est non trivial). \\

De la même façon, $M^h(n) = M(n)[[h]]$ ($n \in \ZZ$) est une déformation du module de Verma $M(n)$ de $\Uc[\slt]$ selon $h$ et $M^{h,h'}(n,g)$ est une déformation de $M^h(n)$ selon $h'$. \\

On résume ce qui précède, et on donne une description explicite du module $M^{h,h'}(n,g)$, dans la proposition \ref{prop_vermadescr} suivante.

\begin{proposition} \label{prop_vermadescr}
Soient $n \in \ZZ$ et $g \in \NN_{\geq 1}$.
\begin{itemize}
\item[1)] $M^{h,h'}(n,g)$ est une représentation de rang infini de $\Uhhp$.
\item[2)] $M^{h,h'}(n,g)$ est une déformation formelle du module de Verma $M(n)$ de $\Uc[\slt]$ selon $h$ et $h'$.
\item[3)] $M^{h,h'}(n,g)$ est une déformation formelle du module de Verma $M^h(n)$ de $\Uh[\slt]$ selon $h'$.
\item[4)] Le module de Verma $M^{h,h'}(n,g)$ est une représentation de poids.
\item[5)] L'action de $\Uhhp$ sur $M^{h,h'}(n,g)$ est donnée dans la base topologique $(m_j)_{j \in \NN}$ par :
\begin{eqnarray*}
H.m_j & = & (n - 2j) \, m_j \, , \\
C. m_j & = & (n + 1)^2 \, m_j \, , \\
X^- . m_j & = & m_{j+1} \, , \\
X^+ . m_j & = & \big[ j \big]_{\! Q \: \! T^{ {\{ j \}}}} \, \big[ n -j+1 \big]_{\! Q \: \! T^{ {\{ n -j+1 \}}}} \, m_{j-1} \, ,
\end{eqnarray*}
\end{itemize}
\end{proposition}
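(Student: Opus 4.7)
Les points 1, 2 et 3 sont déjà essentiellement établis dans la discussion précédant l'énoncé, où l'on a montré l'égalité de $\CC[[h,h']]$-modules
$$ U^{h,h'} X^+ \, + \, U^{h,h'} (H - n \cdot 1) \ = \ \Big( U \bar{X}^+ + \, U (\bar{H} - n \cdot 1) \Big) [[h,h']] \, , $$
dont on déduit $M^{h,h'}(n,g) = M(n)[[h,h']]$ comme $\CC[[h,h']]$-modules. Puisque $M(n)$ est un $\CC$-espace vectoriel de dimension infinie, ceci établit simultanément que $M^{h,h'}(n,g)$ est topologiquement libre de rang infini (point 1), que c'est une déformation formelle de $M(n)$ selon $h$ et $h'$ (point 2), et par restriction à $h = 0$ ou $h' = 0$, qu'il déforme $M^h(n) = M(n)[[h]]$ selon $h'$ (point 3).

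Pour les points 4 et 5, je travaillerais dans la base topologique $(m_j)_{j \in \NN}$ où $m_j := (X^-)^j . m_0$. Les formules concernant $H$ et $X^-$ suivraient des seules relations non déformées : la relation $[H, X^-] = -2 X^-$ donne par récurrence $H . m_j = (n-2j) \, m_j$, ce qui montre en particulier que le module se décompose en somme directe topologique des espaces de poids $\CC[[h,h']] \, m_j$ (point 4), et $X^- . m_j = m_{j+1}$ par la construction même des $m_j$.

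Pour la valeur de $C$, je noterais que $C$ est central dans $U^{h,h'}$ (corollaire \ref{cor_eng}) et que $M^{h,h'}(n,g)$ est engendré par $m_0$, de sorte que $C$ agit par un scalaire $c_n \in \CC[[h,h']]$. Pour déterminer $c_n$, j'évaluerais sur $m_0$ la relation \eqref{eq_defx} exprimant $X^- X^+$, en utilisant $X^+ . m_0 = 0$. Cela donne une équation $\Pi^-(n, c_n, Q, T) = 0$ dans $\CC[[h,h']]$ dont la limite classique s'écrit $\frac{1}{4}(c_n^{(0)} - (n+1)^2) = 0$. En substituant formellement $c_n = (n+1)^2$ (et donc $\sqrt{C} = n+1$), on trouve $H^-_{-1} = \frac{1}{2}(\sqrt{C} - H - 1) = 0$ sur $m_0$, d'où l'annulation du facteur correspondant, et l'unicité de la solution formelle dont la limite classique est $(n+1)^2$ (par un argument de type Hensel) fournit $c_n = (n+1)^2$. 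La formule pour $X^+ . m_j$ s'obtient alors en appliquant la relation \eqref{eq_defx} à $m_{j-1}$ : sur ce vecteur de poids $n - 2j + 2$ sur lequel $C$ agit par $(n+1)^2$, on calcule $H^+_1 = n - j + 1$ et $H^+_{-1} = j$, d'où $X^+ . m_j = X^+ X^- . m_{j-1} = [j]_{QT^{\{j\}}} \, [n-j+1]_{QT^{\{n-j+1\}}} \, m_{j-1}$. La principale difficulté technique sera de justifier rigoureusement les manipulations des expressions symboliques en $\sqrt{C}$ : l'évaluation en $\sqrt{C} = n+1$ est licite car les produits considérés appartiennent en fait à la sous-algèbre engendrée par $H$ et $C = (\sqrt{C})^2$, comme déjà observé dans la définition \ref{def_uhh} de $\Uhhp$, et cette invariance devra être utilisée pour légitimer la substitution.
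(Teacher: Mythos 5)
Votre démonstration est correcte et suit essentiellement la même démarche que celle du texte : les points 1) à 3) sont renvoyés à la discussion précédant l'énoncé, puis les points 4) et 5) sont obtenus à partir des relations de la définition \ref{def_uhh} et de l'égalité $m_j = (X^-)^j . \! \: m_0$, en examinant l'action de $H$ pour la décomposition en espaces de poids. Votre détermination du scalaire par lequel agit $C$ (centralité, évaluation de la relation \eqref{eq_defx} sur $m_0$, annulation du facteur correspondant à $H^-_{-1}$ et unicité de la solution formelle, la substitution $\sqrt{C} = n+1$ étant légitimée par l'invariance sous $\sqrt{C} \mapsto -\sqrt{C}$) ne fait qu'expliciter le calcul que le texte laisse implicite.
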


\begin{proof}
Les trois premiers points ont déjà été prouvés. \\
On utilise les relations de la définition \ref{def_uhh} de $\Uhhp$ et le fait que $m_j = (X^-)^j . m_0$ ($j \in \NN$) pour montrer le dernier point. \\
En examinant l'action de $H$, on obtient que $M^{h,h'}(n,g)$ est une représentation de poids avec $M^{h,h'}_{n-2j}(n,g) = \CC[[h,h']]. m_j$ ($j \in \NN$) et
$$ M^{h,h'}(n,g) \ = \ \bigoplus_{j \in \NN} \, M^{h,h'}_{n - 2j}(n,g) \, . $$
\end{proof}

L'action de $\Uh[\slt]$ sur $M^h(n)$ ($n \in \ZZ$) est donnée dans la base topologique $(m_j)_{j \in \NN}$ par :
\begin{eqnarray*}
H.m_j & = & (n - 2j) \, m_j \, , \\
X^- . m_j & = & m_{j+1} \, , \\
X^+ . m_j & = & {[j]}_Q \, {[n-j+1]}_Q \, m_{j-1} \, ,
\end{eqnarray*}

Formellement donc, le module de Verma $M^{h,h'}(n,g)$ de $\Uhhp$ s'obtient à partir du module de Verma $M^h(n)$ de $U_h(\mathfrak{sl}_2)$, en remplaçant les boîtes quantiques $[a]_Q$ par les boîtes ${[a]}_{Q \: \! T^{\{a \}}}$ ($a \in \ZZ)$.  \\

Les modules de Verma de $\Uhhp$ vérifient la propriété universelle suivante. Sa démonstration s'obtient immédiatement à partir de \eqref{eq_verma}.

\begin{proposition} \label{prop_verma}
Si $V^{h,h'}$ est une représentation de $\Uhhp$ et $f \in V^{h,h'}$ un élément de plus haut poids $n$, alors il existe un unique morphisme
$$ \varphi \, : \ M^{h,h'}(n,g) \ \to \ V^{h,h'} $$
qui vérifie $\varphi \! \left( m_0 \right) = f$.
\end{proposition}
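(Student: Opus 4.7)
The plan is to exploit the defining quotient description \eqref{eq_verma} of $M^{h,h'}(n,g)$ as a standard universal-property argument, adapted to the formal-deformation setting. The only thing one has to check is that the obvious action-evaluation map on $U^{h,h'}$ descends to the quotient; there is no genuine obstacle, which is why the author indicates that the result is immediate.

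First, I would introduce the $\CC[[h,h']]$-linear map
\[
\psi \, : \ U^{h,h'} \ \longrightarrow \ V^{h,h'} \, , \quad u \ \longmapsto \ u . f \, .
\]
Because the action of $U^{h,h'}$ on $V^{h,h'}$ is a $\CC[[h,h']]$-algebra action, this map is automatically $\CC[[h,h']]$-linear and a morphism of left $U^{h,h'}$-modules when $U^{h,h'}$ is regarded as its own left regular representation. The key computation is then to check that the left ideal appearing in \eqref{eq_verma} lies in $\ker \psi$: since $f$ is of highest weight $n$ we have $X^+ . f = 0$ and $(H - n \cdot 1) . f = 0$, so for any $u \in U^{h,h'}$,
\[
\psi(u X^+) \ = \ u . (X^+ . f) \ = \ 0 \, , \quad \psi\big(u (H - n \cdot 1)\big) \ = \ u . \big((H - n \cdot 1) . f \big) \ = \ 0 \, .
\]
Hence $U^{h,h'} X^+ + U^{h,h'} (H - n \cdot 1) \subset \ker \psi$, and $\psi$ factors through the quotient \eqref{eq_verma} to yield a morphism of left $U^{h,h'}$-modules $\varphi : M^{h,h'}(n,g) \to V^{h,h'}$ with $\varphi(m_0) = \psi(1) = f$.

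For uniqueness, I would observe that $M^{h,h'}(n,g)$ is generated as a $U^{h,h'}$-module by $m_0$: indeed, the projection $U^{h,h'} \twoheadrightarrow M^{h,h'}(n,g)$ sends $1$ to $m_0$, so every element of $M^{h,h'}(n,g)$ is of the form $u . m_0$ for some $u \in U^{h,h'}$. Any $U^{h,h'}$-morphism $\varphi'$ satisfying $\varphi'(m_0) = f$ must then agree with $\varphi$ on every such element, and hence on all of $M^{h,h'}(n,g)$. This forces $\varphi' = \varphi$ and completes the argument; the topology plays no role beyond ensuring that $\psi$ is automatically continuous as a $\CC[[h,h']]$-linear map between $(h,h')$-adically complete modules.
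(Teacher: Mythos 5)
Your proof is correct and is precisely the argument the paper has in mind: the paper states that the proposition follows immediately from the quotient description \eqref{eq_verma}, and your writeup spells out exactly that standard factorisation — the evaluation map $u \mapsto u.f$ kills the left ideal $U^{h,h'} X^+ + U^{h,h'}(H - n \cdot 1)$ because $f$ is a highest weight vector of weight $n$, and uniqueness follows since $m_0$ generates $M^{h,h'}(n,g)$. Nothing is missing; your closing remark that the topology plays no role is also consistent with the paper's observation that this ideal is already of the form $\big( U \bar{X}^+ + U(\bar{H}-n\cdot 1)\big)[[h,h']]$, hence closed.
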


On rappelle que la représentation irréductible de dimension finie $L(n)$ de $\Uc[\slt]$ est obtenue comme quotient de $M(n)$ par la sous-représentation $\bigoplus_{j \geq n+1} \CC \, m_j$. \\
$L(n)$ admet alors comme base la famille des vecteurs $m_j$ ($0 \leq j \leq n$) et l'action de $\Uc[\slt]$ sur $L(n)$ est donnée par
\begin{eqnarray*}
H.m_j & = & (n - 2j) \, m_j \, , \\
X^- . m_j & = & m_{j+1} \, , \\
X^+ . m_j & = & j \: \! (n -j+1) \, m_{j-1} \, ,
\end{eqnarray*}
(on pose $m_{-1} = m_{n+1} := 0$). \\

Une sous-représentation $V[[h,h']]$ de $M^{h,h'}(n,g) = M(n)[[h,h']]$ vérifie en particulier $H.V \subset V$. L'action de $H$ sur $M(n)$ étant localement finie et semi-simple, elle l'est aussi sur $V$. Par suite $V$ admet comme base une sous-famille de $(m_j)_{j \in \NN}$. \\

D'après le point 5) de la proposition \ref{prop_vermadescr}, $M^{h,h'}(n,g)$ admet une sous-représentation non triviale si et seulement si $n \in \NN$. Dans ce dernier cas, il existe une unique sous-représentation non triviale : $\left( \bigoplus_{j \geq n+1} \CC \, m_j \right) [[h,h']]$ qui est isomorphe à $M^{h,h'}(-n-2)$ d'après la proposition \ref{prop_verma}. \\
On note $L^{h,h'}(n,g)$ la représentation quotient, de rang fini égal à $n+1$. \\
On définit de la même manière la représentation de rang fini $L^h(n)$ de $\Uh[\slt]$. \\

Par des arguments similaires à ceux donnés dans le cas des modules de Verma, on vérifie que $L^{h,h'}(n,g)$ est une déformation selon $h$ et $h'$ de la représentation irréductible $L(n)$ de $\Uc[\slt]$, et une déformation selon $h'$ de la représentation $L^h(n)$ de $\Uh[\slt]$. \\

On résume ce qui précède, et on donne une description explicite de $L^{h,h'}(n,g)$, dans la proposition suivante.

\begin{proposition} \label{prop_indecdescr}
Soient $n \in \NN$ et $g \in \NN_{\geq 1}$.
\begin{itemize}
\item[1)] $L^{h,h'}(n,g)$ est une représentation de $\Uhhp$ de rang fini égal à $n+1$.
\item[2)] $L^{h,h'}(n,g)$ est une déformation formelle de la représentation irréductible $L(n)$ de $\Uc[\slt]$ selon $h$ et $h'$.
\item[3)] $L^{h,h'}(n,g)$ est une déformation formelle de la représentation $L^h(n)$ de $\Uh[\slt]$ selon $h'$.
\item[4)] $L^{h,h'}(n,g)$ est une représentation de poids.
\item[5)] L'action de $\Uhhp$ sur $L^{h,h'}(n,g)$ est donnée dans la base topologique $(m_j)_{0 \leq j \leq n}$ par :
\begin{eqnarray*}
H.m_j & = & (n - 2j) \, m_j \, , \\
C. m_j & = & (n + 1)^2 \, m_j \, , \\
X^- . m_j & = & m_{j+1} \, , \\
X^+ . m_j & = & \big[ j \big]_{\! Q \: \! T^{ {\{ j \}}}} \, \big[ n -j+1 \big]_{\! Q \: \! T^{ {\{ n -j+1 \}}}} \, m_{j-1} \, ,
\end{eqnarray*}
\end{itemize}
\end{proposition}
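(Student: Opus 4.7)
The plan is to derive each of the five assertions as a direct consequence of Proposition \ref{prop_vermadescr} by passing to the quotient, exploiting the explicit construction of $L^{h,h'}(n,g)$ as $M^{h,h'}(n,g)/N$, where $N$ denotes the closed subrepresentation $\big(\bigoplus_{j \geq n+1} \CC\, m_j\big)[[h,h']]$. The key algebraic fact underlying everything is that $N$ is stable under $X^+$ because $\big[n-(n+1)+1\big]_{Q T^{\{0\}}} = [0]_{Q T^{\{0\}}} = 0$, so $X^+ . m_{n+1} = 0$ and the explicit action formulas show $N$ is preserved by $X^-$, $H$, and $C$ as well.

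First I would prove points 1 and 4 simultaneously: as a $\CC[[h,h']]$-module, the quotient $L^{h,h'}(n,g)$ is $\big(\bigoplus_{0\leq j \leq n}\CC\, m_j\big)[[h,h']]$, which is free of rank $n+1$; moreover, since $N$ is a sum of weight spaces of $M^{h,h'}(n,g)$, the weight decomposition passes to the quotient, yielding $L^{h,h'}(n,g) = \bigoplus_{0\leq j\leq n} L^{h,h'}_{n-2j}(n,g)$ with $L^{h,h'}_{n-2j}(n,g) = \CC[[h,h']]\, m_j$. Point 5 is then immediate: the action formulas on $M^{h,h'}(n,g)$ given in Proposition \ref{prop_vermadescr} restrict to the stated ones on $L^{h,h'}(n,g)$ once one convenes $m_{-1} = m_{n+1} = 0$, the vanishing at $j=n+1$ in the $X^-$ formula being enforced precisely because $m_{n+1}$ is identified with $0$ in the quotient.

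For points 2 and 3, the short exact sequence of $\Uhhp$-representations
$$ 0 \ \longrightarrow \ N \ \longrightarrow \ M^{h,h'}(n,g) \ \longrightarrow \ L^{h,h'}(n,g) \ \longrightarrow \ 0 $$
has all three terms free as $\CC[[h,h']]$-modules (being of the form $V[[h,h']]$ for suitable $\CC$-vector spaces $V$). Applying $\lim_{h,h' \to 0}$ yields the analogous classical sequence relating $M(-n-2) \hookrightarrow M(n) \twoheadrightarrow L(n)$ for $\Uc[\slt]$, while applying $\lim_{h' \to 0}$ yields the quantum sequence $M^h(-n-2) \hookrightarrow M^h(n) \twoheadrightarrow L^h(n)$ for $\Uh[\slt]$. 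This establishes points 2 and 3 simultaneously.

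There is no real obstacle here, since the proposition is essentially a bookkeeping statement organizing the content of the construction carried out just before its statement. The only point requiring a small verification is the stability of $N$ under $X^+$, which, as noted, rests on the identity $[0]_{Q T^{\{0\}}} = 0$ valid for every $g \in \NN_{\geq 1}$.
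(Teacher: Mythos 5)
Your proof is correct and takes essentially the same route as the paper: $L^{h,h'}(n,g)$ is realised as the quotient of the Verma module $M^{h,h'}(n,g)$ by the closed subrepresentation $\big( \bigoplus_{j \geq n+1} \CC \, m_j \big)[[h,h']]$, whose stability under $X^+$ rests on the identity $[0]_{Q \: \! T^{\{0\}}} = 0$, and all five points are then read off from Proposition \ref{prop_vermadescr}. The paper's own proof is simply terser, referring back to the construction preceding the statement and to \og arguments similaires \fg{} to the Verma-module case, where you instead make the deformation claims explicit via the short exact sequence of free $\CC[[h,h']]$-modules --- the same argument in substance.
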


\begin{proof}
Les trois premiers points ont déjà été prouvés. \\
Le dernier point est une conséquence du point 5) de la proposition \ref{prop_vermadescr}. \\
En examinant l'action de $H$, on obtient que $L^{h,h'}(n,g)$ est une représentation de poids avec $L^{h,h'}_{n-2j}(n,g) = \CC[[h,h']]. m_j$ ($0 \leq j \leq n$) et
$$ L^{h,h'}(n,g) \ = \ \bigoplus_{0 \leq j \leq n} \, L^{h,h'}_{n - 2j}(n,g) \, . $$
\end{proof}

L'action de $\Uh[\slt]$ sur la représentation $L^h(n)$ ($n \in \NN$) est donnée par :
\begin{eqnarray*}
H.m_j & = & (n-2j) \, m_j \, , \\
X^- . m_j & = & m_{j+1} \, , \\
X^+ . m_j & = & [j \big]_Q \: \!  [n -j+1]_Q \, m_{j-1} \, .
\end{eqnarray*}

Ici encore, il suffit de remplacer, comme expliqué plus haut, les boîtes quantiques de la représentation $L^h(n)$ de $U_h(\mathfrak{sl}_2)$ pour obtenir la représentation $L^{h,h'}(n,g)$ de $U_{h,h'}(\mathfrak{sl}_2)$.

\begin{remarque}
Pour $g= 1,2,3$, des analogues des représentations $M^{h,h'}(n,g)$ et $L^{h,h'}(n,g)$ existent dans \cite{FH1}. Toutefois celui de $M^{h,h'}(n,g)$ n'apparaît pas comme un module de Verma (les groupes quantiques d'interpolation de Langlands dans \cite{FH1} n'admettent a priori pas de décomposition triangulaire), quotient de la représentation régulière.
\end{remarque}

On note $\cmod$ la catégorie abélienne des représentations de dimension finie de l'algèbre de Lie $\mathfrak{sl}_2$. On rappelle que $\cmod$ est une catégorie semi-simple : voir par exemple \cite[II.6]{humphreys}. Semi-simple ici signifie que toute représentation de dimension finie de $\mathfrak{sl}_2$ est une somme directe finie de représentations irréductibles. \\

D'après la section \ref{section_deformations} sur les déformations formelles, on dispose du foncteur additif :
$$ \lim_{h,h' \to 0} : \ \cmodhh \, \longrightarrow \ \cmod \, . $$

On note $\grot$ et $\grothh$ les groupes de Grothendieck des catégories additives respectives $\cmod$ et $\cmodhh$. \\
On dira qu'une catégorie additive $\mathcal C$ possède la propriété de Krull-Schmidt si chaque objet dans $\mathcal C$ est une somme directe d'objets indécomposables, les facteurs composant la somme étant uniques à isomorphisme et ordre près. \\

Le théorème suivant donne plusieurs résultats nouveaux sur la catégorie des représentations de rang fini de $\Uhhp$. Ceux-ci n'ont pas d'analogues dans \cite{FH1}.

\begin{theoreme} \phantomsection \label{thm_rephh}
\begin{itemize}
\item[1)] Toute représentation $V \in \cmodh$ admet une déformation $V^{h,h'}$ dans $\cmodhh$.
\item[2)] Deux représentations dans $\cmodhh$ sont isomorphes si et seulement si leurs limites classiques le sont.
\item[3)] Le foncteur $\lim_{h,h' \to 0} : \cmodhh \to \cmod$ induit un isomorphisme additif
$$ \left[ \lim_{h,h' \to 0} \right] : \ \grothh \, \stackrel{\sim}{\longrightarrow} \ \grot $$
du groupe de Grothendieck $\grothh$ sur le groupe de Grothendieck $\grot$.
\item[4)] La catégorie $\cmodhh$ possède la propriété de Krull-Schmidt.
\item[5)] Pour tout $n \in \NN$, la représentation de rang fini $L^{h,h'}(n,g)$ est indécomposable. Pour toute représentation $V^{h,h'}$ de $\Uhhp$ de rang fini indécomposable, il existe un unique $n \in \NN$ tel que $V^{h,h'}$ est isomorphe à $L^{h,h'}(n,g)$.
\item[6)] Toute représentation dans $\cmodhh$ est une représentation de poids.
\end{itemize}
\end{theoreme}

\begin{proof}
On note $\mathcal C$ la catégorie des représentations de rang fini de la déformation formelle constante $\Uc[\slt][[h,h']]$. On sait d'après le théorème \ref{thm_uhhdefor} que $\Uhhp$ est une déformation formelle triviale de $\Uc[\slt]$. Il existe alors une équivalence de catégorie additive $\CC[[h,h']]$-linéaire entre $\cmodhh$ et $\mathcal C$, de telle sorte que le diagramme suivant commute :
\begin{equation} \label{eq_commtriv} \xymatrix{
\mathcal C \ar[rr]^-{\simeq} \ar[dr]_-{\lim_{h,h' \to 0}} && \cmodhh \ar[dl]^-{\quad \lim_{h,h' \to 0}} \\
& \cmod &
} \end{equation}
Soit $V \in \cmod$, d'après la proposition \ref{prop_hochrep} et puisque $H^1 \! \left( U(\mathfrak{sl_2}),\text{End}_{\CC} V \right) = 0$ (voir par exemple \cite[II.11]{Gui2}), toute déformation $V^{h,h'}$ dans $\mathcal C$ de $V$ est équivalente à la déformation formelle constante de $V$. \\

Après ces remarques, démontrons dans l'ordre les différents points du théorème.
\begin{itemize}
\item[1)] Toute représentation $V \in \cmod$ admet une déformation dans $\mathcal C$ : la déformation constante. On conclut grâce à \eqref{eq_commtriv}.
\item[2)] D'après \eqref{eq_commtriv} il suffit de montrer le point 2) pour $\mathcal C$. \\
Le foncteur $\lim_{h,h' \to 0}$ induit une application, que l'on note $\left[ \lim_{h,h' \to 0} \right]$, des classes d'isomorphisme de $\mathcal C$ vers les classes d'isomorphisme de $\cmod$. Le foncteur
$$ \mathcal{Q}^{h,h'} \, : \ \cmod \ \to \ \mathcal C \, , $$
qui à une représentation $V \in \cmod$ associe sa déformation constante $V[[h,h']]$, induit clairement un inverse à gauche $\left[ \mathcal{Q}^{h,h'} \right]$ de $\left[ \lim_{h,h' \to 0} \right]$. $\left[ \mathcal{Q}^{h,h'} \right]$ est aussi un inverse à droite, puisque toute représentation dans $\mathcal C$ est isomorphe à la déformation constante de sa limite classique.
\item[3)] Les deux foncteurs précédents étant additifs, $\left[ \lim_{h,h' \to 0} \right]$ et $\left[ \mathcal{Q}^{h,h'} \right]$ induisent, sur les groupes de Grothendieck de $\mathcal C$ et $\cmod$, des isomorphismes inverses l'un de l'autre. On conclut grâce à \eqref{eq_commtriv} encore une fois.
\item[4)] $\cmod$ étant semi-simple, elle possède en particulier la propriété de Krull-Schmidt. D'après le point 2) et puisque $\lim_{h,h' \to 0}$ est additif, on en déduit que $\cmodhh$ la possède aussi.
\item[5)] D'après la proposition \ref{prop_indecdescr}, la limite classique de la représentation $L^{h,h'}(n,g)$ est la représentation irréductible (donc indécomposable) $L(n)$. Toute représentation indécomposable (donc irréductible) de $\mathfrak{sl}_2$ est isomorphe à un $L(n)$ ($n \in \NN$). On conclut en remarquant que le point 2) implique qu'une représentation de $\cmodh$ est indécomposable si et seulement si sa limite classique l'est.
\item[6)] D'après ce qui précède, toute représentation de $\Uhhp$ est isomorphe à une somme directe de représentations $L^{h,h'}(n,g)$. On conclut grâce à la proposition \ref{prop_indecdescr}.
\end{itemize}
\end{proof}

On note, en identifiant les $\CC[[h,h']]$-algèbres $U^0_{h,h'}(\mathfrak{sl}_2,g)$ et $\CC[H][[h,h']]$,
$$ \Pi^0 \, : \ \Uhhp \ \to \ \CC[H][[h,h']] $$
la projection $\CC[[h,h']]$-linéaire sur $U^0_{h,h'}(\mathfrak{sl}_2,g)$ définie naturellement selon la décomposition triangulaire \eqref{eq_triang}. \\
En composant $\Pi^0$ avec l'unique automorphisme de la $\CC[[h,h']]$-algèbre $\CC[H][[h,h']$ qui à $H$ associe $H+1$, on obtient un nouveau morphisme $\CC[[h,h']]$-linéaire $\delta : \Uhhp \to \CC[H][[h,h']]$. \\
En utilisant la remarque \ref{rem_coeff} et la description explicite de l'action de $C$ sur  les représentations $L^{h,h'}(n,g)$ donnée dans la proposition \ref{prop_indecdescr}, on voit que $\delta(C) = H^2$. Le corollaire \ref{cor_eng} implique alors la proposition suivante, qui donne un équivalent pour $\Uhhp$ de l'isomorphisme d'Harish-Chandra (voir \cite{harish}, \cite{Tan} et \cite[VI.4]{Kas}).

\begin{proposition}
La restriction de $\delta$ sur le centre de $\Uhhp$ définit un isomorphisme de $\CC[[h,h']]$-algèbre
$$ \delta \, : \ Z \Big( \Uhhp \Big) \ \stackrel{\sim}{\longrightarrow} \, \ \CC[H^2][[h,h']] \, , $$
qui à $C$ associe $H^2$.
\end{proposition}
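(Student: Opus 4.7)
\emph{Plan de preuve.} Le point clé établi en amont du théorème est que $\delta(C) = H^2$ (via la remarque \ref{rem_coeff} et la proposition \ref{prop_indecdescr}). Par ailleurs, le corollaire \ref{cor_eng} affirme que le centre $Z(\Uhhp)$ est topologiquement engendré par $C$ et canoniquement isomorphe, en tant que $\CC[[h,h']]$-algèbre, à $\CC[C][[h,h']]$. L'obstacle principal est donc de démontrer que la restriction $\delta|_{Z(\Uhhp)}$ n'est pas seulement $\CC[[h,h']]$-linéaire, mais est en fait un morphisme d'algèbres. Une fois cette multiplicativité acquise, la conclusion suivra : puisque $\delta$ envoie le générateur topologique $C$ sur le générateur topologique $H^2$, par continuité et par l'isomorphisme $\CC[C][[h,h']] \simeq \CC[H^2][[h,h']]$ (défini par $f(C) \mapsto f(H^2)$), on aura un isomorphisme de $\CC[[h,h']]$-algèbre de $Z(\Uhhp)$ sur $\CC[H^2][[h,h']]$.

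Pour établir la multiplicativité, je proposerai une approche représentationnelle. Pour tout $n \in \NN$ et tout $z \in Z(\Uhhp)$, on montre que $z$ agit sur $L^{h,h'}(n,g)$ par un scalaire $\chi_n(z) \in \CC[[h,h']]$. En effet, d'après la proposition \ref{prop_indecdescr}, $L^{h,h'}(n,g)$ est une représentation de poids dont les espaces de poids sont libres de rang $1$; l'élément $z$ commutant avec $H$, il préserve ces espaces et y agit par des scalaires $\mu_j(z)$ ($0 \leq j \leq n$). La relation $z X^- = X^- z$ appliquée à $m_j$ donne $\mu_{j+1}(z) \, m_{j+1} = \mu_j(z) \, m_{j+1}$, d'où $\mu_0(z) = \mu_1(z) = \cdots = \mu_n(z) =: \chi_n(z)$. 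L'application $\chi_n : Z(\Uhhp) \to \CC[[h,h']]$ est alors un morphisme de $\CC[[h,h']]$-algèbre.

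L'étape suivante est d'identifier $\chi_n(z)$ en termes de $\Pi^0(z)$. En utilisant la décomposition triangulaire \eqref{eq_triang} de la proposition \ref{prop_triang}, on écrit $z = \sum_{a,b,c \in \NN} \lambda_{a,b,c} \, (X^-)^a H^b (X^+)^c$ avec $\lambda_{a,b,c} \in \CC[[h,h']]$. On fait agir $z$ sur le vecteur de plus haut poids $v_n = m_0 \in L^{h,h'}(n,g)$. Les termes avec $c \geq 1$ annulent $v_n$, les termes avec $a = c = 0$ donnent $\sum_b \lambda_{0,b,0} \, n^b \, v_n = \Pi^0(z)(n) \, v_n$, et les termes avec $c = 0$ mais $a \geq 1$ donnent des contributions dans les espaces de poids $L^{h,h'}_{n-2a}(n,g)$. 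Comme $z \cdot v_n = \chi_n(z) \, v_n$ est proportionnel à $v_n$ et que les vecteurs $(X^-)^a v_n$ ($0 \leq a \leq n$) sont libres sur $\CC[[h,h']]$, chaque coefficient $\sum_b \lambda_{a,b,0} \, n^b$ (pour $a \geq 1$) doit s'annuler. Il en résulte $\chi_n(z) = \Pi^0(z)(n)$.

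La multiplicativité de $\Pi^0|_{Z(\Uhhp)}$ en découle : pour $z, z' \in Z(\Uhhp)$ et pour tout $n \in \NN$,
$$ \Pi^0(zz')(n) \ = \ \chi_n(zz') \ = \ \chi_n(z) \, \chi_n(z') \ = \ \Pi^0(z)(n) \, \Pi^0(z')(n) \, . $$
Le polynôme $\Pi^0(zz') - \Pi^0(z) \Pi^0(z') \in \CC[H][[h,h']]$ s'annule ainsi en tout $n \in \NN$ : en examinant coefficient par coefficient selon $h^n (h')^m$, chaque polynôme ordinaire en $H$ sur $\CC$ obtenu s'annule sur $\NN$ et est donc nul, ce qui force $\Pi^0(zz') = \Pi^0(z) \Pi^0(z')$. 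La restriction $\delta|_{Z(\Uhhp)}$, obtenue à partir de $\Pi^0|_{Z(\Uhhp)}$ par composition avec un automorphisme de $\CC[[h,h']]$-algèbre, est donc multiplicative. Combinée à la $\CC[[h,h']]$-linéarité et à la formule $\delta(C) = H^2$, ceci conclut la preuve.
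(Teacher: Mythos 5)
Your proposal is correct and takes essentially the same route as the paper, whose entire proof consists of the two sentences preceding the proposition: $\delta(C) = H^2$ is read off from remark \ref{rem_coeff} together with the explicit action of $C$ on the representations $L^{h,h'}(n,g)$, and corollary \ref{cor_eng} is then invoked to conclude. The one step you make explicit --- that $\delta$ restricted to the centre is multiplicative, which you obtain via the central characters $\chi_n(z) = \Pi^0(z)(n)$ computed on the highest weight vectors of the $L^{h,h'}(n,g)$ together with a polynomial-identity argument --- is precisely the standard Harish-Chandra mechanism that the paper's appeal to corollary \ref{cor_eng} tacitly presupposes, so your write-up supplies the routine justification the paper leaves implicit rather than diverging from its approach.
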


\section{Dualité de Langlands pour les représentations \\ des groupes quantiques de rang 1} \label{section_langlands}
Dans cette section, on étudie la spécialisation à $Q = \varepsilon$ de la représentation indécomposable $L^{h,h'}(n,g)$ et du module de Verma $M^{h,h'}(n,g)$. \\
Il est montré que ces spécialisations contiennent de manière naturelle des représentations du groupe quantique $\Urh{gh'}$ : les représentations Langlands $g$-duales de $L^{h,h'}(n,g)$ et $M^{h,h'}(n,g)$. Voir les théorèmes \ref{thm_reprinter} et \ref{thm_reprinter2}. \\
En particulier, la conjecture \cite[conjecture 1]{FH1} concernant l'existence de représentations qui déforment simultanément deux représentations Langlands duales, est résolue ici dans le cas du rang 1 et pour tout $g \in \NN_{\geq 1}$. \\
La propriété d'interpolation/dualité pour les représentations permet de terminer la démonstration, entamée par le lemme \ref{lem_fond}, de la propriété d'interpolation de $\Uhhp$ : voir le théorème \ref{thm_uhhinterpolation}. \\

Soit $g \in \NN_{\geq 1}$. Rappelons que $\varepsilon$ désigne la racine $(2g)$-ième primitive de l'unité $\text{e}^{i \pi/ g} \in \CC$.

\begin{definition}
La spécialisation à $Q = \varepsilon$ de $\Uhhp$ est la $\CC[[h']]$-algèbre
$$ U_{\varepsilon,h'}(\mathfrak{sl_2},g) \ := \ U_{\Aq,h'}(\mathfrak{sl}_2,g) \, / \, \overline{(Q = \varepsilon)}^{\: \! h'} \, . $$
\end{definition}

On définit aussi une notion de poids pour les $U_{\varepsilon,h'}(\mathfrak{sl_2},g)$-modules. \\
Un module $U_{\varepsilon,h'}(\mathfrak{sl_2},g)$-module $V^{\varepsilon,h'}$ est un module de poids si il existe une décomposition en espace de poids :
\begin{equation}
V^{\varepsilon,h'} \ = \ \bigoplus_{n \in \ZZ} \, V^{\varepsilon,h'}_n \, , \quad \text{ avec } \ V^{\varepsilon,h'}_n \ := \ \{ f \in V^{\varepsilon,h'} : \, H.f = n \: \! f \} \, .
\end{equation}
Un vecteur $f \in V^{\varepsilon,h'}_n$ est appelé un vecteur de poids $n$. Si $V^{\varepsilon,h'}_n \neq (0)$, alors $n$ est appelé un poids de $V^{\varepsilon,h'}$ et $V^{\varepsilon,h'}_n$ un espace de poids de $V$. \\

On note $M^{\Aq,h'}(n,g)$ ($n \in \ZZ$) et $L^{\Aq,h'}(n,g)$ ($n \in \NN$) les sous-$\Aq[[h']]$-modules de respectivement $M^{h,h'}(n,g)$ et $L^{h,h'}(n,g)$, topologiquement engendrés par les bases $(m_j)_{j \in \NN}$ et $(m_j)_{0 \leq j \leq n}$ (pour la topologie $(h')$-adique). On a
$$ M^{\Aq,h'}(n,g) \ \simeq \ \Big( \bigoplus_{j \in \NN} \Aq \, m_j \Big) [[h']] \, , \quad \ L^{\Aq,h'}(n,g) \ \simeq \ \Big( \bigoplus_{0 \leq j \leq n} \Aq \, m_j \Big) [[h']] \, . $$
D'après les propositions \ref{prop_vermadescr} et \ref{prop_indecdescr}, $X^{\pm}$, $H$, $C$, $Q^{\pm H}$ et $Q^{\sqrt{C}} + Q^{-\sqrt{C}}$ stabilisent $M^{\Aq,h'}(n,g)$ et $L^{\Aq,h'}(n,g)$. Par suite les actions de $U_{h,h'} (\mathfrak{sl}_2,g)$ sur $M^{h,h'}(n,g)$ et $L^{h,h'}(n,g)$ induisent des actions de $U_{\Aq,h'} (\mathfrak{sl}_2,g)$ sur $M^{\Aq,h'}(n,g)$ et $L^{\Aq,h'}(n,g)$. \\

On note $M^{\varepsilon,h'}(n,g)$ et $L^{\varepsilon,h'}(n,g)$ les spécialisations à $Q = \varepsilon$ de $M^{\Aq,h'}(n,g)$ et $L^{\Aq,h'}(n,g)$ :
$$ M^{\varepsilon,h'}(n,g) \ := \ M^{\Aq,h'}(n,g) \, / \, \overline{(Q -\varepsilon). M^{\Aq,h'}(n,g)}^{\: \! h'} \, , $$
$$ L^{\varepsilon,h'}(n,g) \ := \ L^{\Aq,h'}(n,g) \, / \, \overline{(Q -\varepsilon). L^{\Aq,h'}(n,g)}^{\: \! h'} \, , $$
où $\overline{\ \cdot \ }^{\: \! h'}$ désignent l'adhérence pour la topologie $(h')$-adique. $M^{\varepsilon,h'}(n,g)$ et $L^{\varepsilon,h'}(n,g)$ sont en particulier des $\CC[[h']]$-modules et on a
\begin{equation} \label{eq_vermaindecspe}
M^{\varepsilon,h'}(n,g) \ \simeq \ \Big( \bigoplus_{j \in \NN} \CC \, m_j \Big) [[h']] \, , \quad \ L^{\varepsilon,h'}(n,g) \ \simeq \ \Big( \bigoplus_{0 \leq j \leq n} \CC \, m_j \Big) [[h']] \, .
\end{equation}
L'action de $U_{\Aq,h'} (\mathfrak{sl}_2,g)$ sur $M^{\Aq,h'}(n,g)$ induit une action de $U_{\varepsilon,h'}(\mathfrak{sl_2},g)$ sur $M^{\varepsilon,h'}(n,g)$. De même l'action de $U_{\Aq,h'} (\mathfrak{sl}_2,g)$ sur $L^{\Aq,h'}(n,g)$ induit une action de $U_{\varepsilon,h'}(\mathfrak{sl_2},g)$ sur $L^{\varepsilon,h'}(n,g)$.

\begin{definition}
Soit $g \in \NN_{\geq 1}$.
\begin{itemize}
\item[1)] Soit $n \in \ZZ$. Le $U_{\varepsilon,h'}(\mathfrak{sl_2},g)$-module $M^{\varepsilon,h'}(n,g)$ est appelé la spécialisation à $Q = \varepsilon$ du module de Verma $M^{h,h'}(n,g)$.
\item[2)] Soit $n \in \NN$. Le $U_{\varepsilon,h'}(\mathfrak{sl_2},g)$-module $L^{\varepsilon,h'}(n,g)$ est appelé la spécialisation à $Q = \varepsilon$ de la représentation indécomposable $L^{h,h'}(n,g)$.
\end{itemize}
\end{definition}

On vérifie immédiatement que les $U_{\varepsilon,h'}(\mathfrak{sl_2},g)$-modules $M^{\varepsilon,h'}(n,g)$ et $L^{\varepsilon,h'}(n,g)$ sont des modules de poids. \\

Supposons à présent que $n \in g \ZZ$ et explicitons les modules $M^{\varepsilon,h'}(n,g)$ et $L^{\varepsilon,h'}(n,g)$ :
\begin{eqnarray*}
H.m_j & = & (n-2j) \, m_j \, , \\
Q^H . m_j & = & (-1)^{n/g} \, \varepsilon^{-2j} \, m_j \, , \\
C. m_j & = & (n + 1)^2 \, m_j \, , \\
\big( Q^{\sqrt{C}} + Q^{-\sqrt{C}} \big) . m_j & = & (-1)^{n/g} \, (\varepsilon + \varepsilon^{-1} )  \, m_j \, , \\
X^- . m_j & = & m_{j+1} \, , \\
X^+ . m_j & = & \begin{cases}
\big[ j \big]_{\varepsilon \: \! T} \, \big[n -j+1 \big]_{\varepsilon} \, m_{j-1} \, , & \text{si } j \equiv 0 \ [g] \, , \\
\big[ j \big]_{\varepsilon} \, \big[n -j+1 \big]_{\varepsilon \: \! T} \, m_{j-1} \, , & \text{si } j \equiv 1 \ [g] \, , \\
\big[ j \big]_{\varepsilon} \, \big[n -j+1 \big]_{\varepsilon} \, m_{j-1} \, , & \text{sinon.}
\end{cases}
\end{eqnarray*}

Formellement on obtient l'action de $X^{\pm}$ sur $M^{\varepsilon,h'}(n,g)$ et $L^{\varepsilon,h'}(n,g)$ à partir de celle de $X^{\pm}$ sur les représentations de $U_h(\mathfrak{sl}_2)$ correspondantes, en remplaçant les boîtes quantiques $[j]_Q$ par les boîtes $[j]_{\varepsilon \: \! T}$ quand $g$ divise $j$, et par $[j]_{\varepsilon}$ sinon. \\
Dans le cas où $g = 2$ ou $3$, on retrouve alors formellement les actions de $X^{\pm}$ sur les représentations décrites dans \cite{FH1}, avec $q=\varepsilon$ et $t = T$. \\

On suppose toujours $n \in g \ZZ$. \\
Notons ${}^L \! M^{h'}(n,g)$ et ${}^L \! L^{h'}(n,g)$ les sommes des espaces de poids divisibles par $g$ des $U_{\varepsilon,h'}(\mathfrak{sl_2},g)$-modules $ M^{\varepsilon,h'}(n,g)$ et $L^{\varepsilon,h'}(n,g)$ respectivement. \\
Posons $g' := g/2$ et $n' := 2 \: \!n/g$ si $g$ est pair, $g' := g$ et $n' := n/g$ si $g$ est impair. On a
$$ {}^L \! M^{h'}(n,g) \ \simeq \ \Big( \bigoplus_{j \in \NN} \CC \, m_{g' j} \Big) [[h']] \, , \quad \ {}^L \! L^{h'}(n,g) \ \simeq \ \Big( \bigoplus_{0 \leq j \leq n'} \CC \, m_{g' j} \Big) [[h']] \, . $$
D'après ce qui précède, d'une part $(X^{\pm})^g$, $H$, $C$, $Q^{\pm H}$ et $Q^{\sqrt{C}} + Q^{-\sqrt{C}}$ stabilisent ${}^L \! M^{h'}(n,g)$ et ${}^L \! L^{h'}(n,g)$, d'autre part $Q^{2H}$ agit par $1$ et $Q^{2 \sqrt{C}} + Q^{- 2\sqrt{C}}$ par $\varepsilon^2 + \varepsilon^{-2}$. \\

On note ${}^L \! M^{(h')}(n,g)$ et ${}^L \! L^{(h')}(n,g)$ les localisés par rapport à $h'$ des $\CC[[h']]$-modules ${}^L \! M^{h'}(n,g)$ et ${}^L \! L^{h'}(n,g)$. On a
$$ {}^L \! M^{(h')}(n,g) \ \simeq \ \Big( \bigoplus_{j \in \NN} \CC \, m_{g' j} \Big) ((h')) \quad \text{ et } \quad {}^L \! L^{(h')}(n,g) \simeq \Big( \bigoplus_{0 \leq j \leq n'} \CC \, m_{g' j} \Big) ((h')) \, . $$
Les actions de $U_{\varepsilon,h'}(\mathfrak{sl_2},g)$ induisent alors d'après la remarque \ref{rem_fond}, des actions de $U_{\! gh'}(\mathfrak{sl}_2)$ sur ${}^L \! M^{(h')}(n,g)$ et ${}^L \! L^{(h')}(n,g)$. \\

Grâce à la description précédente des modules $M^{\varepsilon,h'}(n,g)$ et $L^{\varepsilon,h'}(n,g)$, on vérifie que ${}^L \! X^{\pm}$ et ${}^L \! H$ stabilisent ${}^L \! M^{h'}(n,g)$ et ${}^L \! L^{h'}(n,g)$, considérés comme sous-$\CC[[h']]$-modules respectivement de ${}^L \! M^{(h')}(n,g)$ et ${}^L \! L^{(h')}(n,g)$. \\
${}^L \! M^{h'}(n,g)$ et ${}^L \! L^{h'}(n,g)$ sont alors des représentations de $U_{\! gh'}(\mathfrak{sl}_2)$

\begin{definition}
\begin{itemize}
\item[1)] Soit $n \in g \ZZ$. La représentation ${}^L \! M^{h'}(n,g)$ de $\Ughp$ est appellé la représentation Langlands $g$-duale du module de Verma $M^{h}(n)$ de $\Uh[\slt]$.
\item[2)] Soit $n \in g \NN$. La représentation ${}^L \! L^{h'}(n,g)$ de $\Ughp$ est appellé la représentation Langlands $g$-duale de la représentation indécomposable $L^h(n)$ de $\Uh[\slt]$.
\end{itemize}
\end{definition}

Dans la suite $\cmodh$ désignera la catégorie additive $\CC[[h]]$-linéaire des représentations de rang fini de $\Uh[\slt]$ et $\groth$ son groupe de Grothendieck. \\
$\mathcal{C}^{gh'} \! (\mathfrak{sl}_2)$ désignera la catégorie additive $\CC[[h']]$-linéaire des représentation de rang fini de $\Ughp$ et $\text{Rep}^{\: \! gh'} \! (\mathfrak{sl}_2)$ son groupe de Grothendieck. \\

On dispose des foncteurs additifs
$$ \lim_{h' \to 0} : \ \cmodhh \, \longrightarrow \ \cmodh \, , \ \quad \lim_{h \to 0} : \ \cmodh \, \longrightarrow \ \cmod \, , $$
$$ \text{ et } \quad \lim_{h' \to 0} : \ \mathcal{C}^{gh'} \! (\mathfrak{sl}_2) \, \longrightarrow \ \cmod \, . $$

On note $P = \ZZ \, \! \omega$ le réseau des poids de $\mathfrak{sl}_2$ et $P^+ = \NN \: \! \omega \subset P$ le sous-ensemble des poids dominants. On définit une application $\Pi_g : P \to P$ par
$$ \Pi_g (n \: \! \omega) \ := \ \frac{n}{g} \: \! \omega \quad \text{si } g \, / \, n \, , \quad \Pi_g (n \: \! \omega) \ := 0 \quad \text{ sinon,} $$
qui s'étend linéairement en une application $\Pi_g : \ZZ[P] \to \ZZ[P]$ ($\Pi_g$ conserve l'addition mais pas le produit). \\

On note
$$ \chi \, : \ \grot \ \to \ \ZZ[P] \, , \quad \ \chi^h \, : \ \groth \ \to \ \ZZ[P] \, , $$
$$ \text{ et } \ \quad \chi^{g h'} \, : \ \text{Rep}^{gh'} \! (\mathfrak{sl}_2)\ \to \ \ZZ[P] $$
les morphismes de caractères (qui sont on le rappelle des morphismes d'anneau). \\

On peut, de la même façon, définir grâce au point 6 du théorème \ref{thm_rephh} un morphisme de caractère
$$ \chi_g^{h,h'} \, : \ \grothh \ \to \ \ZZ[P] \, . $$
La différence ici est que $\chi_g^{h,h'}$ est un morphisme seulement additif. \\

On vérifie sans difficulté que le diagramme suivant est commutatif :
$$ \xymatrix{
\grothh \ar[ddrr]_-{\chi_g^{h,h'}} \ \ar[rr]^-{\left[ \lim_{h' \to 0} \right]} && \ \groth \ar[dd]_-{\chi^h} \ \ar[rr]^-{\left[ \lim_{h \to 0} \right]} && \ \grot \ar[ddll]^-{\chi} \\
&&&& \\
&& \ZZ[P] &&
} $$

Les deux théorèmes qui suivent résument les constructions faites au début de cette section. Ils donnent également une description des représentations Langlands $g$-duales, qu'on obtient immédiatement d'après ce qui précède.

\begin{theoreme} \label{thm_reprinter}
Soient $g \in \NN_{\geq 1}$ et $n \in g \: \! \NN$.
\begin{itemize}
\item[1)] La représentation $L^{h,h'}(n,g)$ de $\Uhhp$ est une déformation selon $h'$ de la représentation indécomposable $L^h(n)$ de $\Uh[\slt]$ :
$$ \lim_{h' \to 0} L^{h,h'}(n,g) \  = \ L^h(n) \, . $$

\item[2)] La spécialisation $L^{\varepsilon,h'}(n,g)$ à $Q = \varepsilon$ de $L^{h,h'}(n,g)$ est un $U_{\varepsilon,h'}(\mathfrak{sl}_2,g)$-module de poids.

\item[3)] $L^{\varepsilon,h'}(n,g)$ contient la représentation ${}^L \! L^{h'}(n,g)$ de $\Urh{gh'}$, Langlands $g$-duale de $L^h(n)$, comme somme des espaces de poids divisibles par $g$.

\item[4)] Les caractères de $L^h(n)$ et ${}^L \! L^{h'}(n,g)$ sont reliés par
$$ \left( \Pi_g \circ \chi^h \right) \left( L^h(n) \right) \ = \ \chi^{g h'} \left( {}^L \! L^{h'}(n,g) \right) . $$

\item[5)] Si $g$ est paire :
\begin{eqnarray*}
{}^L \! L^{h'}(n,g) & \simeq & L^{gh'} \left( \frac{n}{g} \right) \oplus L^{gh'} \left( \frac{n}{g} - 1 \right) \quad \text{ pour } n > 0 \, , \\
{}^L \! L^{h'}(0,g) & \simeq & L^{gh'}(0) .
\end{eqnarray*}
\item[6)] si $g$ est impaire :
$$ {}^L \! L^{h'}(n,g) \ \simeq \ L^{gh'} \left( \frac{n}{g} \right) . $$
\end{itemize}
\end{theoreme}

\begin{definition}
La limite $h' \to 0$ de la représentation Langlands $g$-duale ($n \in g \NN$) ${}^L \! L^{h'}(n,g)$ est une représentation de $\mathfrak{sl}_2$ que l'on note ${}^L \! L(n,g)$ et qu'on appelle la représentation Langlands $g$-duale de la représentation irréductible $L(n)$ de $\mathfrak{sl}_2$.
\end{definition}

Au niveau des caractères on a :
$$ \left( \Pi_g \circ \chi \right) \left( L(n) \right) \ = \ \chi \left( {}^L \! L(n,g) \right) . $$

On peut illustrer le théorème \ref{thm_reprinter} et la remarque précédente par le diagramme suivant ($g \in \NN_{\geq 1}$, $n \in g \: \! \NN$) :
$$ \xymatrix{
&& L^{h,h'}(n,g) \ar[lld]_-{\lim_{h' \to 0 \ }} \ar[rrd]^-{\lim_{Q \to \varepsilon}} && \\
L^h(n) \ar@<-2.5pt>[d]_-{\lim_{h \to 0}} &&&& \quad {}^L \! L^{h'}(n,g) \ar@<2.5pt>[d]^-{\lim_{h' \to 0}} \ \supset \ L^{gh'}(n/g) \\
L(n) &&&& \quad {}^L \! L(n,g) \ \supset \ L(n/g)
} $$

En d'autres mots, la représentation irréductible $L(n)$ de $\mathfrak{sl}_2$, qui peut être déformée une première fois selon $h$ en une représentation $L^h(n)$ de $\Uh[\slt]$, peut une seconde fois être déformée selon $h'$ en une représentation $L^{h,h'}(n,g)$ de $\Uhhp$, les rangs des espaces de poids restant invariants sous chacune des déformations. \\
Par ailleurs, la spécialisation à $Q = \varepsilon$ de cette double déformation $L^{h,h'}(n,g)$ contient la représentation ${}^L \! L^{h'}(n,g)$ de $\Urh{gh'}$, Langlands $g$-duale de $L^h(n)$. ${}^L \! L^{h'}(n,g)$ est en outre la déformation selon $h'$ de la représentation ${}^L \! L(n,g)$ de $\mathfrak{sl}_2$, Langlands $g$-duale de la représentation $L(n)$ de $\mathfrak{sl}_2$.

\begin{theoreme} \label{thm_reprinter2}
Soit $n \in g \: \! \ZZ$.
\begin{itemize}
\item[1)] Le module de Verma $M^{h,h'}(n,g)$ de $\Uhhp$ est une déformation selon $h'$ du module de Verma $M^h(n)$ de $\Uh[\slt]$ :
$$ \lim_{h' \to 0} M^{h,h'}(n,g) \  = \ M^h(n) \, . $$
\item[2)] La spécialisation $M^{\varepsilon,h'}(n,g)$ à $Q = \varepsilon$ de $M^{h,h'}(n,g)$ est un $U_{\varepsilon,h'}(\mathfrak{sl}_2,g)$-module de poids.
\item[3)] $M^{\varepsilon,h'}(n,g)$ contient la représentation ${}^L \! M^{h'}(n,g)$ de $\Urh{gh'}$, Langlands $g$-duale de $M^h(n)$, comme somme des espaces de poids divisibles par $g$.
\item[4)] Si $g$ est paire :
$$ {}^L \! M^{h'}(n,g) \ \simeq \ M^{gh'} \left( \frac{n}{g} \right) \oplus M^{gh'} \left( \frac{n}{g} - 1 \right) . $$
\item[5)] si $g$ est impaire :
$$ {}^L \! M^{h'}(n,g) \ \simeq \ M^{gh'} \left( \frac{n}{g} \right) . $$
\end{itemize}
\end{theoreme}

\begin{definition}
Soit $n \in g \ZZ$. La limite quand $h'$ tend vers $0$ de la représentation Langlands $g$-duale ${}^L \! M^{h'}(n,g)$ est une représentation de $\mathfrak{sl}_2$ que l'on note ${}^L \! M(n,g)$ et qu'on appelle la représentation Langlands $g$-duale du module de Verma $M(n)$ de $\Uc[\slt]$.
\end{definition}

On peut illustrer le théorème \ref{thm_reprinter2} et la remarque précédente par le diagramme suivant ($g \in \NN_{\geq 1}$, $n \in g \: \! \ZZ$) :
$$ \xymatrix{
&& M^{h,h'}(n,g) \ar[lld]_-{\lim_{h' \to 0 \ }} \ar[rrd]^-{\lim_{Q \to \varepsilon}} && \\
M^h(n) \ar@<-2.5pt>[d]_-{\lim_{h \to 0}} &&&& \quad {}^L \! M^{h'}(n,g) \ar@<2.5pt>[d]^-{\lim_{h' \to 0}} \ \supset \ M^{gh'}(n/g) \\
\quad M(n) &&&& {}^L \! M(n,g) \ \supset \ M(n/g)
} $$

En d'autres mots, le module de Verma $M(n)$ de $\mathfrak{sl}_2$, qui peut être déformée une première fois selon $h$ en une représentation $M^h(n)$ de $\Uh[\slt]$, peut une seconde fois être déformée selon $h'$ en une représentation $M^{h,h'}(n,g)$ de $\Uhhp$. \\
Par ailleurs, la spécialisation à $Q = \varepsilon$ de cette double déformation $M^{h,h'}(n,g)$ contient la représentation ${}^L \! M^{h'}(n,g)$ de $\Urh{gh'}$, Langlands $g$-duale de $M^h(n)$. ${}^L M^{h'}(n,g)$ est la déformation selon $h'$ de la représentation ${}^L \! M(n,g)$ de $\mathfrak{sl}_2$, Langlands $g$-duale du module de Verma $M(n)$ de $\mathfrak{sl}_2$. \\

L'étude précédente des représentations $L^{h,h'}(n,g)$ de $U_{\: \! h,h'}(\mathfrak{sl}_2,g)$ et de leurs spécialisations à $Q = \varepsilon$ permet de terminer la preuve des propriétés d'interpolation de $\Uhhp$ entre les groupes quantiques $\Uh[\slt]$ et $\Ughp$.

\begin{theoreme} \phantomsection \label{thm_uhhinterpolation}
\begin{itemize}
\item[1)] $\Uhhp$ est une déformation formelle de $\Uh[\slt]$ selon le paramètre $h'$. \\
En particulier, $\Uhhp / (h' = 0)$ est isomorphe, en tant que $\CC[[h]]$-algèbre, à $\Uh[\slt]$, via l'identification $\tilde{X}^{\pm} = X^{\pm}$ et $\tilde{H} = H$.

\item[2)] La sous-$\CC[[h']]$-algèbre $\langle {}^L \! X^{\pm}, {}^L H \rangle$ de
$$ \Bigg( \, U_{\! \: \mathcal{A}, h'} (\mathfrak{sl}_2, g) \ / \ \overline{ \big( Q = \varepsilon, \, Q^{2H} = 1, \, Q^{2 \sqrt{C}} + Q^{-2 \sqrt{C}} = \varepsilon^{2} + \varepsilon^{-2} \big) }^{ \, h'} \, \Bigg)_{\! \! h'} \, , $$
 topologiquement engendrée par ${}^L \! X^{\pm}$ et ${}^L H$, est isomorphe à $\Ughp$.
\end{itemize}
\end{theoreme}

\begin{proof}
Le premier point a déjà été traité : voir le théorème \ref{thm_uhhdefor}. \\
Quant au second, d'après le théorème \ref{thm_reprinter}, quelque soit $m \in \NN$, l'action de $\Ughp$ sur la représentation $L^{gh'} (m)$ se factorise à travers le morphisme surjectif
$$ U_{\! g h'}(\mathfrak{sl}_2) \ \longrightarrow \ \langle {}^L \! X^{\pm}, {}^L \! H \rangle $$
donné dans la remarque \ref{rem_fond}. Par suite ce morphisme est injectif. En effet un élément de $U_{\! g h'}(\mathfrak{sl}_2)$ appartenant au noyau de la surjection précédente agit alors par zéro sur toutes ses représentations de dimension finie, et par suite est nul. Ce dernier fait repose sur l'existence de la base PBW de $U_{\! g h'}(\mathfrak{sl}_2)$ et sur la description de ses représentations indécomposables de rang finie (on pourra consulter \cite[5.11]{jantzen} pour un résultat analogue dans le cas de $U_q(\glie)$, où $\glie$ est une algèbre de Lie semi-simple de dimension finie; notons toutefois que la démonstration dans le de $U_q(\mathfrak{sl}_2)$ ou $U_h(\mathfrak{sl}_2)$ est plus simple).
\end{proof}

On peut illustrer le théorème précédent par le diagramme suivant :
$$ \xymatrix{
&& \Uhhp \ar[lld]_-{\lim_{h' \to 0 \ }} \ar[rrd]^-{\lim_{Q \to \varepsilon}} && \\
\Uh[\slt] \ar@<-2.5pt>[d]_-{\lim_{h \to 0}} &&&& \Urh{gh'} \ar@<2.5pt>[d]^-{\lim_{h' \to 0}} \\
\Uc[\slt] &&&& \Uc[\slt]
} $$

En d'autres mots, l'algèbre enveloppante $\Uc[\slt]$, qui admet le groupe quantique $\Uh[\slt]$ comme première déformation selon $h$, est déformée une seconde fois selon $h'$ pour chaque $g \in \NN_{\geq 1} $ : on obtient alors les groupes quantiques d'interpolation de Langlands. \\
Ceux-ci, quand ils sont spécialisés à $Q = \exp h = \varepsilon$, permettent de retrouver les groupes quantiques $\Ughp$, déformations selon $h'$ de $\Uc[\slt]$.

\newpage
\thispagestyle{empty}
\mbox{}
\newpage
\cleardoublepage
\addcontentsline{toc}{part}{Bibliographie -- Bibliography}
\part*{Bibliographie -- Bibliography}
\bibliographystyle{plain-fr}
\renewcommand{\refname}{}
\bibliography{phd}

\end{document}